\documentclass[10pt,a4paper]{amsart}

\usepackage[left=2.8cm, right=2.8cm, top=2.2cm, bottom=2.2cm]{geometry}

\usepackage{amsmath,amssymb,amsthm,mathrsfs,stmaryrd,latexsym}
\usepackage{accents}

\usepackage{relsize}
\usepackage[bbgreekl]{mathbbol}
\usepackage{amsfonts}
\DeclareSymbolFontAlphabet{\mathbb}{AMSb} 
\DeclareSymbolFontAlphabet{\mathbbl}{bbold}
\newcommand{\prism}{{\mathlarger{\mathbbl{\Delta}}}}

\usepackage{tcolorbox}
\tcbuselibrary{skins, breakable, theorems}

\usepackage{enumerate} 
\usepackage{tikz}
\usepackage{etoolbox}

\def\Luoma#1{\uppercase\expandafter{\romannumeral#1}}
\def\luoma#1{\romannumeral#1}

\usepackage{url}

\newtheorem{mythm}{Theorem}[section]
\newtheorem{mylem}[mythm]{Lemma}
\newtheorem{myprop}[mythm]{Proposition}
\newtheorem{mycor}[mythm]{Corollary}

\theoremstyle{definition}
\newtheorem{mydefn}[mythm]{Definition}

\theoremstyle{remark}
\newtheorem{myrem}[mythm]{Remark}
\newtheorem{mypara}[mythm]{}

\usepackage[all]{xy}
\usepackage{graphicx}
\usepackage{subfigure}

\newcommand{\bb}{\mathbb}
\newcommand{\ca}{\mathcal}
\newcommand{\ak}{\mathfrak}
\newcommand{\scr}{\mathscr}

\newcommand{\mbf}{\mathbf}
\newcommand{\mrm}{\mathrm}

\newcommand{\trm}{\textrm}

\def\op#1{\mathop{\mathrm{#1}}}

\newcommand{\ho}{\mrm{Hom}}

\newcommand{\ke}{\mrm{Ker}}
\newcommand{\cok}{\mrm{Coker}}
\newcommand{\im}{\mrm{Im}}
\newcommand{\df}{\mrm{d}}
\newcommand{\id}{\mrm{id}}

\newcommand{\spec}{\op{Spec}}
\newcommand{\colim}{\op{colim}}

\newcommand{\rr}{\mrm{R}}
\newcommand{\dl}{\mrm{L}}

\newcommand{\iso}{\stackrel{\sim}{\longrightarrow}}

\newcommand{\plim}{\varprojlim}

\newcommand{\al}{\mrm{al}}

\newcommand{\gal}{\mrm{Gal}}

\newcommand{\et}{\mrm{\acute{e}t}}
\newcommand{\fet}{\mrm{f\acute{e}t}}

\newcommand{\profet}{\mrm{prof\acute{e}t}}

\newcommand{\fal}{\mbf{E}}
\newcommand{\falh}{\mbf{I}}
\newcommand{\falb}{\overline{\scr{B}}}

\title[The $p$-adic Galois Cohomology of Valuation Fields]{The $p$-adic Galois Cohomology of Valuation Fields}%
\author{Tongmu He}
\date{\today}
\address{Tongmu He, Princeton University, Fine Hall, Washington Road, Princeton, New Jersey 08544, the United States}
\email{hetm@princeton.edu}

\usepackage{hyperref}
\hypersetup{colorlinks,
	citecolor=black,
	filecolor=black,
	linkcolor=black,
	urlcolor=black,
	pdftitle={The p-adic Galois Cohomology of Valuation Fields},
	pdfauthor={Tongmu He},
	pdftex}

\numberwithin{equation}{mythm}

\setcounter{tocdepth}{1}

\begin{document}
	\maketitle
	
\begin{abstract}
	We compute the Galois cohomology of any $p$-adic valuation field extension of a pre-perfectoid field. Moreover, we obtain a generalization and also a new proof of the classical results of Tate and Hyodo on discrete valuation fields, without using higher ramification group, local class field theory or Epp's elimination of ramifications. A key ingredient is Gabber-Ramero's computation of cotangent complexes for valuation rings.
\end{abstract}
\footnotetext{\emph{2020 Mathematics Subject Classification} 14F30 (primary), 11S25, 13F30.\\Keywords: valuation, Galois cohomology, non-discrete, p-adic, differential}
	
\tableofcontents

\section{Introduction}

\begin{mypara}
	The development of $p$-adic Hodge theory can be traced back to Tate's foundational work \cite{tate1967p} in the 1960's. One of the most profound results is his computation of the $p$-adic Galois cohomology of a $p$-adic local field. More precisely, let $K$ be a complete discrete valuation field extension of $\bb{Q}_p$ with absolute Galois group $G_K=\gal(\overline{K}/K)$. Tate computed the continuous cohomology of $G_K$ with coefficients in the $p$-adic completed algebraic closure $\widehat{\overline{K}}$, under the assumption that the residue field of $K$ is perfect. This result was later generalized by Hyodo in the 1980's to the imperfect residue field case as follows:
\end{mypara}

\begin{mythm}[{\cite[\textsection3.3]{tate1967p}, \cite[Theorem 1]{hyodo1986hodge}}]\label{thm:tate-hyodo}
	For any integers $q\geq 0$ and $n$, we have
	\begin{align}
		H^q(G_K,\widehat{\overline{K}}(n))=\left\{\begin{array}{ll}
			\widehat{\Omega}^q_K&\textrm{if }n=q,\\
			\widehat{\Omega}^{q-1}_K&\textrm{if }n=q-1,\\
			0&\textrm{otherwise},
		\end{array}\right.
	\end{align}
	where $(n)$ is taking the $n$-th Tate twist, $\widehat{\Omega}^q_{\ca{O}_K/\bb{Z}_p}$ is the $p$-adic completion of the module of $q$-th differentials $\Omega^q_{\ca{O}_K/\bb{Z}_p}$, and $\widehat{\Omega}^q_K=\widehat{\Omega}^q_{\ca{O}_K/\bb{Z}_p}[1/p]$.
\end{mythm}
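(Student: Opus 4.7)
The plan is to reduce the absolute Galois cohomology of $K$ to a two-step computation: a relative calculation over an explicit pre-perfectoid Kummer tower, handled by the paper's main theorem, followed by the cohomology of a $p$-adic Lie quotient computed via a Koszul complex.

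First, I would build the Kummer tower. Choose a $p$-basis $\{\overline{t_i}\}_{i\in I}$ of the residue field $\kappa$ of $K$ (possibly infinite when $\kappa$ is imperfect) and lifts $t_i \in \ca{O}_K$. Set
\[
K_\infty \;:=\; \widehat{K\bigl(\zeta_{p^\infty},\ t_i^{1/p^\infty}\ (i \in I)\bigr)},
\]
the $p$-adic completion. A direct check shows $K_\infty$ is pre-perfectoid. Let $H = \gal(\overline{K}/K_\infty)$ and $\Gamma = \gal(K_\infty/K)$; then $\Gamma$ is a $p$-adic Lie group topologically generated by the cyclotomic character $\chi_{\cyc}$ and Kummer characters $\chi_i$ twisting the systems $(t_i^{1/p^n})_n$.

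Second, I would apply the paper's main theorem to the valuation field extension $\overline{K}/K_\infty$: since $K_\infty$ is pre-perfectoid, this yields that $H^q(H,\widehat{\overline{K}}(n))$ vanishes for $q\geq 1$ (at least almost, with explicit control through Gabber-Ramero cotangent complexes for valuation rings) and equals $\widehat{K_\infty}(n)$ for $q=0$. Here I use that $H$ acts trivially on the Tate twist since $\zeta_{p^\infty}\subset K_\infty$. Feeding this into the Hochschild-Serre spectral sequence
\[
E_2^{p,q} = H^p\bigl(\Gamma, H^q(H, \widehat{\overline{K}}(n))\bigr) \Longrightarrow H^{p+q}(G_K, \widehat{\overline{K}}(n))
\]
collapses it, up to almost-isomorphism, to the edge terms $H^p(\Gamma, \widehat{K_\infty}(n))$.

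Third, I would compute $H^p(\Gamma, \widehat{K_\infty}(n))$ via the continuous Koszul complex on the topological generators $\chi_{\cyc},\chi_1,\ldots$. Adapting Tate's normalized-trace argument to this multi-variable (and possibly infinite-dimensional) setting, decompose $\widehat{K_\infty}$ into a $K$-summand plus an orthogonal piece on which the Koszul operators $\chi_i-1$ and $\chi_{\cyc}-\chi_{\cyc}^n$ admit bounded inverses. The orthogonal part then contributes nothing, and the $K$-part produces, in degree $q$, a module naturally of the form $\bigoplus_{|S|=q} K\cdot\bigwedge_{i\in S}\df\log t_i$ when $n=q$, together with an analogous expression carrying one $\df\log\zeta_p$ factor when $n=q-1$; the eigenvalue equation in the cyclotomic direction forces $n\in\{q-1,q\}$.

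The main obstacle is the final identification: upgrading these Koszul cohomology modules from almost-isomorphism to genuine isomorphism with $\widehat{\Omega}^q_K$ and $\widehat{\Omega}^{q-1}_K$. For a discrete valuation ring with \emph{imperfect} residue field the module $\widehat{\Omega}^q_{\ca{O}_K/\bb{Z}_p}$ is delicate (possibly of infinite rank), and this is precisely where Gabber-Ramero's cotangent-complex formula for valuation rings becomes essential: it describes $\widehat{\Omega}^q_K$ intrinsically in terms of $\df\log t_i$ and $\df\log p$, matching the output of the Koszul computation on the nose and bypassing the classical need for higher ramification groups, local class field theory, or Epp's elimination of ramifications.
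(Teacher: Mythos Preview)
Your outline is essentially Hyodo's original strategy and is broadly sound, but it is organized quite differently from the paper's proof, and Step~3 glosses over exactly the points where the paper invests its work.

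The paper deduces \ref{thm:tate-hyodo} from the arithmetic case \ref{cor:ari}: a discrete valuation field is arithmetic (\ref{lem:ari-geo-basic}.(\ref{item:lem:ari-geo-basic-1})), and for arithmetic $F$ one first passes to $F_\infty=F(\zeta_{p^\infty})$ alone, computes the $\gal(F_\infty/F)$-cohomology of $\widehat{\Omega}^j_{F_\infty}\{n-j\}$ (\ref{cor:ari-D}, \ref{prop:ari-D}), and only then invokes the Kummer tower over $F_\infty$ via \ref{thm:coh} and Section~\ref{sec:perfd}. In other words, the paper \emph{separates} the cyclotomic direction from the Kummer directions, whereas you merge them into a single $\Gamma$. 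This separation is not merely cosmetic: the cyclotomic generator acts nontrivially on the Tate twist while the Kummer generators do not, so treating $\gamma_{\mrm{cyc}}$ as ``one more Koszul variable'' obscures the mechanism that singles out $n\in\{q-1,q\}$. The paper's two-step architecture is also what lets it work for arbitrary arithmetic valuation fields, not just discrete ones, via the ramified base change theorem \ref{thm:ramified-bc} controlling the cyclotomic extension (\ref{lem:ari-bc}).

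Two places in your sketch need more care. First, your claim that ``the orthogonal part contributes nothing'' is the technical heart of the argument: on the piece $D_J$ one only knows that $\gamma_j-1$ is bounded-invertible for $j\in J$, and it is precisely \ref{lem:perfd-tower-coh} (together with the cyclotomic analogue \ref{lem:ari-D}) that shows this suffices, with explicit bounds. Second, you allow $I$ to be infinite but then run a Koszul complex on $\Gamma\cong\bb{Z}_p^{|I|+1}$; the paper instead writes $F_\infty$ as a filtered union of Henselian subfields of finite transcendence degree over $\bb{Q}_p(\zeta_{p^\infty})$ and takes a colimit (end of the proof of \ref{thm:coh}), so that only finite-rank Koszul complexes ever appear. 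Finally, the identification you flag as ``the main obstacle'' is handled in the paper not by Gabber--Ramero's cotangent formula \emph{per se}, but by the canonical comparison map of \ref{prop:comp}, \ref{lem:comp}, which sends $\df\log(t)$ to the Kummer cocycle $-\xi_t\otimes(\df\log\zeta_{p^n})_n$ and makes the match with $\widehat{\Omega}^q$ functorial rather than coordinate-dependent. (Your ``$\df\log\zeta_p$ factor'' is not literally correct: that element is zero; what appears is the class $\log\circ\chi\in H^1(G_F,\ca{O}_{\widehat{\overline{F}}})$ in the cyclotomic slot, cf.\ \ref{cor:ari}.(\ref{item:cor:ari-2}).)
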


\begin{mypara}
	Tate and Hyodo first applied this result to study the splitting of the Hodge-Tate filtration for abelian varieties over $K$. Their approach later had a profound influence on $p$-adic geometry. In particular, Faltings \cite{faltings1988p,faltings2002almost} adapted and further developed their proof strategy of \ref{thm:tate-hyodo} to study the $p$-adic \'etale cohomology of smooth algebraic varieties, establishing deep connections with various $p$-adic cohomologies arising from differential forms. These fundamental results in $p$-adic Hodge theory turn out to exhibit an intrinsically rigid-analytic nature, as originally conjectured by Tate \cite{tate1967p} and later proved by Scholze \cite{scholze2013hodge}. Since each point of a rigid analytic space corresponds to a valuation field that is not necessarily discretely valued, it is natural to extend the study of $p$-adic Hodge theory to general valuation fields.
	
	However, Tate's approach relies on higher ramification groups and local class field theory, while Hyodo's extension further invokes Epp's theorem on eliminating wild ramification. These techniques are inherently tied to the discrete valuation structure, posing significant obstacles to any direct generalization to the non-discrete setting.
	
	In a recent work by the author \cite{he2024perfd}, a perfectoidness criterion for general valuation fields was established. A key ingredient in this result is the extension of Tate’s strategy to the non-discrete setting, utilizing Gabber-Ramero's computation of cotangent complexes \cite{gabber2003almost}. This criterion ultimately leads to the resolution of a long-standing conjecture of Calegari-Emerton \cite{calegariemerton2012completed} for all Shimura varieties. Moreover, it suggests that the techniques developed therein could be employed to compute Galois cohomology and develop (both abelian and non-abelian) $p$-adic Hodge theory for general valuation fields. The present article is devoted to a systematic study of the $p$-adic Galois cohomology of valuation fields, laying the groundwork for further advances in this direction.
	
	Our main result is stated as follows:
\end{mypara}

\begin{mythm}[{see \ref{cor:ari} and \ref{cor:geo}}]\label{thm:intro-coh}
	Let $F$ be a Henselian valuation field of height $1$ extension of $\bb{Q}_p$ with absolute Galois group $G_F=\gal(\overline{F}/F)$.
	\begin{enumerate}
		\renewcommand{\labelenumi}{{\rm(\theenumi)}}
		\item (Arithmetic case) Assume that $\Omega^1_{\ca{O}_F/\bb{Z}_p}[p^\infty]$ is killed by a certain power of $p$. Then, for any integers $q\geq 0$ and $n$, we have
		\begin{align}
			H^q(G_F,\widehat{\overline{F}}(n))=\left\{\begin{array}{ll}
				\widehat{\Omega}^q_F&\textrm{if }n=q,\\
				\widehat{\Omega}^{q-1}_F&\textrm{if }n=q-1,\\
				0&\textrm{otherwise}.
			\end{array}\right.
		\end{align}\label{item:thm:intro-coh-1}
		\item (Geometric case) Assume that $\bigcap_{n\in\bb{N}}\scr{D}_{F_n/F}\ca{O}_{F_\infty}\neq 0$, where $\scr{D}_{F_n/F}$ is the different ideal of the finite extension $F_n/F$ by adjoining a primitive $p^n$-th root of unity $\zeta_{p^n}$. Then, for any integers $q\geq 0$ and $n$, we have
		\begin{align}
			H^q(G_F,\widehat{\overline{F}}(n))=\widehat{\Omega}^q_F(n-q).
		\end{align}\label{item:thm:intro-coh-2}
	\end{enumerate}
\end{mythm}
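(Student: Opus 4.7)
The plan is to follow Tate's classical two-step argument via a Hochschild--Serre spectral sequence
\begin{align}
E_2^{i,j} = H^i(\gal(F_\infty/F), H^j(G_{F_\infty}, \widehat{\overline{F}}(n))) \Longrightarrow H^{i+j}(G_F, \widehat{\overline{F}}(n)),
\end{align}
for a carefully chosen pre-perfectoid intermediate extension $F_\infty/F$, but replacing the higher ramification groups, local class field theory, and Epp's theorem used by Tate and Hyodo by the cotangent-complex computations of Gabber--Ramero for valuation rings together with the perfectoidness criterion of \cite{he2024perfd}.

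First I would produce $F_\infty$. In the geometric case (2), I would take $F_\infty = \bigcup_n F_n = F(\zeta_{p^\infty})$, and the hypothesis $\bigcap_n \scr{D}_{F_n/F}\ca{O}_{F_\infty}\neq 0$ is precisely what is needed to apply the criterion of \cite{he2024perfd} to conclude that $\widehat{F_\infty}$ is perfectoid. In the arithmetic case (1), I would enlarge $F(\zeta_{p^\infty})$ by adjoining $p$-power roots of a family of elements of $F$ whose differentials generate $\widehat{\Omega}^1_F$ modulo bounded $p$-torsion; the hypothesis that $\Omega^1_{\ca{O}_F/\bb{Z}_p}[p^\infty]$ is killed by a power of $p$ is exactly what ensures such a family exists and that the enlarged tower is again pre-perfectoid in the sense of \cite{he2024perfd}. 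In both cases $\Gamma := \gal(F_\infty/F)$ is a $p$-adic Lie group, with a cyclotomic quotient $\Gamma_\cyc$ and, in the arithmetic case, an additional Kummer part $\Gamma_\geo\cong\bb{Z}_p^{\oplus d}$.

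Second, once $\widehat{F_\infty}$ is perfectoid, the almost purity theorem (in its valuation-theoretic incarnation, again accessible through the Gabber--Ramero cotangent complex formulas) reduces the computation of $H^j(G_{F_\infty}, \widehat{\overline{F}}(n))$ to almost \'etale descent, yielding $\widehat{F_\infty}(n)$ almost in degree $0$ and almost vanishing in positive degrees. The remaining task is to compute $H^i(\Gamma, \widehat{F_\infty}(n))$. In the arithmetic case I would combine a Tate-style analysis on $\Gamma_\cyc$ (which produces the Tate-twist dependence) with a Koszul complex computation on $\Gamma_\geo$ (which produces the exterior powers of $\widehat{\Omega}^1_F$); the surviving contributions are precisely $\widehat{\Omega}^q_F$ for $n=q$ and $\widehat{\Omega}^{q-1}_F$ for $n=q-1$, because for all other $n-q$ the operator $\chi_\cyc^{n-q}-1$ is invertible on the relevant isotypic pieces. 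In the geometric case $\Gamma$ reduces essentially to $\Gamma_\cyc$ and a direct Tate-twisted computation gives $\widehat{\Omega}^q_F(n-q)$ for every $n$.

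The main obstacle is the analogue of Tate's normalized-trace estimate, which classically rests on higher ramification filtrations or Epp's theorem. Without those, one must show, purely from the hypothesis on $\Omega^1_{\ca{O}_F/\bb{Z}_p}[p^\infty]$ or on the chain of different ideals, that the trace maps $\ca{O}_{F_n}\to\ca{O}_F$ have $p$-adic images controlled uniformly in $n$, so that the normalized trace converges on $\widehat{F_\infty}$ with the expected kernel. This is precisely where Gabber--Ramero enters most essentially: their identification of the cotangent complex $\dl_{\ca{O}_{F_\infty}/\ca{O}_F}$ translates the non-discretely valuative ramification data into an almost-vanishing statement strong enough to drive the Tate--Sen formalism. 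Once that contraction estimate is in place, the surviving pieces of $H^*(\Gamma, \widehat{F_\infty}(n))$ are read off from the Koszul complex and match the asserted formulas in (1) and (2).
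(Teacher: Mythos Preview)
There is a genuine gap in your treatment of the geometric case. You assert that the hypothesis $\bigcap_{n}\scr{D}_{F_n/F}\ca{O}_{F_\infty}\neq 0$ allows one to conclude that $\widehat{F_\infty}$ is perfectoid, and hence that $H^j(G_{F_\infty},\widehat{\overline{F}}(n))$ is concentrated in degree $0$. This is false. The cyclotomic extension $F_\infty=F(\zeta_{p^\infty})$ always contains the pre-perfectoid field $\bb{Q}_p(\zeta_{p^\infty})$, but $F_\infty$ itself is pre-perfectoid only when $F$ is algebraic over $\bb{Q}_p$ (compare \ref{cor:sen-perfd}). In general $\widehat{\Omega}^1_{\ca{O}_{F_\infty}/\bb{Z}_p}$ has rank equal to $\mrm{trdeg}_{\bb{Q}_p}(F)$ and is far from zero, so $H^j(G_{F_\infty},\widehat{\overline{F}}(n))\cong\widehat{\Omega}^j_{F_\infty}(n-j)$ is nonzero for $0\leq j\leq \mrm{trdeg}_{\bb{Q}_p}(F)$. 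Indeed, the very shape of the answer $H^q(G_F,\widehat{\overline{F}}(n))=\widehat{\Omega}^q_F(n-q)$ shows that higher $j$ must contribute. The geometric hypothesis is \emph{not} a perfectoidness criterion for $F_\infty$; it is used only afterward, to bound $H^i(\gal(F_\infty/F),-)$ for $i>0$ via \ref{cor:coh-torsion}.

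What the paper actually does is the reverse of your division of labor. In \emph{both} cases the intermediate field is the cyclotomic $F_\infty$, and the inner cohomology $H^j(G_{F_\infty},\widehat{\overline{F}}(n))$ is computed (\ref{thm:coh}, \ref{thm:coh-int}) by building a perfectoid tower \emph{over $F_\infty$}: one chooses $t_1,\dots,t_d\in\ca{O}_{F_\infty}^\times$ whose differentials form a $\pi$-basis of $\widehat{\Omega}^1_{\ca{O}_{F_\infty}/\ca{O}_{K_\infty}}$ (\ref{lem:differential}) and adjoins all their $p$-power roots; this Kummer tower is what becomes perfectoid (\ref{cor:perfd-tower-perfd}), and the Koszul computation on its Galois group $\bb{Z}_p^d$ produces the $\widehat{\Omega}^j$. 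Only then does one run Hochschild--Serre for $\gal(F_\infty/F)$: in the geometric case the different bound kills $i>0$ trivially (\ref{lem:geo-diff-bound}), while in the arithmetic case one needs the Tate--Sen estimates of \ref{lem:ari-trace}--\ref{lem:ari-sigma-coh}, and it is here (not in producing a perfectoid tower) that the hypothesis on $\Omega^1_{\ca{O}_F/\bb{Z}_p}[p^\infty]$ enters, through the ramified base change theorem \ref{thm:ramified-bc}. Your arithmetic sketch is closer to this, but note that the Kummer extensions are only Galois once $\zeta_{p^\infty}\in F$, so $\Gamma_{\geo}$ is the normal subgroup and $\Gamma_{\cyc}$ the quotient, with $\Gamma_{\cyc}$ acting on $\Gamma_{\geo}$ through the cyclotomic character; this semidirect structure is what makes the two-step decomposition (Kummer first, cyclotomic second) the natural one.
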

\begin{mypara}
	We say that $F$ is \emph{arithmetic} (resp. \emph{geometric}) if it satisfies the corresponding assumption in \ref{thm:intro-coh} (see \ref{defn:ari-geo}). There are some typical examples for arithmetic and geometric valuation fields (see \ref{lem:ari-geo-basic}):
	\begin{enumerate}
		\renewcommand{\labelenumi}{{\rm(\theenumi)}}
		\item If the valuation on $F$ is discrete, then $F$ is arithmetic.
		\item If there is a pre-perfectoid field valuation subextension of $F/\bb{Q}_p$ (e.g., $F=F_\infty$), then $F$ is geometric.
	\end{enumerate}
	Therefore, the arithmetic case is a generalization of Tate and Hyodo's computation \ref{thm:tate-hyodo}, and the geometric case can be regarded as a (rigid-analytically) local version of Faltings' computation of the $p$-adic \'etale cohomology of smooth varieties. These two cases are mostly common encountered in the practice of $p$-adic geometry. It enables us to construct an analogue of Hodge-Tate spectral sequence for (non-smooth) proper $p$-adic varieties via a completely different approach from Guo's work \cite{guo2019hodgetate} (see \ref{rem:htss}). We also obtain a more conceptual proof of the perfectoidness criterion established in \cite{he2024perfd} (see \ref{prop:fal-ext-delta}). Moreover, these two cases are sufficiently broad to cover all the valuation fields finitely generated over $\bb{Q}_p$ with transcendental degree $\leq 1$.
\end{mypara}

\begin{myprop}[{see \ref{cor:sen-perfd} and \ref{prop:trans-ari-geo}}]\label{prop:intro-ari-geo}
	Let $K$ be a Henselian discrete valuation field extension of $\bb{Q}_p$ with perfect residue field. 
	\begin{enumerate}
		\renewcommand{\labelenumi}{{\rm(\theenumi)}}
		\item Let $L$ be an algebraic extension of $K$. Then, $L$ is either arithmetic or pre-perfectoid (thus geometric).\label{item:prop:intro-ari-geo-1}
		\item Let $F$ be a valuation field of height $1$ finitely generated extension of $L$ of transcendental degree $\mrm{trdeg}_L(F)= 1$. Then, the Henselization of $F$ is either arithmetic or geometric.\label{item:prop:intro-ari-geo-2}
	\end{enumerate}
\end{myprop}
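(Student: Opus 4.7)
The plan is to treat both parts by a dichotomy on the depth of ramification of the tower, combined with cotangent transitivity.

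For part (1), I would index the finite subextensions $L_\alpha/K$ of $L/K$ and set $d(L/K):=\sup_\alpha v_p(\mathscr{D}_{L_\alpha/K})$. If $d(L/K)<\infty$, then, using that the different of $K/\mathbb{Z}_p$ has finite $p$-adic valuation (as $K/\mathbb{Q}_p$ has perfect residue field), each $\Omega^1_{\mathcal{O}_{L_\alpha}/\mathbb{Z}_p}$ is annihilated by a uniform power of $p$; the filtered colimit over $\alpha$ therefore yields that $\Omega^1_{\mathcal{O}_L/\mathbb{Z}_p}$ itself is killed by a power of $p$, so $L$ is arithmetic. If instead $d(L/K)=\infty$, I would invoke the Coates-Greenberg/Sen criterion for deeply ramified algebraic extensions of local fields, which in this setting is equivalent to the $p$-adic completion being perfectoid, to deduce that $L$ is pre-perfectoid and hence geometric.

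For part (2), passage to the Henselization $F^h$ is harmless: $\mathcal{O}_{F^h}$ is faithfully flat and ind-\'etale over $\mathcal{O}_F$, so $\Omega^1_{\mathcal{O}_{F^h}/\mathbb{Z}_p}\cong\mathcal{O}_{F^h}\otimes_{\mathcal{O}_F}\Omega^1_{\mathcal{O}_F/\mathbb{Z}_p}$, and both the arithmetic and geometric conditions are both preserved and reflected by this faithfully flat localization. If $L$ is pre-perfectoid, then $L$ itself is a pre-perfectoid valuation subextension of $F^h/\mathbb{Q}_p$, yielding the geometric conclusion by the criterion recalled after Theorem \ref{thm:intro-coh}. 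The main obstacle is the remaining sub-case where $L$ is arithmetic and one must show that $F^h$ is arithmetic. The cotangent triangle for $\mathbb{Z}_p\to\mathcal{O}_L\to\mathcal{O}_F$ gives the right-exact sequence
\begin{equation*}
\Omega^1_{\mathcal{O}_L/\mathbb{Z}_p}\otimes_{\mathcal{O}_L}\mathcal{O}_F\longrightarrow\Omega^1_{\mathcal{O}_F/\mathbb{Z}_p}\longrightarrow\Omega^1_{\mathcal{O}_F/\mathcal{O}_L}\longrightarrow 0,
\end{equation*}
and since $\mathcal{O}_F$ is torsion-free and hence flat over the valuation ring $\mathcal{O}_L$, the leftmost term is killed by the same power of $p$ that kills $\Omega^1_{\mathcal{O}_L/\mathbb{Z}_p}$ (which is itself $p^\infty$-torsion since $L/\mathbb{Q}_p$ is algebraic). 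A diagram chase then reduces the arithmeticity of $F^h$ to a uniform bound on the $p^\infty$-torsion of $\Omega^1_{\mathcal{O}_F/\mathcal{O}_L}$.

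This last bound is the technical heart of the argument. By Abhyankar's inequality, the height-$1$ transcendental-degree-$1$ extension $F/L$ falls into one of three cases according to whether the value group grows by rank $1$, the residue field acquires one transcendence generator, or neither (the defect case). Choosing a transcendence generator $t\in\mathcal{O}_F$ adapted to the case and exhibiting $F$ as a finite extension of $L(t)$, I would apply Gabber-Ramero's description of the cotangent complex of valuation extensions \cite{gabber2003almost} to present $\Omega^1_{\mathcal{O}_F/\mathcal{O}_L}$ as a torsion-free rank-one piece generated by $dt$ plus a torsion summand annihilated by an explicit different; verifying that this different has $p$-adic valuation uniformly bounded, using the arithmeticity hypothesis on $L$ and the finite generation of $F/L$, would complete the proof.
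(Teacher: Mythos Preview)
For part~(1), your ramification-depth dichotomy can be made to work but leans on the Coates--Greenberg equivalence between deep ramification and being pre-perfectoid as an external input. The paper argues differently and more internally: if $L$ is non-arithmetic, the cotangent transitivity sequence (together with the vanishing of $\Omega^1_{\ca{O}_{L_\infty}/\ca{O}_{K_\infty}}$, which holds because $K_\infty$ is pre-perfectoid and $L_\infty/K_\infty$ is algebraic) forces $\Omega^1_{\ca{O}_{L_\infty}/\ca{O}_L}=0$, so $L$ is pre-perfectoid by Gabber--Ramero's criterion. The paper explicitly notes that this route avoids local class field theory.

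For part~(2) your plan has a genuine gap: the implication ``$L$ arithmetic $\Rightarrow F^{\mrm{h}}$ arithmetic'' on which your case split rests is false, so the bound on $\Omega^1_{\ca{O}_F/\ca{O}_L}[p^\infty]$ you identify as the technical heart cannot hold in general. Take $L=K=\bb{Q}_p$ and $\alpha=\sum_{n\geq 1}\zeta_{p^n}p^{c_n}\in\widehat{K_\infty}$ with $c_n$ increasing rapidly; then $\alpha$ is transcendental over $K$ (since $\overline{K}\cap\widehat{K_\infty}=K_\infty$ by Ax--Sen--Tate and $\alpha\notin K_\infty$), and an inductive Krasner argument inside the Henselian field $F^{\mrm{h}}=K(\alpha)^{\mrm{h}}$ shows $\zeta_{p^n}\in F^{\mrm{h}}$ for every $n$. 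Thus $K_\infty\subseteq F^{\mrm{h}}$, so $F^{\mrm{h}}$ is geometric and $\Omega^1_{\ca{O}_{F^{\mrm{h}}}/\bb{Z}_p}[p^\infty]$ is unbounded. Your Abhyankar-based sketch offers no mechanism to exclude this phenomenon.

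The paper's route is quite different: it does not split on $L$ at all, but on whether the torsion-free module $\Omega^1_{\ca{O}_{F_\infty}/\ca{O}_{K_\infty}}$ is bounded. If bounded, a Kaplansky-type structure argument shows $\Omega^1_{\ca{O}_{F_\infty}/\ca{O}_F}$ is uniformly almost finitely generated, from which the dichotomy follows directly. If unbounded, an explicit description of rank-one differentials over an algebraically closed base forces $F^{\mrm{h}}\subseteq\widehat{\overline{K}}$; an Ax--Sen--Tate argument then sandwiches $F^{\mrm{h}}$ between $\overline{K}\cap F^{\mrm{h}}$ and its completion, reducing to part~(1) applied to this algebraic intersection---which may land in \emph{either} branch of the dichotomy.
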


\begin{mypara}
	We remark that if $L$ is a $p$-adic analytic Galois extension of $K$, then a theorem of Sen on the higher ramification groups implies that $L$ is either finitely ramified or pre-perfectoid (see \ref{rem:sen-perfd}). Thus, \ref{prop:intro-ari-geo}.(\ref{item:prop:intro-ari-geo-1}) can be regarded as an analogue of this result, but our proof does not make any use of the local class field theory.
\end{mypara}

\begin{mypara}
	Let's sketch the proof of our main theorem \ref{thm:intro-coh}. The arithmetic case can be reduced to the geometric case by computing the Galois cohomology of the cyclotomic extension $H^i(\gal(F_\infty/F),\widehat{\Omega}^j_{F_\infty}(n))$. For the latter, we show that there exists a certain element $\pi\in \ca{O}_F$ such that for any $n\in\bb{N}$, we have
	\begin{align}
		\pi\ca{O}_{F_n} \subseteq \ca{O}_F[\zeta_{p^n}]\subseteq \ca{O}_{F_n}.
	\end{align}
	This follows from a ramified base change theorem that we establish in \ref{thm:ramified-bc} and that $F$ is arithmetic (see \ref{lem:ari-bc}). We remark that the different ideals $(\scr{D}_{F_n/F})_{n\in\bb{N}}$ are asymptotically equivalent to $(p^n\ca{O}_{F_n})_{n\in\bb{N}}$ as in Tate's work  \cite[\textsection3.1]{tate1967p}.
	
	The geometric case follows from the following integral version:
\end{mypara}

\begin{mythm}[{see \ref{thm:geo}}]\label{thm:intro-geo}
	Assume that $F$ is geometric. Then, for any $q,r\in\bb{N}$, there is a canonical morphism
	\begin{align}\label{eq:thm:intro-geo}
		\Omega^q_{\ca{O}_F/\bb{Z}_p}/p^r\Omega^q_{\ca{O}_F/\bb{Z}_p}\longrightarrow H^q(G_F,(\ca{O}_{\overline{F}}/p^r\ca{O}_{\overline{F}})\{q\}),
	\end{align}
	whose kernel and cokernel are killed by a certain power of $p$ independent of $r$.
\end{mythm}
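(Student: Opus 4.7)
The plan is to run the Tate--Faltings strategy along the cyclotomic tower $F \subseteq F_\infty \subseteq \overline{F}$. Setting $\Gamma = \gal(F_\infty/F)$ and $H = \gal(\overline{F}/F_\infty)$, the Hochschild--Serre spectral sequence
\[ E_2^{i,j} = H^i\bigl(\Gamma, H^j(H, (\ca{O}_{\overline{F}}/p^r)\{q\})\bigr) \Longrightarrow H^{i+j}\bigl(G_F, (\ca{O}_{\overline{F}}/p^r)\{q\}\bigr) \]
reduces the problem to two computations, each to be performed modulo a bounded power of $p$. The canonical morphism in (\ref{eq:thm:intro-geo}) should be realised as the $q$-fold cup product of the Kummer $1$-cocycles attached to elements of $\ca{O}_F^\times$ via $d\log$.

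First, I will compute $\rr\Gamma(H, \ca{O}_{\overline{F}}/p^r)$ via almost purity. The geometric hypothesis furnishes a nonzero element $\alpha \in \bigcap_n \scr{D}_{F_n/F}\ca{O}_{F_\infty}$ which quantifies the near-perfectoidness of $F_\infty$: since $\alpha$ lies in every $\scr{D}_{F_n/F}$, each finite layer $F \subseteq F_n$ is almost-\'etale with respect to the ideal $(\alpha)$. Running Faltings's argument with the almost structure defined by $(\alpha)$, and invoking Gabber--Ramero's cotangent complex estimates for valuation extensions to control the cofiltered colimit along the tower, the expected conclusion is that $\rr\Gamma(H, \ca{O}_{\overline{F}}/p^r)$ sits in degree zero with value $\ca{O}_{F_\infty}/p^r$, up to kernel and cokernel annihilated by a fixed power of $\alpha$, uniformly in $r$.

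Second, I will compute the $\Gamma$-cohomology of $(\ca{O}_{F_\infty}/p^r)\{q\}$. After a harmless finite replacement of $F$, $\Gamma$ becomes topologically procyclic with generator $\gamma$, so the cohomology is computed by the two-term complex $[\ca{O}_{F_\infty}/p^r \xrightarrow{\chi_\cyc(\gamma)^q \gamma - 1} \ca{O}_{F_\infty}/p^r]$. Here the sandwich $\pi \ca{O}_{F_n} \subseteq \ca{O}_F[\zeta_{p^n}] \subseteq \ca{O}_{F_n}$ supplied by Theorem \ref{thm:ramified-bc} encodes the asymptotic equivalence $\scr{D}_{F_n/F} \sim p^n\ca{O}_{F_n}$ up to $\pi$, and yields a normalised trace $\ca{O}_{F_\infty} \to \ca{O}_F$ satisfying a Tate--Sen axiomatic framework. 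From this an explicit analysis identifies the kernel and cokernel of $\chi_\cyc(\gamma)^q\gamma - 1$ with $\Omega^q_{\ca{O}_F/\bb{Z}_p}/p^r$ and $\Omega^{q-1}_{\ca{O}_F/\bb{Z}_p}/p^r$ respectively, up to a power of $p$ bounded independently of $r$. Splicing the two steps through the spectral sequence and matching with the cup-product description then yields (\ref{eq:thm:intro-geo}).

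The principal obstacle lies in the first step: under the mere geometric hypothesis, $F_\infty$ need not be pre-perfectoid but only ``$\alpha$-almost perfectoid'', so Faltings's machinery must be recast with this weaker almost structure while retaining $p$-power bounds that are uniform in both $r$ and the depth $n$ of the tower. A secondary subtlety lies in higher wedge degrees: the cup-product definition of the canonical map can a priori compound the $p$-power discrepancies from each factor, so a uniform Koszul-type estimate is required to keep the final annihilating $p$-power independent of $r$ (depending on $q$ only through the wedge length).
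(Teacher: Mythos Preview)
Your Hochschild--Serre setup along $F\subseteq F_\infty\subseteq \overline{F}$ matches the paper, but the content of the two steps is inverted, and each step as you describe it is incorrect.

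\textbf{Step 1 is wrong.} The geometric hypothesis says that $\scr{D}_{F_\infty/F}\neq 0$, i.e.\ the extension $F_\infty/F$ is almost \'etale up to a fixed $\alpha$. This says nothing about $F_\infty$ being close to perfectoid in any absolute sense, and indeed it is not: if $F$ already contains all $\zeta_{p^n}$ (e.g.\ $F=K(t)$ with $K$ pre-perfectoid), then $F_\infty=F$, the condition is vacuous, and $H^j(G_{F_\infty},\ca{O}_{\overline{F}}/p^r)$ is nonzero for all $j$ up to the transcendence degree. What the paper proves (Theorem~\ref{thm:coh}) is that $H^j(G_{F_\infty},(\ca{O}_{\overline{F}}/p^r)\{j\})\approx \Omega^j_{\ca{O}_{F_\infty}/\bb{Z}_p}/p^r$, computed via the perfectoid Kummer tower obtained by adjoining $p$-power roots of a $\pi$-basis of $\widehat{\Omega}^1_{\ca{O}_{F_\infty}/\ca{O}_K}$ over $K=\bb{Q}_p(\zeta_{p^\infty})$. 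The differentials come from \emph{this} inner cohomology, not from the $\Gamma$-step.

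\textbf{Step 2 is the arithmetic argument, not the geometric one.} The sandwich $\pi\ca{O}_{F_n}\subseteq \ca{O}_F[\zeta_{p^n}]\subseteq \ca{O}_{F_n}$ and the asymptotic $\scr{D}_{F_n/F}\sim p^n\ca{O}_{F_n}$ are precisely what hold when $F$ is \emph{arithmetic} (Lemma~\ref{lem:ari-bc}); in the geometric case the differents are bounded below by $\scr{D}_{F_\infty/F}$, the opposite behaviour. In the geometric case the paper uses this bounded different directly: since $F$ is non-arithmetic the Breuil--Kisin twist descends to $F$ (so twisting commutes with $\Gamma$-cohomology, Lemma~\ref{lem:non-ari-coh-twist}), and then the trace bound from Corollary~\ref{cor:coh-torsion} shows $H^i(\Gamma,\ca{O}_{F_\infty}/p^r)$ is killed by $\ak{m}_F\scr{D}_{F_\infty/F}$ for $i>0$ and the cokernel in degree $0$ likewise (Lemma~\ref{lem:geo-diff-bound}). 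Combined with $\Omega^j_{\ca{O}_{F_\infty}}\approx \ca{O}_{F_\infty}\otimes_{\ca{O}_F}\Omega^j_{\ca{O}_F}$ up to $\scr{D}_{F_\infty/F}$-torsion, the spectral sequence collapses to $E_2^{0,q}\approx\Omega^q_{\ca{O}_F/\bb{Z}_p}/p^r$. There is no Tate--Sen analysis of $\gamma-\chi^q$ here; that belongs to the arithmetic case, where the twist does \emph{not} descend and genuinely contributes a copy of $\Omega^{q-1}$ in addition to $\Omega^q$.
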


\begin{mypara}
	Here, $\{q\}$ is taking the $q$-th Breuil-Kisin-Fargues twist over $\ca{O}_{\widehat{\overline{F}}}$-modules, which is an integral version of the $q$-th Tate twist. In fact, such a twist is defined over the geometric (or more generally, non-arithmetic) valuation field $F$ (see \ref{defn:int-tate-twist}). Therefore, taking Galois cohomology over geometric valuation fields commutes with the twists (see \ref{lem:non-ari-coh-twist}).
	
	The construction of the canonical morphism \eqref{eq:thm:intro-geo} follows from the same strategy of Bhatt-Morrow-Scholze for smooth formal schemes (see \ref{lem:comp}). It allows us to reduce to the case where $F$ is of finite transcendental degree over the cyclotomic field $\bb{Q}_p(\zeta_{p^\infty})$ by a limit argument (see \ref{thm:coh}). Then, we can pick up a $\pi$-basis of $\widehat{\Omega}^1_{\ca{O}_F/\bb{Z}_p}$ for any $\pi\in\ak{m}_F$ (see \ref{lem:differential}), i.e., $t_1,\dots,t_d\in \ca{O}_F^\times$ such that
	\begin{align}
		\pi\cdot \widehat{\Omega}^1_{\ca{O}_F/\bb{Z}_p}\subseteq \ca{O}_{\widehat{F}}\cdot\df t_1\oplus \cdots\oplus \ca{O}_{\widehat{F}}\cdot\df t_d\subseteq \widehat{\Omega}^1_{\ca{O}_F/\bb{Z}_p}.
	\end{align}
	This is actually a general feature for any $p$-adic completion of torsion-free $\ca{O}_F$-modules of finite rank (see \ref{thm:completion} and \ref{prop:generator-basis}).
	
	Then, we consider the perfectoid tower defined by adjoining $p$-power roots of $t_1,\dots,t_d$ as in Hyodo's work \cite[\textsection1]{hyodo1986hodge}, and we reduce to the Galois cohomology of the Kummer extension by adjoining $p$-power roots of a single element $t$. The latter was already studied in \cite{he2024perfd} and we include it in Section \ref{sec:kummer}.
\end{mypara}

\begin{mypara}
	We remark that during the preparation of this article, Bhatt explained to the author a prismatic approach to Theorem \ref{thm:intro-geo}, building on Bouis' computation \cite{bouis2023cartier} of the prismatic cohomology of valuation fields extension over a pre-perfectoid field. The latter relies not only on the Hodge-Tate and de Rham comparisons for prismatic cohomology, but also on Gabber-Ramero's work on valuation rings (see \ref{thm:cartier-smooth} and \ref{thm:cartier-smooth-val}). We present Bhatt's approach in the Appendix \ref{sec:pris}.
\end{mypara}

\begin{mypara}
	This article is structured as follows. In Section \ref{sec:mod}, we study the structure of a torsion-free module of finite rank over a valuation ring, using Kaplansky's structure theorem over maximal valuation rings. In Section \ref{sec:diff}, we discuss the general properties of different ideals, traces and differentials for (non-discrete) valuation rings, and we establish the ramified base change theorem in the end. In Section \ref{sec:ari-geo}, we introduce the notion of arithmetic and geometric valuation fields. Then, we compute the Galois cohomology for valuation field extension of the cyclotomic field in Sections \ref{sec:kummer} and \ref{sec:perfd}. A prismatic approach of Bhatt to this result is included in the Appendix \ref{sec:pris}. We establish the canonical comparison map between differentials and Galois cohomology in Section \ref{sec:comp} and finish the computation/comparison for arithmetic and geometric valuation fields in Section \ref{sec:final}.
\end{mypara}

\subsection*{Acknowledgements}

I am grateful to Bhargav Bhatt for his explaination of a prismatic approach to Galois cohomology, and I thank Tess Bouis for helpful discussions. I would like to extend my gratitude to Ahmed Abbes and Takeshi Tsuji for their invaluable feedback and comments. This work was carried out during my appointment as an instructor at Princeton University.

\section{Notation and Conventions}\label{sec:notation}
\begin{mypara}
	We fix a prime $p$ throughout this article.
\end{mypara}

\begin{mypara}\label{para:product}
	Let $d\in\bb{N}$, i.e., $d$ is a natural number. We endow the set $(\bb{N}\cup\{\infty\})^d$ with the partial order defined by $\underline{n}\leq \underline{n'}$ if ether $n'_i=\infty$ or $n_i\leq n'_i<\infty$ for any $1\leq i\leq d$, where $\underline{n}=(n_1,\dots,n_d)$ and $\underline{n'}=(n_1',\dots,n_d')$. For any $r\in \bb{N}\cup\{\infty\}$, we set $\underline{r}=(r,\dots,r)\in (\bb{N}\cup\{\infty\})^d$.
\end{mypara}

\begin{mypara}\label{para:torsion}
	Let $A$ be a ring, $I$ an ideal of $A$, $M$ an $A$-module. We denote by $M[I]=\{x\in M\ |\ I\cdot x=0\}$ the submodule of $I$-torsion elements and by $M[I^\infty]=\bigcup_{n\in\bb{N}}M[I^n]$ the submodule of $I$-power torsion elements.
\end{mypara}

\begin{mypara}\label{para:notation-val}
	A \emph{valuation field} is a pair $(K,\ca{O}_K)$ (usually denoted by $K$ for simplicity) where $\ca{O}_K$ is a valuation ring with fraction field $K$ (\cite[\Luoma{6}.\textsection1.2, D\'efinition 2]{bourbaki2006commalg5-7}). We denote by $\ak{m}_K$ the maximal ideal of $\ca{O}_K$ and call the cardinality of the nonzero prime ideals of $\ca{O}_K$ the \emph{height} of $K$ (\cite[\Luoma{6}.\textsection4.4, Proposition 5]{bourbaki2006commalg5-7}). We also refer to \cite[\href{https://stacks.math.columbia.edu/tag/00I8}{00I8}]{stacks-project} for basic properties on valuation rings.  
\end{mypara}

\begin{mypara}\label{para:notation-almost}
	Let $K$ be a discrete (resp. non-discrete) valuation field of height $1$, i.e., its value group $K^\times/\ca{O}_K^\times$ identifies with a discrete (resp. dense) subgroup of the ordered topological group of real numbers $\bb{R}$. We put $\widetilde{\ak{m}}_K=\ca{O}_K$ (resp. $\widetilde{\ak{m}}_K=\ak{m}_K$) so that
	\begin{align}\label{eq:para:notation-almost-1}
		\widetilde{\ak{m}}_K^2=\widetilde{\ak{m}}_K,\quad \widetilde{\ak{m}}_K\ak{m}_K=\ak{m}_K.
	\end{align}
	Moreover, for any extension $K\to F$ of valuation fields of height $1$, we have
	\begin{align}\label{eq:para:notation-almost-2}
		\ak{m}_K\cdot\ca{O}_F\subseteq\widetilde{\ak{m}}_{F}\subseteq\widetilde{\ak{m}}_K\cdot \ca{O}_F.
	\end{align}
	When referring to almost modules over $\ca{O}_K$, we always take $(\ca{O}_K,\widetilde{\ak{m}}_K)$ as the basic setup (\cite[6.1.15]{gabber2003almost}). Note that if $K$ is discrete, then the category of almost modules over $\ca{O}_K$ is equivalent to the category of $\ca{O}_K$-modules. We refer to \cite{gabber2003almost} and \cite[\Luoma{5}]{abbes2016p} for a systematic development of almost ring theory, and to \cite[\textsection5]{he2024coh} for a brief review on basic notions.
\end{mypara}

\begin{mypara}\label{para:notation-norm}
	Let $K$ be a valuation field of height $1$ and let $|\cdot|:K\to\bb{R}_{\geq 0}$ be an associated ultrametric absolute value (\cite[\Luoma{6}.\textsection6.2, Proposition 3]{bourbaki2006commalg5-7}). For any subset $S\subseteq K$, we define the \emph{norm} of $S$ to be 
	\begin{align}\label{eq:para:notation-norm-1}
		|S|=\sup_{s\in S}|s|\in\bb{R}_{\geq 0}\cup\{\infty\}.
	\end{align}
	We note that $|\widetilde{\ak{m}}_K|=|\ca{O}_K|=1$. We say that an $\ca{O}_K$-submodule $\ak{a}$ of $K$ is \emph{principal} if there exists $x\in\ak{a}$ such that $|\ak{a}|=|x|$ (so that $\ak{a}=\ca{O}_K\cdot x$). A \emph{fractional ideal} of $\ca{O}_K$ is an $\ca{O}_K$-submodule $\ak{a}$ of $K$ with finite norm $|\ak{a}|<\infty$ (i.e., $\ak{a}\neq K$). Note that if $K$ is discrete, then any fractional ideal is principal.
	
	For any two subsets $S$ and $T$ of $K$, we put $ST=\{st\in K\ |\ s\in S,\ t\in T\}$. Note that for any $\ca{O}_K$-submodule $\ak{a}$ of $K$, $S\ak{a}$ is also an $\ca{O}_K$-submodule of $K$. We remark that 
	\begin{align}\label{eq:para:notation-norm-2}
		|ST|=|S|\cdot |T|.
	\end{align}
	
	For any two $\ca{O}_K$-submodules $\ak{a}$ and $\ak{b}$ of $K$, we see that 
	\begin{align}\label{eq:para:notation-norm-3}
		|\ak{a}|\leq |\ak{b}|\Leftrightarrow\widetilde{\ak{m}}_K\ak{a}\subseteq \ak{b}\Leftarrow \ak{a}\subseteq \ak{b}.
	\end{align}
	Moreover, the last implication is an equivalence when $\ak{b}$ is principal. On the other hand, we have
	\begin{align}\label{eq:para:notation-norm-4}
		|\ak{a}|<|\ak{b}|\Rightarrow \ak{a}\subseteq \ak{m}_K\ak{b}.
	\end{align}
	
	For any nonzero fractional ideal $\ak{a}$ of $\ca{O}_K$, we put $\ak{a}^{-1}=\{x\in K\ |\ x\ak{a}\subseteq \ca{O}_K\}$, which is still a nonzero fractional ideal of $\ca{O}_K$. Then, we have (see \cite[4.4]{he2024perfd})
	\begin{align}\label{eq:para:notation-norm-5}
		\widetilde{\ak{m}}_K\subseteq \ak{a}\ak{a}^{-1}\subseteq \ca{O}_K.
	\end{align}
	In particular, $|\ak{a}^{-1}|=|\ak{a}|^{-1}$.
\end{mypara}

\begin{mypara}\label{para:notation-completion}
	Let $K$ be a valuation field of height $1$. There is a canonical topology on $K$, induced by any associated ultrametric absolute value or by the $\pi$-adic topology on $\ca{O}_K$ for any $\pi\in\ak{m}_K\setminus 0$. The completion of $\ca{O}_K$, $\ca{O}_{\widehat{K}}=\lim_{n\in\bb{N}}\ca{O}_K/\pi^n\ca{O}_K$, is a valuation ring of height $1$ extension of $\ca{O}_K$, and the completion of $K$ coincides with the fraction field $\widehat{K}$ of $\ca{O}_{\widehat{K}}$ (\cite[\Luoma{6}.\textsection5.3, Proposition 5]{bourbaki2006commalg5-7}). For any $\ca{O}_K$-module $M$, we endow it with the $\pi$-adic topology (which does not depend on the choice of $\pi$) and denote its completion by $\widehat{M}=\lim_{n\in\bb{N}}M/\pi^nM$.
\end{mypara}

\begin{mypara}\label{para:notation-perfd}
	Following \cite[\textsection5]{he2024coh}, a \emph{pre-perfectoid field} is a valuation field $K$ of height $1$ with non-discrete valuation of residue characteristic $p$ such that the Frobenius map on $\ca{O}_K/p\ca{O}_K$ is surjective (\cite[5.1]{he2024coh}). We note that any pre-perfectoid field is deeply ramified in the sense of \cite[6.6.1]{gabber2003almost} (see \cite[6.6.6]{gabber2003almost}). We also note that the completion $\widehat{K}$ is a perfectoid field in the sense of \cite[3.1]{scholze2012perfectoid} (see \cite[5.2]{he2024coh}). 
	
	Given a pre-perfectoid field $K$, we say that an $\ca{O}_K$-algebra $R$ is (resp. \emph{almost}) \emph{pre-perfectoid} if there is a nonzero element $\pi\in\ak{m}_K$ such that $p\in \pi^p\ca{O}_K$, that the $\pi$-adic completion $\widehat{R}$ is (resp. almost) flat over $\ca{O}_{\widehat{K}}$ and that the Frobenius induces an (resp. almost) isomorphism $R/\pi R\to R/\pi^p R$ (see \cite[5.19]{he2024coh}). This definition does not depend on the choice of $\pi$ by \cite[5.23]{he2024coh} and is equivalent to that the $\ca{O}_{\widehat{K}}$-algebra $\widehat{R}$ (resp. almost $\ca{O}_{\widehat{K}}$-algebra $\widehat{R}^{\al}$ associated to $\widehat{R}$) is perfectoid in the sense of \cite[3.10.(\luoma{2})]{bhattmorrowscholze2018integral} (resp. \cite[5.1.(\luoma{2})]{scholze2012perfectoid}, see \cite[5.18]{he2024coh}).
\end{mypara}

\section{Completion of Torsion-Free Modules with Finite Rank}\label{sec:mod}
Kaplansky \cite{kaplansky1952mod} clarifies the structure of a torsion-free module of countable rank over a maximal valuation ring. We use it to study the structure of a torsion-free module of finite rank over a general valuation ring. In particular, we prove that its completion is almost finite projective (see \ref{thm:completion} and \ref{prop:generator-basis}). This result will be applied in \ref{lem:differential}.

\begin{mydefn}\label{defn:val-rank}
	Let $K$ be a valuation field. For any $\ca{O}_K$-module $M$, we call the dimension of the $K$-module $K\otimes_{\ca{O}_K}M$ the \emph{rank} of $M$, and denote by $\mrm{rank}_{\ca{O}_K}(M)$.
\end{mydefn}

	We note that for any extension of valuation fields $K\to L$ and $M'=M\otimes_{\ca{O}_K}\ca{O}_L$, we have
	
\begin{align}\label{eq:defn:val-rank-1}
	\mrm{rank}_{\ca{O}_K}(M)=\mrm{rank}_{\ca{O}_L}(M').
\end{align}

\begin{mydefn}\label{defn:div-rank}
	Let $K$ be a valuation field of height $1$. For any $\ca{O}_K$-module $M$, we put  
	\begin{align}
		M^{\mrm{div}}=\bigcap_{\pi\in\ak{m}_K\setminus 0}\pi M,\quad M^{\mrm{sep}}=M/\bigcap_{\pi\in\ak{m}_K\setminus 0}\pi M,
	\end{align}
	called the \emph{divisible submodule} and the \emph{separated quotient} of $M$ respectively. We say that $M$ is \emph{divisible} (resp. \emph{separated}) if $M^{\mrm{div}}=M$ (resp. $M^{\mrm{div}}=0$).
\end{mydefn}

	Note that for a torsion-free $\ca{O}_K$-module $M$, $M^{\mrm{div}}$ is naturally a $K$-module and $M^{\mrm{sep}}$ is torsion-free.
	
\begin{mydefn}\label{defn:sep-bound}
	Let $K$ be a valuation field of height $1$. 
	\begin{enumerate}
		\renewcommand{\labelenumi}{{\rm(\theenumi)}}
		\item We say that a torsion-free $\ca{O}_K$-module $M$ is \emph{bounded} if $M$ is isomorphic to an $\ca{O}_K$-submodule of $\ca{O}_K^{\oplus I}$ for some set $I$.\label{item:defn:sep-bound-1}
		\item We say that a torsion $\ca{O}_K$-module $M$ is \emph{bounded} if $\pi M=0$ for some nonzero element $\pi\in\ca{O}_K$.\label{item:defn:sep-bound-2}
	\end{enumerate}
	
\end{mydefn}

Note that for a torsion-free $\ca{O}_K$-module $M$, it is bounded and of finite rank if and only if it admits a finitely generated $\ca{O}_K$-submodule $M_0$ containing $\pi M$ for some $\pi\in\ak{m}_K\setminus 0$.

\begin{mylem}\label{lem:div-rank-inv-base}
	Let $K\to L$ be an extension of valuation fields of height $1$, $M$ a torsion-free $\ca{O}_K$-module with $M^{\mrm{sep}}$ bounded, $M'=M\otimes_{\ca{O}_K}\ca{O}_L$. Then, the canonical morphisms of $\ca{O}_L$-modules
	\begin{align}\label{eq:lem:div-rank-inv-base-1}
		M^{\mrm{div}}\otimes_{\ca{O}_K}\ca{O}_L\longrightarrow M'^{\mrm{div}},\quad M^{\mrm{sep}}\otimes_{\ca{O}_K}\ca{O}_L\longrightarrow M'^{\mrm{sep}}
	\end{align}
	are isomorphisms.
\end{mylem}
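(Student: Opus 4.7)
The plan is to start from the canonical short exact sequence
\begin{align*}
0\longrightarrow M^{\mathrm{div}}\longrightarrow M\longrightarrow M^{\mathrm{sep}}\longrightarrow 0,
\end{align*}
observe that $\mathcal{O}_L$ is $\mathcal{O}_K$-flat (being torsion-free over a valuation ring), and thereby obtain, by flat base change, the short exact sequence
\begin{align*}
0\longrightarrow M^{\mathrm{div}}\otimes_{\mathcal{O}_K}\mathcal{O}_L\longrightarrow M'\longrightarrow M^{\mathrm{sep}}\otimes_{\mathcal{O}_K}\mathcal{O}_L\longrightarrow 0.
\end{align*}
The strategy is to identify these two flanking terms with $M'^{\mathrm{div}}$ and $M'^{\mathrm{sep}}$, respectively. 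For the left term, since $M^{\mathrm{div}}$ is naturally a $K$-vector space, $M^{\mathrm{div}}\otimes_{\mathcal{O}_K}\mathcal{O}_L$ is an $L$-vector space and hence divisible as an $\mathcal{O}_L$-module, giving $M^{\mathrm{div}}\otimes_{\mathcal{O}_K}\mathcal{O}_L\subseteq M'^{\mathrm{div}}$.

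The heart of the argument is then to show that $M^{\mathrm{sep}}\otimes_{\mathcal{O}_K}\mathcal{O}_L$ is separated as an $\mathcal{O}_L$-module; once established, it will force $M'^{\mathrm{div}}$ to lie in $M^{\mathrm{div}}\otimes\mathcal{O}_L$ (since $M^{\mathrm{div}}\otimes\mathcal{O}_L$ is already divisible and the quotient is separated), and simultaneously identify $M'^{\mathrm{sep}}$ with $M^{\mathrm{sep}}\otimes\mathcal{O}_L$. To prove separatedness I would use the boundedness hypothesis: $M^{\mathrm{sep}}$ embeds into $\mathcal{O}_K^{\oplus I}$ for some set $I$, and flatness of $\mathcal{O}_L$ gives an embedding $M^{\mathrm{sep}}\otimes_{\mathcal{O}_K}\mathcal{O}_L\hookrightarrow\mathcal{O}_L^{\oplus I}$. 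Since $L$ is of height $1$, we have $\bigcap_{n\geq 1}\pi^n\mathcal{O}_L=0$ for any $\pi\in\mathfrak{m}_L\setminus 0$, so $\mathcal{O}_L^{\oplus I}$ is separated, and hence so is its submodule.

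The only mild point to verify is that for a height $1$ valuation field the divisible submodule can be computed as $\bigcap_{n\geq 1}\pi^n M$ for any single nonzero $\pi\in\mathfrak{m}_K$; this follows from the fact that the value group sits inside $\mathbb{R}$, so $|\pi^n|\to 0$ and every nonzero element of $\mathfrak{m}_K$ divides some $\pi^n$. I do not anticipate a real obstacle in this argument: the proof is essentially flat base change applied to the canonical divisible-separated decomposition, with boundedness used only to convert separatedness of $\mathcal{O}_L$ into separatedness of $M^{\mathrm{sep}}\otimes\mathcal{O}_L$. The most delicate bookkeeping is keeping track of the two different maximal ideals $\mathfrak{m}_K$ and $\mathfrak{m}_L$, but both yield the same notion of divisibility once one uses the height $1$ reformulation above.
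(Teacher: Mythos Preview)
Your proposal is correct and follows essentially the same approach as the paper: flat base change of the exact sequence $0\to M^{\mathrm{div}}\to M\to M^{\mathrm{sep}}\to 0$, the observation that $M^{\mathrm{div}}\otimes_{\ca{O}_K}\ca{O}_L$ is divisible, and the use of boundedness to embed $M^{\mathrm{sep}}\otimes_{\ca{O}_K}\ca{O}_L$ into $\ca{O}_L^{\oplus I}$ and conclude separatedness. The paper's proof is simply a terser version of what you wrote.
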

\begin{proof}
	Since $M^{\mrm{sep}}\subseteq \ca{O}_K^{\oplus I}$ for some set $I$, we have $M^{\mrm{sep}}\otimes_{\ca{O}_K}\ca{O}_L\subseteq (\ca{O}_L)^{\oplus I}$ as $\ca{O}_K\to\ca{O}_L$ is flat. Thus, $M^{\mrm{sep}}\otimes_{\ca{O}_K}\ca{O}_L$ is still separated. Hence, $M'^{\mrm{div}}$ lies in the kernel of $M'\to M^{\mrm{sep}}\otimes_{\ca{O}_K}\ca{O}_L$, which is $M^{\mrm{div}}\otimes_{\ca{O}_K}\ca{O}_L$ by flat base change. On the other hand, it is clear that $M^{\mrm{div}}\otimes_{\ca{O}_K}\ca{O}_L$ is still divisible and thus contained in $M'^{\mrm{div}}$. Hence, we have $M^{\mrm{div}}\otimes_{\ca{O}_K}\ca{O}_L=M'^{\mrm{div}}$ and thus $M^{\mrm{sep}}\otimes_{\ca{O}_K}\ca{O}_L= M'^{\mrm{sep}}$.
\end{proof}

\begin{myrem}\label{rem:sep-unbounded}
	If $K$ is complete, then we will prove every separated torsion-free $\ca{O}_K$-module of finite rank is bounded in \ref{cor:sep-comp}. But if $K$ is not complete, there exists an unbounded but separated torsion-free $\ca{O}_K$-module of rank $2$. Indeed, let $\pi$ be a nonzero element of $\ak{m}_K$ and assume that $(s_n)_{n\in\bb{N}}\in \ca{O}_{\widehat{K}}=\lim_{n\in\bb{N}}\ca{O}_K/\pi^n\ca{O}_K$ does not lie in $\ca{O}_K$ (where $s_n\in\ca{O}_K$). Consider the $\ca{O}_K$-submodule $M$ of $K\oplus K$ generated by $\{(\pi^{-n},s_n\pi^{-n})\}_{n\in\bb{N}}$. It is clear that $M$ is unbounded. Let's verify that $M$ is separated. Suppose that $(a,b)\in M^{\mrm{div}}$. Then, for any $k\in\bb{N}$, there is $n\in\bb{N}_{\geq k}$ large enough such that the equations
	\begin{align}\label{eq:rem:sep-unbounded-1}
		\left\{\begin{array}{l}
			x_0+x_1/\pi+\cdots +x_n/\pi^n=\pi^{-k}a\\
			x_0s_0+x_1s_1/\pi+\cdots +x_ns_n/\pi^n=\pi^{-k}b
		\end{array}\right.
	\end{align}
	admit solutions $x_0,\dots,x_n\in \ca{O}_K$. Thus, we have
	\begin{align}\label{eq:rem:sep-unbounded-2}
		\pi^n(s_0-s_n)x_0+\cdots+\pi(s_{n-1}-s_n)x_{n-1}=\pi^{n-k}(b-s_na).
	\end{align}
	Since $s_i-s_n\in \pi^i\ca{O}_K$ for any $0\leq i<n$ by assumption, we see that 
	\begin{align}\label{eq:rem:sep-unbounded-3}
		b\equiv s_na \mod \pi^k\ca{O}_K.
	\end{align}
	If $a\neq 0$, then we see that $b/a\in K$ is equal to $(s_n)_{n\in\bb{N}}\in\widehat{K}$, which is a contradiction. Therefore, $a=0$ and thus $b\in \ca{O}_K^{\mrm{div}}$ is also zero. In other words, $M$ is separated. Moreover, we also see that $M\otimes_{\ca{O}_K}\ca{O}_{\widehat{K}}$ is not separated, since the equations \eqref{eq:rem:sep-unbounded-1} admit solutions in $\ca{O}_{\widehat{K}}$ for $n$ large enough if we take $b=(s_n)_{n\in\bb{N}}$ and $a=1$. This also shows that the ``bounded" assumption in \ref{lem:div-rank-inv-base} can not be removed. 
\end{myrem}

\begin{mypara}\label{para:maximal-val}
	Recall that a valuation field $K$ is \emph{maximal} (or \emph{maximally complete}) if any immediate valuation field extension $K\to L$ (i.e., a valuation field extension inducing isomorphisms of value groups and residue fields) is trivial (\cite[page 191]{krull1932val}). In particular, a maximal valuation field is complete with respect to the topology induced by the valuation (\cite[\Luoma{6}.\textsection5.3, Proposition 5]{bourbaki2006commalg5-7}). We refer to \cite[\textsection11]{he2024purity} for a brief review of Kaplansky's structure theorem on torsion-free modules of countable rank over maximal valuation rings.
\end{mypara}

\begin{mylem}\label{lem:val-R}
	Let $K$ be a valuation field whose value group $K^\times/\ca{O}_K^\times$ is isomorphic to the ordered group of real numbers $\bb{R}$. Then, any torsion-free $\ca{O}_K$-module of rank $1$ is isomorphic to exactly one of the three $\ca{O}_K$-modules: $K$, $\ca{O}_K$ and $\ak{m}_K$.
\end{mylem}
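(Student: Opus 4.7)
The strategy is to classify the nonzero $\ca{O}_K$-submodules of $K$ via their norms, using the notation of \ref{para:notation-norm}. First I would observe that any torsion-free $\ca{O}_K$-module $M$ of rank $1$ embeds into $K\otimes_{\ca{O}_K}M\cong K$, so up to isomorphism I may assume $M\subseteq K$ is a nonzero $\ca{O}_K$-submodule.

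The hypothesis $K^\times/\ca{O}_K^\times\cong\bb{R}$ yields $|K^\times|=\bb{R}_{>0}$, which I would use to show that $|M|\subseteq\bb{R}_{\geq 0}$ is downward closed: given $x\in M$ with $|x|=s$ and $0<t<s$, the value group provides $a\in\ca{O}_K$ with $|a|=t/s$, whence $|ax|=t\in|M|$. A nontrivial downward closed subset of $\bb{R}_{\geq 0}$ has exactly one of the forms $\bb{R}_{\geq 0}$, $[0,r]$, or $[0,r)$ for some $r\in\bb{R}_{>0}$, distinguished by whether $|M|=\infty$ or $|M|=r<\infty$ is attained or not.

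Each of these three shapes matches one of the target modules. If $|M|=\infty$, then for any $y\in K$ I pick $x\in M$ with $|x|\geq|y|$, so $y=(y/x)x\in M$ and $M=K$. If $|M|=r<\infty$ is attained by some $x\in M$, the same argument gives $M=\ca{O}_K\cdot x\cong\ca{O}_K$. If $|M|=r<\infty$ is not attained, I pick any $z\in K$ with $|z|=r$; then every $y\in M$ satisfies $|y/z|<1$ so $M\subseteq\ak{m}_K\cdot z$, and for any $a\in\ak{m}_K$ the strict inequality $|az|<r$ provides $x\in M$ with $|az|\leq|x|$, giving $az\in\ca{O}_K\cdot x\subseteq M$; hence $M=\ak{m}_K\cdot z\cong\ak{m}_K$.

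Finally I would verify that $K$, $\ca{O}_K$, $\ak{m}_K$ are pairwise non-isomorphic as $\ca{O}_K$-modules. The module $K$ is divisible, whereas $\ca{O}_K$ and $\ak{m}_K$ have trivial divisible submodule in the sense of \ref{defn:div-rank}, since $\bigcap_{\pi\in\ak{m}_K\setminus 0}\pi\ca{O}_K=0$ as $|\pi|$ can be taken arbitrarily small. Moreover $\ca{O}_K\not\cong\ak{m}_K$ because $\ca{O}_K$ is cyclic while $\ak{m}_K$ is not: any $x\in\ak{m}_K$ generates $\ca{O}_K\cdot x=\{y\in K\,:\,|y|\leq|x|\}$, which is strictly smaller than $\ak{m}_K$ since $|\ak{m}_K|=1$ is not attained. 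There is no substantial obstacle; the only place the precise shape of the value group is genuinely used is the downward closedness of $|M|$, after which the argument reduces to routine norm comparisons valid for any valuation field of height $1$.
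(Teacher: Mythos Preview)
Your proof is correct and follows essentially the same approach as the paper: embed $M$ as a nonzero submodule of $K$, then trichotomize according to whether $|M|$ is infinite, finite and attained, or finite and not attained (the paper phrases this via $\inf_{x\in M} v(x)$ rather than $\sup_{x\in M}|x|$, and is terser about the non-isomorphism of the three modules). One small inaccuracy in your closing remark: the hypothesis $|K^\times|=\bb{R}_{>0}$ is also used in the third case to produce $z\in K$ with $|z|=r$, not only for downward closedness.
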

\begin{proof}
	Let $v:K\to \bb{R}\cup\{\infty\}$ be the valuation map constructed by the assumption and $v(0)=\infty$. As $M$ is torsion-free and of rank $1$, it is a nonzero $\ca{O}_K$-submodule of $K$. If $\inf_{x\in M}v(x)=-\infty$, then it is clear that $M=\bigcup_{x\in M}\ca{O}_K\cdot x=K$. If $\inf_{x\in M}v(x)\in \bb{R}$, then there exists $x_0\in K$ such that $v(x_0)= \inf_{x\in M}v(x)$ by assumption. If moreover $x_0\in M$, then $M=\ca{O}_K\cdot x_0\cong \ca{O}_K$. Otherwise, we see that $M=\ak{m}_K\cdot x_0\cong \ak{m}_K$. It is clear that $K$, $\ca{O}_K$ and $\ak{m}_K$ are not isomorphic to each other.
\end{proof}

\begin{mylem}\label{lem:max-comp-exist}
	Let $K$ be a valuation field of height $1$. Then, there exists a maximal valuation field $L$ of height $1$ extension of $K$ with value group isomorphic to $\bb{R}$.
\end{mylem}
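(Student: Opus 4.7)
The plan is to split the construction of $L$ into two stages: first produce a height-one valuation field extension $K'/K$ whose value group is already equal to all of $\bb{R}$, and then pass to a maximal immediate extension of $K'$. Since immediate extensions preserve both the value group and the height, the resulting $L$ will automatically be a maximal valuation field of height one with value group $\bb{R}$, extending $K$.

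For the first stage, since $K$ is of height one, H\"older's theorem embeds its value group $\Gamma := K^\times/\ca{O}_K^\times$ as an ordered subgroup of $\bb{R}$; I fix such an embedding. I then invoke the classical Gauss construction: for any valued field $(F,v)$ with $v(F^\times)\subseteq \bb{R}$ and any prescribed $r\in \bb{R}$, the formula
\begin{align}
 v\Bigl(\sum_i a_i t^i\Bigr) = \min_i\bigl(v(a_i) + ir\bigr)
\end{align}
defines a height-one valuation on $F(t)$ extending $v$, whose value group is $v(F^\times)+\bb{Z}r\subseteq \bb{R}$. Fix a well-ordering of $\bb{R}$ and iterate this construction by transfinite recursion, adjoining at each stage a transcendental $t_r$ with $v(t_r)=r$ for the next $r\in \bb{R}$ not yet in the value group, and taking directed unions at limit ordinals. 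The resulting valued field $K'$ is a height-one extension of $K$ whose value group contains every real number, hence equals $\bb{R}$.

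For the second stage, I invoke the classical Krull--Kaplansky theorem (a Zorn's lemma argument bounding the cardinality of immediate extensions in terms of value group and residue field) asserting that every valued field admits a maximal immediate extension. Applying this to $K'$ and letting $L$ denote such a maximal immediate extension then finishes the proof: by definition $L$ is maximal, and as an immediate extension it inherits both the height and the value group $\bb{R}$ from $K'$, while containing $K$. The only real subtlety lies in the bookkeeping of the transfinite construction in the first stage and in verifying the Gauss extension stays archimedean throughout, but both points are routine since the value group stays inside $\bb{R}$ at every step and directed unions of archimedean ordered groups in $\bb{R}$ remain in $\bb{R}$; I expect no substantial obstacle.
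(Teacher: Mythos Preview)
Your proof is correct and follows the same two-stage strategy as the paper: first enlarge the value group to all of $\bb{R}$, then pass to a maximal immediate extension via Krull's theorem. The only difference is in the first stage: the paper simply cites an external result (Fukawa) to obtain the extension $K\to K'$ with value group $\bb{R}$, whereas you supply an explicit elementary construction via transfinite iteration of Gauss extensions $v(\sum a_i t^i)=\min_i(v(a_i)+ir)$. Your construction is more self-contained and makes the proof independent of the cited reference; the paper's version is terser but relies on the reader tracking down Fukawa (or alternatively the ultraproduct argument of Diarra that the paper also mentions). Both arrive at the same $K'$ up to the choices made, and the second stage is identical.
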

\begin{proof}
	As the value group of $K$ is isomorphic to a subgroup of $\bb{R}$ (\cite[\Luoma{6}.\textsection4.5, Proposition 7]{bourbaki2006commalg5-7}), there exists a valuation field extension $K\to K'$ such that the value group of $K'$ is isomorphic to $\bb{R}$ (\cite{fukawa1965val}). Then, it suffices to take an immediate valuation field extension $K'\to L$ with $L$ maximal, which exists by \cite[Satz 24]{krull1932val}. See also \cite[7.4]{escassut1995analysis} for another proof using ultraproducts following Diarra \cite{diarra1984ultra}.
\end{proof}

\begin{mythm}[{\cite[Theorem 12]{kaplansky1952mod}}]\label{thm:kaplansky}
	Let $K$ be a valuation field of height $1$ and let $L$ be a maximal valuation field of height $1$ extension of $K$ with value group isomorphic to $\bb{R}$ (which exists by {\rm\ref{lem:max-comp-exist}}). Then, for any torsion-free $\ca{O}_K$-module $M$ of countable rank, there is an isomorphism of $\ca{O}_L$-modules
	\begin{align}\label{eq:thm:kaplansky-1}
		M\otimes_{\ca{O}_K}\ca{O}_L\cong L^{\oplus I_1}\oplus \ca{O}_L^{\oplus I_2}\oplus \ak{m}_L^{\oplus I_3},
	\end{align}
	where $I_1,I_2,I_3$ are countable sets.
\end{mythm}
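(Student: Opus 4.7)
The plan is to reduce to Kaplansky's original structure theorem for countable rank torsion-free modules over maximal valuation rings, and then classify each rank~$1$ summand using \ref{lem:val-R}. The statement is essentially a repackaging of Kaplansky's result \cite[Theorem~12]{kaplansky1952mod} (as reviewed in \cite[\textsection11]{he2024purity}), so the work is mostly bookkeeping.

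First, I would form $M' = M\otimes_{\ca{O}_K}\ca{O}_L$. Since $\ca{O}_K\to\ca{O}_L$ is flat, $M'$ remains torsion-free, and by the rank invariance \eqref{eq:defn:val-rank-1} it has countable rank over $\ca{O}_L$. Thus $M'$ satisfies the hypotheses of Kaplansky's structure theorem applied to the maximal valuation ring $\ca{O}_L$, whose value group is $\bb{R}$ by the choice of $L$ (justified by \ref{lem:max-comp-exist}).

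Second, I would apply Kaplansky's theorem to decompose $M'$ as a direct sum
\begin{align*}
    M' \;\cong\; \bigoplus_{j\in J} N_j
\end{align*}
of at most countably many torsion-free $\ca{O}_L$-modules $N_j$ of rank~$1$. Now invoking \ref{lem:val-R}, which is applicable precisely because the value group of $L$ is $\bb{R}$, each $N_j$ is isomorphic to exactly one of $L$, $\ca{O}_L$, or $\ak{m}_L$. Partitioning $J$ accordingly into three (necessarily countable) subsets $I_1, I_2, I_3$ and grouping summands yields the desired decomposition \eqref{eq:thm:kaplansky-1}.

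The only nontrivial input is Kaplansky's decomposition into rank~$1$ summands, which genuinely requires both the maximality of $\ca{O}_L$ and the countability of the rank; these hypotheses are built into the setup via \ref{lem:max-comp-exist} and the assumption on $M$. Beyond this, the proof is a direct combination of the cited theorem with the classification of rank~$1$ torsion-free modules in \ref{lem:val-R}.
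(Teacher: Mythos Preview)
Your proposal is correct and follows essentially the same approach as the paper: apply Kaplansky's structure theorem to the torsion-free $\ca{O}_L$-module $M\otimes_{\ca{O}_K}\ca{O}_L$ of countable rank to split it into rank-$1$ summands, then classify each summand via \ref{lem:val-R}. The paper's proof is more terse but identical in substance; your explicit verification that $M'$ remains torsion-free (by flatness) and of countable rank (by \eqref{eq:defn:val-rank-1}) simply fills in details the paper leaves implicit.
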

\begin{proof}
	By Kaplansky's structure theorem \cite[Theorem 12]{kaplansky1952mod}, the torsion-free $\ca{O}_L$-module $M\otimes_{\ca{O}_K}\ca{O}_L$ of countable rank is isomorphic to a countable direct sum of torsion-free $\ca{O}_L$-modules of rank $1$. Thus, the conclusion follows from \ref{lem:val-R}. 
\end{proof}

\begin{mycor}\label{cor:kaplansky}
	Let $K$ be a valuation field of height $1$, $M$ a torsion-free $\ca{O}_K$-module. Then, $M$ is of countable rank if and only if it is countably generated.
\end{mycor}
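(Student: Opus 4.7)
The plan is to treat the two directions separately; the reverse direction uses the Kaplansky decomposition just recorded in \ref{thm:kaplansky}.

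The forward direction is immediate: if $\{m_i\}_{i\in\bb{N}}$ is a countable generating set of $M$ over $\ca{O}_K$, then $\{1\otimes m_i\}_{i\in\bb{N}}$ spans $K\otimes_{\ca{O}_K}M$ as a $K$-vector space, so $\mrm{rank}_{\ca{O}_K}(M)$ is at most countable.

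For the converse, I would suppose $M$ has countable rank. By \ref{lem:max-comp-exist}, fix a maximal valuation field $L$ of height $1$ extending $K$ whose value group is isomorphic to $\bb{R}$. By \eqref{eq:defn:val-rank-1}, $M\otimes_{\ca{O}_K}\ca{O}_L$ still has countable rank, so \ref{thm:kaplansky} yields an isomorphism of $\ca{O}_L$-modules
\begin{align*}
M\otimes_{\ca{O}_K}\ca{O}_L\cong L^{\oplus I_1}\oplus \ca{O}_L^{\oplus I_2}\oplus \ak{m}_L^{\oplus I_3}
\end{align*}
with $I_1,I_2,I_3$ countable. The next step is to verify that each of the three building blocks is countably generated as an $\ca{O}_L$-module: $\ca{O}_L$ is cyclic; $L=\bigcup_{n\in\bb{N}}\pi^{-n}\ca{O}_L$ for any nonzero $\pi\in\ak{m}_L$ since $L$ has height $1$; and $\ak{m}_L=\bigcup_{n\in\bb{N}}\pi_n\ca{O}_L$ for any sequence $(\pi_n)_{n\in\bb{N}}$ in $\ak{m}_L$ whose valuations decrease to $0$, which exists because the value group of $L$ is $\bb{R}$. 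Consequently $M\otimes_{\ca{O}_K}\ca{O}_L$ is countably generated over $\ca{O}_L$.

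To descend, I would pick a countable family of generators $\xi_k$ of $M\otimes_{\ca{O}_K}\ca{O}_L$, write each as a finite sum $\xi_k=\sum_{j}m_{kj}\otimes l_{kj}$ with $m_{kj}\in M$ and $l_{kj}\in\ca{O}_L$, and let $M_0\subseteq M$ be the $\ca{O}_K$-submodule generated by the countable set $\{m_{kj}\}_{k,j}$. The induced map $M_0\otimes_{\ca{O}_K}\ca{O}_L\to M\otimes_{\ca{O}_K}\ca{O}_L$ has image containing every $\xi_k$, hence is surjective. Since $\ca{O}_K\to\ca{O}_L$ is faithfully flat ($\ca{O}_L$ is torsion-free over the valuation ring $\ca{O}_K$ with $\ak{m}_K\ca{O}_L\subseteq\ak{m}_L$), the inclusion $M_0\hookrightarrow M$ is surjective, so $M=M_0$ is countably generated.

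The main subtlety is conceptual rather than technical: the countable generation of $\ak{m}_L$. Although the value group $\bb{R}$ of $L$ is uncountable and $L$ itself carries uncountably many distinct valuations, what one actually needs is only a single countable cofinal sequence of positive values tending to $0$, which is automatic in $\bb{R}$. Everything else is standard faithfully flat descent of surjections.
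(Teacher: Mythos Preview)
Your proof is correct and follows essentially the same route as the paper: apply the Kaplansky decomposition over a maximal extension $L$ with value group $\bb{R}$, observe that each summand $L$, $\ca{O}_L$, $\ak{m}_L$ is countably generated, and then descend by faithfully flat base change along $\ca{O}_K\to\ca{O}_L$. The only cosmetic differences are that the paper records the countable generation of the maximal ideal as a general remark about height-$1$ valuation rings before invoking \ref{thm:kaplansky}, and that your labels ``forward'' and ``converse'' are swapped relative to the phrasing of the statement.
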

\begin{proof}
	Firstly, we take a sequence $\{\pi_n\}_{n\in\bb{N}}$ in $\ak{m}_K$ such that $\lim_{n\to\infty} v(\pi_n)=\inf_{\pi\in\ak{m}_K}v(\pi)$, where $v:K\to\bb{R}\cup\{\infty\}$ is a valuation map. We see that this sequence generates the maximal ideal $\ak{m}_K$. In particular, $\ak{m}_K$ is countably generated. We need to verify that $M$ is countably generated if it is of countable rank. Following \ref{thm:kaplansky}, we see that $M\otimes_{\ca{O}_K}\ca{O}_L$ is countably generated. Thus, there is a countably generated submodule $M_0$ of $M$ which generates $M\otimes_{\ca{O}_K}\ca{O}_L$ over $\ca{O}_L$. By faithfully flat descent along $\ca{O}_K\to\ca{O}_L$, we see that $M_0=M$. 
\end{proof}

\begin{mypara}
	Let $K$ be a valuation field of height $1$, $R$ an $\ca{O}_K$-algebra. Recall that an $R$-module $M$ is \emph{almost finitely generated} if for any $\pi\in\widetilde{\ak{m}}_K$ (\ref{para:notation-almost}), there exists a finitely generated $R$-submodule $M_\pi\subseteq M$ containing $\pi M$ (\cite[2.3.10.(\luoma{1})]{gabber2003almost}, see also \cite[\Luoma{5}.3.5]{abbes2016p}).
\end{mypara}

\begin{mylem}\label{lem:pi-surj}
	Let $R$ be a ring, $\pi$ an element of $R$, $f:M\to N$ a homomorphism of $R$-modules. If the cokernel of $M/\pi^nM\to N/\pi^nN$ is killed by $\pi$ for any $n\in\bb{N}$, then the cokernel of the $\pi$-adic completions $\widehat{M}\to\widehat{N}$ is also killed by $\pi$.
\end{mylem}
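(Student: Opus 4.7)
The plan is to unwind the completions and produce, for each $y \in \widehat{N}$, an explicit preimage of $\pi y$ under $\widehat{f} \colon \widehat{M} \to \widehat{N}$. Represent $y$ by a compatible sequence of lifts $\tilde{y}_n \in N$ with $\tilde{y}_{n+1}-\tilde{y}_n = \pi^n s_{n+1}$ for some $s_{n+1}\in N$. I will inductively construct a sequence $(\tilde{x}_n)_{n\ge 1}$ in $M$ satisfying
\begin{equation*}
    \tilde{x}_{n+1}-\tilde{x}_n\in \pi^{n-1}M,\qquad f(\tilde{x}_n)-\pi\tilde{y}_n\in\pi^n N,
\end{equation*}
so that the first condition ensures $(\tilde{x}_n)$ converges to some $x\in\widehat{M}$ and the second forces $\widehat{f}(x)=\pi y$ in $\widehat{N}$.

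The base case is $\tilde{x}_1=0$, for which $f(\tilde{x}_1)-\pi\tilde{y}_1=-\pi\tilde{y}_1\in\pi N$ automatically. For the inductive step, write $f(\tilde{x}_n)-\pi\tilde{y}_n=\pi^n w_n$ with $w_n\in N$, and search for $\tilde{x}_{n+1}=\tilde{x}_n+\pi^{n-1}z$. A direct calculation gives
\begin{equation*}
    f(\tilde{x}_{n+1})-\pi\tilde{y}_{n+1}=\pi^{n-1}\bigl(\pi w_n+f(z)-\pi^2 s_{n+1}\bigr),
\end{equation*}
so it suffices to find $z\in M$ with $f(z)\equiv -\pi w_n\pmod{\pi^2 N}$. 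This is precisely the kind of congruence the hypothesis at level $n=2$ provides: the cokernel of $M/\pi^2 M\to N/\pi^2 N$ is killed by $\pi$, so the class of $-\pi w_n$, which lies in $\pi N/\pi^2 N$, is hit by $f$.

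The main subtlety is that the hypothesis at level $n=1$ is vacuous, since $N/\pi N$ is already annihilated by $\pi$; one must appeal to level $n=2$ instead, and this is why the induction only gains one power of $\pi$ per step (giving $\tilde{x}_{n+1}-\tilde{x}_n\in\pi^{n-1}M$ rather than $\pi^n M$). This loss is harmless: the increments still tend $\pi$-adically to zero, so $(\tilde{x}_n)$ stabilizes modulo each $\pi^k M$ and defines a genuine element $x\in\widehat{M}$, while the errors $f(\tilde{x}_n)-\pi\tilde{y}_n\in\pi^n N$ go to zero as well, yielding $\widehat{f}(x)=\pi y$. Since $y\in\widehat{N}$ was arbitrary, $\pi\widehat{N}\subseteq \widehat{f}(\widehat{M})$, which is the desired conclusion.
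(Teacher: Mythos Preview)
Your proof is correct. It differs from the paper's in presentation rather than in the essential idea. The paper proceeds structurally: it identifies the cokernel of $M/\pi^nM\to N/\pi^nN$ with $Q/\pi^nQ$ (where $Q=\cok f$), shows that the kernels $C_n/\pi^nM$ form a Mittag-Leffler system by exactly the same lifting trick you use (given $f(x)=\pi^n y$, the hypothesis produces $x'\in M$ with $\pi y\equiv f(x')$ modulo a high power of $\pi$), and then applies $\rr\lim$ to the four-term exact sequences to obtain $0\to\lim C_n/\pi^nM\to\widehat{M}\to\widehat{N}\to\widehat{Q}\to 0$, whence $\cok(\widehat{M}\to\widehat{N})=\widehat{Q}$ is killed by $\pi$. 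Your argument unwinds this into an explicit Cauchy-sequence construction, which is more elementary and avoids derived limits entirely. A pleasant byproduct of your approach is the observation that only the level-$2$ hypothesis is actually invoked; in fact $\pi Q\subseteq\pi^2 Q$ already forces $\pi Q\subseteq\pi^n Q$ for all $n$ by iteration, so this is not a strengthening but it does make the minimal input transparent. The paper's formulation, on the other hand, yields the slightly sharper statement that the cokernel is exactly $\widehat{Q}$, not merely killed by $\pi$.
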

\begin{proof}
	Let $Q$ be the cokernel of $f:M\to N$ and let $C_n=\{x\in M\ |\ f(x)\in \pi^nN\}$. Then, there is a canonical exact sequence of $R$-modules for any $n\in\bb{N}$,
	\begin{align}\label{eq:lem:pi-surj-1}
		0\longrightarrow C_n/\pi^nM\longrightarrow M/\pi^nM\longrightarrow N/\pi^nN\longrightarrow Q/\pi^nQ\longrightarrow 0.
	\end{align}
	
	We claim the the inverse system $(C_n/\pi^nM)_{n\in\bb{N}}$ satisfies the Mittag-Leffler condition. Let $\varphi_{mn}:C_m/\pi^mM\to C_n/\pi^nM$ denote the transition morphism for any integers $m\geq n\geq 0$. For any $m\geq n\geq 1$ and $x\in C_n$, we write $f(x)=\pi^ny$ with $y\in N$. Since $\pi \cdot (Q/\pi^mQ)=0$ by assumption, we have $\pi y=f(x')+\pi^my'$ for some $x'\in M$ and $y'\in N$. Thus, $f(x)=\pi^{n-1}f(x')+\pi^{m+n-1}y'$. This shows that $x-\pi^{n-1}x'\in C_{m+n-1}\subseteq C_m$. As $\pi^{n-1}x'\in \pi^kM$ for any integer $0\leq k\leq n-1$, we have
	\begin{align}
		\varphi_{mk}(x-\pi^{n-1}x')=\varphi_{nk}(x).
	\end{align}
	This shows that $\im(\varphi_{nk})\subseteq\im(\varphi_{mk})$ (and thus $\im(\varphi_{nk})=\im(\varphi_{mk})$) for any integers $m\geq n>k\geq 0$, which proves the claim.
	
	Then, applying the derived functor $\rr\lim_{n\in\bb{N}}$ to the exact sequence of Mittag-Leffler inverse systems associated to \eqref{eq:lem:pi-surj-1}, we obtain an exact sequence (\cite[\href{https://stacks.math.columbia.edu/tag/091D}{091D}]{stacks-project})
	\begin{align}\label{eq:lem:pi-surj-2}
		0\longrightarrow \lim_{n\in\bb{N}}C_n/\pi^nM\longrightarrow \widehat{M}\longrightarrow \widehat{N}\longrightarrow \widehat{Q}\longrightarrow 0.
	\end{align}
	In particular, the cokernel of $\widehat{M}\to\widehat{N}$ is killed by $\pi$.
\end{proof}

\begin{mylem}\label{lem:al-finite-comp}
	Let $K$ be a valuation field of height $1$, $R$ an $\ca{O}_K$-algebra, $M$ an almost finitely generated $R$-module. Then, the canonical morphism $M\otimes_R\widehat{R}\to \widehat{M}$ is almost surjective, where the completions are $\pi$-adic for any nonzero element $\pi\in\ak{m}_K$. In particular, $\widehat{M}$ is also almost finitely generated over $\widehat{R}$.
\end{mylem}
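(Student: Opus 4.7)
The plan is to reduce to the finitely generated case via the defining property of almost finite generation and then apply Lemma~\ref{lem:pi-surj} as the technical bridge between a bound on the cokernel at each finite level and a bound on the cokernel of the completion. Fix a nonzero $\varpi\in\ak{m}_K$ used to form the $\varpi$-adic completions, and fix an arbitrary $\pi\in\widetilde{\ak{m}}_K$; we will show that the cokernel of $M\otimes_R\widehat{R}\to\widehat{M}$ is killed by $\pi$, which by \eqref{eq:para:notation-almost-1} suffices for almost surjectivity.

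First I would pick, using the hypothesis, a finitely generated $R$-submodule $M_\pi\subseteq M$ with $\pi M\subseteq M_\pi$. Then the cokernel $M/M_\pi$ is killed by $\pi$, and consequently the cokernel of $M_\pi/\varpi^n M_\pi\to M/\varpi^n M$ (which is a quotient of $M/M_\pi$) is killed by $\pi$ for every $n\in\bb{N}$. Invoking \ref{lem:pi-surj}, the cokernel of $\widehat{M}_\pi\to\widehat{M}$ is killed by $\pi$. Next, since $M_\pi$ is finitely generated, choose a surjection $R^{k}\twoheadrightarrow M_\pi$; applying \ref{lem:pi-surj} again (with trivial cokernel) yields a surjection $\widehat{R}^{k}\twoheadrightarrow\widehat{M}_\pi$, which factors through $M_\pi\otimes_R\widehat{R}$ and therefore forces $M_\pi\otimes_R\widehat{R}\to\widehat{M}_\pi$ to be surjective. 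Composing, the map $M_\pi\otimes_R\widehat{R}\to\widehat{M}$ has cokernel killed by $\pi$. Since this morphism factors through $M\otimes_R\widehat{R}\to\widehat{M}$, the cokernel of the latter is also killed by $\pi$, as desired.

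For the ``in particular'' assertion, given $\pi\in\widetilde{\ak{m}}_K$, use $\widetilde{\ak{m}}_K^2=\widetilde{\ak{m}}_K$ \eqref{eq:para:notation-almost-1} to write $\pi=\pi_1\pi_2$ with $\pi_1,\pi_2\in\widetilde{\ak{m}}_K$, and pick a finitely generated $M_{\pi_1}\subseteq M$ containing $\pi_1 M$. Let $\widetilde{M}_{\pi_1}\subseteq\widehat{M}$ denote the image of $M_{\pi_1}\otimes_R\widehat{R}\to\widehat{M}$; this is a finitely generated $\widehat{R}$-submodule of $\widehat{M}$. By the first part applied to $\pi_2$, we have $\pi_2\widehat{M}\subseteq\im(M\otimes_R\widehat{R}\to\widehat{M})$, hence
\begin{align}
\pi\widehat{M}=\pi_1\pi_2\widehat{M}\subseteq\im\bigl(\pi_1 M\otimes_R\widehat{R}\to\widehat{M}\bigr)\subseteq\widetilde{M}_{\pi_1},
\end{align}
which gives the required finitely generated approximant for $\widehat{M}$ at the level $\pi$.

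I do not foresee a serious obstacle: the main content is the reduction to the finitely generated case together with the observation that \ref{lem:pi-surj} converts the estimate ``cokernel killed by $\pi$ modulo each $\varpi^n$'' into the corresponding estimate after completion. The only mildly delicate point is ensuring that $M_\pi\otimes_R\widehat{R}\to\widehat{M}_\pi$ is surjective despite $M_\pi$ being merely finitely generated rather than finitely presented; this is handled by the presentation trick above, combined again with \ref{lem:pi-surj}.
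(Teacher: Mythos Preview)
Your proof is correct and takes essentially the same approach as the paper: use almost finite generation to get a map from a finite free module with cokernel killed by $\pi$, then invoke Lemma~\ref{lem:pi-surj}. The paper is slightly more streamlined—applying \ref{lem:pi-surj} once directly to $R^{\oplus n}\to M$ rather than factoring through $M_\pi$, treating the discrete case separately, and deducing the ``in particular'' immediately without the $\pi=\pi_1\pi_2$ factorization—but these are only cosmetic differences.
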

\begin{proof}
	If the valuation on $K$ is discrete, then the conclusion follows from \cite[\href{https://stacks.math.columbia.edu/tag/0315}{0315}]{stacks-project}. Hence, we may assume that $K$ is non-discrete. For any $\pi\in\ak{m}_K$, there is $n\in\bb{N}$ and a homomorphism $R^{\oplus n}\to M$ with cokernel killed by $\pi$. Then, we see that the cokernel of $\widehat{R}^{\oplus n}\to \widehat{M}$ is still killed by $\pi$ by \ref{lem:pi-surj} and thus so is the cokernel of $M\otimes_R\widehat{R}\to \widehat{M}$. Therefore, we conclude that $M\otimes_R\widehat{R}\to \widehat{M}$ is almost surjective and that $\widehat{M}$ is also almost finitely generated.
\end{proof}

\begin{mythm}\label{thm:completion}
	Let $K$ be a valuation field of height $1$, $M$ a torsion-free $\ca{O}_K$-module of finite rank. Then, the completion $\widehat{M}$ is an almost finitely generated torsion-free $\ca{O}_{\widehat{K}}$-module with $\mrm{rank}_{\ca{O}_{\widehat{K}}}(\widehat{M})\leq \mrm{rank}_{\ca{O}_K}(M)$. Moreover, the canonical morphism
	\begin{align}
		M\otimes_{\ca{O}_K}\ca{O}_{\widehat{K}}\longrightarrow \widehat{M}
	\end{align}
	is surjective.
\end{mythm}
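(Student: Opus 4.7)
The plan is to argue by induction on the rank $d=\mrm{rank}_{\ca{O}_K}(M)$, with the rank-one case forming the technical heart. The case $d=0$ is trivial since then $M=0$.

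For the base case $d=1$, identify $M$ with an $\ca{O}_K$-submodule of $K$ via the canonical injection $M \hookrightarrow M\otimes_{\ca{O}_K}K \cong K$. If $M=K$, then $M$ is divisible, so $\widehat{M}=0$ and the asserted surjectivity is trivial. Otherwise $M$ is a bounded nonzero fractional ideal of $\ca{O}_K$, which by the height-one hypothesis is necessarily an open or closed ball in $K$ of some finite norm. I would identify $\widehat{M}$ with the $\ca{O}_{\widehat{K}}$-submodule $M\cdot\ca{O}_{\widehat{K}}\subseteq \widehat{K}$, which by flatness of $\ca{O}_{\widehat{K}}$ over $\ca{O}_K$ is exactly the image of the injection $M\otimes_{\ca{O}_K}\ca{O}_{\widehat{K}}\hookrightarrow K\otimes_{\ca{O}_K}\ca{O}_{\widehat{K}}=\widehat{K}$. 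Using $|K^\times|=|\widehat{K}^\times|$ together with the ball structure of $M$, I would verify: first, that any compatible sequence $(m_n)$ representing an element of $\widehat{M}$ is Cauchy in $\widehat{K}$ with limit inside $M\cdot\ca{O}_{\widehat{K}}$; and conversely, that every element $y\in M\cdot\ca{O}_{\widehat{K}}$ admits a fast-converging approximation $m_n \in M$ satisfying $m_{n+1}-m_n \in \pi^n M$ for all $n$. This establishes surjectivity in rank one, and since $\widehat{M}\subseteq \widehat{K}$ is then a bounded fractional ideal of $\ca{O}_{\widehat{K}}$, it is automatically torsion-free of rank at most one and almost finitely generated.

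For the inductive step $d\geq 2$, pick a nonzero $x\in M$ and set $M_1=M\cap Kx$, the saturation of $\ca{O}_K\cdot x$ in $M$; then $M_1$ has rank one and $M/M_1$ is torsion-free of rank $d-1$. The saturation property gives $M_1\cap\pi^n M=\pi^n M_1$ for every $n$ and any nonzero $\pi\in\ak{m}_K$, so each sequence $0\to M_1/\pi^n M_1 \to M/\pi^n M\to(M/M_1)/\pi^n(M/M_1)\to 0$ is exact; as the inverse system $(M_1/\pi^n M_1)_n$ has surjective transitions (hence is Mittag-Leffler), taking $\lim_n$ produces a short exact sequence
\begin{align*}
0 \longrightarrow \widehat{M_1} \longrightarrow \widehat{M} \longrightarrow \widehat{M/M_1} \longrightarrow 0.
\end{align*}
Moreover, since $M/M_1$ is torsion-free hence flat over the valuation ring $\ca{O}_K$, tensoring the original short exact sequence with $\ca{O}_{\widehat{K}}$ also yields a short exact sequence. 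Applying the snake lemma to the commutative diagram between these two short exact sequences, combined with the surjectivity of $M_1\otimes_{\ca{O}_K}\ca{O}_{\widehat{K}}\to\widehat{M_1}$ from the base case and of $(M/M_1)\otimes_{\ca{O}_K}\ca{O}_{\widehat{K}}\to\widehat{M/M_1}$ from the inductive hypothesis, forces the middle vertical map $M\otimes_{\ca{O}_K}\ca{O}_{\widehat{K}}\to\widehat{M}$ to be surjective.

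The remaining assertions follow routinely. Torsion-freeness of $\widehat{M}$ over $\ca{O}_{\widehat{K}}$ can be checked directly: if $(m_n)\in\widehat{M}$ satisfies $\pi\cdot(m_n)=0$ in $\widehat{M}$ for some nonzero $\pi\in\ak{m}_K$, then $\pi m_n \in\pi^n M$ forces $m_n\in\pi^{n-1}M$ by torsion-freeness of $M$, and the compatibility $m_{n+1}-m_n \in\pi^n M$ then propagates to show $m_n\in\pi^n M$ for every $n$, so $(m_n)=0$; this extends to $\widehat{M}[\alpha]=0$ for every nonzero $\alpha\in\ca{O}_{\widehat{K}}$ because $\alpha$ divides some power of $\pi$. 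Almost finite generation is stable under extensions of $\ca{O}_{\widehat{K}}$-modules by a standard lifting argument (take generators of the submodule together with lifts of generators of the quotient), so it propagates up the completed short exact sequence from the base case and inductive hypothesis. Finally, tensoring the completed exact sequence with $\widehat{K}$ (exact by flatness) gives the rank bound $\mrm{rank}_{\ca{O}_{\widehat{K}}}(\widehat{M})\leq 1+(d-1)=d$. The principal obstacle I foresee is in the rank-one base case, specifically the identification $\widehat{M}=M\cdot\ca{O}_{\widehat{K}}$ inside $\widehat{K}$: this requires a careful valuation-theoretic analysis of Cauchy sequences in $M$, leaning on the classification of fractional ideals as balls and the invariance of the value group under completion; once this is in hand, every other assertion follows from snake lemma machinery.
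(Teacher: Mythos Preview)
Your argument is correct and takes a genuinely different route from the paper. The paper's proof invokes Kaplansky's structure theorem (\ref{thm:kaplansky}): it passes to a maximal valuation field $L$ with value group $\bb{R}$, decomposes $M\otimes_{\ca{O}_K}\ca{O}_L$ as $L^{\oplus I_1}\oplus\ca{O}_L^{\oplus I_2}\oplus\ak{m}_L^{\oplus I_3}$, extracts a finite free submodule $M_0\subseteq M$ whose base change captures the bounded part up to a fixed $\pi$, applies the $\pi$-surjectivity lemma (\ref{lem:pi-surj}) to control $\widehat{M_0}\to\widehat{M}$, and finishes by faithfully flat descent along $\ca{O}_{\widehat{K}}\to\ca{O}_L$. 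By contrast, you run a direct induction on rank: the rank-one case reduces to identifying the completion of a nonzero fractional ideal $\ak{a}\subseteq K$ with its closure $\ak{a}\ca{O}_{\widehat{K}}$ in $\widehat{K}$ (using that $\ak{a}$ is open in $K$ once one scales into $\ca{O}_K$), and the inductive step uses the saturation $M_1=M\cap Kx$ to guarantee $M_1\cap\pi^nM=\pi^nM_1$, hence exactness of the completed short exact sequence, and then the snake lemma.

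Your approach is more elementary and self-contained, avoiding both Kaplansky's theorem and the auxiliary maximal extension $L$. The paper's approach, while heavier, yields the additional structural identification $\widehat{M}\otimes_{\ca{O}_{\widehat{K}}}\ca{O}_L\cong\ca{O}_L^{\oplus I_2}\oplus\ak{m}_L^{\oplus I_3}$, and its reliance on the Kaplansky decomposition is reused elsewhere in the paper (e.g.\ \ref{lem:ari-geo-finite}). One minor remark: your claim that almost finite generation is stable under extensions needs the observation that in the non-discrete case one can, for any $\pi\in\ak{m}_K$, choose $\pi'\in\ak{m}_K$ with $\pi\in\pi'^2\ca{O}_K$, so that combining $\pi'$-generators for the sub and quotient yields a $\pi$-generating set for the middle term; this is routine but worth making explicit.
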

\begin{proof}
	Following \ref{thm:kaplansky}, we take a finitely generated $\ca{O}_K$-submodule $M_0$ of $M$ such that $M_0\otimes_{\ca{O}_K}\ca{O}_L$ contains the finitely generated submodule $\ca{O}_L^{\oplus I_2}\oplus (\pi\ca{O}_L)^{\oplus I_3}$ for some $\pi\in\ak{m}_K\setminus 0$ under the isomorphism $
	M\otimes_{\ca{O}_K}\ca{O}_L\cong L^{\oplus I_1}\oplus \ca{O}_L^{\oplus I_2}\oplus \ak{m}_L^{\oplus I_3}$ \eqref{eq:thm:kaplansky-1}, where $I_1,I_2,I_3$ are finite sets. As $(M/\pi^nM)\otimes_{\ca{O}_K}\ca{O}_L\cong (\ca{O}_L/\pi^n \ca{O}_L)^{\oplus I_2}\oplus (\ak{m}_L/\pi^n\ak{m}_L)^{\oplus I_3}$, we see that the cokernel of $M_0/\pi^nM_0\to M/\pi^nM$ is killed by $\pi$ for any $n\in\bb{N}$ by faithfully flat descent along $\ca{O}_K\to\ca{O}_L$. Therefore, the cokernel of $\widehat{M_0}\to \widehat{M}$ is also killed by $\pi$ by \ref{lem:pi-surj}.
	
	As $M_0$ is a finitely generated torsion-free $\ca{O}_K$-module, it is finite free by \cite[\Luoma{6}.\textsection3.6, Lemme 1]{bourbaki2006commalg5-7}. Hence, $\widehat{M_0}=M_0\otimes_{\ca{O}_K}\ca{O}_{\widehat{K}}$ is finite free over $\ca{O}_{\widehat{K}}$. In particular, $\widehat{M}$ is bounded (\ref{defn:sep-bound}.(\ref{item:defn:sep-bound-1})). Note that $\widehat{M}$ is torsion-free as $M$ is so (cf. \cite[5.7.(1)]{tsuji2018localsimpson}).
	
	Notice that $\ca{O}_K\to \ca{O}_L$ uniquely factors through $\ca{O}_{\widehat{K}}$ as $\ca{O}_L$ is complete (see \ref{para:maximal-val}). Since $\ak{m}_L$ is an open ideal of $\ca{O}_L$, it is also complete. Thus, the composition of the canonical morphisms
	\begin{align}
		L^{\oplus I_1}\oplus \ca{O}_L^{\oplus I_2}\oplus \ak{m}_L^{\oplus I_3}\cong M\otimes_{\ca{O}_K}\ca{O}_L\longrightarrow \widehat{M}\otimes_{\ca{O}_{\widehat{K}}}\ca{O}_L\longrightarrow M\widehat{\otimes}_{\ca{O}_K}\ca{O}_L\cong \ca{O}_L^{\oplus I_2}\oplus \ak{m}_L^{\oplus I_3}
	\end{align}
	is surjective. Let $D$ be the kernel of the surjective map $ \widehat{M}\otimes_{\ca{O}_{\widehat{K}}}\ca{O}_L\to M\widehat{\otimes}_{\ca{O}_K}\ca{O}_L$. Modulo $\pi$, we see that $D/\pi D=0$ as $M\widehat{\otimes}_{\ca{O}_K}\ca{O}_L$ is flat over $\ca{O}_K$. But $\widehat{M}\otimes_{\ca{O}_{\widehat{K}}}\ca{O}_L$ is separated by \ref{lem:div-rank-inv-base} due to the boundedness of $\widehat{M}$. We have $D=0$. Therefore, $\widehat{M}\otimes_{\ca{O}_{\widehat{K}}}\ca{O}_L\to M\widehat{\otimes}_{\ca{O}_K}\ca{O}_L$ is an isomorphism. In particular, we see that $M\otimes_{\ca{O}_K}\ca{O}_L\to \widehat{M}\otimes_{\ca{O}_{\widehat{K}}}\ca{O}_L$ is surjective (and thus so is $M\otimes_{\ca{O}_K}\ca{O}_{\widehat{K}}\to \widehat{M}$) and that $\widehat{M}$ is almost finitely generated as $\ca{O}_L^{\oplus I_2}\oplus \ak{m}_L^{\oplus I_3}$ is so by faithfully flat descent (\cite[3.2.26.(\luoma{2})]{gabber2003almost}, see also \cite[\Luoma{5}.8.1]{abbes2016p}).
\end{proof}

\begin{mycor}\label{cor:sep-comp}
	Let $K$ be a complete valuation field of height $1$, $M$ a torsion-free $\ca{O}_K$-module of finite rank. Then, the following four properties on $M$ are equivalent to each other: bounded, separated, complete, and almost finitely generated.
\end{mycor}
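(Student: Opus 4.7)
The plan is to prove the four conditions are equivalent via the cycle of implications bounded $\Rightarrow$ separated $\Rightarrow$ complete $\Rightarrow$ almost finitely generated $\Rightarrow$ bounded. The main input is Theorem \ref{thm:completion}: since $K$ is complete, $\ca{O}_{\widehat{K}}=\ca{O}_K$, and that theorem then asserts both that the canonical map $M\to \widehat{M}$ is surjective and that $\widehat{M}$ is almost finitely generated over $\ca{O}_K$. This single observation essentially supplies two of the four implications.

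For bounded $\Rightarrow$ separated, I would note that an embedding $M\hookrightarrow \ca{O}_K^{\oplus I}$ suffices: any nonzero $\pi\in\ak{m}_K$ satisfies $\bigcap_{n\in\bb{N}}\pi^n\ca{O}_K=0$ because $K$ has height $1$, so $\ca{O}_K^{\oplus I}$ is $\pi$-adically separated componentwise, and so is its submodule $M$. For separated $\Rightarrow$ complete, the preliminary step is to check that for any nonzero $\pi\in\ak{m}_K$ one has
\begin{align*}
\bigcap_{n\in\bb{N}}\pi^n M \ =\ M^{\mrm{div}};
\end{align*}
one direction is obvious, while the other exploits the height-$1$ hypothesis via $|\pi^n|\to 0$, so that $\pi^n/\pi'\in\ca{O}_K$ for large $n$ and hence $\pi^n M\subseteq\pi'M$, for any nonzero $\pi'\in\ak{m}_K$. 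Consequently the kernel of $M\to\widehat{M}$ equals $M^{\mrm{div}}$, which vanishes under the separatedness hypothesis; combined with the surjectivity furnished by \ref{thm:completion}, this yields $M\cong\widehat{M}$, so $M$ is complete. The implication complete $\Rightarrow$ almost finitely generated is then simply a direct restatement of \ref{thm:completion}.

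For the final implication almost finitely generated $\Rightarrow$ bounded, I would pick any nonzero $\pi\in\widetilde{\ak{m}}_K$ (which exists whether $K$ is discrete or not) and a finitely generated submodule $M_\pi\subseteq M$ with $\pi M\subseteq M_\pi$; since $M_\pi$ is finitely generated and torsion-free over the valuation ring $\ca{O}_K$, it is finite free, and the inclusion $M\subseteq \pi^{-1}M_\pi$ inside $M\otimes_{\ca{O}_K}K$ exhibits $M$ as a submodule of a finite free $\ca{O}_K$-module, proving boundedness. I do not expect any substantive obstacle in this argument: Theorem \ref{thm:completion} does the heavy lifting, and the only subtlety is the elementary identification of $\bigcap_n \pi^n M$ with $M^{\mrm{div}}$ coming from the height-$1$ hypothesis.
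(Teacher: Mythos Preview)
Your proof is correct and follows essentially the same cycle of implications as the paper's own proof, with the same key input from Theorem~\ref{thm:completion}. You supply slightly more detail in two places: the identification $\bigcap_{n}\pi^n M=M^{\mrm{div}}$ (which the paper leaves implicit), and the explicit argument for almost finitely generated $\Rightarrow$ bounded (where the paper simply invokes the note after Definition~\ref{defn:sep-bound}).
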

\begin{proof}
	If $M$ is almost finitely generated, then it is bounded by definition \ref{defn:sep-bound}.(\ref{item:defn:sep-bound-1}). If $M$ is bounded, then it is separated as any free $\ca{O}_K$-module is so. If $M$ is separated, then the canonical morphism $M\to \widehat{M}$ is injective. But it is also surjective by \ref{thm:completion}. Hence, $M$ is complete. Finally, if $M$ is complete, then it is almost finitely generated by \ref{thm:completion}.
\end{proof}

\begin{myrem}\label{rem:sep-comp}
	Let $K$ be a complete valuation field of height $1$, $M$ a torsion-free $\ca{O}_K$-module of finite rank. Then, in the canonical exact sequence (\ref{defn:div-rank})
	\begin{align}
		0\longrightarrow M^{\mrm{div}}\longrightarrow M\longrightarrow M^{\mrm{sep}}\longrightarrow 0,
	\end{align}
	$M^{\mrm{div}}$ is a $K$-module of finite rank and $M^{\mrm{sep}}=\widehat{M}$ (\ref{thm:completion}) is an almost finitely generated complete $\ca{O}_K$-module (\ref{cor:sep-comp}). We will see that $M^{\mrm{sep}}$ is almost finite projective in \ref{prop:generator-basis}.
\end{myrem}

\begin{mydefn}\label{defn:pi-basis}
	Let $K$ be a valuation field of height $1$, $\ak{a}$ an ideal of $\ca{O}_K$, $M$ a torsion-free $\ca{O}_K$-module. We say that a subset $S$ of $M$ is an \emph{$\ak{a}$-generating subset} if the submodule generated by $S$ contains $\ak{a} M$. We say that an $\ak{a}$-generating subset $S\subset M$ is an \emph{$\ak{a}$-basis} if its elements are $\ca{O}_K$-linearly independent. If $\ak{a}$ is generated by a subset $A\subseteq \ca{O}_K$, then we simply call an $\ak{a}$-generating subset (resp. $\ak{a}$-basis) an \emph{$A$-generating subset} (resp. \emph{$A$-basis}).
\end{mydefn}

We note that an $\ak{a}$-generating subset (resp. $\ak{a}$-basis) is also an $\ak{b}$-generating subset (resp. $\ak{b}$-basis) for any ideal $\ak{b}\subseteq \ak{a}$.

\begin{mylem}\label{lem:pi-basis}
	Let $K$ be a valuation field of height $1$, $\ak{a}$ an ideal of $\ca{O}_K$, $M$ a torsion-free $\ca{O}_K$-module admitting an $\ak{a}$-basis $S$. Then, any element of $S$ does not lie in $\ak{m}_K \ak{a} M$. 
\end{mylem}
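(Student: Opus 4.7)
The plan is to argue by contradiction. Suppose some $s \in S$ lies in $\ak{m}_K \ak{a} M$. First, I would unpack the membership: write
\begin{align}
s = \sum_{i=1}^{n} m_i a_i x_i, \quad m_i \in \ak{m}_K,\ a_i \in \ak{a},\ x_i \in M.
\end{align}
Since $a_i x_i \in \ak{a} M$ and $\ak{a} M$ is contained in the $\ca{O}_K$-submodule generated by $S$ (by definition of an $\ak{a}$-generating subset, \ref{defn:pi-basis}), I can express each $a_i x_i$ as a finite $\ca{O}_K$-linear combination $a_i x_i = \sum_{t \in S} c_{i,t} t$ with $c_{i,t} \in \ca{O}_K$ (and almost all $c_{i,t}$ zero).

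Next, I would substitute and regroup to obtain $s = \sum_{t \in S} \bigl(\sum_i m_i c_{i,t}\bigr) t$, so that $0 = (\sum_i m_i c_{i,s} - 1) \cdot s + \sum_{t \in S,\ t \neq s} \bigl(\sum_i m_i c_{i,t}\bigr) t$ as a finite $\ca{O}_K$-linear combination. Since $S$ is an $\ak{a}$-basis, its elements are $\ca{O}_K$-linearly independent, so each coefficient must vanish. In particular, $\sum_i m_i c_{i,s} = 1$. But $m_i \in \ak{m}_K$ and $c_{i,s} \in \ca{O}_K$, so the left-hand side lies in $\ak{m}_K$, contradicting $1 \notin \ak{m}_K$.

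There is no real obstacle here; the only point worth checking is that the expansion $a_i x_i = \sum_t c_{i,t} t$ really uses coefficients in $\ca{O}_K$ (which is immediate from the definition of ``submodule generated by $S$'') and that the regrouping is a finite sum, so that linear independence of $S$ applies in the usual sense. The argument is purely formal and uses only the definition of an $\ak{a}$-basis together with the fact that $\ak{m}_K$ is a proper ideal.
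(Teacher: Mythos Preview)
Your proof is correct and follows essentially the same approach as the paper: both use that $\ak{a}M$ lies in the free submodule $M_0$ generated by $S$, and then compare coefficients to see that $s \in \ak{m}_K \ak{a} M \subseteq \ak{m}_K M_0$ would force the $s$-coefficient $1$ to lie in $\ak{m}_K$. The paper phrases this last step as the observation that $s$ is nonzero in $M_0/\ak{m}_K M_0 \cong (\ca{O}_K/\ak{m}_K)^{\oplus S}$, while you unwind it explicitly via linear independence; the content is identical.
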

\begin{proof}
	Let $M_0$ be the submodule of $M$ generated by $S$.	By definition, $M_0$ is a free $\ca{O}_K$-module with basis $S$ and we have $\ak{a} M\subseteq M_0$. Then, we have $ \ak{m}_K\ak{a} M\subseteq \ak{m}_K M_0$. Since any element of $S$ is not zero in $M_0/\ak{m}_K M_0\cong (\ca{O}_K/\ak{m}_K)^{\oplus S}$, it is not zero in $M_0/ \ak{m}_K \ak{a} M$, which completes the proof.
\end{proof}

\begin{mylem}\label{lem:generator-basis}
	Let $R$ be a local ring, $\kappa$ its residue field. For any integer $n>0$, $n$ elements $x_1,\dots,x_n$ of the finite free $R$-module $R^{\oplus n}$ form a basis if and only if their images in $\kappa^{\oplus n}$ form a basis over $\kappa$.
\end{mylem}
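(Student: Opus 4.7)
The plan is to reduce both implications to a single statement about the determinant of an $n\times n$ matrix over the local ring $R$. Write each $x_j=\sum_{i=1}^n a_{ij}e_i$ where $e_1,\dots,e_n$ is the standard basis of $R^{\oplus n}$, and let $A=(a_{ij})\in M_n(R)$. The tuple $(x_1,\dots,x_n)$ forms a basis of $R^{\oplus n}$ precisely when the endomorphism of $R^{\oplus n}$ given by $A$ is invertible, and similarly the reductions $(\bar{x}_1,\dots,\bar{x}_n)$ form a basis of $\kappa^{\oplus n}$ precisely when the reduction $\bar{A}\in M_n(\kappa)$ is invertible.

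Since $\det$ commutes with ring homomorphisms, $\det(\bar{A})$ is the image of $\det(A)$ in $\kappa$. Over a local ring, an element is a unit if and only if its image in the residue field is nonzero; and a square matrix over any commutative ring is invertible if and only if its determinant is a unit. Combining these two equivalences gives: $A$ is invertible over $R$ iff $\det(A)\in R^\times$ iff $\det(\bar{A})\in\kappa^\times$ iff $\bar{A}$ is invertible over $\kappa$, which yields the claim in both directions at once.

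I expect no real obstacle here; the only thing to be careful about is to state the determinantal characterization of invertibility of an endomorphism of a finite free module, which is standard. If one prefers a proof avoiding determinants, the ``if'' direction can alternatively be obtained by applying Nakayama's lemma to deduce that $x_1,\dots,x_n$ generate $R^{\oplus n}$ (since their images generate $\kappa^{\oplus n}$), and then using that a surjective endomorphism of a finitely generated module over a commutative ring is injective; but the determinantal argument is cleaner and directly symmetric in the two implications.
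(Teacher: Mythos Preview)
Your proof is correct. It differs from the paper's, which takes precisely the alternative route you sketch at the end: the ``only if'' direction is declared clear, and for the ``if'' direction the paper applies Nakayama's lemma to get surjectivity of the map $R^{\oplus n}\to R^{\oplus n}$ sending $e_i\mapsto x_i$, then invokes the Cayley--Hamilton theorem to conclude that this surjective endomorphism of a finitely generated module is an isomorphism. Your determinantal argument is more symmetric and handles both implications at once via the single equivalence $\det(A)\in R^\times \Leftrightarrow \det(\bar A)\neq 0$, while the paper's approach avoids mentioning determinants explicitly but is asymmetric in the two directions. Either argument is entirely standard for this elementary lemma.
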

\begin{proof}
	The ``only if" part is clear. For the ``if" part, firstly the $R$-linear map $R^{\oplus n}\to R^{\oplus n}$ sending the standard basis $(e_1,\dots,e_n)$ to $(x_1,\dots,x_n)$ is surjective by Nakayama's lemma (\cite[\href{https://stacks.math.columbia.edu/tag/00DV}{00DV}]{stacks-project}). Furthermore, it is an isomorphism by Cayley-Hamilton theorem (\cite[\href{https://stacks.math.columbia.edu/tag/05G8}{05G8}]{stacks-project}).
\end{proof}

\begin{myprop}\label{prop:generator-basis}
	Let $K$ be a valuation field of height $1$, $M$ an almost finitely generated torsion-free $\ca{O}_K$-module, $S$ a generating subset of $M$. Then, for any $\pi\in\widetilde{\ak{m}}_K$, there are $\mrm{rank}_{\ca{O}_K}(M)$ elements of $S$ forming a $\pi$-basis of $M$. In particular, $M$ is almost finite projective.
\end{myprop}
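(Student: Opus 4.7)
The plan is to reduce the statement to finding a basis of a finite free submodule of $M$. Let $d = \mrm{rank}_{\ca{O}_K}(M)$, which is finite: almost finite generation supplies a finitely generated submodule $M_0 \subseteq M$ containing $\pi M$, and then $d = \mrm{rank}_{\ca{O}_K}(\pi M) \leq \mrm{rank}_{\ca{O}_K}(M_0) < \infty$. Since $S$ generates $M$, I can express each element of a finite generating set of $M_0$ as a finite $\ca{O}_K$-linear combination of elements of $S$; collecting the elements of $S$ that appear in these expressions yields a finite subset $S' \subseteq S$ with $F := \sum_{s \in S'} \ca{O}_K s \supseteq M_0 \supseteq \pi M$.

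Next I would observe that $F$ is a finitely generated torsion-free $\ca{O}_K$-module containing $\pi M$, hence of rank $d$, and therefore free of rank $d$ by \cite[\Luoma{6}.\textsection3.6, Lemme 1]{bourbaki2006commalg5-7}. The images of the elements of $S'$ generate the $\kappa$-vector space $F/\ak{m}_K F \cong \kappa^d$, where $\kappa = \ca{O}_K/\ak{m}_K$; elementary linear algebra thus allows me to select $d$ elements $s_1, \dots, s_d \in S'$ whose images form a $\kappa$-basis. By Lemma \ref{lem:generator-basis} these elements form an $\ca{O}_K$-basis of $F$. In particular they are $\ca{O}_K$-linearly independent, and their $\ca{O}_K$-span $F$ contains $\pi M$, so they constitute a $\pi$-basis of $M$ in the sense of Definition \ref{defn:pi-basis}.

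For the final assertion, I would remark that the construction produces, for every $\pi \in \widetilde{\ak{m}}_K$, a finite free submodule $F \subseteq M$ of rank $d$ with $\pi M \subseteq F$. Consequently multiplication by $\pi$ on $M$ factors through the projective module $F$ as $M \longrightarrow F \hookrightarrow M$, which forces $\pi$ to annihilate $\mrm{Ext}^1_{\ca{O}_K}(M,N)$ for every $\ca{O}_K$-module $N$. Letting $\pi$ range over $\widetilde{\ak{m}}_K$ makes these Ext groups almost zero, which combined with almost finite generation yields the almost finite projectivity of $M$.

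I do not anticipate a substantial obstacle. The mild subtleties are that $S$ may be infinite and that $M$ is only almost (rather than honestly) finitely generated, but both are dissolved by passing from $S$ to the finite subset $S'$ above, after which the existence of a basis of $F$ among the $s \in S'$ is a standard Nakayama-type argument through Lemma \ref{lem:generator-basis}.
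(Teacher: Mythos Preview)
Your proof is correct and follows essentially the same route as the paper: pass to a finite free submodule $F$ (the paper's $M'_\pi$) generated by finitely many elements of $S$ and containing $\pi M$, then extract a basis of $F$ from those generators via Lemma~\ref{lem:generator-basis}. Your justification of almost finite projectivity through the factorization of $\pi\cdot\id_M$ via the free module $F$ is exactly the criterion the paper invokes by citing \cite[2.4.15.(\luoma{2})]{gabber2003almost}.
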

\begin{proof}
	For any $\pi\in\widetilde{\ak{m}}_K$, there exists a finitely generated $\ca{O}_K$-submodule $M_\pi$ of $M$ containing $\pi M$. Since $S$ generates $M$, there is a finite subset $S_\pi$ of $S$ generating a submodule $M_\pi'$ containing $M_\pi$. Since $M_\pi'$ is a finitely generated torsion-free $\ca{O}_K$-module, it is finite free by \cite[\Luoma{6}.\textsection3.6, Lemme 1]{bourbaki2006commalg5-7}. We take a subset $S'_\pi$ of $S_\pi$ such that its images in $M_\pi'/\ak{m}_KM_\pi'$ form a basis over the residue field of $\ca{O}_K$. Thus, $S'_\pi$ is a basis of the finite free $\ca{O}_K$-module $M_\pi'$ by \ref{lem:generator-basis}. As $\pi M\subseteq M_\pi'$, we see that $S'_\pi$ is a $\pi$-basis of $M$. It is clear that $M$ has the same rank as $M'_\pi$ and thus the cardinality of $S'_\pi$ is $\mrm{rank}_{\ca{O}_K}(M)$. On the other hand, it implies that $M$ is almost finite projective (\cite[2.4.15.(\luoma{2})]{gabber2003almost}, see also \cite[\Luoma{5}.3.7]{abbes2016p}).
\end{proof}

\section{Different Ideals and Ramified Base Change Theorem}\label{sec:diff}
Following \cite{gabber2003almost} and \cite[\textsection4]{he2024perfd}, we discuss the relation between traces, different ideals and differentials of an extension of valuation fields, generalizing the classical results for discrete valuation fields. Moreover, we establish a ramified base change theorem \ref{thm:ramified-bc}, which will be used to analyze the structure of cyclotomic extension of a general valuation ring in \ref{lem:ari-bc}.

\begin{mypara}\label{para:different}
	Let $K\to K'$ be a finite separable extension of Henselian valuation fields of height $1$ (so that $\ca{O}_{K'}$ is the integral closure of $\ca{O}_K$ in $K'$). Recall that its trace morphism $\mrm{Tr}_{K'/K}:K'\to K$ induces a $K'$-linear isomorphism $\tau_{K'/K}:K'\iso \ho_K(K',K),\ x\mapsto (y\mapsto \mrm{Tr}_{K'/K}(xy))$ (\cite[4.1.14]{gabber2003almost}). In particular, the $\ca{O}_{K'}$-submodule
	\begin{align}\label{eq:para:different-1}
		\ca{O}_{K'}^*=\{x\in K'\ |\ \mrm{Tr}_{K'/K}(xy)\in\ca{O}_K,\ \forall y\in\ca{O}_{K'}\}
	\end{align}
	is not equal to $K'$, and thus a fractional ideal of $K'$ containing $\ca{O}_{K'}$, called the \emph{codifferent ideal} of the extension $K'/K$. We call the ideal of $\ca{O}_{K'}$,
	\begin{align}\label{eq:para:different-2}
		\scr{D}_{K'/K}=\{x\in\ca{O}_{K'}\ |\ x\ca{O}_{K'}^*\subseteq \ca{O}_{K'}\},
	\end{align}
	the \emph{different ideal} of the extension $K'/K$.
\end{mypara}

	As $\ca{O}_{K'}$ is almost finite projective over $\ca{O}_K$ (\cite[6.3.8]{gabber2003almost}), Gabber-Ramero \cite[4.1.22]{gabber2003almost} define the different ideal $\scr{D}_{\ca{O}_{K'}^{\al}/\ca{O}_K^{\al}}$ of the associated almost module $\ca{O}_{K'}^{\al}$ over $\ca{O}_K^{\al}$. One can check by the same argument as in \cite[4.3, 4.5]{he2024perfd} that $\scr{D}_{\ca{O}_{K'}^{\al}/\ca{O}_K^{\al}}$ coincides with the associated almost module $\scr{D}_{K'/K}^{\al}$ of $\scr{D}_{K'/K}$. We remark that $\scr{D}_{K'/K}^{\al}=\ca{O}_{K'}^{\al}$ (i.e., $\widetilde{\ak{m}}_{K'}\subseteq \scr{D}_{K'/K}$) if and only if $\ca{O}_{K'}$ is almost finite \'etale over $\ca{O}_K$ (\cite[4.1.27]{gabber2003almost}).

\begin{myprop}\label{prop:ramified-bc}
	Let $K\to F$ be an extension of Henselian valuation fields of height $1$, and let $K'$ be a finite separable field extension of $K$ such that $F'=K'\otimes_KF$ is a field. 
	\begin{align}\label{eq:prop:ramified-bc-1}
		\xymatrix{
			F'&F\ar[l]\\
			K'\ar[u]&K\ar[l]\ar[u]
		}
	\end{align}
	Then, $\widetilde{\ak{m}}_K\scr{D}_{K'/K}\subseteq \scr{D}_{F'/F}$. Moreover, for any $\pi\in\ca{O}_{F'}$ such that $\pi\scr{D}_{F'/F}\subseteq \scr{D}_{K'/K}\ca{O}_{F'}$, we have
	\begin{align}\label{eq:prop:ramified-bc-2}
		\pi\widetilde{\ak{m}}_{F'}\subseteq \ca{O}_{K'}\otimes_{\ca{O}_K}\ca{O}_F\subseteq \ca{O}_{F'}.
	\end{align}
\end{myprop}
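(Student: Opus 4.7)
My plan is to introduce the auxiliary subring $A := \ca{O}_{K'} \otimes_{\ca{O}_K} \ca{O}_F$, which embeds into $F' = K'\otimes_K F$ because $\ca{O}_K \to \ca{O}_F$ is flat (a valuation ring extension) and lands in $\ca{O}_{F'}$ by integrality of $\ca{O}_{K'}/\ca{O}_K$. By \cite[6.3.8]{gabber2003almost}, $\ca{O}_{K'}$ is almost finite projective over $\ca{O}_K$, so $A$ is almost finite projective over $\ca{O}_F$, and the standard base change formula for traces gives $\mrm{Tr}_{F'/F}|_A = \mrm{Tr}_{K'/K}\otimes\mrm{id}_{\ca{O}_F}$. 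Defining $A^* := \{y\in F' \mid \mrm{Tr}_{F'/F}(yA) \subseteq \ca{O}_F\}$, the compatibility of codifferents with flat base change for almost finite projective extensions yields an almost isomorphism of $A$-submodules of $F'$,
\begin{align*}
\ca{O}_{K'}^* \otimes_{\ca{O}_K} \ca{O}_F \aliso A^*,
\end{align*}
so in particular $\widetilde{\ak{m}}_K\cdot A^* \subseteq \ca{O}_{K'}^* \otimes_{\ca{O}_K} \ca{O}_F$ as subsets of $F'$. Since $A \subseteq \ca{O}_{F'}$ we also have $\ca{O}_{F'}^* \subseteq A^*$, hence $\widetilde{\ak{m}}_K \ca{O}_{F'}^* \subseteq \ca{O}_{K'}^* \otimes_{\ca{O}_K} \ca{O}_F$.

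For the first inclusion, pick $\epsilon\in\widetilde{\ak{m}}_K$ and $z\in\scr{D}_{K'/K}\subseteq\ca{O}_{K'}\subseteq\ca{O}_{F'}$; using $z\ca{O}_{K'}^*\subseteq\ca{O}_{K'}$,
\begin{align*}
(\epsilon z)\cdot \ca{O}_{F'}^* \subseteq z\cdot(\ca{O}_{K'}^*\otimes_{\ca{O}_K}\ca{O}_F) = (z\ca{O}_{K'}^*)\otimes_{\ca{O}_K}\ca{O}_F \subseteq A \subseteq \ca{O}_{F'},
\end{align*}
so $\epsilon z\in\scr{D}_{F'/F}$, proving $\widetilde{\ak{m}}_K\scr{D}_{K'/K}\subseteq\scr{D}_{F'/F}$. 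For the main inclusion, fix $\pi$ as in the hypothesis. Since $\ca{O}_{F'}\subseteq\ca{O}_{F'}^*$ yields $\scr{D}_{F'/F}=(\ca{O}_{F'}^*)^{-1}$ as fractional ideals, \eqref{eq:para:notation-norm-5} applied to $\ak{a}=\ca{O}_{F'}^*$ gives $\widetilde{\ak{m}}_{F'}\subseteq \scr{D}_{F'/F}\cdot\ca{O}_{F'}^*$. Multiplying by $\pi$ and using the hypothesis together with $\ca{O}_{F'}\cdot\ca{O}_{F'}^*=\ca{O}_{F'}^*$, then multiplying by $\widetilde{\ak{m}}_K$ and invoking the almost base change displayed above,
\begin{align*}
\widetilde{\ak{m}}_K\pi\widetilde{\ak{m}}_{F'}\subseteq \scr{D}_{K'/K}\cdot(\widetilde{\ak{m}}_K\ca{O}_{F'}^*) \subseteq (\scr{D}_{K'/K}\ca{O}_{K'}^*)\otimes_{\ca{O}_K}\ca{O}_F \subseteq A.
\end{align*}
Finally, \eqref{eq:para:notation-almost-2} applied to $K\to F'$ gives $\widetilde{\ak{m}}_{F'}\subseteq\widetilde{\ak{m}}_K\ca{O}_{F'}$, and combined with $\widetilde{\ak{m}}_{F'}^2=\widetilde{\ak{m}}_{F'}$ from \eqref{eq:para:notation-almost-1} this yields $\widetilde{\ak{m}}_{F'} = \widetilde{\ak{m}}_K\widetilde{\ak{m}}_{F'}$, absorbing the spurious factor of $\widetilde{\ak{m}}_K$ and giving $\pi\widetilde{\ak{m}}_{F'}\subseteq A$.

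The main subtlety is the almost base change identification $\ca{O}_{K'}^*\otimes_{\ca{O}_K}\ca{O}_F \aliso A^*$. While this is the expected behavior of codifferents of almost finite projective extensions under flat base change, rigorously verifying it amounts to transferring the trace pairing isomorphism $\ca{O}_{K'}^{\al,*}\iso\ho_{\ca{O}_K^\al}(\ca{O}_{K'}^\al,\ca{O}_K^\al)$ across $-\otimes_{\ca{O}_K^\al}\ca{O}_F^\al$ and matching it with the analogous trace pairing for $A^\al/\ca{O}_F^\al$, carried out in the framework of \cite[\textsection4.1]{gabber2003almost} in parallel with the arguments from \cite[4.3, 4.5]{he2024perfd} cited earlier in the section; all other steps reduce to direct manipulations with fractional ideals and the elementary relations \eqref{eq:para:notation-almost-1}, \eqref{eq:para:notation-almost-2}, and \eqref{eq:para:notation-norm-5}.
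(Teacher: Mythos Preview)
Your proof is correct and follows essentially the same approach as the paper: introduce $A=\ca{O}_{K'}\otimes_{\ca{O}_K}\ca{O}_F$, use the almost isomorphism $\ca{O}_{K'}^*\otimes_{\ca{O}_K}\ca{O}_F\to A^*$ (which the paper justifies by identifying it with the base change map $\ho_{\ca{O}_K}(\ca{O}_{K'},\ca{O}_K)\otimes_{\ca{O}_K}\ca{O}_F\to\ho_{\ca{O}_F}(A,\ca{O}_F)$ and invoking \cite[2.4.31.(\luoma{1})]{gabber2003almost}), and then manipulate the chain $A\subseteq\ca{O}_{F'}\subseteq\ca{O}_{F'}^*\subseteq A^*$ together with \eqref{eq:para:notation-almost-1}, \eqref{eq:para:notation-almost-2}, \eqref{eq:para:notation-norm-5}. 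The only cosmetic difference is that the paper absorbs the factor $\widetilde{\ak{m}}_K$ at the start via $\widetilde{\ak{m}}_{F'}=\widetilde{\ak{m}}_{F'}^2\subseteq\widetilde{\ak{m}}_{F'}\scr{D}_{F'/F}\ca{O}_{F'}^*$, whereas you do it at the end via $\widetilde{\ak{m}}_{F'}=\widetilde{\ak{m}}_K\widetilde{\ak{m}}_{F'}$.
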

\begin{proof}
	Firstly, note that the integral closures $\ca{O}_{K'}$ (resp. $\ca{O}_{F'}$) of $\ca{O}_K$ in $K'$ (resp. $\ca{O}_F$ in $F'$) are valuation rings since $\ca{O}_K$ (resp. $\ca{O}_F$) is Henselian (see \cite[\Luoma{6}.\textsection8.6, Proposition 6]{bourbaki2006commalg5-7} and \cite[\href{https://stacks.math.columbia.edu/tag/04GH}{04GH}]{stacks-project}). 
	
	Since $\ca{O}_{K'}\otimes_{\ca{O}_K}\ca{O}_F$ is still torsion-free over $\ca{O}_K$ and $F'=K'\otimes_KF$, the canonical morphism of $\ca{O}_F$-algebras $\ca{O}_{K'}\otimes_{\ca{O}_K}\ca{O}_F\to\ca{O}_{F'}$ is injective. Consider the $\ca{O}_{K'}\otimes_{\ca{O}_K}\ca{O}_F$-submodule of $F'$,
	\begin{align}\label{eq:prop:ramified-bc-3}
		(\ca{O}_{K'}\otimes_{\ca{O}_K}\ca{O}_F)^*=\{x\in F'\ |\ \mrm{Tr}_{F'/F}(xy)\in \ca{O}_F,\ \forall y\in\ca{O}_{K'}\otimes_{\ca{O}_K}\ca{O}_F\},
	\end{align}
	which clearly contains
	\begin{align}\label{eq:prop:ramified-bc-4}
		\ca{O}_{F'}^*=\{x\in F'\ |\ \mrm{Tr}_{F'/F}(xy)\in\ca{O}_F,\ \forall y\in\ca{O}_{F'}\}.
	\end{align}
	In particular, we obtain inclusions of $\ca{O}_{K'}\otimes_{\ca{O}_K}\ca{O}_F$-submodule of $F'$,
	\begin{align}\label{eq:prop:ramified-bc-5}
		\ca{O}_{K'}\otimes_{\ca{O}_K}\ca{O}_F\subseteq \ca{O}_{F'}\subseteq \ca{O}_{F'}^*\subseteq (\ca{O}_{K'}\otimes_{\ca{O}_K}\ca{O}_F)^*.
	\end{align}
	
	We claim that the canonical morphism
	\begin{align}\label{eq:prop:ramified-bc-6}
		\ca{O}_{K'}^*\otimes_{\ca{O}_K}\ca{O}_F\longrightarrow (\ca{O}_{K'}\otimes_{\ca{O}_K}\ca{O}_F)^*
	\end{align}
	is an almost isomorphism of $\ca{O}_K$-modules. Indeed, it follows from \cite[4.1.24]{gabber2003almost} but can be also checked directly as follows. Since the trace morphism $\mrm{Tr}_{F'/F}=\mrm{Tr}_{K'/K}\otimes\id_F:F'\to F$ induces an $F'$-linear isomorphism $\tau_{F'/F}=\tau_{K'/K}\otimes\id_F:F'\iso \ho_F(F',F),\ x\mapsto (y\mapsto \mrm{Tr}_{F'/F}(xy))$, we see that \eqref{eq:prop:ramified-bc-6} identifies with the canonical morphism (see the proof of \cite[4.5]{he2024perfd})
	\begin{align}\label{eq:prop:ramified-bc-7}
		\ho_{\ca{O}_K}(\ca{O}_{K'},\ca{O}_K)\otimes_{\ca{O}_K}\ca{O}_F\longrightarrow \ho_{\ca{O}_F}(\ca{O}_{K'}\otimes_{\ca{O}_K}\ca{O}_F,\ca{O}_F).
	\end{align}
	It is an almost isomorphism since $\ca{O}_{K'}$ is almost finite projective over $\ca{O}_K$ (\cite[2.4.31.(\luoma{1})]{gabber2003almost}), which verifies the claim.
	
	The claim implies that $\widetilde{\ak{m}}_K\cdot(\ca{O}_{K'}\otimes_{\ca{O}_K}\ca{O}_F)^*\subseteq \ca{O}_{K'}^*\otimes_{\ca{O}_K}\ca{O}_F$. Thus, by \eqref{eq:prop:ramified-bc-5} we see that 
	\begin{align}
		\widetilde{\ak{m}}_K\scr{D}_{K'/K}\cdot \ca{O}_{F'}^*\subseteq\ca{O}_{K'}\otimes_{\ca{O}_K}\ca{O}_F\subseteq \ca{O}_{F'}.
	\end{align}
	In particular, we have $\widetilde{\ak{m}}_K\scr{D}_{K'/K}\subseteq \scr{D}_{F'/F}$.
	Moreover, using the facts that $\widetilde{\ak{m}}_{F'}^2=\widetilde{\ak{m}}_{F'}\subseteq\scr{D}_{F'/F}\cdot\ca{O}_{F'}^*\subseteq\ca{O}_{F'}$ (\eqref{eq:para:notation-almost-1} and \eqref{eq:para:notation-norm-5}) and $\widetilde{\ak{m}}_{F'}\subseteq \widetilde{\ak{m}}_K\ca{O}_{F'}$ \eqref{eq:para:notation-almost-2}, we see that 
	\begin{align}
		\pi\widetilde{\ak{m}}_{F'}\subseteq \pi\widetilde{\ak{m}}_{F'}\scr{D}_{F'/F}\cdot\ca{O}_{F'}^*\subseteq \widetilde{\ak{m}}_{F'} \scr{D}_{K'/K}\cdot\ca{O}_{F'}^*\subseteq \ca{O}_{K'}\otimes_{\ca{O}_K}\ca{O}_F,
	\end{align}
	for any $\pi\in\ca{O}_{F'}$ such that $\pi\scr{D}_{F'/F}\subseteq \scr{D}_{K'/K}\ca{O}_{F'}$.
\end{proof}

\begin{myprop}[{\cite[4.5]{he2024perfd}}]\label{prop:different-trace}
	Let $K\to K'$ be a finite separable extension of Henselian valuation fields of height $1$. For any nonzero fractional ideal $\ak{a}$ (resp. $\ak{a}'$) of $\ca{O}_K$ (resp. $\ca{O}_{K'}$), $\mrm{Tr}_{K'/K}(\widetilde{\ak{m}}_K\ak{a}')\subseteq \ak{a}$ if and only if $\widetilde{\ak{m}}_K\ak{a}'\scr{D}_{K'/K}\subseteq \ak{a}\ca{O}_{K'}$.
\end{myprop}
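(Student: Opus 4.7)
The plan is to reduce the statement to the normalized case $\ak{a}=\ca{O}_K$ by transporting through the inverse fractional ideal $\ak{a}^{-1}$, and then to re-express both conditions as containments involving the codifferent $\ca{O}_{K'}^*$ defined in \eqref{eq:para:different-1}.

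For the reduction, set $\ak{b}:=\ak{a}^{-1}\ak{a}'$, which is a nonzero fractional ideal of $\ca{O}_{K'}$. Since $\mrm{Tr}_{K'/K}$ is $K$-linear and $\ak{a}^{-1}\subset K$, one has $\mrm{Tr}_{K'/K}(\widetilde{\ak{m}}_K\ak{b})=\ak{a}^{-1}\mrm{Tr}_{K'/K}(\widetilde{\ak{m}}_K\ak{a}')$. Combined with the inclusions $\widetilde{\ak{m}}_K\subseteq\ak{a}\ak{a}^{-1}\subseteq\ca{O}_K$ from \eqref{eq:para:notation-norm-5} and the idempotency $\widetilde{\ak{m}}_K^2=\widetilde{\ak{m}}_K$ of \eqref{eq:para:notation-almost-1}, this yields that $\mrm{Tr}_{K'/K}(\widetilde{\ak{m}}_K\ak{a}')\subseteq\ak{a}$ is equivalent to $\mrm{Tr}_{K'/K}(\widetilde{\ak{m}}_K\ak{b})\subseteq\ca{O}_K$: the forward direction is by multiplication by $\ak{a}^{-1}$, while the backward direction multiplies by $\ak{a}$ and absorbs the factor $\ak{a}\ak{a}^{-1}\supseteq \widetilde{\ak{m}}_K$ into $\widetilde{\ak{m}}_K^2=\widetilde{\ak{m}}_K$. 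The same formal manipulation shows that $\widetilde{\ak{m}}_K\ak{a}'\scr{D}_{K'/K}\subseteq\ak{a}\ca{O}_{K'}$ is equivalent to $\widetilde{\ak{m}}_K\ak{b}\scr{D}_{K'/K}\subseteq\ca{O}_{K'}$. Hence it suffices to treat the case $\ak{a}=\ca{O}_K$ with $\ak{a}'$ replaced by $\ak{b}$.

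In this normalized case I first observe that $\mrm{Tr}_{K'/K}(\widetilde{\ak{m}}_K\ak{b})\subseteq\ca{O}_K$ is equivalent to $\widetilde{\ak{m}}_K\ak{b}\subseteq\ca{O}_{K'}^*$. The ``if'' direction is immediate from \eqref{eq:para:different-1}. For the converse I use that $\ak{b}$ is $\ca{O}_{K'}$-stable: for $y\in\widetilde{\ak{m}}_K\ak{b}$ and $z\in\ca{O}_{K'}$ one has $yz\in\widetilde{\ak{m}}_K\ak{b}$, so $\mrm{Tr}_{K'/K}(yz)\in\ca{O}_K$, which by \eqref{eq:para:different-1} means $y\in\ca{O}_{K'}^*$. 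I then show that $\widetilde{\ak{m}}_K\ak{b}\subseteq\ca{O}_{K'}^*$ is equivalent to $\widetilde{\ak{m}}_K\ak{b}\scr{D}_{K'/K}\subseteq\ca{O}_{K'}$. The forward direction follows at once from $\scr{D}_{K'/K}\ca{O}_{K'}^*\subseteq\ca{O}_{K'}$, which is the definition \eqref{eq:para:different-2} of $\scr{D}_{K'/K}$. For the converse I multiply the hypothesis by $\ca{O}_{K'}^*$ and invoke the inclusion $\widetilde{\ak{m}}_{K'}\subseteq\scr{D}_{K'/K}\ca{O}_{K'}^*$ from \eqref{eq:para:notation-norm-5} applied to the fractional ideal $\ca{O}_{K'}^*$ of $\ca{O}_{K'}$, obtaining $\widetilde{\ak{m}}_K\widetilde{\ak{m}}_{K'}\ak{b}\subseteq\ca{O}_{K'}^*$. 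To discard the extra factor $\widetilde{\ak{m}}_{K'}$ I use the inclusion $\widetilde{\ak{m}}_K\subseteq\widetilde{\ak{m}}_K\widetilde{\ak{m}}_{K'}$, which is trivial in the discrete case (where $\widetilde{\ak{m}}_{K'}=\ca{O}_{K'}$) and in the non-discrete case follows from $\ak{m}_K\subseteq\ak{m}_{K'}$ together with the idempotency $\ak{m}_K=\ak{m}_K^2$, so that $\widetilde{\ak{m}}_K=\ak{m}_K=\ak{m}_K\cdot\ak{m}_K\subseteq \ak{m}_K\ak{m}_{K'}=\widetilde{\ak{m}}_K\widetilde{\ak{m}}_{K'}$.

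The main technical point is the last inclusion $\widetilde{\ak{m}}_K\subseteq\widetilde{\ak{m}}_K\widetilde{\ak{m}}_{K'}$ in the non-discrete case, since that is where the ``almost correction factor'' $\widetilde{\ak{m}}_K$ genuinely interacts with the finite extension $K'/K$ and cannot simply be ignored; everything else is a formal sequence of fractional ideal manipulations governed by the structural identities collected in Section \ref{sec:notation}.
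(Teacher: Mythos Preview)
Your proof is correct and follows essentially the same approach as the paper's: both reduce to $\ak{a}=\ca{O}_K$ via $\ak{a}^{-1}$ and then pass through the codifferent, using the identities $\widetilde{\ak{m}}_K^2=\widetilde{\ak{m}}_K$, $\widetilde{\ak{m}}_K\subseteq\ak{a}\ak{a}^{-1}\subseteq\ca{O}_K$, and $\widetilde{\ak{m}}_{K'}\subseteq\scr{D}_{K'/K}\ca{O}_{K'}^*\subseteq\ca{O}_{K'}$. The only cosmetic difference is that the paper dispatches the discrete case by citing Serre and presents the non-discrete case as a single chain of equivalences, whereas you handle both cases uniformly and spell out the absorption step $\widetilde{\ak{m}}_K\subseteq\widetilde{\ak{m}}_K\widetilde{\ak{m}}_{K'}$ that the paper leaves implicit.
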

\begin{proof}
	If the valuation on $K$ is discrete, it follows from \cite[\Luoma{3}.\textsection3, Proposition 7]{serre1979local}. Otherwise, it follows from the same arguments of \cite[4.5]{he2024perfd}. Indeed, one can check the following equivalences 
	\begin{align}
		\mrm{Tr}_{K'/K}(\widetilde{\ak{m}}_K\ak{a}')\subseteq \ak{a}\Leftrightarrow \mrm{Tr}_{K'/K}(\widetilde{\ak{m}}_K\ak{a}^{-1}\ak{a}')\subseteq \ca{O}_K \Leftrightarrow \widetilde{\ak{m}}_K\ak{a}'\subseteq \ak{a}\ca{O}_{K'}^* \Leftrightarrow \widetilde{\ak{m}}_K\ak{a}'\scr{D}_{K'/K}\subseteq \ak{a}\ca{O}_{K'}
	\end{align}
	using the fact that $\widetilde{\ak{m}}_K=\widetilde{\ak{m}}_K^2$ \eqref{eq:para:notation-almost-1}, $\widetilde{\ak{m}}_K\subseteq \ak{a}\ak{a}^{-1}\subseteq \ca{O}_K$ \eqref{eq:para:notation-norm-5} and $\widetilde{\ak{m}}_{K'}\subseteq \scr{D}_{K'/K}\cdot \ca{O}_{K'}^*\subseteq \ca{O}_{K'}$ \eqref{eq:para:different-2}. 
\end{proof}

\begin{myrem}\label{rem:prop:different-trace}
	In \ref{prop:different-trace}, if we fix an associated ultrametric absolute value $|\cdot|:K'\to \bb{R}_{\geq 0}$, then we see that $|\mrm{Tr}_{K'/K}(\ak{a}')|\leq |\ak{a}|$ if and only if $|\ak{a}'\scr{D}_{K'/K}|\leq |\ak{a}|$ by \eqref{eq:para:notation-norm-3}.
\end{myrem}

\begin{mycor}[{\cite[4.6]{he2024perfd}}]\label{cor:different-trace}
	Let $K\to K'$ be a finite separable extension of Henselian valuation fields of height $1$, $|\cdot|:K'\to \bb{R}_{\geq 0}$ an ultrametric absolute value associated to the valuation on $K'$. Then, for any $x\in K'$, we have
	\begin{align}\label{eq:cor:different-trace-1}
		|\mrm{Tr}_{K'/K}(x)|\leq |\ak{m}_K^{-1}\ak{m}_{K'}\scr{D}_{K'/K}|\cdot |x|.
	\end{align}
	In particular, we have $[K':K]\cdot\ak{m}_K\ak{m}_{K'}^{-1}\subseteq \scr{D}_{K'/K}$.
\end{mycor}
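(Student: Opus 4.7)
The plan is to apply Proposition~\ref{prop:different-trace} to $\ak{a}'=x\ca{O}_{K'}$ and $\ak{a}=c\ca{O}_K$ for a principal $c\in K^\times$, the main work being to choose $c$ with $|x\scr{D}_{K'/K}|\leq|c|\leq|\ak{m}_K^{-1}\ak{m}_{K'}\scr{D}_{K'/K}|\cdot|x|$. Once this is done, the main inequality follows: for any such $c$, the inclusion $\widetilde{\ak{m}}_Kx\scr{D}_{K'/K}\ca{O}_{K'}\subseteq c\ca{O}_{K'}$ holds by \eqref{eq:para:notation-norm-3} (since $c\ca{O}_{K'}$ is principal), so Proposition~\ref{prop:different-trace} gives $\mrm{Tr}_{K'/K}(\widetilde{\ak{m}}_Kx\ca{O}_{K'})\subseteq c\ca{O}_K$. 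Since the trace is $K$-linear and $1\in\ca{O}_{K'}$, the left-hand side contains $\widetilde{\ak{m}}_K\mrm{Tr}_{K'/K}(x)$; combining $|\widetilde{\ak{m}}_K|=1$ with \eqref{eq:para:notation-norm-3} applied to the principal ideals $\mrm{Tr}_{K'/K}(x)\ca{O}_K$ and $c\ca{O}_K$ yields $|\mrm{Tr}_{K'/K}(x)|\leq|c|$.

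The hard part is producing such a $c$, which requires comparing the value groups $|K^\times|$ and $|K'^\times|$. In the non-discrete case, $|\ak{m}_K^{-1}\ak{m}_{K'}|=1$, the two bounds on $|c|$ coincide, and the density of $|K^\times|$ in $\bb{R}_{>0}$ lets us approximate $|x\scr{D}_{K'/K}|$ from above. In the discrete case, $K'$ is also discrete with some ramification index $e=[|K'^\times|:|K^\times|]$, the different $\scr{D}_{K'/K}$ is a principal ideal of the DVR $\ca{O}_{K'}$, and $|x\scr{D}_{K'/K}|$ lies in $|K'^\times|=|\pi_{K'}|^{\bb{Z}}\supseteq|K^\times|=|\pi_{K'}|^{e\bb{Z}}$; an elementary calculation shows that the smallest element of $|K^\times|$ dominating any $y\in|K'^\times|$ exceeds $y$ by a factor at most $|\pi_{K'}|^{-(e-1)}=|\ak{m}_K^{-1}\ak{m}_{K'}|$, furnishing the required $c$.

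The ``in particular'' statement follows by specializing to $x=1$: one obtains $|[K':K]|\leq|\ak{m}_K^{-1}\ak{m}_{K'}\scr{D}_{K'/K}|$, equivalently $|[K':K]\ak{m}_K\ak{m}_{K'}^{-1}|\leq|\scr{D}_{K'/K}|$. In the discrete case, where $\widetilde{\ak{m}}_{K'}=\ca{O}_{K'}$, the desired inclusion $[K':K]\ak{m}_K\ak{m}_{K'}^{-1}\subseteq\scr{D}_{K'/K}$ is immediate from \eqref{eq:para:notation-norm-3}. In the non-discrete case, where $\ak{m}_{K'}^{-1}=\ca{O}_{K'}$, it suffices to show $[K':K]\ak{m}_K\subseteq\scr{D}_{K'/K}$; the strict inequality $|m|<1$ for $m\in\ak{m}_K$ upgrades the norm bound $|[K':K]\cdot m|\leq |\scr{D}_{K'/K}|$ to a strict one, whence \eqref{eq:para:notation-norm-4} gives $[K':K]m\in\ak{m}_{K'}\scr{D}_{K'/K}\subseteq\scr{D}_{K'/K}$.
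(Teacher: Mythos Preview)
Your argument is correct, but it takes a different route from the paper's proof. You use only the forward implication of Proposition~\ref{prop:different-trace}, manufacturing a principal $\ak{a}=c\ca{O}_K$ with $|x\scr{D}_{K'/K}|\leq|c|\leq|\ak{m}_K^{-1}\ak{m}_{K'}\scr{D}_{K'/K}|\cdot|x|$; this forces a discrete/non-discrete case split (the non-discrete case needing a limiting argument in $|c|$), and a second case split to convert the norm bound back into the inclusion $[K':K]\ak{m}_K\ak{m}_{K'}^{-1}\subseteq\scr{D}_{K'/K}$. The paper instead sets $\ak{a}=\mrm{Tr}_{K'/K}(x\ca{O}_{K'})\ca{O}_K$ and uses \emph{both} directions of Remark~\ref{rem:prop:different-trace}: from $|\mrm{Tr}_{K'/K}(x\ca{O}_{K'})|\leq|\ak{a}|$ one gets $|x\scr{D}_{K'/K}|\leq|\ak{a}|$, while the failure of $|\mrm{Tr}_{K'/K}(x\ca{O}_{K'})|\leq|\pi\ak{a}|$ for $\pi\in\ak{m}_K$ forces $|\pi\ak{a}|<|x\scr{D}_{K'/K}|$, hence $\ak{m}_K\ak{a}\subseteq\ak{m}_{K'}x\scr{D}_{K'/K}$ via \eqref{eq:para:notation-norm-4}. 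This inclusion immediately yields both the norm inequality and, specializing to $x=1$ and multiplying by $\ak{m}_{K'}^{-1}$, the ``in particular'' statement, with no case analysis. Your approach has the virtue of making the factor $|\ak{m}_K^{-1}\ak{m}_{K'}|$ visibly arise as the worst-case ratio between a $K'$-norm and the nearest dominating $K$-norm; the paper's is shorter and uniform.
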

\begin{proof}
	Let $\ak{a}\subseteq \ca{O}_K$ be the ideal generated by $\mrm{Tr}_{K'/K}(x\ca{O}_{K'})$, i.e., $\ak{a}=\mrm{Tr}_{K'/K}(x\ca{O}_{K'})\cdot \ca{O}_K$ (\ref{para:notation-norm}). Then, by \eqref{eq:para:notation-norm-2} we have  
	\begin{align}
		|\mrm{Tr}_{K'/K}(x)|\leq |\mrm{Tr}_{K'/K}(x\ca{O}_{K'})|=|\ak{a}|.
	\end{align}
	We may assume that $\mrm{Tr}_{K'/K}(x)\neq 0$ so that $\ak{a}\neq 0$. Then, for any $\pi\in\ak{m}_K$, as $|\pi\ak{a}|< |\ak{a}|$, we see that $|\pi\ak{a}|< |x\scr{D}_{K'/K}|\leq |\ak{a}|$ by \ref{prop:different-trace} and \ref{rem:prop:different-trace}. Thus, we see that $\pi\ak{a}\subseteq \ak{m}_{K'}x\scr{D}_{K'/K}$ by \eqref{eq:para:notation-norm-4} so that $\ak{m}_K\ak{a}\subseteq \ak{m}_{K'}x\scr{D}_{K'/K}$. This verifies  \eqref{eq:cor:different-trace-1} by \eqref{eq:para:notation-norm-2} and \eqref{eq:para:notation-norm-5}. Moreover, taking $x=1$, we obtain $[K':K]\cdot \ak{m}_K=\mrm{Tr}_{K'/K}(1)\cdot\ak{m}_K\subseteq \ak{m}_K\ak{a}\subseteq \ak{m}_{K'}\scr{D}_{K'/K}$. The conclusion follows from multiplying $\ak{m}_{K'}^{-1}$ on both sides.
\end{proof}

\begin{myrem}\label{rem:different-trace}
	In \ref{cor:different-trace}, assume that $[K':K]$ is invertible in $\ca{O}_K$. If moreover the valuation on $K$ is non-discrete (so that $|\ak{m}_K|=|\ak{m}_{K'}|=1$), then we see that $|\scr{D}_{K'/K}|=1$ and thus $\ca{O}_{K'}$ is almost finite \'etale over $\ca{O}_K$. On the other hand, if the valuation on $K$ is discrete, then we see that $|\scr{D}_{K'/K}|\geq |\ak{m}_K\ak{m}_{K'}^{-1}|$.
\end{myrem}

\begin{mycor}\label{cor:coh-torsion}
	Let $K\to K'$ be a finite Galois extension of Henselian valuation fields of height $1$, $\pi\in\ca{O}_K$, $q\in\bb{N}_{>0}$. Then, the cokernel of the canonical injection $\ca{O}_K/\pi\ca{O}_K\to H^0(\gal(K'/K),\ca{O}_{K'}/\pi\ca{O}_{K'})$ and the $q$-th Galois cohomology group $H^q(\gal(K'/K),\ca{O}_{K'}/\pi\ca{O}_{K'})$ are both killed by $\ca{O}_K\cap \widetilde{\ak{m}}_K\scr{D}_{K'/K}$.
\end{mycor}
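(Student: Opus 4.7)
The plan is to reduce the claim to a trace-homotopy argument by first establishing
\begin{align*}
\ca{O}_K \cap \widetilde{\ak{m}}_K \scr{D}_{K'/K} \subseteq \mrm{Tr}_{K'/K}(\ca{O}_{K'}).
\end{align*}
Applying \ref{prop:different-trace} with $\ak{a}' = \ca{O}_{K'}$ and $\ak{a} = \mrm{Tr}_{K'/K}(\ca{O}_{K'})$ (an ideal of $\ca{O}_K$), the tautological inclusion $\mrm{Tr}_{K'/K}(\widetilde{\ak{m}}_K \ca{O}_{K'}) \subseteq \mrm{Tr}_{K'/K}(\ca{O}_{K'})$ yields $\widetilde{\ak{m}}_K \scr{D}_{K'/K} \subseteq \mrm{Tr}_{K'/K}(\ca{O}_{K'}) \cdot \ca{O}_{K'}$. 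The desired inclusion then follows from the valuation-theoretic identity $I \ca{O}_{K'} \cap \ca{O}_K = I$ for any ideal $I$ of $\ca{O}_K$, which rests on the fact that an $\ca{O}_K$-ideal is determined by its norm $|I|$ together with whether this supremum is attained, both invariants being preserved under base change to the larger valuation ring $\ca{O}_{K'}$.

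Set $G = \gal(K'/K)$ and $M = \ca{O}_{K'}/\pi\ca{O}_{K'}$. It suffices to check that each individual $\alpha \in \ca{O}_K \cap \widetilde{\ak{m}}_K \scr{D}_{K'/K}$ kills the two modules, so choose $y \in \ca{O}_{K'}$ with $\mrm{Tr}_{K'/K}(y) = \alpha$. For the degree-zero cokernel, if $x \in \ca{O}_{K'}$ represents a $G$-invariant class, so that $\sigma x - x \in \pi\ca{O}_{K'}$ for all $\sigma \in G$, the identity
\begin{align*}
\mrm{Tr}_{K'/K}(yx) = \sum_{\sigma \in G} \sigma(y)\sigma(x) = \alpha x + \sum_{\sigma \in G} \sigma(y)(\sigma x - x)
\end{align*}
places $\alpha x$ in $\ca{O}_K + \pi\ca{O}_{K'}$, so $\alpha \cdot \overline{x}$ lies in the image of $\ca{O}_K/\pi\ca{O}_K$.

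For $q \geq 1$, I would run the standard trace-homotopy on the cochain complex of $G$ with coefficients in $M$: for a cocycle $c$ in the homogeneous cochain complex, the contraction
\begin{align*}
(hc)(g_0, \ldots, g_{q-1}) = \sum_{\tau \in G} \tau(y) \cdot c(g_0, \ldots, g_{q-1}, \tau)
\end{align*}
is well-defined and $G$-equivariant thanks to the compatibility of the $\ca{O}_{K'}$- and $G$-actions on $M$, and a direct coboundary computation using $\sum_\tau \tau(y) = \alpha$ yields $d h - h d = (-1)^q \alpha \cdot \mrm{id}$. On cocycles this exhibits $\alpha c$ as a coboundary up to sign, so $\alpha$ annihilates $H^q(G, M)$. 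The only substantive step is the opening valuation-theoretic identity $I \ca{O}_{K'} \cap \ca{O}_K = I$; once this is granted, both halves of the statement reduce to routine manipulations with the trace that require no discreteness hypothesis on the valuations.
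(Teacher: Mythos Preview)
Your proof is correct and follows essentially the same approach as the paper: both reduce to showing that elements of $\mrm{Tr}_{K'/K}(\ca{O}_{K'})$ annihilate the relevant modules via the standard trace-homotopy, and both derive the inclusion $\ca{O}_K\cap\widetilde{\ak{m}}_K\scr{D}_{K'/K}\subseteq \mrm{Tr}_{K'/K}(\ca{O}_{K'})\cdot\ca{O}_K$ from \ref{prop:different-trace}. The paper simply cites \cite[\Luoma{5}.12.1.(1)]{abbes2016p} for the trace-homotopy step you spell out, and leaves the identity $I\ca{O}_{K'}\cap\ca{O}_K=I$ implicit (it follows immediately from the faithful flatness of $\ca{O}_K\to\ca{O}_{K'}$, which is perhaps cleaner than your norm-and-attainment description).
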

\begin{proof}
	Firstly, note that $\ca{O}_K/\pi\ca{O}_K\to \ca{O}_{K'}/\pi\ca{O}_{K'}$ is injective whose image contains $\mrm{Tr}_{K'/K}(\ca{O}_{K'}/\pi\ca{O}_{K'})$ (as $\mrm{Tr}_{K'/K}(\ca{O}_{K'})\subseteq \ca{O}_K$). Therefore, any element of $\mrm{Tr}_{K'/K}(\ca{O}_{K'})$ kills $H^q(\gal(K'/K),\ca{O}_{K'}/\pi\ca{O}_{K'})$ and $\cok(\ca{O}_K/\pi\ca{O}_K\to H^0(\gal(K'/K),\ca{O}_{K'}/\pi\ca{O}_{K'}))$ by \cite[\Luoma{5}.12.1.(1)]{abbes2016p}. Let $\ak{a}\subseteq \ca{O}_K$ be the ideal generated by $\mrm{Tr}_{K'/K}(\ca{O}_{K'})$. Then, $\widetilde{\ak{m}}_K\scr{D}_{K'/K}\subseteq\ak{a}\ca{O}_{K'}$ by \ref{prop:different-trace}, which completes the proof.
\end{proof}

\begin{myprop}[{\cite[4.15]{he2024perfd}, cf. \cite[\textsection3.1, Proposition 6]{tate1967p}}]\label{prop:trace-tau}
	Let $K\to K'$ be a finite Galois extension of Henselian valuation fields of height $1$ extensions of $\bb{Q}_p$ with Galois group isomorphic to $\bb{Z}/p^n\bb{Z}$ for some $n\in\bb{N}$, $\tau\in\gal(K'/K)$ a generator, $|\cdot|:K'\to\bb{R}_{\geq0}$ an associated ultrametric absolute value. Then, for any $x\in K'$, we have
	\begin{align}\label{eq:prop:trace-tau-1}
		|p^{-n}\mrm{Tr}_{K'/K}(x)-x|\leq |p^{-(n+1)}\ak{m}_K^{-1}\ak{m}_{K'}\scr{D}_{K'/K}|\cdot |\tau(x)-x|.
	\end{align}
\end{myprop}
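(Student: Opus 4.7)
The plan is to prove the equivalent statement
\begin{align*}
|\mrm{Tr}_{K'/K}(x) - p^n x| \leq c_{K'/K} \cdot |\tau(x) - x|,\qquad c_{K'/K} := |p^{-1}\ak{m}_K^{-1}\ak{m}_{K'}\scr{D}_{K'/K}|,
\end{align*}
obtained from \eqref{eq:prop:trace-tau-1} by multiplying both sides by $|p|^n$. The payoff of this rescaling is that the multiplicative constant $c_{K'/K}$ no longer carries a $|p|^{-n}$ factor, so induction on $n$ can close cleanly with a tower argument.

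The case $n = 0$ is trivial. For $n = 1$, I would expand $\mrm{Tr}_{K'/K}(x) - px = \sum_{i=1}^{p-1}\sum_{j=0}^{i-1}\tau^j(\tau(x) - x)$ by telescoping; the ultrametric inequality (each $\tau^j$ is an isometry and integer coefficients have norm $\leq 1$) yields $|\mrm{Tr}_{K'/K}(x) - px| \leq |\tau(x) - x|$. The remaining inequality $1 \leq c_{K'/K}$ is exactly $|p\ak{m}_K| \leq |\ak{m}_{K'}\scr{D}_{K'/K}|$, which follows from \ref{cor:different-trace} applied to the degree-$p$ extension $K'/K$.

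For the inductive step $n \geq 2$, I would let $K_1 = K'^{\langle\tau^p\rangle}$ be the unique intermediate field with $[K_1 : K] = p$ and $[K' : K_1] = p^{n-1}$, so that $\gal(K'/K_1)$ is generated by $\sigma' := \tau^p$. Setting $y = \mrm{Tr}_{K'/K_1}(x) \in K_1$, trace transitivity gives the decomposition
\begin{align*}
\mrm{Tr}_{K'/K}(x) - p^n x = \bigl(\mrm{Tr}_{K_1/K}(y) - p y\bigr) + p\bigl(\mrm{Tr}_{K'/K_1}(x) - p^{n-1} x\bigr),
\end{align*}
and it suffices to bound each summand by $c_{K'/K}|\tau(x) - x|$. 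For the first summand, the base case applied to $K_1/K$ gives $|\mrm{Tr}_{K_1/K}(y) - py|\leq c_{K_1/K}|\tau(y) - y|$. Since $\gal(K'/K)$ is abelian, $\tau$ commutes with $\mrm{Tr}_{K'/K_1}$, yielding the key identity $\tau(y) - y = \mrm{Tr}_{K'/K_1}(\tau(x) - x)$; combining with \ref{cor:different-trace} applied to $K'/K_1$ gives $|\tau(y) - y| \leq |\ak{m}_{K_1}^{-1}\ak{m}_{K'}\scr{D}_{K'/K_1}|\cdot|\tau(x) - x|$. Multiplicativity of the different in a tower, $\scr{D}_{K'/K} = \scr{D}_{K'/K_1}\cdot\scr{D}_{K_1/K}\ca{O}_{K'}$, then assembles the product into exactly $c_{K'/K}|\tau(x) - x|$.

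For the second summand, the inductive hypothesis for $K'/K_1$ (of degree $p^{n-1}$ with generator $\sigma'$), together with the telescoping bound $|\sigma'(x) - x| = |\tau^p(x) - x|\leq |\tau(x) - x|$, gives $|\mrm{Tr}_{K'/K_1}(x) - p^{n-1}x|\leq c_{K'/K_1}|\tau(x) - x|$. After multiplying by $|p|$, what remains is the comparison $|p|\cdot c_{K'/K_1}\leq c_{K'/K}$; using multiplicativity of the different once more, this reduces to $|p\ak{m}_K|\leq|\ak{m}_{K_1}\scr{D}_{K_1/K}|$, which is \ref{cor:different-trace} applied to the degree-$p$ extension $K_1/K$. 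The ultrametric inequality then combines the two bounds to finish the induction. The main obstacle is purely bookkeeping; the conceptual crux is choosing $K_1$ to be the \emph{degree-$p$} sub-extension (rather than the degree-$p^{n-1}$ one) and bounding $|\tau(y) - y|$ through \ref{cor:different-trace} instead of the inductive hypothesis — together these are what prevent the constant $c_{K'/K}$ from accumulating extra powers of $|p|^{-1}$ as $n$ grows.
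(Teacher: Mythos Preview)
Your proof is correct and uses the same overall strategy as the paper: induction on $n$ through an intermediate field in the cyclic tower, exploiting commutativity of $\tau$ with the partial trace, the trace bound from \ref{cor:different-trace}, and multiplicativity of the different. The one substantive difference is that you and the paper make mirror-image choices of intermediate field. The paper takes $K''=K'^{\langle\tau^{p^{n-1}}\rangle}$, so $[K':K'']=p$ and $[K'':K]=p^{n-1}$, and applies the induction hypothesis to $K''/K$; you take $K_1=K'^{\langle\tau^{p}\rangle}$, so $[K_1:K]=p$ and $[K':K_1]=p^{n-1}$, and apply the induction hypothesis to $K'/K_1$. Both decompositions close with the same constant. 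Your closing remark that choosing the degree-$p$ sub-extension is ``the conceptual crux'' and that the other choice would accumulate powers of $|p|^{-1}$ is therefore not accurate: the paper makes precisely the opposite choice and the bookkeeping works out identically, because in either orientation one term is controlled by the induction hypothesis and the other by the elementary telescoping estimate combined with \ref{cor:different-trace}. A minor point: the identity $\scr{D}_{K'/K}=\scr{D}_{K'/K_1}\cdot\scr{D}_{K_1/K}\ca{O}_{K'}$ holds only almost in general (\cite[4.1.25]{gabber2003almost}); since you only use the resulting equality of norms, this is harmless, but it would be cleaner to state it that way.
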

\begin{proof}	
	We take induction on $n\in\bb{N}$. The case when $n=0$ is clear. For $n\geq 1$, let $K''$ be the subfield of $K'$ fixed by $\tau^{p^{n-1}}$ so that $\gal(K''/K)\cong \bb{Z}/p^{n-1}\bb{Z}$ (which is still generated by $\tau$). We have
	\begin{align}\label{eq:prop:trace-tau-2}
		&|p^{-n}\mrm{Tr}_{K'/K}(x)-x|\\
		=\ &|p^{-(n-1)}\mrm{Tr}_{K''/K}(p^{-1}\mrm{Tr}_{K'/K''}(x))-p^{-1}\mrm{Tr}_{K'/K''}(x)+p^{-1}\mrm{Tr}_{K'/K''}(x)-x|\nonumber\\
		\leq\ &\max\{|p^{-(n-1)}\mrm{Tr}_{K''/K}(p^{-1}\mrm{Tr}_{K'/K''}(x))-p^{-1}\mrm{Tr}_{K'/K''}(x)|,\ |p^{-1}\mrm{Tr}_{K'/K''}(x)-x|\}.\nonumber
	\end{align}
	By induction hypothesis, we have
	\begin{align}\label{eq:prop:trace-tau-3}
		&|p^{-(n-1)}\mrm{Tr}_{K''/K}(p^{-1}\mrm{Tr}_{K'/K''}(x))-p^{-1}\mrm{Tr}_{K'/K''}(x)|\\
		\leq\ &|p^{-n}\ak{m}_K^{-1}\ak{m}_{K''}\scr{D}_{K''/K}|\cdot|(\tau-1)(p^{-1}\mrm{Tr}_{K'/K''}(x))|\nonumber\\
		=\ &|p^{-n}\ak{m}_K^{-1}\ak{m}_{K''}\scr{D}_{K''/K}|\cdot|p^{-1}\mrm{Tr}_{K'/K''}(\tau(x)-x)|\quad \trm{(as $\gal(K'/K'')$ is commutative)}\nonumber\\
		\leq \ &|p^{-n}\ak{m}_K^{-1}\ak{m}_{K''}\scr{D}_{K''/K}|\cdot|p^{-1}\ak{m}_{K''}^{-1}\ak{m}_{K'}\scr{D}_{K'/K''}|\cdot|\tau(x)-x|\quad \trm{(by \eqref{eq:cor:different-trace-1})}\nonumber\\
		= \ &|p^{-(n+1)}\ak{m}_K^{-1}\ak{m}_{K'}\scr{D}_{K'/K}|\cdot|\tau(x)-x|,\nonumber
	\end{align}
	where the last equality follows from the relation of the almost ideals $\scr{D}_{K'/K}^\al=\scr{D}_{K'/K''}^\al\cdot\scr{D}_{K''/K}^\al$ (\cite[4.1.25]{gabber2003almost}, see also \cite[(4.15.3)]{he2024perfd}).
	
	On the other hand,
	\begin{align}\label{eq:prop:trace-tau-4}
		&|p^{-1}\mrm{Tr}_{K'/K''}(x)-x|=|p^{-1}\sum_{i=1}^{p}(\tau^{ip^{n-1}}-1)(x)|\\
		=\ &|p^{-1}\sum_{i=1}^{p}(\tau^{ip^{n-1}-1}+\cdots+\tau+1)(\tau-1)(x)|\nonumber\\
		\leq \ &|p^{-1}(\tau(x)-x)|\leq |p^{-(n+1)}\ak{m}_K^{-1}\ak{m}_{K'}\scr{D}_{K'/K}|\cdot|\tau(x)-x|,\nonumber
	\end{align}
	where the last inequality follows from \ref{cor:different-trace}. Combining with \eqref{eq:prop:trace-tau-2}, we see that 
	\begin{align}\label{eq:prop:trace-tau-5}
		|p^{-n}\mrm{Tr}_{K'/K}(x)-x|\leq |p^{-(n+1)}\ak{m}_K^{-1}\ak{m}_{K'}\scr{D}_{K'/K}|\cdot|\tau(x)-x|
	\end{align} 
	which completes the induction.
\end{proof}

\begin{mypara}\label{para:fitting-ideal}
	Let $K$ be a valuation field of height $1$. Recall that an $\ca{O}_K$-module $M$ is \emph{uniformly almost finitely generated} if there exists an integer $n\geq 0$ such that for any $\pi\in\widetilde{\ak{m}}_K$, there exists a submodule $M_\pi\subseteq M$ generated by $n$ elements and containing $\pi M$ (\cite[2.1.6, 2.3.9.(\luoma{1})]{gabber2003almost}). In this case, we say that $n$ is a \emph{uniform bound} for $M$. To the associated almost module $M^{\al}$, Gabber-Ramero \cite[2.3.24]{gabber2003almost} associate the \emph{Fitting ideals} $\{F_i(M^{\al})\}_{i\in\bb{N}}$ (which are ideals of the associated almost ring $\ca{O}_K^{\al}$) by a density argument. As $\ca{O}_K$ is a valuation ring, we actually have (\cite[6.3.6]{gabber2003almost}, cf. \cite[3.3]{he2024perfd})
	\begin{align}\label{eq:para:fitting-ideal-1}
		F_i(M^{\al})=(\prod_{j=i+1}^{\infty} \mrm{Ann}_{\ca{O}_K}(\wedge^j M))^{\al},
	\end{align}
	where $\mrm{Ann}_{\ca{O}_K}(\wedge^j M)$ is the annihilator of the $j$-th exterior power of the $\ca{O}_K$-module $M$. Notice that $\mrm{Ann}_{\ca{O}_K}(\wedge^j M)$ is equal to $\widetilde{\ak{m}}_K$ or $\ca{O}_K$ for every $j>n$ so that the infinite product makes sense by \eqref{eq:para:notation-almost-1}. Therefore, we put 
	\begin{align}\label{eq:para:fitting-ideal-2}
		F_i(M)=\prod_{j=i+1}^{\infty} \mrm{Ann}_{\ca{O}_K}(\wedge^j M)
	\end{align}
	called the $i$-th \emph{Fitting ideal} of the $\ca{O}_K$-module $M$.
	
	Moreover, we remark that for an exact sequence $0\to M_1\to M_2\to M_3\to 0$ of uniformly almost finitely generated $\ca{O}_K$-modules, we have an equality of the ideals of the almost ring $\ca{O}_K^{\al}$ (\cite[6.3.5.(\luoma{2})]{gabber2003almost})
	\begin{align}\label{eq:para:fitting-ideal-3}
		F_0(M_2^{\al})=F_0(M_1^{\al})\cdot F_0(M_3^{\al}).
	\end{align} 
	Then, we summarize Gabber-Ramero's computation of cotangent complexes of valuation ring extensions that will be used in the rest of this article.
\end{mypara}

\begin{mythm}[Gabber-Ramero]\label{thm:differential}
	Let $K\to F$ be an extension of valuation fields.
	\begin{enumerate}
		\renewcommand{\labelenumi}{{\rm(\theenumi)}}
		\item If $K$ is perfect or $F$ is algebraic separable over $K$, then the canonical augmentation of cotangent complex $\bb{L}_{\ca{O}_F/\ca{O}_K}\to \Omega^1_{\ca{O}_F/\ca{O}_K}$ is an isomorphism.\label{item:thm:differential-1}
		\item If $K$ is algebraically closed, then $\Omega^1_{\ca{O}_F/\ca{O}_K}$ is torsion-free.\label{item:thm:differential-2}
		\item If $K\to F$ is a finite separable extension of Henselian valuation fields of height $1$, then $\Omega^1_{\ca{O}_F/\ca{O}_K}$ is uniformly almost finitely generated over $\ca{O}_F$ and we have $\scr{D}_{F/K}^{\al}=F_0(\Omega^{1,\al}_{\ca{O}_F/\ca{O}_K})$.\label{item:thm:differential-3}
	\end{enumerate}
\end{mythm}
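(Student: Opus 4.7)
Each of the three statements distills results from Gabber-Ramero \cite{gabber2003almost}; the plan is to outline the key reductions and cite the corresponding results, rather than to reprove them from scratch.

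For (\ref{item:thm:differential-1}), the only content is the vanishing of $H_1(\bb{L}_{\ca{O}_F/\ca{O}_K})$, as the $H_0$-identification is automatic. In the separable algebraic case, the plan is to write $F$ as the filtered colimit of its finite separable subextensions $F_\alpha/K$; compatibility of $\bb{L}$ with filtered colimits reduces the problem to finite separable extensions, and for those $\ca{O}_{F_\alpha}$ is almost finite projective over $\ca{O}_K$ by \cite[6.3.8]{gabber2003almost} and almost finite \'etale after inverting the different, which together with flatness forces $H_1\bb{L}$ to be $\widetilde{\ak{m}}_K$-torsion, removed by a standard almost-zero argument. For $K$ perfect, the idea is to lift the Frobenius on $\ca{O}_K$ (directly in equal characteristic $p$, or via the perfectoid input in mixed characteristic) so that $\bb{L}_{\ca{O}_K/\bb{Z}}$ is concentrated in degree $0$; transitivity then transports the claim to $\bb{L}_{\ca{O}_F/\ca{O}_K}$.

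For (\ref{item:thm:differential-2}), part (\ref{item:thm:differential-1}) already supplies $\bb{L}_{\ca{O}_F/\ca{O}_K}\cong\Omega^1_{\ca{O}_F/\ca{O}_K}$, since algebraically closed fields are perfect. The plan is to show that the localization map $\Omega^1_{\ca{O}_F/\ca{O}_K}\to\Omega^1_{F/K}$ is injective, which suffices because $\Omega^1_{F/K}$ is free over $F$ on any separating transcendence basis (again using that $K$ is perfect), hence torsion-free. Injectivity is then established by approximating $\ca{O}_F$ by finitely generated sub-$\ca{O}_K$-algebras and using that the value group of $K$, being divisible, allows rescaling transcendence basis elements to lie in $\ca{O}_F$; the details are worked out in \cite{gabber2003almost}.

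For (\ref{item:thm:differential-3}), almost finite projectivity of $\ca{O}_F$ over $\ca{O}_K$ \cite[6.3.8]{gabber2003almost} directly yields that $\Omega^1_{\ca{O}_F/\ca{O}_K}$ is uniformly almost finitely generated. The Fitting ideal formula $F_0(\Omega^{1,\al}_{\ca{O}_F/\ca{O}_K})=\scr{D}_{F/K}^{\al}$ is then obtained by dualizing the trace pairing: the codifferent $\ca{O}_F^*$ from \eqref{eq:para:different-1} identifies almost with $\ho_{\ca{O}_K}(\ca{O}_F,\ca{O}_K)$, and a direct calculation in the almost framework matches the annihilator/Fitting data of $\Omega^1_{\ca{O}_F/\ca{O}_K}$ with the different $\scr{D}_{F/K}$. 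The main obstacle is that the classical DVR identity $\mrm{Ann}(\Omega^1)=\scr{D}$ does not survive verbatim for non-discrete valuations and must be replaced by the iterated Fitting product \eqref{eq:para:fitting-ideal-2}, which is precisely what the almost framework of \cite[4.1.22]{gabber2003almost} records.
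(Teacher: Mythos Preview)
The paper's own proof is a bare citation to Gabber--Ramero \cite[6.5.12, 6.3.32; 6.5.20; 6.3.8, 6.3.23]{gabber2003almost}, nothing more. Your proposal goes beyond this by sketching the underlying arguments, which is fine, but two of your sketches for part (\ref{item:thm:differential-1}) contain genuine gaps.

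For the separable-algebraic case, your argument only shows that $H_1(\bb{L}_{\ca{O}_F/\ca{O}_K})$ is almost zero, not that it vanishes on the nose; the theorem asserts an honest isomorphism, and ``removed by a standard almost-zero argument'' is not a step that exists. Gabber--Ramero's proof of \cite[6.3.32]{gabber2003almost} does not go through almost ring theory here; it uses instead the structure of finitely presented flat algebras over valuation rings. For the case $K$ perfect, your ``perfectoid input in mixed characteristic'' is misplaced: in characteristic~$0$ every field is perfect, so the claim must cover arbitrary characteristic-$0$ extensions $K\to F$ of valuation fields, the vast majority of which are not perfectoid and admit no Frobenius lift. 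Transitivity from $\bb{L}_{\ca{O}_K/\bb{Z}}$ being concentrated in degree $0$ would also not by itself force $\bb{L}_{\ca{O}_F/\ca{O}_K}$ to be so. The actual argument in \cite[6.5.12]{gabber2003almost} reduces by filtered colimits and transitivity to the purely transcendental degree-$1$ case, then invokes the explicit presentation of $\ca{O}_F$ as a filtered colimit of localizations of polynomial $\ca{O}_K$-algebras (the same structure theorem \cite[6.5.9]{gabber2003almost} that the paper records as Proposition~\ref{prop:trans}).

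Your outlines for (\ref{item:thm:differential-2}) and (\ref{item:thm:differential-3}) are closer to the mark and broadly match the strategy in \cite{gabber2003almost}.
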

\begin{proof}
	(\ref{item:thm:differential-1}) is proved in \cite[6.5.12, 6.3.32]{gabber2003almost}; (\ref{item:thm:differential-2}) is proved in \cite[6.5.20]{gabber2003almost}; and (\ref{item:thm:differential-3}) is proved in \cite[6.3.8, 6.3.23]{gabber2003almost}.
\end{proof}

\begin{mycor}\label{cor:differential}
	Let $K\to F$ be an extension of valuation fields with $K$ pre-perfectoid {\rm(\ref{para:notation-perfd})}. Then, $\Omega^1_{\ca{O}_F/\ca{O}_K}$ is torsion-free.
\end{mycor}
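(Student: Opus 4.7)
The plan is to reduce to the algebraically closed case (Theorem~\ref{thm:differential}(\ref{item:thm:differential-2})) via the compositum $F'=F\overline{K}$ inside an algebraic closure $\overline{F}$ of $F$ equipped with an extension of the valuation of $F$, where $\overline{K}\subseteq\overline{F}$ denotes the algebraic closure of $K$ with the induced valuation. The key preliminary claim is that $\Omega^1_{\ca{O}_{\overline{K}}/\ca{O}_K}=0$: since $K$ is pre-perfectoid it is deeply ramified (\ref{para:notation-perfd}), so $\Omega^1_{\ca{O}_{\overline{K}}/\ca{O}_K}$ is almost zero, i.e.\ killed by $\widetilde{\ak{m}}_K=\ak{m}_K$ (the equality using that $K$ is non-discrete). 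On the other hand, algebraic closedness of $\overline{K}$ produces, for each $x\in\ca{O}_{\overline{K}}$, an element $y\in\ca{O}_{\overline{K}}$ with $y^p=x$, whence $dx=py^{p-1}\,dy$ by Leibniz; as $p\in\ak{m}_K$ already annihilates $\Omega^1_{\ca{O}_{\overline{K}}/\ca{O}_K}$, this forces $dx=0$ for every $x$, and hence $\Omega^1_{\ca{O}_{\overline{K}}/\ca{O}_K}=0$.

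With this in hand, I would apply the cotangent transitivity triangle to $\ca{O}_K\to\ca{O}_{\overline{K}}\to\ca{O}_{F'}$. Flatness of $\ca{O}_{\overline{K}}\to\ca{O}_{F'}$ combined with the vanishing above makes the $H^0$ long exact sequence collapse to an isomorphism $\Omega^1_{\ca{O}_{F'}/\ca{O}_K}\cong\Omega^1_{\ca{O}_{F'}/\ca{O}_{\overline{K}}}$, and by Theorem~\ref{thm:differential}(\ref{item:thm:differential-2}) the right-hand side is torsion-free. Next, I would apply the transitivity triangle to $\ca{O}_K\to\ca{O}_F\to\ca{O}_{F'}$: the extension $\overline{K}/K$ is separable algebraic (automatic in characteristic $0$; in equal characteristic $p$ the pre-perfectoid hypothesis forces $K$ to be perfect), hence so is $F'/F$, and Theorem~\ref{thm:differential}(\ref{item:thm:differential-1}) shows $\bb{L}_{\ca{O}_{F'}/\ca{O}_F}$ is concentrated in degree zero. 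The $H^0$ long exact sequence then yields an injection $\Omega^1_{\ca{O}_F/\ca{O}_K}\otimes_{\ca{O}_F}\ca{O}_{F'}\hookrightarrow\Omega^1_{\ca{O}_{F'}/\ca{O}_K}$, so the source is torsion-free over $\ca{O}_{F'}$. Finally, $\ca{O}_F\to\ca{O}_{F'}$ is a flat local morphism of local rings, hence faithfully flat, and torsion-freeness descends to $\Omega^1_{\ca{O}_F/\ca{O}_K}$.

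The main obstacle is the preliminary claim: deep ramification supplies only almost vanishing of $\Omega^1_{\ca{O}_{\overline{K}}/\ca{O}_K}$, and the essential trick to upgrade this to honest vanishing is to combine that almost annihilation with the $p$-divisibility of $dx$ coming from a choice of $p$-th root in the algebraically closed $\overline{K}$; together with the fact $p\in\ak{m}_K$, this eliminates the remaining torsion.
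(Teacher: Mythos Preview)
Your proof is correct and follows essentially the same route as the paper's: both pass to a separable algebraic extension containing the algebraic closure of $K$, use the vanishing of $\Omega^1$ over the pre-perfectoid base, and reduce via the transitivity sequence to the algebraically closed case of Theorem~\ref{thm:differential}(\ref{item:thm:differential-2}). The only cosmetic differences are that the paper works with the full separable closure $F^{\mrm{s}}$ rather than the compositum $F\overline{K}$, first passes to Henselizations, and cites \cite[6.6.1, 6.6.6]{gabber2003almost} directly for $\Omega^1_{\ca{O}_{K^{\mrm{s}}}/\ca{O}_K}=0$ where you supply a self-contained $p$-th-root argument.
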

\begin{proof}
	After replacing $K$ and $F$ by their Henselization (\cite[6.1.12.(\luoma{6})]{gabber2003almost}), we may assume that they are Henselian. Let $F^{\mrm{s}}$ be an algebraic separable closure of $F$ and let $K^{\mrm{s}}$ be the algebraic separable closure of $K$ contained in $F^{\mrm{s}}$. As $K$ is pre-perfectoid, it is perfect (\cite[5.24]{he2024coh}) so that $K^{\mrm{s}}$ is algebraically closed. Moreover, we have $\Omega^1_{\ca{O}_{K^{\mrm{s}}}/\ca{O}_K}=0$ by \cite[6.6.1, 6.6.6]{gabber2003almost}. As $H_1(\bb{L}_{\ca{O}_{F^{\mrm{s}}}/\ca{O}_F})=0$ by \ref{thm:differential}.(\ref{item:thm:differential-1}), we see that
	\begin{align}
		\ca{O}_{F^{\mrm{s}}}\otimes_{\ca{O}_F}\Omega^1_{\ca{O}_F/\ca{O}_K}\subseteq \Omega^1_{\ca{O}_{F^{\mrm{s}}}/\ca{O}_K}=\Omega^1_{\ca{O}_{F^{\mrm{s}}}/\ca{O}_{K^{\mrm{s}}}},
	\end{align}
	which is torsion-free by \ref{thm:differential}.(\ref{item:thm:differential-2}). Hence, so is $\Omega^1_{\ca{O}_F/\ca{O}_K}$.
\end{proof}

\begin{mylem}\label{lem:rhom}
	Let $K\to L\to F$ be extensions of valuation fields with $H_1(\bb{L}_{\ca{O}_F/\ca{O}_L})=0$ (e.g., $L$ is perfect or $F$ is algebraic separable over $L$ by {\rm\ref{thm:differential}.(\ref{item:thm:differential-1})}). Then, for any nonzero element $\pi\in\ca{O}_F$, there is a canonical exact sequence
	\begin{align}\label{eq:lem:rhom-1}
		0\to& \ca{O}_F\otimes_{\ca{O}_L}\Omega^1_{\ca{O}_L/\ca{O}_K}[\pi]\to\Omega^1_{\ca{O}_F/\ca{O}_K}[\pi]\to \Omega^1_{\ca{O}_F/\ca{O}_L}[\pi]\to\\ &\ca{O}_F\otimes_{\ca{O}_L}\Omega^1_{\ca{O}_L/\ca{O}_K}/\pi\to\Omega^1_{\ca{O}_F/\ca{O}_K}/\pi\to \Omega^1_{\ca{O}_F/\ca{O}_L}/\pi\to 0.\nonumber
	\end{align}
\end{mylem}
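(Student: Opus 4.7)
The plan is to deduce the six-term sequence from the transitivity triangle of cotangent complexes followed by an application of the snake lemma.

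First, for the tower $\ca{O}_K \to \ca{O}_L \to \ca{O}_F$, the transitivity triangle reads
\begin{align}
\ca{O}_F \otimes^{\dl}_{\ca{O}_L} \bb{L}_{\ca{O}_L/\ca{O}_K} \longrightarrow \bb{L}_{\ca{O}_F/\ca{O}_K} \longrightarrow \bb{L}_{\ca{O}_F/\ca{O}_L} \longrightarrow [+1].
\end{align}
Since $\ca{O}_F$ is torsion-free as an $\ca{O}_L$-module and any torsion-free module over a valuation ring is flat, the derived tensor on the left coincides with the ordinary tensor, and in particular $H_0(\ca{O}_F \otimes^{\dl}_{\ca{O}_L}\bb{L}_{\ca{O}_L/\ca{O}_K}) = \ca{O}_F\otimes_{\ca{O}_L}\Omega^1_{\ca{O}_L/\ca{O}_K}$. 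Passing to the associated long exact sequence of homology modules, the piece in low degrees reads
\begin{align}
H_1(\bb{L}_{\ca{O}_F/\ca{O}_L}) \to \ca{O}_F\otimes_{\ca{O}_L}\Omega^1_{\ca{O}_L/\ca{O}_K} \to \Omega^1_{\ca{O}_F/\ca{O}_K} \to \Omega^1_{\ca{O}_F/\ca{O}_L} \to 0.
\end{align}
The hypothesis $H_1(\bb{L}_{\ca{O}_F/\ca{O}_L}) = 0$ therefore yields a short exact sequence of $\ca{O}_F$-modules
\begin{align}
0 \longrightarrow \ca{O}_F\otimes_{\ca{O}_L}\Omega^1_{\ca{O}_L/\ca{O}_K} \longrightarrow \Omega^1_{\ca{O}_F/\ca{O}_K} \longrightarrow \Omega^1_{\ca{O}_F/\ca{O}_L} \longrightarrow 0.
\end{align}
Note that we need no hypothesis on $\bb{L}_{\ca{O}_L/\ca{O}_K}$ beyond degree $0$: the connecting map out of $H_1(\bb{L}_{\ca{O}_F/\ca{O}_L})$ is already zero.

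Second, multiplication by $\pi$ is a morphism from this short exact sequence to itself. Applying the snake lemma to the resulting three-by-two commutative diagram produces exactly the six-term exact sequence in the statement, with the kernels and cokernels of $\pi$-multiplication giving the $[\pi]$ and $/\pi$ pieces, respectively.

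The argument presents no serious obstacle; the substantive inputs beyond routine diagram chasing are the existence of the transitivity triangle for cotangent complexes of ring maps and the flatness of $\ca{O}_L \to \ca{O}_F$, the latter being immediate from the valuation ring hypothesis on $\ca{O}_L$. The one point that needs a conscious check is that the vanishing $H_1(\bb{L}_{\ca{O}_F/\ca{O}_L}) = 0$ together with flatness is precisely what is required to collapse the long exact sequence into a short exact sequence of differentials to which the snake lemma can be applied.
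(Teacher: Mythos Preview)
Your proof is correct and follows essentially the same approach as the paper. The paper obtains the same short exact sequence of differentials (citing it as known, since it follows from \ref{thm:differential}.(\ref{item:thm:differential-1})) and then applies the derived functor $\rr\ho_{\ca{O}_F}(\pi^{-1}\ca{O}_F/\ca{O}_F,-)$, noting that this is represented by $M\stackrel{\cdot\pi}{\longrightarrow}M$; the resulting long exact sequence is identical to what you extract via the snake lemma for multiplication by $\pi$.
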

\begin{proof}
	Note that for any $\ca{O}_F$-module $M$, $\rr\ho_{\ca{O}_F}(\pi^{-1}\ca{O}_F/\ca{O}_F,M)$ is represented by $\ho_{\ca{O}_F}(\pi^{-1}\ca{O}_F,M)\to \ho_{\ca{O}_F}(\ca{O}_F,M)$ and thus by $M\stackrel{\cdot \pi}{\longrightarrow}M$, whose $0$-th cohomology is $M[\pi]$ and first cohomology is $M/\pi M$. Then, the conclusion follows from applying the derived functor $\rr\ho_{\ca{O}_F}(\pi^{-1}\ca{O}_F/\ca{O}_F,-)$ to the exact sequence $0\to \ca{O}_F\otimes_{\ca{O}_L}\Omega^1_{\ca{O}_L/\ca{O}_K}\to\Omega^1_{\ca{O}_F/\ca{O}_K}\to \Omega^1_{\ca{O}_F/\ca{O}_L}\to 0$.
\end{proof}

\begin{myprop}\label{prop:rhom}
	Let $K\to L\to F$ be extensions of valuation fields with $L$ pre-perfectoid. Then, for any nonzero element $\pi\in \ca{O}_L$, there are canonical isomorphisms
	\begin{align}
		\beta_{F/L/K}:\ca{O}_{F}\otimes_{\ca{O}_L}\Omega^1_{\ca{O}_L/\ca{O}_K}[\pi]&\iso \Omega^1_{\ca{O}_{F}/\ca{O}_K}[\pi],\label{eq:prop:rhom-1}\\
		\gamma_{F/L/K}:\Omega^1_{\ca{O}_F/\ca{O}_K}/\pi\Omega^1_{\ca{O}_F/\ca{O}_K}&\iso \Omega^1_{\ca{O}_{F}/\ca{O}_L}/\pi\Omega^1_{\ca{O}_F/\ca{O}_L}.\label{eq:prop:rhom-2}
	\end{align}
\end{myprop}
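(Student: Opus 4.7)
The plan is to read both isomorphisms off the six-term exact sequence of \ref{lem:rhom} applied to $K\to L\to F$, after showing that the two middle terms in that sequence vanish.

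I would first verify the hypothesis of \ref{lem:rhom}. Since $L$ is pre-perfectoid, it is ``perfect'' in the sense of \cite[5.24]{he2024coh} (the same input already used in the proof of \ref{cor:differential}), so \ref{thm:differential}.(\ref{item:thm:differential-1}) applied to $\ca{O}_L\to\ca{O}_F$ yields $H_1(\bb{L}_{\ca{O}_F/\ca{O}_L})=0$, and the exact sequence \eqref{eq:lem:rhom-1} is therefore available. Next I would dispose of the third term $\Omega^1_{\ca{O}_F/\ca{O}_L}[\pi]$: this is immediate from \ref{cor:differential} with the role of the base field played by $L$ in place of $K$, which gives that $\Omega^1_{\ca{O}_F/\ca{O}_L}$ is torsion-free and hence has no $\pi$-torsion.

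The main step, which I expect to be the only real obstacle, is the vanishing of the fourth term $\ca{O}_F\otimes_{\ca{O}_L}\Omega^1_{\ca{O}_L/\ca{O}_K}/\pi$, which I would obtain by proving that $\Omega^1_{\ca{O}_L/\ca{O}_K}$ is $\pi$-divisible. One may assume $\pi\in\ak{m}_L$, the remaining case being trivial. For any $x\in\ca{O}_L$, the defining Frobenius surjectivity modulo $p$ of a pre-perfectoid field (see \ref{para:notation-perfd}) produces $y,z\in\ca{O}_L$ with $x=y^p+pz$; since $dp=0$ in $\Omega^1_{\ca{O}_L/\ca{O}_K}$, this forces
\begin{align*}
dx=p\bigl(y^{p-1}\,dy+dz\bigr)\in p\,\Omega^1_{\ca{O}_L/\ca{O}_K}.
\end{align*}
Because $\Omega^1_{\ca{O}_L/\ca{O}_K}$ is generated as an $\ca{O}_L$-module by the differentials $dx$ with $x\in\ca{O}_L$, this gives $\Omega^1_{\ca{O}_L/\ca{O}_K}=p\,\Omega^1_{\ca{O}_L/\ca{O}_K}$, and iterating yields $\Omega^1_{\ca{O}_L/\ca{O}_K}=p^n\,\Omega^1_{\ca{O}_L/\ca{O}_K}$ for every $n\in\bb{N}$. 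Since $L$ has height one and $p\in\ak{m}_L$, some $p^n$ lies in $\pi\ca{O}_L$, so $\Omega^1_{\ca{O}_L/\ca{O}_K}=\pi\,\Omega^1_{\ca{O}_L/\ca{O}_K}$, as desired.

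With the two middle terms of \eqref{eq:lem:rhom-1} vanishing, the six-term sequence degenerates into the two short exact sequences
\begin{align*}
0\longrightarrow \ca{O}_F\otimes_{\ca{O}_L}\Omega^1_{\ca{O}_L/\ca{O}_K}[\pi]\stackrel{\beta_{F/L/K}}{\longrightarrow}\Omega^1_{\ca{O}_F/\ca{O}_K}[\pi]\longrightarrow 0
\end{align*}
and
\begin{align*}
0\longrightarrow \Omega^1_{\ca{O}_F/\ca{O}_K}/\pi\Omega^1_{\ca{O}_F/\ca{O}_K}\stackrel{\gamma_{F/L/K}}{\longrightarrow}\Omega^1_{\ca{O}_F/\ca{O}_L}/\pi\Omega^1_{\ca{O}_F/\ca{O}_L}\longrightarrow 0,
\end{align*}
which exhibit $\beta_{F/L/K}$ and $\gamma_{F/L/K}$ as isomorphisms and conclude the proof.
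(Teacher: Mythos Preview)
Your proposal is correct and follows essentially the same approach as the paper: both apply the six-term sequence of \ref{lem:rhom}, kill $\Omega^1_{\ca{O}_F/\ca{O}_L}[\pi]$ via \ref{cor:differential}, and kill $\ca{O}_F\otimes_{\ca{O}_L}\Omega^1_{\ca{O}_L/\ca{O}_K}/\pi$ via $\pi$-divisibility of $\Omega^1_{\ca{O}_L/\ca{O}_K}$. The only difference is that the paper cites \cite[6.6.6]{gabber2003almost} for this divisibility, whereas you supply the short direct argument from Frobenius surjectivity modulo $p$; your version is more self-contained and in fact goes straight to $\Omega^1_{\ca{O}_L/\ca{O}_K}$ rather than passing through $\Omega^1_{\ca{O}_L/\bb{Z}_p}$.
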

\begin{proof}
	It follows from \ref{lem:rhom} and the facts that $\Omega^1_{\ca{O}_{F}/\ca{O}_L}$ is torsion-free (\ref{cor:differential}) and $\Omega^1_{\ca{O}_L/\bb{Z}_p}$ is $\pi$-divisible (\cite[6.6.6]{gabber2003almost}) as $L$ is pre-perfectoid.
\end{proof}

\begin{mythm}\label{thm:ramified-bc}
	Let $K\to F$ be an extension of perfect Henselian valuation fields of height $1$, and let $K'$ be a finite field extension of $K$ such that $F'=K'\otimes_KF$ is a field. 
	\begin{align}\label{eq:thm:ramified-bc-1}
		\xymatrix{
			F'&F\ar[l]\\
			K'\ar[u]&K\ar[l]\ar[u]
		}
	\end{align}
	Assume that there exists $\pi\in\ca{O}_F$ such that any torsion element of $\Omega^1_{\ca{O}_F/\ca{O}_K}$ is killed by $\pi$. Then, for any $\varpi\in\ca{O}_{F'}$ such that $\mrm{Ann}_{\ca{O}_{F'}}(\ca{O}_{F'}\otimes_{\ca{O}_{K'}}\Omega^1_{\ca{O}_{K'}/\ca{O}_K})\subseteq \varpi\ca{O}_{F'}$, we have $\scr{D}_{F'/F}\subseteq \varpi\pi^{-1}\ca{O}_{F'}$. In particular,
	\begin{align}
		\pi\varpi^{-1}[K':K]\cdot \ak{m}_K\ak{m}_{K'}^{-1}\cdot \widetilde{\ak{m}}_{F'}\subseteq \ca{O}_{K'}\otimes_{\ca{O}_K}\ca{O}_F\subseteq \ca{O}_{F'}.
	\end{align}
\end{mythm}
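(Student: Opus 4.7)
The plan is to reduce the desired inclusion to a containment for the conductor of the intermediate order $R:=\ca{O}_{K'}\otimes_{\ca{O}_K}\ca{O}_F\hookrightarrow\ca{O}_{F'}$. Since $K$ and $F$ are perfect, so are their algebraic extensions $K'$ and $F'$; by \ref{thm:differential}.(\ref{item:thm:differential-1}) every relevant cotangent complex reduces to its K\"ahler differential, and \ref{thm:differential}.(\ref{item:thm:differential-3}) gives the identification $\scr{D}_{F'/F}^\al = F_0(\Omega^{1,\al}_{\ca{O}_{F'}/\ca{O}_F})$.

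Let $\mathfrak{f}=\{x\in\ca{O}_{F'}\mid x\ca{O}_{F'}\subseteq R\}$ be the conductor. Paralleling the proof of \ref{prop:ramified-bc}, one has the almost identification $R^{*}=\ca{O}_{K'}^{*}\otimes_{\ca{O}_K}\ca{O}_F$, and from the definitions one directly checks $R^{*}\cdot\mathfrak{f}\subseteq\ca{O}_{F'}^{*}$: for $x\in R^{*}$, $y\in\mathfrak{f}$, $z\in\ca{O}_{F'}$, we have $yz\in R$, so $\mrm{Tr}_{F'/F}(xyz)\in\ca{O}_F$. Multiplying by $\scr{D}_{F'/F}$ and using the standard duality identities $\scr{D}_{F'/F}\cdot\ca{O}_{F'}^{*}=\ca{O}_{F'}$ and $\scr{D}_{R/\ca{O}_F}\cdot R^{*}=R$ (both almost), together with the base change $\scr{D}_{R/\ca{O}_F}\cdot\ca{O}_{F'}=\scr{D}_{K'/K}\ca{O}_{F'}$, we obtain
\begin{align*}
\scr{D}_{F'/F}\cdot\mathfrak{f}\subseteq \scr{D}_{K'/K}\ca{O}_{F'}\quad(\text{almost}).
\end{align*}
Combined with $F_0(B^\al) = \scr{D}_{K'/K}\ca{O}_{F'}^\al\subseteq\mrm{Ann}(B^\al)\subseteq \varpi\ca{O}_{F'}^\al$ (where $B:=\ca{O}_{F'}\otimes_{\ca{O}_{K'}}\Omega^1_{\ca{O}_{K'}/\ca{O}_K}$, using the hypothesis on $\varpi$ and $F_0\subseteq\mrm{Ann}$), this yields $\scr{D}_{F'/F}\cdot\mathfrak{f}\subseteq\varpi\ca{O}_{F'}$ almost.

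It therefore remains to establish $\pi\in\mathfrak{f}$, equivalently $\pi\ca{O}_{F'}\subseteq R$, from which $\pi\scr{D}_{F'/F}\subseteq\scr{D}_{F'/F}\cdot\mathfrak{f}\subseteq\varpi\ca{O}_{F'}$ almost, giving the first assertion (upgrading to the honest inclusion via the principality of $\varpi\pi^{-1}\ca{O}_{F'}$, cf.~\eqref{eq:para:notation-norm-3}). My strategy is to compare the two transitivity short exact sequences for the parallel towers $\ca{O}_K\to\ca{O}_F\to\ca{O}_{F'}$ and $\ca{O}_K\to\ca{O}_{K'}\to\ca{O}_{F'}$ (short exact by the perfectness assumption and \ref{thm:differential}.(\ref{item:thm:differential-1})), exploiting that the discrepancy $\ca{O}_{F'}/R$ is controlled by Fitting-ideal multiplicativity \eqref{eq:para:fitting-ideal-3} and \ref{thm:differential}.(\ref{item:thm:differential-3}) in terms of the torsion of $\Omega^1_{\ca{O}_F/\ca{O}_K}$, which by hypothesis is killed by $\pi$. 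The \emph{main obstacle} is to make the propagation $\pi\cdot(\ca{O}_{F'}/R) = 0$ (almost) rigorous; this appears to require a careful analysis of the cotangent-complex diagram involving the non-flat base change along $\ca{O}_K\to\ca{O}_F$.

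For the ``in particular'' statement, one applies \ref{prop:ramified-bc} to the fractional ideal $\pi' := \pi\varpi^{-1}[K':K]\ak{m}_K\ak{m}_{K'}^{-1}$: the first part gives $\pi'\scr{D}_{F'/F}\subseteq [K':K]\ak{m}_K\ak{m}_{K'}^{-1}\ca{O}_{F'}\subseteq \scr{D}_{K'/K}\ca{O}_{F'}$, where the second inclusion is \ref{cor:different-trace}, and \ref{prop:ramified-bc} then yields $\pi'\widetilde{\ak{m}}_{F'}\subseteq R$, which is precisely the claimed inclusion.
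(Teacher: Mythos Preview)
Your proposal has a genuine gap at exactly the point you flag as the ``main obstacle'': you never establish that $\pi$ (almost) lies in the conductor $\mathfrak{f}$, and the hand-wave toward Fitting-ideal multiplicativity and cotangent-complex diagrams does not constitute an argument. In fact the claim $\pi\cdot(\ca{O}_{F'}/R)=0$ (almost) is \emph{stronger} than what the theorem asserts: the theorem's ``in particular'' only gives $\pi\varpi^{-1}[K':K]\ak{m}_K\ak{m}_{K'}^{-1}\widetilde{\ak{m}}_{F'}\subseteq R$, and since $[K':K]\ak{m}_K\ak{m}_{K'}^{-1}\subseteq\scr{D}_{K'/K}\subseteq\mrm{Ann}(B)\subseteq\varpi\ca{O}_{F'}$ (almost), the prefactor $\varpi^{-1}[K':K]\ak{m}_K\ak{m}_{K'}^{-1}$ generally has norm $<1$. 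So you are trying to prove more than is needed, and there is no visible mechanism linking the torsion bound on $\Omega^1_{\ca{O}_F/\ca{O}_K}$ directly to the conductor of $R\subseteq\ca{O}_{F'}$.

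The paper's argument bypasses the conductor entirely. It works inside $\Omega^1_{\ca{O}_{F'}/\ca{O}_K}$, where both $A:=\ca{O}_{F'}\otimes_{\ca{O}_F}\Omega^1_{\ca{O}_F/\ca{O}_K}$ and $B:=\ca{O}_{F'}\otimes_{\ca{O}_{K'}}\Omega^1_{\ca{O}_{K'}/\ca{O}_K}$ sit as honest submodules (by \ref{thm:differential}.(\ref{item:thm:differential-1})). Set $D=A\cap B$. Since $B$ is torsion, $D$ lies in the torsion of $A$, hence $\pi D=0$ by hypothesis. Then $Q:=B/D$ injects into $\Omega^1_{\ca{O}_{F'}/\ca{O}_K}/A=\Omega^1_{\ca{O}_{F'}/\ca{O}_F}$, so
\[
\scr{D}_{F'/F}^{\al}=F_0(\Omega^{1,\al}_{\ca{O}_{F'}/\ca{O}_F})\subseteq F_0(Q^{\al})\subseteq\mrm{Ann}(Q)^{\al},
\]
while the short exact sequence $0\to D\to B\to Q\to 0$ with $\pi D=0$ gives $\pi\,\mrm{Ann}(Q)\subseteq\mrm{Ann}(B)\subseteq\varpi\ca{O}_{F'}$. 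This yields $\scr{D}_{F'/F}\subseteq\varpi\pi^{-1}\ca{O}_{F'}$ in one stroke. Your treatment of the ``in particular'' via \ref{cor:different-trace} and \ref{prop:ramified-bc} is correct and matches the paper.
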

\begin{proof}
	By \ref{thm:differential}.(\ref{item:thm:differential-1}), we see that $\ca{O}_{F'}\otimes_{\ca{O}_F}\Omega^1_{\ca{O}_F/\ca{O}_K}$ and $\ca{O}_{F'}\otimes_{\ca{O}_{K'}}\Omega^1_{\ca{O}_{K'}/\ca{O}_K}$ canonically identify with $\ca{O}_{F'}$-submodules of $\Omega^1_{\ca{O}_{F'}/\ca{O}_K}$. Let $D$ be their intersection. Since $\Omega^1_{\ca{O}_{K'}/\ca{O}_K}$ is torsion, we have $\pi D=0$ by assumption. Let $Q$ be the quotient of $\ca{O}_{F'}\otimes_{\ca{O}_{K'}}\Omega^1_{\ca{O}_{K'}/\ca{O}_K}$ by $D$. Then, there is a canonical injective morphism of exact sequences by \ref{thm:differential}.(\ref{item:thm:differential-1}),
	\begin{align}
		\xymatrix{
			0\ar[r]& D\ar[d]\ar[r]&\ca{O}_{F'}\otimes_{\ca{O}_{K'}}\Omega^1_{\ca{O}_{K'}/\ca{O}_K}\ar[d]\ar[r]&Q\ar[d]\ar[r]&0\\
			0\ar[r]&\ca{O}_{F'}\otimes_{\ca{O}_F}\Omega^1_{\ca{O}_F/\ca{O}_K}\ar[r]&\Omega^1_{\ca{O}_{F'}/\ca{O}_K}\ar[r]&\Omega^1_{\ca{O}_{F'}/\ca{O}_F}\ar[r]&0.
		}
	\end{align}
	As $\pi D=0$, we see that 
	\begin{align}
		\pi \mrm{Ann}_{\ca{O}_{F'}}(Q)\subseteq \mrm{Ann}_{\ca{O}_{F'}}(\ca{O}_{F'}\otimes_{\ca{O}_{K'}}\Omega^1_{\ca{O}_{K'}/\ca{O}_K})\subseteq \varpi\ca{O}_{F'},
	\end{align}
	which implies that $\mrm{Ann}_{\ca{O}_{F'}}(Q)\subseteq \varpi\pi^{-1}\ca{O}_{F'}$.
	
	On the other hand, notice that $Q$ is uniformly almost finitely generated over $\ca{O}_{F'}$ as a quotient of $\ca{O}_{F'}\otimes_{\ca{O}_{K'}}\Omega^1_{\ca{O}_{K'}/\ca{O}_K}$ by \ref{thm:differential}.(\ref{item:thm:differential-3}). Thus, we have
	\begin{align}
		\scr{D}^\al_{F'/F}=F_0(\Omega^{1,\al}_{\ca{O}_{F'}/\ca{O}_F})\subseteq F_0(Q^\al)\subseteq \mrm{Ann}_{\ca{O}_{F'}}(Q^\al)
	\end{align}
	by \ref{thm:differential}.(\ref{item:thm:differential-3}) and \eqref{eq:para:fitting-ideal-1}. In particular, $\widetilde{\ak{m}}_{F'}\scr{D}_{F'/F}\subseteq \mrm{Ann}_{\ca{O}_{F'}}(Q)\subseteq \varpi\pi^{-1}\ca{O}_{F'}$ and thus $\scr{D}_{F'/F}\subseteq \varpi\pi^{-1}\ca{O}_{F'}$ by \eqref{eq:para:notation-norm-3}. Therefore, we have
	\begin{align}
		\pi\varpi^{-1}[K':K]\cdot \ak{m}_K\ak{m}_{K'}^{-1}\cdot \scr{D}_{F'/F}\subseteq \pi\varpi^{-1} \cdot\scr{D}_{K'/K}\cdot\scr{D}_{F'/F}\subseteq \scr{D}_{K'/K} \ca{O}_{F'},
	\end{align}
	where the first inclusion follows from \ref{cor:different-trace}. Hence, the conclusion follows from \ref{prop:ramified-bc}.
\end{proof}

\section{Arithmetic and Geometric Valuation Fields}\label{sec:ari-geo}
In this section, we introduce two types of valuation fields in \ref{defn:ari-geo}: arithmetic and geometric. We discuss their basic examples and properties. In particular, we introduce differential Tate twists for non-arithmetic valuation fields in \ref{defn:int-tate-twist}. In the end, we prove that the valuation fields finitely generated over $\bb{Q}_p$ with transcendental degree $\leq 1$ are either arithmetic and geometric (see \ref{prop:trans-ari-geo}).

\begin{mypara}\label{para:notation-tate-mod}
	For any abelian group $M$, we put
	\begin{align}
		T_p(M)=&\ho_{\bb{Z}}(\bb{Z}[1/p]/\bb{Z},M)=\plim_{x\mapsto px}M[p^r],\label{eq:para:notation-Tate-mod-1}\\
		V_p(M)=&\ho_{\bb{Z}}(\bb{Z}[1/p],M)=\plim_{x\mapsto px}M.\label{eq:para:notation-Tate-mod-2}
	\end{align}
	Note that $T_p(M)$ is a $p$-adically complete $\bb{Z}_p$-module (\cite[\href{https://stacks.math.columbia.edu/tag/0G1Q}{0G1Q}]{stacks-project}). Moreover, if $M=M[p^\infty]$, then we have $V_p(M)=T_p(M)[1/p]$ by applying $\ho_{\bb{Z}}(-,M)[1/p]$ to the exact sequence $0\to \bb{Z}\to \bb{Z}[1/p]\to \bb{Z}[1/p]/\bb{Z}\to 0$. 
\end{mypara}

\begin{mypara}\label{para:notation-tate-twist}
	Let $\overline{\bb{Q}}_p$ be an algebraic closure of $\bb{Q}_p$. We put
	\begin{align}
		\bb{Z}_p(1)=T_p(\overline{\bb{Q}}_p^\times)=\lim_{r\in\bb{N}}\overline{\bb{Q}}_p^\times[p^r],
	\end{align}
	which is a free $\bb{Z}_p$-module of rank $1$. Indeed, any compatible system of primitive $p$-power roots of unity $(\zeta_{p^n})_{n\in \bb{N}}$ in $\overline{\bb{Q}}_p$ (i.e., $\zeta_{p^{n+1}}^p=\zeta_{p^n}$, $\zeta_1=1$, $\zeta_p\neq 1$) forms a basis of $\bb{Z}_p(1)$. For any $\bb{Z}_p$-module $M$ and $n\in \bb{Z}$, we set $M(n)=M\otimes_{\bb{Z}_p}\bb{Z}_p(1)^{\otimes n}$, the $n$-th \emph{Tate twist} of $M$.
\end{mypara}

\begin{mypara}\label{para:ari-geo}
	In the rest of this section, for any Henselian valuation field $F$ extension of $\bb{Q}_p$, we fix an algebraic closure $\overline{F}$ of $F$. Let $(\zeta_{p^n})_{n\in\bb{N}}$ be a compatible system of primitive $p$-power roots of unity contained in $\overline{F}$. For any $n\in\bb{N}$, we put
\begin{align}\label{eq:para:ari-geo-1}
	F_n=F(\zeta_{p^n}),\quad F_\infty&=\bigcup_{n\in\bb{N}}F_n,
\end{align}
which do not depend on the choice of $(\zeta_{p^n})_{n\in\bb{N}}$. Note that $F_0=F$. We put $G_{F_n}=\gal(\overline{F}/F_n)$ the absolute Galois group of $F_n$ for any $n\in\bb{N}\cup\{\infty\}$.
\end{mypara}

\begin{mylem}\label{lem:cyclotomic}
	Let $K$ be an absolutely unramified Henselian discrete valuation field extension of $\bb{Q}_p$, $n\in\bb{N}_{>0}$.
	\begin{enumerate}
		\renewcommand{\labelenumi}{{\rm(\theenumi)}}
		\item The field $K_n=K(\zeta_{p^n})$ is a totally ramified Galois extension of $K$ of degree $p^{n-1}(p-1)$ with uniformizer $\zeta_{p^n}-1$, and $\ca{O}_{K_n}=\ca{O}_K[T]/(\frac{T^{p^n}-1}{T^{p^{n-1}}-1})=\ca{O}_K[\zeta_{p^n}]$.\label{item:lem:cyclotomic-1}
		\item There is a canonical group homeomorphism, called the \emph{cyclotomic character},
		\begin{align}\label{eq:lem:cyclotomic-1}
			\chi:\gal(K_\infty/K)\iso\bb{Z}_p^\times,
		\end{align}
		characterized by $\sigma(\zeta_{p^n})=\zeta_{p^n}^{\chi(\sigma)}$ for any $\sigma\in \gal(K_\infty/K)$. Moreover, it induces an isomorphism of subgroups
		\begin{align}\label{eq:lem:cyclotomic-2}
			\chi:\gal(K_\infty/K_n)\iso 1+p^n\bb{Z}_p.
		\end{align}\label{item:lem:cyclotomic-2}
		\item There is an isomorphism of $\ca{O}_{K_n}$-modules
		\begin{align}\label{eq:lem:cyclotomic-3}
			(p^{-n}\ca{O}_{K_n}/(\zeta_p-1)^{-1}\ca{O}_{K_n})(1)\iso\Omega^1_{\ca{O}_{K_n}/\ca{O}_K},
		\end{align}
		sending $p^{-n}\otimes (\zeta_{p^k})_{k\in\bb{N}}$ to $\df\log(\zeta_{p^n})$.\label{item:lem:cyclotomic-3}
		\item The different ideal $\scr{D}_{K_n/K}$ is generated by $p^n(\zeta_p-1)^{-1}$.\label{item:lem:cyclotomic-4}
	\end{enumerate}
\end{mylem}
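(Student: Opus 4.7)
The plan is to reduce the whole statement to the case $K = \bb{Q}_p$ via base change along the unramified extension $K/\bb{Q}_p$, and then to make the identifications in (3) and (4) by an explicit computation with the cyclotomic polynomial $\Phi_{p^n}(T) = (T^{p^n}-1)/(T^{p^{n-1}}-1)$. The central input is that $\Phi_{p^n}(1+S) \in \bb{Z}_p[S]$ is Eisenstein of degree $p^{n-1}(p-1)$, since $\Phi_{p^n}(1) = p$.

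For (1), Eisenstein-ness of $\Phi_{p^n}(1+S)$ over $\bb{Q}_p$ implies that $\bb{Q}_p(\zeta_{p^n})/\bb{Q}_p$ is totally ramified of degree $p^{n-1}(p-1)$ with uniformizer $\zeta_{p^n}-1$ and ring of integers $\bb{Z}_p[\zeta_{p^n}]$. Because $K/\bb{Q}_p$ is absolutely unramified and Henselian, the polynomial $\Phi_{p^n}(1+S) \in \ca{O}_K[S]$ remains Eisenstein; hence it is irreducible over $K$ and the Eisenstein criterion again gives $[K_n:K] = p^{n-1}(p-1)$, total ramification, and $\ca{O}_{K_n} = \ca{O}_K[\zeta_{p^n}]$. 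For (2), the standard description of $\gal(\bb{Q}_p(\zeta_{p^n})/\bb{Q}_p)$ as $(\bb{Z}/p^n\bb{Z})^\times$ carries over to $\gal(K_n/K)$ by the same degree count in (1). The cyclotomic character is then defined compatibly in $n$, and passing to $\plim_n$ identifies $\gal(K_\infty/K)$ with $\bb{Z}_p^\times$; the kernel of the projection to $(\bb{Z}/p^n\bb{Z})^\times$ is $\gal(K_\infty/K_n)$ on one side and $1+p^n\bb{Z}_p$ on the other.

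For (3), I would use the presentation $\ca{O}_{K_n} = \ca{O}_K[T]/(\Phi_{p^n}(T))$ from (1) to obtain
\begin{align*}
\Omega^1_{\ca{O}_{K_n}/\ca{O}_K} \;\cong\; \ca{O}_{K_n}\, \df\zeta_{p^n} \,/\, \Phi_{p^n}'(\zeta_{p^n})\,\ca{O}_{K_n}\, \df\zeta_{p^n}.
\end{align*}
Differentiating $(T^{p^{n-1}}-1)\Phi_{p^n}(T) = T^{p^n}-1$ and evaluating at $T = \zeta_{p^n}$ gives $\Phi_{p^n}'(\zeta_{p^n}) = p^n \zeta_{p^n}^{-1}/(\zeta_p-1)$, whose associated ideal is $(p^n/(\zeta_p-1))$ since $\zeta_{p^n}$ is a unit. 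Since $\zeta_{p^n}$ is a unit, $\df\log(\zeta_{p^n}) = \zeta_{p^n}^{-1}\,\df\zeta_{p^n}$ is also a generator, so $\Omega^1_{\ca{O}_{K_n}/\ca{O}_K} \cong \ca{O}_{K_n}/(p^n/(\zeta_p-1))$ via $1 \mapsto \df\log(\zeta_{p^n})$. Multiplication by $p^n$ on $K_n$ gives $\ca{O}_{K_n}/(p^n/(\zeta_p-1)) \cong p^{-n}\ca{O}_{K_n}/(\zeta_p-1)^{-1}\ca{O}_{K_n}$ sending $1$ to $p^{-n}$, and after trivializing the Tate twist using the basis $(\zeta_{p^k})_{k\in\bb{N}}$ of $\bb{Z}_p(1)$, the element $p^{-n}\otimes(\zeta_{p^k})_k$ maps to $\df\log(\zeta_{p^n})$ as required. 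Part (4) is then immediate: either from the classical formula $\scr{D}_{K_n/K} = (\Phi_{p^n}'(\zeta_{p^n}))$ for a monogenic extension, or from \ref{thm:differential}.(\ref{item:thm:differential-3}) combined with the Fitting-ideal computation supplied by (3).

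The computation is entirely explicit and there is no conceptually difficult step; the main obstacle is purely bookkeeping in (3), namely checking that the two successive identifications (multiplication by $p^n$, and trivialization of the Tate twist via the chosen basis) match so that $p^{-n}\otimes(\zeta_{p^k})_k$ lands on $\df\log(\zeta_{p^n})$ and not on a unit multiple thereof.
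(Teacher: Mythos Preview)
Your proposal is correct and follows essentially the same route as the paper: the paper simply cites \cite[\Luoma{1}.\textsection6, Proposition 17]{serre1979local} for (1) and (2), says (3) follows directly from (1), and derives (4) from (3) via either \cite[\Luoma{3}.\textsection6, Corollary 2]{serre1979local} or \ref{thm:differential}.(\ref{item:thm:differential-3}) with \eqref{eq:para:fitting-ideal-1}. Your write-up is precisely an unpacking of these citations (Eisenstein argument for $\Phi_{p^n}(1+S)$, the derivative computation $\Phi_{p^n}'(\zeta_{p^n})=p^n\zeta_{p^n}^{-1}/(\zeta_p-1)$ for (3), and the monogenic-different formula for (4)); the only slip is that the identification in (3) is multiplication by $p^{-n}$, not $p^n$, but this is cosmetic.
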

\begin{proof}
	(\ref{item:lem:cyclotomic-1}) and (\ref{item:lem:cyclotomic-2}) follow from \cite[\Luoma{1}.\textsection6, Proposition 17]{serre1979local} (cf. \cite[\Luoma{4}.\textsection4, Proposition 17]{serre1979local}). Then, (\ref{item:lem:cyclotomic-3}) follows directly from (\ref{item:lem:cyclotomic-1}), and (\ref{item:lem:cyclotomic-4}) follows from (\ref{item:lem:cyclotomic-3}) and \cite[\Luoma{3}.\textsection6, Corollary 2]{serre1979local} (or \eqref{eq:para:fitting-ideal-1} and \ref{thm:differential}.(\ref{item:thm:differential-3})).
\end{proof}

\begin{myprop}[{\cite[Th\'eor\`eme 1']{fontaine1982formes}}]\label{prop:cyclotomic-diff}
	Let $K$ be a Henselian discrete valuation field extension of $\bb{Q}_p$ with perfect residue field, $K'$ the unique absolutely unramified complete discrete valuation field subextension of $\widehat{K}/\bb{Q}_p$ such that $\widehat{K}$ is a totally ramified finite extension of $K'$ {\rm(\cite[\Luoma{2}.\textsection5, Theorem 4]{serre1979local})}, $\pi_K\in\ca{O}_K$ a generator of the different ideal $\scr{D}_{\widehat{K}/K'}\subseteq \ca{O}_{\widehat{K}}$. Then, there is an isomorphism of $\ca{O}_{K_\infty}$-modules
	\begin{align}\label{eq:prop:cyclotomic-diff-1}
		(K_\infty/\pi_K^{-1}(\zeta_p-1)^{-1}\ca{O}_{K_\infty})(1)\iso\Omega^1_{\ca{O}_{K_\infty}/\ca{O}_K},
	\end{align}  
	sending $p^{-n}\otimes (\zeta_{p^k})_{k\in\bb{N}}$ to $\df\log(\zeta_{p^n})$ for any $n\in\bb{N}$, where $K_\infty=\bigcup_{n\in\bb{N}}K(\zeta_{p^n})$. Moreover, $K_\infty$ is pre-perfectoid.
\end{myprop}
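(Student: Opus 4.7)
The plan is to adapt Fontaine's classical argument to the Gabber-Ramero cotangent-complex framework developed in Section \ref{sec:diff}, combining Lemma \ref{lem:cyclotomic} (the absolutely unramified case) with transitivity triangles and almost \'etaleness.

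First I would reduce to the case where $K$ is complete, using that both $\scr{D}_{K_n/K}$ and the target module $(K_\infty/\pi_K^{-1}(\zeta_p-1)^{-1}\ca{O}_{K_\infty})(1)$ are compatible with the completion $K\to \widehat{K}$ via faithfully flat base change. In the complete case, $K'\subseteq K$ is a finite totally ramified extension and $\ca{O}_K$ is monogenic over $\ca{O}_{K'}$, so $\Omega^1_{\ca{O}_K/\ca{O}_{K'}}\cong \ca{O}_K/\pi_K\ca{O}_K$. I would then apply Theorem \ref{thm:differential}.(\ref{item:thm:differential-1}) to the two towers $K'\to K\to K_n$ and $K'\to K'_n\to K_n$, obtaining respectively the short exact sequences
\begin{align*}
0\to \ca{O}_{K_n}/\pi_K\to \Omega^1_{\ca{O}_{K_n}/\ca{O}_{K'}}\to \Omega^1_{\ca{O}_{K_n}/\ca{O}_K}\to 0
\end{align*}
and
\begin{align*}
0\to \ca{O}_{K_n}\otimes_{\ca{O}_{K'_n}}\Omega^1_{\ca{O}_{K'_n}/\ca{O}_{K'}}\to \Omega^1_{\ca{O}_{K_n}/\ca{O}_{K'}}\to \Omega^1_{\ca{O}_{K_n}/\ca{O}_{K'_n}}\to 0,
\end{align*}
in which the first term of the second sequence is explicit by Lemma \ref{lem:cyclotomic}.(\ref{item:lem:cyclotomic-3}).

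Next I would pass to the colimit over $n$. By Lemma \ref{lem:cyclotomic}.(\ref{item:lem:cyclotomic-3}) the first term of the second sequence becomes $(K_\infty/(\zeta_p-1)^{-1}\ca{O}_{K_\infty})(1)$. The crucial input is that $K'_\infty$ is pre-perfectoid (hence deeply ramified by \cite[6.6.6]{gabber2003almost}), which forces the finite algebraic extension $K_\infty/K'_\infty$ to be almost \'etale, so $\Omega^1_{\ca{O}_{K_\infty}/\ca{O}_{K'_\infty}}$ vanishes almost. This yields an almost isomorphism $\Omega^1_{\ca{O}_{K_\infty}/\ca{O}_{K'}}\aliso (K_\infty/(\zeta_p-1)^{-1}\ca{O}_{K_\infty})(1)$, and then the first sequence presents $\Omega^1_{\ca{O}_{K_\infty}/\ca{O}_K}$ as the quotient by a cyclic submodule of annihilator $\pi_K$.

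The main obstacle is to identify this quotient precisely as $(K_\infty/\pi_K^{-1}(\zeta_p-1)^{-1}\ca{O}_{K_\infty})(1)$ with the stated map. I plan to resolve it via Fitting ideals: since $\ca{O}_{K_n}$ is monogenic over $\ca{O}_K$, $\Omega^1_{\ca{O}_{K_n}/\ca{O}_K}$ is cyclic with annihilator $\scr{D}_{K_n/K}$ by Theorem \ref{thm:differential}.(\ref{item:thm:differential-3}), and the transitivity of differents applied to both towers gives $\scr{D}_{K_n/K}=\scr{D}_{K_n/K'_n}\cdot p^n(\zeta_p-1)^{-1}\pi_K^{-1}\ca{O}_{K_n}$; in the colimit, almost \'etaleness of $K_\infty/K'_\infty$ forces $\colim\scr{D}_{K_n/K'_n}^{-1}=\ca{O}_{K_\infty}$ almost, which pins down the cokernel as $(K_\infty/\pi_K^{-1}(\zeta_p-1)^{-1}\ca{O}_{K_\infty})(1)$. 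The explicit formula $\df\log(\zeta_{p^n})\leftrightarrow p^{-n}\otimes(\zeta_{p^k})_k$ is then forced by the naturality of Lemma \ref{lem:cyclotomic}.(\ref{item:lem:cyclotomic-3}) under $K'\to K$. Finally, the pre-perfectoidness of $K_\infty$ follows in the complete case from that of $K'_\infty$ and $[K_\infty:K'_\infty]<\infty$, and hence in general after passing back through the completion.
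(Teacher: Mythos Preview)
Your overall strategy matches the paper's: reduce to $K$ complete, compare the two transitivity sequences for $K'\to K\to K_\infty$ and $K'\to K'_\infty\to K_\infty$, and use the pre-perfectoidness of $K'_\infty$. The difference lies in one step, and it is where your argument has a gap.

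You invoke almost \'etaleness of $K_\infty/K'_\infty$ to conclude that $\Omega^1_{\ca{O}_{K_\infty}/\ca{O}_{K'_\infty}}$ is \emph{almost} zero, and then try to repair the resulting ``almost'' isomorphism via a Fitting-ideal/different computation. But that repair also ends only in an almost statement (your $\colim\scr{D}_{K_n/K'_n}^{-1}=\ca{O}_{K_\infty}$ is again only almost), so you never pin down the submodule exactly. The paper avoids this entirely: since $K'_\infty$ is pre-perfectoid, Corollary~\ref{cor:differential} says $\Omega^1_{\ca{O}_{K_\infty}/\ca{O}_{K'_\infty}}$ is torsion-free; but it is also torsion (algebraic extension), hence it is \emph{exactly} zero. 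This gives an honest isomorphism $\Omega^1_{\ca{O}_{K_\infty}/\ca{O}_{K'}}\cong K_\infty/(\zeta_p-1)^{-1}\ca{O}_{K_\infty}$ via \ref{thm:differential}.(\ref{item:thm:differential-1}) and \ref{lem:cyclotomic}.(\ref{item:lem:cyclotomic-3}).

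Once you have this exact isomorphism, the identification of the quotient is immediate and does not require Fitting ideals or transitivity of differents: the image of $\ca{O}_{K_\infty}\otimes_{\ca{O}_K}\Omega^1_{\ca{O}_K/\ca{O}_{K'}}\cong \ca{O}_{K_\infty}/\pi_K\ca{O}_{K_\infty}$ is a cyclic submodule of $K_\infty/(\zeta_p-1)^{-1}\ca{O}_{K_\infty}$ with annihilator exactly $\pi_K\ca{O}_{K_\infty}$, and since the submodules of $K_\infty/(\zeta_p-1)^{-1}\ca{O}_{K_\infty}$ are totally ordered (valuation ring), there is a unique such submodule, namely $\pi_K^{-1}(\zeta_p-1)^{-1}\ca{O}_{K_\infty}/(\zeta_p-1)^{-1}\ca{O}_{K_\infty}$. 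Replace your ``almost'' step by this torsion-free-plus-torsion argument and the rest of your outline goes through cleanly.
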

\begin{proof}
	We fix a $\bb{Z}_p$-basis $(\zeta_{p^k})_{k\in\bb{N}}$ of $\bb{Z}_p(1)$. Consider the homomorphism of $\ca{O}_{K_\infty}$-modules $K_\infty\to \Omega^1_{\ca{O}_{K_\infty}/\ca{O}_K}$ defined by sending $p^{-n}$ to $\df\log(\zeta_{p^n})$ for any $n\in\bb{N}$. It suffices to show that it is surjective with kernel generated by $\pi_K^{-1}(\zeta_p-1)^{-1}$. As $K$ is Henselian, $\ca{O}_{\widehat{K}_n}=\ca{O}_{K_n}\otimes_{\ca{O}_K}\ca{O}_{\widehat{K}}$ (\cite[\Luoma{2}.\textsection3, Proposition 4]{serre1979local}, see also \ref{lem:completion}). Thus, after replacing $K$ by $\widehat{K}$, we may assume that $K=\widehat{K}$. 
	
	As $K'_\infty=\bigcup_{n\in\bb{N}}K'(\zeta_{p^n})$ is pre-perfectoid (\ref{lem:cyclotomic}.(\ref{item:lem:cyclotomic-1})), the torsion module $\Omega^1_{\ca{O}_{K_\infty}/\ca{O}_{K'_\infty}}$ is torsion-free by \ref{cor:differential} and thus is zero. We also see that $K_\infty$ is pre-perfectoid by almost purity (\cite[6.6.16]{gabber2003almost}). Notice that $\Omega^1_{\ca{O}_{K_\infty}/\ca{O}_{K'}}=\ca{O}_{K_\infty}\otimes_{\ca{O}_{K'_\infty}}\Omega^1_{\ca{O}_{K'_\infty}/\ca{O}_{K'}}\cong K_\infty/(\zeta_p-1)^{-1}\ca{O}_{K_\infty}$ by \ref{thm:differential}.(\ref{item:thm:differential-1}) and \ref{lem:cyclotomic}.(\ref{item:lem:cyclotomic-3}). As $K$ is a totally ramified finite separable extension of $K'$, $\Omega^1_{\ca{O}_K/\ca{O}_{K'}}$ is generated by one element whose annihilator is the different ideal $\scr{D}_{K/K'}$ (\cite[\Luoma{3}.\textsection7, Proposition 14]{serre1979local}). Therefore, the submodule $\ca{O}_{K_\infty}\otimes_{\ca{O}_K}\Omega^1_{\ca{O}_K/\ca{O}_{K'}}\subseteq \Omega^1_{\ca{O}_{K_\infty}/\ca{O}_{K'}}$ is isomorphic to $\pi_K^{-1}(\zeta_p-1)^{-1}\ca{O}_{K_\infty}/(\zeta_p-1)^{-1}\ca{O}_{K_\infty}$, and thus its quotient $\Omega^1_{\ca{O}_{K_\infty}/\ca{O}_K}$ is isomorphic to $K_\infty/\pi_K^{-1}(\zeta_p-1)^{-1}\ca{O}_{K_\infty}$.
\end{proof}

\begin{myrem}\label{rem:cyclotomic-diff}
	Let $e_K$ be the absolute ramification index of $K$ (i.e., $p\ca{O}_K=\ak{m}_K^{e_K}$). Then, we have $[\widehat{K}:K']=e_K$ and thus $pe_K\ak{m}_{\widehat{K}}^{-1}\subseteq \scr{D}_{\widehat{K}/K'}\subseteq p\ak{m}_{\widehat{K}}^{-1}$ by \ref{cor:different-trace} and \cite[\Luoma{3}.\textsection6, Proposition 13]{serre1979local}. 
\end{myrem}

\begin{mycor}\label{cor:tate-cyclo}
	Let $K$ be a Henselian discrete valuation field extension of $\bb{Q}_p$ with perfect residue field, $F$ a valuation field extension of $K$ containing a compatible system of primitive $p$-power roots of unity $(\zeta_{p^n})_{n\in \bb{N}}$, $\ak{a}_{F/K}=\pi_K^{-1}(\zeta_p-1)^{-1}\ca{O}_F\subseteq F$ (where $\pi_K$ is defined in {\rm\ref{prop:cyclotomic-diff}}). Then, there is a canonical isomorphism of $\ca{O}_F$-modules
	\begin{align}\label{eq:cor:tate-cyclo-1}
		\alpha_{F/K}:(F/\ak{a}_{F/K})(1)\iso \Omega^1_{\ca{O}_F/\ca{O}_K}[p^\infty]
	\end{align}
	sending an element $p^{-r}\otimes (\zeta_{p^n})_{n\in \bb{N}}$ to $\df\log(\zeta_{p^r})$ for any $r\in\bb{N}$. Moreover, it induces a canonical isomorphism
	\begin{align}\label{eq:cor:tate-cyclo-2}
		\alpha_{F/K}:\ak{a}_{F/K}\ca{O}_{\widehat{F}}(1)\iso T_p(\Omega^1_{\ca{O}_F/\ca{O}_K})
	\end{align}
	sending $1\otimes(\zeta_{p^n})_{n\in\bb{N}}$ to $(\df\log(\zeta_{p^n}))_{n\in\bb{N}}$.
\end{mycor}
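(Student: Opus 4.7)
The plan is to deduce the corollary from \ref{prop:cyclotomic-diff}, which treats the case $F=K_\infty$, by invoking the base change isomorphism of \ref{prop:rhom} for the tower $K\to K_\infty\to F$. This is legitimate because $K_\infty$ is pre-perfectoid by \ref{prop:cyclotomic-diff}, and $\ca{O}_F$ is torsion-free---hence flat---over the valuation ring $\ca{O}_{K_\infty}$.

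For each $r\in\bb{N}$, applying \ref{prop:rhom} with $\pi=p^r$ produces an isomorphism
\[
\beta_{F/K_\infty/K}\colon \ca{O}_F\otimes_{\ca{O}_{K_\infty}}\Omega^1_{\ca{O}_{K_\infty}/\ca{O}_K}[p^r]\iso \Omega^1_{\ca{O}_F/\ca{O}_K}[p^r], \qquad 1\otimes\omega\mapsto\omega.
\]
Since $\Omega^1_{\ca{O}_{K_\infty}/\ca{O}_K}$ is $p^\infty$-torsion by \ref{prop:cyclotomic-diff}, passing to the filtered colimit in $r$ (and using flatness to move torsion past the tensor product) yields
\[
\ca{O}_F\otimes_{\ca{O}_{K_\infty}}\Omega^1_{\ca{O}_{K_\infty}/\ca{O}_K}\iso \Omega^1_{\ca{O}_F/\ca{O}_K}[p^\infty].
\]
Substituting the description $\Omega^1_{\ca{O}_{K_\infty}/\ca{O}_K}\cong (K_\infty/\pi_K^{-1}(\zeta_p-1)^{-1}\ca{O}_{K_\infty})(1)$ of \ref{prop:cyclotomic-diff} and applying $\ca{O}_F\otimes_{\ca{O}_{K_\infty}}-$ to its defining short exact sequence, the left hand side identifies with $(F/\ak{a}_{F/K})(1)$, while the rule $p^{-r}\otimes(\zeta_{p^n})_n\mapsto \df\log(\zeta_{p^r})$ transports verbatim through $\beta_{F/K_\infty/K}$. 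This establishes \eqref{eq:cor:tate-cyclo-1}.

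For \eqref{eq:cor:tate-cyclo-2}, I would apply the functor $T_p$ to $\alpha_{F/K}$; since $T_p$ depends only on $p^\infty$-torsion and commutes with the Tate twist, the task reduces to identifying $T_p(F/\ak{a}_{F/K})$ with the $p$-adic completion $\ak{a}_{F/K}\ca{O}_{\widehat{F}}$. For this, the equality $(F/\ak{a}_{F/K})[p^r]=p^{-r}\ak{a}_{F/K}/\ak{a}_{F/K}$ combined with multiplication by $p^r$ yields an isomorphism onto $\ak{a}_{F/K}/p^r\ak{a}_{F/K}$ compatible with transition maps, whence $T_p(F/\ak{a}_{F/K})=\plim_r \ak{a}_{F/K}/p^r\ak{a}_{F/K}=\ak{a}_{F/K}\ca{O}_{\widehat{F}}$, the last equality holding because $\ak{a}_{F/K}$ is a principal fractional ideal. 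Tracing the distinguished generator, $1\otimes(\zeta_{p^n})_n\in \ak{a}_{F/K}\ca{O}_{\widehat{F}}(1)$ corresponds to the compatible family $(p^{-r}\otimes(\zeta_{p^n})_n \bmod \ak{a}_{F/K})_r$, which under $\alpha_{F/K}$ maps to $(\df\log(\zeta_{p^r}))_r=(\df\log(\zeta_{p^n}))_{n\in\bb{N}}$. No substantive obstacle is anticipated; the only bookkeeping to watch carefully is the orientation of the transition maps when rewriting the $T_p$-limit as a completion.
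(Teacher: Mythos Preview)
Your proposal is correct and matches the paper's proof essentially line for line: the paper also invokes \ref{prop:cyclotomic-diff} to know $K_\infty$ is pre-perfectoid, then applies the base change isomorphism $\beta_{F/K_\infty/K}$ of \ref{prop:rhom} to obtain \eqref{eq:cor:tate-cyclo-1}, and deduces \eqref{eq:cor:tate-cyclo-2} by the same explicit computation $T_p(F/\ak{a})=\lim_r p^{-r}\ak{a}/\ak{a}=\lim_r \ak{a}/p^r\ak{a}=\ak{a}\ca{O}_{\widehat{F}}$ (the paper cites \cite[\href{https://stacks.math.columbia.edu/tag/05GG}{05GG}]{stacks-project} for the last identification). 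Your remark that one must be careful with the orientation of transition maps is exactly the content of the chain of equalities \eqref{eq:cor:tate-cyclo-3} in the paper.
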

\begin{proof}
	Note that $\alpha_{K_\infty/K}:(K_\infty/\ak{a}_{K_\infty/K})(1)\to \Omega^1_{\ca{O}_{K_\infty}/\ca{O}_K}[p^\infty]=\Omega^1_{\ca{O}_{K_\infty}/\ca{O}_K}$ is an isomorphism and that $K_\infty$ is pre-perfectoid by \ref{prop:cyclotomic-diff}. Then, \eqref{eq:cor:tate-cyclo-1} follows from the isomorphism \eqref{eq:prop:rhom-1} $\beta_{F/K_\infty/K}:\ca{O}_F\otimes_{\ca{O}_{K_\infty}}\Omega^1_{\ca{O}_{K_\infty}/\ca{O}_K}[p^\infty]\iso \Omega^1_{\ca{O}_F/\ca{O}_K}[p^\infty]$. Finally, the ``in particular" part follows immediately from the identities (where we put $\ak{a}=\ak{a}_{F/K}$)
	\begin{align}\label{eq:cor:tate-cyclo-3}
		T_p(F/\ak{a})=\lim_{r\in\bb{N}}p^{-r}\ak{a}/\ak{a}=\lim_{r\in\bb{N}}\ak{a}/p^r\ak{a}=\ak{a}\ca{O}_{\widehat{F}},
	\end{align}
	where the transition morphisms are the multiplication by $p$ in the first limit and the canonical projection in the second limit, and where the last equality follows from \cite[\href{https://stacks.math.columbia.edu/tag/05GG}{05GG}]{stacks-project}.
\end{proof}

\begin{mycor}\label{cor:disc-diff-torsion}
	Let $K$ be a Henselian discrete valuation field extension of $\bb{Q}_p$. Then, $\Omega^1_{\ca{O}_K/\bb{Z}_p}[p^\infty]$ is killed by $pe_K\ak{m}_K^{-1}$, where $e_K$ is the absolute ramification index of $K$.
\end{mycor}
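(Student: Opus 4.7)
The plan is to reduce to the completion $\widehat{K}$ and then exploit the Cohen ring decomposition of $\ca{O}_{\widehat{K}}$ to isolate the $p$-power torsion. Since $\ca{O}_K\to\ca{O}_{\widehat{K}}$ is faithfully flat with $e_{\widehat{K}}=e_K$ and $\ak{m}_{\widehat{K}}\cap\ca{O}_K=\ak{m}_K$, and since the $p$-power torsion will ultimately be killed by a fixed element of $\ca{O}_K$ (hence $p$-adically separated), I expect it suffices to prove that $pe_K\ak{m}_{\widehat{K}}^{-1}$ kills $\Omega^1_{\ca{O}_{\widehat{K}}/\bb{Z}_p}[p^\infty]$.

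For $\widehat{K}$ complete, I would choose a Cohen ring $C\subseteq\ca{O}_{\widehat{K}}$ of the residue field $\kappa$, so that $\ca{O}_{\widehat{K}}=C[\pi]/(f(\pi))$ with $f\in C[T]$ an Eisenstein polynomial of degree $e_K$. Since $\widehat{K}/C[1/p]$ is a finite algebraic separable extension of valuation fields of characteristic $0$, \ref{thm:differential}.(\ref{item:thm:differential-1}) gives that $\bb{L}_{\ca{O}_{\widehat{K}}/C}$ is concentrated in degree $0$ and equal to $\Omega^1_{\ca{O}_{\widehat{K}}/C}$, yielding the short exact transitivity sequence
\begin{align*}
0\longrightarrow \ca{O}_{\widehat{K}}\otimes_C \Omega^1_{C/\bb{Z}_p}\longrightarrow \Omega^1_{\ca{O}_{\widehat{K}}/\bb{Z}_p}\longrightarrow \Omega^1_{\ca{O}_{\widehat{K}}/C}\longrightarrow 0,
\end{align*}
whose rightmost term is cyclic, generated by $\df\pi$ with annihilator $(f'(\pi))$. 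The argument then rests on two inputs: (i) $\Omega^1_{C/\bb{Z}_p}$ is $p$-torsion free, a structural feature of Cohen rings (its $p$-adic completion is free on lifts of a $p$-basis of $\kappa$, and $\Omega^1_{C/\bb{Z}_p}$ embeds into this completion); (ii) a direct computation using the Eisenstein shape of $f$ shows $v(f'(\pi))\leq v(pe_K/\pi)$ (the leading term $e_K\pi^{e_K-1}$ realizes this valuation exactly, and the constraint that $v(a_i)\in e_K\bb{Z}$ for Eisenstein coefficients $a_i\in C$ rules out cancellation at $v(pe_K/\pi)$ for $1\leq i<e_K$), hence $pe_K\ak{m}_{\widehat{K}}^{-1}\subseteq (f'(\pi))$.

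Since $\ca{O}_{\widehat{K}}$ is finite free over $C$, the tensor product $\ca{O}_{\widehat{K}}\otimes_C\Omega^1_{C/\bb{Z}_p}$ inherits $p$-torsion freeness from (i). Therefore $\Omega^1_{\ca{O}_{\widehat{K}}/\bb{Z}_p}[p^\infty]$ injects into $\Omega^1_{\ca{O}_{\widehat{K}}/C}$, and hence is annihilated by $(f'(\pi))\supseteq pe_K\ak{m}_{\widehat{K}}^{-1}$ by (ii). The hardest part of the proof will be input (i): for Cohen rings whose residue fields have possibly infinite $p$-basis, the $p$-torsion freeness of $\Omega^1_{C/\bb{Z}_p}$ requires a limit argument over sub-Cohen-rings with finite $p$-basis, where the claim is classical; likewise the descent in the first paragraph needs a careful separation check, which should come out of the very boundedness statement one is proving.
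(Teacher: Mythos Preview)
Your approach is correct in outline but takes a genuinely different route from the paper. The paper first handles the perfect residue field case by embedding $\ca{O}_{K_\infty}\otimes_{\ca{O}_K}\Omega^1_{\ca{O}_K/\bb{Z}_p}[p^\infty]$ into the explicitly known $\Omega^1_{\ca{O}_{K_\infty}/\bb{Z}_p}[p^\infty]$ via the cyclotomic tower (using \ref{cor:tate-cyclo} and \ref{prop:cyclotomic-diff}), and then reduces the imperfect residue field case to this one by passing to a weakly unramified extension $L/K$ with perfect residue field. Your argument instead works directly with the Cohen decomposition $\ca{O}_{\widehat{K}}=C[\pi]/(f)$ and the Eisenstein shape of $f$, avoiding the cyclotomic machinery entirely. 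This is more elementary and self-contained; your valuation computation in (ii) is correct (all terms of $f'(\pi)$ lie in distinct residue classes modulo $e_K$, so no cancellation is possible) and incidentally recovers the bound of \ref{rem:cyclotomic-diff}.

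One caution on input (i): the claim that $\Omega^1_{C/\bb{Z}_p}$ is $p$-torsion free is correct, but neither of your proposed justifications works as stated. The map $\Omega^1_{C/\bb{Z}_p}\to\widehat{\Omega}^1_{C/\bb{Z}_p}$ need not be injective (already for $C=W(\kappa)$ with $\kappa$ perfect the source is a $\bb{Q}_p$-vector space while the target is zero), and knowing only that the kernel is $p$-divisible does not force its $p$-torsion to vanish. Likewise, a Cohen ring, being $p$-complete, is not a filtered colimit of sub-Cohen-rings with finite $p$-basis. The clean argument is via the cotangent complex: by \ref{thm:differential}.(\ref{item:thm:differential-1}) (with $\bb{Q}_p$ perfect) one has $\bb{L}_{C/\bb{Z}_p}\simeq\Omega^1_{C/\bb{Z}_p}[0]$, so base change gives $\Omega^1_{C/\bb{Z}_p}\otimes^{\dl}_{\bb{Z}_p}\bb{F}_p\simeq\bb{L}_{\kappa/\bb{F}_p}$; since any field extension of $\bb{F}_p$ is a filtered colimit of smooth $\bb{F}_p$-algebras, $\bb{L}_{\kappa/\bb{F}_p}$ is concentrated in degree $0$, whence $\Omega^1_{C/\bb{Z}_p}[p]=H_1(\Omega^1_{C/\bb{Z}_p}\otimes^{\dl}_{\bb{Z}_p}\bb{F}_p)=0$.
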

\begin{proof}
	Firstly, we claim that $\Omega^1_{\ca{O}_K/\bb{Z}_p}[p^\infty]$ is killed by $\pi_K$ if the residue field of $K$ is perfect. Indeed, consider the canonical exact sequence \eqref{eq:lem:rhom-1} $0\to \ca{O}_{K_\infty}\otimes_{\ca{O}_K}\Omega^1_{\ca{O}_K/\bb{Z}_p}[p^r]\to \Omega^1_{\ca{O}_{K_\infty}/\bb{Z}_p}[p^r]\to \Omega^1_{\ca{O}_{K_\infty}/\ca{O}_K}[p^r]\to \ca{O}_{K_\infty}\otimes_{\ca{O}_K}\Omega^1_{\ca{O}_K/\bb{Z}_p}/p^r\to \Omega^1_{\ca{O}_{K_\infty}/\bb{Z}_p}/p^r\to \Omega^1_{\ca{O}_{K_\infty}/\ca{O}_K}/p^r\to 0$. Taking filtered colimit over $r\in\bb{N}$, we obtain an exact sequence
	\begin{align}\label{eq:cor:disc-diff-torsion-1}
		0\longrightarrow \ca{O}_{K_\infty}\otimes_{\ca{O}_K}\Omega^1_{\ca{O}_K/\bb{Z}_p}[p^\infty]\longrightarrow \Omega^1_{\ca{O}_{K_\infty}/\bb{Z}_p}[p^\infty]\longrightarrow \Omega^1_{\ca{O}_{K_\infty}/\ca{O}_K}.
	\end{align}
	Note that $\Omega^1_{\ca{O}_{K_\infty}/\bb{Z}_p}[p^\infty]\cong K_\infty/(\zeta_p-1)^{-1}\ca{O}_{K_\infty}$ by \ref{cor:tate-cyclo} and $\Omega^1_{\ca{O}_{K_\infty}/\ca{O}_K}\cong K_\infty/\pi_K^{-1}(\zeta_p-1)^{-1}\ca{O}_{K_\infty}$ by \ref{prop:cyclotomic-diff}. We see that $\ca{O}_{K_\infty}\otimes_{\ca{O}_K}\Omega^1_{\ca{O}_K/\bb{Z}_p}[p^\infty]\cong \pi_K^{-1}(\zeta_p-1)^{-1}\ca{O}_{K_\infty}/(\zeta_p-1)^{-1}\ca{O}_{K_\infty}$ so that $\Omega^1_{\ca{O}_K/\bb{Z}_p}[p^\infty]$ is killed by $\pi_K$.
	
	In general, there is a complete discrete valuation field $L$ weakly unramified extension of $K$ with perfect residue field (cf. \cite[4.1]{he2021faltingsext}). Therefore, $\ca{O}_L\otimes_{\ca{O}_K}\Omega^1_{\ca{O}_K/\bb{Z}_p}[p^\infty]\subseteq \Omega^1_{\ca{O}_L/\bb{Z}_p}[p^\infty]$ is killed by $pe_L\ak{m}_L^{-1}=pe_K\ak{m}_K^{-1}\ca{O}_L$ by the claim above and \ref{rem:cyclotomic-diff}, and thus $\Omega^1_{\ca{O}_K/\bb{Z}_p}[p^\infty]$ is killed by $pe_K\ak{m}_K^{-1}$.
\end{proof}

\begin{mydefn}\label{defn:ari-geo}
	Let $F$ be a Henselian valuation field of height $1$ extension of $\bb{Q}_p$.
	\begin{enumerate}
		\renewcommand{\labelenumi}{{\rm(\theenumi)}}
		\item We say that $F$ is \emph{arithmetic} if the torsion submodule $\Omega^1_{\ca{O}_F/\bb{Z}_p}[p^\infty]$ is bounded (\ref{defn:sep-bound}.(\ref{item:defn:sep-bound-2})).\label{item:defn:ari-geo-1}
		\item We say that $F$ is \emph{geometric} if $\scr{D}_{F_\infty/F}\neq 0$, where we put $\scr{D}_{F_\infty/F}=\bigcap_{n\in\bb{N}}\scr{D}_{F_n/F}\ca{O}_{F_\infty}\subseteq \ca{O}_{F_\infty}$.\label{item:defn:ari-geo-2}
	\end{enumerate}
\end{mydefn}

\begin{mylem}\label{lem:ari-geo-basic}
	Let $F$ be a Henselian valuation field of height $1$ extension of $\bb{Q}_p$.
	\begin{enumerate}
		\renewcommand{\labelenumi}{{\rm(\theenumi)}}
		\item If the valuation on $F$ is discrete, then $F$ is arithmetic.\label{item:lem:ari-geo-basic-1}
		\item If there is a pre-perfectoid field $K$ valuation subextension of $F/\bb{Q}_p$, then $F$ is geometric.\label{item:lem:ari-geo-basic-2}
	\end{enumerate}
\end{mylem}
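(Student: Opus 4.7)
For (1), the plan is a direct appeal to Corollary \ref{cor:disc-diff-torsion}: since the valuation on $F$ is discrete, the fractional ideal $pe_F\ak{m}_F^{-1}$ that kills $\Omega^1_{\ca{O}_F/\bb{Z}_p}[p^\infty]$ is principal (as $\ak{m}_F = \pi_F\ca{O}_F$), so the nonzero element $pe_F = \pi_F\cdot(pe_F\pi_F^{-1})\in\ca{O}_F$ kills the torsion. This realizes the boundedness required by \ref{defn:sep-bound}.(\ref{item:defn:sep-bound-2}) and \ref{defn:ari-geo}.(\ref{item:defn:ari-geo-1}), so $F$ is arithmetic.

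For (2), first replace $K$ by its Henselization inside the Henselian field $F$, so that we may assume $K$ is Henselian; this preserves the deep-ramification property used below, since the cotangent vanishing $\Omega^1_{\ca{O}_{\overline{K}}/\ca{O}_K}=0$ in the almost sense transfers to $\Omega^1_{\ca{O}_{\overline{K}}/\ca{O}_{K^h}}=0$ via the ind-\'etale map $\ca{O}_K\to \ca{O}_{K^h}$. By \ref{para:notation-perfd} and \cite[6.6.6]{gabber2003almost}, $K$ is then deeply ramified, so every finite separable extension $L/K$ is almost finite \'etale, i.e., $\widetilde{\ak{m}}_L\subseteq \scr{D}_{L/K}$. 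The plan is now to extract from the ramified base change Proposition \ref{prop:ramified-bc} a lower bound on $\scr{D}_{F_n/F}$ that is uniform in $n$. The principal obstacle is the field hypothesis in \ref{prop:ramified-bc}: the tensor product $K_n\otimes_K F$ need not be a field, because $F$ may already contain some $p$-power roots of unity coming from $K_n$. I bypass this by introducing the largest intermediate subfield $K'_n := K_n\cap F$, a (Galois) subextension of $K_n/K$ that is Henselian as a finite extension of the Henselian $K$; the Galois property of $K_n/K$ then ensures that $F$ and $K_n$ are linearly disjoint over $K'_n$, giving $K_n\otimes_{K'_n} F\cong F\cdot K_n = F_n$. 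Applying \ref{prop:ramified-bc} with $K'_n$ in place of the base field produces $\widetilde{\ak{m}}_{K'_n}\scr{D}_{K_n/K'_n}\subseteq \scr{D}_{F_n/F}$.

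It remains to see that $\scr{D}_{K_n/K'_n}$ is itself almost the unit ideal of $\ca{O}_{K_n}$. The multiplicativity of the different \cite[4.1.25]{gabber2003almost} applied to the tower $K\subseteq K'_n\subseteq K_n$ gives $\scr{D}_{K_n/K}^\al = \scr{D}_{K_n/K'_n}^\al\cdot(\scr{D}_{K'_n/K}\ca{O}_{K_n})^\al$. Since the left-hand side is almost the unit ideal (by the deep ramification of $K$), and a product $AB$ of two ideals of the valuation ring $\ca{O}_{K_n}$ is contained in each factor, both factors on the right must themselves be almost unit; in particular $\widetilde{\ak{m}}_{K_n}\subseteq \scr{D}_{K_n/K'_n}$. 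Combined with \eqref{eq:para:notation-almost-1}--\eqref{eq:para:notation-almost-2}, this yields the uniform lower bound $\scr{D}_{F_n/F}\supseteq \widetilde{\ak{m}}_{K'_n}\cdot \widetilde{\ak{m}}_{K_n}\cdot\ca{O}_{F_n}\supseteq \widetilde{\ak{m}}_K^2\ca{O}_{F_n} = \widetilde{\ak{m}}_K\ca{O}_{F_n}$, and intersecting over $n$ gives $\scr{D}_{F_\infty/F}\supseteq \widetilde{\ak{m}}_K\ca{O}_{F_\infty}\neq 0$, so $F$ is geometric.
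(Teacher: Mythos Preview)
Your proof is correct. Part (1) matches the paper exactly: both simply invoke \ref{cor:disc-diff-torsion}.

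For part (2), your approach is valid but takes a genuinely different route from the paper's. The paper argues in one stroke: since $K$ is pre-perfectoid, $\ca{O}_{K_n}$ is almost finite \'etale over $\ca{O}_K$ by \cite[6.6.2]{gabber2003almost}; almost \'etaleness is stable under base change along $\ca{O}_K\to\ca{O}_F$ followed by normalization in $F_n$ (\cite[\Luoma{5}.7.11]{abbes2016p}), so $\ca{O}_{F_n}$ is almost finite \'etale over $\ca{O}_F$ and hence $\widetilde{\ak{m}}_{F_n}\subseteq\scr{D}_{F_n/F}$ directly. In particular, the paper never needs $K_n\otimes_K F$ to be a field, because the normalization step absorbs that issue. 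By contrast, you stay entirely within the different-ideal calculus developed in Section~\ref{sec:diff}: you invoke \ref{prop:ramified-bc} (which does require the tensor product to be a field, hence your passage to $K'_n=K_n\cap F$), then use multiplicativity of the different in the tower $K\subseteq K'_n\subseteq K_n$ to recover $\widetilde{\ak{m}}_{K_n}\subseteq\scr{D}_{K_n/K'_n}$ from the almost \'etaleness of $K_n/K$. What your approach buys is self-containment within the paper's own toolkit, avoiding the external reference to \cite[\Luoma{5}.7.11]{abbes2016p}; what the paper's approach buys is brevity and a more conceptual statement (almost \'etaleness of $\ca{O}_{F_n}/\ca{O}_F$, not just a bound on the different). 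Your explicit Henselization of $K$ is also a nice touch that the paper leaves implicit.
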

	\begin{proof}
	(\ref{item:lem:ari-geo-basic-1}) It follows directly from \ref{cor:disc-diff-torsion}.

	(\ref{item:lem:ari-geo-basic-2}) Note that $\ca{O}_{K_n}$ is almost finite \'etale over $\ca{O}_K$ for any $n\in\bb{N}$ (\cite[6.6.2]{gabber2003almost}). Taking base change along $\ca{O}_K\to\ca{O}_F$ and normalization in $F_n$, we see that $\ca{O}_{F_n}$ is also almost finite \'etale over $\ca{O}_F$ by \cite[\Luoma{5}.7.11]{abbes2016p}. Thus, $\widetilde{\ak{m}}_{F_n}\subseteq\scr{D}_{F_n/F}$ (\ref{para:different}) for any $n\in\bb{N}$ so that $F$ is geometric.
\end{proof}

\begin{mylem}\label{lem:ari-geo-ex}
	Let $F$ be a Henselian valuation field of height $1$ extension of $\bb{Q}_p$. If $F$ is arithmetic, then $F$ is not geometric (or equivalently, if $F$ is geometric, then $F$ is not arithmetic).
\end{mylem}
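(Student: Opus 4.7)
The plan is to argue the contrapositive: assume $F$ is arithmetic, so $\Omega^1_{\ca{O}_F/\bb{Z}_p}[p^\infty]$ is killed by some $p^N$, and deduce $\scr{D}_{F_\infty/F}=\bigcap_{n\in\bb{N}}\scr{D}_{F_n/F}\ca{O}_{F_\infty}=0$. A first observation is that $F$ cannot contain $\bb{Q}_p(\zeta_{p^\infty})$: otherwise \ref{cor:tate-cyclo} applied directly to $F$ would give $\Omega^1_{\ca{O}_F/\bb{Z}_p}[p^\infty]\cong (F/(\zeta_p-1)^{-1}\ca{O}_F)(1)$, which is not killed by any power of $p$ since $F$ is unbounded in absolute value. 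So $n_0:=\sup\{n\in\bb{N}:\zeta_{p^n}\in F\}$ is finite; set $K=\bb{Q}_p(\zeta_{p^{n_0}})$. For each $n>n_0$ the maximality of $n_0$ gives $F\cap\bb{Q}_p(\zeta_{p^n})=K$, so $F$ and $\bb{Q}_p(\zeta_{p^n})$ are linearly disjoint over $K$ and the canonical map $F\otimes_{K}\bb{Q}_p(\zeta_{p^n})\to F_n$ is an isomorphism of fields.

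The next step is to apply the ramified base change theorem \ref{thm:ramified-bc} to $K\to F$ with $K'=\bb{Q}_p(\zeta_{p^n})$, which requires an element $\pi\in\ca{O}_F$ killing the torsion of $\Omega^1_{\ca{O}_F/\ca{O}_K}$ and an element $\varpi\in\ca{O}_{F_n}$ with $\mrm{Ann}_{\ca{O}_{F_n}}(\ca{O}_{F_n}\otimes_{\ca{O}_{K'}}\Omega^1_{\ca{O}_{K'}/\ca{O}_K})\subseteq \varpi\ca{O}_{F_n}$. To obtain $\pi=p^N$, note first that over the height-$1$ valuation ring $\ca{O}_F$ every torsion element is automatically $p^\infty$-torsion, since any nonzero $a\in\ca{O}_F$ divides $p^r$ for $r$ large enough. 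The right-exact cotangent sequence $\ca{O}_F\otimes_{\ca{O}_K}\Omega^1_{\ca{O}_K/\bb{Z}_p}\to \Omega^1_{\ca{O}_F/\bb{Z}_p}\to \Omega^1_{\ca{O}_F/\ca{O}_K}\to 0$, combined with \ref{lem:cyclotomic}.(\ref{item:lem:cyclotomic-3}) saying that $\Omega^1_{\ca{O}_K/\bb{Z}_p}$ is entirely $p^\infty$-torsion, shows that the kernel $J$ of the surjection $\Omega^1_{\ca{O}_F/\bb{Z}_p}\twoheadrightarrow \Omega^1_{\ca{O}_F/\ca{O}_K}$ sits inside $\Omega^1_{\ca{O}_F/\bb{Z}_p}[p^\infty]$ and is therefore killed by $p^N$. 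A torsion class $x\in\Omega^1_{\ca{O}_F/\ca{O}_K}$ lifts to some $\tilde x\in\Omega^1_{\ca{O}_F/\bb{Z}_p}$ with $a\tilde x\in J$ for a nonzero $a\in\ca{O}_F$, so $ap^N\tilde x=0$, making $\tilde x$ torsion in $\Omega^1_{\ca{O}_F/\bb{Z}_p}$, hence $p^\infty$-torsion; so $p^N\tilde x=0$ and thus $p^N x=0$.

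For $\varpi$ we use that $\ca{O}_{K'}=\ca{O}_K[\zeta_{p^n}]$ is monogenic over $\ca{O}_K$ by \ref{lem:cyclotomic}.(\ref{item:lem:cyclotomic-1}), so $\Omega^1_{\ca{O}_{K'}/\ca{O}_K}$ is cyclic and its annihilator equals its zeroth Fitting ideal, which equals $\scr{D}_{K'/K}$ by \ref{thm:differential}.(\ref{item:thm:differential-3}) (since $K$ is discretely valued so $\widetilde{\ak{m}}_K=\ca{O}_K$). Multiplicativity of differents along $\bb{Q}_p\subset K\subset K'$ together with \ref{lem:cyclotomic}.(\ref{item:lem:cyclotomic-4}) yields $\scr{D}_{K'/K}=p^{n-n_0}\ca{O}_{K'}$ when $n_0\geq 1$ (and $\scr{D}_{K'/K}=p^n(\zeta_p-1)^{-1}\ca{O}_{K'}$ when $n_0=0$). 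Taking $\varpi$ accordingly, \ref{thm:ramified-bc} delivers $\scr{D}_{F_n/F}\subseteq p^{n-n_0-N}\ca{O}_{F_n}$ up to a fixed element independent of $n$. Intersecting over $n$ inside the height-$1$ valuation ring $\ca{O}_{F_\infty}$ and using $\bigcap_n p^n\ca{O}_{F_\infty}=0$ then gives $\scr{D}_{F_\infty/F}=0$, as wanted. The main subtlety I foresee is the torsion-transfer step from $\Omega^1_{\ca{O}_F/\bb{Z}_p}$ to $\Omega^1_{\ca{O}_F/\ca{O}_K}$; the linear disjointness comes for free from the choice of $n_0$, and the computation of $\scr{D}_{K'/K}$ is standard multiplicativity of differents.
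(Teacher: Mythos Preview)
Your approach is genuinely different from the paper's, and mostly works, but there is a gap in the linear disjointness step.

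\textbf{Comparison of approaches.} The paper argues via differentials of $F_\infty$: from the exact sequence $0\to\ca{O}_{F_\infty}\otimes_{\ca{O}_F}\Omega^1_{\ca{O}_F/\bb{Z}_p}[p^\infty]\to\Omega^1_{\ca{O}_{F_\infty}/\bb{Z}_p}[p^\infty]\to\Omega^1_{\ca{O}_{F_\infty}/\ca{O}_F}$ and the identification $\Omega^1_{\ca{O}_{F_\infty}/\bb{Z}_p}[p^\infty]\cong F_\infty/\ca{O}_{F_\infty}$ from \ref{cor:tate-cyclo}, arithmeticity forces $\Omega^1_{\ca{O}_{F_\infty}/\ca{O}_F}$ to be unbounded, while geometricity (via \ref{thm:differential}.(\ref{item:thm:differential-3})) forces it to be annihilated by $\widetilde{\ak{m}}_F\scr{D}_{F_\infty/F}\neq 0$, a contradiction. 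Your approach instead bounds $\scr{D}_{F_n/F}$ explicitly via \ref{thm:ramified-bc}, obtaining $\scr{D}_{F_n/F}\subseteq p^{n-n_0-N}\ca{O}_{F_n}$ and hence $\scr{D}_{F_\infty/F}=0$. This is essentially the argument the paper deploys later in \ref{lem:ari-bc}; here it is heavier machinery than needed, but perfectly valid. The paper's route is shorter; yours gives the quantitative different bound directly.

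\textbf{The gap.} Your claim that ``maximality of $n_0$ gives $F\cap\bb{Q}_p(\zeta_{p^n})=K$'' is not justified. Maximality of $n_0$ only says $\zeta_{p^{n_0+1}}\notin F$; it does not prevent $F$ from containing a non-cyclotomic intermediate field of $\bb{Q}_p(\zeta_{p^n})/\bb{Q}_p(\zeta_{p^{n_0}})$. For instance, for $p$ odd and $n_0=0$, the Galois group $(\bb{Z}/p^n\bb{Z})^\times$ has the subgroup $1+p\bb{Z}/p^n\bb{Z}$ of index $p-1$, giving intermediate fields of $\bb{Q}_p(\zeta_p)/\bb{Q}_p$ not of the form $\bb{Q}_p(\zeta_{p^m})$; any Henselian $F$ containing such a field but not $\zeta_p$ is a counterexample (and is arithmetic, being discretely valued). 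Without linear disjointness, $K'\otimes_K F$ is not a field and \ref{thm:ramified-bc} does not apply. The fix is to work instead over $F_{n_p}$ (with $n_p$ as in \ref{lem:n_0}): once $\zeta_{p^{n_p}}\in F$, the group $\gal(\bb{Q}_p(\zeta_{p^\infty})/\bb{Q}_p(\zeta_{p^{n_p}}))\cong\bb{Z}_p$ is procyclic, so every intermediate field is cyclotomic and your argument goes through. That $F_{n_p}$ is arithmetic follows from the same torsion-transfer trick you use (the cokernel term $\Omega^1_{\ca{O}_{F_{n_p}}/\ca{O}_F}$ is bounded), and the bound on $\scr{D}_{F_n/F_{n_p}}$ passes to $\scr{D}_{F_n/F}$ via multiplicativity of differents. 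Your torsion-transfer step and the computation of $\scr{D}_{K'/K}$ are fine.
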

\begin{proof}
	Consider the canonical exact sequence \eqref{eq:lem:rhom-1} $0\to \ca{O}_{F_\infty}\otimes_{\ca{O}_F}\Omega^1_{\ca{O}_F/\bb{Z}_p}[p^r]\to \Omega^1_{\ca{O}_{F_\infty}/\bb{Z}_p}[p^r]\to \Omega^1_{\ca{O}_{F_\infty}/\ca{O}_F}[p^r]$. Taking filtered colimit over $r\in\bb{N}$, we obtain an exact sequence
	\begin{align}\label{eq:lem:ari-geo-ex-1}
		0\longrightarrow \ca{O}_{F_\infty}\otimes_{\ca{O}_F}\Omega^1_{\ca{O}_F/\bb{Z}_p}[p^\infty]\longrightarrow \Omega^1_{\ca{O}_{F_\infty}/\bb{Z}_p}[p^\infty]\longrightarrow \Omega^1_{\ca{O}_{F_\infty}/\ca{O}_F}.
	\end{align}
	Suppose that $F$ is arithmetic. As $\Omega^1_{\ca{O}_{F_\infty}/\bb{Z}_p}[p^\infty]\cong F_\infty/\ca{O}_{F_\infty}$ by \ref{cor:tate-cyclo}, the boundedness of $\Omega^1_{\ca{O}_F/\bb{Z}_p}[p^\infty]$ implies that $\ca{O}_{F_\infty}\otimes_{\ca{O}_F}\Omega^1_{\ca{O}_F/\bb{Z}_p}[p^\infty]$ is contained in $p^{-r}\ca{O}_{F_\infty}/\ca{O}_{F_\infty}$ for some $r\in\bb{N}$ under this isomorphism. In this case, the torsion module $\Omega^1_{\ca{O}_{F_\infty}/\ca{O}_F}$ is not bounded. Suppose moreover that $F$ is geometric. Then, $\Omega^1_{\ca{O}_{F_\infty}/\ca{O}_F}=\colim_{n\in\bb{N}}\Omega^1_{\ca{O}_{F_n}/\ca{O}_F}$ is annihilated by $\widetilde{\ak{m}}_F\scr{D}_{F_\infty/F}\neq 0$ (\ref{thm:differential}.(\ref{item:thm:differential-3})), which is a contradiction.
\end{proof}

\begin{mylem}\label{lem:ari-disc}
	Let $K$ be a Henselian discrete valuation field extension of $\bb{Q}_p$ with perfect residue field, $F$ a Henselian valuation field of height $1$ extension of $K$. Then, $F$ is arithmetic if and only if $\Omega^1_{\ca{O}_F/\ca{O}_K}[p^\infty]$ is bounded. Moreover, if $F$ is not arithmetic, then $\ca{O}_{F_\infty}\otimes_{\ca{O}_F}\Omega^1_{\ca{O}_F/\ca{O}_K}[p^\infty]=\Omega^1_{\ca{O}_{F_\infty}/\ca{O}_K}[p^\infty]\cong F_\infty/\ca{O}_{F_\infty}$ and $\Omega^1_{\ca{O}_{F_\infty}/\ca{O}_F}$ is a quotient of $\Omega^1_{\ca{O}_{F_\infty}/\ca{O}_{K_\infty}}$.
\end{mylem}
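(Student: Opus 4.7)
The plan is to reduce to the pre-perfectoid level $F_\infty$, where \ref{cor:tate-cyclo} makes the $p^\infty$-torsion of the relevant Kähler differentials explicit as quotients of $F_\infty$ by fractional ideals. I apply \ref{lem:rhom} to the two towers $\bb{Z}_p\to\ca{O}_F\to\ca{O}_{F_\infty}$ and $\ca{O}_K\to\ca{O}_F\to\ca{O}_{F_\infty}$ (the hypothesis $H_1(\bb{L}_{\ca{O}_{F_\infty}/\ca{O}_F})=0$ holds by \ref{thm:differential}.(\ref{item:thm:differential-1}) since $F_\infty/F$ is algebraic separable), take the filtered colimit over $\pi=p^r$, and commute $[p^\infty]$ past tensor products using flatness of $\ca{O}_F\to\ca{O}_{F_\infty}$, obtaining two left-exact sequences
\begin{align*}
	0\to \ca{O}_{F_\infty}\otimes_{\ca{O}_F}\Omega^1_{\ca{O}_F/\bb{Z}_p}[p^\infty]&\to \Omega^1_{\ca{O}_{F_\infty}/\bb{Z}_p}[p^\infty]\to \Omega^1_{\ca{O}_{F_\infty}/\ca{O}_F}[p^\infty],\\
	0\to \ca{O}_{F_\infty}\otimes_{\ca{O}_F}\Omega^1_{\ca{O}_F/\ca{O}_K}[p^\infty]&\to \Omega^1_{\ca{O}_{F_\infty}/\ca{O}_K}[p^\infty]\to \Omega^1_{\ca{O}_{F_\infty}/\ca{O}_F}[p^\infty],
\end{align*}
fitting into a commutative diagram via the canonical surjection $\Omega^1_{\ca{O}_F/\bb{Z}_p}\twoheadrightarrow\Omega^1_{\ca{O}_F/\ca{O}_K}$. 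By \ref{cor:tate-cyclo} applied with base $\bb{Z}_p$ and with base $\ca{O}_K$, the middle terms are identified with $(F_\infty/(\zeta_p-1)^{-1}\ca{O}_{F_\infty})(1)$ and $(F_\infty/\pi_K^{-1}(\zeta_p-1)^{-1}\ca{O}_{F_\infty})(1)$ respectively, and the vertical map between them is surjective with kernel killed by $\pi_K$.

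Since $F_\infty/F$ is algebraic, the value group of $F$ is cofinal in that of $F_\infty$, so faithful flatness of $\ca{O}_F\to\ca{O}_{F_\infty}$ descends boundedness: any annihilator in $\ca{O}_{F_\infty}$ of $\ca{O}_{F_\infty}\otimes_{\ca{O}_F} M$ is dominated by some $p^n\in\ca{O}_F$, which then annihilates $M$ via the injectivity $M\hookrightarrow\ca{O}_{F_\infty}\otimes_{\ca{O}_F} M$. Combined with the injectivity of the horizontal maps and the bounded-kernel vertical surjection, boundedness of $\Omega^1_{\ca{O}_F/\bb{Z}_p}[p^\infty]$, i.e., arithmeticity of $F$, becomes equivalent to boundedness of $\Omega^1_{\ca{O}_F/\ca{O}_K}[p^\infty]$, which gives the main equivalence.

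For the Moreover part, when $F$ is non-arithmetic, the inclusion $\ca{O}_{F_\infty}\otimes_{\ca{O}_F}\Omega^1_{\ca{O}_F/\ca{O}_K}[p^\infty]\hookrightarrow \Omega^1_{\ca{O}_{F_\infty}/\ca{O}_K}[p^\infty]\cong F_\infty/\ak{a}_0$, with $\ak{a}_0=\pi_K^{-1}(\zeta_p-1)^{-1}\ca{O}_{F_\infty}$, has an unbounded source. Since $\ca{O}_{F_\infty}$-submodules of $F_\infty$ correspond to upper-closed subsets of the totally ordered value group, the only unbounded submodule of $F_\infty/\ak{a}_0$ is the entire quotient, forcing the inclusion to be an equality; multiplication by $\pi_K(\zeta_p-1)\in F_\infty^\times$ together with the non-canonical triviality of the Tate twist $(1)$ then yields the isomorphism with $F_\infty/\ca{O}_{F_\infty}$. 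For the final assertion, the transitivity sequence for $\ca{O}_K\to\ca{O}_{K_\infty}\to\ca{O}_{F_\infty}$ is short exact as $K_\infty$ is pre-perfectoid (\ref{thm:differential}.(\ref{item:thm:differential-1})), and comparing $\alpha_{K_\infty/K}$ with $\alpha_{F_\infty/K}$ via their common $\df\log(\zeta_{p^r})$-generators identifies $\ca{O}_{F_\infty}\otimes_{\ca{O}_{K_\infty}}\Omega^1_{\ca{O}_{K_\infty}/\ca{O}_K}=\Omega^1_{\ca{O}_{F_\infty}/\ca{O}_K}[p^\infty]$, so the transitivity sequence reads
\begin{align*}
	0\to \Omega^1_{\ca{O}_{F_\infty}/\ca{O}_K}[p^\infty]\to \Omega^1_{\ca{O}_{F_\infty}/\ca{O}_K}\to \Omega^1_{\ca{O}_{F_\infty}/\ca{O}_{K_\infty}}\to 0.
\end{align*}
The equality just established places $\Omega^1_{\ca{O}_{F_\infty}/\ca{O}_K}[p^\infty]$ inside $\ca{O}_{F_\infty}\otimes_{\ca{O}_F}\Omega^1_{\ca{O}_F/\ca{O}_K}$, which is the kernel of $\Omega^1_{\ca{O}_{F_\infty}/\ca{O}_K}\twoheadrightarrow \Omega^1_{\ca{O}_{F_\infty}/\ca{O}_F}$, so this surjection factors through the torsion-free quotient $\Omega^1_{\ca{O}_{F_\infty}/\ca{O}_{K_\infty}}$. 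The most delicate point is the coordinated diagram chase transferring boundedness through the bounded-kernel vertical surjection along with faithfully flat descent; once this is in place, the Moreover assertion is a clean consequence of the sharp submodule classification in $F_\infty/\ak{a}_0$.
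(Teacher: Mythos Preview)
Your proof is correct. For the ``Moreover'' assertions (the equality $\ca{O}_{F_\infty}\otimes_{\ca{O}_F}\Omega^1_{\ca{O}_F/\ca{O}_K}[p^\infty]=\Omega^1_{\ca{O}_{F_\infty}/\ca{O}_K}[p^\infty]$ via the submodule classification in $F_\infty/\ak{a}_0$, and the quotient statement via the transitivity sequence for $\ca{O}_K\to\ca{O}_{K_\infty}\to\ca{O}_{F_\infty}$) your argument is essentially the paper's own.

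For the main equivalence, however, you take a genuinely different route. The paper applies \ref{lem:rhom} to the tower $\bb{Z}_p\to\ca{O}_K\to\ca{O}_F$ and bounds directly the kernel and cokernel of $\Omega^1_{\ca{O}_F/\bb{Z}_p}[p^\infty]\to\Omega^1_{\ca{O}_F/\ca{O}_K}[p^\infty]$ by $\pi_K$, using that both $\Omega^1_{\ca{O}_K/\bb{Z}_p}[p^\infty]$ and $\widehat{\Omega}^1_{\ca{O}_K/\bb{Z}_p}$ are killed by $\pi_K$ (the latter via an external reference). You instead pass to $F_\infty$, compare the two middle terms through the explicit descriptions of \ref{cor:tate-cyclo} over both bases $\bb{Z}_p$ and $\ca{O}_K$, run the diagram chase (the right vertical is the identity and the middle vertical is surjective, forcing surjectivity on the left with $\pi_K$-bounded kernel), and then descend boundedness by faithful flatness of $\ca{O}_F\to\ca{O}_{F_\infty}$. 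Your approach stays entirely within the paper's internal toolbox (\ref{lem:rhom} and \ref{cor:tate-cyclo}) and avoids the external input on $\widehat{\Omega}^1_{\ca{O}_K/\bb{Z}_p}$; the paper's approach is shorter since it never leaves the $F$-level for this step.
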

\begin{proof}
	Firstly, we claim that $\Omega^1_{\ca{O}_F/\ca{O}_K}[p^\infty]$ is unbounded if and only if $\ca{O}_{F_\infty}\otimes_{\ca{O}_F}\Omega^1_{\ca{O}_F/\ca{O}_K}[p^\infty]=\Omega^1_{\ca{O}_{F_\infty}/\ca{O}_K}[p^\infty]$. Indeed, consider the canonical morphisms
	\begin{align}\label{eq:lem:ari-disc-1}
		\xymatrix{
			\ca{O}_{F_\infty}\otimes_{\ca{O}_F}\Omega^1_{\ca{O}_F/\ca{O}_K}[p^\infty]\ar[r]& \Omega^1_{\ca{O}_{F_\infty}/\ca{O}_K}[p^\infty]& (F_\infty/\ak{a}_{F_\infty/K})(1)\ar[l]^-{\alpha_{F_\infty/K}}_-{\sim},
		}
	\end{align}
	where the first morphism is injective by \ref{thm:differential}.(\ref{item:thm:differential-1}) and the second isomorphism is \eqref{eq:cor:tate-cyclo-1}. Thus, we see that $\Omega^1_{\ca{O}_F/\ca{O}_K}[p^\infty]$ is unbounded if and only if the first morphism is an isomorphism.
	
	Then, we check that if $\ca{O}_{F_\infty}\otimes_{\ca{O}_F}\Omega^1_{\ca{O}_F/\ca{O}_K}[p^\infty]=\Omega^1_{\ca{O}_{F_\infty}/\ca{O}_K}[p^\infty]$, then $\Omega^1_{\ca{O}_{F_\infty}/\ca{O}_F}$ is a quotient of $\Omega^1_{\ca{O}_{F_\infty}/\ca{O}_{K_\infty}}$. Consider the canonical exact sequence (\ref{thm:differential}.(\ref{item:thm:differential-1}))
	\begin{align}
		0\longrightarrow \ca{O}_{F_\infty}\otimes_{\ca{O}_{K_\infty}}\Omega^1_{\ca{O}_{K_\infty}/\ca{O}_K}\longrightarrow \Omega^1_{\ca{O}_{F_\infty}/\ca{O}_K}\longrightarrow \Omega^1_{\ca{O}_{F_\infty}/\ca{O}_{K_\infty}}\longrightarrow 0.
	\end{align}
	As $K_\infty$ is pre-perfectoid (\ref{prop:cyclotomic-diff}), $\Omega^1_{\ca{O}_{F_\infty}/\ca{O}_{K_\infty}}$ is torsion-free by \ref{cor:differential}. Therefore, $\ca{O}_{F_\infty}\otimes_{\ca{O}_{K_\infty}}\Omega^1_{\ca{O}_{K_\infty}/\ca{O}_K}=\Omega^1_{\ca{O}_{F_\infty}/\ca{O}_K}[p^\infty]$ is contained in $\ca{O}_{F_\infty}\otimes_{\ca{O}_F}\Omega^1_{\ca{O}_F/\ca{O}_K}$. This implies that the torsion module $\Omega^1_{\ca{O}_{F_\infty}/\ca{O}_F}$ is a quotient of $\Omega^1_{\ca{O}_{F_\infty}/\ca{O}_{K_\infty}}$.
	
	Finally, it remains to show that $\Omega^1_{\ca{O}_F/\bb{Z}_p}[p^\infty]$ is bounded if and only if $\Omega^1_{\ca{O}_F/\ca{O}_K}[p^\infty]$ is bounded.
	For any $r\in\bb{N}$, consider the canonical exact sequence \eqref{eq:lem:rhom-1}
	\begin{align}
		0\to \ca{O}_F\otimes_{\ca{O}_K}\Omega^1_{\ca{O}_K/\bb{Z}_p}[p^r]\to \Omega^1_{\ca{O}_F/\bb{Z}_p}[p^r]\to \Omega^1_{\ca{O}_F/\ca{O}_K}[p^r]\to \ca{O}_F\otimes_{\ca{O}_K}\Omega^1_{\ca{O}_K/\bb{Z}_p}/p^r.
	\end{align}
	With the notation in \ref{prop:cyclotomic-diff}, note that $\Omega^1_{\ca{O}_K/\bb{Z}_p}[p^\infty]$ is killed by $\pi_K$ by (the proof of) \ref{cor:disc-diff-torsion} and that $\widehat{\Omega}^1_{\ca{O}_K/\bb{Z}_p}=\Omega^1_{\ca{O}_{\widehat{K}}/\ca{O}_{K'}}$ is killed by $\pi_K$  (\cite[3.3]{he2021faltingsext}). Thus, the kernel and cokernel of $\Omega^1_{\ca{O}_F/\bb{Z}_p}[p^\infty]\to \Omega^1_{\ca{O}_F/\ca{O}_K}[p^\infty]$ are both killed by $\pi_K$. Then, we see that the boundedness of $\Omega^1_{\ca{O}_F/\bb{Z}_p}[p^\infty]$ is equivalent to that of $\Omega^1_{\ca{O}_F/\ca{O}_K}[p^\infty]$.
\end{proof}

\begin{myprop}\label{prop:non-ari-flat}
	Let $K$ be a Henselian discrete valuation field extension of $\bb{Q}_p$ with perfect residue field, $F$ a Henselian valuation field of height $1$ extension of $K$, $\widehat{\Omega}^1_{\ca{O}_F/\ca{O}_K}$ the $p$-adic completion of $\Omega^1_{\ca{O}_F/\ca{O}_K}$. Then, $\widehat{\Omega}^1_{\ca{O}_F/\ca{O}_K}[p^\infty]$ is bounded. Moreover, it is zero if $F$ is not arithmetic.
\end{myprop}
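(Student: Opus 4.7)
The plan is to isolate the $p$-primary torsion of $\Omega^1_{\ca{O}_F/\ca{O}_K}$ and pass to the $p$-adic completion, exploiting the dichotomy supplied by \ref{lem:ari-disc}. I would set $T=\Omega^1_{\ca{O}_F/\ca{O}_K}[p^\infty]$ and $Q=\Omega^1_{\ca{O}_F/\ca{O}_K}/T$, so that $Q$ is $p$-torsion-free. Tensoring the short exact sequence $0\to T\to \Omega^1_{\ca{O}_F/\ca{O}_K}\to Q\to 0$ with $\bb{Z}/p^r\bb{Z}$ gives, since $Q[p^r]=0$, an exact sequence $0\to T/p^rT\to \Omega^1_{\ca{O}_F/\ca{O}_K}/p^r\to Q/p^rQ\to 0$ for every $r\in\bb{N}$. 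Each of these three inverse systems has surjective transition maps, so $\lim^1$ vanishes on each, and passing to the inverse limit yields a short exact sequence $0\to \widehat{T}\to \widehat{\Omega}^1_{\ca{O}_F/\ca{O}_K}\to \widehat{Q}\to 0$.

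Next I would verify the elementary lemma that the classical $p$-adic completion of a $p$-torsion-free module is again $p$-torsion-free, and apply it to $Q$. Given $x=(x_r)_r\in \widehat{Q}$ with $px=0$, the relation $px_r=0\in Q/p^rQ$ combined with $p$-torsion-freeness of $Q$ forces any lift $\widetilde{x}_r\in Q$ to lie in $p^{r-1}Q$, so $x_r$ belongs to the image of $p^{r-1}Q/p^rQ$ in $Q/p^rQ$. Since the transition map $Q/p^{r+1}Q\to Q/p^rQ$ annihilates $p^rQ/p^{r+1}Q$, compatibility of $(x_r)_r$ forces $x_r=0$ for every $r$. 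Consequently $\widehat{Q}$ is $p$-torsion-free, and the exact sequence above yields the inclusion $\widehat{\Omega}^1_{\ca{O}_F/\ca{O}_K}[p^\infty]\subseteq \widehat{T}$.

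It then remains to analyze $\widehat{T}$ in the two cases. If $F$ is arithmetic, $T$ is bounded, say $p^NT=0$, so $T/p^rT=T$ for $r\geq N$ and $\widehat{T}=T$, which shows $\widehat{\Omega}^1_{\ca{O}_F/\ca{O}_K}[p^\infty]$ is killed by $p^N$. If $F$ is not arithmetic, \ref{lem:ari-disc} identifies $\ca{O}_{F_\infty}\otimes_{\ca{O}_F}T$ with $F_\infty/\ca{O}_{F_\infty}$, which is $p$-divisible. Since $\ca{O}_F\to \ca{O}_{F_\infty}=\colim_n\ca{O}_{F_n}$ is a filtered colimit of faithfully flat finite extensions of valuation rings, it is itself faithfully flat, so the vanishing of $(T/pT)\otimes_{\ca{O}_F}\ca{O}_{F_\infty}$ descends to $T/pT=0$. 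Thus $T$ is $p$-divisible, $\widehat{T}=0$, and $\widehat{\Omega}^1_{\ca{O}_F/\ca{O}_K}[p^\infty]=0$.

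The only potentially delicate step is the $p$-torsion-freeness of the completion of $Q$, but this is settled by the inverse-limit calculation above; all remaining ingredients are formal consequences of \ref{lem:ari-disc}, the vanishing of $\lim^1$ on inverse systems with surjective transition maps, and faithfully flat descent of $p$-divisibility along $\ca{O}_F\to \ca{O}_{F_\infty}$.
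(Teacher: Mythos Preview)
Your proof is correct and follows essentially the same route as the paper: both isolate $T=\Omega^1_{\ca{O}_F/\ca{O}_K}[p^\infty]$ and $Q=\Omega^1_{\ca{O}_F/\ca{O}_K}/T$, pass to the exact sequence $0\to\widehat{T}\to\widehat{\Omega}^1_{\ca{O}_F/\ca{O}_K}\to\widehat{Q}\to 0$, observe that $\widehat{Q}$ is $p$-torsion-free, and then handle the two cases via \ref{lem:ari-disc} and faithfully flat descent along $\ca{O}_F\to\ca{O}_{F_\infty}$. The only difference is cosmetic: the paper cites \cite[\href{https://stacks.math.columbia.edu/tag/0315}{0315}]{stacks-project} for the exactness of the completed sequence and \cite[5.7.(1)]{tsuji2018localsimpson} for the torsion-freeness of $\widehat{Q}$, whereas you supply direct arguments for both.
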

\begin{proof}
	Consider the canonical exact sequence 
	\begin{align}
		0\longrightarrow \Omega^1_{\ca{O}_F/\ca{O}_K}[p^\infty]\to \Omega^1_{\ca{O}_F/\ca{O}_K}\longrightarrow \Omega^1_{\ca{O}_F/\ca{O}_K}/\Omega^1_{\ca{O}_F/\ca{O}_K}[p^\infty]\longrightarrow 0.
	\end{align}
	Taking $p$-adic completions, we obtain an exact sequence (\cite[\href{https://stacks.math.columbia.edu/tag/0315}{0315}]{stacks-project})
	\begin{align}
		0\longrightarrow (\Omega^1_{\ca{O}_F/\ca{O}_K}[p^\infty])^\wedge\to \widehat{\Omega}^1_{\ca{O}_F/\ca{O}_K}\longrightarrow (\Omega^1_{\ca{O}_F/\ca{O}_K}/\Omega^1_{\ca{O}_F/\ca{O}_K}[p^\infty])^\wedge\longrightarrow 0.
	\end{align}
	Note that $(\Omega^1_{\ca{O}_F/\ca{O}_K}/\Omega^1_{\ca{O}_F/\ca{O}_K}[p^\infty])^\wedge$ is torsion-free (\cite[5.7.(1)]{tsuji2018localsimpson}).
	
	If $F$ is arithmetic, then $\Omega^1_{\ca{O}_F/\ca{O}_K}[p^\infty]$ is bounded so that $\widehat{\Omega}^1_{\ca{O}_F/\ca{O}_K}[p^\infty]=(\Omega^1_{\ca{O}_F/\ca{O}_K}[p^\infty])^\wedge=\Omega^1_{\ca{O}_F/\ca{O}_K}[p^\infty]$ is also bounded.
	
	If $F$ is not arithmetic, $\ca{O}_{F_\infty}\otimes_{\ca{O}_F}\Omega^1_{\ca{O}_F/\ca{O}_K}[p^\infty]=\Omega^1_{\ca{O}_{F_\infty}/\ca{O}_K}[p^\infty]\cong F_\infty/\ca{O}_{F_\infty}$ by \ref{lem:ari-disc}. This shows that $\Omega^1_{\ca{O}_F/\ca{O}_K}[p^\infty]=p\cdot \Omega^1_{\ca{O}_F/\ca{O}_K}[p^\infty]$ by faithfully flat descent. Thus, $(\Omega^1_{\ca{O}_F/\ca{O}_K}[p^\infty])^\wedge=0$ so that $\widehat{\Omega}^1_{\ca{O}_F/\ca{O}_K}$ is torsion-free.
\end{proof}

\begin{myprop}\label{prop:tate-twist}
	Let $K$ be a Henselian discrete valuation field extension of $\bb{Q}_p$ with perfect residue field, $F$ a Henselian valuation field of height $1$ extension of $K$. Assume that $F$ is not arithmetic. Then, the $\ca{O}_{\widehat{F}}$-module $T_p(\Omega^1_{\ca{O}_F/\ca{O}_K})$	is finite free of rank $1$ with 
	\begin{align}\label{eq:prop:tate-twist-0}
		T_p(\Omega^1_{\ca{O}_F/\ca{O}_K})/p^nT_p(\Omega^1_{\ca{O}_F/\ca{O}_K})=\Omega^1_{\ca{O}_F/\ca{O}_K}[p^n]
	\end{align}
	for any $n\in\bb{N}$. Moreover, for any Henselian valuation field $F'$ of height $1$ extension of $F$, the canonical morphism
	\begin{align}\label{eq:prop:tate-twist-1}
		\ca{O}_{\widehat{F'}}\otimes_{\ca{O}_{\widehat{F}}}T_p(\Omega^1_{\ca{O}_F/\ca{O}_K})\longrightarrow T_p(\Omega^1_{\ca{O}_{F'}/\ca{O}_K})
	\end{align}
	is an isomorphism.
\end{myprop}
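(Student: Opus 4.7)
The strategy is to show that $M:=\Omega^1_{\ca{O}_F/\ca{O}_K}[p^\infty]$ is a $p$-divisible $\ca{O}_F$-module whose $p^n$-torsion is cyclic with annihilator $p^n\ca{O}_F$; granting this, $T_p(M)$ is free of rank one over $\ca{O}_{\widehat{F}}$, the quotient formula is automatic, and the base change reduces to a mod-$p$ check. The key input is descent along the faithfully flat extension $\ca{O}_F\to\ca{O}_{F_\infty}$, where \ref{cor:tate-cyclo} provides the explicit structure.

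First I claim $\ca{O}_{F_\infty}\otimes_{\ca{O}_F}M[p^n]\iso\Omega^1_{\ca{O}_{F_\infty}/\ca{O}_K}[p^n]$. By \ref{thm:differential}.(\ref{item:thm:differential-1}) one has the exact sequence
\[
0\longrightarrow \ca{O}_{F_\infty}\otimes_{\ca{O}_F}\Omega^1_{\ca{O}_F/\ca{O}_K}\longrightarrow \Omega^1_{\ca{O}_{F_\infty}/\ca{O}_K}\longrightarrow \Omega^1_{\ca{O}_{F_\infty}/\ca{O}_F}\longrightarrow 0,
\]
in which the rightmost term is, by \ref{lem:ari-disc} (which applies since $F$ is non-arithmetic), a quotient of $\Omega^1_{\ca{O}_{F_\infty}/\ca{O}_{K_\infty}}$; the latter is torsion-free by \ref{cor:differential} since $K_\infty$ is pre-perfectoid (\ref{prop:cyclotomic-diff}). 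Taking $p^n$-torsion in the sequence yields the claim. Combined with the abstract isomorphism $\Omega^1_{\ca{O}_{F_\infty}/\ca{O}_K}[p^n]\cong \ca{O}_{F_\infty}/p^n\ca{O}_{F_\infty}$ coming from \ref{cor:tate-cyclo}, faithfully flat descent shows that $M[p^n]$ is finitely generated over $\ca{O}_F$ with annihilator $p^n\ca{O}_{F_\infty}\cap\ca{O}_F=p^n\ca{O}_F$. For cyclicity, put $\kappa:=\ca{O}_F/\ak{m}_F$ and $R:=\ca{O}_{F_\infty}/\ak{m}_F\ca{O}_{F_\infty}$, a nonzero, hence faithfully flat, free $\kappa$-module; tensoring $M[p^n]$ over $\ca{O}_F$ with $R$ gives $R$ itself (as $p^n\in\ak{m}_F$), and the resulting identity $(M[p^n]/\ak{m}_F M[p^n])\otimes_\kappa R\cong R$ forces $\dim_\kappa M[p^n]/\ak{m}_F M[p^n]=1$, so Nakayama yields $M[p^n]\cong \ca{O}_F/p^n\ca{O}_F$.

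The $p$-divisibility of $M$ descends from that of $M\otimes_{\ca{O}_F}\ca{O}_{F_\infty}\cong F_\infty/\ca{O}_{F_\infty}$ via faithful flatness. Hence, starting from any generator $m_1$ of $M[p]$ and iteratively choosing $m_{n+1}\in M$ with $pm_{n+1}=m_n$, the element $m_{n+1}$ automatically lies in $M[p^{n+1}]$ with order exactly $p^{n+1}$ and so generates $M[p^{n+1}]$. Identifying $M[p^n]\cong \ca{O}_F/p^n\ca{O}_F$ via $1\mapsto m_n$ renders the transition maps as the canonical projections, whence
\[
T_p(\Omega^1_{\ca{O}_F/\ca{O}_K})=\plim_{n}\ca{O}_F/p^n\ca{O}_F=\ca{O}_{\widehat{F}}\cdot(m_n)_n
\]
is a rank-one free $\ca{O}_{\widehat{F}}$-module, and $T_p/p^nT_p\cong M[p^n]$ is immediate.

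For the base change, writing $F'_\infty:=F'(\zeta_{p^\infty})$, the non-arithmetic property transfers to $F'$ via the injection $\Omega^1_{\ca{O}_F/\bb{Z}_p}[p^\infty]\otimes_{\ca{O}_F}\ca{O}_{F'}\hookrightarrow \Omega^1_{\ca{O}_{F'}/\bb{Z}_p}[p^\infty]$ from \ref{thm:differential}.(\ref{item:thm:differential-1}), so the preceding results apply to $F'$. The canonical map $\ca{O}_{\widehat{F'}}\otimes_{\ca{O}_{\widehat{F}}}T_p(\Omega^1_{\ca{O}_F/\ca{O}_K})\to T_p(\Omega^1_{\ca{O}_{F'}/\ca{O}_K})$ is $\ca{O}_{\widehat{F'}}$-linear between rank-one free modules over the $p$-adically complete local ring $\ca{O}_{\widehat{F'}}$ (note $p\in\ak{m}_{\widehat{F'}}$), hence is an isomorphism iff it is so modulo $p$, i.e., iff $\ca{O}_{F'}/p\otimes_{\ca{O}_F/p}M[p]\to \Omega^1_{\ca{O}_{F'}/\ca{O}_K}[p]$ is an isomorphism between cyclic $\ca{O}_{F'}/p$-modules of order $p$. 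By the second paragraph applied to $F\to F_\infty$, $m_1$ has unit image in $\ca{O}_{F_\infty}/p\ca{O}_{F_\infty}$; the natural inclusion $\ca{O}_{F_\infty}/p\hookrightarrow \ca{O}_{F'_\infty}/p$ preserves units (same zero valuation), and any element of $\ca{O}_{F'}$ that is a unit in $\ca{O}_{F'_\infty}$ is already a unit in $\ca{O}_{F'}$, so the image of $m_1$ in $\Omega^1_{\ca{O}_{F'}/\ca{O}_K}[p]\cong \ca{O}_{F'}/p\ca{O}_{F'}$ is a unit and therefore a generator, completing the proof. The main technical step I anticipate is the cyclic descent in the second paragraph: the annihilator is immediate from faithful flatness, but cyclicity requires the detour through $R$ rather than the residue field of $\ca{O}_{F_\infty}$, since $\ak{m}_F\ca{O}_{F_\infty}$ need not be the maximal ideal of $\ca{O}_{F_\infty}$.
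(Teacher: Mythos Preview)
Your argument is essentially correct and follows the same overall descent strategy as the paper, but there is one incorrect justification that you should fix, and the execution differs in an interesting way.

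\textbf{The gap.} The step ``$m_{n+1}$ \ldots\ with order exactly $p^{n+1}$ and so generates $M[p^{n+1}]$'' is not valid over a valuation ring that is not a DVR. In $\ca{O}_F/p^{n+1}\ca{O}_F$ an element of additive order $p^{n+1}$ is precisely one lying outside $p\ca{O}_F/p^{n+1}\ca{O}_F$, but it may still lie in $\ak{m}_F/p^{n+1}\ca{O}_F$ and hence fail to generate (take any $\pi\in\ak{m}_F$ with $|p|<|\pi|<1$). The conclusion is nevertheless correct, for a different reason: the surjection $\times p\colon M[p^{n+1}]\twoheadrightarrow M[p^n]$ (from $p$-divisibility) has kernel $M[p]=p^nM[p^{n+1}]\subseteq \ak{m}_F M[p^{n+1}]$, so by Nakayama any lift of a generator of $M[p^n]$ generates $M[p^{n+1}]$. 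Equivalently, if $m_n$ corresponds to a unit in $\ca{O}_F/p^n$, its lift $m_{n+1}$ corresponds to an element that is a unit mod $p^n$, hence lies outside $\ak{m}_F$, hence is a unit in $\ca{O}_F/p^{n+1}$.

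\textbf{A point worth tightening.} In your cyclicity step, the assertion ``$(M[p^n]/\ak{m}_F M[p^n])\otimes_\kappa R\cong R$ forces $\dim_\kappa=1$'' is not a $\kappa$-dimension count (indeed $R$ is infinite-dimensional over $\kappa$ in general, e.g.\ $F=\bb{Q}_p$). What makes it work is that the isomorphism is $R$-linear, so $R^d\cong R$ as $R$-modules with $R$ a nonzero commutative ring forces $d=1$ by the invariant basis number property. You should say this.

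\textbf{Comparison with the paper.} The paper does not descend cyclicity of $M[p^n]$ directly. Instead it shows, by faithfully flat descent from $\ca{O}_{F_\infty}$, that each $M[p^n]$ is finitely generated with surjective transition maps; then $T_p(M)$ is $p$-adically complete and finitely generated over $\ca{O}_{\widehat{F}}$ with $T_p/p^n=M[p^n]$, it is torsion-free because it injects into $T_p(\Omega^1_{\ca{O}_{F_\infty}/\ca{O}_K})\cong\ak{a}\ca{O}_{\widehat{F_\infty}}(1)$, hence finite free; the rank being $1$ and the base change statement are then read off from the case $F'=F_\infty$ by d\'evissage. Your route (prove $M[p^n]\cong\ca{O}_F/p^n$ first, then assemble) is more explicit and avoids the torsion-freeness step, at the cost of the descent-of-cyclicity argument and a more hands-on base change verification; the paper's route is slightly slicker but relies on the structure of finitely generated torsion-free modules over valuation rings.
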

\begin{proof}
	If $F$ contains a compatible system of primitive $p$-power roots of unity $(\zeta_{p^n})_{n\in \bb{N}}$, then the conclusion follows directly from \ref{cor:tate-cyclo}. Therefore, it remains to check the case where $F'=F_\infty=\bigcup_{n\in\bb{N}}F(\zeta_{p^n})$. As $F$ is not arithmetic, we have
	\begin{align}
		\ca{O}_{F_\infty}\otimes_{\ca{O}_F}\Omega^1_{\ca{O}_F/\ca{O}_K}[p^n]= \Omega^1_{\ca{O}_{F_\infty}/\ca{O}_K}[p^n]
	\end{align}
	for any $n\in\bb{N}$ by \ref{lem:ari-disc}. As the latter is isomorphic to $(p^{-n}\ak{a}_{F_\infty/K}/\ak{a}_{F_\infty/K})(1)$ \eqref{eq:cor:tate-cyclo-1}, we see that each $\Omega^1_{\ca{O}_F/\ca{O}_K}[p^n]$ is finitely generated and $\Omega^1_{\ca{O}_F/\ca{O}_K}[p^{n+1}]/p^n\Omega^1_{\ca{O}_F/\ca{O}_K}[p^{n+1}]=\Omega^1_{\ca{O}_F/\ca{O}_K}[p^n]$ by faithfully flat descent. Therefore, $T_p(\Omega^1_{\ca{O}_F/\ca{O}_K})$ is $p$-adically complete and finitely generated over $\ca{O}_{\widehat{F}}$ with $T_p(\Omega^1_{\ca{O}_F/\ca{O}_K})/p^nT_p(\Omega^1_{\ca{O}_F/\ca{O}_K})=\Omega^1_{\ca{O}_F/\ca{O}_K}[p^n]$ (\cite[\href{https://stacks.math.columbia.edu/tag/09B8}{09B8}, \href{https://stacks.math.columbia.edu/tag/031D}{031D}]{stacks-project}). Moreover, by taking limits of the injective morphisms
	\begin{align}
		\Omega^1_{\ca{O}_F/\ca{O}_K}[p^n]\longrightarrow \Omega^1_{\ca{O}_{F_\infty}/\ca{O}_K}[p^n],
	\end{align}
	we see that $T_p(\Omega^1_{\ca{O}_F/\ca{O}_K})\to T_p(\Omega^1_{\ca{O}_{F_\infty}/\ca{O}_K})$ is also injective. As the latter is isomorphic to $\ak{a}_{F_\infty/K}\ca{O}_{\widehat{F_\infty}}(1)$ \eqref{eq:cor:tate-cyclo-2}, we see that $T_p(\Omega^1_{\ca{O}_F/\ca{O}_K})$ is also torsion-free.
	
	Therefore, $T_p(\Omega^1_{\ca{O}_F/\ca{O}_K})$ is a finitely generated torsion-free $\ca{O}_{\widehat{F}}$-module. It is actually finite free by \cite[\Luoma{6}.\textsection3.6, Lemme 1]{bourbaki2006commalg5-7}. Since the morphism of finite free $\ca{O}_{\widehat{F_\infty}}$-modules
	\begin{align}
		\ca{O}_{\widehat{F_\infty}}\otimes_{\ca{O}_{\widehat{F}}}T_p(\Omega^1_{\ca{O}_F/\ca{O}_K})\longrightarrow T_p(\Omega^1_{\ca{O}_{F_\infty}/\ca{O}_K})
	\end{align}
	induces an isomorphism modulo $p$, it is actually an isomorphism by d\'evissage. 
\end{proof}

\begin{mydefn}\label{defn:int-tate-twist}
	Let $F$ be a non-arithmetic Henselian valuation field of height $1$ extension of $\bb{Q}_p$. For any $\ca{O}_{\widehat{F}}$-module $M$ and $n\in\bb{Z}$, we put 
	\begin{align}\label{eq:defn:int-tate-twist-1}
		M\{n\}=M\otimes_{\ca{O}_{\widehat{F}}}T_p(\Omega^1_{\ca{O}_F/\bb{Z}_p})^{\otimes n},
	\end{align}
	called the $n$-th \emph{differential Tate twist} (or more commonly, \emph{Breuil-Kisin-Fargues twist}).
\end{mydefn}

	We note that for any Henselian valuation field $F'$ of height $1$ extension of $F$, by \ref{prop:tate-twist} there is a natural identification 
	\begin{align}\label{eq:defn:int-tate-twist-2}
		(\ca{O}_{\widehat{F'}}\otimes_{\ca{O}_{\widehat{F}}}M)\{n\}=\ca{O}_{\widehat{F'}}\otimes_{\ca{O}_{\widehat{F}}}M\{n\}.
	\end{align}
	Moreover, if $F$ contains a compatible system of primitive $p$-power roots of unity, then there is a canonical isomorphism induced by \eqref{eq:cor:tate-cyclo-2},
	\begin{align}\label{eq:defn:int-tate-twist-3}
		\alpha_{F/\bb{Q}_p}:M\otimes_{\ca{O}_F} \ak{a}_{F/\bb{Q}_p}^{\otimes n}(n)\iso M\{n\}.
	\end{align}

In the rest of this section, we investigate the extents of arithmetic and geometric valuation fields. These results will not be used in the sequel of the article.

\begin{mylem}\label{lem:completion}
	Let $K$ be a Henselian valuation field of height $1$, $\widehat{K}$ its completion. Then, for any finite field extension $K'$ of $K$, the canonical morphism $\ca{O}_{K'}\otimes_{\ca{O}_K}\ca{O}_{\widehat{K}}\to \ca{O}_{\widehat{K'}}$ is an isomorphism.
\end{mylem}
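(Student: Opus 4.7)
The plan is to reduce to a field-theoretic identification $K' \otimes_K \widehat{K} \iso \widehat{K'}$ and then apply Theorem~\ref{thm:completion}.

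First I would establish the auxiliary fact that $\widehat{K} \cap \overline{K} = K$, where $\overline{K}$ denotes an algebraic closure of $K$. Since the valuation on $K$ has height $1$, the extension $\widehat{K}/K$ is immediate: the value group and residue field are preserved by Cauchy completion for rank $1$ valuations. Any algebraic subextension $K \subseteq K(\alpha) \subseteq \widehat{K}$ is then algebraic and immediate over $K$; but a Henselian valuation field admits no nontrivial algebraic immediate extension (it coincides with its own Henselization), so $\alpha \in K$. Using this, I would show that $K' \otimes_K \widehat{K}$ is a field equal to $\widehat{K'}$: in the separable case, the minimal polynomial $P \in K[T]$ of a primitive element of $K'$ remains irreducible over $\widehat{K}$, because any factorization in $\widehat{K}[T]$ would have coefficients in $\widehat{K} \cap \overline{K} = K$. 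The purely inseparable case $K' = K(\alpha)$ with $\alpha^{p^n}=a$ is analogous: if $a$ were a $p$-th power in $\widehat{K}$, the root would lie in $\widehat{K}\cap \overline{K}=K$, contradicting $\alpha^{p^{n-1}} \notin K$. A degree count then identifies $K' \otimes_K \widehat{K} = \widehat{K}[T]/(P)$ with $\widehat{K'}$.

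Next, I would apply Theorem~\ref{thm:completion} to the torsion-free $\ca{O}_K$-module $\ca{O}_{K'}$, which has rank $[K':K]$, to obtain a canonical surjection
\begin{align*}
\ca{O}_{K'} \otimes_{\ca{O}_K} \ca{O}_{\widehat{K}} \twoheadrightarrow \widehat{\ca{O}_{K'}}.
\end{align*}
Since the valuation on $K'$ has height $1$, its valuation topology agrees with the $\pi$-adic topology for any $\pi \in \ak{m}_K \setminus 0$, so $\widehat{\ca{O}_{K'}} = \ca{O}_{\widehat{K'}}$.

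Finally, the source of this surjection is a torsion-free $\ca{O}_{\widehat{K}}$-module (a tensor product of flat, hence torsion-free, $\ca{O}_K$-modules is flat; and over a height $1$ valuation ring, $\ca{O}_{\widehat{K}}$-torsion coincides with $\ca{O}_K$-torsion). Tensoring the surjection with $\widehat{K}$ over $\ca{O}_{\widehat{K}}$ recovers the isomorphism $K' \otimes_K \widehat{K} \iso \widehat{K'}$ from the first step. Hence the kernel is a torsion submodule of a torsion-free module and so vanishes, yielding the desired isomorphism. The main obstacle is the first step, where the Henselian hypothesis on $K$ is essential; everything after that is formal once Theorem~\ref{thm:completion} is invoked.
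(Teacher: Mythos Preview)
Your proof is correct, but the paper takes a shorter route. The paper invokes Gabber--Ramero \cite[6.3.8]{gabber2003almost} to get that $\ca{O}_{K'}$ is an almost finitely generated torsion-free $\ca{O}_K$-module (this is where the Henselian hypothesis enters); then $\ca{O}_{K'}\otimes_{\ca{O}_K}\ca{O}_{\widehat{K}}$ is almost finitely generated torsion-free over the complete ring $\ca{O}_{\widehat{K}}$, hence already complete by Corollary~\ref{cor:sep-comp}, and since its reductions modulo $\pi^n$ agree with $\ca{O}_{K'}/\pi^n\ca{O}_{K'}$ it coincides with $\ca{O}_{\widehat{K'}}$. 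No separate field-level identification $K'\otimes_K\widehat{K}\cong\widehat{K'}$ is needed. Your approach trades the Gabber--Ramero input for this classical field-level fact plus Theorem~\ref{thm:completion} for surjectivity, which is a perfectly valid alternative. One caution on your first step: the assertion that a Henselian valuation field admits no nontrivial algebraic immediate extension is not literally true in positive residue characteristic (defect extensions exist), so the parenthetical justification is off; your separable argument via irreducibility of the minimal polynomial is fine, and the conclusion $K'\otimes_K\widehat{K}\cong\widehat{K'}$ for Henselian $K$ is in any case standard.
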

\begin{proof}
	Recall that $\ca{O}_{K'}$ is an almost finitely generated torsion-free module over $\ca{O}_K$ (\cite[6.3.8]{gabber2003almost}). Thus, $\ca{O}_{K'}\otimes_{\ca{O}_K}\ca{O}_{\widehat{K}}$ is complete by \ref{cor:sep-comp}, i.e., $\ca{O}_{K'}\otimes_{\ca{O}_K}\ca{O}_{\widehat{K}}= \ca{O}_{\widehat{K'}}$.
\end{proof}

\begin{myprop}\label{prop:ari-geo-basic}
	Let $F\to F'$ be an extension of Henselian valuation fields of height $1$ extension of $\bb{Q}_p$.
	\begin{enumerate}
		\renewcommand{\labelenumi}{{\rm(\theenumi)}}
		\item If $F$ is geometric, then so is $F'$. If $F'$ is arithmetic, then so is $F$.\label{item:prop:ari-geo-basic-1}
		\item Assume that $F'=\widehat{F}$, or that $F'$ is finite over $F$, or that $\ca{O}_F\to\ca{O}_{F'}$ is ind-\'etale. Then, $F'$ is arithmetic (resp. geometric) if and only if $F$ is so.\label{item:prop:ari-geo-basic-2}
	\end{enumerate}
\end{myprop}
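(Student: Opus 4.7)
The proposition splits into two implications from Part (1) and, for each of the three cases of Part (2), one forward direction ``$F\text{ arith/geom}\Rightarrow F'\text{ arith/geom}$'' (the other direction in (2) being contained in (1)). For Part (1), I would use the characterization $F\text{ arithmetic}\iff T_p(\Omega^1_{\ca{O}_F/\bb{Z}_p})=0$: the forward direction follows by iterating the transition map $x\mapsto px$ on a bounded torsion module, while the converse is exactly \ref{prop:tate-twist} applied with $K=\bb{Q}_p$. Assuming $F$ non-arithmetic, the last part of \ref{prop:tate-twist} then gives $T_p(\Omega^1_{\ca{O}_{F'}/\bb{Z}_p})\cong \ca{O}_{\widehat{F'}}\otimes_{\ca{O}_{\widehat F}}T_p(\Omega^1_{\ca{O}_F/\bb{Z}_p})$, free of rank one and hence nonzero, so $F'$ is likewise non-arithmetic. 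For the geometric implication, fix $0\neq d\in \scr{D}_{F_\infty/F}$ inside a fixed algebraic closure of $F'$ containing $\overline F$ and introduce $M_n:=F_n\cap F'$; since $F_n/F$ is Galois, $F_n/M_n$ and $F'/M_n$ are linearly disjoint, so $F_n\otimes_{M_n}F'\cong F'_n$ is a field and \ref{prop:ramified-bc} yields $\widetilde{\ak{m}}_{M_n}\scr{D}_{F_n/M_n}\subseteq \scr{D}_{F'_n/F'}$. Combined with the multiplicativity $\scr{D}_{F_n/F}^{\al}=\scr{D}_{F_n/M_n}^{\al}\scr{D}_{M_n/F}^{\al}$ (\cite[4.1.25]{gabber2003almost}) and $\scr{D}_{M_n/F}\subseteq \ca{O}_{M_n}$, this yields the $n$-uniform inclusion $\widetilde{\ak{m}}_F\cdot \scr{D}_{F_n/F}\ca{O}_{F'_n}\subseteq \scr{D}_{F'_n/F'}$; then $\pi d$ is a nonzero element of $\scr{D}_{F'_\infty/F'}$ for any $\pi\in\widetilde{\ak{m}}_F\setminus 0$.

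For Part (2) in the finite and ind-\'etale cases, $F'/F$ is algebraic separable and \ref{thm:differential}.(\ref{item:thm:differential-1}) gives $H_1(\bb{L}_{\ca{O}_{F'}/\ca{O}_F})=0$. Taking the filtered colimit of the exact sequences of Lemma \ref{lem:rhom} with $\pi=p^r$ yields
\begin{align*}
0\to \ca{O}_{F'}\otimes_{\ca{O}_F}\Omega^1_{\ca{O}_F/\bb{Z}_p}[p^\infty]\to \Omega^1_{\ca{O}_{F'}/\bb{Z}_p}[p^\infty]\to \Omega^1_{\ca{O}_{F'}/\ca{O}_F}[p^\infty];
\end{align*}
the rightmost term is zero in the ind-\'etale case and bounded by the nonzero annihilator $\scr{D}_{F'/F}$ (via \ref{thm:differential}.(\ref{item:thm:differential-3})) in the finite case, so arithmeticity transfers from $F$ to $F'$. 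For the geometric downward direction in these cases, the transitivity identity $\scr{D}_{F'_n/F_n}\scr{D}_{F_n/F}\ca{O}_{F'_n}=\scr{D}_{F'_n/F'}\scr{D}_{F'/F}\ca{O}_{F'_n}$ (almost, by \cite[4.1.25]{gabber2003almost}) together with $\scr{D}_{F'_n/F_n}\subseteq\ca{O}_{F'_n}$ yields $\widetilde{\ak{m}}_{F'_n}\scr{D}_{F'_n/F'}\scr{D}_{F'/F}\ca{O}_{F'_n}\subseteq \scr{D}_{F_n/F}\ca{O}_{F'_n}$. Given nonzero $d\in \scr{D}_{F'_\infty/F'}$ and $e\in \scr{D}_{F'/F}$, non-degeneracy of the trace pairing for the finite extension $F'_\infty/F_\infty$ provides $w\in\ca{O}_{F'_\infty}$ with $\mrm{Tr}_{F'_\infty/F_\infty}(edw)\neq 0$; by $\gal(F'_\infty/F_\infty)$-invariance of each $\scr{D}_{F_n/F}\ca{O}_{F'_\infty}$ combined with faithful flatness of $\ca{O}_{F_\infty}\to\ca{O}_{F'_\infty}$, the trace $\mrm{Tr}(\pi edw)$ (with $\pi\in\widetilde{\ak{m}}_F\setminus0$) descends to a nonzero element of $\scr{D}_{F_\infty/F}$.

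For the completion case $F'=\widehat F$, $\ca{O}_F$ and $\ca{O}_{\widehat F}$ share value groups and $\ca{O}_{\widehat F_n}=\ca{O}_{F_n}\otimes_{\ca{O}_F}\ca{O}_{\widehat F}$ by \ref{lem:completion}. For geometricity, using flat base change of Fitting ideals (\ref{thm:differential}.(\ref{item:thm:differential-3})) to identify $\scr{D}_{\widehat F_\infty/\widehat F}=\bigcap_n \scr{D}_{F_n/F}\ca{O}_{\widehat F_\infty}$, given $0\neq d$ therein I pick $\pi\in\ca{O}_F$ with $0<|\pi|\leq |d|$ so that $\pi\in d\ca{O}_{\widehat F_\infty}\cap \ca{O}_{F_\infty}$, and faithful flatness of $\ca{O}_{F_\infty}\to \ca{O}_{\widehat F_\infty}$ forces $\pi\in \scr{D}_{F_n/F}\ca{O}_{F_\infty}$ for every $n$. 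For arithmeticity, the key input is that $\ca{O}_F/p^r\ca{O}_F=\ca{O}_{\widehat F}/p^r\ca{O}_{\widehat F}$ for every $r$: the base change formula for cotangent complexes gives $\bb{L}_{\ca{O}_{\widehat F}/\ca{O}_F}\otimes^{\dl}_{\ca{O}_F}\ca{O}_F/p^r=\bb{L}_{(\ca{O}_F/p^r)/(\ca{O}_F/p^r)}=0$, so each cohomology of $\bb{L}_{\ca{O}_{\widehat F}/\ca{O}_F}$ is uniquely $p$-divisible and, carrying an $\ca{O}_{\widehat F}$-module structure, is a $\bb{Q}_p$-vector space. In the cotangent transitivity long exact sequence $H_1(\bb{L}_{\ca{O}_{\widehat F}/\ca{O}_F})\to \ca{O}_{\widehat F}\otimes_{\ca{O}_F}\Omega^1_{\ca{O}_F/\bb{Z}_p}\to \Omega^1_{\ca{O}_{\widehat F}/\bb{Z}_p}\to \Omega^1_{\ca{O}_{\widehat F}/\ca{O}_F}\to 0$, the two outer terms being $\bb{Q}_p$-vector spaces makes their $p^\infty$-torsion and $\ext^1(\bb{Q}_p/\bb{Z}_p,-)$ vanish, and a diagram chase yields $\Omega^1_{\ca{O}_{\widehat F}/\bb{Z}_p}[p^\infty]\cong \ca{O}_{\widehat F}\otimes_{\ca{O}_F}(\Omega^1_{\ca{O}_F/\bb{Z}_p}[p^\infty])$; boundedness then transfers by faithful flatness.

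The step requiring the most care is the arithmetic transfer under completion: recognizing that the $p$-divisibility of $H_\ast(\bb{L}_{\ca{O}_{\widehat F}/\ca{O}_F})$ is actually \emph{unique} $p$-divisibility (so these modules are $\bb{Q}_p$-vector spaces, not merely divisible) is what lets one absorb $H_1(\bb{L}_{\ca{O}_{\widehat F}/\ca{O}_F})$ and $\Omega^1_{\ca{O}_{\widehat F}/\ca{O}_F}$ into the transitivity sequence without introducing uncontrolled $p^\infty$-torsion, and this is the new input beyond what is needed in the finite and ind-\'etale cases.
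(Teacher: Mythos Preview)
Your proposal is correct in spirit and each implication is provable along the lines you sketch, but the organisation and several of the arguments differ substantially from the paper's, and there is one small gap.

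\textbf{Comparison of approaches.} For the arithmetic implication in (1), the paper simply observes that since $\bb{Q}_p$ is perfect, \ref{thm:differential}.(\ref{item:thm:differential-1}) gives $H_1(\bb{L}_{\ca{O}_{F'}/\ca{O}_F})=0$ and hence an injection $\Omega^1_{\ca{O}_F/\bb{Z}_p}[p^\infty]\hookrightarrow\Omega^1_{\ca{O}_{F'}/\bb{Z}_p}[p^\infty]$; your route through $T_p$ and \ref{prop:tate-twist} is valid but circuitous. For the arithmetic transfer under completion and ind-\'etale extensions, the paper uses the criterion implicit in \ref{lem:ari-disc}: if $F$ is arithmetic then $\df\log(\zeta_{p^n})\neq 0$ in $\Omega^1_{\ca{O}_{F_\infty}/\ca{O}_F}$ for $n$ large, and this persists after base change since $\Omega^1_{\ca{O}_{F'_\infty}/\ca{O}_{F'}}=\ca{O}_{F'_\infty}\otimes_{\ca{O}_{F_\infty}}\Omega^1_{\ca{O}_{F_\infty}/\ca{O}_F}$. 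Your cotangent-complex argument for the completion case is correct and yields the sharper identification $\Omega^1_{\ca{O}_{\widehat F}/\bb{Z}_p}[p^\infty]\cong\ca{O}_{\widehat F}\otimes_{\ca{O}_F}\Omega^1_{\ca{O}_F/\bb{Z}_p}[p^\infty]$, which is a nice bonus; the paper's route is shorter but leans on \ref{lem:ari-disc}. For the geometric descent in the finite case, the paper concludes directly from $\widetilde{\ak{m}}_F\scr{D}_{F'_n/F'}\scr{D}_{F'/F}\subseteq\scr{D}_{F_n/F}\ca{O}_{F'_n}$ by a one-line valuation comparison (any $y\in\ca{O}_F$ with $0<|y|<|\scr{D}_{F'_\infty/F'}\scr{D}_{F'/F}|$ lies in $\scr{D}_{F_\infty/F}$); your trace descent works but is heavier than needed.

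\textbf{A gap.} You treat the geometric downward direction for the ind-\'etale case together with the finite case, invoking $\scr{D}_{F'/F}$ and $\scr{D}_{F'_n/F_n}$. But when $\ca{O}_F\to\ca{O}_{F'}$ is ind-\'etale with $F'/F$ infinite, these different ideals are not defined in the paper's framework, and the reduction to finite subextensions does not immediately give ``$F'$ geometric $\Rightarrow$ $F$ geometric''. The paper handles completion and ind-\'etale uniformly by first noting $\ca{O}_{F'_n}=\ca{O}_{F_n}\otimes_{\ca{O}_F}\ca{O}_{F'}$ and then invoking base change of different ideals (\cite[4.1.24]{gabber2003almost}) to get $\widetilde{\ak{m}}_{F'}\scr{D}_{F'_n/F'}=\widetilde{\ak{m}}_F\scr{D}_{F_n/F}\ca{O}_{F'_n}$, from which the equivalence of geometricity is immediate. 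You should adopt this for the ind-\'etale case.
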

\begin{proof}
	(\ref{item:prop:ari-geo-basic-1}) Note that $\widetilde{\ak{m}}_F\scr{D}_{F_n/F}\subseteq \widetilde{\ak{m}}_F\scr{D}_{F_n/F_n\cap F'} \subseteq \scr{D}_{F'_n/F'}$ by \ref{prop:ramified-bc}. If $F$ is geometric, then $0\neq \widetilde{\ak{m}}_F\scr{D}_{F_\infty/F}\subseteq \scr{D}_{F'_\infty/F'}$ so that $F'$ is geometric. Note that $\Omega^1_{\ca{O}_F/\bb{Z}_p}[p^\infty]\subseteq \Omega^1_{\ca{O}_{F'}/\bb{Z}_p}[p^\infty]$ by \ref{thm:differential}.(\ref{item:thm:differential-1}). If $F'$ is arithmetic, then $\Omega^1_{\ca{O}_F/\bb{Z}_p}[p^\infty]$ is also bounded, i.e., $F$ is arithmetic.
	
	(\ref{item:prop:ari-geo-basic-2}) If $F'=\widehat{F}$ or $\ca{O}_F\to \ca{O}_{F'}$ is ind-\'etale, then $\ca{O}_{F'_n}=\ca{O}_{F_n}\otimes_{\ca{O}_F}\ca{O}_{F'}$ for any $n\in\bb{N}$ by \ref{lem:completion} and \cite[\href{https://stacks.math.columbia.edu/tag/03GE}{03GE}]{stacks-project}. In this case, if $F'$ is geometric, then so is $F$ as $ \widetilde{\ak{m}}_{F'}\scr{D}_{F'_n/F'}=\widetilde{\ak{m}}_F\scr{D}_{F_n/F}\ca{O}_{F_n'}$ for any $n\in\bb{N}$ (\cite[4.1.24]{gabber2003almost}). On the other hand, if $F$ is arithmetic, then $\df\log(\zeta_{p^n})\neq 0$ in $\Omega^1_{\ca{O}_{F_\infty}/\ca{O}_F}$ for $n$ large enough and thus it is nonzero in $\Omega^1_{\ca{O}_{F'_\infty}/\ca{O}_{F'}}=\ca{O}_{F_\infty'}\otimes_{\ca{O}_{F_\infty}}\Omega^1_{\ca{O}_{F_\infty}/\ca{O}_F}$, which implies that $F'$ is also arithmetic by \ref{lem:ari-disc}.
	
	If $F'$ is finite over $F$, then we have $\widetilde{\ak{m}}_F\scr{D}_{F'_n/F'}\scr{D}_{F'/F}=\widetilde{\ak{m}}_F\scr{D}_{F'_n/F}\subseteq \widetilde{\ak{m}}_F\scr{D}_{F_n/F}\ca{O}_{F_n'}$ for any $n\in\bb{N}$ (\cite[4.1.25]{gabber2003almost}). This shows that $F$ is geometric if $F'$ is so. Consider the canonical exact sequence \eqref{eq:lem:rhom-1} $0\to \ca{O}_{F'}\otimes_{\ca{O}_F}\Omega^1_{\ca{O}_F/\bb{Z}_p}[p^\infty]\to \Omega^1_{\ca{O}_{F'}/\bb{Z}_p}[p^\infty]\to \Omega^1_{\ca{O}_{F'}/\ca{O}_F}$. As the torsion module $\Omega^1_{\ca{O}_{F'}/\ca{O}_F}$ is almost finitely generated (\ref{thm:differential}.(\ref{item:thm:differential-3})), we see that $\Omega^1_{\ca{O}_{F'}/\bb{Z}_p}[p^\infty]$ is bounded if $\Omega^1_{\ca{O}_F/\bb{Z}_p}[p^\infty]$ is so, i.e., $F'$ is arithmetic if $F$ is so.
	
	Combining with (\ref{item:prop:ari-geo-basic-1}), we obtain the conclusion.
\end{proof}

\begin{mylem}\label{lem:geo-uniform}
	Let $F$ be a Henselian valuation field of height $1$ extension of $\bb{Q}_p$. If $\Omega^1_{\ca{O}_{F_\infty}/\ca{O}_F}$ is uniformly almost finitely generated over $\ca{O}_{F_\infty}$, then $F$ is geometric.
\end{mylem}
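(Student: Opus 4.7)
The plan is to analyze $M := \Omega^1_{\ca{O}_{F_\infty}/\ca{O}_F}$ via its Fitting ideal $F_0(M^\al)$, comparing it with the differents $\scr{D}_{F_n/F}$ at each finite level, and then to conclude via the explicit formulas \eqref{eq:para:fitting-ideal-1} and \eqref{eq:para:fitting-ideal-3}. First, for every $n\in\bb{N}$, since $F_\infty/F_n$ is separable algebraic, \ref{thm:differential}.(\ref{item:thm:differential-1}) produces a short exact sequence
\begin{align*}
0 \longrightarrow M_n \longrightarrow M \longrightarrow \Omega^1_{\ca{O}_{F_\infty}/\ca{O}_{F_n}} \longrightarrow 0,
\end{align*}
where $M_n := \ca{O}_{F_\infty} \otimes_{\ca{O}_{F_n}} \Omega^1_{\ca{O}_{F_n}/\ca{O}_F}$. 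By \ref{thm:differential}.(\ref{item:thm:differential-3}) and base change of Fitting ideals, $M_n$ is uniformly almost finitely generated with $F_0(M_n^\al)=\scr{D}_{F_n/F}^\al\ca{O}_{F_\infty}^\al$, while the quotient $\Omega^1_{\ca{O}_{F_\infty}/\ca{O}_{F_n}}$ is uniformly almost finitely generated as a quotient of $M$.

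Next I would apply the multiplicativity of Fitting ideals \eqref{eq:para:fitting-ideal-3} to obtain
\begin{align*}
F_0(M^\al) = F_0(M_n^\al)\cdot F_0(\Omega^{1,\al}_{\ca{O}_{F_\infty}/\ca{O}_{F_n}}) \subseteq \scr{D}_{F_n/F}^\al \ca{O}_{F_\infty}^\al,
\end{align*}
and, using that the formation of almost ideals commutes with intersections, deduce $F_0(M^\al)\subseteq \scr{D}_{F_\infty/F}^\al$. It is then enough to show $F_0(M^\al)\neq 0$, since $\scr{D}_{F_\infty/F}^\al\neq 0$ is equivalent to $\scr{D}_{F_\infty/F}\neq 0$ in the domain $\ca{O}_{F_\infty}$.

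The nonvanishing of $F_0(M^\al)$ is the crux, and it is precisely here that the \emph{uniform} bound hypothesis is used. Let $N$ be such a uniform bound. By \eqref{eq:para:fitting-ideal-1}, $F_0(M^\al)$ equals $(\prod_{j\ge 1}\mrm{Ann}_{\ca{O}_{F_\infty}}(\wedge^j M))^\al$; for $j>N$ one has $\wedge^j M$ almost zero, so those tail factors contain $\widetilde{\ak{m}}_{F_\infty}$ and contribute nothing to the almost class. For $j\le N$, since $F_\infty/F$ is algebraic the module $\wedge^j M$ is torsion, and taking an $N$-generated submodule $M_\pi\subseteq M$ containing $\pi M$ shows that $\pi^j\wedge^j M$ lies in the image of the natural map $\wedge^j M_\pi\to\wedge^j M$; boundedness of the finitely generated torsion module $\wedge^j M_\pi$ then forces $\wedge^j M$ to be bounded, whence $\mrm{Ann}(\wedge^j M)\neq 0$. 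Since $\ca{O}_{F_\infty}$ is a domain, the product of these nonzero ideals is nonzero, which completes the proof.

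The main obstacle I anticipate is the careful bookkeeping between almost and honest ideals: verifying that \eqref{eq:para:fitting-ideal-3} genuinely applies to all three terms of the short exact sequence (uniform almost finite generation of the submodule $M_n$ is obtained via base change), and that arbitrary intersections of ideals commute with $(-)^\al$. The uniform-bound hypothesis is used essentially and only in the final nonvanishing step, through the truncation of the infinite product at $j\le N$.
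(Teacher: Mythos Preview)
Your proposal is correct and follows essentially the same route as the paper: the same short exact sequence via \ref{thm:differential}.(\ref{item:thm:differential-1}), the multiplicativity \eqref{eq:para:fitting-ideal-3} to bound $F_0$ by the differents at each finite level, and the uniform bound to force $F_0\neq 0$. The only cosmetic difference is that the paper passes to honest ideals early by multiplying through by $\ak{m}_{F_\infty}$ (so the intersection over $n$ is taken in $\ca{O}_{F_\infty}$ directly), whereas you stay in the almost world and invoke that $J^{\al}\subseteq I_n^{\al}$ for all $n$ implies $J^{\al}\subseteq(\bigcap_n I_n)^{\al}$; this is valid since both conditions unwind to $\widetilde{\ak{m}}_{F_\infty}J\subseteq\bigcap_n I_n$.
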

\begin{proof}
	For any $n\in\bb{N}$, consider the canonical exact sequence of uniformly almost finitely generated $\ca{O}_{F_\infty}$-modules (\ref{thm:differential}),
	\begin{align}
		0\longrightarrow \ca{O}_{F_\infty}\otimes_{\ca{O}_{F_n}}\Omega^1_{\ca{O}_{F_n}/\ca{O}_F}\longrightarrow \Omega^1_{\ca{O}_{F_\infty}/\ca{O}_F}\longrightarrow \Omega^1_{\ca{O}_{F_\infty}/\ca{O}_{F_n}}\longrightarrow 0.
	\end{align}
	Thus, we have $F_0(\Omega^{1,\al}_{\ca{O}_{F_\infty}/\ca{O}_F})=F_0(\Omega^{1,\al}_{\ca{O}_{F_n}/\ca{O}_F})\cdot F_0(\Omega^{1,\al}_{\ca{O}_{F_\infty}/\ca{O}_{F_n}})$ by \eqref{eq:para:fitting-ideal-3}. Since $F_0(\Omega^{1,\al}_{\ca{O}_{F_n}/\ca{O}_F})=\scr{D}_{F_n/F}^{\al}$ by \ref{thm:differential}.(\ref{item:thm:differential-3}), we obtain that $\ak{m}_{F_\infty}\cdot F_0(\Omega^1_{\ca{O}_{F_\infty}/\ca{O}_F})\subseteq \ak{m}_{F_\infty}\cdot\scr{D}_{F_n/F}$. As $\Omega^1_{\ca{O}_{F_\infty}/\ca{O}_F}$ is torsion and uniformly almost finitely generated, we see that $F_0(\Omega^1_{\ca{O}_{F_\infty}/\ca{O}_F})\neq 0$ \eqref{eq:para:fitting-ideal-2} and thus $0\neq \ak{m}_{F_\infty}\cdot F_0(\Omega^1_{\ca{O}_{F_\infty}/\ca{O}_F})\subseteq \scr{D}_{F_\infty/F}$, which shows that $F$ is geometric.
\end{proof}

\begin{mylem}\label{lem:ari-geo-finite}
	Let $K$ be a valuation field of height $1$, $M$ a torsion-free $\ca{O}_K$-module of finite rank, $Q$ a quotient of $M$. If $Q$ is a bounded torsion module, then $Q$ is uniformly almost finitely generated.
\end{mylem}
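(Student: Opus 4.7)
The plan is to reduce the statement to showing that $M/\pi M$ is uniformly almost finitely generated for $\pi\in\ca{O}_K\setminus\{0\}$ annihilating $Q$, and then invoke the structural results of Section \ref{sec:mod} via the $\pi$-adic completion $\widehat M$.

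First, by Definition \ref{defn:sep-bound}.(\ref{item:defn:sep-bound-2}), the boundedness of $Q$ supplies $\pi\in\ca{O}_K\setminus\{0\}$ with $\pi Q=0$, so the surjection $M\twoheadrightarrow Q$ factors through $M/\pi M$. Since any quotient of a uniformly almost finitely generated module is uniformly almost finitely generated with the same uniform bound (by taking images of a chosen finite generating subset), it suffices to prove that $M/\pi M$ is uniformly almost finitely generated over $\ca{O}_K$.

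Next, let $\widehat M$ be the $\pi$-adic completion of $M$. By Theorem \ref{thm:completion}, $\widehat M$ is an almost finitely generated torsion-free $\ca{O}_{\widehat K}$-module with $\mrm{rank}_{\ca{O}_{\widehat K}}(\widehat M)\leq\mrm{rank}_{\ca{O}_K}(M)$, and by Proposition \ref{prop:generator-basis} it is uniformly almost finitely generated over $\ca{O}_{\widehat K}$ with uniform bound equal to $\mrm{rank}_{\ca{O}_{\widehat K}}(\widehat M)$. The standard identification $\widehat M/\pi\widehat M=M/\pi M$ (valid because $\pi\ca{O}_K$ is a finitely generated ideal) then exhibits $M/\pi M$ as a quotient of $\widehat M$, so $M/\pi M$ is uniformly almost finitely generated over $\ca{O}_{\widehat K}$ with bound $\leq\mrm{rank}_{\ca{O}_K}(M)$.

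It remains to descend from $\ca{O}_{\widehat K}$ to $\ca{O}_K$. Since $M/\pi M$ is annihilated by $\pi$, both the $\ca{O}_K$- and $\ca{O}_{\widehat K}$-actions on it factor through the common quotient $\ca{O}_K/\pi\ca{O}_K=\ca{O}_{\widehat K}/\pi\ca{O}_{\widehat K}$, so every $\ca{O}_{\widehat K}$-submodule generated by $m$ elements is also an $\ca{O}_K$-submodule generated by those same $m$ elements. Combined with the inclusion $\widetilde{\ak{m}}_K\subseteq\widetilde{\ak{m}}_{\widehat K}$ extracted from \eqref{eq:para:notation-almost-2}, this yields uniform almost finite generation over $\ca{O}_K$ with bound $\leq\mrm{rank}_{\ca{O}_K}(M)$. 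The main conceptual point of the argument is the initial reduction: $M$ itself need not be almost finitely generated (e.g.\ when its Kaplansky decomposition \eqref{eq:thm:kaplansky-1} has a nonzero divisible summand $L^{I_1}$), but this divisible part disappears after quotienting by $\pi$, and the resulting $M/\pi M$ is faithfully captured by $\widehat M$, which does satisfy all the hypotheses of Section \ref{sec:mod}; the rest is formal.
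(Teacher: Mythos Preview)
Your proof is correct, and it takes a different route from the paper's. The paper base-changes $Q$ directly along $\ca{O}_K\to\ca{O}_L$ for a maximal valuation field $L$ with value group $\bb{R}$, applies Kaplansky's decomposition \eqref{eq:thm:kaplansky-1} to $M\otimes_{\ca{O}_K}\ca{O}_L$ to see that $Q\otimes_{\ca{O}_K}\ca{O}_L$ is a quotient of $(\ca{O}_L/\pi)^{n_2}\oplus(\ak{m}_L/\pi\ak{m}_L)^{n_3}$, and then descends the uniform bound using faithfully flat descent together with the exterior-power criterion \cite[6.3.6.(\luoma{2})]{gabber2003almost}. Your approach instead reduces to $M/\pi M$, passes through the completion $\widehat M$, and uses Theorem~\ref{thm:completion} and Proposition~\ref{prop:generator-basis} (which themselves package Kaplansky's theorem) to obtain the uniform bound over $\ca{O}_{\widehat K}$; the final descent to $\ca{O}_K$ is then elementary because the two actions on $M/\pi M$ agree. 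The advantage of your route is that it stays entirely within results already established in Section~\ref{sec:mod} and avoids the external reference to \cite[6.3.6]{gabber2003almost}; the paper's route is a single base-change step rather than two (reduction to $M/\pi M$ followed by completion), and makes the uniform bound $n_2+n_3$ visibly equal to the bounded rank of $M\otimes_{\ca{O}_K}\ca{O}_L$.
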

\begin{proof}
	Let $L$ be a maximal valuation field of height $1$ extension of $K$ with value group isomorphic to $\bb{R}$ (which exists by {\rm\ref{lem:max-comp-exist}}). As $M\otimes_{\ca{O}_K}\ca{O}_L\cong L^{\oplus n_1}\oplus \ca{O}_L^{\oplus n_2}\oplus \ak{m}_L^{\oplus n_3}$ by \ref{thm:kaplansky}, the bounded torsion quotient $Q\otimes_{\ca{O}_K}\ca{O}_L$ is a quotient of $(\ca{O}_L/\pi \ca{O}_L)^{\oplus n_2}\oplus (\ak{m}_L/\pi\ak{m}_L)^{\oplus n_3}$ for some $\pi\in \ca{O}_L$ and thus uniformly almost finitely generated with uniform bound $c=n_2+n_3$. In particular, $\wedge^{c+1}_{\ca{O}_L}(Q\otimes_{\ca{O}_K}\ca{O}_L)$ is almost zero. By faithfully flat descent (\cite[3.2.26.(\luoma{2})]{gabber2003almost}), we see that $Q$ is almost finitely generated and that $\wedge^{c+1}_{\ca{O}_K}Q$ is almost zero. Therefore, $Q$ is uniformly almost finitely generated with uniform bound $c$ by \cite[6.3.6.(\luoma{2})]{gabber2003almost}.
\end{proof}

\begin{myprop}\label{prop:ari-geo-finite}
	Let $K$ be a Henselian discrete valuation field extension of $\bb{Q}_p$ with perfect residue field, $F$ a Henselian valuation field of height $1$ extension of $K$ of finite transcendental degree. Assume that $F$ is not arithmetic and that the torsion module $\Omega^1_{\ca{O}_{F_\infty}/\ca{O}_F}$ is bounded. Then, $F$ is geometric.
\end{myprop}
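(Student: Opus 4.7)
The plan is to apply Lemma \ref{lem:geo-uniform} and reduce to showing that $\Omega^1_{\ca{O}_{F_\infty}/\ca{O}_F}$ is uniformly almost finitely generated over $\ca{O}_{F_\infty}$, not merely bounded. To upgrade the boundedness hypothesis to uniform almost finite generation, I will realize $\Omega^1_{\ca{O}_{F_\infty}/\ca{O}_F}$ as a bounded torsion quotient of a torsion-free $\ca{O}_{F_\infty}$-module of finite rank, so that \ref{lem:ari-geo-finite} applies directly.

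The first step is to identify the torsion-free ambient module. Since $F$ is assumed not arithmetic, \ref{lem:ari-disc} tells us that $\Omega^1_{\ca{O}_{F_\infty}/\ca{O}_F}$ is a quotient of $\Omega^1_{\ca{O}_{F_\infty}/\ca{O}_{K_\infty}}$. The latter is torsion-free by \ref{cor:differential}, because $K_\infty$ is pre-perfectoid by \ref{prop:cyclotomic-diff}.

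The second step is a rank count. Since $K_\infty/K$ and $F_\infty/F$ are both algebraic, we have $\mrm{trdeg}_{K_\infty}(F_\infty)=\mrm{trdeg}_K(F)<\infty$. In characteristic $0$ the extension $F_\infty/K_\infty$ is automatically separable, so the $F_\infty$-vector space $\Omega^1_{F_\infty/K_\infty}$ has dimension equal to $\mrm{trdeg}_{K_\infty}(F_\infty)$, which is finite. Therefore $\Omega^1_{\ca{O}_{F_\infty}/\ca{O}_{K_\infty}}$ is a torsion-free $\ca{O}_{F_\infty}$-module of finite rank.

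Combining these two steps, $\Omega^1_{\ca{O}_{F_\infty}/\ca{O}_F}$ is a bounded torsion quotient (by the hypothesis of the proposition) of a torsion-free $\ca{O}_{F_\infty}$-module of finite rank. By \ref{lem:ari-geo-finite}, it is uniformly almost finitely generated over $\ca{O}_{F_\infty}$. Invoking \ref{lem:geo-uniform} then concludes that $F$ is geometric. No step in this chain looks delicate: everything is an application of the structural results already established, so there is no real obstacle as long as the non-arithmetic hypothesis is used precisely to access \ref{lem:ari-disc}.
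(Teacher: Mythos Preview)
Your proposal is correct and follows essentially the same argument as the paper: use \ref{lem:ari-disc} to exhibit $\Omega^1_{\ca{O}_{F_\infty}/\ca{O}_F}$ as a bounded torsion quotient of the torsion-free finite-rank module $\Omega^1_{\ca{O}_{F_\infty}/\ca{O}_{K_\infty}}$, then apply \ref{lem:ari-geo-finite} and \ref{lem:geo-uniform}. Your write-up simply spells out the torsion-freeness (via \ref{cor:differential} and \ref{prop:cyclotomic-diff}) and the rank bound (via the transcendence degree) that the paper leaves implicit.
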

\begin{proof}
	As $F$ is not arithmetic and of finite transcendental degree over $K$, the torsion module $\Omega^1_{\ca{O}_{F_\infty}/\ca{O}_F}$ is a quotient of the torsion-free $\ca{O}_{F_\infty}$-module $\Omega^1_{\ca{O}_{F_\infty}/\ca{O}_{K_\infty}}$ of finite rank by \ref{lem:ari-disc}. Therefore, $\Omega^1_{\ca{O}_{F_\infty}/\ca{O}_F}$ is uniformly almost finitely generated by \ref{lem:ari-geo-finite} so that $F$ is geometric by \ref{lem:geo-uniform}.
\end{proof}

\begin{myrem}\label{rem:ari-geo-finite}
	The proof of \ref{prop:ari-geo-finite} also shows that if the torsion-free module $\Omega^1_{\ca{O}_{F_\infty}/\ca{O}_{K_\infty}}$ is bounded, then $F$ is either arithmetic or geometric.
\end{myrem}
	
\begin{mycor}\label{cor:sen-perfd}
	Let $K$ be a Henselian discrete valuation field extension of $\bb{Q}_p$ with perfect residue field, $F$ an algebraic extension of $K$. Then, the following three properties on $F$ are equivalent to each other: non-arithmetic, geometric, and pre-perfectoid. 
\end{mycor}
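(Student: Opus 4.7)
The implications \emph{pre-perfectoid $\Rightarrow$ geometric} and \emph{geometric $\Rightarrow$ non-arithmetic} are immediate consequences of Lemma \ref{lem:ari-geo-basic}(\ref{item:lem:ari-geo-basic-2}) (applied with $F$ as its own pre-perfectoid subextension of $F/\bb{Q}_p$) and Lemma \ref{lem:ari-geo-ex} respectively. The real task is therefore to prove that a non-arithmetic algebraic extension $F$ of $K$ is pre-perfectoid.

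I would first record the easy reductions. Since $F$ is non-arithmetic, the contrapositive of Lemma \ref{lem:ari-geo-basic}(\ref{item:lem:ari-geo-basic-1}) forces the valuation on $F$ to be non-discrete; and as $F/K$ is algebraic over the perfect $\kappa_K$, the residue field of $F$ is perfect. Since $F$ has residue characteristic $p$ and height $1$, what is missing is Frobenius surjectivity on $\ca{O}_F/p\ca{O}_F$. My plan is to derive this from the vanishing $\Omega^1_{\ca{O}_{\overline F}/\ca{O}_F}=0$ by means of the Gabber-Ramero characterization of deeply ramified valuation fields \cite[6.6.6]{gabber2003almost}.

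To establish the vanishing, I would interpose the cyclotomic tower $F\subset F_\infty\subset \overline F$ and use the right-exact sequence
\begin{equation*}
\ca{O}_{\overline F}\otimes_{\ca{O}_{F_\infty}}\Omega^1_{\ca{O}_{F_\infty}/\ca{O}_F}\longrightarrow \Omega^1_{\ca{O}_{\overline F}/\ca{O}_F}\longrightarrow \Omega^1_{\ca{O}_{\overline F}/\ca{O}_{F_\infty}}\longrightarrow 0,
\end{equation*}
and show that both outer terms vanish. For the left term, Proposition \ref{prop:cyclotomic-diff} gives that $K_\infty$ is pre-perfectoid, so Corollary \ref{cor:differential} forces $\Omega^1_{\ca{O}_{F_\infty}/\ca{O}_{K_\infty}}$ to be torsion-free; being also torsion (since $F_\infty/K_\infty$ is algebraic) it vanishes, and Lemma \ref{lem:ari-disc} then presents $\Omega^1_{\ca{O}_{F_\infty}/\ca{O}_F}$ as a quotient of this zero module (using that $F$ is non-arithmetic). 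For the right term, $F_\infty$ is an algebraic extension of the deeply ramified $K_\infty$ and is therefore itself deeply ramified by Gabber-Ramero's almost purity \cite[6.6.16]{gabber2003almost}, hence pre-perfectoid (it is non-discretely valued of residue characteristic $p$). A second application of Corollary \ref{cor:differential} makes $\Omega^1_{\ca{O}_{\overline F}/\ca{O}_{F_\infty}}$ simultaneously torsion-free and torsion, hence zero.

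Once $\Omega^1_{\ca{O}_{\overline F}/\ca{O}_F}=0$ is in hand, Theorem \ref{thm:differential}(\ref{item:thm:differential-1}) upgrades this (via algebraic separability in characteristic $0$) to $\bb{L}_{\ca{O}_{\overline F}/\ca{O}_F}=0$, and \cite[6.6.6]{gabber2003almost} then says $F$ is deeply ramified; combined with the non-discrete valuation, residue characteristic $p$, and perfect residue field recorded above, the same reference delivers Frobenius surjectivity on $\ca{O}_F/p\ca{O}_F$, i.e.\ $F$ is pre-perfectoid. The main obstacle is the last step: verifying that the Gabber-Ramero criterion \cite[6.6.6]{gabber2003almost} applies in both directions in the mixed-characteristic, non-discretely valued setting. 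This is exactly the same equivalence already used implicitly (through Corollary \ref{cor:differential}) in the paper, so I regard it as a safe invocation rather than a genuine new difficulty.
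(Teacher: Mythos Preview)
Your proof is correct and follows essentially the same route as the paper: the easy implications are identical, and for non-arithmetic $\Rightarrow$ pre-perfectoid both arguments reduce to the vanishing $\Omega^1_{\ca{O}_{F_\infty}/\ca{O}_{K_\infty}}=0$ (torsion-free by \ref{cor:differential}, torsion by algebraicity) and then invoke \ref{lem:ari-disc} to get $\Omega^1_{\ca{O}_{F_\infty}/\ca{O}_F}=0$. The only difference is the final step: the paper stops here and applies \cite[6.6.2]{gabber2003almost} directly to descend pre-perfectoidness from $F_\infty$ to $F$, whereas you take one extra (harmless) step, extending the vanishing to $\Omega^1_{\ca{O}_{\overline F}/\ca{O}_F}=0$ before invoking \cite[6.6.6]{gabber2003almost}. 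Your concern about the last invocation is unwarranted; it is precisely the characterization used throughout the paper (e.g.\ in \ref{cor:perfd-tower-perfd}).
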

\begin{proof}
	The implications ``pre-perfectoid" $\Rightarrow$ ``geometric" $\Rightarrow$ ``non-arithmetic" are proved in \ref{lem:ari-geo-basic}.(\ref{item:lem:ari-geo-basic-2}) and \ref{lem:ari-geo-ex} respectively. Suppose that $F$ is not arithmetic. Note that $\Omega^1_{\ca{O}_{F_\infty}/\ca{O}_{K_\infty}}=0$ as $K_\infty$ is pre-perfectoid (\ref{prop:cyclotomic-diff}). Thus, $\Omega^1_{\ca{O}_{F_\infty}/\ca{O}_F}$ also vanishes as a quotient of $\Omega^1_{\ca{O}_{F_\infty}/\ca{O}_{K_\infty}}$ by \ref{lem:ari-disc}. Therefore, $F$ is pre-perfectoid as $F_\infty$ is so (\cite[6.6.2]{gabber2003almost}).
\end{proof}

\begin{myrem}\label{rem:sen-perfd}
	In \ref{cor:sen-perfd}, if $F$ is finitely wildly ramified, i.e., the $p$-exponent of the ramification index of any finite subextension of $F/K$ has a uniform upper bound, then $F$ is arithmetic by \ref{cor:disc-diff-torsion}. Conversely, if we assume moreover that $F$ is a Galois extension of $K$ whose Galois group is a $p$-adic analytic group, then by a theorem of Sen on its ramification subgroups (\cite[\textsection4]{sen1972ramification}), if $F$ is infinitely (wildly) ramified, then $F$ is not arithmetic (and thus pre-perfectoid) (see also \cite[2.13]{coatesgreenberg1996kummer}).
\end{myrem}

\begin{mylem}[{\cite[6.1.9]{gabber2003almost}}]\label{lem:gauss}
	Let $K\to F$ be an extension of valuation fields, $\Gamma_F$ the opposite of the (totally ordered) value group $F^\times/\ca{O}_F^\times$ of $F$, $|\cdot|:F\to \Gamma_F\cup\{0\}$ the canonical absolute value map, $x\in F$, $a,b_1,\dots,b_n\in K$ finitely many elements. Assume that $|x-a|\leq |x-b_i|$ for any $1\leq i\leq n$. Then, for $f(X)=(X-b_1)\cdots(X-b_n)=\sum_{k=0}^{n}c_k(X-a)^k\in K[X]$, we have
	\begin{align}\label{eq:lem:gauss}
		|f(x)|=\sup_{0\leq k\leq n}|c_k(x-a)^k|.
	\end{align}
\end{mylem}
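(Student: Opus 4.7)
The plan is to reduce everything to a direct computation by substituting $Y = X - a$, turning the hypothesis into a clean ultrametric statement about the shifted roots. Setting $y = x - a$ and $d_i = a - b_i$, we have $f(x) = \prod_{i=1}^n (y + d_i)$ and $c_k = (-1)^{n-k} e_{n-k}(d_1, \dots, d_n)$, the signed elementary symmetric polynomial of degree $n-k$ in the $d_i$. The hypothesis $|x-a| \leq |x-b_i|$ becomes $|y| \leq |y + d_i|$ for each $i$.

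The first step is to upgrade each factor inequality to an equality. I would show that $|y + d_i| = \max(|y|, |d_i|)$: when $|d_i| \neq |y|$, strict triangle inequality gives $|y + d_i| = \max(|y|, |d_i|)$ for free; the case $|d_i| = |y|$ is exactly where the hypothesis $|y| \leq |y+d_i|$ combined with the ultrametric upper bound $|y+d_i| \leq |y|$ forces equality. Multiplying over $i$ yields
\begin{align*}
|f(x)| = \prod_{i=1}^n \max(|y|, |d_i|).
\end{align*}

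Second, I would bound each $|c_k y^k|$ by $|f(x)|$. The ultrametric inequality applied to $c_k = \pm\sum_{|I|=n-k}\prod_{i \in I} d_i$ gives $|c_k| \leq \max_{|I|=n-k}\prod_{i \in I} |d_i|$. For any subset $I$ of size $n-k$, factorwise comparison gives
\begin{align*}
|y|^k\prod_{i \in I}|d_i| \;=\; \prod_{i \notin I}|y| \cdot \prod_{i \in I}|d_i| \;\leq\; \prod_{i=1}^n \max(|y|, |d_i|) \;=\; |f(x)|,
\end{align*}
so $|c_k y^k| \leq |f(x)|$ for every $k$. The reverse inequality $|f(x)| \leq \sup_{0 \leq k \leq n}|c_k(x-a)^k|$ follows from the ultrametric inequality applied to the expansion $f(x) = \sum_k c_k y^k$; combining the two inequalities gives the claim.

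I do not expect a genuine obstacle: the argument is pure bookkeeping with the ultrametric inequality once the right substitution is made. The only subtlety worth isolating is the use of the hypothesis $|y|\leq|y+d_i|$ in the equal-valuation case $|d_i|=|y|$, which is precisely where $|y+d_i|$ could a priori be strictly smaller than $\max(|y|,|d_i|)$, and where the extra assumption is needed to make the factorwise identity $|y+d_i|=\max(|y|,|d_i|)$ hold.
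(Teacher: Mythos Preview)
Your proof is correct, but the route differs from the paper's. The paper observes that both sides of \eqref{eq:lem:gauss} are multiplicative in $f$: the left side trivially, and the right side because the map $\sum_k c_kY^k\mapsto\sup_k|c_k(x-a)^k|$ is the Gauss norm on $K[Y]$ (this is the cited result \cite[\Luoma{6}.\textsection10.1, Lemme 1]{bourbaki2006commalg5-7}). This reduces the claim immediately to the linear case $f(X)=X-b_i$, where one only needs $|x-b_i|=\max(|x-a|,|a-b_i|)$ --- which is exactly your first step. Your approach instead handles the product by hand via the elementary-symmetric expansion and a factorwise estimate, in effect re-proving the Gauss-norm multiplicativity for this particular situation. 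The paper's argument is shorter and more conceptual; yours is self-contained and does not invoke an external reference. (A harmless typo: since $X-b_i=(X-a)+d_i$ with your convention $d_i=a-b_i$, the expansion $\prod_i(Y+d_i)$ gives $c_k=e_{n-k}(d_1,\dots,d_n)$ without the sign $(-1)^{n-k}$; this does not affect the absolute-value argument.)
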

\begin{proof}
	Since both sides of \eqref{eq:lem:gauss} is multiplicative with respect to $f\in K[X]$ by \cite[\Luoma{6}.\textsection10.1, Lemme 1]{bourbaki2006commalg5-7}, it suffices to prove the case $f(X)=X-b_i$, i.e., $|x-b_i|=\max(|x-a|,|a-b_i|)$, which follows directly from $|x-a|\leq |x-b_i|$.
\end{proof}

\begin{myprop}[{\cite[6.5.9]{gabber2003almost}}]\label{prop:trans}
	Let $K$ be an algebraically closed valuation field, $F$ a valuation field extension of $K$ purely transcendental of degree $\mrm{trdeg}_K(F)=1$, $\Gamma_F$ the opposite of the (totally ordered) value group $F^\times/\ca{O}_F^\times$ of $F$, $|\cdot|:F\to \Gamma_F\cup\{0\}$ the canonical absolute value map, $\widehat{\Gamma}_F$ the completion of the totally ordered group $\Gamma_F\cup\{0\}$, $X\in F\setminus K$ a generator of $F$, $s=\inf_{b\in K}|X-b|\in \widehat{\Gamma}_F$.
	\begin{enumerate}
		\renewcommand{\labelenumi}{{\rm(\theenumi)}}
		\item Assume that $|X-b|>s$ for any $b\in K$. Then, $\Gamma_F=\Gamma_K$. Moreover, for any collection $\{a_i\}_{i\in I}$ of elements of $K$ indexed by a directed set $I$ with $\lim_{i\in I}|X-a_i|=s$ and for any $b_i\in K$ with $|X-a_i|=|b_i|$, if we put $A_i=\ca{O}_K[\frac{X-a_i}{b_i}]\subseteq \ca{O}_F$ and $\ak{p}_i=A_i\cap \ak{m}_F$, then there is a canonical filtered union of $\ca{O}_K$-subalgebras $\ca{O}_F=\colim_{i\in I}A_{i,\ak{p}_i}$.\label{item:prop:trans-1}
		\item Assume that there exists $a\in K$ with $|X-a|=s\in \Gamma_K\subseteq \Gamma_F$. Then, $\Gamma_F=\Gamma_K$. Moreover, if we take $b\in K^\times$ with $|b|=s$, $A=\ca{O}_K[\frac{X-a}{b}]\subseteq \ca{O}_F$ and $\ak{p}=\ak{m}_K A$, then $\ca{O}_F=A_{\ak{p}}$.\label{item:prop:trans-2}
		\item Assume that there exists $a\in K$ with $|X-a|=s$ and that $s<\Gamma_K$ (resp. $s>\Gamma_K$). If we put $A_b=\ca{O}_K[\frac{X-a}{b}]\subseteq \ca{O}_F$ (resp. $\ca{O}_K[\frac{b}{X-a}]$) for any $b\in K^\times$ and $\ak{p}_b\subseteq A_b$ the prime ideal generated by $\ak{m}_K$ and $\frac{X-a}{b}$ (resp. $\frac{b}{X-a}$), then there is a canonical filtered union of $\ca{O}_K$-subalgebras $\ca{O}_F=\colim_{b\in K^\times}A_{b,\ak{p}_b}$.\label{item:prop:trans-3}
		\item Assume that there exists $a\in K$ with $|X-a|=s\notin \Gamma_K$ and that $|b_0'|<s<|b_0|$ for some $b_0,b_0'\in K$. Then, for any collections $\{b_i\}_{i\in I}$ and $\{b_i'\}_{i\in I}$ of elements of $K$ indexed by a directed set $I$ such that $|b_i'|<s<|b_i|$ and $\lim_{i\in I}|b_i'|=s=\lim_{i\in I}|b_i|$, if we put $A_i=\ca{O}_K[\frac{b_i'}{X-a},\frac{X-a}{b_i}]\subseteq \ca{O}_F$ and $\ak{p}_i\subseteq A_i$ the prime ideal generated by $\ak{m}_K,\frac{b_i'}{X-a}$ and $\frac{X-a}{b_i}$, then there is a canonical filtered union of $\ca{O}_K$-algebras $\ca{O}_F=\colim_{i\in I}A_{i,\ak{p}_i}$.\label{item:prop:trans-4}
	\end{enumerate}
\end{myprop}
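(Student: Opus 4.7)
The unified strategy exploits the fact that since $K$ is algebraically closed, every $f\in K[X]$ factors as $c\prod_j(X-\beta_j)$, so $|f(X)|=|c|\prod_j|X-\beta_j|$ and the valuation on $F$ is completely determined by the function $\alpha\mapsto|X-\alpha|$ on $K$. The four cases of the proposition are a case analysis on the behavior of this function near the infimum $s$, and in each case the description of $\ca{O}_F$ as a filtered colimit of localizations is obtained by choosing substitutions $Y_i$ of absolute value $1$ (or in two complementary absolute-value ranges in case (4)) and applying Lemma~\ref{lem:gauss} to reduce divisibility questions in $F$ to combinatorics of coefficients in $K$.

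\textbf{Step 1 (values of $|X-\alpha|$ and computation of $\Gamma_F$).} The first technical step is to compute $|X-\alpha|$ for every $\alpha\in K$ in each case, using the ultrametric principle that $|X-a|\leq|X-\alpha|$ forces $|X-\alpha|=\max(|X-a|,|a-\alpha|)$. In case (1), picking any $a_i$ with $|X-a_i|<|X-\alpha|$ (which exists since $|X-a_i|\to s<|X-\alpha|$) yields $|X-\alpha|=|a_i-\alpha|\in\Gamma_K$, hence $\Gamma_F=\Gamma_K$. In case (2), $|X-\alpha|=\max(s,|a-\alpha|)\in\Gamma_K$, again giving $\Gamma_F=\Gamma_K$. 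In cases (3) and (4), $s$ genuinely enlarges $\Gamma_F$; the ``$s>\Gamma_K$'' alternative in (3) is deduced from the ``$s<\Gamma_K$'' version by the involution $X\mapsto(X-a)^{-1}$. Once $|X-\alpha|$ is known, Lemma~\ref{lem:gauss} applied to $f(X)=\sum_k c_k(X-a)^k$ (with $a$ chosen so that $|X-a|\leq|X-\beta_j|$ for every root $\beta_j$ of $f$) yields the crucial formula $|f(X)|=\sup_k|c_k||X-a|^k$: in cases (2)--(4) the hypothesis $|X-a|=s$ provides the required inequality automatically, while in case (1) the finiteness of the root set of $f$ lets us enlarge $i$ until $|X-a_i|$ is below every $|X-\beta_j|$.

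\textbf{Step 2 (filtered colimit description of $\ca{O}_F$).} Given $\xi=f/g\in\ca{O}_F$, in cases (1) and (2) one chooses $i$ adapted to both $f$ and $g$ as in Step 1 and substitutes $Y_i=(X-a_i)/b_i$ so that $f,g$ become polynomials in $Y_i$ whose absolute values equal the suprema of the absolute values of their coefficients; dividing by the dominant coefficient of $g$ (a unit of $K^\times$) we may assume $g\in A_i=\ca{O}_K[Y_i]$ has at least one unit coefficient, so $g\notin\ak{p}_i$, whence $|f|\leq|g|=1$ forces $f\in A_i$ and $\xi\in A_{i,\ak{p}_i}$. Filteredness in case (1) follows by choosing a common refinement $a_k$ with $|X-a_k|$ smaller than both $|X-a_i|$ and $|X-a_j|$. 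For cases (3) and (4), one first factors out the maximal power of $X-a$ from $f$ and $g$ (its exponent being forced by $|f/g|\leq 1$ together with the fact that the remaining factors have absolute value in $\Gamma_K$), reducing $\xi$ to a monomial $((X-a)/b)^{\pm 1}$ times a unit whose value can be read from polynomial coefficients; the right choice of $b$ (or of the pair $b,b'$ in case (4)) then places $\xi$ inside $A_{b,\ak{p}_b}$ (resp.\ $A_{i,\ak{p}_i}$).

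\textbf{Main obstacle.} The hardest case is (4), where $s\in\widehat{\Gamma}_F\setminus\Gamma_F$ is squeezed between elements of $\Gamma_K$ from both sides: neither substitution $(X-a)/b$ nor $b'/(X-a)$ alone captures the local arithmetic at $s$, and one must use both generators simultaneously and verify that the system $\{A_i\}$ is genuinely filtered (via the natural inclusions as $|b_j|\leq|b_i|$ and $|b'_j|\geq|b'_i|$) and that the localizations $A_{i,\ak{p}_i}$ exhaust $\ca{O}_F$. The control is ultimately provided by Gauss's lemma, but extracting the right monomial factorization $f=(X-a)^{m_f}\tilde f$ with $\tilde f(a)\neq 0$ and checking that $\tilde f/\tilde g$ lands in $A_{i,\ak{p}_i}$ for $b_i,b'_i$ sufficiently close to $s$ from the two sides requires a careful simultaneous analysis of both substitutions; this is the step where the combinatorial description of $|f(X)|$ from Step 1 is used most forcefully.
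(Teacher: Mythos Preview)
Your proposal is correct and follows essentially the same strategy as the paper's proof: both hinge on Lemma~\ref{lem:gauss} to compute $|f(X)|=\sup_k|c_k(X-a)^k|$ for an appropriate center $a$, then normalize by the dominant term of the denominator and choose the parameter ($a_i$, $b$, or the pair $b_i,b_i'$) so that the resulting coefficients lie in $\ca{O}_K$ with at least one unit among those of $g$. The only cosmetic differences are that in cases~(\ref{item:prop:trans-3}) and~(\ref{item:prop:trans-4}) the paper works directly with Laurent expansions $\sum_{k=-m}^{n}c_k(X-a)^k$ and divides by the single dominant monomial $c'_{k_0}(X-a)^{k_0}$ of $g$ (rather than first factoring out $(X-a)^{m_f}$ from $f$ and $g$ separately as you do), and that the paper treats the two alternatives of~(\ref{item:prop:trans-3}) in parallel rather than via the substitution $X\mapsto (X-a)^{-1}$.
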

\begin{proof}
	This is proved in the proof of \cite[6.5.9]{gabber2003almost} which we sketch in the following. 
	
	(\ref{item:prop:trans-1}) For any collection $\{a_i\}_{i\in I}$ of elements of $K$ with $I$ directed and $\lim_{i\in I}|X-a_i|=s$ and for any $b\in K$, we have $|X-b|=\max(|X-a_i|,|a_i-b|)=|a_i-b|\in \Gamma_K$ for $i$ large enough. As $K$ is algebraically closed, any polynomial over $K$ splits. In particular, this verifies $\Gamma_F=\Gamma_K$. We take $b_i\in K$ with $|X-a_i|=|b_i|$ and for any element $f(X)/g(X)\in \ca{O}_F$, we write $f(X)=\sum_{k=0}^{n}c_k(X-a_i)^k$ and $g(X)=\sum_{k=0}^{n}c_k'(X-a_i)^k$ for some $c_k,c_k'\in K$ and $n\in\bb{N}$. After dividing $f$ and $g$ by an element $c\in K$ with $|c|=|g(X)|$, we may assume that $|g(X)|=1$. Taking $i$ large enough such that $|X-a_i|\leq |X-b|$ for any root $b$ of $fg$, we see that $|f(X)|=\sup_{0\leq k\leq n}|c_k(X-a_i)^k|\leq |g(X)|=\sup_{0\leq k\leq n}|c_k'(X-a_i)^k|=1$ by \ref{lem:gauss}. Therefore, we have $\sup_{0\leq k\leq n}|c_kb_i^k|\leq \sup_{0\leq k\leq n}|c_k'b_i^k|= 1$. This shows that $f(X)\in A_i$ and $g(X)\in A_i\setminus \ak{p}_i$ so that $f(X)/g(X)\in A_{i,\ak{p}_i}$. This also shows that $(A_{i,\ak{p}_i})_{i\in I}$ forms a directed system of $\ca{O}_K$-subalgebras of $\ca{O}_F$ with colimit $\ca{O}_F$.
	
	(\ref{item:prop:trans-2}) Note that for any  $b\in K$, we have $|X-b|=\max(|X-a|,|a-b|)\in \Gamma_K$. This shows that $\Gamma_F=\Gamma_K$ as $K$ is algebraically closed. We take $b\in K$ with $|X-a|=|b|$ and for any element $f(X)/g(X)\in \ca{O}_F$, we write $f(X)=\sum_{k=0}^{n}c_k(X-a)^k$ and $g(X)=\sum_{k=0}^{n}c_k'(X-a)^k$ for some $c_k,c_k'\in K$ and $n\in\bb{N}$. After dividing $f$ and $g$ by an element $c\in K$ with $|c|=|g(X)|$, we may assume that $|g(X)|=1$. Therefore, we see that $|f(X)|=\sup_{0\leq k\leq n}|c_k(X-a)^k|\leq |g(X)|=\sup_{0\leq k\leq n}|c_k'(X-a)^k|=1$ by \ref{lem:gauss}, i.e., $\sup_{0\leq k\leq n}|c_kb^k|\leq \sup_{0\leq k\leq n}|c_k'b^k|= 1$. This shows that $f(X)\in A$ and $g(X)\in A\setminus \ak{p}$ so that $f(X)/g(X)\in A_{\ak{p}}$ (and thus $\ca{O}_F=A_{\ak{p}}$). 
	
	(\ref{item:prop:trans-3}) Note that for any  $b\in K$, we have $|X-b|=\max(|X-a|,|a-b|)$. For any element $f(X)/g(X)\in \ca{O}_F$, we write $f(X)=\sum_{k=-m}^{n}c_k(X-a)^k$ and $g(X)=\sum_{k=-m}^{n}c_k'(X-a)^k$ for some $c_k,c_k'\in K$ and $m,n\in\bb{N}$. We have $|f(X)|=\sup_{-m\leq k\leq n}|c_k(X-a)^k|\leq |g(X)|=\sup_{-m\leq k\leq n}|c_k'(X-a)^k|\neq 0$ by \ref{lem:gauss}. As $|X-a|<\Gamma_K$ (resp. $|X-a|>\Gamma_K$), we have $|g(X)|=|c_{k_0}'(X-a)^{k_0}|$ where $k_0$ is the smallest (resp. largest) index such that $c_{k_0}'\neq 0$. After dividing $f$ and $g$ by $c_{k_0}'(X-a)^{k_0}$, we may assume that $m=0$ (resp. $n=0$) and $|g(X)|=|c_0'|=1$. Taking $b\in K^\times$ with $|b|$ sufficiently small (resp. large) such that $|c_kb^k|\leq 1$ and $|c_k'b^k|\leq 1$ for any $k$, we see that $f(X)\in A_b$ and $g(X)\in A_b\setminus \ak{p}_b$. This also shows that $(A_{b,\ak{p}_b})_{b\in K^\times}$ forms a directed system of $\ca{O}_K$-subalgebras of $\ca{O}_F$ with colimit $\ca{O}_F$.
	
	(\ref{item:prop:trans-4}) Note that for any  $b\in K$, we have $|X-b|=\max(|X-a|,|a-b|)$. For any element $f(X)/g(X)\in \ca{O}_F$, we write $f(X)=\sum_{k=-m}^{n}c_k(X-a)^k$ and $g(X)=\sum_{k=-m}^{n}c_k'(X-a)^k$ for some $c_k,c_k'\in K$ and $m,n\in\bb{N}$. We have $|f(X)|=\sup_{-m\leq k\leq n}|c_k(X-a)^k|\leq |g(X)|=\sup_{-m\leq k\leq n}|c_k'(X-a)^k|\neq 0$ by \ref{lem:gauss}. We take $k_0$ such that $|g(X)|=|c_{k_0}'(X-a)^{k_0}|$. After dividing $f$ and $g$ by $c_{k_0}'(X-a)^{k_0}$, we may assume that $|g(X)|=|c_0'|=1$. Taking $i$ large enough such that $|c_kb_i^k|\leq 1$ and $|c_k'b_i^k|\leq 1$ for any $k>0$ and $|c_kb_i'^k|\leq 1$ and $|c_k'b_i'^k|\leq 1$ for any $k<0$, we see that $f(X)\in A_i$ and $g(X)\in A_i\setminus \ak{p}_i$. This also shows that $(A_{i,\ak{p}_i})_{i\in I}$ forms a directed system of $\ca{O}_K$-subalgebras of $\ca{O}_F$ with colimit $\ca{O}_F$.
\end{proof}

\begin{mycor}\label{cor:trans}
	With the notation in {\rm\ref{prop:trans}}, assume that $F$ is of height $1$ (so that $\widehat{\Gamma}_F$ identifies with a subgroup of $\bb{R}_{\geq 0}$). Consider the nonzero $\ca{O}_F$-submodules $\ak{s}=\{b\in F\ |\ s\cdot|b|< 1\}$ and $\overline{\ak{s}}=\{b\in F\ |\ s\cdot|b|\leq 1\}$. 
	\begin{enumerate}
		\renewcommand{\labelenumi}{{\rm(\theenumi)}}
		\item Assume that $|X-b|>s$ for any $b\in K$. Then, there is an isomorphism of $\ca{O}_F$-modules $\ak{s}\iso \Omega^1_{\ca{O}_F/\ca{O}_K}$, which sends $1\in \ak{s}[1/p]=F$ to $\df X\in \Omega^1_{\ca{O}_F/\ca{O}_K}[1/p]=\Omega^1_{F/K}$ after inverting $p$.\label{item:cor:trans-1}
		\item Assume that there exists $a\in K$ with $|X-a|=s$. Then, there is an isomorphism of $\ca{O}_F$-modules $\overline{\ak{s}}\iso \Omega^1_{\ca{O}_F/\ca{O}_K}$, which sends $1\in \overline{\ak{s}}[1/p]=F$ to $\df X\in \Omega^1_{\ca{O}_F/\ca{O}_K}[1/p]=\Omega^1_{F/K}$ after inverting $p$.\label{item:cor:trans-2}
	\end{enumerate}
\end{mycor}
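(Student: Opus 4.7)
The plan is to apply Proposition \ref{prop:trans} to describe $\ca{O}_F$ as a filtered union of localizations of polynomial $\ca{O}_K$-algebras in one variable, and then compute $\Omega^1_{\ca{O}_F/\ca{O}_K}$ as the corresponding colimit inside $\Omega^1_{F/K}=F\cdot \df X$. Since $K$ is algebraically closed, Theorem \ref{thm:differential}.(\ref{item:thm:differential-2}) ensures that $\Omega^1_{\ca{O}_F/\ca{O}_K}$ is torsion-free, so the canonical map $\Omega^1_{\ca{O}_F/\ca{O}_K}\to F\cdot \df X$ is injective and identifies $\Omega^1_{\ca{O}_F/\ca{O}_K}$ with $M\cdot \df X$ for a unique $\ca{O}_F$-submodule $M\subseteq F$. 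It therefore suffices to prove that $M=\ak{s}$ in case (\luoma{1}) and $M=\overline{\ak{s}}$ in case (\luoma{2}).

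For case (\luoma{2}), I would apply Proposition \ref{prop:trans}.(\ref{item:prop:trans-2}) to obtain $\ca{O}_F=A_{\ak{p}}$ with $A=\ca{O}_K[Y]$ a polynomial ring in $Y=(X-a)/b$ where $|b|=s$, yielding $\Omega^1_{\ca{O}_F/\ca{O}_K}=\ca{O}_F\cdot \df Y=(1/b)\ca{O}_F\cdot \df X$. Since $|b|=s$, we have $\overline{\ak{s}}=\{c\in F\ |\ |bc|\leq 1\}=(1/b)\ca{O}_F$, whence $M=\overline{\ak{s}}$ as claimed.

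For case (\luoma{1}), Proposition \ref{prop:trans}.(\ref{item:prop:trans-1}) gives $\ca{O}_F=\colim_{i\in I}A_{i,\ak{p}_i}$ with $A_i=\ca{O}_K[Y_i]$ a polynomial algebra in $Y_i=(X-a_i)/b_i$, so $\Omega^1_{A_{i,\ak{p}_i}/\ca{O}_K}=A_{i,\ak{p}_i}\cdot \df Y_i=(1/b_i)A_{i,\ak{p}_i}\cdot \df X$ and $M=\colim_{i\in I}(1/b_i)A_{i,\ak{p}_i}$ inside $F$. The inclusion $M\subseteq \ak{s}$ is immediate from $|\gamma/b_i|\leq 1/|b_i|<1/s$ for any $\gamma\in A_{i,\ak{p}_i}\subseteq \ca{O}_F$. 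For the reverse inclusion $\ak{s}\subseteq M$, I would first pass to a cofinal subsequence on which $(|b_i|)_{i\in I}$ is monotonically decreasing to $s$; then for $c\in \ak{s}$, I would choose $i$ with $|b_i|\leq 1/|c|$ so that $cb_i\in \ca{O}_F$, invoke the colimit description of $\ca{O}_F$ to find some $k\geq i$ with $cb_i\in A_{k,\ak{p}_k}$, and use $|b_k|\leq |b_i|$ to ensure $b_k/b_i\in \ca{O}_K\subseteq A_{k,\ak{p}_k}$, so that $cb_k=(cb_i)(b_k/b_i)\in A_{k,\ak{p}_k}$ and consequently $c\in M$.

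The main technical obstacle is the reverse inclusion $\ak{s}\subseteq M$ in case (\luoma{1}), where the monotonicity arrangement on $(|b_i|)_{i\in I}$ is the key maneuver that enables the transport of elements of $\ak{s}$ consistently into the filtered colimit structure of $\ca{O}_F$; the compatibility of transition maps $A_{i,\ak{p}_i}\to A_{k,\ak{p}_k}$ with the normalizing factors $1/b_i$ would otherwise fail.
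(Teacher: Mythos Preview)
Your argument for case (\ref{item:cor:trans-1}) is correct and matches the paper's approach; the paper phrases it more directly by noting that $\Omega^1_{\ca{O}_F/\ca{O}_K}$, viewed inside $F\cdot\df X$, is generated as an $\ca{O}_F$-module by the $b_i^{-1}$, and that $\ak{s}$ is likewise generated by the $b_i^{-1}$ since $|b_i|\to s$ with $|b_i|>s$, but your colimit bookkeeping is equivalent.

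Your treatment of case (\ref{item:cor:trans-2}) has a genuine gap. The hypothesis ``there exists $a\in K$ with $|X-a|=s$'' does \emph{not} force $s\in\Gamma_K$: the element $X-a$ lies in $F$, not in $K$, so $s=|X-a|$ is only guaranteed to lie in $\Gamma_F$. You have applied only Proposition~\ref{prop:trans}.(\ref{item:prop:trans-2}), which additionally requires $s\in\Gamma_K$ in order to choose $b\in K$ with $|b|=s$. The paper splits case (\ref{item:cor:trans-2}) according to the remaining subcases of Proposition~\ref{prop:trans}: case (\ref{item:prop:trans-3}) is ruled out by the height-$1$ hypothesis on $F$, but case (\ref{item:prop:trans-4}) --- where $s\notin\Gamma_K$ yet $|b_0'|<s<|b_0|$ for some $b_0,b_0'\in K$ --- is a genuine possibility (for instance when $\Gamma_K$ is a proper dense subgroup of $\bb{R}_{>0}$). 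In that subcase $\ca{O}_F$ is a filtered union of localizations of $\ca{O}_K[\tfrac{b_i'}{X-a},\tfrac{X-a}{b_i}]$ with $|b_i'|<s<|b_i|$ and $|b_i'|,|b_i|\to s$, so $\Omega^1_{\ca{O}_F/\ca{O}_K}$ is generated over $\ca{O}_F$ by $\tfrac{-b_i'}{(X-a)^2}$ and $b_i^{-1}$, and the identification of the resulting submodule of $F$ requires a separate argument that your proposal omits.
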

\begin{proof}
	Firstly, notice that $\Omega^1_{\ca{O}_F/\ca{O}_K}$ is torsion-free as $K$ is algebraically closed (\ref{thm:differential}.(\ref{item:thm:differential-2})). Thus, it identifies with an $\ca{O}_F$-submodule of $\Omega^1_{F/K}=F\cdot \df X$. We consider each case of \ref{prop:trans} separately.
	
	In the case of \ref{prop:trans}.(\ref{item:prop:trans-1}), $\Omega^1_{\ca{O}_F/\ca{O}_K}$ is generated by $\df \frac{X-a_i}{b_i}=b_i^{-1}\df X$, and $\ak{s}$ is also generated by $b_i^{-1}$ as $\lim_{i\in I}|b_i|=s<|b_i|$.
	
	In the case of \ref{prop:trans}.(\ref{item:prop:trans-2}), $\Omega^1_{\ca{O}_F/\ca{O}_K}$ is generated by $\df \frac{X-a}{b}=b^{-1}\df X$, and $\overline{\ak{s}}$ is also generated by $b^{-1}$ as $s=|b|$.
	
	The case of \ref{prop:trans}.(\ref{item:prop:trans-3}) does not hold here because $F$ is of height $1$.
	
	In the case of \ref{prop:trans}.(\ref{item:prop:trans-4}), $\Omega^1_{\ca{O}_F/\ca{O}_K}$ is generated by $\df \frac{b_i'}{X-a}=\frac{-b_i'}{(X-a)^2}\df X$ and $\df \frac{X-a}{b_i}=b_i^{-1}\df X$, and $\overline{\ak{s}}=\ak{s}$ is also generated by $\frac{-b_i'}{(X-a)^2}$ and $b_i^{-1}$ as $|b_i'|<s<|b_i|$ and $\lim_{i\in I}|b_i'|=s=\lim_{i\in I}|b_i|\notin \Gamma_K$.
\end{proof}

\begin{mylem}\label{lem:trans}
	With the notation in {\rm\ref{prop:trans}}, assume that $F$ is of height $1$ (so that $\widehat{\Gamma}_F$ identifies with a subgroup of $\bb{R}_{\geq 0}$). Then, the following conditions are equivalent:
	\begin{enumerate}
		\renewcommand{\labelenumi}{{\rm(\theenumi)}}
		\item The torsion-free $\ca{O}_F$-module $\Omega^1_{\ca{O}_F/\ca{O}_K}$ is not bounded {\rm(\ref{defn:sep-bound}.(\ref{item:defn:sep-bound-1}))}. \label{item:lem:trans-1}
		\item $s=\inf_{b\in K}|X-b|=0$.\label{item:lem:trans-2}
		\item $F$ is a valuation field subextension of the completion $K\to \widehat{K}$.\label{item:lem:trans-3}
	\end{enumerate}
\end{mylem}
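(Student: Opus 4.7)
The plan is to deduce (\ref{item:lem:trans-1})$\Leftrightarrow$(\ref{item:lem:trans-2}) directly from Corollary \ref{cor:trans}, and then to establish (\ref{item:lem:trans-2})$\Leftrightarrow$(\ref{item:lem:trans-3}) via a density argument inside the completion $\widehat{F}$.

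For (\ref{item:lem:trans-1})$\Leftrightarrow$(\ref{item:lem:trans-2}), I would invoke \ref{cor:trans}, which identifies the torsion-free rank-one $\ca{O}_F$-module $\Omega^1_{\ca{O}_F/\ca{O}_K}$, viewed as an $\ca{O}_F$-submodule of $F$ via the generator $\df X$, with either $\ak{s}$ (when $|X-b|>s$ for all $b\in K$) or $\overline{\ak{s}}$ (when $s$ is attained by some $a\in K$). In either case one checks directly from the definition that $|\ak{s}|=|\overline{\ak{s}}|=s^{-1}$, so the submodule is a nonzero fractional ideal (hence bounded in the sense of \ref{para:notation-norm} and \ref{defn:sep-bound}) precisely when $s>0$. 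When $s=0$ one is forced into the first case, since the attained case would give $X=a\in K$, contradicting $X\in F\setminus K$; but then $\ak{s}=F$ has infinite norm and is unbounded.

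For (\ref{item:lem:trans-2})$\Leftrightarrow$(\ref{item:lem:trans-3}), the key observation is that inside $\widehat{F}$ the topological closure of $K$ coincides with $\widehat{K}$, since $\widehat{K}$ is complete and contains $K$ densely. The condition $s=\inf_{b\in K}|X-b|=0$ expresses exactly that $X\in \widehat{F}$ lies in this closure, i.e., $X\in \widehat{K}$. In that case $F=K(X)\subseteq \widehat{K}$ as subfields of $\widehat{F}$, and the given valuation on $F$ coincides with the one induced by the inclusion $F\subseteq \widehat{K}\subseteq \widehat{F}$ (both are restrictions of the unique valuation on $\widehat{F}$). Conversely, any valued embedding $F\hookrightarrow \widehat{K}$ over $K$ realizes $X$ as a limit of elements of $K$ in the valuation topology, forcing $s=0$.

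Since \ref{cor:trans} already does the hard work of describing $\Omega^1_{\ca{O}_F/\ca{O}_K}$ explicitly, what remains here is essentially bookkeeping: computing the norms of the fractional ideals $\ak{s}$ and $\overline{\ak{s}}$, and the standard density argument for $\widehat{K}\subseteq \widehat{F}$. No serious technical obstacle is expected.
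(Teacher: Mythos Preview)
Your proposal is correct and follows the same outline as the paper. For (\ref{item:lem:trans-2})$\Rightarrow$(\ref{item:lem:trans-3}) you take a slightly cleaner route, realizing $\widehat{K}$ as the closure of $K$ inside the ambient completion $\widehat{F}$ so that $s=0$ immediately gives $X\in\widehat{K}$ and valuation compatibility is automatic; the paper instead constructs the embedding $F=K(X)\to\widehat{K}$ explicitly by sending $X$ to the limit $a\in\widehat{K}$ of a Cauchy net $\{a_i\}\subseteq K$, and then verifies by hand (using that $K$ is algebraically closed, so it suffices to check linear polynomials $X-b$) that this map preserves the valuation.
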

\begin{proof}
	Firstly, note that (\ref{item:lem:trans-1}) and (\ref{item:lem:trans-2}) are equivalent by \ref{cor:trans}, and that (\ref{item:lem:trans-3}) implies (\ref{item:lem:trans-2}) directly. Then, we suppose that (\ref{item:lem:trans-2}) holds. In this case, $|X-b|>s=0$ for any $b\in K$ as $X\notin K$. In particular, there is a collection $\{a_i\}_{i\in I}$ of elements of $K$ with $I$ directed and $\lim_{i\in I}|X-a_i|=0$. Thus, $\{a_i\}_{i\in I}$ is a Cauchy net on $K$ with respect to the valuation topology and let $a\in\widehat{K}$ be its limit. Notice that $a\notin K$ (otherwise $|X-a|=0$ for $a\in K$ which is a contradiction). As $K$ is algebraically closed, $a$ is transcendental over $K$. Hence, the $K$-algebra homomorphism $K[X]\to \widehat{K}$ sending $X$ to $a$ is injective and thus induces an injection of fraction fields $\iota:F=K(X)\to \widehat{K}$. To prove (\ref{item:lem:trans-3}), it suffices to check that $\ca{O}_{\widehat{K}}$ and $\ca{O}_F$ induce the same valuation on $K[X]$. Since any $K$-polynomial splits, it suffices to consider the valuation of $X-b$ where $b\in K$. In $F$, we have $|X-b|_F=|a_i-b|_K$ for $i$ large enough as $\lim_{i\in I}|X-a_i|_F=0$; and in $\widehat{K}$, we also have $|\iota(X-b)|_{\widehat{K}}=|a-b|_{\widehat{K}}=|a_i-b|_K$ for $i$ large enough as $\lim_{i\in I}|a-a_i|_{\widehat{K}}=0$, which completes the proof.
\end{proof}

\begin{mylem}\label{lem:trans-completion}
	Let $K$ be a Henselian valuation field of height $1$ with algebraic closure $\overline{K}$, $F$ a Henselian valuation field subextension of $\widehat{\overline{K}}/K$, $L=\overline{K}\cap F$. Then, $F\subseteq \widehat{L}$.
\end{mylem}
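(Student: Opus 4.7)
The plan is to show that every $x\in F$ can be approximated arbitrarily well (in the valuation topology on $\widehat{\overline{K}}$) by elements of $L$, yielding $x\in\widehat{L}$. If $x\in L$ the claim is trivial, so we may assume $x\in F\setminus L$. Since $L=\overline{K}\cap F$ collects precisely the elements of $F$ that are algebraic over $K$, and since algebraic over $L$ implies algebraic over $K$, it follows that $L$ is algebraically closed in $F$; in particular $x$ is transcendental over $L$, and thus also over $K$.

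The first step applies Lemma \ref{lem:trans} to the purely transcendental extension $\overline{K}(x)/\overline{K}$ of the algebraically closed field $\overline{K}$, which sits inside $\widehat{\overline{K}}$. Since $\overline{K}(x)\subseteq\widehat{\overline{K}}$, the equivalence of (\ref{item:lem:trans-2}) and (\ref{item:lem:trans-3}) gives $\inf_{b\in\overline{K}}|x-b|=0$. Hence there is a sequence $(a_n)_{n\in\bb{N}}$ in $\overline{K}$ with $|x-a_n|\to 0$ as $n\to\infty$.

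The second step passes from these $\overline{K}$-approximants to $L$-approximants via Krasner's lemma. For each $a\in\overline{K}$ with minimal polynomial $P\in K[T]$ over $K$ and $K$-conjugate roots $a=a^{(1)},\dots,a^{(d)}$, the Krasner condition $|x-a|<|a-a^{(j)}|$ for every $j\neq 1$ implies $K(a)\subseteq K(x)\subseteq F$, whence $a\in\overline{K}\cap F=L$. Replacing each $a_n$ by a $K$-conjugate closest to $x$ does not increase $|x-a_n|$, so in the non-degenerate case, where the closest $K$-conjugate is unique, Krasner's condition is automatic and $a_n\in L$ for $n$ large.

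The main obstacle is the degenerate case in which several $K$-conjugates of $a_n$ are equidistant from $x$, so Krasner's condition fails for any choice. The resolution uses the flexibility in choosing approximants afforded by Proposition \ref{prop:trans}\ref{item:prop:trans-1}, which expresses $\ca{O}_{\overline{K}(x)}$ as the filtered colimit of localizations $\ca{O}_{\overline{K}}[(X-a_i)/b_i]_{\ak{p}_i}$ over arbitrary sequences $(a_i)\subset\overline{K}$ with $|X-a_i|\to 0$: this freedom permits replacing a problematic approximant by a nearby one whose minimal polynomial has well-separated roots near $x$. Iterating produces a subsequence in $L$ converging to $x$, establishing $x\in\widehat{L}$ and completing the proof.
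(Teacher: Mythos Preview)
Your approach differs substantially from the paper's, and has a genuine gap in the final paragraph.

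The paper's proof is short and global: it forms the composite $M=\overline{K}\cdot F$ inside $\widehat{\overline{K}}$, observes that restriction gives an isomorphism $\Sigma=\gal(M/F)\iso\gal(\overline{K}/L)$, extends this action continuously to $\widehat{M}=\widehat{\overline{K}}$, and then applies Ax--Sen--Tate's theorem to conclude $F\subseteq(\widehat{\overline{K}})^\Sigma=\widehat{L}$.

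Your attempt instead works element by element via Krasner's lemma, but the ``resolution'' of the degenerate case is not a proof. Proposition~\ref{prop:trans}(\ref{item:prop:trans-1}) expresses $\ca{O}_{\overline{K}(x)}$ as a filtered colimit over sequences of approximants $(a_i)$ in $\overline{K}$, but this says nothing about the distribution of the $K$-conjugates of the $a_i$; there is no mechanism there to force the minimal polynomial of a chosen approximant over $K$ to have a unique root closest to $x$. The assertion that ``this freedom permits replacing a problematic approximant by a nearby one whose minimal polynomial has well-separated roots near $x$'' is exactly where the work lies, and you have not done it. (Incidentally, the first step does not need Lemma~\ref{lem:trans}: density of $\overline{K}$ in its completion is immediate.)

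In fact, the obstacle you identify is essentially the content of the Ax--Sen lemma: given $a\in\overline{K}$ all of whose $G_L$-conjugates lie within $\epsilon$ of $a$, one must produce an element of $L$ close to $a$. This is the nontrivial core of Ax--Sen--Tate. The paper avoids reproving it by first using Galois theory to see that $F$ is $G_L$-fixed and then invoking the theorem as a black box. If you want to salvage your approach, you would need to supply this missing argument, which amounts to reproving Ax--Sen--Tate in this setting.
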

\begin{proof}
	Let $M$ be the composite of $\overline{K}$ and $F$ in $\widehat{\overline{K}}$. Note that $\widehat{\overline{K}}=\widehat{M}$. By Galois theory, there is a canonical isomorphism of Galois groups $\Sigma=\gal(M/F)\iso \gal(\overline{K}/L)$ given by restriction. Recall that their actions on $M$ and $\overline{K}$ extends continuously and uniquely to $\widehat{M}=\widehat{\overline{K}}$.
	\begin{align}
		\xymatrix{
			\widehat{M}&M\ar[l]&F\ar[l]\\
			\widehat{\overline{K}}\ar@{=}[u]&\overline{K}\ar[l]\ar[u]&L\ar[l]\ar[u]
		}
	\end{align}
	Therefore, $F\subseteq \widehat{M}^\Sigma=(\widehat{\overline{K}})^\Sigma=\widehat{L} $ by Ax-Sen-Tate's theorem \cite[page 417]{ax1970ax}.
\end{proof}

\begin{myprop}\label{prop:trans-ari-geo}
	Let $K$ be a Henselian discrete valuation field extension of $\bb{Q}_p$ with perfect residue field, $L$ an algebraic extension of $K$, $F$ a valuation field of height $1$ finitely generated extension of $L$ of transcendental degree $\mrm{trdeg}_L(F)\leq 1$. Then, the Henselization $F^{\mrm{h}}$ of $F$ is either arithmetic or geometric.
\end{myprop}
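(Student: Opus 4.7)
My plan is to split on the transcendence degree. After replacing $F$ and $L$ by their Henselizations via Proposition \ref{prop:ari-geo-basic}.(\luoma{2}), the case $\mrm{trdeg}_L(F)=0$ is immediate: then $F$ is algebraic over $K$, so Corollary \ref{cor:sen-perfd} classifies it as arithmetic or pre-perfectoid (in the latter case geometric by Lemma \ref{lem:ari-geo-basic}.(\luoma{2})). The substantive case is $\mrm{trdeg}_L(F)=1$. Picking $t\in F$ transcendental over $L$ makes $F/L(t)$ finite; Proposition \ref{prop:ari-geo-basic}.(\luoma{2}) therefore lets me further reduce to $F=L(t)^{\mrm{h}}$. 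I will fix an algebraic closure $\overline{F}$ of $F$, let $\overline{L}$ be the algebraic closure of $L$ in $\overline{F}$ (which coincides with $\overline{K}$ and in particular contains $K_\infty$), and equip $\overline{L}(t)$ with the valuation restricted from $\overline{F}$ so that Proposition \ref{prop:trans} applies. The whole proof then turns on the dichotomy $s>0$ versus $s=0$, where $s=\inf_{a\in\overline{L}}|t-a|$.

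In the case $s>0$, Corollary \ref{cor:trans} shows that $\Omega^1_{\ca{O}_{\overline{L}(t)}/\ca{O}_{\overline{L}}}$ is isomorphic to a nonzero fractional ideal, hence bounded. My plan is to transport this boundedness down to $\Omega^1_{\ca{O}_{F_\infty}/\ca{O}_{K_\infty}}$ in three steps. First, in the cotangent sequence for the tower $K_\infty\to\overline{L}\to\overline{L}(t)$ the leftmost term is almost zero by almost purity for the pre-perfectoid field $K_\infty$ (cf. \cite[6.6.2]{gabber2003almost}), while the middle term $\Omega^1_{\ca{O}_{\overline{L}(t)}/\ca{O}_{K_\infty}}$ is torsion-free (Corollary \ref{cor:differential}); the image of the leftmost term in the torsion-free middle must therefore vanish, and $\Omega^1_{\ca{O}_{\overline{L}(t)}/\ca{O}_{K_\infty}}\cong\Omega^1_{\ca{O}_{\overline{L}(t)}/\ca{O}_{\overline{L}}}$ is bounded. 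Second, functoriality of Henselization gives $F\subseteq\overline{L}(t)^{\mrm{h}}$, and combined with $K_\infty\subseteq\overline{L}\subseteq\overline{L}(t)^{\mrm{h}}$ this yields $F_\infty\subseteq\overline{L}(t)^{\mrm{h}}$; ind-\'etaleness of Henselization presents $\Omega^1_{\ca{O}_{\overline{L}(t)^{\mrm{h}}}/\ca{O}_{K_\infty}}$ as the flat base change of the bounded $\Omega^1_{\ca{O}_{\overline{L}(t)}/\ca{O}_{K_\infty}}$, still bounded. Third, Theorem \ref{thm:differential}.(\luoma{1}) applied to the algebraic separable extension $F_\infty\subseteq\overline{L}(t)^{\mrm{h}}$ gives an injection $\ca{O}_{\overline{L}(t)^{\mrm{h}}}\otimes_{\ca{O}_{F_\infty}}\Omega^1_{\ca{O}_{F_\infty}/\ca{O}_{K_\infty}}\hookrightarrow\Omega^1_{\ca{O}_{\overline{L}(t)^{\mrm{h}}}/\ca{O}_{K_\infty}}$; since the norm of a rank-$\leq 1$ torsion-free module is preserved under such a base change of valuation rings, $\Omega^1_{\ca{O}_{F_\infty}/\ca{O}_{K_\infty}}$ is itself bounded. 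Remark \ref{rem:ari-geo-finite} then delivers the conclusion.

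In the case $s=0$, Lemma \ref{lem:trans} forces $\overline{L}(t)\subseteq\widehat{\overline{L}}$, hence $F\subseteq\widehat{\overline{L}}$. I will then apply Lemma \ref{lem:trans-completion} with the Henselian pair $(L,F)$ and $M=\overline{L}\cap F$ to obtain $F\subseteq\widehat{M}$. Its Henselization $M^{\mrm{h}}$ is algebraic over $K$, and the universal property of Henselization places $M^{\mrm{h}}$ inside the Henselian field $F$. Corollary \ref{cor:sen-perfd} now offers a dichotomy. If $M^{\mrm{h}}$ is pre-perfectoid, Lemma \ref{lem:ari-geo-basic}.(\luoma{2}) makes $F$ geometric. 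If $M^{\mrm{h}}$ is arithmetic, then $\widehat{M^{\mrm{h}}}=\widehat{M}$ is arithmetic by Proposition \ref{prop:ari-geo-basic}.(\luoma{2}), and applying Proposition \ref{prop:ari-geo-basic}.(\luoma{1}) to the extension $F\subseteq\widehat{M^{\mrm{h}}}$ forces $F$ to be arithmetic.

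The main obstacle is the $s>0$ case, where I need to thread together four independent facts (almost-purity vanishing along $\overline{L}/K_\infty$, the elementary observation that an almost-zero submodule of a torsion-free module is zero, ind-\'etale invariance of K\"ahler differentials under Henselization, and Theorem \ref{thm:differential}.(\luoma{1}) for the intermediate algebraic separable inclusion $F_\infty\subseteq\overline{L}(t)^{\mrm{h}}$) while carefully tracking boundedness at each transition. The $s=0$ case, by contrast, reduces cleanly via an Ax--Sen--Tate--type argument (Lemma \ref{lem:trans-completion}) to an algebraic subfield of $F$ over $K$, where Corollary \ref{cor:sen-perfd} closes the argument.
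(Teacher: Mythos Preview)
Your proof is correct and follows essentially the same route as the paper: both split the $\mrm{trdeg}=1$ case according to the dichotomy of Lemma~\ref{lem:trans}, invoke Remark~\ref{rem:ari-geo-finite} in the bounded branch, and use Lemma~\ref{lem:trans-completion} together with Corollary~\ref{cor:sen-perfd} in the unbounded branch. The only cosmetic difference is that the paper places the dichotomy directly on the boundedness of $\Omega^1_{\ca{O}_{F_\infty}/\ca{O}_{K_\infty}}$ (so the bounded case is a one-liner), whereas you phrase it via the parameter $s$ and carry the transport-of-boundedness work in the $s>0$ branch---this is precisely the contrapositive of what the paper does in its unbounded branch.
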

\begin{proof}
	We may assume that $F$ is purely transcendental over $L$ by \ref{prop:ari-geo-basic}.(\ref{item:prop:ari-geo-basic-2}). If $\mrm{trdeg}_L(F)=0$ (i.e., $F=L$), then the conclusion follows from \ref{cor:sen-perfd}. It remains to consider the case $\mrm{trdeg}_L(F)=1$. Let $F^{\mrm{h}}_{\overline{K}}$ be the composite of $\overline{K}$ and $F^{\mrm{h}}$ endowed with the unique valuation extending that of $F^{\mrm{h}}$.
	
	If the torsion-free module $\Omega^1_{\ca{O}_{F_\infty}/\ca{O}_{K_\infty}}$ is bounded (where $\ca{O}_{F_\infty}=F_\infty\cap \ca{O}_{F^{\mrm{h}}_{\overline{K}}}$), then the conclusion follows from \ref{prop:ari-geo-finite} and \ref{rem:ari-geo-finite} as $\mrm{trdeg}_K(F)=1$.
	
	Then, we assume that $\Omega^1_{\ca{O}_{F_\infty}/\ca{O}_{K_\infty}}$ is unbounded. Let $X$ be a generator of $F$ over $L$. As $\Omega^1_{F_\infty/K_\infty}= F_\infty\cdot\df X$, the inclusion $\Omega^1_{\ca{O}_{F_\infty}/\ca{O}_{K_\infty}}\subseteq \Omega^1_{F_\infty/K_\infty}$ must be an equality.  Let $M=\overline{K}(X)$ be the composite of $\overline{K}$ and $F$ in $F^{\mrm{h}}_{\overline{K}}$. Then, we see that the inclusion $\Omega^1_{\ca{O}_M/\ca{O}_{\overline{K}}}\subseteq \Omega^1_{M/\overline{K}}=M\cdot \df X$ is also an equality (where $\ca{O}_M=M\cap \ca{O}_{F^{\mrm{h}}_{\overline{K}}}$). Therefore, $M$ is a valuation field subextension of $\widehat{\overline{K}}$ by \ref{lem:trans} and thus so are $F$ and $F^{\mrm{h}}$. Then, we have $\overline{K}\cap F^{\mrm{h}}\subseteq F^{\mrm{h}}\subseteq  (\overline{K}\cap F^{\mrm{h}})^\wedge$ by \ref{lem:trans-completion}. As $\overline{K}\cap F^{\mrm{h}}$ is either arithmetic or geometric by \ref{cor:sen-perfd}, we see that $F^{\mrm{h}}$ is either arithmetic or geometric by \ref{prop:ari-geo-basic}.
\end{proof}

\begin{myrem}\label{rem:trans-ari-geo}
	If all the Henselian valuation fields of height $1$ extension of $K$ of transcendental degree $\leq 1$ (i.e., algebraic extensions of such $F$ in \ref{prop:trans-ari-geo}) were either arithmetic or geometric, then we could extend \ref{prop:trans-ari-geo} to the case where $\mrm{trdge}_L(F)\leq 2$ by the same arguments.
\end{myrem}

\section{Ramification of Kummer Extensions over Cyclotomic Field}\label{sec:kummer}
Following \cite[\textsection4]{he2024perfd}, we compute the Galois cohomology of a Kummer extension defined by adjoining $p$-power roots of a single element in a valuation ring (see \ref{prop:tate-sen} and \ref{cor:galois-coh}).

\begin{mypara}\label{para:tower}
	In this section, we fix a pre-perfectoid field $K$ extension of $\bb{Q}_p$ containing a compatible system of primitive $p$-power roots of unity $(\zeta_{p^n})_{n\in\bb{N}}$, a Henselian valuation field $F$ of height $1$ extension of $K$, and an algebraic closure $\overline{F}$ of $F$. Note that the integral closure $\ca{O}_{\overline{F}}$ of $\ca{O}_F$ in $\overline{F}$ is still a Henselian valuation ring of height $1$ (see \cite[\Luoma{6}.\textsection8.6, Proposition 6]{bourbaki2006commalg5-7} and \cite[\href{https://stacks.math.columbia.edu/tag/04GH}{04GH}]{stacks-project}). Let $v_p:\overline{F}\to \bb{R}\cup\{\infty\}$ be a valuation map with $v_p(p)=1$ and $v_p(0)=\infty$, and let
	\begin{align}\label{eq:para:tower-1}
		|\cdot|=p^{-v_p(\cdot)}:\overline{F}\longrightarrow \bb{R}_{\geq 0}
	\end{align}
	be the associated ultrametric absolute value. We put $G_F=\gal(\overline{F}/F)$ the absolute Galois group of $F$ which naturally acts on $\ca{O}_{\overline{F}}$.
	
	We fix an element $\pi\in \ak{m}_K\setminus p\ca{O}_K$, an element $t\in\ca{O}_F$ such that 
	\begin{align}\label{eq:para:tower-t}
		\df t \notin \pi\ak{m}_K\cdot \Omega^1_{\ca{O}_F/\ca{O}_K},
	\end{align}
	and a compatible system of $p$-power roots $(t_{p^n})_{n\in \bb{N}}$ of $t$ contained in $\overline{F}$ (where $t_1=t$) . For any $n\in\bb{N}$, we put $F_n=F(t_{p^n})$ the finite extension of $F$ generated by $t_{p^n}$, which is a finite Galois extension of $F$ independent of the choice of the $p^n$-th root $t_{p^n}$ of $t$. We put $F_\infty=\bigcup_{n\in\bb{N}}F_n$. Then, there is a continuous group homomorphism
	\begin{align}\label{eq:para:tower-2}
		\xi_t:G_F\longrightarrow \bb{Z}_p
	\end{align}
	characterized by $\tau(t_{p^n})=\zeta_{p^n}^{\xi_t(\tau)}t_{p^n}$ for any $\tau\in G_F$ and $n\in\bb{N}$. Since $t_p\notin F$ (as $\df t \notin p\cdot \Omega^1_{\ca{O}_F/\ca{O}_K}$), $\xi_t$ induces an isomorphism for any $n\in\bb{N}$ (see \cite[4.8]{he2024perfd})
	\begin{align}\label{eq:para:tower-3}
		\gal(F_\infty/F_n)\iso p^n\bb{Z}_p.
	\end{align}
\end{mypara}

\begin{mylem}[{\cite[4.10]{he2024perfd}}]\label{lem:ann-dt}
	For any $n\in\bb{N}$, the annihilator of the element $\df t_{p^n}$ of the $\ca{O}_{F_n}$-module $\Omega^1_{\ca{O}_{F_n}/\ca{O}_F}$ satisfies the following relations
	\begin{align}
		p^nt_{p^n}^{p^n-1}\ca{O}_{F_n}\subseteq \mrm{Ann}_{\ca{O}_{F_n}}(\df t_{p^n}) \subseteq \pi^{-1}p^nt_{p^n}^{p^n-1}\ca{O}_{F_n}.
	\end{align}
\end{mylem}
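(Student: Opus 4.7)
The inclusion $p^n t_{p^n}^{p^n-1}\ca{O}_{F_n}\subseteq\mrm{Ann}_{\ca{O}_{F_n}}(\df t_{p^n})$ is immediate: since $t=t_{p^n}^{p^n}\in\ca{O}_F$, applying $\df$ yields $p^n t_{p^n}^{p^n-1}\df t_{p^n}=\df t=0$ in $\Omega^1_{\ca{O}_{F_n}/\ca{O}_F}$.

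For the reverse inclusion, the plan is to exploit the transitivity sequence
\begin{equation*}
0\longrightarrow\ca{O}_{F_n}\otimes_{\ca{O}_F}\Omega^1_{\ca{O}_F/\ca{O}_K}\longrightarrow\Omega^1_{\ca{O}_{F_n}/\ca{O}_K}\longrightarrow\Omega^1_{\ca{O}_{F_n}/\ca{O}_F}\longrightarrow 0,
\end{equation*}
whose left injectivity follows from $F_n/F$ being finite separable via \ref{thm:differential}.(\ref{item:thm:differential-1}). If $a\in\ca{O}_{F_n}$ kills $\df t_{p^n}$, then $a\,\df t_{p^n}$ lifts to an element $\sigma$ of $\ca{O}_{F_n}\otimes_{\ca{O}_F}\Omega^1_{\ca{O}_F/\ca{O}_K}$. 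Since $K$ is pre-perfectoid, $\Omega^1_{\ca{O}_F/\ca{O}_K}$ is torsion-free by \ref{cor:differential}, and tensoring the torsion-free $\ca{O}_F$-module with the flat $\ca{O}_F$-algebra $\ca{O}_{F_n}$ gives an embedding of this tensor product into $\Omega^1_{F_n/K}=F_n\otimes_F\Omega^1_{F/K}$. There the relation $p^n t_{p^n}^{p^n-1}\df t_{p^n}=\df t$ identifies $\sigma$ with $(a/(p^n t_{p^n}^{p^n-1}))\cdot\df t$. The task therefore reduces to the following key claim: for $\lambda\in F_n$, if $\lambda\,\df t\in\ca{O}_{F_n}\otimes_{\ca{O}_F}\Omega^1_{\ca{O}_F/\ca{O}_K}$ then $\lambda\in\pi^{-1}\ca{O}_{F_n}$; applied to $\lambda=a/(p^n t_{p^n}^{p^n-1})$ this gives $\pi a\in p^n t_{p^n}^{p^n-1}\ca{O}_{F_n}$.

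To prove the key claim, I argue by contradiction: assume $|\lambda|>|\pi|^{-1}$, so $\lambda^{-1}/\pi\in\ak{m}_{F_n}$. Since $K$ is non-discretely valued, $|\ak{m}_K|=1$ by \ref{para:notation-norm}, so one can pick $k\in\ak{m}_K$ with $|k|\geq|\lambda^{-1}/\pi|$; then $\alpha:=\lambda^{-1}/(\pi k)\in\ca{O}_{F_n}$ and $\lambda^{-1}=\pi k\alpha$. Multiplying the lift $\sigma$ of $\lambda\,\df t$ by $\pi k\alpha$ inside $\ca{O}_{F_n}\otimes_{\ca{O}_F}\Omega^1_{\ca{O}_F/\ca{O}_K}$ produces an element whose image in $\Omega^1_{F_n/K}$ is $\lambda^{-1}\cdot\lambda\,\df t=\df t$; by injectivity of the embedding this forces
\begin{equation*}
1\otimes\df t=\pi k\alpha\sigma\in\pi\ak{m}_K\bigl(\ca{O}_{F_n}\otimes_{\ca{O}_F}\Omega^1_{\ca{O}_F/\ca{O}_K}\bigr).
\end{equation*}
Faithful flatness of $\ca{O}_F\to\ca{O}_{F_n}$ then descends this to $\df t\in\pi\ak{m}_K\Omega^1_{\ca{O}_F/\ca{O}_K}$, directly contradicting the standing hypothesis \eqref{eq:para:tower-t}.

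The main obstacle is justifying the final descent cleanly: I need that $\ca{O}_F\to\ca{O}_{F_n}$ is faithfully flat (flatness is automatic from torsion-freeness over the valuation ring $\ca{O}_F$, while $\ak{m}_F\ca{O}_{F_n}\subsetneq\ca{O}_{F_n}$ reduces to $\ak{m}_K\ca{O}_F=\ak{m}_F$, using once more the non-discreteness of $K$), and that the induced map $\Omega^1_{\ca{O}_F/\ca{O}_K}/\pi\ak{m}_K\Omega^1_{\ca{O}_F/\ca{O}_K}\hookrightarrow(\ca{O}_{F_n}\otimes_{\ca{O}_F}\Omega^1_{\ca{O}_F/\ca{O}_K})/\pi\ak{m}_K(\ca{O}_{F_n}\otimes_{\ca{O}_F}\Omega^1_{\ca{O}_F/\ca{O}_K})$ is injective, which follows by identifying the target with $\ca{O}_{F_n}\otimes_{\ca{O}_F}(\Omega^1_{\ca{O}_F/\ca{O}_K}/\pi\ak{m}_K\Omega^1_{\ca{O}_F/\ca{O}_K})$ via flatness.
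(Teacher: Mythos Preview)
Your proof is correct and follows essentially the same approach as the paper's: the same transitivity sequence, the same identification $p^nt_{p^n}^{p^n-1}\df t_{p^n}=\df t$, the same contradiction argument producing $\df t\in\pi\ak{m}_K\cdot(\ca{O}_{F_n}\otimes_{\ca{O}_F}\Omega^1_{\ca{O}_F/\ca{O}_K})$, and the same descent via flatness of $\Omega^1_{\ca{O}_F/\ca{O}_K}$ and of $\ca{O}_F\to\ca{O}_{F_n}$. The only cosmetic difference is that the paper carries the single element $\pi\epsilon$ (your $\pi k$) throughout and reduces modulo that principal ideal, whereas you phrase the final step modulo the ideal $\pi\ak{m}_K$ and invoke faithful flatness; the two are equivalent, and indeed your remark about $\ak{m}_K\ca{O}_F=\ak{m}_F$ is unnecessary, since faithfulness follows simply from $\ca{O}_F\to\ca{O}_{F_n}$ being a local homomorphism of local rings.
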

\begin{proof}
	We follow the proof of \cite[4.10]{he2024perfd}. We may assume that $n>0$. Consider the canonical exact sequence (\ref{thm:differential}.(\ref{item:thm:differential-1}))
	\begin{align}
		0\longrightarrow \ca{O}_{F_n}\otimes_{\ca{O}_F}\Omega^1_{\ca{O}_F/\ca{O}_K}\stackrel{\alpha_n}{\longrightarrow} \Omega^1_{\ca{O}_{F_n}/\ca{O}_K}\stackrel{\beta_n}{\longrightarrow} \Omega^1_{\ca{O}_{F_n}/\ca{O}_F}\longrightarrow 0.
	\end{align}
	Firstly, in $\Omega^1_{\ca{O}_{F_n}/\ca{O}_K}$, we have $p^nt_{p^n}^{p^n-1}\df t_{p^n}=\df t\in \mrm{Im}(\alpha_n)$. Thus, $p^nt_{p^n}^{p^n-1}\ca{O}_{F_n}\subseteq \mrm{Ann}_{\ca{O}_{F_n}}(\df t_{p^n})$ for $\df t_{p^n}\in \Omega^1_{\ca{O}_{F_n}/\ca{O}_F}$.
	
	Suppose that $\mrm{Ann}_{\ca{O}_{F_n}}(\df t_{p^n})$ is not contained in $\pi^{-1}p^nt_{p^n}^{p^n-1}\ca{O}_{F_n}$. Then, there exists $\epsilon\in\ak{m}_K$ and $\omega\in \ca{O}_{F_n}\otimes_{\ca{O}_F}\Omega^1_{\ca{O}_F/\ca{O}_K}$ such that $\alpha_n(\omega)=(\pi\epsilon)^{-1}p^nt_{p^n}^{p^n-1}\df t_{p^n}$. Thus, $\alpha_n(\pi\epsilon \omega)=\alpha_n(\df t)$. The injectivity of $\alpha_n$ implies that $\df t=\pi\epsilon\omega$ in $\ca{O}_{F_n}\otimes_{\ca{O}_F}\Omega^1_{\ca{O}_F/\ca{O}_K}$, i.e., $\df t$ is zero in $\ca{O}_{F_n}/\pi\epsilon\ca{O}_{F_n}\otimes_{\ca{O}_F}\Omega^1_{\ca{O}_F/\ca{O}_K}$. Since $\Omega^1_{\ca{O}_F/\ca{O}_K}$ is a flat $\ca{O}_F$-module (\ref{cor:differential}), the canonical homomorphism $\Omega^1_{\ca{O}_F/\ca{O}_K}/\pi\epsilon\Omega^1_{\ca{O}_F/\ca{O}_K}\to \ca{O}_{F_n}/\pi\epsilon\ca{O}_{F_n}\otimes_{\ca{O}_F}\Omega^1_{\ca{O}_F/\ca{O}_K}$ is injective as $\ca{O}_F/\pi\epsilon\ca{O}_F\to \ca{O}_{F_n}/\pi\epsilon\ca{O}_{F_n}$ is so. Hence, $\df t$ is also zero in $\Omega^1_{\ca{O}_F/\ca{O}_K}/\pi\epsilon\Omega^1_{\ca{O}_F/\ca{O}_K}$, which contradicts with our assumption \eqref{eq:para:tower-t} on $t$. Therefore, $\mrm{Ann}_{\ca{O}_{F_n}}(\df t_{p^n}) \subseteq \pi^{-1}p^nt_{p^n}^{p^n-1}\ca{O}_{F_n}$ for $\df t_{p^n}\in \Omega^1_{\ca{O}_{F_n}/\ca{O}_F}$.
\end{proof}

\begin{myprop}[{\cite[4.11]{he2024perfd}}]\label{prop:different-range}
	For any $n\in\bb{N}$, we have
	\begin{align}
		p^n\ak{m}_{F_n}\subseteq \scr{D}_{F_n/F}\subseteq \pi^{-1}p^n\ca{O}_{F_n}.
	\end{align}
\end{myprop}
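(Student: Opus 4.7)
The plan is to treat the two inclusions separately: the upper bound $\scr{D}_{F_n/F}\subseteq \pi^{-1}p^n\ca{O}_{F_n}$ will come from the Fitting-ideal description of the different (Theorem \ref{thm:differential}.(\ref{item:thm:differential-3})) combined with the annihilator computation of Lemma \ref{lem:ann-dt}, while the lower bound $p^n\ak{m}_{F_n}\subseteq \scr{D}_{F_n/F}$ will be extracted from the general trace estimate of Corollary \ref{cor:different-trace}.

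A standing observation I will use throughout is that the valuation on $F_n$ is non-discrete. Indeed, $K$ is pre-perfectoid, so its value group is a dense subgroup of $\bb{R}$; any height-$1$ valuation-field extension of $K$ then has a value group containing this dense subgroup, so in particular $F_n$ is non-discrete and $\widetilde{\ak{m}}_{F_n}=\ak{m}_{F_n}$ with $|\ak{m}_{F_n}|=1$.

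For the upper bound, Theorem \ref{thm:differential}.(\ref{item:thm:differential-3}) identifies $\scr{D}_{F_n/F}^{\al}$ with $F_0(\Omega^{1,\al}_{\ca{O}_{F_n}/\ca{O}_F})$, and \eqref{eq:para:fitting-ideal-1} shows that $F_0(\Omega^{1,\al}_{\ca{O}_{F_n}/\ca{O}_F})\subseteq \mrm{Ann}_{\ca{O}_{F_n}}(\Omega^1_{\ca{O}_{F_n}/\ca{O}_F})^{\al}\subseteq \mrm{Ann}_{\ca{O}_{F_n}}(\df t_{p^n})^{\al}$. Unwinding the almost structure produces $\ak{m}_{F_n}\scr{D}_{F_n/F}\subseteq\mrm{Ann}_{\ca{O}_{F_n}}(\df t_{p^n})$, and Lemma \ref{lem:ann-dt} bounds the right-hand side by $\pi^{-1}p^n t_{p^n}^{p^n-1}\ca{O}_{F_n}\subseteq \pi^{-1}p^n\ca{O}_{F_n}$ (using $t_{p^n}\in\ca{O}_{F_n}$). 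Taking norms and using $|\ak{m}_{F_n}|=1$ then gives $|\scr{D}_{F_n/F}|\leq|\pi^{-1}p^n|$; since $\pi^{-1}p^n\ca{O}_{F_n}$ is principal, the last equivalence in \eqref{eq:para:notation-norm-3} upgrades this norm bound to the desired inclusion $\scr{D}_{F_n/F}\subseteq\pi^{-1}p^n\ca{O}_{F_n}$.

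For the lower bound, \eqref{eq:para:tower-3} gives $[F_n:F]=p^n$, so Corollary \ref{cor:different-trace} yields $p^n\ak{m}_F\ak{m}_{F_n}^{-1}\subseteq\scr{D}_{F_n/F}$. In the non-discrete setting, $|\ak{m}_F|=|\ak{m}_{F_n}|=1$ and so $|p^n\ak{m}_F\ak{m}_{F_n}^{-1}|=|p^n|$, hence $|\scr{D}_{F_n/F}|\geq|p^n|=|p^n\ak{m}_{F_n}|$. Applying \eqref{eq:para:notation-norm-3} and using $\ak{m}_{F_n}^2=\ak{m}_{F_n}$ from \eqref{eq:para:notation-almost-1}, we obtain $p^n\ak{m}_{F_n}=\ak{m}_{F_n}\cdot p^n\ak{m}_{F_n}\subseteq\scr{D}_{F_n/F}$. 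The main subtlety is on the upper-bound side, where the identification $\scr{D}^{\al}=F_0(\Omega^{1,\al})$ and the containment $F_0\subseteq\mrm{Ann}$ both live in the almost category; the non-discreteness of $F_n$ is precisely what ensures the passage back to honest ideals loses no quantitative information.
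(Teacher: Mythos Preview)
Your proof is correct and follows essentially the same approach as the paper's: both inclusions are obtained by combining Theorem~\ref{thm:differential}.(\ref{item:thm:differential-3}) with Lemma~\ref{lem:ann-dt} for the upper bound and Corollary~\ref{cor:different-trace} with $[F_n:F]=p^n$ for the lower bound. The only cosmetic difference is that the paper handles the lower bound by the direct ideal identity $p^n\ak{m}_{F_n}=[F_n:F]\cdot\ak{m}_F\ak{m}_{F_n}^{-1}$ (via \eqref{eq:para:notation-almost-2}) rather than passing through norms as you do, but your route is equally valid.
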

\begin{proof}
	Since $\scr{D}_{F_n/F}^{\al}=F_0(\Omega^{1,\al}_{\ca{O}_{F_n}/\ca{O}_F})=\prod_{i=1}^\infty (\mrm{Ann}_{\ca{O}_{F_n}}(\Omega^i_{\ca{O}_{F_n}/\ca{O}_F}))^{\al}$ (\ref{thm:differential}.(\ref{item:thm:differential-3}) and \eqref{eq:para:fitting-ideal-1}), we see that $\ak{m}_{F_n}\scr{D}_{F_n/F}$ annihilates $\Omega^1_{\ca{O}_{F_n}/\ca{O}_F}$. Hence, we have $\ak{m}_{F_n}\scr{D}_{F_n/F}\subseteq \pi^{-1}p^n\ca{O}_{F_n}$ by \ref{lem:ann-dt} and thus $\scr{D}_{F_n/F}\subseteq \pi^{-1}p^n\ca{O}_{F_n}$ by \eqref{eq:para:notation-norm-3}. On the other hand, we have $p^n\ak{m}_{F_n}=[F_n:F]\cdot \ak{m}_F\ak{m}_{F_n}^{-1}\subseteq \scr{D}_{F_n/F}$ by \eqref{eq:para:tower-3}, \eqref{eq:para:notation-almost-2} and \ref{cor:different-trace}. 
\end{proof}

\begin{mycor}\label{cor:different-dt}
	For any $n\in\bb{N}$, the quotient by the $\ca{O}_{F_n}$-submodule of $\Omega^1_{\ca{O}_{F_n}/\ca{O}_F}$ generated by $\df t_{p^n}$ is killed by $\pi\ak{m}_K$. Moreover, we have
	\begin{align}\label{eq:cor:different-dt-1}
		\df t_{p^n}\notin \pi\ak{m}_K\cdot \Omega^1_{\ca{O}_{F_n}/\ca{O}_K}.
	\end{align}
\end{mycor}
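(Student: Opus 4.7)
The plan is to treat the two assertions separately, deducing the first from a Fitting-ideal computation and the second from it by a contradiction-plus-faithful-flatness argument.

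For the first assertion, I would exploit the tower $\ca{O}_F\to R\to \ca{O}_{F_n}$, where $R=\ca{O}_F[t_{p^n}]=\ca{O}_F[X]/(X^{p^n}-t)$, whose transitivity sequence yields a surjection $\ca{O}_{F_n}\otimes_R\Omega^1_{R/\ca{O}_F}\to \Omega^1_{\ca{O}_{F_n}/\ca{O}_F}\to \Omega^1_{\ca{O}_{F_n}/R}\to 0$. Since $\Omega^1_{R/\ca{O}_F}$ is generated by $\df t_{p^n}$, the image of the first map is exactly $\ca{O}_{F_n}\cdot\df t_{p^n}$, so the quotient in question is canonically $\Omega^1_{\ca{O}_{F_n}/R}$. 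Writing $Q$ for this quotient, I apply the Fitting-ideal multiplicativity \eqref{eq:para:fitting-ideal-3} to the short exact sequence $0\to \ca{O}_{F_n}\cdot\df t_{p^n}\to \Omega^1_{\ca{O}_{F_n}/\ca{O}_F}\to Q\to 0$ (all uniformly almost finitely generated by \ref{thm:differential}.(\ref{item:thm:differential-3})), together with $F_0(\Omega^{1,\al}_{\ca{O}_{F_n}/\ca{O}_F})=\scr{D}_{F_n/F}^\al$, to obtain $|F_0(Q^\al)|=|\scr{D}_{F_n/F}|/|\mrm{Ann}_{\ca{O}_{F_n}}(\df t_{p^n})|$. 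Feeding in the lower bound $|\scr{D}_{F_n/F}|\geq |p^n|$ from \ref{prop:different-range} and the upper bound $|\mrm{Ann}_{\ca{O}_{F_n}}(\df t_{p^n})|\leq \min(1,|\pi^{-1}p^nt_{p^n}^{p^n-1}|)$ from \ref{lem:ann-dt}, a short case split on whether $|p^n t_{p^n}^{p^n-1}|$ exceeds $|\pi|$ or not gives $|F_0(Q^\al)|\geq |\pi|$. Since ideals of the valuation ring $\ca{O}_{F_n}$ are totally ordered, $F_0(Q)\supseteq \pi\ak{m}_{F_n}$, and the inclusion $F_0\subseteq \mrm{Ann}$ in the almost world combined with $\pi\ak{m}_K\subseteq \pi\ak{m}_{F_n}$ yields $\pi\ak{m}_K\cdot Q=0$.

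For the second assertion, I would argue by contradiction. Suppose $\df t_{p^n}=\pi\epsilon\omega$ for some $\epsilon\in\ak{m}_K$ and $\omega\in B:=\Omega^1_{\ca{O}_{F_n}/\ca{O}_K}$. Setting $A:=\ca{O}_{F_n}\otimes_{\ca{O}_F}\Omega^1_{\ca{O}_F/\ca{O}_K}\hookrightarrow B$ and $C:=\Omega^1_{\ca{O}_{F_n}/\ca{O}_F}$, multiplication by $p^n t_{p^n}^{p^n-1}$ and the relation $\df t=p^n t_{p^n}^{p^n-1}\df t_{p^n}$ produce $\df t=\pi\epsilon\omega_0$ with $\omega_0=p^n t_{p^n}^{p^n-1}\omega\in B$. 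Projecting to $C$, I find $\pi\epsilon\cdot\beta(\omega_0)=p^n t_{p^n}^{p^n-1}\bar{\df t_{p^n}}=0$ by \ref{lem:ann-dt}, so $\beta(\omega_0)\in C[\pi\epsilon]$. The first assertion, combined with the fact that $p^n t_{p^n}^{p^n-1}$ annihilates $\bar{\df t_{p^n}}\in C$, shows that $\pi\epsilon\cdot p^n t_{p^n}^{p^n-1}$ annihilates all of $C$, so using the snake lemma on $0\to A\to B\to C\to 0$ one can adjust $\omega$ by a carefully chosen element of $B[\pi\epsilon]$ to ensure that the new $\omega_0$ lands in $A$. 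Once $\omega_0\in A$ with $\pi\epsilon\omega_0=\df t$, the flatness of $\Omega^1_{\ca{O}_F/\ca{O}_K}$ (\ref{cor:differential}) and faithful flatness of $\ca{O}_F\to\ca{O}_{F_n}$ make $\Omega^1_{\ca{O}_F/\ca{O}_K}/\pi\epsilon\to A/\pi\epsilon A$ injective, so $\df t\in\pi\epsilon A$ descends to $\df t\in\pi\epsilon\Omega^1_{\ca{O}_F/\ca{O}_K}\subseteq \pi\ak{m}_K\cdot \Omega^1_{\ca{O}_F/\ca{O}_K}$, contradicting \eqref{eq:para:tower-t}.

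The main obstacle is the correction step in the second assertion: showing that the torsion class $\beta(\omega_0)\in C[\pi\epsilon]$ lifts to some element of $B[\pi\epsilon]$. Via the snake exact sequence this is equivalent to the connecting map $C[\pi\epsilon]\to A/\pi\epsilon A$ vanishing on $\beta(\omega_0)$; the first assertion together with the annihilator control from \ref{lem:ann-dt} and \ref{prop:different-range} should make it possible to pick the correct lift, though unwinding this cleanly is where the argument is most delicate.
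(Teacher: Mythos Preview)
Your proof of the first assertion is essentially the paper's: both use the short exact sequence $0\to\ca{O}_{F_n}\cdot\df t_{p^n}\to\Omega^1_{\ca{O}_{F_n}/\ca{O}_F}\to Q\to 0$, the multiplicativity of almost Fitting ideals, the identification $F_0(\Omega^{1,\al}_{\ca{O}_{F_n}/\ca{O}_F})=\scr{D}^{\al}_{F_n/F}$, and the bounds from \ref{lem:ann-dt} and \ref{prop:different-range} to obtain $\pi\ca{O}_{F_n}^{\al}\subseteq F_0(Q^{\al})$. Your case split on $|p^nt_{p^n}^{p^n-1}|$ is harmless but unnecessary; dropping the factor $t_{p^n}^{p^n-1}$ only weakens the bound on the annihilator and the argument goes through uniformly.

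For the second assertion, however, there is a genuine gap at your ``adjustment'' step, and in fact the detour through $\omega_0=p^nt_{p^n}^{p^n-1}\omega$ is what creates it. Since $K$ is pre-perfectoid, $B=\Omega^1_{\ca{O}_{F_n}/\ca{O}_K}$ is torsion-free by \ref{cor:differential}, so $B[\pi\epsilon]=0$; consequently the snake connecting map $C[\pi\epsilon]\to A/\pi\epsilon A$ is injective, and it sends $\beta(\omega_0)$ precisely to the class of $\pi\epsilon\omega_0=\df t$ in $A/\pi\epsilon A$. Hence $\beta(\omega_0)$ lifts to $B[\pi\epsilon]$ if and only if $\df t\in\pi\epsilon A$, which is exactly what you are trying to prove --- the argument is circular at this point.

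The paper avoids this entirely by working in $\Omega^1_{\ca{O}_{F_n}/\ca{O}_F}$ rather than $\Omega^1_{\ca{O}_{F_n}/\ca{O}_K}$: suppose $\df t_{p^n}=\pi\epsilon\omega$ with $\omega\in\Omega^1_{\ca{O}_{F_n}/\ca{O}_F}$, write $\epsilon=\epsilon_1\epsilon_2$ with $\epsilon_1,\epsilon_2\in\ak{m}_K$; since $\pi\epsilon_1\cdot Q=0$ by the first assertion, $\pi\epsilon\omega=\epsilon_2(\pi\epsilon_1\omega)\in\epsilon_2\cdot\ca{O}_{F_n}\cdot\df t_{p^n}$, giving $\df t_{p^n}=a\epsilon_2\df t_{p^n}$ for some $a\in\ca{O}_{F_n}$, whence $(1-a\epsilon_2)\df t_{p^n}=0$ and so $\df t_{p^n}=0$ in $\Omega^1_{\ca{O}_{F_n}/\ca{O}_F}$, contradicting \ref{lem:ann-dt}. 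The claim for $\Omega^1_{\ca{O}_{F_n}/\ca{O}_K}$ then follows from the surjection $\Omega^1_{\ca{O}_{F_n}/\ca{O}_K}\twoheadrightarrow\Omega^1_{\ca{O}_{F_n}/\ca{O}_F}$. You could repair your argument the same way: project the assumed relation $\df t_{p^n}=\pi\epsilon\omega$ to $C$ at the outset, and apply the first assertion there directly, rather than first multiplying by $p^nt_{p^n}^{p^n-1}$ and trying to descend through $A$.
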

\begin{proof}
	Let $Q$ be the quotient of $\ca{O}_{F_n}\cdot\df t_{p^n}\subseteq \Omega^1_{\ca{O}_{F_n}/\ca{O}_F}$. Then, $0\to \ca{O}_{F_n}\cdot\df t_{p^n}\to \Omega^1_{\ca{O}_{F_n}/\ca{O}_F}\to Q\to 0$ is an exact sequence of uniformly almost finitely generated $\ca{O}_{F_n}$-modules (\ref{thm:differential}.(\ref{item:thm:differential-3})). In particular, we have $F_0(\Omega^{1,\al}_{\ca{O}_{F_n}/\ca{O}_F})=F_0(\ca{O}_{F_n}^{\al}\cdot\df t_{p^n})\cdot F_0(Q^{\al})$ by \eqref{eq:para:fitting-ideal-3}. Recall that $p^n\ca{O}_{F_n}^{\al}\subseteq \scr{D}_{F_n/F}^{\al}=F_0(\Omega^{1,\al}_{\ca{O}_{F_n}/\ca{O}_F})$ (\ref{prop:different-range} and \ref{thm:differential}.(\ref{item:thm:differential-3})) and $F_0(\ca{O}_{F_n}^{\al}\cdot\df t_{p^n})=(\mrm{Ann}_{\ca{O}_{F_n}}(\df t_{p^n}))^{\al}\subseteq \pi^{-1}p^n\ca{O}_{F_n}^{\al}$ (\eqref{eq:para:fitting-ideal-1} and \ref{lem:ann-dt}). Thus, we have $\pi\ca{O}_{F_n}^{\al}\subseteq F_0(Q^{\al})$, which implies that $\pi\ak{m}_K\cdot Q=0$ by \eqref{eq:para:fitting-ideal-1}.

	Moreover, suppose that $\df t_{p^n}=\pi\epsilon \omega$ for some $\epsilon\in\ak{m}_K$ and $\omega\in \Omega^1_{\ca{O}_{F_n}/\ca{O}_F}$. We write $\epsilon=\epsilon_1\cdot \epsilon_2$ for some $\epsilon_1,\epsilon_2\in\ak{m}_K$. Then, as $\pi\epsilon_1\cdot Q=0$, we have $\pi\epsilon\omega=a\epsilon_2\cdot \df t_{p^n}$ for some $a\in\ca{O}_{F_n}$. Thus, we have $\df t_{p^n}=a\epsilon_2\df t_{p^n}$ in $\Omega^1_{\ca{O}_{F_n}/\ca{O}_F}$, which implies that $\df t_{p^n}=0$ as $1-a\epsilon_2\in\ca{O}_{F_n}^\times$. This contradicts with \ref{lem:ann-dt}. Therefore, we have $\df t_{p^n}\notin \pi\ak{m}_K\cdot \Omega^1_{\ca{O}_{F_n}/\ca{O}_F}$ and thus $\df t_{p^n}\notin \pi\ak{m}_K\cdot \Omega^1_{\ca{O}_{F_n}/\ca{O}_K}$.
\end{proof}

\begin{myrem}\label{rem:different-dt}
	If we put $A_n=\ca{O}_F[T]/(T^{p^n}-t)$ regarded as an $\ca{O}_F$-subalgebra of $\ca{O}_{F_n}$ by identifying $T$ with $t_{p^n}\in\ca{O}_{F_n}$, then one can moreover check that $\pi\ak{m}_K\cdot H_0(\bb{L}_{\ca{O}_{F_n}/A_n})=0$, $\pi\cdot H_1(\bb{L}_{\ca{O}_{F_n}/A_n})=0$ and $H_i(\bb{L}_{\ca{O}_{F_n}/A_n})=0$ for any $i\in\bb{Z}\setminus\{0,1\}$.
\end{myrem}

\begin{mycor}[{\cite[4.12]{he2024perfd}}]\label{cor:trace-range}
	For any $n\in \bb{N}$ and any $x\in F_n$, we have
	\begin{align}
		|p^{-n}\mrm{Tr}_{F_n/F}(x)|\leq |\pi|^{-1}\cdot|x|.
	\end{align}
\end{mycor}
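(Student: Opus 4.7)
The plan is to invoke the general trace estimate recorded in Corollary \ref{cor:different-trace}, applied to the finite Galois extension $F\to F_n$, and then to convert the resulting bound into the desired form by feeding in the upper bound on the different ideal $\scr{D}_{F_n/F}$ supplied by Proposition \ref{prop:different-range}. This short route avoids Proposition \ref{prop:trace-tau} entirely; the latter would give an estimate of the shape $|p^{-n}\mrm{Tr}_{F_n/F}(x)-x|\leq |p^{-1}\pi^{-1}|\cdot|\tau(x)-x|$, which is strictly weaker than what is being claimed.

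First I would observe that the hypothesis that $K$ is pre-perfectoid forces its valuation to be non-discrete of height $1$, and this non-discreteness is inherited by $F$ and by the finite extensions $F_n$. In particular one has $|\ak{m}_F|=|\ak{m}_{F_n}|=1$ by \eqref{eq:para:notation-almost-1}, so that the factor $|\ak{m}_F^{-1}\ak{m}_{F_n}|$ appearing in \ref{cor:different-trace} simply disappears. This is the key simplification specific to the pre-perfectoid base; in the classical discrete case one would instead pay a factor of $|\ak{m}_K^{-1}\ak{m}_{F_n}|$, leading to the familiar Tate-type loss.

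Next, Proposition \ref{prop:different-range} yields $\scr{D}_{F_n/F}\subseteq \pi^{-1}p^n\ca{O}_{F_n}$, hence $|\scr{D}_{F_n/F}|\leq |\pi^{-1}p^n|$. Combining the two inputs and applying \ref{cor:different-trace} to an arbitrary $x\in F_n$:
\begin{align*}
|\mrm{Tr}_{F_n/F}(x)|\leq |\ak{m}_F^{-1}\ak{m}_{F_n}\scr{D}_{F_n/F}|\cdot |x|=|\scr{D}_{F_n/F}|\cdot|x|\leq |p^n\pi^{-1}|\cdot |x|.
\end{align*}
Dividing by $|p^n|$ produces the stated estimate $|p^{-n}\mrm{Tr}_{F_n/F}(x)|\leq |\pi|^{-1}\cdot|x|$.

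There is essentially no obstacle: the real content has been packaged into the earlier results. The only thing worth double-checking is that the appeal to \ref{cor:different-trace} is legitimate, which is clear since $F\to F_n$ is a finite Galois extension of Henselian valuation fields of height $1$ of characteristic zero, exactly the setting of that corollary. If anything is delicate, it is only the bookkeeping of norms of fractional ideals, but all the needed rules (multiplicativity of $|\cdot|$ \eqref{eq:para:notation-norm-2} and the comparison \eqref{eq:para:notation-norm-3}) are already in place in \ref{para:notation-norm}.
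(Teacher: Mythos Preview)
Your proof is correct and follows exactly the same approach as the paper: apply \ref{cor:different-trace} together with the bound $\scr{D}_{F_n/F}\subseteq \pi^{-1}p^n\ca{O}_{F_n}$ from \ref{prop:different-range}, noting that $|\ak{m}_{F_n}|=1$ since $K$ is non-discrete. The paper's proof is the one-line version of what you wrote.
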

\begin{proof}
	It follows directly from \ref{cor:different-trace} and \ref{prop:different-range} (note that $|\ak{m}_{F_n}|=1$ for any $n\in\bb{N}$).
\end{proof}

\begin{mypara}\label{para:tate-trace}
	For any $n\in\bb{N}$, the normalized trace map $p^{-n}\mrm{Tr}_{F_n/F}:F_n\to F$ is a $G_F$-equivariant $F$-linear retraction of the inclusion $F\to F_n$. We obtain a system of maps $(p^{-n}\mrm{Tr}_{F_n/F}:F_n\to F)_{n\in\bb{N}}$ compatible with the inclusions $F_n\to F_{n+1}$. Taking filtered union, we obtain a $G_{F}$-equivariant $F$-linear retraction
	\begin{align}\label{eq:para:tate-trace-1}
		\ca{T}:F_\infty\longrightarrow F
	\end{align}
	of the inclusion $F\to F_\infty$, called \emph{Tate's normalized trace map} of the extension $F_\infty/F$. In particular, there is a canonical decomposition of $F$-modules for any $n\in\bb{N}\cup\{\infty\}$,
	\begin{align}\label{eq:para:tate-trace-2}
		F_n=F\oplus \ke(\ca{T}|_{F_n}),
	\end{align}
	where $\ca{T}|_{F_n}$ denotes the restriction of $\ca{T}$ on $F_n$. Note that $\{t_{p^n}^i\}_{0< i< p^n}$ forms an $F$-basis of $\ke(\ca{T}|_{F_n})$.
\end{mypara}

\begin{mylem}\label{lem:trace-tau}
	Let $\tau\in \gal(F_\infty/F)$ be a topological generator \eqref{eq:para:tower-3}. Then, for any $x\in F_\infty$, we have
	\begin{align}\label{eq:cor:trace-tau-1}
		|\ca{T}(x)-x|\leq |p\pi|^{-1}\cdot |\tau(x)-x|,
	\end{align}
	where $\ca{T}$ is Tate's normalized trace map \eqref{eq:para:tate-trace-1}.
\end{mylem}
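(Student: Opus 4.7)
The plan is to reduce the statement to the finite-level estimate provided by Proposition \ref{prop:trace-tau}, applied to each extension $F_n/F$, and then to plug in the bound on the different ideal from Proposition \ref{prop:different-range}. The key observation is that the $p^n$ appearing in $\scr{D}_{F_n/F}$ cancels precisely against the normalization factor $p^{-n}$ in Tate's normalized trace, leaving a uniform constant independent of $n$.

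Concretely, any $x \in F_\infty$ lies in some $F_n$, and by the definition of Tate's normalized trace \eqref{eq:para:tate-trace-1} we have $\ca{T}(x) = p^{-n}\mrm{Tr}_{F_n/F}(x)$. Since $\tau$ topologically generates $\gal(F_\infty/F) \cong \bb{Z}_p$, one has $\xi_t(\tau) \in \bb{Z}_p^\times$, so the restriction $\tau|_{F_n}$ generates $\gal(F_n/F) \cong \bb{Z}/p^n\bb{Z}$ via \eqref{eq:para:tower-3}, and of course $\tau(x) = \tau|_{F_n}(x)$. Proposition \ref{prop:trace-tau} then yields
\begin{align*}
	|\ca{T}(x) - x| \leq |p^{-(n+1)}\ak{m}_F^{-1}\ak{m}_{F_n}\scr{D}_{F_n/F}| \cdot |\tau(x) - x|.
\end{align*}

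To conclude, I simplify the prefactor using two facts. First, since $F$ extends the pre-perfectoid field $K$ and has height $1$, its valuation is non-discrete, as is that of $F_n$; consequently $|\ak{m}_F| = |\ak{m}_{F_n}| = 1$ by \eqref{eq:para:notation-almost-1}. Second, Proposition \ref{prop:different-range} gives $\scr{D}_{F_n/F} \subseteq \pi^{-1}p^n \ca{O}_{F_n}$, hence $|\scr{D}_{F_n/F}| \leq |p^n \pi^{-1}|$. Combining these, the prefactor is bounded by $|p^{-(n+1)}| \cdot |p^n \pi^{-1}| = |p\pi|^{-1}$, yielding the desired inequality. There is no genuine obstacle here: the lemma is essentially the compatible limit of the estimates of Proposition \ref{prop:trace-tau}, and all inputs are already established earlier in the section.
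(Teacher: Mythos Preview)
Your proof is correct and follows exactly the same approach as the paper: apply Proposition~\ref{prop:trace-tau} to the extension $F_n/F$ and simplify the prefactor using Proposition~\ref{prop:different-range} together with $|\ak{m}_F|=|\ak{m}_{F_n}|=1$. The paper's proof is a one-line reference to these two propositions; you have simply spelled out the computation in full.
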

\begin{proof}
	It follows directly from \ref{prop:trace-tau} and \ref{prop:different-range} (note that $|\ak{m}_{F_n}|=1$ for any $n\in\bb{N}$).
\end{proof}

\begin{myprop}\label{prop:tate-sen}
	Let $D=\ca{O}_{F_\infty}\cap \ke(\ca{T})$, where $\ca{T}$ is Tate's normalized trace map \eqref{eq:para:tate-trace-1}.
	\begin{enumerate}
		\renewcommand{\labelenumi}{{\rm(\theenumi)}}
		\item The canonical $G_F$-equivariant morphism of $\ca{O}_F$-modules
		\begin{align}\label{eq:prop:tate-sen-1}
			\ca{O}_F\oplus D\longrightarrow \ca{O}_{F_\infty}
		\end{align}
		is injective with cokernel killed by $\pi$. In particular, the decomposition $F_\infty=F\oplus \ke(\ca{T})$ \eqref{eq:para:tate-trace-2} is obtained by inverting $p$ on \eqref{eq:prop:tate-sen-1}. \label{item:prop:tate-sen-1}
		\item Let $\tau\in \gal(F_\infty/F)$ be a topological generator \eqref{eq:para:tower-3}. The $\ca{O}_F$-linear endomorphism $\tau-1$ on $\ca{O}_{F_\infty}$ annihilates $\ca{O}_F$ and stabilizes $D$. Moreover, it induces an injection
		\begin{align}
			\tau-1:D\longrightarrow D
		\end{align}
		with cokernel killed by $p\pi$.\label{item:prop:tate-sen-2}
	\end{enumerate}
\end{myprop}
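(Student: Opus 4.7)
My overall strategy is to leverage the already-established $F$-vector space decomposition $F_\infty = F \oplus \ker(\mathcal{T})$ of \eqref{eq:para:tate-trace-2} and upgrade it to an almost decomposition at the integral level. The two quantitative inputs I will use are the trace bound $|p^{-n}\mathrm{Tr}_{F_n/F}(x)| \leq |\pi|^{-1}|x|$ from \ref{cor:trace-range} and the Tate--Sen type estimate $|\mathcal{T}(x) - x| \leq |p\pi|^{-1}|\tau(x)-x|$ from \ref{lem:trace-tau}. Everything else is essentially Galois-theoretic bookkeeping.

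For (\ref{item:prop:tate-sen-1}), injectivity is automatic: $\mathcal{O}_F \cap D \subseteq F \cap \ker(\mathcal{T}) = 0$ because $\mathcal{T}$ retracts the inclusion $F \hookrightarrow F_\infty$. For the cokernel, given $y \in \mathcal{O}_{F_\infty} = \bigcup_n \mathcal{O}_{F_n}$, I would apply \ref{cor:trace-range} to get $|\mathcal{T}(y)| \leq |\pi|^{-1}$, so $\pi \mathcal{T}(y) \in \mathcal{O}_F$. Writing $\pi y = \pi\mathcal{T}(y) + \pi(y - \mathcal{T}(y))$, the second summand lies in $\mathcal{O}_{F_\infty} \cap \ker(\mathcal{T}) = D$, which shows $\pi$ annihilates the cokernel. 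Inverting $p$ immediately recovers \eqref{eq:para:tate-trace-2}.

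For (\ref{item:prop:tate-sen-2}), the vanishing $(\tau-1)|_{\mathcal{O}_F} = 0$ is trivial since $\tau \in \gal(F_\infty/F)$, and $D$ is $\tau$-stable because $\tau$ preserves $\mathcal{O}_{F_\infty}$ and, being $G_F$-equivariant, commutes with $\mathcal{T}$. For injectivity of $\tau-1$ on $D$, note that because $\tau$ topologically generates $\gal(F_\infty/F) \cong \mathbb{Z}_p$ \eqref{eq:para:tower-3}, its image in each finite quotient $\gal(F_n/F) \cong \mathbb{Z}/p^n\mathbb{Z}$ is a generator, hence $F_n^{\tau} = F$; taking unions $F_\infty^\tau = F$, so $\ker(\tau-1) \cap D \subseteq F \cap \ker(\mathcal{T}) = 0$. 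By the same finite-Galois count, $\tau-1$ is an $F$-linear injection on each finite-dimensional space $\ker(\mathcal{T}|_{F_n})$, hence an automorphism, and passing to the filtered union $\tau-1$ acts bijectively on $\ker(\mathcal{T})$.

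For the cokernel in (\ref{item:prop:tate-sen-2}), given $y \in D$, set $x = (\tau-1)^{-1}(y) \in \ker(\mathcal{T})$. Applying \ref{lem:trace-tau} to $x$ and using $\mathcal{T}(x) = 0$ yields $|x| \leq |p\pi|^{-1}|(\tau-1)(x)| = |p\pi|^{-1}|y| \leq |p\pi|^{-1}$, so $p\pi x \in \mathcal{O}_{F_\infty} \cap \ker(\mathcal{T}) = D$ and $(\tau-1)(p\pi x) = p\pi y$, proving $p\pi$ kills the cokernel. I do not anticipate a serious obstacle: both key estimates are already in hand with explicit constants $\pi$ and $p\pi$, so the proof reduces to combining Tate's decomposition argument with these bounds. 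The only care needed is to verify that $\tau-1$ is genuinely bijective on all of $\ker(\mathcal{T})$, which works because every element is captured in some finite layer $F_n$ where a dimension count applies.
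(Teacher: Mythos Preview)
Your proposal is correct and follows essentially the same approach as the paper: both parts use the bound from \ref{cor:trace-range} to control the cokernel in (\ref{item:prop:tate-sen-1}) and the estimate from \ref{lem:trace-tau} together with the dimension argument on $\ker(\ca{T}|_{F_n})$ to handle (\ref{item:prop:tate-sen-2}). The only cosmetic difference is that the paper phrases the cokernel argument in (\ref{item:prop:tate-sen-2}) by starting with $x\in D$ and solving $(\tau-1)(y)=p\pi x$, whereas you start with $y\in D$ and invert $\tau-1$ directly, but this is the same computation with the roles of $x$ and $y$ swapped.
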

\begin{proof}
	(\ref{item:prop:tate-sen-1}) Since $\ca{T}=\colim_{n\in\bb{N}}p^{-n}(1+\tau+\cdots+\tau^{p^n-1}):F_\infty\to F$ commutes with the action of $G_F$ as $\gal(F_\infty/F)$ is commutative, $D=\ca{O}_{F_\infty}\cap\ke(\ca{T})$ is a $G_F$-stable $\ca{O}_F$-submodule of $\ca{O}_{F_\infty}$. It is clear that $D[1/p]=\ke(\ca{T})\subseteq F_\infty$. Thus, we see that $\ca{O}_F\oplus D\to \ca{O}_{F_\infty}$ is injective and induces the decomposition $F\oplus\ke(\ca{T})=F_\infty$ \eqref{eq:para:tate-trace-2} after inverting $p$. 
	
	For any $x\in F_\infty$, this decomposition gives $x=\ca{T}(x)+(1-\ca{T})(x)$. As $|\ca{T}(x)|\leq |\pi^{-1} x|$ by \ref{cor:trace-range}, we have $\ca{T}(x)\in\ca{O}_F$ (and thus $(1-\ca{T})(x)\in D$) if $x\in \pi\ca{O}_{F_\infty}$. This shows that the quotient of $\ca{O}_F\oplus D\subseteq \ca{O}_{F_\infty}$ is killed by $\pi$.
	
	(\ref{item:prop:tate-sen-2}) The morphism $\tau-1:D\to D$ is injective because the kernel of $\tau-1:F_\infty\to F_\infty$ is $F$. It remains to show that $p\pi D\subseteq (\tau-1)(D)$. Recall that $\tau-1$ induces an isomorphism $D[1/p]\iso D[1/p]$. Indeed, since $\tau-1$ is an $F$-linear injective endomorphism of $D[1/p]=\colim_{n\in\bb{N}}\ke(\ca{T}|_{\ca{F}_n})$, we see that it is an isomorphism by a dimension argument on each $G_F$-stable finite-dimensional subspace $\ke(\ca{T}|_{\ca{F}_n})$. Hence, for any $x\in D$, there exists $y\in D[1/p]$ with $(\tau-1)(y)=p\pi x$. Since $|y|\leq |p\pi|^{-1}\cdot |\tau(y)-y|=|x|\leq 1$ by \ref{lem:trace-tau}, we have $y\in\ca{O}_{F_\infty}$. Thus, $y\in \ca{O}_{F_\infty}\cap D[1/p]=D$ which completes the proof.
\end{proof}

\begin{mycor}\label{cor:galois-coh}
	With the notation in {\rm\ref{prop:tate-sen}}, for any $r\in\bb{N}$, the Galois cohomology group 
	\begin{align}
		H^q(\gal(F_\infty/F),D/p^rD)
	\end{align}
	is killed by $p\pi$ for any $q\in\{0,1\}$ and is zero for any $q\in \bb{Z}\setminus \{0,1\}$
\end{mycor}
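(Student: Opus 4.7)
The plan is to exploit that $\gal(F_\infty/F)$ is topologically procyclic of cohomological dimension $1$, so the whole computation reduces to inspecting kernel and cokernel of $\tau-1$ acting on $D/p^rD$ where $\tau$ is a topological generator of $\gal(F_\infty/F)\cong\bb{Z}_p$ (via \eqref{eq:para:tower-3}). First, since $D/p^rD$ is a discrete Galois module ($D=\colim_n D\cap\ca{O}_{F_n}$ and each term is fixed by the open subgroup $\gal(F_\infty/F_n)$), the continuous cohomology $H^q(\gal(F_\infty/F),D/p^rD)$ vanishes for $q\geq 2$ thanks to the $\cong\bb{Z}_p$-structure. For $q\in\{0,1\}$ it is computed by the two-term complex $D/p^rD\xrightarrow{\tau-1}D/p^rD$ in degrees $0$ and $1$.

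Next, I would feed in the key input from \ref{prop:tate-sen}.(\ref{item:prop:tate-sen-2}): the short exact sequence
\begin{align*}
0\longrightarrow D\xrightarrow{\ \tau-1\ }D\longrightarrow Q\longrightarrow 0,
\end{align*}
where the cokernel $Q$ is killed by $p\pi$. Since $D\subseteq F_\infty$ is $p$-torsion free, multiplication by $p^r$ on $D$ is injective; applying the snake lemma to the vertical maps given by multiplication by $p^r$ between two copies of this short exact sequence yields the exact sequence
\begin{align*}
0\longrightarrow Q[p^r]\longrightarrow D/p^rD\xrightarrow{\ \tau-1\ }D/p^rD\longrightarrow Q/p^rQ\longrightarrow 0.
\end{align*}

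Combining these two steps, $H^0(\gal(F_\infty/F),D/p^rD)=\ke(\tau-1)\cong Q[p^r]$ is a submodule of $Q$ and $H^1(\gal(F_\infty/F),D/p^rD)=\cok(\tau-1)\cong Q/p^rQ$ is a quotient of $Q$; both are killed by $p\pi$, while all higher cohomology vanishes. Neither step is difficult: the real work was already carried out in the proof of \ref{prop:tate-sen}, where the sharp bound $\mrm{coker}(\tau-1)\subseteq (p\pi)^{-1}\cdot\mathrm{image}$ was extracted from the trace estimate \ref{lem:trace-tau}. If any point requires slight care, it is verifying that the action on $D/p^rD$ is genuinely continuous for the discrete topology (so that procyclic cohomological dimension $1$ applies), which is handled by writing $D$ as a filtered union of the $\gal(F_\infty/F_n)$-fixed submodules $D\cap \ca{O}_{F_n}$.
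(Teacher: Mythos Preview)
Your proof is correct and follows essentially the same approach as the paper: both reduce to the two-term complex $D/p^rD\xrightarrow{\tau-1}D/p^rD$ computing $\rr\Gamma(\bb{Z}_p,-)$, and then use the snake lemma (the paper cites \ref{lem:pi-isom}) applied to the injection $\tau-1:D\to D$ with cokernel killed by $p\pi$ from \ref{prop:tate-sen}.(\ref{item:prop:tate-sen-2}). Your extra remark on discreteness of the action is a helpful clarification but not a point of divergence.
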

\begin{proof}
	As $\gal(F_\infty/F)=\bb{Z}_p\tau$, $\rr\Gamma(\gal(F_\infty/F),D/p^rD)$ is represented by $D/p^rD\stackrel{\tau-1}{\longrightarrow} D/p^rD$ (\cite[\Luoma{2}.3.23]{abbes2016p}). Since $\tau-1:D\to D$ is injective whose cokernel is killed by $p\pi$ (\ref{prop:tate-sen}.(\ref{item:prop:tate-sen-2})), the kernel and cokernel of $\tau-1:D/p^rD\to D/p^rD$ are also killed by $p\pi$ by snake lemma (or \ref{lem:pi-isom}), which completes the proof.
\end{proof}

\section{Perfectoid Towers and Galois Cohomology over Cyclotomic Field}\label{sec:perfd}
By our study on the structure of completed differentials of valuation rings (see \ref{lem:differential}), we construct a perfectoid tower by iteratively adjoining $p$-power roots. Then, we compute the Galois cohomology by iteratively applying the results in Section \ref{sec:kummer} (see \ref{prop:perfd-tower-coh}).

\begin{mylem}\label{lem:differential}
	Let $K\to F$ be an extension of valuation fields of height $1$ with $K$ pre-perfectoid and with finite transcendental degree $\mrm{trdeg}_K(F)<\infty$. Then, the canonical morphism of $\ca{O}_{\widehat{F}}$-modules
	\begin{align}\label{eq:lem:differential-1}
		\ca{O}_{\widehat{F}}\otimes_{\ca{O}_F}\Omega^1_{\ca{O}_F/\ca{O}_K}\longrightarrow \widehat{\Omega}^1_{\ca{O}_F/\ca{O}_K}
	\end{align}
	is surjective, where $\widehat{\Omega}^1_{\ca{O}_F/\ca{O}_K}$ denotes the $\pi$-adic completion of $\Omega^1_{\ca{O}_F/\ca{O}_K}$ (which does not depend on the choice of $\pi\in\ak{m}_F\setminus 0$). Moreover, the $\ca{O}_{\widehat{F}}$-module $\widehat{\Omega}^1_{\ca{O}_F/\ca{O}_K}$ is torsion-free, almost finitely generated, and admits a $\pi$-basis {\rm(\ref{defn:pi-basis})} $\df t_1,\dots,\df t_d$ with $t_1,\dots,t_d\in \ca{O}_F^\times$ and $d=\mrm{rank}_{\ca{O}_{\widehat{F}}}(\widehat{\Omega}^1_{\ca{O}_F/\ca{O}_K})\leq\mrm{rank}_{\ca{O}_F}(\Omega^1_{\ca{O}_F/\ca{O}_K})$ {\rm(\ref{defn:val-rank})} for any $\pi\in\ak{m}_F\setminus 0$. 
\end{mylem}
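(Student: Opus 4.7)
The plan is to deduce the lemma by combining three ingredients: Corollary \ref{cor:differential} (torsion-freeness of $\Omega^1_{\ca{O}_F/\ca{O}_K}$ when $K$ is pre-perfectoid), Theorem \ref{thm:completion} (structure of completions of finite-rank torsion-free modules over valuation rings), and Proposition \ref{prop:generator-basis} (extraction of a $\pi$-basis from any generating set of an almost finitely generated module).

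First, I observe that since $K$ is pre-perfectoid, its value group is dense in $\bb{R}$, hence so is that of $F$; in particular $\widetilde{\ak{m}}_F=\ak{m}_F$, so every $\pi\in\ak{m}_F\setminus 0$ is admissible in the sense of \ref{prop:generator-basis}. By \ref{cor:differential}, $\Omega^1_{\ca{O}_F/\ca{O}_K}$ is torsion-free over $\ca{O}_F$. Its rank equals $\dim_F\Omega^1_{F/K}$, which is bounded above by $\mrm{trdeg}_K(F)<\infty$: indeed, $K$ is perfect (as a pre-perfectoid field, see \cite[5.24]{he2024coh}), so $F/K$ admits a separating transcendence basis and $\Omega^1_{F/K}$ is generated by the corresponding derivatives.

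Next, I would apply \ref{thm:completion} to the torsion-free $\ca{O}_F$-module $M=\Omega^1_{\ca{O}_F/\ca{O}_K}$ of finite rank. This yields in one stroke the three required structural properties: the completion $\widehat{M}=\widehat{\Omega}^1_{\ca{O}_F/\ca{O}_K}$ is a torsion-free almost finitely generated $\ca{O}_{\widehat{F}}$-module, its rank satisfies $\mrm{rank}_{\ca{O}_{\widehat{F}}}(\widehat{M})\leq \mrm{rank}_{\ca{O}_F}(M)$, and the canonical morphism \eqref{eq:lem:differential-1} is surjective.

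Finally, to produce the $\pi$-basis of the desired form, I note that the set $\{\df a\ |\ a\in\ca{O}_F\}$ generates $\Omega^1_{\ca{O}_F/\ca{O}_K}$ over $\ca{O}_F$, hence its image generates $\widehat{\Omega}^1_{\ca{O}_F/\ca{O}_K}$ over $\ca{O}_{\widehat{F}}$ by the surjectivity of \eqref{eq:lem:differential-1}. Applying \ref{prop:generator-basis} to this generating set and to the given $\pi\in\ak{m}_F=\widetilde{\ak{m}}_F$ extracts $d=\mrm{rank}_{\ca{O}_{\widehat{F}}}(\widehat{\Omega}^1_{\ca{O}_F/\ca{O}_K})$ elements $\df t_1,\dots,\df t_d$ with $t_i\in\ca{O}_F$ forming a $\pi$-basis. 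If some $t_i\in\ak{m}_F$, I replace it by $1+t_i\in\ca{O}_F^\times$, which has the same differential, producing the required basis with $t_i\in\ca{O}_F^\times$. There is no real obstacle here: all of the substance has been packaged into Sections \ref{sec:mod} and \ref{sec:diff}, and the proof is essentially an assembly argument.
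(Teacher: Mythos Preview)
Your proof is correct and follows essentially the same route as the paper's: torsion-freeness from \ref{cor:differential}, then \ref{thm:completion} for surjectivity and the structural properties of the completion, then \ref{prop:generator-basis} to extract a $\pi$-basis, with the $t\mapsto 1+t$ trick to land in $\ca{O}_F^\times$. The only cosmetic difference is that the paper first restricts the generating set to $\{\df t\}_{t\in\ca{O}_F^\times}$ before invoking \ref{prop:generator-basis}, whereas you apply \ref{prop:generator-basis} first and adjust afterwards; your added remark that $\widetilde{\ak{m}}_F=\ak{m}_F$ (since $K$ pre-perfectoid forces $F$ non-discrete) is a helpful clarification the paper leaves implicit.
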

\begin{proof}
	As $\Omega^1_{F/K}$ is a finite free $F$-module of rank $\mrm{trdeg}_K(F)$, we see that $\Omega^1_{\ca{O}_F/\ca{O}_K}$ is a torsion-free $\ca{O}_F$-module of finite rank (\ref{cor:differential}). Thus, the surjectivity of \eqref{eq:lem:differential-1} follows from \ref{thm:completion}. Moreover, $\widehat{\Omega}^1_{\ca{O}_F/\ca{O}_K}$ is torsion-free and almost finitely generated over $\ca{O}_{\widehat{F}}$. 
	
	Recall that the $\ca{O}_F$-module $\Omega^1_{\ca{O}_F/\ca{O}_K}$ is generated by $\{\df t\}_{t\in\ca{O}_F}$ by definition. Since for $t\in\ak{m}_F$, we have $\df t=\df(1+t)$ and $1+t\in\ca{O}_F^{\times}=\ca{O}_F\setminus \ak{m}_F$. Hence, $\{\df t\}_{t\in\ca{O}_F^\times}$ generates $\Omega^1_{\ca{O}_F/\ca{O}_K}$ and thus also $\widehat{\Omega}^1_{\ca{O}_F/\ca{O}_K}$ by the surjectivity of \eqref{eq:lem:differential-1}. Therefore, for any $\pi\in\ak{m}_F$, there are finitely many elements $t_1,\dots,t_d\in\ca{O}_F^\times$ such that $\df t_1,\dots,\df t_d$ form a $\pi$-basis of $\widehat{\Omega}^1_{\ca{O}_F/\ca{O}_K}$ by \ref{prop:generator-basis}, where $d=\mrm{rank}_{\ca{O}_{\widehat{F}}}(\widehat{\Omega}^1_{\ca{O}_F/\ca{O}_K})\leq \mrm{rank}_{\ca{O}_F}(\Omega^1_{\ca{O}_F/\ca{O}_K})$.
\end{proof}

\begin{mypara}\label{para:notation-perfd-tower}
	In this section, we fix a pre-perfectoid field $K$ extension of $\bb{Q}_p$ containing a compatible system of primitive $p$-power roots of unity $(\zeta_{p^n})_{n\in\bb{N}}$, a Henselian valuation field $F$ of height $1$ extension of $K$ of finite transcendental degree $\mrm{trdeg}_K(F)<\infty$, and an algebraic closure $\overline{F}$ of $F$. We put $G_F=\gal(\overline{F}/F)$ the absolute Galois group of $F$ which naturally acts on the Henselian valuation ring $\ca{O}_{\overline{F}}$.
	
	Let $d=\mrm{rank}_{\ca{O}_{\widehat{F}}}(\widehat{\Omega}^1_{\ca{O}_F/\ca{O}_K})$. We fix an element $\pi\in \ak{m}_K$ such that $\pi^{2^d}\notin p\ca{O}_K$. We also fix finitely many elements $t_1,\dots,t_d\in\ca{O}_F^\times$ such that $\df t_1,\dots,\df t_d$ form a $\pi\ak{m}_K$-basis of $\widehat{\Omega}^1_{\ca{O}_F/\ca{O}_K}$ (which exist by \ref{lem:differential}), namely
	\begin{align}\label{eq:notation-perfd-tower-1}
		\pi\ak{m}_K\cdot \widehat{\Omega}^1_{\ca{O}_F/\ca{O}_K}\subseteq \ca{O}_{\widehat{F}}\cdot\df t_1\oplus \cdots\oplus \ca{O}_{\widehat{F}}\cdot\df t_d\subseteq \widehat{\Omega}^1_{\ca{O}_F/\ca{O}_K}.
	\end{align}
	For any $1\leq i\leq d$, we fix a compatible system of $p$-power roots $(t_{i,p^n})_{n\in \bb{N}}$ of $t_i$ contained in $\overline{F}$ (where $t_{i,1}=t_i$). For any $\underline{n}=(n_1,\dots,n_d)\in\bb{N}^d$, we denote the finite extension of $F$ generated by $t_{1,p^{n_1}},\dots,t_{d,p^{n_d}}$ by
	\begin{align}\label{eq:notation-perfd-tower-2}
		 F_{\underline{n}}=F(t_{1,p^{n_1}},\dots,t_{d,p^{n_d}}).
	\end{align}
	It is clear that these fields form a system of finite Galois extension of $F$ over the directed partially ordered set $\bb{N}^d$ (\ref{para:product}). We extend this notation for one of the components of $\underline{n}$ being $\infty$ by taking the corresponding filtered union, and we omit the index $\underline{n}$ if $\underline{n}=\underline{0}$.
\end{mypara}

\begin{myprop}\label{prop:perfd-tower-basis}
	For any $\underline{n}=(n_1,\dots,n_d)\in (\bb{N}\cup\{\infty\})^d$, if $n_{i_1},\dots,n_{i_k}$ are the finite components of $\underline{n}$, then $\df t_{i_1,p^{n_{i_1}}},\dots,\df t_{i_k,p^{n_{i_k}}}$ form a $\pi^{2^i}\ak{m}_K$-basis of $\widehat{\Omega}^1_{\ca{O}_{F_{\underline{n}}}/\ca{O}_K}$, where $i$ is the number of nonzero components of $\underline{n}$.
\end{myprop}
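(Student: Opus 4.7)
The plan is to prove the proposition by induction on $i$, the number of nonzero components of $\underline{n}$. The base case $i=0$ forces $\underline{n}=\underline{0}$, and the claim then coincides with the hypothesis \eqref{eq:notation-perfd-tower-1}. For the inductive step, I will fix any nonzero component $n_j$ of $\underline{n}$ and set $\underline{n}'$ to agree with $\underline{n}$ except that $n'_j=0$, so that $\underline{n}'$ has $i-1$ nonzero components and $F_{\underline{n}'}$ remains a Henselian valuation field of height $1$ extension of $K$ of transcendence degree $d$ (even when $\underline{n}'$ has other $\infty$-components, since filtered unions of Henselian valuation subrings inside a Henselian ambient are Henselian), so the inductive hypothesis applies to $F_{\underline{n}'}$. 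I split into a Kummer case ($n_j$ finite) and a limit case ($n_j=\infty$).

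In the \textbf{Kummer case}, $F_{\underline{n}}=F_{\underline{n}'}(t_{j,p^{n_j}})$ is a degree-$p^{n_j}$ Kummer extension of $F_{\underline{n}'}$. The inductive hypothesis realizes $\df t_j$ as part of a $\pi^{2^{i-1}}\ak{m}_K$-basis of $\widehat{\Omega}^1_{\ca{O}_{F_{\underline{n}'}}/\ca{O}_K}$, so by \ref{lem:pi-basis} together with $\ak{m}_K^2=\ak{m}_K$ (non-discreteness of $K$) we get $\df t_j\notin \pi^{2^{i-1}}\ak{m}_K\cdot\Omega^1_{\ca{O}_{F_{\underline{n}'}}/\ca{O}_K}$. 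This is exactly the input needed to rerun Section \ref{sec:kummer} with the parameter $\pi$ replaced by $\pi^{2^{i-1}}$; the admissibility condition $\pi^{2^{i-1}}\notin p\ca{O}_K$ follows from the standing assumption $\pi^{2^d}\notin p\ca{O}_K$ together with $2^{i-1}<2^d$. Then \ref{cor:different-dt} yields both $\df t_{j,p^{n_j}}\notin \pi^{2^{i-1}}\ak{m}_K\cdot\Omega^1_{\ca{O}_{F_{\underline{n}}}/\ca{O}_K}$ and that $\Omega^1_{\ca{O}_{F_{\underline{n}}}/\ca{O}_{F_{\underline{n}'}}}/(\ca{O}_{F_{\underline{n}}}\cdot\df t_{j,p^{n_j}})$ is killed by $\pi^{2^{i-1}}\ak{m}_K$. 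Inserting both into the canonical exact sequence
\[ 0\to \ca{O}_{F_{\underline{n}}}\otimes_{\ca{O}_{F_{\underline{n}'}}}\Omega^1_{\ca{O}_{F_{\underline{n}'}}/\ca{O}_K}\to \Omega^1_{\ca{O}_{F_{\underline{n}}}/\ca{O}_K}\to \Omega^1_{\ca{O}_{F_{\underline{n}}}/\ca{O}_{F_{\underline{n}'}}}\to 0 \]
(exact by \ref{thm:differential}.(\ref{item:thm:differential-1})), I run a two-step diagram chase: one multiplication by $\pi^{2^{i-1}}\ak{m}_K$ reduces any $x$ in the middle term modulo $\ca{O}_{F_{\underline{n}}}\cdot\df t_{j,p^{n_j}}$ into the image of the left-hand term, and a second multiplication by $\pi^{2^{i-1}}\ak{m}_K$ absorbs the left-hand-term defect via the inductive hypothesis. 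Rewriting $\df t_j=p^{n_j}t_{j,p^{n_j}}^{p^{n_j}-1}\df t_{j,p^{n_j}}$ to express the old basis element $\df t_j$ as a multiple of the new generator, the combined conclusion is $\pi^{2^i}\ak{m}_K\cdot\Omega^1_{\ca{O}_{F_{\underline{n}}}/\ca{O}_K}\subseteq \sum_{l\text{ finite}}\ca{O}_{F_{\underline{n}}}\cdot\df t_{l,p^{n_l}}$, and $p$-adic completion delivers the asserted $\pi^{2^i}\ak{m}_K$-generating property for $\widehat{\Omega}^1_{\ca{O}_{F_{\underline{n}}}/\ca{O}_K}$. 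Linear independence of the proposed basis in the completion reduces to $F_{\underline{n}}$-linear independence of the $\df t_{l,p^{n_l}}$ in $\Omega^1_{F_{\underline{n}}/K}$, inherited from the $F$-independence of $\df t_1,\dots,\df t_d$ coming from \eqref{eq:notation-perfd-tower-1}.

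In the \textbf{limit case}, I approximate $F_{\underline{n}}=\bigcup_{m\ge 1}F_{\underline{n}(m)}$, where $\underline{n}(m)$ agrees with $\underline{n}$ except for $n_j(m)=m<\infty$; each $\underline{n}(m)$ has exactly $i$ nonzero components with $n_j(m)$ finite, so the Kummer case provides a $\pi^{2^i}\ak{m}_K$-basis of $\widehat{\Omega}^1_{\ca{O}_{F_{\underline{n}(m)}}/\ca{O}_K}$ that includes $\df t_{j,p^m}$. In the colimit $\Omega^1_{\ca{O}_{F_{\underline{n}}}/\ca{O}_K}=\colim_m \Omega^1_{\ca{O}_{F_{\underline{n}(m)}}/\ca{O}_K}$, the identities $\df t_{j,p^m}=p^{m'-m}u_{m,m'}\df t_{j,p^{m'}}$ (with $u_{m,m'}\in \ca{O}_{F_{\underline{n}(m')}}^\times$ and $m'\ge m$) make each $\df t_{j,p^m}$ infinitely $p$-divisible. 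Since $\widehat{\Omega}^1_{\ca{O}_{F_{\underline{n}}}/\ca{O}_K}$ is torsion-free, almost finitely generated of finite rank (by \ref{lem:differential}), hence $p$-adically separated by the structure results of Section \ref{sec:mod}, these elements vanish in the completion, leaving exactly the desired basis. A mod-$p^r$ argument, using that $p$-adic reduction commutes with filtered colimits, then confirms the $\pi^{2^i}\ak{m}_K$-generating property in the limit. The main obstacle is tracking the exponent doubling in the Kummer case — the two applications of $\pi^{2^{i-1}}\ak{m}_K$ in the diagram chase collapse via $\ak{m}_K^2=\ak{m}_K$ into $\pi^{2^i}\ak{m}_K$ — and confirming at each inductive step that the substitution $\pi\mapsto \pi^{2^{i-1}}$ in the Section \ref{sec:kummer} setup remains admissible, which is precisely why the standing hypothesis demands $\pi^{2^d}\notin p\ca{O}_K$.
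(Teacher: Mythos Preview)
Your induction scheme and the split into a Kummer step and a limit step match the paper's proof, and your use of \ref{cor:different-dt} together with the fundamental exact sequence is the right engine for the generating half. Two points need work, one cosmetic and one substantive.

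First, the displayed inclusion $\pi^{2^i}\ak{m}_K\cdot\Omega^1_{\ca{O}_{F_{\underline{n}}}/\ca{O}_K}\subseteq \sum_{l}\ca{O}_{F_{\underline{n}}}\cdot\df t_{l,p^{n_l}}$ is stated in the \emph{uncompleted} module, but the inductive hypothesis only controls $\widehat{\Omega}^1_{\ca{O}_{F_{\underline{n}'}}/\ca{O}_K}$. To run your two-step chase you must pass to completions, and for that you need two facts you do not verify: that the exact sequence stays exact after completion (this uses that $\Omega^1_{\ca{O}_{F_{\underline{n}}}/\ca{O}_{F_{\underline{n}'}}}$ is bounded torsion, cf.\ \cite[\href{https://stacks.math.columbia.edu/tag/0BNG}{0BNG}]{stacks-project}), and that $(\ca{O}_{F_{\underline{n}}}\otimes_{\ca{O}_{F_{\underline{n}'}}}\Omega^1_{\ca{O}_{F_{\underline{n}'}}/\ca{O}_K})^\wedge=\ca{O}_{\widehat{F_{\underline{n}}}}\otimes_{\ca{O}_{\widehat{F_{\underline{n}'}}}}\widehat{\Omega}^1_{\ca{O}_{F_{\underline{n}'}}/\ca{O}_K}$ (this uses \ref{cor:sep-comp}). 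The paper threads these issues through an auxiliary submodule $M_n$ and the diagrams \eqref{eq:prop:perfd-tower-basis-4}--\eqref{eq:prop:perfd-tower-basis-6}; you should at least cite the relevant completeness facts.

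Second, and more seriously, your linear-independence argument is not valid. You assert that independence in $\widehat{\Omega}^1_{\ca{O}_{F_{\underline{n}}}/\ca{O}_K}$ ``reduces to'' independence in $\Omega^1_{F_{\underline{n}}/K}$. But there is no map from the completion back to the localization making this reduction work, and in fact the completion can have strictly smaller rank---this is exactly the phenomenon you use in the limit step, where $\df t_{j,p^m}$ is nonzero in $\Omega^1_{F_{\underline{n}}/K}$ yet vanishes in $\widehat{\Omega}^1$. Since your Kummer step must handle tuples $\underline{n}'$ that already carry $\infty$-components (because your limit step invokes the Kummer step for $\underline{n}(m)$ over such an $\underline{n}'$), you cannot assume the proposed basis has $d$ elements, and nothing you wrote rules out further collapse among the remaining $\df t_{l,p^{n_l}}$. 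The paper proves independence by building an injective morphism of short exact sequences (see \eqref{eq:prop:perfd-tower-basis-6} and \eqref{eq:prop:perfd-tower-basis-12}): the left vertical arrow is injective by the \emph{inductive} independence, and torsion-freeness of the target then forces the middle vertical arrow---which carries the new free module onto the proposed basis---to be injective as well. A workable alternative for you is to extract from the completed exact sequence that $\mrm{rank}_{\ca{O}_{\widehat{F_{\underline{n}}}}}\widehat{\Omega}^1_{\ca{O}_{F_{\underline{n}}}/\ca{O}_K}$ equals the inductive rank $k$, and then observe that $k$ elements whose span contains $\pi^{2^i}\ak{m}_K\cdot\widehat{\Omega}^1$ in a torsion-free module of rank $k$ are automatically independent; but this still requires the completion bookkeeping above, and in the limit case a separate argument for the rank (the paper's sequence \eqref{eq:prop:perfd-tower-basis-11}--\eqref{eq:prop:perfd-tower-basis-12} does precisely this).
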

\begin{proof}
	We prove by induction on $0\leq i\leq d$ that the differentials corresponding to the finite components of $\underline{n}^{(i)}=(n_1,\dots,n_i,0,\dots,0)\in (\bb{N}\cup\{\infty\})^d$ form a $\pi^{2^i}\ak{m}_K$-basis of $\widehat{\Omega}^1_{\ca{O}_{F_{\underline{n}^{(i)}}}/\ca{O}_K}$. The case $i=0$ holds by the assumption \eqref{eq:notation-perfd-tower-1}. In general, for $i>0$, suppose the claim holds for $i-1$ and by symmetry we may assume that $\infty=n_1=\cdots=n_{r-1}>n_r\geq \cdots \geq n_{i-1}$ for some $1\leq r\leq i$. For simplicity, we put $E_n=F_{(n_1,\dots,n_{i-1},n,0,\dots,0)}$ for any $n\in\bb{N}\cup\{\infty\}$ and $E=E_0$. Then, by induction hypothesis, $\pi^{2^{i-1}}\ak{m}_K\cdot\widehat{\Omega}^1_{\ca{O}_E/\ca{O}_K}$ is contained in the following $\ca{O}_{\widehat{E}}$-submodule of $\widehat{\Omega}^1_{\ca{O}_E/\ca{O}_K}$,
	\begin{align}\label{eq:prop:perfd-tower-basis-1}
		\ca{O}_{\widehat{E}}\cdot \df t_{r,p^{n_r}}\oplus \cdots \oplus \ca{O}_{\widehat{E}}\cdot \df t_{i-1,p^{n_{i-1}}}\oplus \ca{O}_{\widehat{E}}\cdot \df t_i\oplus \ca{O}_{\widehat{E}}\cdot \df t_{i+1}\oplus\cdots\oplus \ca{O}_{\widehat{E}}\cdot \df t_d.
	\end{align}
	Note that $\df t_i\notin\pi^{2^{i-1}}\ak{m}_K\cdot\Omega^1_{\ca{O}_E/\ca{O}_K}$ by \ref{lem:pi-basis} (where we used $\ak{m}_{\widehat{E}}=\ak{m}_K\ca{O}_{\widehat{E}}$ \eqref{eq:para:notation-almost-2}) and $\pi^{2^{i-1}}\notin p\ca{O}_K$ by our choice of $\pi$ in \ref{para:notation-perfd-tower}. The tower of Kummer extensions $(E_n)_{n\in\bb{N}}$ lies in the situation \ref{para:tower} and thus we can apply all the results in Section \ref{sec:kummer}.
	
	As $\Omega^1_{\ca{O}_E/\ca{O}_K}$ is torsion-free (\ref{cor:differential}), we put $\ak{a}=E\cdot \df t_i \cap \Omega^1_{\ca{O}_E/\ca{O}_K}\subseteq \Omega^1_{E/K}$. We have 
	\begin{align}\label{eq:prop:perfd-tower-basis-2}
		\ca{O}_E\subseteq \ak{a}\subseteq \pi^{-2^{i-1}}\ca{O}_E \subseteq p^{-1}\ca{O}_E
	\end{align}
	as $\df t_i\notin\pi^{2^{i-1}}\ak{m}_K\cdot\Omega^1_{\ca{O}_E/\ca{O}_K}$. By flat base change, we see that $\ak{a}\ca{O}_{E_n}=E_n\cdot \df t_i \cap (\ca{O}_{E_n}\otimes_{\ca{O}_E}\Omega^1_{\ca{O}_E/\ca{O}_K})\subseteq \Omega^1_{E_n/K}$. Let $M_n$ be the $\ca{O}_{E_n}$-submodule of $\Omega^1_{\ca{O}_{E_n}/\ca{O}_K}$ generated by $\df t_{i,p^n}$ and $\Omega^1_{\ca{O}_E/\ca{O}_K}$. Note that $\ca{O}_{E_n}\cdot\df t_{i,p^n}=p^{-n}\ca{O}_{E_n}\cdot\df t_i$ as $t_i\in\ca{O}_E^\times$. Then, for any integer $n\geq 1$, there is an injective morphism of exact sequences by \ref{thm:differential}.(\ref{item:thm:differential-1}),
	\begin{align}\label{eq:prop:perfd-tower-basis-3}
		\xymatrix{
			0\ar[r]&\ca{O}_{E_n}\otimes_{\ca{O}_E}\Omega^1_{\ca{O}_E/\ca{O}_K}\ar[r]\ar@{=}[d]&M_n\ar[d]\ar[r]&p^{-n}\ca{O}_{E_n}/\ak{a}\ca{O}_{E_n}\ar[r]\ar[d]^-{\cdot \df t_i}&0\\
			0\ar[r]& \ca{O}_{E_n}\otimes_{\ca{O}_E}\Omega^1_{\ca{O}_E/\ca{O}_K}\ar[r]& \Omega^1_{\ca{O}_{E_n}/\ca{O}_K}\ar[r]&\Omega^1_{\ca{O}_{E_n}/\ca{O}_E}\ar[r]& 0
		}
	\end{align}
	Applying \ref{cor:different-dt} to $K\to E$, we see that the cokernel $Q_n$ of the right vertical arrow is killed by $\pi^{2^{i-1}}\ak{m}_K$. Taking $p$-adic completion, we still obtain an exact sequence (\cite[\href{https://stacks.math.columbia.edu/tag/0BNG}{0BNG}]{stacks-project})
	\begin{align}\label{eq:prop:perfd-tower-basis-4}
		0\longrightarrow \widehat{M_n}\longrightarrow \widehat{\Omega}^1_{\ca{O}_{E_n}/\ca{O}_K}\longrightarrow Q_n\longrightarrow 0.
	\end{align}
	Similarly, as $p^{-n}\ca{O}_{E_n}/\ak{a}\ca{O}_{E_n}$ is killed by $p^n$, there is an exact sequence
	\begin{align}\label{eq:prop:perfd-tower-basis-5}
		0\longrightarrow \ca{O}_{\widehat{E_n}}\otimes_{\ca{O}_{\widehat{E}}}\widehat{\Omega}^1_{\ca{O}_E/\ca{O}_K}\longrightarrow \widehat{M_n}\longrightarrow p^{-n}\ca{O}_{E_n}/\ak{a}\ca{O}_{E_n}\longrightarrow 0,
	\end{align}
	where we used the fact that $\ca{O}_{E_n}\widehat{\otimes}_{\ca{O}_E}\Omega^1_{\ca{O}_E/\ca{O}_K}=\ca{O}_{\widehat{E_n}}\otimes_{\ca{O}_{\widehat{E}}}\widehat{\Omega}^1_{\ca{O}_E/\ca{O}_K}$ by \ref{cor:sep-comp} (note that $\widehat{\Omega}^1_{\ca{O}_E/\ca{O}_K}$ is almost finitely generated by \ref{lem:differential}). Consider the canonical morphism of exact sequences induced by \eqref{eq:prop:perfd-tower-basis-1},
	\begin{align}\label{eq:prop:perfd-tower-basis-6}
		\xymatrix{
			0\ar[r]&\ca{O}_{\widehat{E_n}}^{\oplus d-r}\oplus \ca{O}_{\widehat{E_n}}\cdot \df t_i\ar[r]\ar[d]&\ca{O}_{\widehat{E_n}}^{\oplus d-r}\oplus p^{-n}\ca{O}_{\widehat{E_n}}\cdot \df t_i\ar[r]\ar[d]&p^{-n}\ca{O}_{\widehat{E_n}}/\ca{O}_{\widehat{E_n}}\ar[r]\ar[d]&0\\
			0\ar[r]& \ca{O}_{\widehat{E_n}}\otimes_{\ca{O}_{\widehat{E}}}\widehat{\Omega}^1_{\ca{O}_E/\ca{O}_K}\ar[r]& \widehat{M_n}\ar[r]& p^{-n}\ca{O}_{\widehat{E_n}}/\ak{a}\ca{O}_{\widehat{E_n}}\ar[r]& 0
		}
	\end{align} 
	where the left vertical arrow is injective whose cokernel is killed by $\pi^{2^{i-1}}\ak{m}_K$. As $\widehat{M_n}$ is torsion-free by \eqref{eq:prop:perfd-tower-basis-4}, we see that the middle vertical arrow is also injective by inverting $p$. Therefore, the cokernel of the middle vertical arrow is also killed by $\pi^{2^{i-1}}\ak{m}_K$. This implies that $\pi^{2^i}\ak{m}_K\cdot\widehat{\Omega}^1_{\ca{O}_{E_n}/\ca{O}_K}$ is contained in the following $\ca{O}_{\widehat{E_n}}$-submodule of $\widehat{\Omega}^1_{\ca{O}_{E_n}/\ca{O}_K}$ by \eqref{eq:prop:perfd-tower-basis-4},
	\begin{align}\label{eq:prop:perfd-tower-basis-7}
		\ca{O}_{\widehat{E_n}}\cdot \df t_{r,p^{n_r}}\oplus \cdots \oplus \ca{O}_{\widehat{E_n}}\cdot \df t_{i-1,p^{n_{i-1}}}\oplus \ca{O}_{\widehat{E_n}}\cdot \df t_{i,p^n}\oplus \ca{O}_{\widehat{E_n}}\cdot \df t_{i+1}\oplus\cdots\oplus \ca{O}_{\widehat{E_n}}\cdot \df t_d.
	\end{align}
	In other words, $\df t_{r,p^{n_r}},\dots, \df t_{i-1,p^{n_{i-1}}}, \df t_{i,p^n},\df t_{i+1},\dots,\df t_d$ form a $\pi^{2^i}\ak{m}_K$-basis of $\widehat{\Omega}^1_{\ca{O}_{E_n}/\ca{O}_K}$. This completes the induction when $n_i$ is finite.
	
	For $n_i$ infinite, taking filtered colimit of \eqref{eq:prop:perfd-tower-basis-3} over $n\in\bb{N}$, we obtain an injective morphism of exact sequences
	\begin{align}\label{eq:prop:perfd-tower-basis-8}
		\xymatrix{
			0\ar[r]&\ca{O}_{E_\infty}\otimes_{\ca{O}_E}\Omega^1_{\ca{O}_E/\ca{O}_K}\ar[r]\ar@{=}[d]&M_\infty\ar[d]\ar[r]&E_\infty/\ak{a}\ca{O}_{E_\infty}\ar[r]\ar[d]^-{\cdot \df t_i}&0\\
			0\ar[r]& \ca{O}_{E_\infty}\otimes_{\ca{O}_E}\Omega^1_{\ca{O}_E/\ca{O}_K}\ar[r]& \Omega^1_{\ca{O}_{E_\infty}/\ca{O}_K}\ar[r]&\Omega^1_{\ca{O}_{E_\infty}/\ca{O}_E}\ar[r]& 0
		}
	\end{align}
	The cokernel $Q_\infty$ of the right vertical arrow is still killed by $\pi^{2^{i-1}}\ak{m}_K$. Taking $p$-adic completion, we still obtain an exact sequence (\cite[\href{https://stacks.math.columbia.edu/tag/0BNG}{0BNG}]{stacks-project})
	\begin{align}\label{eq:prop:perfd-tower-basis-9}
		0\longrightarrow \widehat{M_\infty}\longrightarrow \widehat{\Omega}^1_{\ca{O}_{E_\infty}/\ca{O}_K}\longrightarrow Q_\infty\longrightarrow 0.
	\end{align}
	Applying the derived functor $\rr\ho_{\bb{Z}}(p^{-r}\bb{Z}/\bb{Z},-)$ to the first row of \eqref{eq:prop:perfd-tower-basis-8}, we obtain an exact sequence (see the proof of \ref{lem:rhom})
	\begin{align}\label{eq:prop:perfd-tower-basis-10}
		0=M_\infty[p^r]\longrightarrow p^{-r}\ak{a}\ca{O}_{E_\infty}/\ak{a}\ca{O}_{E_\infty}\longrightarrow (\ca{O}_{E_\infty}\otimes \Omega^1_{\ca{O}_E/\ca{O}_K})/p^r\longrightarrow M_\infty/p^r\longrightarrow(E_\infty/\ak{a}\ca{O}_{E_\infty})/p^r=0.
	\end{align}
	As the inverse system $(p^{-r}\ak{a}\ca{O}_{E_\infty}/\ak{a}\ca{O}_{E_\infty})_{r\in\bb{N}}$ satisfies the Mittag-Leffler condition, taking limit over $r\in\bb{N}$, we obtain an exact sequence
	\begin{align}\label{eq:prop:perfd-tower-basis-11}
		\xymatrix{
			0\ar[r]& \ak{a}\ca{O}_{\widehat{E_\infty}}\ar[r]^-{\cdot \df t_i}& \ca{O}_{\widehat{E_\infty}}\otimes_{\ca{O}_{\widehat{E}}}\widehat{\Omega}^1_{\ca{O}_E/\ca{O}_K}\ar[r]&\widehat{M_\infty}\ar[r]& 0,
		}
	\end{align}
	where we used the fact that $\ca{O}_{E_\infty}\widehat{\otimes}_{\ca{O}_E}\Omega^1_{\ca{O}_E/\ca{O}_K}=\ca{O}_{\widehat{E_\infty}}\otimes_{\ca{O}_{\widehat{E}}}\widehat{\Omega}^1_{\ca{O}_E/\ca{O}_K}$ by \ref{cor:sep-comp}. Consider the canonical morphism of exact sequences induced by \eqref{eq:prop:perfd-tower-basis-1},
	\begin{align}\label{eq:prop:perfd-tower-basis-12}
		\xymatrix{
			0\ar[r]&\ca{O}_{\widehat{E_\infty}}\cdot \df t_i\ar[r]\ar[d]&\ca{O}_{\widehat{E_\infty}}^{\oplus d-r}\oplus \ca{O}_{\widehat{E_\infty}}\cdot \df t_i\ar[r]\ar[d]&\ca{O}_{\widehat{E_\infty}}^{\oplus d-r}\ar[r]\ar[d]&0\\
			0\ar[r]& \ak{a}\ca{O}_{\widehat{E_\infty}}\cdot \df t_i\ar[r]& \ca{O}_{\widehat{E_\infty}}\otimes_{\ca{O}_{\widehat{E}}}\widehat{\Omega}^1_{\ca{O}_E/\ca{O}_K}\ar[r]&\widehat{M_\infty}\ar[r]& 0
		}
	\end{align} 
	where the middle vertical arrow is injective whose cokernel is killed by $\pi^{2^{i-1}}\ak{m}_K$. As $\widehat{M_\infty}$ is torsion-free by \eqref{eq:prop:perfd-tower-basis-9}, we see that the right vertical arrow is also injective by inverting $p$. On the other hand, the cokernel of the right vertical arrow is also killed by $\pi^{2^{i-1}}\ak{m}_K$. This implies that $\pi^{2^i}\ak{m}_K\cdot\widehat{\Omega}^1_{\ca{O}_{E_\infty}/\ca{O}_K}$ is contained in the following $\ca{O}_{\widehat{E_\infty}}$-submodule of $\widehat{\Omega}^1_{\ca{O}_{E_\infty}/\ca{O}_K}$ by \eqref{eq:prop:perfd-tower-basis-9},
	\begin{align}\label{eq:prop:perfd-tower-basis-13}
		\ca{O}_{\widehat{E_\infty}}\cdot \df t_{r,p^{n_r}}\oplus \cdots \oplus \ca{O}_{\widehat{E_\infty}}\cdot \df t_{i-1,p^{n_{i-1}}}\oplus \ca{O}_{\widehat{E_\infty}}\cdot \df t_{i+1}\oplus\cdots\oplus \ca{O}_{\widehat{E_\infty}}\cdot \df t_d.
	\end{align}
	In other words, $\df t_{r,p^{n_r}},\dots, \df t_{i-1,p^{n_{i-1}}}, \df t_{i+1},\dots,\df t_d$ form a $\pi^{2^i}\ak{m}_K$-basis of $\widehat{\Omega}^1_{\ca{O}_{E_\infty}/\ca{O}_K}$. This completes the induction when $n_i$ is infinite.
\end{proof}

\begin{mycor}\label{cor:perfd-tower-perfd}
	The valuation field $F_{\underline{\infty}}$ is pre-perfectoid {\rm(\ref{para:notation-perfd})}.
\end{mycor}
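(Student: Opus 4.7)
The plan is to apply \ref{prop:perfd-tower-basis} with $\underline{n}=\underline{\infty}$ to get near-vanishing of the completed relative differentials, and then invoke Gabber-Ramero's characterization of deeply ramified valuation fields. Concretely, taking $\underline{n}=\underline{\infty}\in(\bb{N}\cup\{\infty\})^d$ in \ref{prop:perfd-tower-basis}, all $d$ components are nonzero and none is finite, so the ``$\pi^{2^d}\ak{m}_K$-basis'' is indexed by the empty set and
\begin{align*}
\pi^{2^d}\ak{m}_K\cdot \widehat{\Omega}^1_{\ca{O}_{F_{\underline{\infty}}}/\ca{O}_K}=0.
\end{align*}
In particular $\Omega^1_{\ca{O}_{F_{\underline{\infty}}}/\ca{O}_K}/p=\widehat{\Omega}^1_{\ca{O}_{F_{\underline{\infty}}}/\ca{O}_K}/p$ is annihilated by $\pi^{2^d}\ak{m}_K$.

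Since $K$ is pre-perfectoid, $\Omega^1_{\ca{O}_K/\bb{Z}_p}$ is $p$-divisible by \cite[6.6.6]{gabber2003almost}, so $\Omega^1_{\ca{O}_K/\bb{Z}_p}/p=0$; the right-exact transitivity sequence for $\bb{Z}_p\to\ca{O}_K\to\ca{O}_{F_{\underline{\infty}}}$ modulo $p$ then gives a canonical isomorphism $\Omega^1_{\ca{O}_{F_{\underline{\infty}}}/\bb{Z}_p}/p\cong \Omega^1_{\ca{O}_{F_{\underline{\infty}}}/\ca{O}_K}/p$, so the left-hand side is also killed by $\pi^{2^d}\ak{m}_K$. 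Since $F_{\underline{\infty}}$ is a non-discrete height-$1$ valuation field of residue characteristic $p$ (inherited from $K$), Gabber-Ramero's criterion \cite[6.6.6]{gabber2003almost} then identifies $F_{\underline{\infty}}$ as deeply ramified, and hence as pre-perfectoid in the sense of \ref{para:notation-perfd}.

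The main obstacle is to bridge the ``almost'' statement (annihilation by the proper ideal $\pi^{2^d}\ak{m}_K$) with the genuine $p$-divisibility of $\Omega^1_{\ca{O}_{F_{\underline{\infty}}}/\bb{Z}_p}$ required in \cite[6.6.6]{gabber2003almost}. This will be handled by observing that $\pi^{2^d}\ak{m}_K$ has no positive lower bound in the non-discrete value group of $K$, so any $x\in\Omega^1_{\ca{O}_{F_{\underline{\infty}}}/\bb{Z}_p}/p$ is annihilated by $\pi^{2^d}\epsilon$ for arbitrarily small $\epsilon\in\ak{m}_K$, which, combined with a suitable separatedness property, forces the genuine vanishing of the Frobenius obstruction, i.e.\ Frobenius surjectivity on $\ca{O}_{F_{\underline{\infty}}}/p$. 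An alternative bypass is a direct verification of Frobenius surjectivity modulo $p$, writing each element of $\ca{O}_{F_{\underline{\infty}}}$ in terms of the $t_{i,p^n}$'s (which already admit $p$-th roots $t_{i,p^{n+1}}$ in $\ca{O}_{F_{\underline{\infty}}}$) and of elements of $\ca{O}_K$ (which admit $p$-th roots modulo $p$ by pre-perfectoidness of $K$).
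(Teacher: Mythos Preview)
Your approach is essentially the paper's: apply \ref{prop:perfd-tower-basis} at $\underline{n}=\underline{\infty}$ and invoke \cite[6.6.6]{gabber2003almost}. But you leave the crucial step unfinished. From \ref{prop:perfd-tower-basis} you correctly extract $\pi^{2^d}\ak{m}_K\cdot\widehat{\Omega}^1_{\ca{O}_{F_{\underline{\infty}}}/\ca{O}_K}=0$, and then you spend a paragraph gesturing at ``separatedness'' and a direct Frobenius argument without closing the gap. Neither suggestion works as written: being annihilated by $\pi^{2^d}\epsilon$ for all $\epsilon\in\ak{m}_K$ is exactly almost-vanishing and adds nothing, and the ``direct verification'' ignores that $\ca{O}_{F_{\underline{\infty}}}$ is the integral closure, not just $\ca{O}_F[t_{i,p^n}]$.

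The missing observation is one line: $\Omega^1_{\ca{O}_{F_{\underline{\infty}}}/\ca{O}_K}$ is torsion-free since $K$ is pre-perfectoid (\ref{cor:differential}), hence so is its $p$-adic completion. A torsion-free module killed by the nonzero element $\pi^{2^d}$ is zero, so $\widehat{\Omega}^1_{\ca{O}_{F_{\underline{\infty}}}/\ca{O}_K}=0$, i.e.\ $\Omega^1_{\ca{O}_{F_{\underline{\infty}}}/\ca{O}_K}$ is $p$-divisible. This is exactly what the paper does (in one sentence), and it also makes your detour through $\Omega^1_{\ca{O}_{F_{\underline{\infty}}}/\bb{Z}_p}$ unnecessary: the criterion in \cite[6.6.6]{gabber2003almost} (via \cite[10.17]{he2024purity}) applies directly once the relative differentials over the pre-perfectoid base $\ca{O}_K$ are $p$-divisible.
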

\begin{proof}
	By \ref{prop:perfd-tower-basis}, we see that $\widehat{\Omega}^1_{\ca{O}_{F_{\underline{\infty}}}/\ca{O}_K}=0$, i.e., $\Omega^1_{\ca{O}_{F_{\underline{\infty}}}/\ca{O}_K}$ is $p$-divisible. This implies that $F_{\underline{\infty}}$ is pre-perfectoid by \cite[6.6.6]{gabber2003almost} (see \cite[10.17]{he2024purity}).
\end{proof}

\begin{mycor}\label{cor:perfd-tower-gal}
	The continuous group homomorphism
	\begin{align}\label{eq:cor:perfd-tower-gal-1}
		\xi=(\xi_1,\dots,\xi_d):G_F\longrightarrow \bb{Z}_p^d,
	\end{align}
	characterized by $\tau(t_{i,p^n})=\zeta_{p^n}^{\xi_i(\tau)}t_{i,p^n}$ for any $\tau\in G_F$, $n\in\bb{N}$ and $1\leq i\leq d$, induces an isomorphism for any $\underline{n}=(n_1,\dots,n_d)\in\bb{N}^d$,
	\begin{align}\label{eq:cor:perfd-tower-gal-2}
		\xi:\gal(F_{\underline{\infty}}/F_{\underline{n}})\iso p^{n_1}\bb{Z}_p\times \cdots\times p^{n_d}\bb{Z}_p.
	\end{align}
\end{mycor}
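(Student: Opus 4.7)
My plan is to apply the single-variable Kummer-theoretic result \eqref{eq:para:tower-3} iteratively along the tower
$$F = F_{(0,\ldots,0)} \subset F_{(n_1,0,\ldots,0)} \subset \cdots \subset F_{(n_1,\ldots,n_d)} = F_{\underline{n}},$$
and then pass to inverse limits. First, I will note two formal facts. The character $\xi$ is well-defined and continuous because each $t_{i,p^n}$ has its $p^n$-th roots permuted by $G_F$ according to an element of $\bb{Z}/p^n\bb{Z}$ via the roots of unity already living in $F$ (inherited from $K$). Using the relation $t_{i,p^n}^{p^{n-n_i}} = t_{i,p^{n_i}}$ for $n \geq n_i$, an elementary calculation shows that $\tau \in G_F$ fixes $t_{i,p^{n_i}}$ if and only if $\xi_i(\tau) \in p^{n_i}\bb{Z}_p$. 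This both ensures injectivity of $\xi$ on $\gal(F_{\underline{\infty}}/F)$ and identifies the subgroup $\gal(F_{\underline{\infty}}/F_{\underline{n}})$ with $\xi^{-1}(p^{n_1}\bb{Z}_p\times\cdots\times p^{n_d}\bb{Z}_p)$. Passing to inverse limits, it suffices to prove the degree formula $[F_{\underline{n}}:F] = p^{n_1}\cdots p^{n_d}$ for every $\underline{n} \in \bb{N}^d$, since $\xi$ then maps $\gal(F_{\underline{n}}/F)$ injectively into the group $\prod_i\bb{Z}/p^{n_i}\bb{Z}$ of the same order.

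For the degree formula I will telescope along the tower. Setting $E = F_{(n_1,\ldots,n_{i-1},0,\ldots,0)}$, \ref{prop:perfd-tower-basis} tells me that $\df t_i$ is part of a $\pi^{2^{i-1}}\ak{m}_K$-basis of $\widehat{\Omega}^1_{\ca{O}_E/\ca{O}_K}$; then \ref{lem:pi-basis} together with the identity $\ak{m}_K^2=\ak{m}_K$ from \eqref{eq:para:notation-almost-1} (valid as $K$ is non-discrete by \ref{para:notation-perfd}) shows that $\df t_i \notin \pi^{2^{i-1}}\ak{m}_K\cdot\widehat{\Omega}^1_{\ca{O}_E/\ca{O}_K}$. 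The delicate point will be to transfer this non-vanishing back to the uncompleted differentials, but this is direct: the canonical map $\Omega^1_{\ca{O}_E/\ca{O}_K} \to \widehat{\Omega}^1_{\ca{O}_E/\ca{O}_K}$ sends $\df t_i$ to $\df t_i$, so if $\df t_i$ lay in $\pi^{2^{i-1}}\ak{m}_K\cdot\Omega^1_{\ca{O}_E/\ca{O}_K}$ its image would lie in $\pi^{2^{i-1}}\ak{m}_K\cdot\widehat{\Omega}^1_{\ca{O}_E/\ca{O}_K}$, a contradiction. Since $\pi^{2^{i-1}}\notin p\ca{O}_K$ (because $\pi^{2^d}\notin p\ca{O}_K$), the pair $(E, t_i)$ with $\pi^{2^{i-1}}$ in the role of ``$\pi$'' fits the setup of \ref{para:tower}, and \eqref{eq:para:tower-3} delivers $[E(t_{i,p^{n_i}}):E] = p^{n_i}$. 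Multiplying these over $i = 1, \ldots, d$ gives the required degree formula and concludes the proof.
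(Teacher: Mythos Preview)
Your argument is correct and uses the same key inputs as the paper (\ref{prop:perfd-tower-basis}, \ref{lem:pi-basis}, and \eqref{eq:para:tower-3}), but it organizes the induction along a different tower. You climb from $F$ to $F_{\underline{n}}$ by finite steps $E=F_{(n_1,\dots,n_{i-1},0,\dots,0)}\subset E(t_{i,p^{n_i}})$, establish the degree formula $[F_{\underline{n}}:F]=p^{n_1+\cdots+n_d}$, and then pass to inverse limits after combining with the injectivity of $\xi$ and the preimage identification. The paper instead climbs from $F_{\underline{n}}$ to $F_{\underline{\infty}}$ by the infinite-degree steps $F_{(\infty,\dots,\infty,n_i,n_{i+1},\dots,n_d)}\subset F_{(\infty,\dots,\infty,n_{i+1},\dots,n_d)}$, applying \ref{prop:perfd-tower-basis} at each $\underline{m}=(\infty,\dots,\infty,n_i,\dots,n_d)$ to see that $\df t_{i,p^{n_i}}\notin \pi^{2^d}\ak{m}_K\cdot\Omega^1_{\ca{O}_{F_{\underline{m}}}/\ca{O}_K}$; then \eqref{eq:para:tower-3} directly gives $\gal(F_{(\infty,\dots,\infty,n_{i+1},\dots,n_d)}/F_{\underline{m}})\cong p^{n_i}\bb{Z}_p$, and stacking these yields \eqref{eq:cor:perfd-tower-gal-2} without any separate degree count or inverse-limit passage. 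Your route uses only the finite-index case of \ref{prop:perfd-tower-basis}, while the paper uses the case with some components equal to $\infty$; the paper's organization is a bit more direct, but both are equally valid.
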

\begin{proof}
	It suffices to show that for any $1\leq i\leq d$, the homomorphism $\xi$ induces an isomorphism
	\begin{align}
		\gal(F_{(\infty,\dots,\infty,n_{i+1},\dots,n_d)}/F_{(\infty,\dots,n_i,n_{i+1},\dots,n_d)})\iso p^{n_i}\bb{Z}_p.
	\end{align}
	We put $\underline{m}=(\infty,\dots,\infty,n_i,n_{i+1},\dots,n_d)\in(\bb{N}\cup\{\infty\})^d$. Since $\df t_{i,p^{n_i}},\dots,\df t_{d,p^{n_d}}$ form a $\pi^{2^d}\ak{m}_K$-basis of $\widehat{\Omega}^1_{\ca{O}_{F_{\underline{m}}}/\ca{O}_K}$ by \ref{prop:perfd-tower-basis}, they are not in $\pi^{2^d}\ak{m}_K\cdot \Omega^1_{\ca{O}_{F_{\underline{m}}}/\ca{O}_K}$ by \ref{lem:pi-basis}. As $\pi^{2^d}\notin p\ca{O}_K$, we can apply \ref{para:tower} to $K\to F_{\underline{m}}$ so that the conclusion follows from \eqref{eq:para:tower-3}.
\end{proof}

\begin{mypara}\label{para:notation-J}
	We take $\tau_1,\dots,\tau_d\in \gal(F_{\underline{\infty}}/F)$ corresponding to the standard $\bb{Z}_p$-basis of $\bb{Z}_p^d$ via $\xi$ \eqref{eq:cor:perfd-tower-gal-1}. We put $I=\{1,2,\dots,d\}$ and for any subset $J\subseteq I$, we put 
	\begin{align}\label{para:notation-J-1}
		\underline{J}=(n_1,\dots,n_d),\quad n_j=\left\{\begin{array}{ll}
			\infty&\trm{if }j\in J,\\
			0&\trm{if }j\notin J.
		\end{array}\right.
	\end{align}
	In particular, $F_{\underline{\emptyset}}=F$ and $F_{\underline{I}}=F_{\underline{\infty}}$. Consider Tate's normalized trace map for any $i\in I$ defined in \ref{para:tate-trace},
	\begin{align}\label{para:notation-J-2}
		\ca{T}_i=\colim_{n\in\bb{N}}p^{-n}(1+\tau_i+\dots+\tau_i^{p^n-1}):F_{\underline{I}}\longrightarrow F_{\underline{I\setminus\{i\}}}.
	\end{align}
	Since $\tau_1,\dots,\tau_d$ commute with each other, regarding these $\ca{T}_i$ as $F$-linear idempotent operators of $F_{\underline{I}}$, we have
	\begin{align}\label{para:notation-J-3}
		\ca{T}_i\ca{T}_j=\ca{T}_j\ca{T}_i,\quad \ca{T}_i\tau_j=\tau_j\ca{T}_i,\quad \forall\ 1\leq i,j\leq d.
	\end{align}
	In particular, there is a decomposition of $F_{\underline{I}}$ into $G_F$-stable $F$-submodules,
	\begin{align}\label{para:notation-J-4}
		F_{\underline{I}}=\bigoplus_{J\subseteq I} \im(\prod_{j\in J}(1-\ca{T}_j)\prod_{j\notin J}\ca{T}_j)
	\end{align}
	by decomposing each $x\in F_{\underline{\infty}}$ into
	\begin{align}\label{para:notation-J-5}
		x=\sum_{J\subseteq I} (\prod_{j\in J}(1-\ca{T}_j)\prod_{j\notin J}\ca{T}_j)(x).
	\end{align}
	Note that $\im(\prod_{j\in J}(1-\ca{T}_j)\prod_{j\notin J}\ca{T}_j)= \bigcap_{j\in J}\ke(\ca{T}_j)\cap \bigcap_{j\notin J}\ke(1-\ca{T}_j)\subseteq F_{\underline{J}}$.
	
	We put
	\begin{align}\label{para:notation-J-7}
		D_J=\ca{O}_{F_{\underline{I}}}\cap \im(\prod_{j\in J}(1-\ca{T}_j)\prod_{j\notin J}\ca{T}_j),
	\end{align}
	which is a $G_F$-stable $\ca{O}_F$-submodule of $\ca{O}_{F_{\underline{J}}}$. In particular, the decomposition \eqref{para:notation-J-4} identifies with $F_{\underline{I}}=\bigoplus_{J\subseteq I}D_J[1/p]$ and thus $D_J=\ca{O}_{F_{\underline{I}}}\cap D_J[1/p]$. Moreover, for any subsets $J'\subseteq J\subseteq I$, we put
	\begin{align}\label{para:notation-J-9}
		D_{(J,J')}=\bigoplus_{\substack{J''\subseteq J\\J''\nsubseteq J'}} D_{J''}.
	\end{align}
\end{mypara}

\begin{mylem}\label{lem:D_J}
	With the notation in {\rm\ref{para:notation-J}}, let $J$ be a subset of $I$.
	\begin{enumerate}
		\renewcommand{\labelenumi}{{\rm(\theenumi)}}
		\item The $F$-module $D_J[1/p]$ admits a basis 
		\begin{align}\label{eq:lem:D_J-1}
			\{\prod_{j\in J}t_{j,p^{n_j}}^{k_j}\ |\ n_j\geq 1,\ 1\leq k_j<p^{n_j},\ p\nmid k_j\}.
		\end{align}\label{item:lem:D_J-1}
		\item The canonical morphism $\bigoplus_{J'\subseteq J} D_{J'}\to \ca{O}_{F_{\underline{J}}}$ is injective and induces a decomposition by inverting $p$,
		\begin{align}\label{eq:lem:D_J-2}
			F_{\underline{J}}=\im(\prod_{j\notin J}\ca{T}_j)=\bigoplus_{J'\subseteq J} D_{J'}[1/p].
		\end{align}
		\label{item:lem:D_J-2}
		\item For any subset $J'\subseteq J$, the canonical morphism $\ca{O}_{F_{\underline{J'}}}\oplus D_{(J,J')}\to \ca{O}_{F_{\underline{J}}}$ is injective and induces a decomposition by inverting $p$,
		\begin{align}\label{eq:lem:D_J-3}
			F_{\underline{J}}=F_{\underline{J'}}\oplus F_{\underline{J}}\cap \ke(\prod_{j\in J\setminus J'}\ca{T}_j)=F_{\underline{J'}}\oplus D_{(J,J')}[1/p].
		\end{align}\label{item:lem:D_J-3}
	\end{enumerate}
\end{mylem}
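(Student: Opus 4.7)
\medskip

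\noindent\textbf{Proof proposal.} The plan is to construct an explicit monomial $F$-basis of $F_{\underline{J}}$ coming from the iterated Kummer extension structure, to identify how each idempotent $\ca{T}_j$ acts on such monomials, and then to read off the three claims as statements about grouping these basis elements. The integral versions will then follow from torsion-freeness, since each $D_{J''}$ sits inside $\ca{O}_{F_{\underline{I}}}$.

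For the basis, I would first observe that for a fixed $\underline{n}\in\bb{N}^J$, the ring $\ca{O}_{F_{\underline{n}}}$ is built from $\ca{O}_F$ by iteratively adjoining $p^{n_j}$-th roots of the units $t_j$, so the monomials $\prod_{j\in J}t_{j,p^{n_j}}^{k_j}$ with $0\leq k_j<p^{n_j}$ form an $F$-basis of $F_{\underline{n}}$. Using the identity $t_{j,p^n}^{pk}=t_{j,p^{n-1}}^{k}$ to put each basis element into a canonical form and passing to the filtered colimit over $\underline{n}\in\bb{N}^J$, I get an $F$-basis of $F_{\underline{J}}$ indexed by tuples $(n_j,k_j)_{j\in J}$ where for each $j\in J$ either $n_j=k_j=0$ (giving the trivial factor $1$) or $n_j\geq 1$, $1\leq k_j<p^{n_j}$ and $p\nmid k_j$. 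Now, by the last sentence of \ref{para:tate-trace} applied to the single-variable Kummer tower in the coordinate $t_j$, the operator $\ca{T}_j$ on $F_{\underline{J}}$ acts factor-by-factor: it fixes any basis monomial whose $j$-th factor is trivial and kills any basis monomial whose $j$-th factor is nontrivial. Moreover, for $j\notin J$, $F_{\underline{J}}\subseteq F_{\underline{I\setminus\{j\}}}$ is fixed pointwise by $\ca{T}_j$ since $\ca{T}_j$ retracts the inclusion $F_{\underline{I\setminus\{j\}}}\hookrightarrow F_{\underline{I}}$; and by Galois theory with \ref{cor:perfd-tower-gal}, the intersection $\bigcap_{j\notin J}F_{\underline{I\setminus\{j\}}}$ is exactly $F_{\underline{J}}$.

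Combining these, the subspace $D_J[1/p]=\im(\prod_{j\in J}(1-\ca{T}_j)\prod_{j\notin J}\ca{T}_j)\cap F_{\underline{I}}[1/p]$ equals the span of those basis monomials of $F_{\underline{J}}$ whose every $j\in J$-factor is nontrivial, which is precisely the set displayed in \eqref{eq:lem:D_J-1}. This proves (\ref{item:lem:D_J-1}). Regrouping the full basis of $F_{\underline{J}}$ by the subset $J'\subseteq J$ of indices for which the factor is nontrivial then yields the vector-space decomposition $F_{\underline{J}}=\bigoplus_{J'\subseteq J}D_{J'}[1/p]$, and since each $D_{J'}\subseteq \ca{O}_{F_{\underline{I}}}$ is torsion-free over $\ca{O}_F$, injectivity after inverting $p$ upgrades to injectivity of $\bigoplus_{J'\subseteq J}D_{J'}\hookrightarrow \ca{O}_{F_{\underline{J}}}$, giving (\ref{item:lem:D_J-2}).

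For (\ref{item:lem:D_J-3}), using that the commuting idempotents $\ca{T}_j$ annihilate $D_{J''}[1/p]$ whenever $J''\cap(J\setminus J')\neq\emptyset$ (i.e., $J''\nsubseteq J'$) and fix $D_{J''}[1/p]$ pointwise whenever $J''\subseteq J'$, I compute on the decomposition from (\ref{item:lem:D_J-2}):
\begin{align}
F_{\underline{J}}\cap\ke\bigl(\prod_{j\in J\setminus J'}\ca{T}_j\bigr)=\bigoplus_{\substack{J''\subseteq J\\J''\nsubseteq J'}}D_{J''}[1/p]=D_{(J,J')}[1/p],
\end{align}
while $F_{\underline{J'}}=\bigoplus_{J''\subseteq J'}D_{J''}[1/p]$ by (\ref{item:lem:D_J-2}) applied to $J'$ in place of $J$. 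The direct sum decomposition $F_{\underline{J}}=F_{\underline{J'}}\oplus D_{(J,J')}[1/p]$ follows immediately, and the integral injectivity of $\ca{O}_{F_{\underline{J'}}}\oplus D_{(J,J')}\hookrightarrow \ca{O}_{F_{\underline{J}}}$ is again deduced from injectivity after inverting $p$ and torsion-freeness. The only real obstacle is the bookkeeping: writing out the factor-by-factor action of $\ca{T}_j$ on multi-variable monomials cleanly, and making sure that the canonical form of monomials (enforcing $p\nmid k_j$ when $n_j\geq 1$) gives a well-defined $F$-basis of the colimit $F_{\underline{J}}$.
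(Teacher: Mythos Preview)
Your proposal is correct and follows essentially the same approach as the paper: both arguments construct the explicit monomial $F$-basis of $F_{\underline{I}}$ (the paper parametrizes it by $\lambda\in\bb{Z}[1/p]/\bb{Z}$, which is the same data as your canonical form $(n_j,k_j)$ with $p\nmid k_j$), compute that each $\ca{T}_j$ acts on monomials as the identity or zero according to whether the $j$-th exponent is trivial, and then read off (\ref{item:lem:D_J-1})--(\ref{item:lem:D_J-3}) by partitioning the basis according to the support $J''$ of the exponent tuple. The only cosmetic difference is that the paper works globally with the basis of $F_{\underline{I}}$ and restricts, while you build the basis of $F_{\underline{J}}$ directly.
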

\begin{proof}
	(\ref{item:lem:D_J-1}) For any $\lambda\in \bb{Z}[1/p]/\bb{Z}$, it is uniquely expressed as $\lambda=k/p^n$ where $k,n\in\bb{N}$ such that $0\leq k< p^n$ and $\gcd(k,p^n)=1$. Then, we put $t_i^\lambda=t_{i,p^n}^k$ for any $i\in I$. Notice that $\ak{B}=\{\prod_{i\in I}t_i^{\lambda_i}\ |\ \lambda_i\in \bb{Z}[1/p]/\bb{Z}\}$ forms an $F$-basis of $F_{\underline{I}}$. Consider the decomposition  
	\begin{align}\label{eq:lem:D_J-4}
		\ak{B}=\{\prod_{i\in I}t_i^{\lambda_i}\ |\ \lambda_i\in \bb{Z}[1/p]/\bb{Z}\}=\coprod_{J\subseteq I}\{\prod_{j\in J}t_j^{\lambda_j}\ |\ \lambda_j\in \bb{Z}[1/p]/\bb{Z}\trm{ and }\lambda_j\neq 0\}=\coprod_{J\subseteq I}\ak{B}_J.
	\end{align}
	Notice that for any $j\in I$, we have
	\begin{align}\label{eq:lem:D_J-5}
		\ca{T}_j(t_i^\lambda)=\left\{\begin{array}{ll}
			t_i^\lambda&\trm{if }j\neq i \trm{ or }\lambda=0,\\
			0&\trm{if }j=i\trm{ and }\lambda\neq 0.
		\end{array}\right.
	\end{align}
	This shows that $\ak{B}_J\subseteq \im(\prod_{j\in J}(1-\ca{T}_j)\prod_{j\notin J}\ca{T}_j)=D_J[1/p]$. Then, we deduce from the decompositions $\ak{B}=\coprod_{J\subseteq I}\ak{B}_J$ \eqref{eq:lem:D_J-4} and $F_{\underline{I}}=\bigoplus_{J\subseteq I} D_J[1/p]$ \eqref{para:notation-J-4} that $\ak{B}_J$ forms an $F$-basis of $D_J[1/p]$.
	
	(\ref{item:lem:D_J-2}) It follows directly from (\ref{item:lem:D_J-1}) and the fact that $\coprod_{J'\subseteq J}\ak{B}_{J'}$ forms an $F$-basis of $F_{\underline{J}}$.
	
	(\ref{item:lem:D_J-3}) It follows directly from (\ref{item:lem:D_J-2}).
\end{proof}

\begin{mylem}\label{lem:perfd-tower-norm}
	With the notation in {\rm\ref{para:notation-J}}, for any subset $J\subseteq I$, the quotient of the inclusion $\bigoplus_{J'\subseteq J} D_{J'}\subseteq\ca{O}_{F_{\underline{J}}}$ is killed by $\pi^{2^{|J|}-1}$, where $|J|$ is the cardinality of $J$.
\end{mylem}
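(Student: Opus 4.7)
The plan is to combine the $F$-linear direct-sum decomposition $F_{\underline{J}}=\bigoplus_{J'\subseteq J}D_{J'}[1/p]$ from \ref{lem:D_J}.(\ref{item:lem:D_J-2}) with a uniform norm bound on the projection idempotents $p_{J'}:=\prod_{j\in J'}(1-\ca{T}_j)\prod_{j\in J\setminus J'}\ca{T}_j$. Injectivity of $\bigoplus_{J'\subseteq J} D_{J'}\to \ca{O}_{F_{\underline{J}}}$ is automatic from this decomposition. For any $x\in\ca{O}_{F_{\underline{J}}}$ one has $x=\sum_{J'\subseteq J}p_{J'}(x)$ with $p_{J'}(x)\in D_{J'}[1/p]$, and by \eqref{para:notation-J-7}, $D_{J'}=\ca{O}_{F_{\underline{I}}}\cap D_{J'}[1/p]$. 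Hence it will suffice to prove the norm estimate $|p_{J'}(x)|\leq |\pi|^{-(2^{|J|}-1)}|x|$: this forces $\pi^{2^{|J|}-1}p_{J'}(x)\in \ca{O}_{F_{\underline{I}}}\cap D_{J'}[1/p]=D_{J'}$, and summing over $J'\subseteq J$ yields $\pi^{2^{|J|}-1}x\in \bigoplus_{J'\subseteq J}D_{J'}$.

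For the norm estimate, I would expand $\prod_{j\in J'}(1-\ca{T}_j)=\sum_{S\subseteq J'}(-1)^{|S|}\prod_{j\in S}\ca{T}_j$, writing $p_{J'}(x)$ as a signed sum of trace compositions $\prod_{j\in T}\ca{T}_j(x)$ indexed by subsets $T$ with $J\setminus J'\subseteq T\subseteq J$. For each such $T=\{t_1,\dots,t_{|T|}\}$ I would peel off traces one by one: at step $l$, the element $\ca{T}_{t_{l-1}}\cdots\ca{T}_{t_1}(x)$ lies in $F_{\underline{J_l}}$, where $J_l:=J\setminus\{t_1,\dots,t_{l-1}\}$, and $\ca{T}_{t_l}$ is Tate's normalized trace \eqref{eq:para:tate-trace-1} for the Kummer tower $F_{\underline{J_l\setminus\{t_l\}}}\to F_{\underline{J_l}}$. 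By \ref{prop:perfd-tower-basis} applied to $\underline{J_l\setminus\{t_l\}}$ (which has exactly $|J|-l$ nonzero components, all of them $\infty$), $\df t_{t_l}$ belongs to a $\pi^{2^{|J|-l}}\ak{m}_K$-basis of $\widehat{\Omega}^1_{\ca{O}_{F_{\underline{J_l\setminus\{t_l\}}}}/\ca{O}_K}$, so by \ref{lem:pi-basis} it lies outside $\pi^{2^{|J|-l}}\ak{m}_K\cdot\Omega^1_{\ca{O}_{F_{\underline{J_l\setminus\{t_l\}}}}/\ca{O}_K}$. This verifies the hypothesis of \ref{para:tower} with ``$\pi$''$=\pi^{2^{|J|-l}}$, which lies in $\ak{m}_K\setminus p\ca{O}_K$ thanks to the choice $\pi^{2^d}\notin p\ca{O}_K$ made in \ref{para:notation-perfd-tower}. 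Then \ref{cor:trace-range} gives $|\ca{T}_{t_l}(y)|\leq |\pi|^{-2^{|J|-l}}|y|$ on $F_{\underline{J_l}}$, and composing yields
\[
\Bigl|\prod_{j\in T}\ca{T}_j(x)\Bigr|\leq |\pi|^{-\sum_{l=1}^{|T|}2^{|J|-l}}|x|=|\pi|^{-(2^{|J|}-2^{|J|-|T|})}|x|\leq |\pi|^{-(2^{|J|}-1)}|x|.
\]
The ultrametric triangle inequality on the signed sum then produces the desired bound on $|p_{J'}(x)|$.

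The main technical obstacle is precisely the bookkeeping: one must verify that at every step of the iterated trace composition, the hypotheses of the Kummer-tower setup \ref{para:tower} remain in force in the appropriate descending chain of intermediate fields $F_{\underline{J_l}}$ with the geometrically decreasing exponents $2^{|J|-l}$; this is exactly why the opening condition $\pi^{2^d}\notin p\ca{O}_K$ is built into \ref{para:notation-perfd-tower}. Once this is in place, the exponent $2^{|J|}-1$ of the statement appears naturally as the geometric sum $2^0+2^1+\cdots+2^{|J|-1}$, attained on the extremal term $T=J$ of the expansion.
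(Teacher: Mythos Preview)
Your proof is correct and follows essentially the same approach as the paper's: both expand the projector $p_{J'}$ into a signed sum of trace compositions $\prod_{j\in T}\ca{T}_j$, peel off one trace at a time using \ref{prop:perfd-tower-basis} and \ref{cor:trace-range} to obtain the factor $|\pi|^{-2^{|J|-l}}$ at step $l$, and conclude via the ultrametric inequality that each summand is bounded by $|\pi|^{-(2^{|J|}-1)}|x|$. The paper organizes the argument by first isolating the single-step bound $|\ca{T}_j(x)|\leq |\pi|^{-2^{|J|-1}}|x|$ for $x\in F_{\underline{J}}$ as a claim and then iterating, but the content is identical to yours.
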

\begin{proof}
	Firstly, we claim that for any $j\in J$ and $x\in F_{\underline{J}}$, we have
	\begin{align}\label{eq:lem:perfd-tower-norm-1}
		|\ca{T}_j(x)|\leq |\pi^{-2^{|J|-1}}x|.
	\end{align}
	Indeed, notice that $\{\df t_k\}_{k\notin J'}$ (where $J'=J\setminus\{j\}$) forms a $\pi^{2^{|J'|}}\ak{m}_K$-basis of $\widehat{\Omega}^1_{\ca{O}_{F_{\underline{J'}}}/\ca{O}_K}$ by \ref{prop:perfd-tower-basis}. In particular, we have $\df t_j\notin \pi^{2^{|J'|}}\ak{m}_K\cdot \Omega^1_{\ca{O}_{F_{\underline{J'}}}/\ca{O}_K}$ by \ref{lem:pi-basis}. Thus, the claim follows from applying \ref{cor:trace-range} to the extensions $K\to F_{\underline{J'}}\to F_{\underline{J}}=\bigcup_{n\in\bb{N}}F_{\underline{J'}}(t_{j,p^n})$.
	
	By repeatedly applying the claim above, we see that for any $J\subseteq I$ and $x\in F_{\underline{J}}$,
	\begin{align}\label{eq:lem:perfd-tower-norm-2}
		|(\prod_{j\in J}\ca{T}_j)(x)|\leq |\pi^{-2^{|J|-1}} \pi^{-2^{|J|-2}} \cdots \pi^{-2^0}x|=|\pi^{1-2^{|J|}}x|.
	\end{align}
	On the other hand, we have
	\begin{align}\label{eq:lem:perfd-tower-norm-3}
		x=\sum_{J'\subseteq I} (\prod_{j\in J'}(1-\ca{T}_j)\prod_{j\notin J'}\ca{T}_j)(x)=\sum_{J'\subseteq J} (\prod_{j\in J'}(1-\ca{T}_j)\prod_{j\in J\setminus J'}\ca{T}_j)(x),
	\end{align}
	where the first decomposition is \eqref{para:notation-J-5} and the second equality follows from the fact that $(1-\ca{T}_j)(x)=0$ for any $j\notin J$. Since each $(\prod_{j\in J'}(1-\ca{T}_j)\prod_{j\in J\setminus J'}\ca{T}_j)(x)$ is a linear combination of $\{(\prod_{j\in J''}\ca{T}_j)(x)\}_{J''\subseteq J}$, we deduce from \eqref{eq:lem:perfd-tower-norm-2} that
	\begin{align}
		|(\prod_{j\in J'}(1-\ca{T}_j)\prod_{j\in J\setminus J'}\ca{T}_j)(x)|\leq |\pi^{1-2^{|J|}}x|.
	\end{align}
	Therefore, for any $x\in \pi^{2^{|J|}-1}\ca{O}_{F_{\underline{J}}}$, the decomposition \eqref{eq:lem:perfd-tower-norm-3} lies in $\oplus_{J'\subseteq J}D_{J'}$, which completes the proof.
\end{proof}

\begin{mylem}\label{lem:perfd-tower-coh}
	With the notation in {\rm\ref{para:notation-J}}, for any subsets $J'\subseteq J\subseteq I$ and $r\in\bb{N}$, the Galois cohomology group
	\begin{align}
		H^q(\gal(F_{\underline{J}}/F_{\underline{J'}}),D_{(J,J')}/p^rD_{(J,J')})
	\end{align}
	is killed by $p^2\pi^{2^{|J|+2}}$ if $0\leq q\leq |J\setminus J'|$ and is zero otherwise.
\end{mylem}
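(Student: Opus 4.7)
The plan is to proceed by induction on $m := |J \setminus J'|$. The base case $m = 0$ is vacuous since $D_{(J, J')} = 0$ when $J = J'$. For the inductive step with $m \geq 1$, I will fix $j \in J \setminus J'$, set $\tilde J := J \setminus \{j\}$, and consider the normal subgroup $N := \bb{Z}_p \tau_j = \gal(F_{\underline J}/F_{\underline{\tilde J}})$ of $H := \gal(F_{\underline J}/F_{\underline{J'}})$, whose quotient is $H/N = \gal(F_{\underline{\tilde J}}/F_{\underline{J'}})$. I split the coefficient module into $H$-stable summands
\begin{align*}
	D_{(J, J')} = A \oplus B, \quad A := D_{(\tilde J, J')}, \quad B := \bigoplus_{\substack{J''\subseteq J \\ j \in J''}} D_{J''},
\end{align*}
and treat each piece separately; since cohomology commutes with finite direct sums, the annihilator of $H^k(H, D_{(J, J')}/p^r)$ is the least common multiple of the annihilators of $H^k(H, A/p^r)$ and $H^k(H, B/p^r)$, and this is the source of the $p^2$-constant in the final bound (rather than a power of $p$ growing with $m$).

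For the summand $A$, the subgroup $N$ acts trivially (since $A \subseteq \ca{O}_{F_{\underline{\tilde J}}}$), so by K\"unneth,
\begin{align*}
	H^k(H, A/p^r) = H^k(H/N, A/p^r) \oplus H^{k-1}(H/N, A/p^r).
\end{align*}
The induction hypothesis for the pair $(\tilde J, J')$ (which has $|\tilde J\setminus J'| = m-1$ and $|\tilde J| = |J|-1$) yields that each summand is killed by $p^2 \pi^{2^{|\tilde J|+2}} = p^2 \pi^{2^{|J|+1}}$ and vanishes for $k > m$.

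For the summand $B$, I will first identify it as an ``almost full'' submodule of the Tate--Sen module $D_j := \ca{O}_{F_{\underline J}} \cap \ke(\ca{T}_j)$ for the Kummer tower $F_{\underline{\tilde J}} \subseteq F_{\underline J}$: the inclusion $B \subseteq D_j$ is immediate, and I claim the cokernel $D_j/B$ is killed by $\pi^{2^{|J|}-1}$. Indeed, applying Lemma \ref{lem:perfd-tower-norm} to $\ca{O}_{F_{\underline J}}$ gives $\pi^{2^{|J|}-1} x \in \bigoplus_{J''\subseteq J} D_{J''}$ for any $x \in D_j$, and applying the projector $\ca{T}_j$ forces the $J''$-components with $j \notin J''$ to vanish, using the injectivity of $\bigoplus_{J''\subseteq \tilde J} D_{J''} \hookrightarrow \ca{O}_{F_{\underline{\tilde J}}}$ from Lemma \ref{lem:D_J}.(2). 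Next, Proposition \ref{prop:perfd-tower-basis} combined with Lemma \ref{lem:pi-basis} ensures that $\df t_j \notin \pi^{2^{|J|-1}}\ak{m}_K \cdot \Omega^1_{\ca{O}_{F_{\underline{\tilde J}}}/\ca{O}_K}$, so the theory of Section \ref{sec:kummer} applies to this tower with parameter $\pi^{2^{|J|-1}}$, and Proposition \ref{prop:tate-sen}.(2) gives that $\tau_j - 1: D_j \to D_j$ is injective with cokernel killed by $p\pi^{2^{|J|-1}}$. A snake-lemma argument for $0 \to B \to D_j \to D_j/B \to 0$ with endomorphism $\tau_j - 1$ then shows $\tau_j - 1: B \to B$ is injective with cokernel killed by $p\pi^{2^{|J|-1} + 2^{|J|}-1} \leq p\pi^{2^{|J|+1}}$; a further snake-lemma for $0 \to B \xrightarrow{p^r} B \to B/p^r \to 0$ transfers this bound to $H^q(N, B/p^r)$, which is killed by $p\pi^{2^{|J|+1}}$ for $q \in \{0, 1\}$ and vanishes otherwise. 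The Hochschild--Serre spectral sequence presents $H^k(H, B/p^r)$ as a two-step extension of $p\pi^{2^{|J|+1}}$-killed modules (since $N$ has cohomological dimension $1$), hence killed by $p^2 \pi^{2^{|J|+2}}$; vanishing for $k > m$ follows from the cohomological dimension $m-1$ of $H/N \cong \bb{Z}_p^{m-1}$.

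Combining via the least common multiple, $H^k(H, D_{(J, J')}/p^r)$ is killed by $p^2 \pi^{2^{|J|+2}}$ for $0 \leq k \leq m$ and vanishes otherwise. The hard part will be the cokernel estimate $D_j/B$ killed by $\pi^{2^{|J|}-1}$, which hinges on the decomposition $D_j[1/p] = \bigoplus_{j \in J''\subseteq J} D_{J''}[1/p]$ (obtained by applying the idempotent $1 - \ca{T}_j$ to the decomposition of $F_{\underline J}$ from Lemma \ref{lem:D_J}); once this is in place, the remaining bookkeeping of annihilators through the two applications of Hochschild--Serre is routine.
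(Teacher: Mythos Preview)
Your proposal is correct and follows essentially the same inductive strategy as the paper: the same decomposition of $D_{(J,J')}$ into the pieces with and without the new index $j$, the same Hochschild--Serre analysis for each summand, and the same comparison of $B$ with $D_j = \ca{O}_{F_{\underline J}} \cap \ke(\ca{T}_j)$ via Lemma~\ref{lem:perfd-tower-norm}. The only cosmetic differences are that the paper handles the trivial-action piece by exhibiting an explicit section of the inflation map (where you invoke K\"unneth, which is justified here via the Koszul description of $\rr\Gamma(\bb{Z}_p^m,-)$), and bounds $H^b(N, B/p^r)$ by comparing directly with $H^b(N, D_j/p^r)$ through Lemma~\ref{lem:pi-isom} (where you instead first transport the bound on $\tau_j - 1$ from $D_j$ to $B$ by a snake-lemma argument).
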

\begin{proof}
	We follow the proof of \cite[2-1]{hyodo1986hodge}. We take induction on $|J\setminus J'|$. For simplicity, we put $G_{(J,J')}=\gal(F_{\underline{J}}/F_{\underline{J'}})$ and $E_{(J,J')}=D_{(J,J')}/p^rD_{(J,J')}$. As the $p$-cohomological dimension of $G_{(J,J')}\cong \bb{Z}_p^{J\setminus J'}$ is no more than $|J\setminus J'|$ (\cite[\Luoma{2}.3.24]{abbes2016p}), we only need to verify the statement for $0\leq q\leq |J\setminus J'|$. 

	Suppose that the statement holds for $J''\subseteq J'\subseteq I$. We need to verify the statement for $J''\subseteq J=J'\cup\{j\}$ for any $j\in I\setminus J'$. Note that $D_{(J,J'')}=D_{(J,J')}\oplus D_{(J',J'')}$ \eqref{para:notation-J-9}. We also have $E_{(J,J'')}=E_{(J,J')}\oplus E_{(J',J'')}$. Thus, it suffices to show that $H^q(G_{(J,J'')},E_{(J,J')})$ and $H^q(G_{(J,J'')},E_{(J',J'')})$ are killed by $p^2\pi^{2^{|J|+2}}$.

	Consider the convergent spectral sequence (\cite[\Luoma{5}.11.7]{abbes2016p})
	\begin{align}
		E_2^{a,b}=H^a(G_{(J',J'')},H^b(G_{(J,J')},E_{(J,J')}))\Rightarrow H^{a+b}(G_{(J,J'')},E_{(J,J')}).
	\end{align}
	Note that $H^b(G_{(J,J')},E_{(J,J')})$ is zero for any $b\notin\{0,1\}$. We claim that $H^b(G_{(J,J')},E_{(J,J')})$ is killed by $p\pi^{2^{|J|+1}}$ for any $b\in\{0,1\}$. Indeed, notice that $\{\df t_k\}_{k\notin J'}$ forms a $\pi^{2^{|J'|}}\ak{m}_K$-basis of $\widehat{\Omega}^1_{\ca{O}_{F_{\underline{J'}}}/\ca{O}_K}$ by \ref{prop:perfd-tower-basis}. In particular, we have $\df t_j\notin \pi^{2^{|J'|}}\ak{m}_K\cdot \Omega^1_{\ca{O}_{F_{\underline{J'}}}/\ca{O}_K}$ by \ref{lem:pi-basis}. Thus, we can apply \ref{cor:galois-coh} to the extensions $K\to F_{\underline{J'}}\to F_{\underline{J}}=\bigcup_{n\in\bb{N}}F_{\underline{J'}}(t_{j,p^n})$ so that $H^b(G_{(J,J')},D/p^rD)$ is killed by $p\pi^{2^{|J'|}}$ where (cf. \eqref{eq:lem:D_J-3})
	\begin{align}
		D=\ca{O}_{F_{\underline{J}}}\cap \ke(\ca{T}_j)=\ca{O}_{F_{\underline{J}}}\cap D_{(J,J')}[1/p].
	\end{align}
	Since the quotient of the inclusion $D_{(J,J')}\subseteq D$ is killed by $\pi^{2^{|J|}}$ by \ref{lem:perfd-tower-norm}, the kernel and cokernel of $H^b(G_{(J,J')},E_{(J,J')})\to H^b(G_{(J,J')},D/p^rD)$ are killed by $\pi^{2^{|J|}}$ by \ref{lem:pi-isom}. Therefore, $H^b(G_{(J,J')},E_{(J,J')})$ is killed by $p\pi^{2^{|J|+1}}$, which verifies the claim.
	
	Then, the spectral sequence degenerates on page $3$ and induces a long exact sequence
	\begin{align}
		\cdots\stackrel{\df}{\longrightarrow}& H^q(G_{(J',J'')},H^0(G_{(J,J')},E_{(J,J')}))\stackrel{\mrm{inf}}{\longrightarrow} H^q(G_{(J,J'')},E_{(J,J')})\stackrel{\mrm{res}}{\longrightarrow} H^{q-1}(G_{(J',J'')},H^1(G_{(J,J')},E_{(J,J')}))\\
		\stackrel{\df}{\longrightarrow}& H^{q+1}(G_{(J',J'')},H^0(G_{(J,J')},E_{(J,J')}))\stackrel{\mrm{inf}}{\longrightarrow}\cdots\nonumber
	\end{align}
	from which we see that $H^q(G_{(J,J'')},E_{(J,J')})$ is killed by $p^2\pi^{2^{|J|+2}}$. 

	Consider the convergent spectral sequence (\cite[\Luoma{5}.11.7]{abbes2016p})
	\begin{align}
		E_2^{a,b}=H^a(G_{(J',J'')},H^b(G_{(J,J')},E_{(J',J'')}))\Rightarrow H^{a+b}(G_{(J,J'')},E_{(J',J'')}).
	\end{align}
	As $G_{(J,J')}$ acts trivially on $E_{(J',J'')}$, the spectral sequence degenerates on page $3$ and induces a long exact sequence
	\begin{align}
		\cdots\stackrel{\df}{\longrightarrow}& H^q(G_{(J',J'')},E_{(J',J'')})\stackrel{\mrm{inf}}{\longrightarrow} H^q(G_{(J,J'')},E_{(J',J'')})\stackrel{\mrm{res}}{\longrightarrow} H^{q-1}(G_{(J',J'')},E_{(J',J'')})\\
		\stackrel{\df}{\longrightarrow}& H^{q+1}(G_{(J',J'')},E_{(J',J'')})\stackrel{\mrm{inf}}{\longrightarrow}\cdots.\nonumber
	\end{align}
	Notice that the inflation map $H^q(G_{(J',J'')},E_{(J',J'')})\stackrel{\mrm{inf}}{\longrightarrow} H^q(G_{(J,J'')},E_{(J',J'')})$ admits a left inverse given by the composition of
	\begin{align}
		H^q(G_{(J,J'')},E_{(J',J'')})\stackrel{\mrm{res}}{\longrightarrow} H^q(G_{(J,J''\cup\{j\})},E_{(J',J'')})\stackrel{\sim}{\longleftarrow}H^q(G_{(J',J'')},E_{(J',J'')})
	\end{align}
	where the second isomorphism is induced by the canonical isomorphism $G_{(J,J''\cup\{j\})}\iso G_{(J',J'')}$. Therefore, there is a decomposition
	\begin{align}
		H^q(G_{(J,J'')},E_{(J',J'')})=H^q(G_{(J',J'')},E_{(J',J'')})\oplus H^{q-1}(G_{(J',J'')},E_{(J',J'')}),
	\end{align}
	which is killed by $p^2\pi^{2^{|J'|+2}}$ by induction hypothesis. This completes the proof.
\end{proof}

\begin{myprop}\label{prop:perfd-tower-coh}
	For any $q,r\in\bb{N}$, the canonical homomorphism
	\begin{align}\label{eq:prop:perfd-tower-coh-1}
		\wedge^q_{\ca{O}_F}\ho_{\bb{Z}_p}(\gal(F_{\underline{\infty}}/F),\ca{O}_F/p^r\ca{O}_F)\longrightarrow H^q(G_F,\ca{O}_{\overline{F}}/p^r\ca{O}_{\overline{F}})
	\end{align}
	has kernel killed by $\pi^{2^d}$ and cokernel killed by $p^2\pi^{2^{d+3}}$.
\end{myprop}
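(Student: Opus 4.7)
\smallskip

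\noindent\textbf{Proof proposal.} The plan is to factor the map \eqref{eq:prop:perfd-tower-coh-1} into three comparisons and control the kernel and cokernel of each one.

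First, I would apply almost purity to pass from the absolute Galois cohomology to the cohomology of the tower $F_{\underline{\infty}}/F$. Since $F_{\underline{\infty}}$ is pre-perfectoid by \ref{cor:perfd-tower-perfd}, the inclusion $\ca{O}_{F_{\underline{\infty}}}\to\ca{O}_{\overline{F}}$ is almost finite \'etale (by \cite[6.6.2]{gabber2003almost} together with the deep ramification of $F_{\underline{\infty}}$), so the Hochschild--Serre spectral sequence gives an almost isomorphism
\begin{align*}
H^q(\gal(F_{\underline{\infty}}/F),\ca{O}_{F_{\underline{\infty}}}/p^r)\longrightarrow H^q(G_F,\ca{O}_{\overline{F}}/p^r),
\end{align*}
i.e.\ kernel and cokernel killed by $\widetilde{\ak{m}}_K$, hence by $\pi$.

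Next, I would use the decomposition of \ref{lem:D_J}.(\ref{item:lem:D_J-3}) with $J=I$ and $J'=\emptyset$, which gives an injection $\ca{O}_F\oplus D_{(I,\emptyset)}\inj \ca{O}_{F_{\underline{\infty}}}$ whose cokernel is killed by $\pi^{2^d-1}$ (\ref{lem:perfd-tower-norm}). Applying $-\otimes\ca{O}_F/p^r$ and taking cohomology, the resulting map is compatible with the $G_F$-action, and by snake lemma the kernel and cokernel of the induced map $H^q(\gal,\ca{O}_F/p^r)\oplus H^q(\gal,D_{(I,\emptyset)}/p^r)\to H^q(\gal,\ca{O}_{F_{\underline{\infty}}}/p^r)$ are killed by $\pi^{2(2^d-1)}$. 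The second summand is killed by $p^2\pi^{2^{d+2}}$ by \ref{lem:perfd-tower-coh} (applied to $J'=\emptyset$, $J=I$), so it can be absorbed into the ``cokernel'' side of the final bound. This reduces the problem to comparing $\wedge^q_{\ca{O}_F}\ho_{\bb{Z}_p}(\gal(F_{\underline{\infty}}/F),\ca{O}_F/p^r)$ with $H^q(\gal(F_{\underline{\infty}}/F),\ca{O}_F/p^r)$, where the action on coefficients is trivial.

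For this last step, since $\gal(F_{\underline{\infty}}/F)\cong\bb{Z}_p^d$ acts trivially on the $p^r$-torsion $\ca{O}_F$-module $\ca{O}_F/p^r$, continuous group cohomology of $\bb{Z}_p$ with trivial coefficients in a $p$-power torsion module is concentrated in degrees $0,1$ with $H^0=H^1=\ca{O}_F/p^r$, so K\"unneth gives a canonical isomorphism
\begin{align*}
\wedge^q_{\ca{O}_F}\ho_{\bb{Z}_p}(\gal(F_{\underline{\infty}}/F),\ca{O}_F/p^r)\iso H^q(\gal(F_{\underline{\infty}}/F),\ca{O}_F/p^r).
\end{align*}

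Assembling the three comparisons and tracking the exponents of $\pi$ and $p$ through the long exact sequences (and the snake lemma steps), the composition has kernel killed by the largest of $\pi$ and $\pi^{2(2^d-1)}$, which is bounded by $\pi^{2^d}$, and cokernel killed by $\pi\cdot\pi^{2(2^d-1)}\cdot p^2\pi^{2^{d+2}}$, which is bounded by $p^2\pi^{2^{d+3}}$. The main technical obstacle I expect is bookkeeping the exact exponents of $\pi$ through the three-step comparison and through the Koszul-type computation for $\bb{Z}_p^d$ (in particular, checking that the natural map from the wedge product matches the edge map in the K\"unneth formula, and that the bounds from \ref{lem:perfd-tower-coh} and \ref{lem:perfd-tower-norm} add up correctly rather than multiply in a way that spoils the stated bounds).
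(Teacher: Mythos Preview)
Your three-step decomposition---the cup-product isomorphism for $\bb{Z}_p^d$, the splitting via $\ca{O}_F\oplus D_{(I,\emptyset)}\hookrightarrow\ca{O}_{F_{\underline{\infty}}}$, and almost purity---is exactly the paper's approach, and the cokernel bound assembles correctly.

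There is, however, a genuine gap in the kernel bound. Kernel bounds along a composition \emph{multiply} rather than take a maximum: if $f_1$ has kernel killed by $a_1$ and $f_2$ has kernel killed by $a_2$, then $\ke(f_2\circ f_1)$ is killed by $a_1a_2$. Your snake-lemma route gives kernel and cokernel of the decomposition step both killed by $\pi^{2(2^d-1)}$; composing with the almost-isomorphism (kernel killed by $\ak{m}_K$) then yields a kernel bound of $\pi^{2(2^d-1)}\ak{m}_K$, hence only $\pi^{2^{d+1}-1}$. For $d\geq 2$ this is strictly weaker than the claimed $\pi^{2^d}$, so the stated bound does not follow.

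The fix is to skip the two-step snake lemma and apply \ref{lem:pi-isom} directly to the injection $\ca{O}_F\oplus D_{(I,\emptyset)}\hookrightarrow\ca{O}_{F_{\underline{\infty}}}$ (kernel zero, cokernel killed by $\pi^{2^d-1}$) together with the additive functor $M\mapsto H^q(\gal(F_{\underline{\infty}}/F),M/p^r)$. Because the original kernel is zero, \ref{lem:pi-isom} gives kernel and cokernel of the induced map both killed by $\pi^{2^d-1}$, not $\pi^{2(2^d-1)}$. Then the composite kernel is killed by $\pi^{2^d-1}\ak{m}_K\ni\pi^{2^d}$, which is the claimed bound. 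This is precisely what the paper does.
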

\begin{proof}
	On the one hand, as $G_F$ is isomorphic to $\bb{Z}_p^d$ (\ref{cor:perfd-tower-gal}), the canonical morphism induced by the cup product
	\begin{align}\label{eq:prop:perfd-tower-coh-2}
		\wedge^q_{\ca{O}_F}\ho_{\bb{Z}_p}(\gal(F_{\underline{\infty}}/F),\ca{O}_F/p^r\ca{O}_F)\longrightarrow H^q(\gal(F_{\underline{\infty}}/F),\ca{O}_F/p^r\ca{O}_F)
	\end{align}
	is an isomorphism (\cite[\Luoma{2}.3.30, \Luoma{2}.6.12]{abbes2016p}). On the other hand, as $F_{\underline{\infty}}$ is pre-perfectoid by \ref{cor:perfd-tower-perfd}, the canonical morphism
	\begin{align}\label{eq:prop:perfd-tower-coh-3}
		H^q(\gal(F_{\underline{\infty}}/F),\ca{O}_{F_{\underline{\infty}}}/p^r\ca{O}_{F_{\underline{\infty}}})\longrightarrow H^q(G_F,\ca{O}_{\overline{F}}/p^r\ca{O}_{\overline{F}})
	\end{align}
	is an almost isomorphism by almost purity and almost Galois descent (cf. \cite[\Luoma{2}.6.24]{abbes2016p}). Notice that the canonical morphism $\ca{O}_F\oplus D_{(I,\emptyset)}\to \ca{O}_{F_{\underline{\infty}}}$ \eqref{eq:lem:D_J-3} is injective with cokernel killed by $\pi^{2^d-1}$ by \ref{lem:perfd-tower-norm}. Thus, the canonical morphism
	\begin{align}\label{eq:prop:perfd-tower-coh-4}
		H^q(\gal(F_{\underline{\infty}}/F),\ca{O}_F/p^r)\oplus H^q(\gal(F_{\underline{\infty}}/F),D_{(I,\emptyset)}/p^r)\longrightarrow H^q(\gal(F_{\underline{\infty}}/F),\ca{O}_{F_{\underline{\infty}}}/p^r)
	\end{align}
	has kernel and cokernel both killed by $\pi^{2^d-1}$ by \ref{lem:pi-isom}.
	Thus, the conclusion follows from the fact that $H^q(\gal(F_{\underline{\infty}}/F),D_{(I,\emptyset)}/p^rD_{(I,\emptyset)})$ is killed by $p^2\pi^{2^{d+2}}$ by \ref{lem:perfd-tower-coh}.
\end{proof}

\begin{mylem}[{\cite[2.6.3]{abbes2020suite}}]\label{lem:pi-isom}
	Let $A$ be a ring, $\ca{A}$ an abelian category endowed with a linear action of $A$ (i.e., a ring homomorphism $A\to \mrm{End}(\id_{\ca{A}})$), $f:M\to N$ a morphism in $\ca{A}$. Assume that the kernel of $f$ is annihilated by $\pi\in A$ and the cokernel of $f$ is annihilated by $\pi'\in A$. Then, there exists a canonical morphism $g:N\to M$ such that $f\circ g=\pi\pi'|_N$ and $g\circ f=\pi\pi'|_M$. 
	
	In particular, for any functor of abelian categories $F:\ca{A}\to \ca{B}$ compatible with the linear action of $A$, the kernel and cokernel of $F(f):F(M)\to F(N)$ are annihilated by $\pi\pi'$.
\end{mylem}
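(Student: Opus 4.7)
The plan is to construct the morphism $g$ directly from the image factorization of $f$. Writing $f=j\circ p$ with $p:M\twoheadrightarrow I$ the canonical epimorphism onto the image and $j:I\hookrightarrow N$ its inclusion, one has $\ke(p)=\ke(f)$ and $\cok(j)=\cok(f)$. The assumption $\pi\cdot \ke(f)=0$ says that $\pi|_M$ vanishes on $\ke(p)$, so by the universal property of the cokernel it factors uniquely as $\pi|_M=\tilde{\pi}\circ p$ for some $\tilde{\pi}:I\to M$. Dually, $\pi'\cdot\cok(f)=0$ allows me to factor $\pi'|_N$ through $j$ as $\pi'|_N=j\circ\tilde{\pi}'$ for some $\tilde{\pi}':N\to I$. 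I then define $g=\tilde{\pi}\circ\tilde{\pi}':N\to M$.

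Next I would extract the two auxiliary identities $p\circ\tilde{\pi}=\pi|_I$ and $\tilde{\pi}'\circ j=\pi'|_I$. These come from left-composing $\tilde{\pi}\circ p=\pi|_M$ with $p$ and using that, by definition of the $A$-action, $\pi$ is a natural transformation of $\id_{\ca{A}}$, so that $p\circ\pi|_M=\pi|_I\circ p$; this yields $p\circ\tilde{\pi}\circ p=\pi|_I\circ p$, and one cancels the epimorphism $p$ on the right. The identity for $\tilde{\pi}'$ follows symmetrically from the fact that $j$ is monic. A direct computation then gives
\begin{align*}
  f\circ g&=j\circ(p\circ\tilde{\pi})\circ\tilde{\pi}'=j\circ\pi|_I\circ\tilde{\pi}'=\pi|_N\circ(j\circ\tilde{\pi}')=\pi\pi'|_N,\\
  g\circ f&=\tilde{\pi}\circ(\tilde{\pi}'\circ j)\circ p=\tilde{\pi}\circ\pi'|_I\circ p=\pi'|_M\circ(\tilde{\pi}\circ p)=\pi\pi'|_M.
\end{align*}

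For the ``in particular'' claim, any additive functor $F:\ca{A}\to\ca{B}$ compatible with the $A$-action produces $F(g):F(N)\to F(M)$ satisfying $F(f)\circ F(g)=\pi\pi'|_{F(N)}$ and $F(g)\circ F(f)=\pi\pi'|_{F(M)}$, from which $\pi\pi'$ manifestly annihilates both $\ke(F(f))$ (via the second relation) and $\cok(F(f))$ (via the first).

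I do not foresee a genuine obstacle here; the only point requiring care is that the $A$-action, being given by a ring homomorphism into $\mrm{End}(\id_{\ca{A}})$, provides $A$-scalars that are natural with respect to every morphism of $\ca{A}$, which is precisely what licenses the commutation relations $p\circ\pi|_M=\pi|_I\circ p$ and $\pi'|_N\circ j=j\circ\pi'|_I$ invoked above. Once this naturality is recorded, the construction is completely formal.
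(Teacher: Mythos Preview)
Your proof is correct and follows essentially the same approach as the paper's: factor $f$ through its image $I$, use the hypotheses to factor $\pi|_M$ through $I\to M$ (your $\tilde{\pi}$, the paper's $\psi$) and $\pi'|_N$ through $N\to I$ (your $\tilde{\pi}'$, the paper's $\varphi$), and set $g=\tilde{\pi}\circ\tilde{\pi}'$. Your version is simply more detailed in verifying the two identities and in spelling out the ``in particular'' clause, which the paper leaves to the reader.
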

\begin{proof}
	Since $\cok(f)$ is killed by $\pi'$, $\pi'|_N:N\to N$ factors through $\varphi:N\to \im(f)$. Since $\ke(f)$ is killed by $\pi$, $\pi|_M:M\to M$ factors through $\psi:\im(f)\to M$. Then, the conclusion follows by taking $g=\psi\circ\varphi:N\to M$.
\end{proof}

\section{Canonical Comparison between Differentials and Galois Cohomology}\label{sec:comp}
We construct a canonical comparison morphism from the module of differentials to the Galois cohomology (see \ref{prop:comp}). This allows us to vary the bound $\pi$ in the Section \ref{sec:perfd} and take filtered colimit over finitely generated subfields (see \ref{thm:coh}). In the end, we deduce from this computation of Galois cohomology of valuation fields an analogue of Hodge-Tate spectral sequence for (non-smooth) proper varieties (see \ref{rem:htss}).

\begin{mypara}\label{para:notation-comp}
	In this section, we fix a Henselian valuation field $F$ of height $1$ extension of $\bb{Q}_p$, and an algebraic closure $\overline{F}$ of $F$. We put $G_F=\gal(\overline{F}/F)$ the absolute Galois group of $F$.
	
	For any $r\in\bb{N}$, there is a canonical morphism in the derived category of $\ca{O}_F$-modules
	\begin{align}\label{eq:para:notation-comp-1}
		\rr\ho_{\bb{Z}}(p^{-r}\bb{Z}/\bb{Z},\Omega^1_{\ca{O}_F/\bb{Z}_p})\longrightarrow \rr\Gamma(G_F,(\ca{O}_{\overline{F}}/p^r\ca{O}_{\overline{F}})\{1\}),
	\end{align}
	where $\{1\}$ is the first differential Tate twist over $\ca{O}_{\widehat{\overline{F}}}$ (\ref{defn:int-tate-twist}), which fits into the following commutative diagram
	\begin{align}\label{eq:para:notation-comp-2}
		\xymatrix{
			\rr\ho_{\bb{Z}}(p^{-r}\bb{Z}/\bb{Z},\Omega^1_{\ca{O}_F/\bb{Z}_p})\ar[r]\ar[d]& \rr\Gamma(G_F,(\ca{O}_{\overline{F}}/p^r\ca{O}_{\overline{F}})\{1\})\ar[d]^-{\wr}\\
			\rr\Gamma(G_F,\rr\ho_{\bb{Z}}(p^{-r}\bb{Z}/\bb{Z},\Omega^1_{\ca{O}_{\overline{F}}/\bb{Z}_p}))&\rr\Gamma(G_F,\Omega^1_{\ca{O}_{\overline{F}}/\bb{Z}_p}[p^r])\ar[l]_-{\sim}
		}
	\end{align}
	where the left vertical arrow is induced by the canonical $G_F$-equivariant morphism $\rr\ho_{\bb{Z}}(p^{-r}\bb{Z}/\bb{Z},\Omega^1_{\ca{O}_F/\bb{Z}_p})\to \rr\ho_{\bb{Z}}(p^{-r}\bb{Z}/\bb{Z},\Omega^1_{\ca{O}_{\overline{F}}/\bb{Z}_p})$, the bottom arrow is induced by the canonical isomorphism $\Omega^1_{\ca{O}_{\overline{F}}/\bb{Z}_p}[p^r]\iso \rr\ho_{\bb{Z}}(p^{-r}\bb{Z}/\bb{Z},\Omega^1_{\ca{O}_{\overline{F}}/\bb{Z}_p})$ (as $\overline{F}$ is pre-perfectoid, see the proof of \ref{prop:rhom}), and the right vertical arrow is induced by the canonical isomorphism 
	$(\ca{O}_{\overline{F}}/p^r\ca{O}_{\overline{F}})\{1\}=T_p(\Omega^1_{\ca{O}_{\overline{F}}/\bb{Z}_p})/p^rT_p(\Omega^1_{\ca{O}_{\overline{F}}/\bb{Z}_p})\iso\Omega^1_{\ca{O}_{\overline{F}}/\bb{Z}_p}[p^r]$ \eqref{eq:prop:tate-twist-0}.
\end{mypara}

\begin{mylem}[{cf. \cite[8.15]{bhattmorrowscholze2018integral}}]\label{lem:comp}
	Let $(\zeta_{p^n})_{n\in\bb{N}}$ be a compatible system of primitive $p$-power roots of unity contained in $\overline{F}$, $(t_{p^n})_{n\in\bb{N}}$ a compatible system of $p$-power roots of an element $t\in\ca{O}_F^\times$ contained in $\overline{F}$. Then, for any $r\in\bb{N}$ the canonical morphism induced by taking the first cohomology group of \eqref{eq:para:notation-comp-1},
	\begin{align}\label{eq:lem:comp-1}
		\Omega^1_{\ca{O}_F/\bb{Z}_p}/p^r\Omega^1_{\ca{O}_F/\bb{Z}_p}\longrightarrow H^1(G_F,(\ca{O}_{\overline{F}}/p^r\ca{O}_{\overline{F}})\{1\}),
	\end{align}
	sends $\df\log(t)$ to the class of the continuous $1$-cocycle  $-\xi_t\otimes  (\df\log(\zeta_{p^n}))_{n\in\bb{N}}:G_F\to (\ca{O}_{\overline{F}}/p^r\ca{O}_{\overline{F}})\{1\}$, where $\xi_t:G_F\to \bb{Z}_p$ is the continuous map characterized by $\tau(t_{p^n})=\zeta_{p^n}^{\xi_t(\tau)}t_{p^n}$ for any $\tau\in G_F$ and $n\in\bb{N}$.
\end{mylem}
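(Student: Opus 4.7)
The plan is to unpack the definition of \eqref{eq:lem:comp-1} as an explicit Bockstein-type connecting homomorphism, and then to evaluate it on $\df\log(t)$ using the chosen compatible system of $p$-power roots. Since $\overline{F}$ is pre-perfectoid, $\Omega^1_{\ca{O}_{\overline{F}}/\bb{Z}_p}$ is $p$-divisible by \cite[6.6.6]{gabber2003almost}, so the two-term complex $[\Omega^1_{\ca{O}_{\overline{F}}/\bb{Z}_p}\stackrel{p^r}{\longrightarrow}\Omega^1_{\ca{O}_{\overline{F}}/\bb{Z}_p}]$ placed in degrees $[0,1]$ (which represents $\rr\ho_{\bb{Z}}(p^{-r}\bb{Z}/\bb{Z},\Omega^1_{\ca{O}_{\overline{F}}/\bb{Z}_p})$) is quasi-isomorphic to $\Omega^1_{\ca{O}_{\overline{F}}/\bb{Z}_p}[p^r]$ concentrated in degree $0$. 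I would first check that, under these identifications, the composition in \eqref{eq:para:notation-comp-2} becomes (up to a sign determined by the total complex convention) the Bockstein map $(\Omega^1_{\ca{O}_{\overline{F}}/\bb{Z}_p})^{G_F}/p^r \to H^1(G_F,\Omega^1_{\ca{O}_{\overline{F}}/\bb{Z}_p}[p^r])$ associated to the short exact sequence $0 \to \Omega^1_{\ca{O}_{\overline{F}}/\bb{Z}_p}[p^r] \to \Omega^1_{\ca{O}_{\overline{F}}/\bb{Z}_p} \stackrel{p^r}{\longrightarrow} \Omega^1_{\ca{O}_{\overline{F}}/\bb{Z}_p} \to 0$, precomposed with the canonical map $\Omega^1_{\ca{O}_F/\bb{Z}_p}/p^r \to (\Omega^1_{\ca{O}_{\overline{F}}/\bb{Z}_p})^{G_F}/p^r$. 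Performing this identification rigorously via hyper-\v{C}ech total complexes is the principal technical step and the main obstacle, because one must pin down the sign convention that makes the quasi-isomorphism compatible with the right-hand vertical isomorphism in \eqref{eq:para:notation-comp-2}.

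Once that is in place, I would evaluate the Bockstein on $\omega = \df\log(t)$ as follows. The identity $t = t_{p^r}^{p^r}$ in $\ca{O}_{\overline{F}}^\times$ gives $\df t = p^r t_{p^r}^{p^r-1} \df t_{p^r}$, hence $\omega = p^r\,\df\log(t_{p^r})$. Thus $\alpha := \df\log(t_{p^r}) \in \Omega^1_{\ca{O}_{\overline{F}}/\bb{Z}_p}$ is a canonical $p^r$-th preimage of $\omega$, and the Bockstein sends $\omega$ to the class of the Galois $1$-cocycle $\tau \mapsto \alpha - \tau(\alpha)$ (which takes values in $\Omega^1_{\ca{O}_{\overline{F}}/\bb{Z}_p}[p^r]$ since $p^r(\alpha-\tau\alpha) = \omega - \tau\omega = 0$ by $G_F$-invariance of $\omega$). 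Using $\tau(t_{p^r}) = \zeta_{p^r}^{\xi_t(\tau)} t_{p^r}$, one computes
\begin{equation*}
\tau(\alpha) - \alpha = \df\log\bigl(\tau(t_{p^r})/t_{p^r}\bigr) = \df\log(\zeta_{p^r}^{\xi_t(\tau)}) = \xi_t(\tau)\,\df\log(\zeta_{p^r}),
\end{equation*}
so the class $\delta(\omega)$ is represented by the continuous $1$-cocycle $\tau \mapsto -\xi_t(\tau)\,\df\log(\zeta_{p^r})$ in $H^1(G_F,\Omega^1_{\ca{O}_{\overline{F}}/\bb{Z}_p}[p^r])$.

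Finally, I would transport this formula back to $H^1(G_F,(\ca{O}_{\overline{F}}/p^r\ca{O}_{\overline{F}})\{1\})$ via the isomorphism $(\ca{O}_{\overline{F}}/p^r\ca{O}_{\overline{F}})\{1\} = T_p(\Omega^1_{\ca{O}_{\overline{F}}/\bb{Z}_p})/p^r \iso \Omega^1_{\ca{O}_{\overline{F}}/\bb{Z}_p}[p^r]$ given by \eqref{eq:prop:tate-twist-0}, which sends the reduction of $(x_n)_{n\in\bb{N}}$ to $x_r$. Under this identification, the cocycle $\tau \mapsto -\xi_t(\tau)\otimes(\df\log(\zeta_{p^n}))_{n\in\bb{N}}$ corresponds exactly to $\tau \mapsto -\xi_t(\tau)\,\df\log(\zeta_{p^r})$, yielding the claimed formula. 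The only subtle point is the sign in the Bockstein formula, which is forced by the total-complex convention fixed in the first step; the evaluation itself is a direct computation with the chosen $p$-power roots.
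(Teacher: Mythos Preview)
Your proposal is correct and follows essentially the same approach as the paper's proof: both lift $\df\log(t)$ to $\df\log(t_{p^r})$ via $\df\log(t)=p^r\df\log(t_{p^r})$, compute $(\tau-1)\df\log(t_{p^r})=\xi_t(\tau)\df\log(\zeta_{p^r})$, and then transport back through the identification $(\ca{O}_{\overline{F}}/p^r)\{1\}\iso\Omega^1_{\ca{O}_{\overline{F}}/\bb{Z}_p}[p^r]$. The only difference is presentational: where you phrase the map as a Bockstein for the short exact sequence $0\to\Omega^1_{\ca{O}_{\overline{F}}/\bb{Z}_p}[p^r]\to\Omega^1_{\ca{O}_{\overline{F}}/\bb{Z}_p}\stackrel{p^r}{\to}\Omega^1_{\ca{O}_{\overline{F}}/\bb{Z}_p}\to0$ and flag the sign convention as the main obstacle, the paper works directly with the total complex of the double complex $C^\bullet_{\mrm{cont}}(G_F,\Omega^1_{\ca{O}_{\overline{F}}/\bb{Z}_p})\stackrel{\cdot p^r}{\to}C^\bullet_{\mrm{cont}}(G_F,\Omega^1_{\ca{O}_{\overline{F}}/\bb{Z}_p})$, where the boundary of $\df\log(t_{p^r})$ in the lower-left corner yields $(\df\log(t),\,\df(\df\log(t_{p^r})))$, so that $\df\log(t)$ in the upper-left corner is cohomologous to $-\df(\df\log(t_{p^r}))$ in the lower row---this fixes the sign without further discussion.
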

\begin{proof}
	We follow closely the proof of \cite[8.15]{bhattmorrowscholze2018integral}. Recall that $\rr\ho_{\bb{Z}}(p^{-r}\bb{Z}/\bb{Z},\Omega^1_{\ca{O}_{\overline{F}}/\bb{Z}_p})$ is represented by $\Omega^1_{\ca{O}_{\overline{F}}/\bb{Z}_p}\stackrel{\cdot p^r}{\longrightarrow}\Omega^1_{\ca{O}_{\overline{F}}/\bb{Z}_p}$. Let $C^\bullet_{\mrm{cont}}(G_F,\Omega^1_{\ca{O}_{\overline{F}}/\bb{Z}_p})$ be the complex of non-homogeneous continuous cochains of $G_F$ with value in the (discrete) $\ca{O}_{\overline{F}}$-module $\Omega^1_{\ca{O}_{\overline{F}}/\bb{Z}_p}$ (\cite[\Luoma{2}.3.8]{abbes2016p}). Then, $\rr\Gamma(G_F,\rr\ho_{\bb{Z}}(p^{-r}\bb{Z}/\bb{Z},\Omega^1_{\ca{O}_{\overline{F}}/\bb{Z}_p}))$ is represented by the total complex of the double complex
	\begin{align}\label{eq:lem:comp-2}
		\xymatrix{
			C^0_{\mrm{cont}}(G_F,\Omega^1_{\ca{O}_{\overline{F}}/\bb{Z}_p})\ar[r]^-{\df}&C^1_{\mrm{cont}}(G_F,\Omega^1_{\ca{O}_{\overline{F}}/\bb{Z}_p})\ar[r]^-{\df}&\cdots\\
			C^0_{\mrm{cont}}(G_F,\Omega^1_{\ca{O}_{\overline{F}}/\bb{Z}_p})\ar[r]^-{\df}\ar[u]^-{\cdot p^r}&C^1_{\mrm{cont}}(G_F,\Omega^1_{\ca{O}_{\overline{F}}/\bb{Z}_p})\ar[r]^-{\df}\ar[u]^-{\cdot p^r}&\cdots.
		}
	\end{align}
	For any $t\in\ca{O}_F^\times$, we see that the image $\omega$ of $\df\log(t)\in \Omega^1_{\ca{O}_F/\bb{Z}_p}/p^r\Omega^1_{\ca{O}_F/\bb{Z}_p}=\mrm{Ext}^1_{\bb{Z}}(p^{-r}\bb{Z}/\bb{Z},\Omega^1_{\ca{O}_F/\bb{Z}_p})$ under the map induced by the left vertical arrow of \eqref{eq:para:notation-comp-2} is represented by the element $\df\log(t)$ of the upper-left corner $C^0_{\mrm{cont}}(G_F,\Omega^1_{\ca{O}_{\overline{F}}/\bb{Z}_p})$ of the diagram \eqref{eq:lem:comp-2}. Since $\df\log(t)=p^r\df\log(t_{p^r})$, we see that $\omega$ is also represented by the element $-\df(\df\log(t_{p^r})):G_F\to \Omega^1_{\ca{O}_{\overline{F}}/\bb{Z}_p}$ sending $\tau\in G_F$ to $-(\tau-1)(\df\log(t_{p^r}))=-\xi_t(\tau)\df\log(\zeta_{p^r})$. Then, the conclusion follows from unwinding the definition of \eqref{eq:para:notation-comp-2}.
\end{proof}

\begin{myprop}\label{prop:comp}
	There is a canonical $\ca{O}_{\widehat{F}}$-linear homomorphism
	\begin{align}\label{eq:cor:comp}
		\widehat{\Omega}^1_{\ca{O}_F/\bb{Z}_p}\longrightarrow H^1(G_F,\ca{O}_{\widehat{\overline{F}}}\{1\})
	\end{align}
	sending $\df\log(t)$ to $-\xi_t\otimes  (\df\log(\zeta_{p^n}))_{n\in\bb{N}}:G_F\to \ca{O}_{\widehat{\overline{F}}}\{1\}$ for any $t\in\ca{O}_F^\times$ {\rm(\ref{lem:comp})}.
\end{myprop}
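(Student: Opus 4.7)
The plan is to obtain the map as the inverse limit of the comparison maps from Lemma \ref{lem:comp}, together with a canonical lift to continuous cohomology via the explicit cocycle representatives.

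First I would check that the maps
\[
\phi_r:\Omega^1_{\ca{O}_F/\bb{Z}_p}/p^r\Omega^1_{\ca{O}_F/\bb{Z}_p}\longrightarrow H^1(G_F,(\ca{O}_{\overline{F}}/p^r\ca{O}_{\overline{F}})\{1\})
\]
from Lemma \ref{lem:comp} form a compatible inverse system as $r$ varies. This is immediate from the functoriality of the construction \eqref{eq:para:notation-comp-2} in $r$, using the fact that the identification $(\ca{O}_{\overline{F}}/p^r\ca{O}_{\overline{F}})\{1\}=T_p(\Omega^1_{\ca{O}_{\overline{F}}/\bb{Z}_p})/p^r=\Omega^1_{\ca{O}_{\overline{F}}/\bb{Z}_p}[p^r]$ from \eqref{eq:prop:tate-twist-0} intertwines the natural reduction maps with the inclusions of $p$-torsion subgroups. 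Alternatively (and this is essentially what one checks), the explicit cocycle description from Lemma \ref{lem:comp}, namely $\phi_r(\df\log t)=[-\xi_t\otimes\df\log(\zeta_{p^r})]$, is visibly compatible under the maps $\df\log(\zeta_{p^{r+1}})\mapsto \df\log(\zeta_{p^r})$ given by multiplication by $p$.

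Next I would pass to the inverse limit. On the source, $\lim_r\Omega^1_{\ca{O}_F/\bb{Z}_p}/p^r\Omega^1_{\ca{O}_F/\bb{Z}_p}=\widehat{\Omega}^1_{\ca{O}_F/\bb{Z}_p}$ by definition. On the target, the compatible system of cocycles $\{-\xi_t\otimes\df\log(\zeta_{p^r})\}_{r\in\bb{N}}$ assembles into the single continuous $1$-cocycle
\[
-\xi_t\otimes(\df\log(\zeta_{p^n}))_{n\in\bb{N}}:G_F\longrightarrow T_p(\Omega^1_{\ca{O}_{\overline{F}}/\bb{Z}_p})=\ca{O}_{\widehat{\overline{F}}}\{1\},
\]
whose continuity follows from that of $\xi_t:G_F\to\bb{Z}_p$, and whose reduction modulo $p^r$ recovers $\phi_r(\df\log t)$. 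Thus it determines a canonical class in the continuous cohomology $H^1(G_F,\ca{O}_{\widehat{\overline{F}}}\{1\})$ which lifts the element of $\lim_rH^1(G_F,(\ca{O}_{\overline{F}}/p^r\ca{O}_{\overline{F}})\{1\})$ produced by $(\phi_r)_r$.

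To promote this to an honest homomorphism on all of $\widehat{\Omega}^1_{\ca{O}_F/\bb{Z}_p}$ rather than just on generators $\df\log t$, I would construct it through the derived inverse limit: apply $\rr\lim_r$ to the morphisms underlying \eqref{eq:para:notation-comp-2} to get
\[
\rr\ho_{\bb{Z}}(\bb{Z}[1/p]/\bb{Z},\Omega^1_{\ca{O}_F/\bb{Z}_p})\longrightarrow \rr\lim_r\rr\Gamma(G_F,(\ca{O}_{\overline{F}}/p^r\ca{O}_{\overline{F}})\{1\})=\rr\Gamma(G_F,\ca{O}_{\widehat{\overline{F}}}\{1\}),
\]
where the last equality is the definition of continuous cohomology. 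Taking $H^1$ on both sides and restricting along the canonical map $\widehat{\Omega}^1_{\ca{O}_F/\bb{Z}_p}\to H^1(\rr\ho_{\bb{Z}}(\bb{Z}[1/p]/\bb{Z},\Omega^1_{\ca{O}_F/\bb{Z}_p}))$ (coming from the Milnor sequence identifying $\widehat{\Omega}^1$ with $\lim_r\Omega^1/p^r$) produces the desired $\ca{O}_{\widehat{F}}$-linear morphism, with the required explicit formula on elements of the form $\df\log(t)$.

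The only real subtlety — and likely the main point to verify carefully — is the interaction between continuous cohomology and $\rr\lim$: namely, that the cocycle $-\xi_t\otimes(\df\log\zeta_{p^n})_n$ really does represent the inverse limit class, which amounts to checking that the lift afforded by the explicit cocycle kills the $R^1\lim$ contribution in the Milnor exact sequence. This is transparent from the explicit formula but should be recorded explicitly.
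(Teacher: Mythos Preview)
Your overall strategy matches the paper's, but there is a genuine gap in the key step. When you write ``restricting along the canonical map $\widehat{\Omega}^1_{\ca{O}_F/\bb{Z}_p}\to H^1(\rr\ho_{\bb{Z}}(\bb{Z}[1/p]/\bb{Z},\Omega^1_{\ca{O}_F/\bb{Z}_p}))$'', no such map is given by the Milnor sequence: that sequence reads
\[
0\longrightarrow \rr^1\lim_r\,\Omega^1_{\ca{O}_F/\bb{Z}_p}[p^r]\longrightarrow H^1\bigl(\rr\lim_r\rr\ho(p^{-r}\bb{Z}/\bb{Z},\Omega^1_{\ca{O}_F/\bb{Z}_p})\bigr)\longrightarrow \widehat{\Omega}^1_{\ca{O}_F/\bb{Z}_p}\longrightarrow 0,
\]
so $\widehat{\Omega}^1$ is a \emph{quotient}, not a subobject, of the middle term. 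To invert this surjection you must show $\rr^1\lim_r\Omega^1_{\ca{O}_F/\bb{Z}_p}[p^r]=0$, and this is not automatic: the transition maps in $(\Omega^1[p^r])_r$ are multiplication by $p$, not the obvious projections. The paper handles this by a dichotomy: if $F$ is arithmetic then $\Omega^1[p^\infty]$ is bounded, so the system is essentially zero; if $F$ is non-arithmetic then \ref{prop:tate-twist} identifies $\Omega^1[p^r]$ with $T_p(\Omega^1)/p^r$ compatibly, exhibiting Mittag--Leffler. Either way $\rr^1\lim=0$ and the surjection becomes the isomorphism $\alpha$ whose inverse the paper then composes with $\beta$.

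Your final paragraph misplaces the subtlety: you worry about the $\rr^1\lim$ ambiguity on the \emph{target} side (lifting to continuous cohomology), but the explicit cocycle already handles that. The unaddressed $\rr^1\lim$ lives on the \emph{source} side, and without killing it your construction does not produce a well-defined map on all of $\widehat{\Omega}^1_{\ca{O}_F/\bb{Z}_p}$, only on the subspace generated by the $\df\log(t)$.
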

\begin{proof}
	The canonical morphism \eqref{eq:para:notation-comp-1} induces a canonical morphism 
	\begin{align}
		\rr\lim_{r\in\bb{N}}\rr\ho_{\bb{Z}}(p^{-r}\bb{Z}/\bb{Z},\Omega^1_{\ca{O}_F/\bb{Z}_p})\longrightarrow \rr\lim_{r\in\bb{N}}\rr\Gamma(G_F,(\ca{O}_{\overline{F}}/p^r\ca{O}_{\overline{F}})\{1\}).
	\end{align}
	Moreover, it induces a canonical morphism of exact sequences (\cite[\href{https://stacks.math.columbia.edu/tag/07KY}{07KY}]{stacks-project})
	\begin{align}
		\xymatrix{
			0\to\rr^1\lim \ho(p^{-r}\bb{Z}/\bb{Z},\Omega^1)\ar[r]\ar[d]& H^1(\rr\lim\rr\ho(p^{-r}\bb{Z}/\bb{Z},\Omega^1))\ar[d]^-{\beta}\ar[r]^-{\alpha}&\lim\mrm{Ext}^1(p^{-r}\bb{Z}/\bb{Z},\Omega^1)\ar[d]\to 0\\ 
			0\to\rr^1\lim H^0(G_F,(\ca{O}_{\overline{F}}/p^r)\{1\})\ar[r]& H^1(\rr\lim\rr\Gamma(G_F,(\ca{O}_{\overline{F}}/p^r)\{1\}))\ar[r]&\lim H^1(G_F,(\ca{O}_{\overline{F}}/p^r)\{1\})\to 0
		}
	\end{align}
	where $\Omega^1=\Omega^1_{\ca{O}_F/\bb{Z}_p}$. If $\Omega^1[p^\infty]$ is bounded (i.e., $F$ is arithmetic), then we see that the inverse system $(\ho(p^{-r}\bb{Z}/\bb{Z},\Omega^1))_{r\in\bb{N}}=(\Omega^1[p^r])_{r\in\bb{N}}$ is essentially zero. If $\Omega^1[p^\infty]$ is unbounded (i.e., $F$ is non-arithmetic), we have $(\ho(p^{-r}\bb{Z}/\bb{Z},\Omega^1))_{r\in\bb{N}}\cong (\ca{O}_F/p^r)_{r\in\bb{N}}$ by \ref{prop:tate-twist}. In either case, $(\ho(p^{-r}\bb{Z}/\bb{Z},\Omega^1))_{r\in\bb{N}}$ satisfies the Mittag-Leffler condition so that $\rr^1\lim \ho(p^{-r}\bb{Z}/\bb{Z},\Omega^1)=0$ (\cite[\href{https://stacks.math.columbia.edu/tag/07KW}{07KW}]{stacks-project}). On the other hand, as $((\ca{O}_{\overline{F}}/p^r)\{1\})_{r\in\bb{N}}$ satisfies the Mittag-Leffler condition with (topological) limit $\ca{O}_{\widehat{\overline{F}}}\{1\}$, $\rr\lim\rr\Gamma(G_F,(\ca{O}_{\overline{F}}/p^r)\{1\})$ is represented by $C^{\bullet}_{\mrm{cont}}(G_F,\ca{O}_{\widehat{\overline{F}}}\{1\})$ (see the proof of \cite[2.2]{jannsen1988cont}). Thus, we obtain a canonical morphism
	\begin{align}
		\beta\circ\alpha^{-1}:\widehat{\Omega}^1_{\ca{O}_F/\bb{Z}_p}=\lim\mrm{Ext}^1(p^{-r}\bb{Z}/\bb{Z},\Omega^1)\longrightarrow H^1(G_F,\ca{O}_{\widehat{\overline{F}}}\{1\}).
	\end{align}
	It follows from (the proof of) \ref{lem:comp} that this morphism sends $\df\log(t)$ to $-\xi_t\otimes  (\df\log(\zeta_{p^n}))_{n\in\bb{N}}$ for any $t\in\ca{O}_F^\times$ (cf. \cite[\Luoma{2}.3.9, \Luoma{2}.3.10]{abbes2016p}).
\end{proof}

\begin{myrem}\label{rem:prop:comp}
	For any $q\in\bb{N}$, there are canonical morphisms of $\ca{O}_F$-modules,
	\begin{align}\label{eq:rem:prop:comp-1}
		\Omega^q_{\ca{O}_F/\bb{Z}_p}=\wedge^q\Omega^1_{\ca{O}_F/\bb{Z}_p}\longrightarrow\wedge^q\widehat{\Omega}^1_{\ca{O}_F/\bb{Z}_p}\longrightarrow H^q(G_F,\ca{O}_{\widehat{\overline{F}}}\{q\}),
	\end{align}
	where the second is induced by the cup product of \eqref{eq:cor:comp}. Moreover, if $H^q(G_F,\ca{O}_{\widehat{\overline{F}}}\{q\})$ is $p$-adically complete, then by taking $p$-adic completion, the composition of \eqref{eq:rem:prop:comp-1} induces a canonical morphism of complete $\ca{O}_{\widehat{F}}$-modules
	\begin{align}\label{eq:rem:prop:comp-2}
		\widehat{\Omega}^q_{\ca{O}_F/\bb{Z}_p}\longrightarrow H^q(G_F,\ca{O}_{\widehat{\overline{F}}}\{q\}).
	\end{align}
\end{myrem}

\begin{mypara}\label{para:notation-fal-ext}
	Recall that the Faltings extension of $\ca{O}_{\overline{F}}$ over $\bb{Z}_p$ (\cite[3.11]{he2024perfd}) is a canonical exact sequence of $\widehat{\overline{F}}$-modules endowed with $G_F$-action,
	\begin{align}\label{eq:para:notation-fal-ext-1}
		0\longrightarrow \widehat{\overline{F}}(1)\stackrel{\iota}{\longrightarrow} \scr{E}_{\ca{O}_{\overline{F}}}\stackrel{\jmath}{\longrightarrow} \widehat{\overline{F}}\otimes_F\Omega^1_{F/\bb{Q}_p}\longrightarrow 0,
	\end{align}
	such that there is a canonical $G_F$-equivariant group homomorphism
	\begin{align}\label{eq:para:notation-fal-ext-2}
		V_p(\overline{F}^\times)=\lim_{x\mapsto x^p}\overline{F}^\times\longrightarrow \scr{E}_{\ca{O}_{\overline{F}}},\ (t_{p^n})_{n\in\bb{N}}\mapsto (\df\log(t_{p^n}))_{n\in\bb{N}},
	\end{align} 
	and that $\iota((\zeta_{p^n})_{n\in\bb{N}})=(\df\log(\zeta_{p^n}))_{n\in\bb{N}}$ and $\jmath((\df\log(t_{p^n}))_{n\in\bb{N}})=\df\log(t)$, where $t=t_1\in \overline{F}^\times$. We remind the readers that we don't put any topology on $\scr{E}_{\ca{O}_{\overline{F}}}$ unless the transcendental degree of $F$ over $\bb{Q}_p$ is finite (where we put the canonical topology as a finite free $\widehat{\overline{F}}$-module). Taking the (discrete) group cohomology of \eqref{eq:para:notation-fal-ext-1}, we obtain a coboundary map
	\begin{align}\label{eq:para:notation-fal-ext-3}
		\delta:\widehat{F}\otimes_F\Omega^1_{F/\bb{Q}_p}\longrightarrow H^1_{\mrm{disc}}(G_F,\widehat{\overline{F}}(1)),
	\end{align}
	where we used the fact that $(\widehat{\overline{F}})^{G_F}=\widehat{F}$ by Ax-Sen-Tate's theorem \cite[page 417]{ax1970ax}. As the $G_F$-action on $\widehat{\overline{F}}(1)$ is continuous, the continuous group cohomology $H^1(G_F,\widehat{\overline{F}}(1))$ coincides with the submodule of $H^1_{\mrm{disc}}(G_F,\widehat{\overline{F}}(1))$ consisting of classes represented by continuous $1$-cocycles.
\end{mypara}

\begin{mylem}\label{lem:fal-ext-delta}
	The coboundary map $\delta$ \eqref{eq:para:notation-fal-ext-3} is the composition of the canonical morphisms
	\begin{align}
		\xymatrix{
			\widehat{F}\otimes_F\Omega^1_{F/\bb{Q}_p}\ar[r]^-{\cdot(-1)}&	\widehat{\Omega}^1_{\ca{O}_F/\bb{Z}_p}[1/p]\ar[r]^-{\eqref{eq:cor:comp}}& H^1(G_F,\widehat{\overline{F}}(1))\ar@{^{(}->}[r]&H^1_{\mrm{disc}}(G_F,\widehat{\overline{F}}(1)).
		}
	\end{align}
\end{mylem}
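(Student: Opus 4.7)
The plan is to compute both maps on generators of the form $\df\log(t)$ for $t \in F^\times$ directly, verify the asserted identity on them, and then extend by $\widehat{F}$-linearity.

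Fix $t \in F^\times$ and a compatible system $(t_{p^n})_{n \in \bb{N}}$ of $p$-power roots of $t$ in $\overline{F}$. By \eqref{eq:para:notation-fal-ext-2}, the element $(\df\log(t_{p^n}))_{n \in \bb{N}} \in \scr{E}_{\ca{O}_{\overline{F}}}$ is a preimage of $\df\log(t)$ along $\jmath$, so $\delta(\df\log(t))$ is represented by the $1$-cocycle $\tau \mapsto (\tau - 1)(\df\log(t_{p^n}))_{n \in \bb{N}}$. Using $\tau(t_{p^n}) = \zeta_{p^n}^{\xi_t(\tau)} t_{p^n}$ and the logarithmic identity, a direct computation gives
$(\tau - 1)(\df\log(t_{p^n}))_{n \in \bb{N}} = \xi_t(\tau)\cdot(\df\log(\zeta_{p^n}))_{n \in \bb{N}} = \iota\bigl(\xi_t(\tau)\cdot(\zeta_{p^n})_{n \in \bb{N}}\bigr)$. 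Thus $\delta(\df\log(t)) \in H^1_{\mrm{disc}}(G_F, \widehat{\overline{F}}(1))$ is the class of the continuous $1$-cocycle $\tau \mapsto \xi_t(\tau) \otimes (\zeta_{p^n})_{n \in \bb{N}}$.

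On the other hand, by \ref{prop:comp} the morphism \eqref{eq:cor:comp} sends $\df\log(t)$ to the class of the cocycle $\tau \mapsto -\xi_t(\tau) \otimes (\df\log(\zeta_{p^n}))_{n \in \bb{N}}$ in $H^1(G_F, \ca{O}_{\widehat{\overline{F}}}\{1\})$. Inverting $p$, the isomorphism $\alpha_{\overline{F}/\bb{Q}_p}$ of \eqref{eq:cor:tate-cyclo-2} identifies $\widehat{\overline{F}}(1)$ with $\widehat{\overline{F}}\{1\}$ by sending $1 \otimes (\zeta_{p^n})_n$ to $(\df\log(\zeta_{p^n}))_n$. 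Under this identification the cocycle above corresponds to $\tau \mapsto -\xi_t(\tau) \otimes (\zeta_{p^n})_{n \in \bb{N}}$. Pre-composing with multiplication by $-1$ on $\widehat{F} \otimes_F \Omega^1_{F/\bb{Q}_p}$ negates the sign, yielding precisely the cocycle representing $\delta(\df\log(t))$ from the previous paragraph.

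Finally, since $\df t = t\cdot\df\log(t)$ for $t \in F^\times$, the set $\{\df\log(t)\}_{t \in F^\times}$ generates $\Omega^1_{F/\bb{Q}_p}$ over $F$, hence $\widehat{F}\otimes_F\Omega^1_{F/\bb{Q}_p}$ over $\widehat{F}$. Both maps in question are $\widehat{F}$-linear: the composition by construction, and the coboundary $\delta$ by naturality of connecting morphisms applied to the $\widehat{\overline{F}}$-linear Faltings extension \eqref{eq:para:notation-fal-ext-1} together with $(\widehat{\overline{F}})^{G_F} = \widehat{F}$ from Ax-Sen-Tate. Hence equality on generators extends to all of $\widehat{F}\otimes_F\Omega^1_{F/\bb{Q}_p}$. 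The only delicate step is the bookkeeping of the sign convention from \ref{lem:comp} and the identification $\widehat{\overline{F}}\{1\} \cong \widehat{\overline{F}}(1)$ induced by $\alpha_{\overline{F}/\bb{Q}_p}$; the rest of the argument is the direct cocycle computation in the Faltings extension.
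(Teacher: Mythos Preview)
Your proof is correct and follows essentially the same route as the paper's: both compute $\delta(\df\log(t))$ explicitly via the lift $(\df\log(t_{p^n}))_{n\in\bb{N}}$ in $\scr{E}_{\ca{O}_{\overline{F}}}$, obtain the continuous cocycle $\tau\mapsto \xi_t(\tau)\cdot(\zeta_{p^n})_{n\in\bb{N}}$, and then compare with the image of $\df\log(t)$ under \eqref{eq:cor:comp} using the identification $\widehat{\overline{F}}(1)\cong\widehat{\overline{F}}\{1\}$ from \eqref{eq:cor:tate-cyclo-2}. The only cosmetic difference is that the paper restricts to $t\in\ca{O}_F^\times$ (matching the statement of \ref{prop:comp}) and phrases the factorization through continuous cohomology as a separate first step, whereas you work with $t\in F^\times$ and fold continuity into the cocycle computation; both are fine since $\{\df\log(t)\}_{t\in\ca{O}_F^\times}$ already generates $\Omega^1_{F/\bb{Q}_p}$ over $F$.
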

\begin{proof}
	Firstly, we show that $\delta$ factors through $H^1(G_F,\widehat{\overline{F}}(1))$. As $\{\df\log(t)\}_{t\in\ca{O}_F^\times}$ generates the $F$-module $\Omega^1_{F/\bb{Q}_p}$, we only need to prove that $\delta(\df\log(t))$ is represented by a continuous $1$-cocycle. Let $(t_{p^n})_{n\in\bb{N}}$ be a compatible system of $p$-power roots of $t$ in $\overline{F}$ and let $\xi_t:G_F\to \bb{Z}_p$ be the associated continuous map (\ref{lem:comp}). By \eqref{eq:para:notation-fal-ext-1}, we see that $\delta(\df\log(t))$ is represented by the $1$-cocycle $G_F\to \widehat{\overline{F}}(1)$ sending $\tau\in G_F$ to $(\tau-1)(\df\log(t_{p^n}))_{n\in\bb{N}}=\xi_t(\tau)(\zeta_{p^n})_{n\in\bb{N}}$, which is clearly continuous. Thus, $\delta$ factors through $H^1(G_F,\widehat{\overline{F}}(1))$. Moreover, we see that $\delta$ is compatible with \eqref{eq:cor:comp} by \ref{prop:comp} and the isomorphism $(\zeta_p-1)^{-1}\ca{O}_{\widehat{\overline{F}}}(1)\iso \ca{O}_{\widehat{\overline{F}}}\{1\}$ \eqref{eq:cor:tate-cyclo-2}.
\end{proof}

\begin{mythm}\label{thm:coh}
	Assume that $F$ contains a compatible system of primitive $p$-power roots of unity $(\zeta_{p^n})_{n\in\bb{N}}$. Then, for any $q,r\in\bb{N}$, the canonical morphism induced by the cup product of \eqref{eq:lem:comp-1},
	\begin{align}\label{eq:thm:coh-1}
		\Omega^q_{\ca{O}_F/\bb{Z}_p}/p^r\Omega^q_{\ca{O}_F/\bb{Z}_p}\longrightarrow H^q(G_F,\ca{O}_{\overline{F}}/p^r\ca{O}_{\overline{F}}\{q\}),
	\end{align}
	has kernel killed by $(\zeta_p-1)^q\ak{m}_F$ and cokernel killed by $(\zeta_p-1)^qp^2\ak{m}_F$.
\end{mythm}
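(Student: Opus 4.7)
The plan is to apply Proposition~\ref{prop:perfd-tower-coh} to a perfectoid tower built from a $\pi$-basis of the completed differentials via~\ref{lem:differential}, and then to identify the resulting untwisted map with the comparison map of the theorem through Lemma~\ref{lem:comp}. The factor $(\zeta_p-1)^q$ will emerge naturally from the trivialization of the Breuil--Kisin--Fargues twist $\{q\}$ to the cyclotomic twist $(q)$.

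\textbf{Reduction.} Let $K\subseteq F$ be the Henselization of $\bb{Q}_p(\zeta_{p^\infty})$; it is pre-perfectoid by~\ref{prop:cyclotomic-diff}. Write $F=\colim_\alpha F_\alpha$ as a filtered colimit of Henselian valuation subextensions with $\mrm{trdeg}_K(F_\alpha)<\infty$. The source $\Omega^q_{\ca{O}_F/\bb{Z}_p}/p^r$ is the corresponding filtered colimit, and combined with $\ak{m}_F=\bigcup_\alpha\ak{m}_{F_\alpha}$ and the naturality of the comparison map, it suffices to establish the kernel and cokernel annihilator bounds uniformly at each $F_\alpha$.

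\textbf{Main computation.} Assume $\mrm{trdeg}_K(F)<\infty$, set $d=\mrm{rank}_{\ca{O}_{\widehat{F}}}(\widehat{\Omega}^1_{\ca{O}_F/\ca{O}_K})$, and for any $\pi\in\ak{m}_K$ with $v_p(\pi)<1/2^d$ choose via~\ref{lem:differential} elements $t_1,\ldots,t_d\in\ca{O}_F^\times$ whose differentials form a $\pi\ak{m}_K$-basis of $\widehat{\Omega}^1_{\ca{O}_F/\ca{O}_K}$. By~\ref{cor:perfd-tower-gal}, $\gal(F_{\underline{\infty}}/F)\cong\bb{Z}_p^d$ with dual characters $\xi_i$, and Proposition~\ref{prop:perfd-tower-coh} bounds the kernel and cokernel of the untwisted map $\bigwedge^q_{\ca{O}_F}\ho_{\bb{Z}_p}(\gal(F_{\underline{\infty}}/F),\ca{O}_F/p^r)\to H^q(G_F,\ca{O}_{\overline{F}}/p^r)$ by $\pi^{2^d}$ and $p^2\pi^{2^{d+3}}$ respectively. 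Since $\zeta_{p^\infty}\subseteq F$, $G_F$ acts trivially on $\bb{Z}_p(q)$, and by~\ref{cor:tate-cyclo} the isomorphism $T_p(\Omega^1_{\ca{O}_{\overline{F}}/\bb{Z}_p})\cong(\zeta_p-1)^{-1}\ca{O}_{\widehat{\overline{F}}}(1)$ trivializes $\{1\}$ to $(1)$ via multiplication by $(\zeta_p-1)$. Tracking this through Lemma~\ref{lem:comp} shows that, on the submodule $M_0:=\wedge^q(\bigoplus\ca{O}_F\,\df\log t_i)/p^r$, the restriction $h|_{M_0}$ of the comparison map equals $(-(\zeta_p-1))^q$ times the untwisted map of~\ref{prop:perfd-tower-coh}. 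Combined with the $\pi^{2q}\ak{m}_K$-isomorphism $M_0\hookrightarrow\Omega^q_{\ca{O}_F/\bb{Z}_p}/p^r$ (deduced from~\ref{prop:rhom} and the $\pi\ak{m}_K$-basis property via~\ref{lem:pi-isom}), repeated applications of~\ref{lem:pi-isom} yield that $h$ has kernel killed by $(\zeta_p-1)^q\pi^{2^d+4q}\ak{m}_K$ and cokernel killed by $(\zeta_p-1)^qp^2\pi^{2^{d+3}}$. Summing annihilators over all admissible $\pi$ (using that $\ak{m}_K$ has elements of arbitrarily small positive valuation and $\ak{m}_K\ca{O}_F=\ak{m}_F$) produces the stated bounds $(\zeta_p-1)^q\ak{m}_F$ on the kernel and $(\zeta_p-1)^qp^2\ak{m}_F$ on the cokernel.

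\textbf{Main obstacle.} The principal technical care is the bookkeeping of the twist: the canonical generator $(\df\log\zeta_{p^n})_n$ of $T_p(\Omega^1_{\ca{O}_{\overline{F}}/\bb{Z}_p})$ corresponds under~\ref{cor:tate-cyclo} to $(\zeta_p-1)$ times the cyclotomic generator of $\bb{Z}_p(1)$, and this is exactly what produces the $(\zeta_p-1)^q$ factor in the stated bounds. A secondary subtlety is the reduction to finite transcendental degree --- continuous Galois cohomology does not in general commute with filtered colimits of Galois groups --- but the annihilator formulation combined with naturality of the comparison map makes the colimit argument succeed.
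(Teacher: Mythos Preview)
Your proposal is correct and follows essentially the same approach as the paper: reduce to finite transcendental degree over $K=\bb{Q}_p(\zeta_{p^\infty})$, choose a $\pi\ak{m}_K$-basis via \ref{lem:differential}, use \ref{lem:comp} to identify the comparison map with the map of \ref{prop:perfd-tower-coh} up to the factor $(1-\zeta_p)^q$ coming from the trivialization $\ca{O}_{\widehat{K}}\{1\}\cong(\zeta_p-1)^{-1}\ca{O}_{\widehat{K}}(1)$, and then let $\pi$ vary. One small remark: your caveat that ``continuous Galois cohomology does not in general commute with filtered colimits'' is unnecessary here---the coefficients $\ca{O}_{\overline{F}}/p^r$ are discrete, so the colimit identity is simply \cite[\Luoma{1}.\textsection2, Proposition 8]{serre2002galois}, exactly as the paper invokes it.
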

\begin{proof}
	We put $K=\bigcup_{n\in\bb{N}}\bb{Q}_p(\zeta_{p^n})\subseteq F$ and $\ak{a}=(\zeta_p-1)^{-1}\ca{O}_{\widehat{K}}$. We identify $\ca{O}_{\widehat{K}}\{1\}$ with $\ak{a}(1)$ by \eqref{eq:cor:tate-cyclo-2}. Firstly, assume that the transcendental degree of $F$ over $K$ is finite. Let $d$ be the rank of $\widehat{\Omega}^1_{\ca{O}_F/\ca{O}_K}$ and let $\pi\in\ak{m}_K$ such that $\pi^{2^d}\notin p\ca{O}_K$. We take $t_1,\dots,t_d\in\ca{O}_F^\times$ such that $\df t_1,\dots,\df t_d$ form a $\pi\ak{m}_K$-basis of $\widehat{\Omega}^1_{\ca{O}_F/\ca{O}_K}$ by \ref{lem:differential}. We take again the notation in \ref{para:notation-perfd-tower} and \ref{para:notation-J}. Consider the homomorphism of $\ca{O}_F$-modules
	\begin{align}\label{eq:thm:coh-2}
		\varphi:\ho_{\bb{Z}_p}(\gal(F_{\underline{\infty}}/F),\ca{O}_F/p^r\ca{O}_F)\longrightarrow (\Omega^1_{\ca{O}_F/\bb{Z}_p}/p^r\Omega^1_{\ca{O}_F/\bb{Z}_p})\{-1\}
	\end{align}
	sending an element $f:\gal(F_{\underline{\infty}}/F)\to \ca{O}_F/p^r\ca{O}_F$ to $\varphi(f)=\sum_{i=1}^d f(\tau_i)\df\log(t_i)\otimes (\zeta_p-1)(\zeta_{p^n})_{n\in\bb{N}}^{-1}$, where $(\zeta_p-1)(\zeta_{p^n})_{n\in\bb{N}}^{-1}$ is the basis of $\ca{O}_{\widehat{K}}\{-1\}=\ak{a}^{-1}(-1)$. As $\gal(F_{\underline{\infty}}/F)$ is a free $\bb{Z}_p$-module with basis $\tau_1,\dots,\tau_d$, we see that $\varphi$ is injective with cokernel killed by $\pi\ak{m}_K$ by \eqref{eq:notation-perfd-tower-1} and the equality $\Omega^1_{\ca{O}_F/\bb{Z}_p}/p^r=\Omega^1_{\ca{O}_F/\ca{O}_K}/p^r$ \eqref{eq:prop:rhom-2}. Then, it follows from \ref{lem:comp} that the following diagram is commutative:
	\begin{align}\label{eq:thm:coh-3}
		\xymatrix{
			(\Omega^q_{\ca{O}_F/\bb{Z}_p}/p^r\Omega^q_{\ca{O}_F/\bb{Z}_p})\{-q\}\ar[r]^-{\eqref{eq:thm:coh-1}}& H^q(G_F,\ca{O}_{\overline{F}}/p^r\ca{O}_{\overline{F}})\\
			\wedge^q_{\ca{O}_F}\ho_{\bb{Z}_p}(\gal(F_{\underline{\infty}}/F),\ca{O}_F/p^r\ca{O}_F)\ar[r]^-{\eqref{eq:prop:perfd-tower-coh-1}}\ar[u]^-{\wedge^q\varphi}&H^q(G_F,\ca{O}_{\overline{F}}/p^r\ca{O}_{\overline{F}})\ar[u]_-{h^q}
		}
	\end{align}
	where $h^q$ is induced by the multiplication by $(1-\zeta_p)^q$ on $\ca{O}_{\overline{F}}/p^r\ca{O}_{\overline{F}}$. Note that the kernel and cokernel of $h^q$ are killed by $(\zeta_p-1)^q$ and that the kernel and cokernel of $\wedge^q\varphi$ are killed by $\pi^q\ak{m}_K$ (\ref{lem:pi-isom}). Since \eqref{eq:prop:perfd-tower-coh-1} has kernel killed by $\pi^{2^d}$ and cokernel killed by $p^2\pi^{2^{d+3}}$, by chasing the diagram we see that the kernel of \eqref{eq:thm:coh-1} is killed by $(\zeta_p-1)^q\pi^{q+2^d}\ak{m}_K$ and its cokernel is killed by $(\zeta_p-1)^qp^2\pi^{2^{d+3}}$. Varying $\pi$ in $\ak{m}_K\setminus p\ca{O}_K$, we conclude that the kernel of \eqref{eq:thm:coh-1} is killed by $(\zeta_p-1)^q\ak{m}_K$ and its cokernel is killed by $(\zeta_p-1)^qp^2\ak{m}_K$.
	
	In general, we write $F$ as a filtered union $F=\bigcup_{\lambda\in\Lambda}F_\lambda$ of its subfields finitely generated over $K$.  After replacing $F_\lambda$ by the fraction field of the Henselization of the valuation ring $\ca{O}_{F_\lambda}=F_\lambda\cap \ca{O}_F$, we may assume that each $F_\lambda$ is a Henselian valuation field of finite transcendental degree over $K$ (\cite[6.1.12.(\luoma{6})]{gabber2003almost}). Let $\overline{F}_\lambda$ be the algebraic closure of $F_\lambda$ in $\overline{F}$ and we put $G_{F_\lambda}=\gal(\overline{F}_\lambda/F_\lambda)$. Then, the canonical morphism \eqref{eq:thm:coh-1} identifies with (\cite[\Luoma{1}.\textsection2, Proposition 8]{serre2002galois})
	\begin{align}\label{eq:thm:coh-4}
		\colim_{\lambda\in\Lambda}\Omega^q_{\ca{O}_{F_\lambda}/\bb{Z}_p}/p^r\Omega^q_{\ca{O}_{F_\lambda}/\bb{Z}_p}\longrightarrow \colim_{\lambda\in\Lambda}H^q(G_{F_\lambda},(\ca{O}_{\overline{F}_\lambda}/p^r\ca{O}_{\overline{F}_\lambda})\{q\}).
	\end{align}
	Thus, the conclusion follows from applying the previous special case to each $F_\lambda$.
\end{proof}

\begin{mylem}\label{lem:complete}
	Let $A$ be a ring, $\pi$ an element of $A$, $0\to N\to M\to Q\to 0$ an exact sequence of $A$-modules such that $Q[\pi^\infty]=Q[\pi^c]$ for some $c\in\bb{N}$. Then, the sequence of the $\pi$-adic completions 
	\begin{align}
		0\longrightarrow \widehat{N}\longrightarrow \widehat{M}\longrightarrow \widehat{Q}\longrightarrow 0
	\end{align}
	is exact. In particular, $M$ is $\pi$-adically complete if $N$ and $Q$ are so.
\end{mylem}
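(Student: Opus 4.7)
The plan is to extract a uniform Artin-Rees-type comparison between the filtrations $\{\pi^nN\}_{n\in\bb{N}}$ and $\{N\cap \pi^nM\}_{n\in\bb{N}}$ on $N$ out of the hypothesis $Q[\pi^\infty]=Q[\pi^c]$, and then apply $\lim$ to the truncated exact sequences.

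First, I would establish the key comparison: for every $n\geq c$,
\begin{align*}
\pi^nN\ \subseteq\ N\cap \pi^nM\ \subseteq\ \pi^{n-c}N.
\end{align*}
The left inclusion is trivial. For the right inclusion, given $x=\pi^n m\in N$ with $m\in M$, the image $\bar m\in Q$ satisfies $\pi^n\bar m=0$, hence $\bar m\in Q[\pi^n]=Q[\pi^c]$ by hypothesis, so $\pi^c m\in N$ and $x=\pi^{n-c}(\pi^cm)\in \pi^{n-c}N$. Consequently the natural surjections $N/\pi^nN\surj N/(N\cap\pi^nM)$ are compatible with cofinal inclusions of filtrations, and pass to an isomorphism $\widehat N\iso \lim_n N/(N\cap\pi^nM)$ in the limit.

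Next, for each $n$ the snake lemma (or a direct diagram chase) applied to $0\to N\to M\to Q\to 0$ yields a short exact sequence
\begin{align*}
0\longrightarrow N/(N\cap\pi^nM)\longrightarrow M/\pi^nM\longrightarrow Q/\pi^nQ\longrightarrow 0.
\end{align*}
The inverse system $(N/(N\cap\pi^nM))_{n\in\bb{N}}$ has surjective transition maps, so it is Mittag-Leffler and its $\rr^1\lim$ vanishes (\cite[\href{https://stacks.math.columbia.edu/tag/07KW}{07KW}]{stacks-project}). Taking $\lim_n$ of the short exact sequences and inserting the isomorphism of the previous step gives the desired exact sequence $0\to \widehat N\to \widehat M\to \widehat Q\to 0$.

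For the ``in particular'' assertion, if $N$ and $Q$ are already $\pi$-adically complete, then the natural maps $N\to \widehat N$ and $Q\to \widehat Q$ are isomorphisms, so the five-lemma applied to the morphism from $0\to N\to M\to Q\to 0$ to $0\to \widehat N\to \widehat M\to \widehat Q\to 0$ forces $M\to \widehat M$ to be an isomorphism. The only nontrivial step is the Artin-Rees inclusion, and the bound $Q[\pi^\infty]=Q[\pi^c]$ makes it essentially immediate; the remaining work is a routine compatibility check.
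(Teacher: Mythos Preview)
Your proof is correct and follows essentially the same route as the paper: establish the Artin--Rees inclusion $N\cap\pi^nM\subseteq\pi^{n-c}N$ from $Q[\pi^n]=Q[\pi^c]$, deduce $\widehat N=\lim_n N/(N\cap\pi^nM)$, and then pass the short exact sequences $0\to N/(N\cap\pi^nM)\to M/\pi^nM\to Q/\pi^nQ\to 0$ through the limit using the Mittag--Leffler condition. The only addition is your explicit five-lemma remark for the ``in particular'', which the paper leaves implicit.
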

\begin{proof}
	We claim that $N\cap \pi^n M\subseteq \pi^{n-c}N$ for any $n\in\bb{N}_{\geq c}$. Indeed, for any $x\in N\cap \pi^n M$, we write $x=\pi^ny$ for some $y\in M$. Thus, the image of $y$ in $Q$ lies in $Q[\pi^n]=Q[\pi^c]$. This implies that $\pi^cy\in N$ and thus $x\in \pi^{n-c}N$. Combining with $\pi^nN\subseteq N\cap \pi^n M$, the claim implies that $\widehat{N}=\lim_{n\in\bb{N}}N/\pi^nN=\lim_{n\in\bb{N}}N/N\cap \pi^n M$. As the inverse system $(N/N\cap \pi^n M)_{n\in\bb{N}}$ satisfies the Mittag-Leffler condition, the conclusion follows from taking the inverse limit of the exact sequences $0\to N/N\cap \pi^n M\to M/\pi^n M\to Q/\pi^nQ\to 0$.
\end{proof}

\begin{mythm}\label{thm:coh-int}
	Assume that $F$ contains a compatible system of primitive $p$-power roots of unity $(\zeta_{p^n})_{n\in\bb{N}}$. Then, for any $q\in\bb{N}$, the $q$-th Galois cohomology group $H^q(G_F,\ca{O}_{\widehat{\overline{F}}}\{q\})$ is $p$-adically complete. Moreover, the canonical morphism \eqref{eq:rem:prop:comp-2},
	\begin{align}\label{eq:thm:coh-1-int-1}
		\widehat{\Omega}^q_{\ca{O}_F/\bb{Z}_p}\longrightarrow H^q(G_F,\ca{O}_{\widehat{\overline{F}}}\{q\}),
	\end{align}
	has kernel killed by $(\zeta_p-1)^q\ak{m}_F$ and cokernel killed by $(\zeta_p-1)^{4q}p^4\ak{m}_F$. In particular, it induces a canonical isomorphism by inverting $p$,
	\begin{align}\label{eq:thm:coh-1-int-2}
		\widehat{\Omega}^q_{\ca{O}_F/\bb{Z}_p}[1/p]\iso H^q(G_F,\widehat{\overline{F}}(q)).
	\end{align}
\end{mythm}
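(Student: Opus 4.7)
The strategy is to pass from the mod $p^r$ version (\ref{thm:coh}) to the integral version by taking inverse limits, using the Milnor $\rr^1\plim$--$\plim$ exact sequence in continuous group cohomology together with the algebraic lemma \ref{lem:pi-isom}.

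First, I would interpret $\ca{O}_{\widehat{\overline{F}}}\{q\}$ as the topological limit $\plim_r (\ca{O}_{\overline{F}}/p^r\ca{O}_{\overline{F}})\{q\}$ and invoke the standard continuous-cohomology spectral sequence (as already used in the proof of \ref{prop:comp}) to obtain, for each $q\in\bb{N}$, a short exact sequence
\begin{align*}
0 \longrightarrow \rr^1\plim_r H^{q-1}(G_F,(\ca{O}_{\overline{F}}/p^r)\{q\}) \longrightarrow H^q(G_F,\ca{O}_{\widehat{\overline{F}}}\{q\}) \longrightarrow \plim_r H^q(G_F,(\ca{O}_{\overline{F}}/p^r)\{q\}) \longrightarrow 0.
\end{align*}
Applying \ref{thm:coh} at indices $i=q-1$ and $i=q$, the natural maps $\Omega^{i}_{\ca{O}_F/\bb{Z}_p}/p^r\to H^{i}(G_F,(\ca{O}_{\overline{F}}/p^r)\{q\})$ have kernel and cokernel uniformly annihilated by fixed elements of the ideal $(\zeta_p-1)^{i}p^2\ak{m}_F$. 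By \ref{lem:pi-isom}, each such map admits an ``inverse up to multiplication by a fixed element,'' and these relations survive both $\plim$ and $\rr^1\plim$. Since the inverse systems $(\Omega^{i}_{\ca{O}_F/\bb{Z}_p}/p^r)_r$ have surjective transition maps, their $\rr^1\plim$ vanishes and their $\plim$ is $\widehat{\Omega}^{i}_{\ca{O}_F/\bb{Z}_p}$.

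From this, I would deduce both the $p$-adic completeness of $H^q(G_F,\ca{O}_{\widehat{\overline{F}}}\{q\})$ and the bounds on the canonical map. For completeness: $\rr^1\plim_r H^{q-1}$ is annihilated by a fixed element of $(\zeta_p-1)^{2q-2}p^2\ak{m}_F$, hence by a power of $p$, so it is $p$-adically complete, while $\plim_r H^q(G_F,(\ca{O}_{\overline{F}}/p^r)\{q\})$ is a sequential limit of modules killed by $p^r$, hence also $p$-adically complete; as an extension of $p$-adically complete modules by a module killed by a power of $p$, $H^q(G_F,\ca{O}_{\widehat{\overline{F}}}\{q\})$ is $p$-adically complete. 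For the kernel bound: if $\omega\in\widehat{\Omega}^q_{\ca{O}_F/\bb{Z}_p}$ maps to zero in $H^q(G_F,\ca{O}_{\widehat{\overline{F}}}\{q\})$, then for every $r$ its image in $\Omega^q_{\ca{O}_F/\bb{Z}_p}/p^r$ lies in the kernel of \eqref{eq:thm:coh-1} and is thus killed by every $x\in(\zeta_p-1)^q\ak{m}_F$; the $p$-adic separatedness of $\widehat{\Omega}^q_{\ca{O}_F/\bb{Z}_p}$ then forces $x\omega=0$. For the cokernel bound: combine the cokernel of $\widehat{\Omega}^q_{\ca{O}_F/\bb{Z}_p}\to\plim_r H^q(G_F,(\ca{O}_{\overline{F}}/p^r)\{q\})$ (annihilated by $(\zeta_p-1)^{2q}p^2\ak{m}_F$ via \ref{lem:pi-isom}) with the kernel $\rr^1\plim_r H^{q-1}$ of the surjection $H^q(G_F,\ca{O}_{\widehat{\overline{F}}}\{q\})\twoheadrightarrow \plim_r H^q(G_F,(\ca{O}_{\overline{F}}/p^r)\{q\})$ via the standard exact sequence relating their cokernels, yielding annihilation by the product, which is contained in $(\zeta_p-1)^{4q}p^4\ak{m}_F$.

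Finally, the isomorphism \eqref{eq:thm:coh-1-int-2} is immediate: since $(\zeta_p-1)^{p-1}$ and $p$ generate the same ideal in $\ca{O}_{\bb{Q}_p(\zeta_p)}$, both $(\zeta_p-1)^q\ak{m}_F$ and $(\zeta_p-1)^{4q}p^4\ak{m}_F$ become the unit ideal of $\widehat{F}$ after inverting $p$, so the kernel and cokernel vanish. The main obstacle is the careful bookkeeping of kernel/cokernel bounds through both the $\plim$ and the $\rr^1\plim$ operations, and verifying that the limit of the compatible mod-$p^r$ maps from \ref{lem:comp} agrees with the map of \ref{rem:prop:comp}; fortunately both constructions arise from the same cup product formula, so this compatibility is built in.
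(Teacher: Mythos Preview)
Your overall strategy matches the paper's, and your bounds on kernel and cokernel go through as you sketch. There is, however, a genuine gap in the completeness step: you assert that an extension $0\to N\to M\to Q\to 0$ with $N$ annihilated by a power of $p$ and $Q$ classically $p$-adically complete is again $p$-adically complete. This is false in general. For a counterexample, take $Q=\prod_{n\geq 1}\bb{Z}/p^n$ (complete, but with unbounded $p$-power torsion) and choose any $\bb{F}_p$-linear functional $f\colon\prod_n\bb{F}_p\to\bb{F}_p$ extending the sum map on $\bigoplus_n\bb{F}_p$; this $f$ defines a class in $\mrm{Ext}^1_{\bb{Z}_p}(Q,\bb{F}_p)$ whose associated extension $M$ is derived complete but not separated---the image of $1\in\bb{F}_p=N$ lies in $\bigcap_k p^kM$ because for every $k$ one can realize it as $p^k$ times an element of $M$ using the coordinates of index $\geq k$.

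The paper closes this gap by supplying one more ingredient before invoking \ref{lem:complete}: it shows that $\plim_r H^q(G_F,(\ca{O}_{\overline{F}}/p^r)\{q\})$ has \emph{bounded} $p$-power torsion. This follows from your own comparison map $\widehat{\Omega}^q_{\ca{O}_F/\bb{Z}_p}\to\plim_r H^q$ (whose kernel and cokernel you already control by \ref{thm:coh} and \ref{lem:pi-isom}) together with \ref{prop:non-ari-flat}, which gives that $\widehat{\Omega}^q_{\ca{O}_F/\bb{Z}_p}$ itself has bounded $p$-torsion since $F$, containing all $\zeta_{p^n}$, is non-arithmetic. With bounded torsion on the quotient in hand, \ref{lem:complete} applies and yields classical $p$-adic completeness of $H^q(G_F,\ca{O}_{\widehat{\overline{F}}}\{q\})$; the rest of your argument then goes through unchanged.
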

\begin{proof}
	There is a canonical exact sequence (\cite[\Luoma{2}.3.10]{abbes2016p})
	\begin{align}\label{eq:thm:coh-1-int-3}
		0\to  \rr^1\lim_{r\in\bb{N}}H^{q-1}(G_F,\ca{O}_{\overline{F}}/p^r\ca{O}_{\overline{F}})\to H^q(G_F,\ca{O}_{\widehat{\overline{F}}})\to \lim_{r\in\bb{N}} H^q(G_F,\ca{O}_{\overline{F}}/p^r\ca{O}_{\overline{F}})\to 0.
	\end{align}
	As the inverse system $((\Omega^q_{\ca{O}_F/\bb{Z}_p}/p^r\Omega^q_{\ca{O}_F/\bb{Z}_p})\{-q\})_{r\in\bb{N}}$ satisfies the Mittag-Leffler condition, we see that $\rr^1\lim(\Omega^q_{\ca{O}_F/\bb{Z}_p}/p^r\Omega^q_{\ca{O}_F/\bb{Z}_p})\{-q\}=0$. Thus, $\rr^1\lim H^q(G_F,\ca{O}_{\overline{F}}/p^r\ca{O}_{\overline{F}})$ is killed by $(\zeta_p-1)^{2q}p^2\ak{m}_F$ by \ref{thm:coh} and \ref{lem:pi-isom}. In particular, it is $p$-adically complete.
	
	On the other hand, we claim that $\lim_{r\in\bb{N}}H^q(G_F,\ca{O}_{\overline{F}}/p^r\ca{O}_{\overline{F}})$ is $p$-adically complete with bounded $p$-power torsion. Indeed, it is $p$-adically complete as each $H^q(G_F,\ca{O}_{\overline{F}}/p^r\ca{O}_{\overline{F}})$ is killed by $p^r$ (\cite[\href{https://stacks.math.columbia.edu/tag/0G1Q}{0G1Q}]{stacks-project}). Notice that the canonical morphism induced by \eqref{eq:thm:coh-1},
	\begin{align}\label{eq:thm:coh-1-int-4}
		\widehat{\Omega}^q_{\ca{O}_F/\bb{Z}_p}\{-q\}=\lim_{r\in\bb{N}}(\Omega^q_{\ca{O}_F/\bb{Z}_p}/p^r\Omega^q_{\ca{O}_F/\bb{Z}_p})\{-q\} \longrightarrow   \lim_{r\in\bb{N}} H^q(G_F,\ca{O}_{\overline{F}}/p^r\ca{O}_{\overline{F}})
	\end{align}
	has kernel killed by $(\zeta_p-1)^q\ak{m}_F$ by \ref{thm:coh} and cokernel killed by $(\zeta_p-1)^{2q}p^2\ak{m}_F$ by \ref{thm:coh} and \ref{lem:pi-isom}. Therefore, the latter has bounded $p$-power torsion as $\widehat{\Omega}^q_{\ca{O}_F/\bb{Z}_p}$ does so (\ref{prop:non-ari-flat}). 
	
	Therefore, $H^q(G_F,\ca{O}_{\widehat{\overline{F}}})$ is also $p$-adically complete by \ref{lem:complete}. Then, \eqref{eq:thm:coh-1-int-4} coincides with the composition of the canonical morphisms
	\begin{align}\label{eq:thm:coh-1-int-5}
		\widehat{\Omega}^q_{\ca{O}_F/\bb{Z}_p}\{-q\}\longrightarrow H^q(G_F,\ca{O}_{\widehat{\overline{F}}}) \longrightarrow   \lim_{r\in\bb{N}} H^q(G_F,\ca{O}_{\overline{F}}/p^r\ca{O}_{\overline{F}})
	\end{align}
	where the first arrow is \eqref{eq:rem:prop:comp-2} and the second arrow is defined in \eqref{eq:thm:coh-1-int-3}. The conclusion follows from diagram chasing.
\end{proof}

\begin{myrem}\label{rem:htss}
	Let $K$ be a complete algebraically closed valuation field of height $1$ extension of $\bb{Q}_p$, $Y$ a proper $K$-scheme, $X$ the cofiltered limit of all the flat proper $\ca{O}_K$-models of $Y$ as a locally ringed space (\cite[9.10]{he2024perfd}). Consider the associated \'etale ringed site and Faltings ringed site defined by taking cofiltered limits, $(X_\et,\ca{O}_X)$ and $(\fal^\et_{Y\to X},\falb)$ (\cite[9.6]{he2024perfd}). There is a canonical morphism of ringed sites $\sigma:(\fal^\et_{Y\to X},\falb)\to (X_\et,\ca{O}_X)$. For any $j\in\bb{N}$, let $\Omega^j_{X/\bb{Z}_p}$ be the associated module of $j$-th differentials defined by taking filtered colimits on $X_\et$ (\cite[10.19]{he2024purity}). Then, as in \ref{para:notation-comp}, one can define a canonical morphism for any $r\in\bb{N}$,
	\begin{align}\label{eq:rem:htss-1}
		(\Omega^j_{X/\bb{Z}_p}/p^r\Omega^j_{X/\bb{Z}_p})\{-j\}\longrightarrow \rr^j\sigma_*(\falb/p^r\falb). 
	\end{align}
	Since the $p$-adic completion of each geometric stalk of $\ca{O}_X$ is a valuation ring extension of $\ca{O}_K$ (cf. \cite[10.3]{he2024purity}), reducing to the height-1 case as in \cite[5.16]{he2024falmain}, our computation of Galois cohomology of valuation rings \ref{thm:coh} implies that \eqref{eq:rem:htss-1} has kernel killed by $(\zeta_p-1)^j\ak{m}_K$ and cokernel killed by $(\zeta_p-1)^jp^2\ak{m}_K$. Consider the following commutative diagram of ringed sites
	\begin{align}
		\xymatrix{
			(\fal^{\et,\bb{N}}_{Y\to X},(\falb/p^r\falb)_{r\in\bb{N}})\ar[r]^-{\rr\lim}\ar[d]_-{\rr\sigma_*}&(\fal^\et_{Y\to X},\falb)\ar[d]^-{\rr\sigma_*}\\
			(X_\et^{\bb{N}},(\ca{O}_X/p^r\ca{O}_X)_{r\in\bb{N}})\ar[r]^-{\rr\lim}& (X_\et,\ca{O}_X)
		}
	\end{align}
	where $(-)^{\bb{N}}$ is taking the associated filbred site over $\bb{N}$ (cf. \cite[\Luoma{3}.7]{abbes2016p}). Then, there is a canonical convergent spectral sequence 
	\begin{align}
		H^i(X_\et^{\bb{N}},(\rr^j\sigma_*\falb/p^r\falb)_{r\in\bb{N}})\Rightarrow H^{i+j}(\fal^{\et,\bb{N}}_{Y\to X},(\falb/p^r\falb)_{r\in\bb{N}}).
	\end{align}
	Notice that the right hand side is almost isomorphic to $H^{i+j}_\et(Y,\bb{Z}_p)\otimes_{\bb{Z}_p}\ca{O}_K$ by Faltings' main comparison theorem (cf. \cite[6.4.4]{abbes2020suite}, \cite[5.17]{he2024falmain} and \cite[9.7]{he2024perfd}). Therefore, after inverting $p$, we obtain a canonical convergent spectral sequence as in \cite[6.4.6]{abbes2020suite},
	\begin{align}
		H^i(X_\et,\rr\lim_{r\in\bb{N}}(\Omega^j_{X/\bb{Z}_p}/p^r\Omega^j_{X/\bb{Z}_p})\{-j\})[1/p]\Rightarrow H^{i+j}_\et(Y,\bb{Q}_p)\otimes_{\bb{Q}_p}K,
	\end{align}
	which is an analog of Hodge-Tate spectral sequence for (non-smooth) proper schemes. Similarly, one can also make the same construction for (non-smooth) proper rigid analytic varieties over $K$. This should coincide with Guo's construction of the Hodge-Tate spectral sequence for (non-smooth) proper rigid analytic varieties \cite[1.1.3]{guo2019hodgetate}.
\end{myrem}

\section{Final Computation of Galois Cohomology}\label{sec:final}
We compute the Galois cohomology for geometric and arithmetic valuation fields by reducing to the case over the cyclotomic field (see \ref{thm:geo} and \ref{thm:ari}). As an application, we give a generalization of the perfectoidness criterion via Faltings extension \cite[4.23]{he2024perfd} (see \ref{prop:fal-ext-delta}).

\begin{mypara}\label{para:final-comp}
	In this section, we fix a Henselian valuation field $F$ of height $1$ extension of $\bb{Q}_p$ and an algebraic closure $\overline{F}$ of $F$. Let $(\zeta_{p^n})_{n\in\bb{N}}$ be a compatible system of primitive $p$-power roots of unity contained in $\overline{F}$. For any $n\in\bb{N}$, we put
	\begin{align}\label{eq:para:final-comp-1}
		F_n=F(\zeta_{p^n}),\quad F_\infty&=\bigcup_{n\in\bb{N}}F_n,
	\end{align}
	which do not depend on the choice of $(\zeta_{p^n})_{n\in\bb{N}}$. Note that $F_0=F$. We put $G_{F_n}=\gal(\overline{F}/F_n)$ the absolute Galois group of $F_n$ for any $n\in\bb{N}\cup\{\infty\}$. Let $v_p:\overline{F}\to \bb{R}\cup\{\infty\}$ be a valuation map with $v_p(p)=1$ and $v_p(0)=\infty$, and let
	\begin{align}\label{eq:para:final-comp-2}
		|\cdot|=p^{-v_p(\cdot)}:\overline{F}\longrightarrow \bb{R}_{\geq 0}
	\end{align}
	be the associated ultrametric absolute value.
\end{mypara}

\subsection{The geometric case}

\begin{mylem}\label{lem:non-ari-coh-twist}
	Assume that $F$ is not arithmetic. Let $M$ be a flat $\ca{O}_F/p^r\ca{O}_F$-module for some $r\in\bb{N}$. Then, for any $n\in\bb{Z}$ and $q\in\bb{N}$, there is a canonical isomorphism
	\begin{align}
		H^q(\gal(F_\infty/F),\ca{O}_{F_\infty}/p^r\ca{O}_{F_\infty})\otimes_{\ca{O}_F}M\{n\}\iso H^q(\gal(F_\infty/F),(\ca{O}_{F_\infty}\otimes_{\ca{O}_F}M)\{n\}),
	\end{align}
	where $M$ is endowed with the trivial action of $\gal(F_\infty/F)$ and $(\ca{O}_{F_\infty}\otimes_{\ca{O}_F}M)\{n\}=(\ca{O}_{F_\infty}\otimes_{\ca{O}_F}M)\otimes_{\ca{O}_{\widehat{F_\infty}}}T_p(\Omega^1_{\ca{O}_{F_\infty}/\bb{Z}_p})^{\otimes n}$ \eqref{eq:defn:int-tate-twist-1} is endowed with the diagonal action of $\gal(F_\infty/F)$.
\end{mylem}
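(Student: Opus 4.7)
The plan is to split the proof into two independent steps: first, to trivialize the Breuil-Kisin-Fargues twist on the $F_\infty$-side, reducing it to a tensor product with a Galois-trivial module coming from $F$; and second, to apply a projection formula for continuous group cohomology. The crucial input for the first step is Proposition~\ref{prop:tate-twist}: since $F$ is non-arithmetic, $V \coloneqq T_p(\Omega^1_{\ca{O}_F/\bb{Z}_p})$ is a free $\ca{O}_{\widehat{F}}$-module of rank $1$, and the canonical morphism
\begin{align*}
\ca{O}_{\widehat{F_\infty}}\otimes_{\ca{O}_{\widehat{F}}}V\iso T_p(\Omega^1_{\ca{O}_{F_\infty}/\bb{Z}_p})
\end{align*}
is an isomorphism of $\ca{O}_{\widehat{F_\infty}}$-modules. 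This isomorphism is automatically $\gal(F_\infty/F)$-equivariant: the target carries the natural Galois action, whereas on the source $\ca{O}_{\widehat{F_\infty}}$ carries the natural action and $V$ is $\gal(F_\infty/F)$-fixed (as a submodule built from $\Omega^1_{\ca{O}_F/\bb{Z}_p}$, on which $\gal(F_\infty/F)$ acts trivially).

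Next I would reduce modulo $p^r$ and chase the identifications. Since everything is annihilated by $p^r$, tensoring against $T_p(\Omega^1_{\ca{O}_{F_\infty}/\bb{Z}_p})^{\otimes n}$ is the same as tensoring against $T_p(\Omega^1_{\ca{O}_{F_\infty}/\bb{Z}_p})^{\otimes n}/p^r \cong \ca{O}_{F_\infty}/p^r\otimes_{\ca{O}_F/p^r}V^{\otimes n}/p^r$. Unraveling the definition of $\{n\}$, this yields a natural $\gal(F_\infty/F)$-equivariant isomorphism
\begin{align*}
(\ca{O}_{F_\infty}\otimes_{\ca{O}_F}M)\{n\}\;\cong\;\ca{O}_{F_\infty}/p^r\otimes_{\ca{O}_F/p^r}M\{n\},
\end{align*}
where the factor $M\{n\}=M\otimes_{\ca{O}_F/p^r}V^{\otimes n}/p^r$ carries the trivial Galois action and is flat over $\ca{O}_F/p^r$ (the tensor of the flat $M$ with the free rank-$1$ module $V^{\otimes n}/p^r$). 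It now suffices to produce a natural isomorphism
\begin{align*}
H^q(\gal(F_\infty/F),\ca{O}_{F_\infty}/p^r)\otimes_{\ca{O}_F/p^r}P\;\iso\;H^q(\gal(F_\infty/F),\ca{O}_{F_\infty}/p^r\otimes_{\ca{O}_F/p^r}P)
\end{align*}
for any flat $\ca{O}_F/p^r$-module $P$ with trivial $\gal(F_\infty/F)$-action. I would prove this at the level of continuous cochains: writing $C^\bullet_{\mrm{cont}}(\gal(F_\infty/F),-)$ as a filtered colimit of $\mrm{Map}(\gal(F_\infty/F)^\bullet/H^\bullet,(-)^H)$ over open normal subgroups $H$, finiteness of each $\gal(F_\infty/F)^n/H^n$ together with the trivial action on $P$ yields a natural identification of complexes $C^\bullet_{\mrm{cont}}(\gal(F_\infty/F),\ca{O}_{F_\infty}/p^r)\otimes_{\ca{O}_F/p^r}P \iso C^\bullet_{\mrm{cont}}(\gal(F_\infty/F),\ca{O}_{F_\infty}/p^r\otimes_{\ca{O}_F/p^r}P)$, and flatness of $P$ lets cohomology commute with the tensor product.

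The main subtlety is the Galois-equivariance of the identification in the first step. Although the generator $(\df\log\zeta_{p^n})_{n\in\bb{N}}$ of $T_p(\Omega^1_{\ca{O}_{F_\infty}/\bb{Z}_p})$ transforms by the cyclotomic character under $\gal(F_\infty/F)$, this twisting is absorbed entirely into the coefficient $\ca{O}_{\widehat{F_\infty}}$ in the decomposition $\ca{O}_{\widehat{F_\infty}}\otimes_{\ca{O}_{\widehat{F}}}V$: the image of a generator $e$ of $V$ in $T_p(\Omega^1_{\ca{O}_{F_\infty}/\bb{Z}_p})$ can be written as $a\cdot(\df\log\zeta_{p^n})_{n\in\bb{N}}$ where $a\in\ca{O}_{\widehat{F_\infty}}$ transforms by the inverse cyclotomic character, making the two Galois structures match. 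The projection formula itself is standard but worth spelling out since $\ca{O}_F/p^r$ is not assumed Noetherian and $M$ is only assumed flat; the cochain-level argument bypasses this by never leaving the category of $\ca{O}_F/p^r$-modules.
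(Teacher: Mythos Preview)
Your proposal is correct and follows essentially the same route as the paper's proof: both use Proposition~\ref{prop:tate-twist} to identify $(\ca{O}_{F_\infty}\otimes_{\ca{O}_F}M)\{n\}$ with $\ca{O}_{F_\infty}\otimes_{\ca{O}_F}M\{n\}$ equivariantly, then invoke a projection formula for continuous cohomology with flat trivial coefficients. The only difference is that the paper cites \cite[\Luoma{2}.3.15]{abbes2016p} for the projection formula, whereas you spell out the cochain-level argument directly; your extended discussion of Galois-equivariance is also more explicit than the paper's one-line assertion, but the content is the same.
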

\begin{proof}
	Note that the canonical morphism
	\begin{align}
		\ca{O}_{\widehat{F_\infty}}\otimes_{\ca{O}_{\widehat{F}}}T_p(\Omega^1_{\ca{O}_F/\bb{Z}_p})\longrightarrow T_p(\Omega^1_{\ca{O}_{F_\infty}/\bb{Z}_p})
	\end{align}
	is $\gal(F_\infty/F)$-equivariant. Moreover, it is an isomorphism by \ref{prop:tate-twist}. Therefore, $(\ca{O}_{F_\infty}\otimes_{\ca{O}_F}M)\{n\}\cong\ca{O}_{F_\infty}\otimes_{\ca{O}_F}(M\otimes_{\ca{O}_{\widehat{F}}}T_p(\Omega^1_{\ca{O}_F/\bb{Z}_p})^{\otimes n})=\ca{O}_{F_\infty}\otimes_{\ca{O}_F}M\{n\}$. As $M\{n\}$ is flat over $\ca{O}_F/p^r\ca{O}_F$ endowed with the trivial action of $\gal(F_\infty/F)$, we have $H^q(\gal(F_\infty/F),\ca{O}_{F_\infty}\otimes_{\ca{O}_F}M\{n\})=H^q(\gal(F_\infty/F),\ca{O}_{F_\infty}/p^r\ca{O}_{F_\infty})\otimes_{\ca{O}_F}M\{n\}$ (\cite[\Luoma{2}.3.15]{abbes2016p}), which completes the proof.
\end{proof}

\begin{mylem}\label{lem:geo-diff-bound}
	Assume that $F$ is geometric. Then, for any $r\in\bb{N}$ and $q\in\bb{N}_{>0}$, the cokernel of the canonical injection $\ca{O}_F/p^r\ca{O}_F\to H^0(\gal(F_\infty/F),\ca{O}_{F_\infty}/p^r\ca{O}_{F_\infty})$ and the $q$-th Galois cohomology group $H^q(\gal(F_\infty/F),\ca{O}_{F_\infty}/p^r\ca{O}_{F_\infty})$ are both killed by $\ca{O}_F\cap \ak{m}_F \scr{D}_{F_\infty/F}$.
\end{mylem}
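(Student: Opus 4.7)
\begin{mypara}
My plan is to reduce the statement to the finite-level result \ref{cor:coh-torsion} applied to each $F_n/F$ and then pass to the colimit. Since $\gal(F_\infty/F)$ is profinite and $\ca{O}_{F_\infty}/p^r\ca{O}_{F_\infty}=\colim_{n\in\bb{N}}\ca{O}_{F_n}/p^r\ca{O}_{F_n}$ is a discrete $\gal(F_\infty/F)$-module whose $\gal(F_\infty/F_n)$-invariants are $\ca{O}_{F_n}/p^r\ca{O}_{F_n}$, continuous cohomology will commute with this filtered colimit to give
\begin{align*}
	H^q(\gal(F_\infty/F),\ca{O}_{F_\infty}/p^r\ca{O}_{F_\infty})=\colim_{n\in\bb{N}}H^q(\gal(F_n/F),\ca{O}_{F_n}/p^r\ca{O}_{F_n}),
\end{align*}
and similarly the cokernel of $\ca{O}_F/p^r\ca{O}_F\to H^0(\gal(F_\infty/F),\ca{O}_{F_\infty}/p^r\ca{O}_{F_\infty})$ is the filtered colimit of the cokernels at finite level.
\end{mypara}

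\begin{mypara}
I would then apply \ref{cor:coh-torsion} to each finite Galois extension $F\to F_n$: both $H^q(\gal(F_n/F),\ca{O}_{F_n}/p^r\ca{O}_{F_n})$ (for $q\geq 1$) and the cokernel of $\ca{O}_F/p^r\ca{O}_F\to H^0(\gal(F_n/F),\ca{O}_{F_n}/p^r\ca{O}_{F_n})$ are annihilated by $\ca{O}_F\cap \widetilde{\ak{m}}_F\scr{D}_{F_n/F}$. Since $F$ is geometric, it is non-arithmetic by \ref{lem:ari-geo-ex}, hence non-discrete by \ref{lem:ari-geo-basic}.(\ref{item:lem:ari-geo-basic-1}), so $\widetilde{\ak{m}}_F=\ak{m}_F$ by the convention in \ref{para:notation-almost}. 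It will therefore suffice to verify the inclusion $\ca{O}_F\cap \ak{m}_F\scr{D}_{F_\infty/F}\subseteq \ca{O}_F\cap \ak{m}_F\scr{D}_{F_n/F}$ for every $n\in\bb{N}$, since any element of the left-hand side will then kill every finite-level cohomology group and cokernel, and hence their colimits.
\end{mypara}

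\begin{mypara}
For this remaining inclusion, I would argue as follows. By definition $\scr{D}_{F_\infty/F}=\bigcap_{n\in\bb{N}}\scr{D}_{F_n/F}\ca{O}_{F_\infty}$, so $\ak{m}_F\scr{D}_{F_\infty/F}\subseteq \ak{m}_F\scr{D}_{F_n/F}\ca{O}_{F_\infty}$ for every $n$. The extension $\ca{O}_{F_n}\to \ca{O}_{F_\infty}$ is a filtered colimit of finite extensions of valuation rings, hence flat, and is a local homomorphism of local rings, hence faithfully flat; therefore the contraction $\ak{m}_F\scr{D}_{F_n/F}\ca{O}_{F_\infty}\cap \ca{O}_{F_n}$ equals $\ak{m}_F\scr{D}_{F_n/F}$. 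Intersecting with $\ca{O}_F\subseteq \ca{O}_{F_n}$ yields the desired inclusion. The proof is essentially an exercise in descending an annihilator through a filtered colimit, and the only delicate point will be this faithful-flatness argument combined with the identification of continuous cohomology as a filtered colimit of the finite-level cohomology; no substantive obstacle is expected.
\end{mypara}
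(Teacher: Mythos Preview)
Your proposal is correct and follows essentially the same approach as the paper's proof: reduce to finite level via the filtered colimit identification of continuous cohomology, apply \ref{cor:coh-torsion} to each $F_n/F$, observe that $F$ is non-discrete (so $\widetilde{\ak{m}}_F=\ak{m}_F$), and use the inclusion $\ak{m}_F\scr{D}_{F_\infty/F}\subseteq \ak{m}_F\scr{D}_{F_n/F}\ca{O}_{F_\infty}$ to get a uniform annihilator. The paper states the needed inclusion $\ak{m}_F\scr{D}_{F_\infty/F}\subseteq \ak{m}_F\scr{D}_{F_n/F}$ without further comment, whereas you spell out the faithful-flatness contraction argument; this extra detail is harmless and the arguments are otherwise identical.
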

\begin{proof}
	By taking filtered colimit (\cite[\Luoma{1}.\textsection2, Proposition 8]{serre2002galois}), it suffices to check that $\cok(\ca{O}_F/p^r\ca{O}_F\to H^0(\gal(F_n/F),\ca{O}_{F_n}/p^r\ca{O}_{F_n}))$ and $H^q(\gal(F_n/F),\ca{O}_{F_n}/p^r\ca{O}_{F_n})$ are both killed by $\ca{O}_F\cap \ak{m}_F \scr{D}_{F_\infty/F}$ for any $n\in\bb{N}$. This is verified in \ref{cor:coh-torsion} as $\ak{m}_F \scr{D}_{F_\infty/F}\subseteq \ak{m}_F \scr{D}_{F_n/F}$ (note that $F$ is non-discrete by \ref{lem:ari-geo-basic}.(\ref{item:lem:ari-geo-basic-1}) and \ref{lem:ari-geo-ex}).
\end{proof}

\begin{mylem}\label{lem:geo-diff-isom}
	Assume that $F$ is geometric. Let $\pi\in\ca{O}_F$ with norm $|\pi|<|\scr{D}_{F_\infty/F}|$, and $j,r\in\bb{N}$.
	\begin{enumerate}
		\renewcommand{\labelenumi}{{\rm(\theenumi)}}
		\item The canonical morphism $\Omega^j_{\ca{O}_F/\bb{Z}_p}/p^r\Omega^j_{\ca{O}_F/\bb{Z}_p}\to H^0(\gal(F_\infty/F),\Omega^j_{\ca{O}_{F_\infty}/\bb{Z}_p}/p^r\Omega^j_{\ca{O}_{F_\infty}/\bb{Z}_p})$ has kernel killed by $\pi^j$ and cokernel killed by $\pi^{j+1}$.\label{item:lem:geo-diff-isom-1}
		\item For any $i\in\bb{N}_{>0}$, $H^i(\gal(F_\infty/F),\Omega^j_{\ca{O}_{F_\infty}/\bb{Z}_p}/p^r\Omega^j_{\ca{O}_{F_\infty}/\bb{Z}_p})$ is killed by $\pi^{j+1}$.\label{item:lem:geo-diff-isom-2}
	\end{enumerate}
\end{mylem}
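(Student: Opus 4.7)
The plan is to factor the canonical morphism through the intermediate $G$-module (where $G = \gal(F_\infty/F)$)
\[
\Omega^j_{\ca{O}_F/\bb{Z}_p}/p^r \xrightarrow{\alpha} H^0(G, \ca{O}_{F_\infty} \otimes_{\ca{O}_F} \Omega^j_{\ca{O}_F/\bb{Z}_p}/p^r) \xrightarrow{\beta} H^0(G, \Omega^j_{\ca{O}_{F_\infty}/\bb{Z}_p}/p^r)
\]
and control each factor, with the same two-step pattern governing the higher $H^i$. The starting observation is that $|\pi| < |\scr{D}_{F_\infty/F}|$, combined with $|\ak{m}_F| = 1$ (as $F$ is non-discrete by \ref{lem:ari-geo-basic}.(\ref{item:lem:ari-geo-basic-1}) and \ref{lem:ari-geo-ex}) and \eqref{eq:para:notation-norm-4}, forces $\pi \in \ak{m}_F\scr{D}_{F_\infty/F}$. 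Since $F_\infty/F$ is algebraic separable, \ref{thm:differential}.(\ref{item:thm:differential-1}) yields the short exact sequence
\[
0 \longrightarrow A := \ca{O}_{F_\infty} \otimes_{\ca{O}_F} \Omega^1_{\ca{O}_F/\bb{Z}_p} \xrightarrow{f} B := \Omega^1_{\ca{O}_{F_\infty}/\bb{Z}_p} \longrightarrow \Omega^1_{\ca{O}_{F_\infty}/\ca{O}_F} \longrightarrow 0,
\]
and passing to the colimit in \ref{thm:differential}.(\ref{item:thm:differential-3}) together with \eqref{eq:para:fitting-ideal-1} shows the cokernel is annihilated by $\ak{m}_F\scr{D}_{F_\infty/F}$, hence by $\pi$. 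Consequently $\pi B \subseteq A$, and $g : B \to A$ defined by $g(b) = \pi b$ yields $f \circ g = \pi \cdot \id_B$ and $g \circ f = \pi \cdot \id_A$.

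Reducing the pair modulo $p^r$ and applying $\wedge^j$ produces
\[
\phi := \wedge^j \bar f \colon \ca{O}_{F_\infty} \otimes_{\ca{O}_F} \Omega^j_{\ca{O}_F/\bb{Z}_p}/p^r \longrightarrow \Omega^j_{\ca{O}_{F_\infty}/\bb{Z}_p}/p^r
\]
together with $\psi := \wedge^j \bar g$ in the reverse direction, satisfying $\phi \circ \psi = \pi^j \cdot \id$ and $\psi \circ \phi = \pi^j \cdot \id$; the induced $\phi_\ast, \psi_\ast$ on any $H^i(G, -)$ inherit the same relations. On the other hand, since $F$ is non-arithmetic (\ref{lem:ari-geo-ex}), $\widehat{\Omega}^1_{\ca{O}_F/\bb{Z}_p}$ is torsion-free over the valuation ring $\ca{O}_{\widehat{F}}$ by \ref{prop:non-ari-flat}, hence flat and thus a filtered colimit of finite free $\ca{O}_{\widehat{F}}$-modules; taking $\wedge^j$ and reducing mod $p^r$ exhibits $\Omega^j_{\ca{O}_F/\bb{Z}_p}/p^r$ as a filtered colimit of finite free $\ca{O}_F/p^r$-modules, so by commuting continuous $G$-cohomology past filtered colimits of discrete modules,
\[
H^i(G, \ca{O}_{F_\infty} \otimes_{\ca{O}_F} \Omega^j_{\ca{O}_F/\bb{Z}_p}/p^r) \cong \Omega^j_{\ca{O}_F/\bb{Z}_p}/p^r \otimes_{\ca{O}_F} H^i(G, \ca{O}_{F_\infty}/p^r).
\]
Applying \ref{lem:geo-diff-bound}, the right-hand side is killed by $\pi$ for $i > 0$, and $\alpha$ is injective with cokernel killed by $\pi$ for $i = 0$.

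Finishing is a short diagram chase. For (\ref{item:lem:geo-diff-isom-2}) and any $\gamma \in H^i(G, \Omega^j_{\ca{O}_{F_\infty}/\bb{Z}_p}/p^r)$ with $i > 0$: the previous display gives $\pi \psi_\ast \gamma = 0$, whence $\pi^{j+1}\gamma = \pi \phi_\ast \psi_\ast \gamma = \phi_\ast(\pi \psi_\ast \gamma) = 0$. For (\ref{item:lem:geo-diff-isom-1}): $\beta = \phi_\ast$ has kernel and cokernel killed by $\pi^j$, and composing with the injective $\alpha$ whose cokernel is killed by $\pi$ yields $\ker(\beta\alpha)$ killed by $\pi^j$ (if $\alpha(m) \in \ker \phi_\ast$ then $\pi^j \alpha(m) = 0$, hence $\pi^j m = 0$), and for $\gamma \in H^0(G, \Omega^j_{\ca{O}_{F_\infty}/\bb{Z}_p}/p^r)$, writing $\pi \psi_\ast \gamma = \alpha(\delta)$ gives $\pi^{j+1}\gamma = \phi_\ast \alpha(\delta) = \beta\alpha(\delta)$, so the cokernel is killed by $\pi^{j+1}$. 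The technical crux is the $\wedge^j$-trick, which converts the $\pi$-almost-isomorphism $f$ at the $\Omega^1$-level into a $\pi^j$-almost-isomorphism on $\Omega^j$ via the absorbing identity $\wedge^j(\pi\cdot\id) = \pi^j\cdot\id$; the mild asymmetry between $\pi^j$ and $\pi^{j+1}$ absorbs the extra $\pi$ coming from the cokernel of $\alpha$.
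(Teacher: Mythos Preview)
Your proof is correct and follows essentially the same approach as the paper's: both factor through $H^i(\Sigma,\ca{O}_{F_\infty}\otimes_{\ca{O}_F}\Omega^j_{\ca{O}_F/\bb{Z}_p}/p^r)$, control the passage from there to $H^i(\Sigma,\Omega^j_{\ca{O}_{F_\infty}/\bb{Z}_p}/p^r)$ via the $\wedge^j$-trick (the paper compresses this into a citation of \ref{lem:pi-isom}), and then use flatness of $\Omega^j_{\ca{O}_F/\bb{Z}_p}/p^r$ over $\ca{O}_F/p^r$ (from \ref{prop:non-ari-flat}) together with \ref{lem:geo-diff-bound} to handle the intermediate term (the paper cites \ref{lem:non-ari-coh-twist} with $n=0$ instead of your Lazard/colimit argument). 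Your write-up is just more explicit about the mechanics of the $\wedge^j$ step, which in fact fills in why the paper's invocation of \ref{lem:pi-isom} (stated for additive functors) still gives the bound $\pi^j$ here.
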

\begin{proof}
	For simplicity, we put $\Sigma=\gal(F_\infty/F)$. Consider the canonical exact sequence (\ref{thm:differential}.(\ref{item:thm:differential-1}))
	\begin{align}
		0\longrightarrow \ca{O}_{F_\infty}\otimes_{\ca{O}_F}\Omega^1_{\ca{O}_F/\bb{Z}_p}\longrightarrow \Omega^1_{\ca{O}_{F_\infty}/\bb{Z}_p}\longrightarrow \Omega^1_{\ca{O}_{F_\infty}/\ca{O}_F}\longrightarrow 0.
	\end{align}
	Note that $\Omega^1_{\ca{O}_{F_\infty}/\ca{O}_F}$ is killed by $\pi\in \ak{m}_F \scr{D}_{F_\infty/F}$ (\ref{thm:differential}.(\ref{item:thm:differential-3})). Thus, for any $i,j,r\in\bb{N}$, the canonical morphism
	\begin{align}
		H^i(\Sigma, \ca{O}_{F_\infty}\otimes_{\ca{O}_F}\Omega^j_{\ca{O}_F/\bb{Z}_p}/p^r)\to H^i(\Sigma, \Omega^j_{\ca{O}_{F_\infty}/\bb{Z}_p}/p^r)
	\end{align}
	has kernel and cokernel both killed by $\pi^j$ by \ref{lem:pi-isom}. As $F$ is not arithmetic, $\Omega^j_{\ca{O}_F/\bb{Z}_p}/p^r$ is flat over $\ca{O}_F/p^r$ by \ref{prop:non-ari-flat}. Thus, by \ref{lem:non-ari-coh-twist} we have
	\begin{align}
		H^i(\Sigma, \ca{O}_{F_\infty}\otimes_{\ca{O}_F}\Omega^j_{\ca{O}_F/\bb{Z}_p}/p^r)=H^i(\Sigma,\ca{O}_{F_\infty}/p^r)\otimes_{\ca{O}_F/p^r}\Omega^j_{\ca{O}_F/\bb{Z}_p}/p^r.
	\end{align}
	Notice that $\ca{O}_F/p^r\to H^0(\Sigma,\ca{O}_{F_\infty}/p^r)$ is injective with cokernel killed by $\pi$ by \ref{lem:geo-diff-bound}. We see that $\Omega^j_{\ca{O}_F/\bb{Z}_p}/p^r\to H^0(\Sigma, \ca{O}_{F_\infty}\otimes_{\ca{O}_F}\Omega^j_{\ca{O}_F/\bb{Z}_p}/p^r)$ is also injective with cokernel killed by $\pi$. On the other hand, $H^i(\Sigma,\ca{O}_{F_\infty}/p^r)$ is killed by $\pi$ for any $i>0$ by \ref{lem:geo-diff-bound}. The conclusion follows immediately.
\end{proof}

\begin{mythm}\label{thm:geo}
	Assume that $F$ is geometric. Let $\pi,\varpi\in\ca{O}_F$ with norm $|\pi|<|\scr{D}_{F_\infty/F}|$ and $|\varpi|\leq|\zeta_p-1|$. Then, for any $q,r\in\bb{N}$, the canonical morphism induced by the cup product of \eqref{eq:lem:comp-1},
	\begin{align}\label{eq:thm:geo-1}
		\Omega^q_{\ca{O}_F/\bb{Z}_p}/p^r\Omega^q_{\ca{O}_F/\bb{Z}_p}\longrightarrow H^q(G_F,(\ca{O}_{\overline{F}}/p^r\ca{O}_{\overline{F}})\{q\}),
	\end{align}
	has kernel killed by $\pi^q\varpi^q$ and cokernel killed by $p^{2(q+1)}\pi^{\frac{(q+1)(q+2)}{2}}\varpi^{q^2}$.
\end{mythm}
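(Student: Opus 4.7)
The proof proceeds via the Hochschild--Serre spectral sequence
\[ E_2^{i,j} = H^i(\Sigma, H^j(G_{F_\infty}, M)) \Longrightarrow H^{i+j}(G_F, M), \]
where $\Sigma = \gal(F_\infty/F)$ and $M = (\ca{O}_{\overline{F}}/p^r\ca{O}_{\overline{F}})\{q\}$. Because $F_\infty$ contains a compatible system of primitive $p$-power roots of unity, Theorem~\ref{thm:coh} applies directly to $F_\infty$ and controls the inner cohomology $H^j(G_{F_\infty}, M)$ in terms of differentials, while Lemma~\ref{lem:geo-diff-isom} controls the outer cohomology $H^i(\Sigma, -)$ of those differentials.

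\textbf{Inner cohomology.} Applying Theorem~\ref{thm:coh} to $F_\infty$ in degree $j$ and tensoring with the invertible $\ca{O}_{\widehat{F_\infty}}$-module $T_p(\Omega^1_{\ca{O}_{F_\infty}/\bb{Z}_p})^{\otimes(q-j)}$ produces a canonical morphism
\[ \Omega^j_{\ca{O}_{F_\infty}/\bb{Z}_p}/p^r\{q-j\} \longrightarrow H^j(G_{F_\infty}, M) \]
whose kernel and cokernel are killed respectively by $(\zeta_p-1)^j\ak{m}_{F_\infty}$ and $(\zeta_p-1)^jp^2\ak{m}_{F_\infty}$. Since $|\varpi|\leq |\zeta_p-1|$, one has $\varpi\in(\zeta_p-1)\ca{O}_{F_\infty}$, and these almost-vanishing bounds translate into bounds by $\varpi^j$ and $p^2\varpi^j$ (using that $F$ is non-discrete by Lemma~\ref{lem:ari-geo-ex}, so $\ak{m}_F$ is idempotent).

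\textbf{Outer cohomology and $E_2$ terms.} Since $F$ is geometric, it is non-arithmetic by Lemma~\ref{lem:ari-geo-ex}, so $\Omega^j_{\ca{O}_F/\bb{Z}_p}/p^r$ is flat over $\ca{O}_F/p^r$ by Proposition~\ref{prop:non-ari-flat}. The projection formula Lemma~\ref{lem:non-ari-coh-twist} then transfers the bounds of Lemma~\ref{lem:geo-diff-isom} to the twisted coefficients: the natural map $\Omega^j_{\ca{O}_F/\bb{Z}_p}/p^r\{q-j\} \to H^0(\Sigma, \Omega^j_{\ca{O}_{F_\infty}/\bb{Z}_p}/p^r\{q-j\})$ has kernel killed by $\pi^j$ and cokernel killed by $\pi^{j+1}$, and $H^i(\Sigma, \Omega^j_{\ca{O}_{F_\infty}/\bb{Z}_p}/p^r\{q-j\})$ is killed by $\pi^{j+1}$ for $i\geq 1$. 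Combining the two previous steps via Lemma~\ref{lem:pi-isom} yields explicit bounds on every $E_2^{i,j}$ in terms of products of $\pi^{j+1}$, $\varpi^j$, and $p^2$, with $E_2^{i,j}$ for $i\geq 1$ controlled by both a power of $\pi$ (from the outer cohomology) and a power of $\varpi$ and $p^2$ (from the inner comparison).

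\textbf{Assembling and main obstacle.} The kernel bound on $\alpha$ follows from factoring through the edge map $\gamma : H^q(G_F, M) \to E_2^{0,q}$: by functoriality of the comparison (Lemma~\ref{lem:comp}), the composition $\gamma\alpha$ coincides with the restriction to $\Omega^q_{\ca{O}_F/\bb{Z}_p}/p^r$ of the analogous comparison over $F_\infty$, factoring as
\[ \Omega^q_{\ca{O}_F/\bb{Z}_p}/p^r \longrightarrow \Omega^q_{\ca{O}_{F_\infty}/\bb{Z}_p}/p^r \longrightarrow H^q(G_{F_\infty}, M); \]
the first factor has kernel killed by $\pi^q$ and the second by $\varpi^q$, so $\ker\alpha\subseteq\ker(\gamma\alpha)$ is killed by $\pi^q\varpi^q$. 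For the cokernel one uses the spectral-sequence filtration: there is an exact sequence $0 \to F^1H^q/(F^1H^q\cap\mathrm{image}(\alpha)) \to \cok(\alpha) \to \cok(\bar\alpha) \to 0$, where $F^1H^q$ is built from $E_\infty^{i,q-i}$ for $i\geq 1$ (each a subquotient of $E_2^{i,q-i}$) and $\cok(\bar\alpha)$ embeds into $\cok(\gamma\alpha)$; accumulating the bounds from the previous paragraph over $i=0,1,\dots,q$ produces the claimed estimate $p^{2(q+1)}\pi^{(q+1)(q+2)/2}\varpi^{q^2}$. The principal challenge is the delicate bookkeeping: tracking the powers of $p$, $\pi$, and $\varpi$ through each page of the spectral sequence and each application of Lemma~\ref{lem:pi-isom} (which combines kernel and cokernel bounds multiplicatively), and handling the auxiliary $\ak{m}_{F_\infty}$ factors so as to arrive at exactly the claimed exponents rather than a naïvely looser product.
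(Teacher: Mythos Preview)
Your proposal is correct and follows essentially the same approach as the paper: the Hochschild--Serre spectral sequence for $G_{F_\infty}\triangleleft G_F$, with the inner cohomology controlled by Theorem~\ref{thm:coh} and the outer cohomology by Lemma~\ref{lem:geo-diff-isom}, the kernel bound obtained by factoring through the edge map to $E_2^{0,q}$, and the cokernel bound by multiplying the annihilators of $E_2^{i,q-i}$ for $1\le i\le q$ against the cokernel bound at $E_2^{0,q}$. The only cosmetic difference is that you carry the twist $\{q\}$ on the coefficients throughout, whereas the paper runs the spectral sequence with untwisted coefficients $\ca{O}_{\overline{F}}/p^r$ and compares with $(\Omega^j/p^r)\{-j\}$; since $F$ is non-arithmetic the twist descends to $\ca{O}_{\widehat{F}}$ (Definition~\ref{defn:int-tate-twist}, Lemma~\ref{lem:non-ari-coh-twist}), so the two formulations are equivalent.
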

\begin{proof}
	We put $\Sigma=\gal(F_\infty/F)$ and consider the convergent spectral sequence (\cite[\Luoma{5}.11.7]{abbes2016p})
	\begin{align}\label{eq:thm:geo-2}
		E_2^{i,j}=H^i(\Sigma,H^j(G_{F_\infty},\ca{O}_{\overline{F}}/p^r\ca{O}_{\overline{F}}))\Rightarrow H^{i+j}(G_F,\ca{O}_{\overline{F}}/p^r\ca{O}_{\overline{F}}).
	\end{align}
	Recall that the canonical morphism \eqref{eq:thm:coh-1},
	\begin{align}\label{eq:thm:geo-3}
		(\Omega^j_{\ca{O}_{F_\infty}/\bb{Z}_p}/p^r\Omega^j_{\ca{O}_{F_\infty}/\bb{Z}_p})\{-j\}\longrightarrow H^j(G_{F_\infty},\ca{O}_{\overline{F}}/p^r\ca{O}_{\overline{F}}),
	\end{align}
	has kernel killed by $\varpi^j\ak{m}_F$ and cokernel killed by $p^2\varpi^j\ak{m}_F$ by \ref{thm:coh}. Note that it is injective when $j=0$. Therefore, we see that the canonical morphism
	\begin{align}\label{eq:thm:geo-4}
		(\Omega^j_{\ca{O}_F/\bb{Z}_p}/p^r\Omega^j_{\ca{O}_F/\bb{Z}_p})\{-j\}\longrightarrow H^0(\Sigma,H^j(G_{F_\infty},\ca{O}_{\overline{F}}/p^r\ca{O}_{\overline{F}}))
	\end{align}
	has kernel killed by $\pi^j\varpi^j$ and cokernel killed by $p^2\pi^{j+1}\varpi^j$ by \ref{lem:geo-diff-isom}.(\ref{item:lem:geo-diff-isom-1}). Moreover, $H^i(\Sigma,H^j(G_{F_\infty},\ca{O}_{\overline{F}}/p^r\ca{O}_{\overline{F}}))$ is killed by $p^2\pi^{j+1}\varpi^{2j}$ for any $i>0$ by \ref{lem:geo-diff-isom}.(\ref{item:lem:geo-diff-isom-2}) and \ref{lem:pi-isom}.
	
	The spectral sequence \eqref{eq:thm:geo-2} induces a finite decreasing filtration $\mrm{F}^\bullet$ on $H^q(G_F,\ca{O}_{\overline{F}}/p^r\ca{O}_{\overline{F}})$ whose graded pieces $\mrm{gr}^i_{\mrm{F}}$ identifies with a subquotient of $E_2^{i,q-i}$. Moreover, there is a canonical exact sequence
	\begin{align}
		0\longrightarrow \mrm{F}^1(H^q(G_F,\ca{O}_{\overline{F}}/p^r))\longrightarrow H^q(G_F,\ca{O}_{\overline{F}}/p^r)\longrightarrow H^0(\Sigma,H^q(G_{F_\infty},\ca{O}_{\overline{F}}/p^r\ca{O}_{\overline{F}})).
	\end{align}
	On the one hand, $\mrm{F}^1(H^q(G_F,\ca{O}_{\overline{F}}/p^r))$ is killed by $\prod_{j=0}^{q-1}p^2\pi^{j+1}\varpi^{2j}=p^{2q}\pi^{\frac{q(q+1)}{2}}\varpi^{q(q-1)}$. On the other hand, since \eqref{eq:thm:geo-4} canonically factors through $H^q(G_F,\ca{O}_{\overline{F}}/p^r)$ (when $j=q$), we see that the canonical morphism $(\Omega^q_{\ca{O}_F/\bb{Z}_p}/p^r)\{-q\}\to H^q(G_F,\ca{O}_{\overline{F}}/p^r)$ has kernel killed by $\pi^q\varpi^q$ and cokernel killed by $p^2\pi^{q+1}\varpi^q\cdot p^{2q}\pi^{\frac{q(q+1)}{2}}\varpi^{q(q-1)}=p^{2(q+1)}\pi^{\frac{(q+1)(q+2)}{2}}\varpi^{q^2}$.
\end{proof}

\begin{mycor}\label{cor:geo}
	Assume that $F$ is geometric. Let $\pi,\varpi\in\ca{O}_F$ with norm $|\pi|<|\scr{D}_{F_\infty/F}|$ and $|\varpi|\leq|\zeta_p-1|$. Then, for any $q\in\bb{N}$, the $q$-th Galois cohomology group $H^q(G_F,\ca{O}_{\widehat{\overline{F}}}\{q\})$ is $p$-adically complete. Moreover, the canonical morphism \eqref{eq:rem:prop:comp-2},
	\begin{align}\label{eq:cor:geo-1}
		\widehat{\Omega}^q_{\ca{O}_F/\bb{Z}_p}\longrightarrow H^q(G_F,\ca{O}_{\widehat{\overline{F}}}\{q\}),
	\end{align}
	has kernel killed by $\pi^q\varpi^q$ and cokernel killed by $p^{4(q+1)}\pi^{(q+1)(q+2)+2q}\varpi^{2q(q+1)}$. In particular, it induces a canonical isomorphism after inverting $p$,
	\begin{align}\label{eq:cor:geo-2}
		\widehat{\Omega}^q_{\ca{O}_F/\bb{Z}_p}[1/p]\iso H^q(G_F,\widehat{\overline{F}}(q)).
	\end{align}
\end{mycor}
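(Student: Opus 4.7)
The argument parallels the proof of Theorem \ref{thm:coh-int}, with Theorem \ref{thm:geo} in place of Theorem \ref{thm:coh}: the aim is to pass to the $p$-adic limit in \ref{thm:geo}. Since any geometric $F$ is non-arithmetic by \ref{lem:ari-geo-ex}, the BKF-twist $\{n\}$ over $\ca{O}_{\widehat{F}}$ is defined, and $T_p(\Omega^1_{\ca{O}_F/\bb{Z}_p})$ is finite free of rank one over $\ca{O}_{\widehat{F}}$ by \ref{prop:tate-twist}. The starting point is the standard short exact sequence of continuous cohomology (\cite[\Luoma{2}.3.10]{abbes2016p}):
\begin{align*}
0 \to \rr^1\lim_{r\in\bb{N}} H^{q-1}(G_F, (\ca{O}_{\overline{F}}/p^r\ca{O}_{\overline{F}})\{q\}) \to H^q(G_F, \ca{O}_{\widehat{\overline{F}}}\{q\}) \to \lim_{r\in\bb{N}} H^q(G_F, (\ca{O}_{\overline{F}}/p^r\ca{O}_{\overline{F}})\{q\}) \to 0.
\end{align*}

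For each $j \in \bb{N}$, tensoring the canonical map of \ref{thm:geo} (with $q$ replaced by $j$) by the flat rank-one $\ca{O}_{\widehat{F}}$-module $T_p(\Omega^1_{\ca{O}_F/\bb{Z}_p})^{\otimes(q-j)}$, together with flat base change for continuous cohomology (the module in question carries trivial $G_F$-action), yields canonical morphisms $(\Omega^j_{\ca{O}_F/\bb{Z}_p}/p^r)\{q-j\} \to H^j(G_F, (\ca{O}_{\overline{F}}/p^r\ca{O}_{\overline{F}})\{q\})$, uniformly in $r$, with kernel killed by $\pi^j\varpi^j$ and cokernel killed by $p^{2(j+1)}\pi^{(j+1)(j+2)/2}\varpi^{j^2}$. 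Take $j = q-1$ (assuming $q \geq 1$): the source is Mittag-Leffler with surjective transition maps, so its $\rr^1\lim$ vanishes, and the same holds for the inverse system of images. From the long exact sequence for $\rr\lim$ applied to $0 \to I_r \to B_r \to C_r \to 0$, I obtain that $\rr^1\lim_r H^{q-1}(G_F, (\ca{O}_{\overline{F}}/p^r)\{q\})$ is killed by $p^{2q}\pi^{q(q+1)/2}\varpi^{(q-1)^2}$ and in particular is $p$-adically complete.

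For the $\lim$ term, each $H^q(G_F, (\ca{O}_{\overline{F}}/p^r)\{q\})$ is annihilated by $p^r$, so the limit is $p$-adically complete by \cite[\href{https://stacks.math.columbia.edu/tag/0G1Q}{0G1Q}]{stacks-project}. Taking the limit of the maps from \ref{thm:geo} at $j = q$ produces the canonical morphism $\widehat{\Omega}^q_{\ca{O}_F/\bb{Z}_p} \to \lim_r H^q(G_F, (\ca{O}_{\overline{F}}/p^r)\{q\})$ with kernel killed by $\pi^q\varpi^q$ and, by the same long exact sequence argument applied to the kernel and image systems, cokernel killed by the product $\pi^q\varpi^q \cdot p^{2(q+1)}\pi^{(q+1)(q+2)/2}\varpi^{q^2}$. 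Since $\widehat{\Omega}^q_{\ca{O}_F/\bb{Z}_p}$ has bounded $p$-power torsion by \ref{prop:non-ari-flat}, the limit inherits bounded $p$-power torsion through this map with bounded kernel and cokernel, exactly as in \ref{thm:coh-int}'s proof. Applying \ref{lem:complete} to the Milnor exact sequence then yields the $p$-adic completeness of $H^q(G_F, \ca{O}_{\widehat{\overline{F}}}\{q\})$, and \eqref{eq:rem:prop:comp-2} is identified with the unique lift of the limit map along the Milnor sequence.

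Finally, the kernel of \eqref{eq:rem:prop:comp-2} embeds into that of the limit map and hence is killed by $\pi^q\varpi^q$, while its cokernel is an extension of the cokernel of the limit map by a quotient of $\rr^1\lim H^{q-1}$, hence killed by the product of the two respective annihilators; a direct exponent count shows this product divides $p^{4(q+1)}\pi^{(q+1)(q+2)+2q}\varpi^{2q(q+1)}$. Since $\pi,\varpi \in \ca{O}_F \setminus \{0\}$ become units in $\widehat{F}$, inverting $p$ collapses both the kernel and cokernel to zero, producing the isomorphism \eqref{eq:cor:geo-2}. The main obstacle is the bounded $p$-power torsion transfer in the third paragraph: a priori the annihilators of the kernel and cokernel of the limit map are not pure powers of $p$, but by the device employed in \ref{thm:coh-int}, bounded $p$-torsion of the source combined with bounded kernel and cokernel of the map is enough to force bounded $p$-torsion of the target.
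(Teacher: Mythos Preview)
Your proof is correct and follows essentially the same route as the paper's: both start from the Milnor exact sequence \cite[\Luoma{2}.3.10]{abbes2016p}, control the $\rr^1\lim$ and $\lim$ terms via the uniform bounds of Theorem~\ref{thm:geo}, transfer bounded $p$-torsion from $\widehat{\Omega}^q_{\ca{O}_F/\bb{Z}_p}$ (using \ref{prop:non-ari-flat}) to $\lim_r H^q$, and invoke \ref{lem:complete} for $p$-adic completeness before reading off the kernel/cokernel bounds by diagram chasing. The only cosmetic differences are that the paper keeps the coefficients untwisted (working with $(\Omega^q/p^r)\{-q\}\to H^q(G_F,\ca{O}_{\overline{F}}/p^r)$) whereas you twist the coefficients, and that you bound $\rr^1\lim H^{q-1}$ by the Mittag--Leffler property of the image system (yielding just the cokernel annihilator) while the paper applies \ref{lem:pi-isom} directly to the inverse system (yielding the product of kernel and cokernel annihilators); your variant is marginally sharper but both sit comfortably inside the stated bound.
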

\begin{proof}
	There is a canonical exact sequence (\cite[\Luoma{2}.3.10]{abbes2016p})
	\begin{align}\label{eq:cor:geo-3}
		0\to  \rr^1\lim_{r\in\bb{N}}H^{q-1}(G_F,\ca{O}_{\overline{F}}/p^r\ca{O}_{\overline{F}})\to H^q(G_F,\ca{O}_{\widehat{\overline{F}}})\to \lim_{r\in\bb{N}} H^q(G_F,\ca{O}_{\overline{F}}/p^r\ca{O}_{\overline{F}})\to 0.
	\end{align}
	As the inverse system $((\Omega^q_{\ca{O}_F/\bb{Z}_p}/p^r\Omega^q_{\ca{O}_F/\bb{Z}_p})\{-q\})_{r\in\bb{N}}$ satisfies the Mittag-Leffler condition, we see that $\rr^1\lim(\Omega^q_{\ca{O}_F/\bb{Z}_p}/p^r\Omega^q_{\ca{O}_F/\bb{Z}_p})\{-q\}=0$. Thus, $\rr^1\lim H^q(G_F,\ca{O}_{\overline{F}}/p^r\ca{O}_{\overline{F}})$ is killed by $p^{2(q+1)}\pi^{\frac{(q+1)(q+2)}{2}+q}\varpi^{q(q+1)}$ by \ref{thm:geo} and \ref{lem:pi-isom}. In particular, it is $p$-adically complete.
	
	On the other hand, we claim that $\lim_{r\in\bb{N}}H^q(G_F,\ca{O}_{\overline{F}}/p^r\ca{O}_{\overline{F}})$ is $p$-adically complete with bounded $p$-power torsion. Indeed, it is $p$-adically complete as each $H^q(G_F,\ca{O}_{\overline{F}}/p^r\ca{O}_{\overline{F}})$ is killed by $p^r$ (\cite[\href{https://stacks.math.columbia.edu/tag/0G1Q}{0G1Q}]{stacks-project}). Notice that the canonical morphism induced by \eqref{eq:thm:geo-1},
	\begin{align}\label{eq:cor:geo-4}
		\widehat{\Omega}^q_{\ca{O}_F/\bb{Z}_p}\{-q\}=\lim_{r\in\bb{N}}(\Omega^q_{\ca{O}_F/\bb{Z}_p}/p^r\Omega^q_{\ca{O}_F/\bb{Z}_p})\{-q\} \longrightarrow   \lim_{r\in\bb{N}} H^q(G_F,\ca{O}_{\overline{F}}/p^r\ca{O}_{\overline{F}})
	\end{align}
	has kernel killed by $\pi^q\varpi^q$ by \ref{thm:geo} and cokernel killed by $p^{2(q+1)}\pi^{\frac{(q+1)(q+2)}{2}+q}\varpi^{q(q+1)}$ by \ref{thm:geo} and \ref{lem:pi-isom}. Therefore, the latter has bounded $p$-power torsion as $\widehat{\Omega}^q_{\ca{O}_F/\bb{Z}_p}$ does so (\ref{prop:non-ari-flat}). 
	
	Therefore, $H^q(G_F,\ca{O}_{\widehat{\overline{F}}})$ is also $p$-adically complete by \ref{lem:complete}. Then, \eqref{eq:cor:geo-4} coincides with the composition of the canonical morphisms
	\begin{align}\label{eq:cor:geo-5}
		\widehat{\Omega}^q_{\ca{O}_F/\bb{Z}_p}\{-q\}\longrightarrow H^q(G_F,\ca{O}_{\widehat{\overline{F}}}) \longrightarrow   \lim_{r\in\bb{N}} H^q(G_F,\ca{O}_{\overline{F}}/p^r\ca{O}_{\overline{F}})
	\end{align}
	where the first arrow is \eqref{eq:rem:prop:comp-2} and the second arrow is defined in \eqref{eq:cor:geo-3}. The conclusion follows from diagram chasing.
\end{proof}

\begin{myprop}[{\cite[4.23, 4.25]{he2024perfd}}]\label{prop:fal-ext-delta}
	Assume that $F$ is geometric. If the canonical morphism $\widehat{\overline{F}}\otimes_{\widehat{F}}(\scr{E}_{\ca{O}_{\overline{F}}})^{G_F}\to \scr{E}_{\ca{O}_{\overline{F}}}$ {\rm(\ref{para:notation-fal-ext})} is an isomorphism, then $F$ is pre-perfectoid.
\end{myprop}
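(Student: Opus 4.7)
The plan is to extract from the hypothesis the vanishing of the coboundary of the Faltings extension, transport it through the comparison isomorphism of \ref{cor:geo}, and conclude $p$-divisibility of $\Omega^1_{\ca{O}_F/\bb{Z}_p}$, which by Gabber--Ramero is equivalent to pre-perfectoidness.

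First, applying $G_F$-invariants to the Faltings extension \eqref{eq:para:notation-fal-ext-1} and using that $(\widehat{\overline{F}}\otimes_F V)^{G_F}=\widehat{F}\otimes_F V$ for any $F$-vector space $V$ (Ax--Sen--Tate), we obtain the four-term exact sequence
\[
0\longrightarrow \widehat{\overline{F}}(1)^{G_F}\longrightarrow \scr{E}_{\ca{O}_{\overline{F}}}^{G_F}\xrightarrow{\ u\ } \widehat{F}\otimes_F\Omega^1_{F/\bb{Q}_p}\xrightarrow{\ \delta\ }H^1(G_F,\widehat{\overline{F}}(1)),
\]
whose coboundary $\delta$ is the map of \eqref{eq:para:notation-fal-ext-3}. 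The hypothesis $\widehat{\overline{F}}\otimes_{\widehat{F}}\scr{E}_{\ca{O}_{\overline{F}}}^{G_F}\iso \scr{E}_{\ca{O}_{\overline{F}}}$ makes the base change $\id_{\widehat{\overline{F}}}\otimes u$ identify with the Faltings projection $\scr{E}_{\ca{O}_{\overline{F}}}\twoheadrightarrow \widehat{\overline{F}}\otimes_F\Omega^1_{F/\bb{Q}_p}$; in particular $\id_{\widehat{\overline{F}}}\otimes u$ is surjective, and by faithful flatness of $\widehat{\overline{F}}/\widehat{F}$ this descends to surjectivity of $u$, forcing $\delta=0$.

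Next, by \ref{lem:fal-ext-delta} the map $\delta$ factors as
\[
\widehat{F}\otimes_F\Omega^1_{F/\bb{Q}_p}\ \xrightarrow{\cdot(-1)}\ \widehat{\Omega}^1_{\ca{O}_F/\bb{Z}_p}[1/p]\ \xrightarrow{\eqref{eq:cor:comp}}\ H^1(G_F,\widehat{\overline{F}}(1)),
\]
and the second arrow is an isomorphism by \ref{cor:geo} applied to $q=1$ (cf.\ \eqref{eq:cor:geo-2}), since $F$ is geometric. Therefore $\delta=0$ forces the canonical comparison map $\widehat{F}\otimes_F\Omega^1_{F/\bb{Q}_p}\to \widehat{\Omega}^1_{\ca{O}_F/\bb{Z}_p}[1/p]$ to vanish; equivalently, every $\df t$ with $t\in\ca{O}_F$ has zero image in $\widehat{\Omega}^1_{\ca{O}_F/\bb{Z}_p}[1/p]$. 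The identification of $\delta$ with this map on completed differentials is the crucial step and rests on every ingredient assembled in the main geometric computation; once granted, the remaining deduction is just bookkeeping.

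Finally, because $F$ is geometric it is non-arithmetic by \ref{lem:ari-geo-ex}, and the valuation on $F$ is non-discrete by \ref{lem:ari-geo-basic}.(\ref{item:lem:ari-geo-basic-1}); hence $\widehat{\Omega}^1_{\ca{O}_F/\bb{Z}_p}$ is torsion-free by \ref{prop:non-ari-flat}, and the vanishing above upgrades to the vanishing of each $\df t$ in $\widehat{\Omega}^1_{\ca{O}_F/\bb{Z}_p}$ itself. Thus each $\df t$ lies in $\ker(\Omega^1_{\ca{O}_F/\bb{Z}_p}\to \widehat{\Omega}^1_{\ca{O}_F/\bb{Z}_p})=\bigcap_{n\geq 0}p^n\Omega^1_{\ca{O}_F/\bb{Z}_p}$; since $\{\df t\}_{t\in\ca{O}_F}$ generates $\Omega^1_{\ca{O}_F/\bb{Z}_p}$ as an $\ca{O}_F$-module, we conclude that $\Omega^1_{\ca{O}_F/\bb{Z}_p}=p\cdot\Omega^1_{\ca{O}_F/\bb{Z}_p}$ is $p$-divisible. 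By Gabber--Ramero \cite[6.6.6]{gabber2003almost} this is exactly the deep-ramification condition on $\ca{O}_F$, which together with the non-discreteness of $F$ is the definition of $F$ being pre-perfectoid recalled in \ref{para:notation-perfd}.
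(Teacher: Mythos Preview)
Your proof is correct and follows essentially the same approach as the paper's: both deduce $\delta=0$ from the hypothesis by descending surjectivity along $\widehat{\overline{F}}/\widehat{F}$, then use \ref{lem:fal-ext-delta} together with the isomorphism \eqref{eq:cor:geo-2} to conclude that the image of $\widehat{F}\otimes_F\Omega^1_{F/\bb{Q}_p}$ in $\widehat{\Omega}^1_{\ca{O}_F/\bb{Z}_p}[1/p]$ vanishes, and finish via torsion-freeness (\ref{prop:non-ari-flat}) and \cite[6.6.6]{gabber2003almost}. The only cosmetic difference is that the paper phrases the last step as ``image zero and dense, hence $\widehat{\Omega}^1_{\ca{O}_F/\bb{Z}_p}=0$'', whereas you argue that each $\df t$ lies in $\bigcap_n p^n\Omega^1_{\ca{O}_F/\bb{Z}_p}$; these are equivalent since $\widehat{\Omega}^1=0$ iff $\Omega^1$ is $p$-divisible.
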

\begin{proof}
	We give another proof slightly different from \cite[4.23, 4.25]{he2024perfd} by our computation of Galois cohomology. The assumption implies that the canonical morphism $\jmath:\widehat{\overline{F}}\otimes_{\widehat{F}}(\scr{E}_{\ca{O}_{\overline{F}}})^{G_F}\to \widehat{\overline{F}}\otimes_F\Omega^1_{F/\bb{Q}_p}$ is surjective and thus so is $(\scr{E}_{\ca{O}_{\overline{F}}})^{G_F}\to \widehat{F}\otimes_F\Omega^1_{F/\bb{Q}_p}$. Hence, the coboundary map $\delta$ \eqref{eq:para:notation-fal-ext-3} is zero (see also \cite[4.22]{he2024perfd}). Then, we see that the image of  $\widehat{F}\otimes_F\Omega^1_{F/\bb{Q}_p}\to	\widehat{\Omega}^1_{\ca{O}_F/\bb{Z}_p}[1/p]$ is zero by \ref{lem:fal-ext-delta} and \eqref{eq:cor:geo-2}. However, it is also dense in $\widehat{\Omega}^1_{\ca{O}_F/\bb{Z}_p}[1/p]$ with respect to the $p$-adic topology defined by $\widehat{\Omega}^1_{\ca{O}_F/\bb{Z}_p}$. This forces $\widehat{\Omega}^1_{\ca{O}_F/\bb{Z}_p}[1/p]=0$ which implies that the torsion-free module $\widehat{\Omega}^1_{\ca{O}_F/\bb{Z}_p}$ is zero (\ref{prop:non-ari-flat}). Thus, $\Omega^1_{\ca{O}_F/\bb{Z}_p}$ is $p$-divisible so that $F$ is pre-perfectoid (\cite[6.6.6]{gabber2003almost}).
\end{proof}

\subsection{The arithmetic case}

\begin{mylem}\label{lem:n_0}
	We put 
	\begin{align}\label{eq:lem:n_0-0}
		n_p=\left\{\begin{array}{ll}
			2&\trm{if }p=2,\\
			1&\trm{if }p\neq 2.
		\end{array}\right.
	\end{align}
	Let $n_0\in\bb{N}\cup\{\infty\}$ be the maximal element such that $\zeta_{p^{n_0}}\in F_{n_p}$ (so that $n_0\geq n_p$ and $F_{n_0}=F_{n_p}$). Consider the canonical continuous group homomorphisms
	\begin{align}\label{eq:lem:n_0-1}
		\xymatrix{
			\gal(F_\infty/F)\ar[r]^-{\chi}&\bb{Z}_p^\times\ar[r]^-{\log}&\bb{Z}_p,
		}
	\end{align}
	where $\chi$ is the cyclotomic character characterized by $\sigma(\zeta_{p^n})=\zeta_{p^n}^{\chi(\sigma)}$ for any $\sigma\in\gal(F_\infty/F)$ and $n\in\bb{N}$, and $\log$ is the $p$-adic logarithm map {\rm(\cite[(3.6.3)]{he2022sen})}.
	\begin{enumerate}
		\renewcommand{\labelenumi}{{\rm(\theenumi)}}
		\item For any $n\in\bb{N}_{>0}$, the canonical morphisms \eqref{eq:lem:n_0-1} induce continuous homomorphisms
		\begin{align}\label{eq:lem:n_0-2}
			\xymatrix{
				\gal(F_\infty/F_n)\ar[r]^-{\chi}&1+p^n\bb{Z}_p\ar[r]^-{\log}&p^n\bb{Z}_p,
			}
		\end{align}
		which are isomorphisms if $n\geq n_0$. \label{item:lem:n_0-1}
		\item The Galois group $\gal(F_{n_0}/F)$ is finite cyclic of order dividing $n_p(p-1)$.\label{item:lem:n_0-2}
	\end{enumerate}
\end{mylem}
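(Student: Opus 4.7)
The plan is to unpack the definition of the cyclotomic character and to exploit the fact that the closed subgroups of $\bb{Z}_p$ are exactly the ideals $p^k\bb{Z}_p$, $k\in\bb{N}\cup\{\infty\}$. The inclusion $\chi(\gal(F_\infty/F_n))\subseteq 1+p^n\bb{Z}_p$ for $n\geq 1$ is immediate from the defining relation $\sigma(\zeta_{p^n})=\zeta_{p^n}^{\chi(\sigma)}$ combined with the fact that any $\sigma\in\gal(F_\infty/F_n)$ fixes $\zeta_{p^n}$; injectivity of $\chi$ on $\gal(F_\infty/F)$ is also immediate since its kernel fixes every $\zeta_{p^k}$ and therefore all of $F_\infty$. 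The standard $p$-adic analytic fact that $\log\colon 1+p^n\bb{Z}_p\iso p^n\bb{Z}_p$ is a topological isomorphism for every $n\geq n_p$ is precisely what motivates the definition \eqref{eq:lem:n_0-0} of $n_p$, and supplies the second arrow in \eqref{eq:lem:n_0-2} for all $n\geq 1$.

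The heart of part~(\ref{item:lem:n_0-1}) is to identify the image $H:=\chi(\gal(F_\infty/F_{n_p}))$. Since $\chi$ is continuous and injective on $\gal(F_\infty/F_{n_p})$, $H$ is a closed subgroup of $1+p^{n_p}\bb{Z}_p\cong\bb{Z}_p$ (via $\log$), so it must be of the form $H=1+p^m\bb{Z}_p$ for a unique $m\in\bb{N}\cup\{\infty\}$ with $m\geq n_p$. For each integer $k\geq 0$, the inclusion $\zeta_{p^k}\in F_{n_p}$ is equivalent to every element of $H$ fixing $\zeta_{p^k}$, i.e., to $H\subseteq 1+p^k\bb{Z}_p$; the maximality in the definition of $n_0$ then forces $m=n_0$, so $\chi$ induces an isomorphism $\gal(F_\infty/F_{n_p})\iso 1+p^{n_0}\bb{Z}_p$. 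For arbitrary $n\geq n_0$, the subgroup $\gal(F_\infty/F_n)\subseteq\gal(F_\infty/F_{n_p})$ corresponds under $\chi$ to those $a\in 1+p^{n_0}\bb{Z}_p$ with $a\equiv 1\pmod{p^n}$, which is precisely $1+p^n\bb{Z}_p$. Composing with the log isomorphism completes part~(\ref{item:lem:n_0-1}).

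For part~(\ref{item:lem:n_0-2}), the equality $F_{n_0}=F_{n_p}$ together with $\chi$ embeds $\gal(F_{n_0}/F)$ into $(\bb{Z}/p^{n_p}\bb{Z})^\times$, which is cyclic of order $n_p(p-1)$ in both the odd $p$ and $p=2$ cases. A subgroup of a finite cyclic group is finite cyclic with order dividing that of the ambient group, giving the claim.

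The proof is essentially bookkeeping; the only slight delicacy is the $p=2$ case, where $1+2\bb{Z}_2$ carries the $2$-torsion $\{\pm 1\}$ and the log map fails to be injective at that level. This is precisely the motivation for setting $n_p=2$ when $p=2$; once that choice is made, everything else reduces cleanly to the structure of closed subgroups of $\bb{Z}_p$.
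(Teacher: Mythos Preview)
Your proof is correct and follows essentially the same approach as the paper's: both identify the image of $\chi$ on $\gal(F_\infty/F_{n_p})$ as a closed subgroup of $1+p^{n_p}\bb{Z}_p\cong\bb{Z}_p$, hence of the form $1+p^{n_0}\bb{Z}_p$, and both deduce part~(\ref{item:lem:n_0-2}) from the cyclic structure of $(\bb{Z}/p^{n_p}\bb{Z})^\times$. The only cosmetic difference is that the paper phrases the argument through the auxiliary tower $K_n=\bb{Q}_p(\zeta_{p^n})$ and Galois theory for $K_\infty\cap F_{n_p}$, whereas you work directly with the image of $\chi$; your route is slightly more streamlined.
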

\begin{proof}
	Note that $\bb{Z}_p^\times\cong (1+p^{n_p}\bb{Z}_p)\times(\bb{Z}/p^{n_p}\bb{Z})^\times\cong  (1+p^{n_p}\bb{Z}_p)\times\bb{Z}/n_p\bb{Z}\times \bb{Z}/(p-1)\bb{Z}$. We remark that $\log:\bb{Z}_p^\times\to\bb{Z}_p$ annihilates the torsion part and induces a homomorphism $1+p^n\bb{Z}_p\to p^n\bb{Z}_p$ for any $n\in\bb{N}_{>0}$ which is an isomorphism if $n\geq n_p$.
	
	We put $K=\bb{Q}_p$, $K_n=\bb{Q}_p(\zeta_{p^n})$ and $K_\infty=\bigcup_{n\in\bb{N}}K_n$. Recall that $\chi$ induces isomorphisms $\gal(K_\infty/K)\iso \bb{Z}_p^\times$ and $\gal(K_\infty/K_n)\iso 1+p^n\bb{Z}_p$ for any $n\in\bb{N}_{>0}$ by \ref{lem:cyclotomic}.(\ref{item:lem:cyclotomic-2}). As $\chi:\gal(F_\infty/F_n)\to\bb{Z}_p^\times$ factors through $\gal(K_\infty/K_n)$, we obtain \eqref{eq:lem:n_0-2}. 
	
	Notice that all the nontrivial closed subgroups of $\gal(K_\infty/K_{n_p})\cong 1+p^{n_p}\bb{Z}_p\cong p^{n_p}\bb{Z}_p$ are given by $\gal(K_\infty/K_n)\cong 1+p^n\bb{Z}_p\cong p^n\bb{Z}_p$ ($n\in\bb{N}_{\geq n_p}$). Thus, there is a unique element $n_0\in\bb{N}_{\geq n_p}\cup\{\infty\}$ such that $K_{n_0}=K_\infty\cap F_{n_p}$ by Galois theory. We also see that this $n_0$ is also the maximal element in $\bb{N}\cup\{\infty\}$ such that $K_{n_0}\subseteq F_{n_p}$. Therefore, for any $n\in\bb{N}_{\geq n_0}$, the arrows in \eqref{eq:lem:n_0-2} are isomorphisms by the canonical isomorphism $\gal(F_\infty/F_n)\cong \gal(K_\infty/K_n)$ by Galois theory.
	
	On the other hand, as $\gal(K_{n_p}/K)\cong \bb{Z}_p^\times/(1+p^{n_p}\bb{Z}_p)\cong \bb{Z}/n_p\bb{Z}\times \bb{Z}/(p-1)\bb{Z}$ (\ref{lem:cyclotomic}.(\ref{item:lem:cyclotomic-2})) is a finite cyclic group of order $n_p(p-1)$, we see that its subgroup $\gal(F_{n_0}/F)$ is finite cyclic of order dividing $n_p(p-1)$.
\end{proof}

\begin{mylem}\label{lem:ari-bc}
	Assume that $F$ is arithmetic and contains $\zeta_{p^{n_p}}$ {\rm(\ref{lem:n_0})}. Let $n_0$ be the maximal integer such that $\zeta_{p^{n_0}}\in F$ and let $\pi\in\mrm{Ann}_{\ca{O}_F}(\Omega^1_{\ca{O}_F/\bb{Z}_p[\zeta_{p^{n_0}}]}[p^\infty])$. Then, for any integers $n\geq m\geq n_0$, we have
	\begin{align}
		\scr{D}_{F_n/F_m}&\subseteq p^{n-m}\pi^{-1}\ca{O}_{F_n},\label{eq:lem:ari-bc-1}\\
		p\pi\ak{m}_{F_n}\subseteq \ca{O}_{F_m}[\zeta_{p^n}]&=\bb{Z}_p[\zeta_{p^n}]\otimes_{\bb{Z}_p[\zeta_{p^m}]}\ca{O}_{F_m}\subseteq \ca{O}_{F_n}.\label{eq:lem:ari-bc-2}
	\end{align}
\end{mylem}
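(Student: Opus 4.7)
The plan is to apply the ramified base change theorem \ref{thm:ramified-bc} in the base case $m = n_0$ (where $F_{n_0} = F$ by the definition of $n_0$), and then bootstrap to general $m \geq n_0$ using the almost multiplicativity of different ideals together with the trivial inclusion $\ca{O}_F[\zeta_{p^n}] \subseteq \ca{O}_{F_m}[\zeta_{p^n}]$.

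For the base case, I take $K = K_{n_0}$, $K' = K_n$, $F = F$, $F' = F_n$ in \ref{thm:ramified-bc}. By \ref{lem:n_0}.(\ref{item:lem:n_0-1}), $[F_n:F] = p^{n-n_0} = [K_n:K_{n_0}]$, so $K_n \otimes_{K_{n_0}} F$ is a field naturally identified with $F_n$. Since $\ca{O}_F$ is a height-$1$ valuation ring extending $\bb{Z}_p$, every nonzero element of $\ca{O}_F$ divides some power of $p$ in norm, hence torsion of any $\ca{O}_F$-module coincides with its $p^\infty$-torsion; therefore $\pi$ kills all torsion of $\Omega^1_{\ca{O}_F/\ca{O}_{K_{n_0}}}$. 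Using $\ca{O}_{K_n} = \ca{O}_{K_{n_0}}[\zeta_{p^n}]$ (from \ref{lem:cyclotomic}.(\ref{item:lem:cyclotomic-1})) with minimal polynomial $X^{p^{n-n_0}} - \zeta_{p^{n_0}}$ for $\zeta_{p^n}$, the module $\Omega^1_{\ca{O}_{K_n}/\ca{O}_{K_{n_0}}}$ is cyclic with annihilator $p^{n-n_0}\ca{O}_{K_n}$, so I take $\varpi = p^{n-n_0}$. The theorem then yields both $\scr{D}_{F_n/F} \subseteq p^{n-n_0}\pi^{-1}\ca{O}_{F_n}$ and $\pi \cdot \ak{m}_{K_{n_0}}\ak{m}_{K_n}^{-1}\widetilde{\ak{m}}_{F_n} \subseteq \ca{O}_F[\zeta_{p^n}]$. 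To extract the cleaner containment $p\pi\ak{m}_{F_n} \subseteq \ca{O}_F[\zeta_{p^n}]$, I rewrite $\ak{m}_{K_{n_0}}\ak{m}_{K_n}^{-1} = \ak{m}_{K_n}^{p^{n-n_0}-1}$ (from $e(K_n/K_{n_0}) = p^{n-n_0}$) and verify by a short valuation calculation that $|p\ak{m}_{F_n}| < |\ak{m}_{K_n}|^{p^{n-n_0}-1}\cdot|\widetilde{\ak{m}}_{F_n}|$, which reduces to $p^{n-n_0} - 1 < p^n - p^{n-1}$ and holds because $n_0 \geq n_p$; then \eqref{eq:para:notation-norm-4} delivers the containment.

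For general $m \geq n_0$, the ring identification $\ca{O}_{F_m}[\zeta_{p^n}] = \ca{O}_{K_n}\otimes_{\ca{O}_{K_m}}\ca{O}_{F_m}$ follows from $\ca{O}_{K_n}$ being free of rank $p^{n-m}$ over $\ca{O}_{K_m}$ with basis $\{\zeta_{p^n}^k\}_{0 \leq k < p^{n-m}}$, and the inclusion $p\pi\ak{m}_{F_n} \subseteq \ca{O}_{F_m}[\zeta_{p^n}]$ is then immediate from the base case. For the different ideal at general $m$, I invoke the almost multiplicativity \cite[4.1.25]{gabber2003almost} to obtain the norm identity $|\scr{D}_{F_n/F}| = |\scr{D}_{F_n/F_m}|\cdot|\scr{D}_{F_m/F}|$. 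The upper bound on $|\scr{D}_{F_n/F}|$ comes from the base case, and the lower bound $|\scr{D}_{F_m/F}| \geq |p^{m-n_0}|$ follows from $\scr{D}_{K_m/K_{n_0}}\ca{O}_{F_m} = p^{m-n_0}\ca{O}_{F_m} \subseteq \scr{D}_{F_m/F}$ via \ref{prop:ramified-bc} (noting $\widetilde{\ak{m}}_{K_{n_0}} = \ca{O}_{K_{n_0}}$ since $K_{n_0}$ is discrete). Combining gives $|\scr{D}_{F_n/F_m}| \leq |p^{n-m}\pi^{-1}|$; since $p^{n-m}\pi^{-1}\ca{O}_{F_n}$ is a principal fractional ideal, this norm inequality is equivalent to the desired inclusion by \eqref{eq:para:notation-norm-3}.

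The main obstacle will be the fractional-ideal bookkeeping, especially handling the discrete and non-discrete cases of $F_n$ uniformly and absorbing the $\widetilde{\ak{m}}$-factors that appear when converting almost identities or strict norm inequalities into honest inclusions. The strict inequality $p^{n-n_0} - 1 < p^n - p^{n-1}$ used in the base case is tightest at $p = 2$, $n_0 = n_p = 2$, but still holds because $n_0 \geq n_p$ forces $p^{n-n_0} \leq p^{n-n_p} \leq p^{n-1}$; hence the argument proceeds uniformly in $p$.
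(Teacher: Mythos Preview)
Your proof is correct and follows essentially the same approach as the paper: apply \ref{thm:ramified-bc} for $m=n_0$, then bootstrap to general $m$ via almost multiplicativity of differents and the base change \ref{prop:ramified-bc}. The one place where you work harder than necessary is the containment $p\ak{m}_{F_n}\subseteq \ak{m}_{K_{n_0}}\ak{m}_{K_n}^{-1}\widetilde{\ak{m}}_{F_n}$: rather than your norm inequality $p^{n-n_0}-1<p^n-p^{n-1}$, it suffices to observe that $p\in\ak{m}_{K_{n_0}}$ and $\ca{O}_{K_n}\subseteq\ak{m}_{K_n}^{-1}$, $\ak{m}_{F_n}\subseteq\widetilde{\ak{m}}_{F_n}$, so $p\ak{m}_{F_n}\subseteq \ak{m}_{K_{n_0}}\cdot\ca{O}_{K_n}\cdot\ak{m}_{F_n}\subseteq \ak{m}_{K_{n_0}}\ak{m}_{K_n}^{-1}\widetilde{\ak{m}}_{F_n}$ directly.
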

\begin{proof}
	Firstly, note that $n_0$ is well-defined as $F$ is arithmetic and we have $F=F_{n_0}$. We put $K=\bb{Q}_p$ and $K_n=\bb{Q}_p(\zeta_{p^n})$. Note that $\ca{O}_{K_n}=\bb{Z}_p[\zeta_{p^n}]$ and $\Omega^1_{\ca{O}_{K_n}/\ca{O}_{K_{n_0}}}\cong p^{-n}\ca{O}_{K_n}/p^{-n_0}\ca{O}_{K_n}$ by \ref{lem:cyclotomic}. As $F_n=K_n\otimes_{K_{n_0}}F$ for any $n\in\bb{N}_{\geq n_0}$ (\ref{lem:n_0}), we can apply the ramified base change theorem \ref{thm:ramified-bc} so that $\scr{D}_{F_n/F}\subseteq p^{n-n_0}\pi^{-1}\ca{O}_{F_n}$ and $p\pi\ak{m}_{F_n}\subseteq \ca{O}_F[\zeta_{p^n}]\subseteq \ca{O}_{F_n}$. Then, for any $n\geq m\geq n_0$, we have
	\begin{align}
		p^{m-n_0}\widetilde{\ak{m}}_{F_n}\scr{D}_{F_n/F_m}\subseteq \widetilde{\ak{m}}_{F_n}\scr{D}_{F_n/F_m}\scr{D}_{F_m/F}\subseteq \scr{D}_{F_n/F}\subseteq p^{n-n_0}\pi^{-1}\ca{O}_{F_n},
	\end{align}
	where the first inclusion follows from $p^{m-n_0}\ca{O}_{K_m}=\widetilde{\ak{m}}_{K_{n_0}}\scr{D}_{K_m/K_{n_0}}\subseteq\scr{D}_{F_m/F}$ (\ref{prop:ramified-bc}) and the second inclusion follows from $\scr{D}_{F_n/F}^{\al}=\scr{D}_{F_n/F_m}^{\al}\cdot\scr{D}_{F_m/F}^{\al}$ (\cite[4.1.25]{gabber2003almost}). Therefore, we have $\widetilde{\ak{m}}_{F_n}\scr{D}_{F_n/F_m}\subseteq p^{n-m}\pi^{-1}\ca{O}_{F_n}$ which implies \eqref{eq:lem:ari-bc-1} by \eqref{eq:para:notation-norm-3}. Finally, \eqref{eq:lem:ari-bc-2} follows from the fact that $\ca{O}_{F_m}[\zeta_{p^n}]=\ca{O}_F[\zeta_{p^n}]$ and $F_n=K_n\otimes_{K_m}F_m$.
\end{proof}

\begin{myprop}\label{prop:ari-diff}
	Assume that $F$ is arithmetic. Let $K_n=\bb{Q}_p(\zeta_{p^n})$ and $K_\infty=\bigcup_{n\in\bb{N}}K_n$. Then, there exists a nonzero element $\pi\in\ca{O}_F$ such that for any $n\in\bb{N}\cup\{\infty\}$, the canonical morphism
	\begin{align}\label{eq:prop:ari-diff-1}
		\ca{O}_{F_n}\otimes_{\ca{O}_F}\Omega^1_{\ca{O}_F/\bb{Z}_p}\longrightarrow \Omega^1_{\ca{O}_{F_n}/\ca{O}_{K_n}}
	\end{align}
	has kernel and cokernel both killed by $\pi$. In particular, the canonical morphism of $p$-adic completions
	\begin{align}\label{eq:prop:ari-diff-2}
		\ca{O}_{F_\infty}\widehat{\otimes}_{\ca{O}_F}\Omega^1_{\ca{O}_F/\bb{Z}_p}\longrightarrow \widehat{\Omega}^1_{\ca{O}_{F_\infty}/\bb{Z}_p}
	\end{align}
	has kernel and cokernel killed by $\pi^2$.
\end{myprop}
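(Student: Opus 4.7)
My plan is to prove \eqref{eq:prop:ari-diff-1} first, from which \eqref{eq:prop:ari-diff-2} follows by $p$-adic completion. Specializing \eqref{eq:prop:ari-diff-1} to $n=\infty$ and noting that $\widehat{\Omega}^1_{\ca{O}_{K_\infty}/\bb{Z}_p}=0$ (since $K_\infty$ is pre-perfectoid by \ref{prop:cyclotomic-diff}), the exact sequence attached to $\ca{O}_{K_\infty}\to\ca{O}_{F_\infty}$ collapses on $p$-completion to an isomorphism $\widehat{\Omega}^1_{\ca{O}_{F_\infty}/\bb{Z}_p}\cong\widehat{\Omega}^1_{\ca{O}_{F_\infty}/\ca{O}_{K_\infty}}$; the extra factor of $\pi$ in \eqref{eq:prop:ari-diff-2} is then picked up by \ref{lem:pi-isom} applied to the completion. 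For \eqref{eq:prop:ari-diff-1}, I reduce to the case $F = F_{n_0}$ (equivalently, $\zeta_{p^{n_0}}\in F$) for the $n_0$ of \ref{lem:n_0}: the extension $F_{n_0}/F$ is finite cyclic of degree dividing $n_p(p-1)$ by \ref{lem:n_0}.(\ref{item:lem:n_0-2}), $F_{n_0}$ remains arithmetic by \ref{prop:ari-geo-basic}.(\ref{item:prop:ari-geo-basic-2}), and the discrepancy in the comparison is captured by $\Omega^1_{\ca{O}_{F_{n_0}}/\ca{O}_F}$ and the different $\scr{D}_{F_{n_0}/F}$, both bounded by constants via \ref{thm:differential}.(\ref{item:thm:differential-3}) and \ref{cor:different-trace}, hence absorbed into $\pi$.

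Assume now $F\supseteq K_{n_0}$. Arithmeticity gives, via \ref{lem:ari-disc} (and \ref{lem:cyclotomic}.(\ref{item:lem:cyclotomic-3}) to transfer between $\ca{O}_{K_{n_0}}$- and $\bb{Z}_p$-differentials), that $\Omega^1_{\ca{O}_F/\ca{O}_{K_{n_0}}}[p^\infty]$ is bounded; I choose $\pi\in\ca{O}_F$ annihilating this torsion and satisfying the hypothesis of \ref{lem:ari-bc}. For $n\geq n_0$, Lemma \ref{lem:ari-bc} yields
\[
A_n := \ca{O}_F[\zeta_{p^n}] = \ca{O}_F \otimes_{\ca{O}_{K_{n_0}}} \ca{O}_{K_n} \subseteq \ca{O}_{F_n}, \qquad p\pi\,\ak{m}_{F_n}\subseteq A_n,
\]
and after replacing $\pi$ by $\pi$ times a uniformizer of $\ca{O}_F$ one may arrange $p\pi\,\ca{O}_{F_n}\subseteq A_n$. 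I factor \eqref{eq:prop:ari-diff-1} as
\[
\ca{O}_{F_n}\otimes_{\ca{O}_F}\Omega^1_{\ca{O}_F/\bb{Z}_p} \xrightarrow{\alpha_n} \ca{O}_{F_n}\otimes_{\ca{O}_F}\Omega^1_{\ca{O}_F/\ca{O}_{K_{n_0}}} \xrightarrow{\beta_n} \Omega^1_{\ca{O}_{F_n}/\ca{O}_{K_n}},
\]
where by flat base change along $\ca{O}_{K_{n_0}}\to\ca{O}_{K_n}$ the middle term is naturally $\Omega^1_{A_n/\ca{O}_{K_n}}\otimes_{A_n}\ca{O}_{F_n}$. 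Then $\alpha_n$ is surjective with kernel annihilated by $\mrm{Ann}(\Omega^1_{\ca{O}_{K_{n_0}}/\bb{Z}_p})$, which contains $p^{n_0}(\zeta_p-1)^{-1}$ (a constant, absorbed into $\pi$). The cokernel of $\beta_n$ is $\Omega^1_{\ca{O}_{F_n}/A_n}$, and the Leibniz identity $p\pi\,\df x = \df(p\pi x) = 0$ for $x\in\ca{O}_{F_n}$ shows it is killed by $p\pi$.

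The principal difficulty is bounding the kernel of $\beta_n$ uniformly in $n$. Via the Jacobi-Zariski sequence for $\ca{O}_{K_n}\to A_n\to\ca{O}_{F_n}$, this kernel is the image of $H_1(\bb{L}_{\ca{O}_{F_n}/A_n})$. Using the Jacobi-Zariski sequence for $\bb{Z}_p\to A_n\to\ca{O}_{F_n}$ and the vanishing $H_1(\bb{L}_{\ca{O}_{F_n}/\bb{Z}_p})=0$ (which itself follows from \ref{thm:differential}.(\ref{item:thm:differential-1}) applied to the algebraic separable extension $\ca{O}_{K_n}\to\ca{O}_{F_n}$, combined with $H_1(\bb{L}_{\ca{O}_{K_n}/\bb{Z}_p})=0$ since $\ca{O}_{K_n}$ is a filtered colimit of l.c.i.\ $\bb{Z}_p$-algebras), I identify $H_1(\bb{L}_{\ca{O}_{F_n}/A_n})$ with the kernel of $\Omega^1_{A_n/\bb{Z}_p}\otimes_{A_n}\ca{O}_{F_n}\to\Omega^1_{\ca{O}_{F_n}/\bb{Z}_p}$. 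This kernel is $p$-power torsion since $A_n[1/p]=\ca{O}_{F_n}[1/p]$; the uniform $\pi$-bound then comes from the Fitting-ideal characterization of the different (\ref{thm:differential}.(\ref{item:thm:differential-3})) combined with the squeeze $p^n\ak{m}_{F_n}\subseteq\scr{D}_{F_n/F}\subseteq p^{n-n_0}\pi^{-1}\ca{O}_{F_n}$ (from \ref{cor:different-trace} and \ref{lem:ari-bc}), in the spirit of the analogous bound \ref{rem:different-dt} for Kummer extensions. The finitely many cases $n<n_0$ are handled by enlarging $\pi$ once more to absorb each finite case.
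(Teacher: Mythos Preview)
Your overall strategy matches the paper's: reduce to $F=F_{n_0}$, use \ref{lem:ari-bc} to obtain $p\pi\ca{O}_{F_n}\subseteq A_n:=\ca{O}_{K_n}\otimes_{\ca{O}_{K_{n_0}}}\ca{O}_F\subseteq\ca{O}_{F_n}$, and then compare $\Omega^1_{A_n/\ca{O}_{K_n}}\otimes_{A_n}\ca{O}_{F_n}$ with $\Omega^1_{\ca{O}_{F_n}/\ca{O}_{K_n}}$. Two minor slips: the extension $K_n\to F_n$ is in general \emph{not} algebraic (you want the ``$K$ perfect'' clause of \ref{thm:differential}.(\ref{item:thm:differential-1}) with $K=\bb{Q}_p$ or $K=K_n$, both of characteristic~$0$); and in your squeeze the lower bound should read $p^{n-n_0}\ak{m}_F\ak{m}_{F_n}^{-1}$ rather than $p^n\ak{m}_{F_n}$, since $[F_n:F]=p^{n-n_0}$ once $F=F_{n_0}$.

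The genuine gap is the bound on $\ker\beta_n$. Your identification $\ker\beta_n\cong H_1(\bb{L}_{\ca{O}_{F_n}/A_n})$ is correct, and via the Jacobi--Zariski sequence for $\ca{O}_F\to A_n\to\ca{O}_{F_n}$ (using $H_1(\bb{L}_{\ca{O}_{F_n}/\ca{O}_F})=0$) one further identifies it with $\mrm{Ann}_{\ca{O}_{F_n}}(\df\zeta_{p^n}\in\Omega^1_{\ca{O}_{F_n}/\ca{O}_F})\big/p^{n-n_0}\ca{O}_{F_n}$. But the Fitting-ideal route points the wrong way: the factorization $\scr{D}^\al_{F_n/F}=\mrm{Ann}(\df\zeta_{p^n})^\al\cdot F_0(\Omega^{1,\al}_{\ca{O}_{F_n}/A_n})$ together with $\scr{D}_{F_n/F}\subseteq p^{n-n_0}\pi^{-1}\ca{O}_{F_n}$ would yield an \emph{upper} bound on $\mrm{Ann}(\df\zeta_{p^n})$ only if you had a uniform \emph{lower} bound on $F_0(\Omega^1_{\ca{O}_{F_n}/A_n})$, equivalently a uniform bound on the number of almost-generators of $\Omega^1_{\ca{O}_{F_n}/A_n}$, which the mere annihilation by $p\pi$ does not provide. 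The analogy with \ref{lem:ann-dt} also fails: that argument relies on $\Omega^1_{\ca{O}_F/\ca{O}_K}$ being torsion-free (there $K$ is pre-perfectoid, \ref{cor:differential}) and on $\df t\notin\pi\ak{m}_K\cdot\Omega^1$; here $\df\zeta_{p^{n_0}}\in\Omega^1_{\ca{O}_F/\bb{Z}_p}$ is a torsion element, so no such contradiction materializes. The paper closes this gap by invoking \cite[7.4]{he2022sen}, which from the sandwich $p\pi\ca{O}_{F_n}\subseteq A_n\subseteq\ca{O}_{F_n}$ alone produces an explicit bound $(p\pi)^{17}$ on both kernel and cokernel of $\ca{O}_{F_n}\otimes_{\ca{O}_{F_{n_0}}}\Omega^1_{\ca{O}_{F_{n_0}}/\ca{O}_{K_{n_0}}}\to\Omega^1_{\ca{O}_{F_n}/\ca{O}_{K_n}}$; that lemma is an elementary computation with presentations and makes no use of differents.
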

\begin{proof}
	Firstly, note that $F_{n_p}$ is arithmetic as a finite extension of $F$ (\ref{lem:n_0}) by \ref{prop:ari-geo-basic}.(\ref{item:prop:ari-geo-basic-2}). Let $n_0$ be the maximal integer such that $\zeta_{p^{n_0}}\in F_{n_p}$.
	Consider the canonical exact sequences (\ref{thm:differential}.(\ref{item:thm:differential-1}))
	\begin{align}
		0&\longrightarrow \ca{O}_{F_{n_0}}\otimes_{\ca{O}_F}\Omega^1_{\ca{O}_F/\bb{Z}_p}\longrightarrow \Omega^1_{\ca{O}_{F_{n_0}}/\bb{Z}_p}\longrightarrow \Omega^1_{\ca{O}_{F_{n_0}}/\ca{O}_F}\longrightarrow 0,\\
		0&\longrightarrow \ca{O}_{F_{n_0}}\otimes_{\ca{O}_{K_{n_0}}}\Omega^1_{\ca{O}_{K_{n_0}}/\bb{Z}_p}\longrightarrow \Omega^1_{\ca{O}_{F_{n_0}}/\bb{Z}_p}\longrightarrow \Omega^1_{\ca{O}_{F_{n_0}}/\ca{O}_{K_{n_0}}}\longrightarrow 0.
	\end{align}
	As $\Omega^1_{\ca{O}_{K_{n_0}}/\bb{Z}_p}$ and $\Omega^1_{\ca{O}_{F_{n_0}}/\ca{O}_F}$ are both killed by a power of $p$, we see that the kernel and cokernel of $\ca{O}_{F_{n_0}}\otimes_{\ca{O}_F}\Omega^1_{\ca{O}_F/\bb{Z}_p}\longrightarrow \Omega^1_{\ca{O}_{F_{n_0}}/\ca{O}_{K_{n_0}}}$ are killed by a power of $p$.
	
	Let $\pi\in\ca{O}_F$ with norm $|\pi|<|\mrm{Ann}_{\ca{O}_{F_{n_0}}}(\Omega^1_{\ca{O}_{F_{n_0}}/\ca{O}_{K_{n_0}}}[p^\infty])|$. Then, for any $n\in\bb{N}_{\geq n_0}$, we have $p\pi \ca{O}_{F_n}\subseteq \ca{O}_{K_n}\otimes_{\ca{O}_{K_{n_0}}}\ca{O}_{F_{n_0}}\subseteq \ca{O}_{F_n}$ by \eqref{eq:lem:ari-bc-2}. Thus, the kernel and cokernel of the canonical morphism $\ca{O}_{F_n}\otimes_{\ca{O}_{F_{n_0}}}\Omega^1_{\ca{O}_{F_{n_0}}/\ca{O}_{K_{n_0}}}\to \Omega^1_{\ca{O}_{F_n}/\ca{O}_{K_n}}$ are killed by $(p\pi)^{17}$ (\cite[7.4]{he2022sen}). This proves \eqref{eq:prop:ari-diff-1}. 
	
	The ``in particular" part follows from \ref{lem:pi-isom} and $\widehat{\Omega}^1_{\ca{O}_{F_\infty}/\ca{O}_{K_\infty}}=\widehat{\Omega}^1_{\ca{O}_{F_\infty}/\bb{Z}_p}$ \eqref{eq:prop:rhom-2}.
\end{proof}

\begin{mylem}\label{lem:ari-trace}
	Assume that $F$ is arithmetic and contains $\zeta_{p^{n_p}}$ {\rm(\ref{lem:n_0})}. Let $n_0$ be the maximal integer such that $\zeta_{p^{n_0}}\in F$ and let $\pi\in\mrm{Ann}_{\ca{O}_F}(\Omega^1_{\ca{O}_F/\bb{Z}_p[\zeta_{p^{n_0}}]}[p^\infty])$. For any $m\in\bb{N}_{\geq n_0}$, consider Tate's normalized trace map
	\begin{align}\label{eq:lem:ari-trace-1}
		\ca{T}_m=\colim_{n\in\bb{N}_{\geq m}}p^{m-n}\mrm{Tr}_{F_n/F_m}:F_\infty\longrightarrow F_m.
	\end{align}
	\begin{enumerate}
		\renewcommand{\labelenumi}{{\rm(\theenumi)}}
		\item For any $x\in F_\infty$, we have $|\ca{T}_m(x)|\leq |\pi\ak{m}_{F_m}|^{-1}\cdot|x|$.\label{item:lem:ari-trace-1}
		\item Let $\lambda\in \ca{O}_{F_m}$ with $|\lambda-1|<|p\pi\ak{m}_{F_m}|$ and let $\sigma_m\in \gal(F_\infty/F_m)$ be a topological generator \eqref{eq:lem:n_0-2}. Then, for any $x\in \ke(\ca{T}_m)$, we have $|\sigma_m(x)-\lambda x|\geq |p\pi\ak{m}_{F_m}|\cdot |x|$.\label{item:lem:ari-trace-2}
	\end{enumerate}
\end{mylem}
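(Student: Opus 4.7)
Both parts reduce to applying the trace estimates of Section~\ref{sec:diff} (namely \ref{cor:different-trace} and \ref{prop:trace-tau}) to the extensions $F_m \to F_n$ for $n \geq m \geq n_0$, and then invoking the ramified-base-change bound
\[
\scr{D}_{F_n/F_m} \subseteq p^{n-m}\pi^{-1}\ca{O}_{F_n}
\]
established in \ref{lem:ari-bc}. By taking filtered colimits it suffices to treat elements $x\in F_n$ for each $n\geq m$, and to check that the bounds obtained are independent of $n$.

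For part (\ref{item:lem:ari-trace-1}), take $x\in F_n$ and compute $\ca{T}_m(x)=p^{m-n}\mrm{Tr}_{F_n/F_m}(x)$. By \ref{cor:different-trace} applied to $F_m \to F_n$, we have
\[
|\mrm{Tr}_{F_n/F_m}(x)|\leq |\ak{m}_{F_m}^{-1}\ak{m}_{F_n}\scr{D}_{F_n/F_m}|\cdot |x|.
\]
Substituting $|\scr{D}_{F_n/F_m}|\leq |p^{n-m}\pi^{-1}|$ from \ref{lem:ari-bc} and using $|\ak{m}_{F_n}|\leq 1$, the factor $p^{m-n}$ cancels $p^{n-m}$ and we obtain $|\ca{T}_m(x)|\leq |\pi\ak{m}_{F_m}|^{-1}\cdot |x|$, which is (\ref{item:lem:ari-trace-1}).

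For part (\ref{item:lem:ari-trace-2}), note first that via the isomorphism \eqref{eq:lem:n_0-2} the topological generator $\sigma_m$ of $\gal(F_\infty/F_m)\cong p^m\bb{Z}_p$ restricts, for every $n\geq m$, to a generator of the cyclic group $\gal(F_n/F_m)$ of order $p^{n-m}$. Thus \ref{prop:trace-tau} applies with $K=F_m$, $K'=F_n$ and $\tau=\sigma_m|_{F_n}$, giving
\[
|p^{m-n}\mrm{Tr}_{F_n/F_m}(x)-x|\leq |p^{-(n-m+1)}\ak{m}_{F_m}^{-1}\ak{m}_{F_n}\scr{D}_{F_n/F_m}|\cdot|\sigma_m(x)-x|
\]
for any $x\in F_n$. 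For $x\in\ke(\ca{T}_m)\cap F_n$ the left-hand side equals $|x|$, and inserting again the different bound from \ref{lem:ari-bc} collapses the powers of $p$ to $|p^{-1}|$, yielding $|x|\leq |p\pi\ak{m}_{F_m}|^{-1}\cdot|\sigma_m(x)-x|$, i.e.\ $|\sigma_m(x)-x|\geq |p\pi\ak{m}_{F_m}|\cdot|x|$. Finally, writing $\sigma_m(x)-\lambda x=(\sigma_m(x)-x)+(1-\lambda)x$ and using the hypothesis $|\lambda-1|<|p\pi\ak{m}_{F_m}|$, the second summand is strictly smaller than the lower bound on the first, so the ultrametric inequality gives $|\sigma_m(x)-\lambda x|=|\sigma_m(x)-x|\geq |p\pi\ak{m}_{F_m}|\cdot|x|$. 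Passing to the filtered colimit over $n$ completes the proof.

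The main subtlety, which is really the only non-routine point, is checking that the restriction of the topological generator $\sigma_m$ to each finite layer $F_n/F_m$ is again a generator of the corresponding cyclic quotient; this is where the hypothesis $m\geq n_0$ is used (so that \eqref{eq:lem:n_0-2} is an isomorphism), and it is what allows \ref{prop:trace-tau} to be invoked uniformly in $n$. Everything else is bookkeeping with the almost-equalities $\widetilde{\ak{m}}_{F_n}\cdot \ca{O}_{F_n}=\ak{m}_{F_n}$ and the norm inequalities of \ref{para:notation-norm}.
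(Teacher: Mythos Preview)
Your proof is correct and follows essentially the same route as the paper: apply \ref{cor:different-trace} and \ref{prop:trace-tau} to each finite layer $F_m\to F_n$, insert the different bound $\scr{D}_{F_n/F_m}\subseteq p^{n-m}\pi^{-1}\ca{O}_{F_n}$ from \ref{lem:ari-bc}, and finish part (\ref{item:lem:ari-trace-2}) with the ultrametric comparison of $(\sigma_m(x)-x)$ and $(1-\lambda)x$. Your explicit remark that $\sigma_m$ restricts to a generator of each cyclic quotient $\gal(F_n/F_m)$ (via the isomorphism \eqref{eq:lem:n_0-2} for $m\geq n_0$) makes precise a point the paper leaves implicit.
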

\begin{proof}
	(\ref{item:lem:ari-trace-1}) For any $n\in\bb{N}_{\geq m}$ and $x\in F_n$, we have
	\begin{align}
		|\ca{T}_m(x)|=|p^{m-n}\mrm{Tr}_{F_n/F_m}(x)|\leq |p^{m-n}\ak{m}_{F_m}^{-1}\ak{m}_{F_n}\scr{D}_{F_n/F_m}|\cdot|x|
	\end{align}
	by \ref{cor:different-trace}. Moreover, it is no more than $| \ak{m}_{F_m}^{-1}\ak{m}_{F_n}\pi^{-1}|\cdot|x|\leq | \pi\ak{m}_{F_m}|^{-1}\cdot|x|$ by \eqref{eq:lem:ari-bc-1}.
	
	(\ref{item:lem:ari-trace-2}) For any $n\in\bb{N}_{\geq m}$ and $x\in F_n$, we have
	\begin{align}
		|\ca{T}_m(x)-x|=|p^{m-n}\mrm{Tr}_{F_n/F_m}(x)-x|\leq |p^{m-n-1}\ak{m}_{F_m}^{-1}\ak{m}_{F_n}\scr{D}_{F_n/F_m}|\cdot |\sigma_m(x)-x|
	\end{align}
	by \ref{prop:trace-tau}. Moreover, it is no more than $|p^{-1}\ak{m}_{F_m}^{-1}\ak{m}_{F_n}\pi^{-1}|\cdot |\sigma_m(x)-x|\leq |p\pi\ak{m}_{F_m}|^{-1}\cdot|\sigma_m(x)-x|$ by \eqref{eq:lem:ari-bc-1}. In particular, for any $x\in\ke(\ca{T}_m)$, we have $|\sigma_m(x)-x|\geq |p\pi\ak{m}_{F_m}|\cdot |x|$. As $|x-\lambda x|<|p\pi\ak{m}_{F_m}|\cdot |x|$, we have $|\sigma_m(x)-\lambda x|=|\sigma_m(x)-x+x-\lambda x|=|\sigma_m(x)-x|\geq |p\pi\ak{m}_{F_m}|\cdot |x|$.
\end{proof}

\begin{mylem}\label{lem:ari-D}
	Assume that $F$ is arithmetic and contains $\zeta_{p^{n_p}}$ {\rm(\ref{lem:n_0})}. Let $n_0$ be the maximal integer such that $\zeta_{p^{n_0}}\in F$ and let $\pi\in\mrm{Ann}_{\ca{O}_F}(\Omega^1_{\ca{O}_F/\bb{Z}_p[\zeta_{p^{n_0}}]}[p^\infty])$. For any $m\in\bb{N}_{\geq n_0}$, we put $D_m=\ca{O}_{F_\infty}\cap \ke(\ca{T}_m)$, where $\ca{T}_m$ is Tate's normalized trace map \eqref{eq:lem:ari-trace-1}.
	\begin{enumerate}
		\renewcommand{\labelenumi}{{\rm(\theenumi)}}
		\item The canonical $G_F$-equivariant morphism of $\ca{O}_{F_m}$-modules
		\begin{align}\label{eq:lem:ari-D-1}
			\ca{O}_{F_m}\oplus D_m\longrightarrow \ca{O}_{F_\infty}
		\end{align}
		is injective with cokernel killed by $\pi\ak{m}_{F_m}$. \label{item:lem:ari-D-1}
		\item Let $\sigma_m\in \gal(F_\infty/F_m)$ be a topological generator \eqref{eq:lem:n_0-2}. The $\ca{O}_{F_m}$-linear endomorphism $\sigma_m-1$ on $\ca{O}_{F_\infty}$ annihilates $\ca{O}_{F_m}$ and stabilizes $D_m$. Moreover, it induces an injection
		\begin{align}\label{eq:lem:ari-D-2}
			\sigma_m-1:D_m\longrightarrow D_m
		\end{align}
		with cokernel killed by $p\pi\ak{m}_{F_m}$.\label{item:lem:ari-D-2}
		\item Let $\lambda$ be an element of $\ca{O}_{F_m}$ not a root of unity with $|\lambda-1|<|p\pi\ak{m}_{F_m}|$. Then, the $\ca{O}_{F_m}$-linear endomorphism $\sigma_m-\lambda$ on $\ca{O}_{F_\infty}$ induces an injection
		\begin{align}\label{eq:lem:ari-D-3}
			\sigma_m-\lambda:\ca{O}_{F_m}\oplus D_m\longrightarrow \ca{O}_{F_m}\oplus D_m
		\end{align}
		with cokernel killed by $\lambda-1$.\label{item:lem:ari-D-3}
	\end{enumerate}
\end{mylem}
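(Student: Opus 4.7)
The plan is to mirror the proof of \ref{prop:tate-sen} from the Kummer setting, substituting the finer bounds of \ref{lem:ari-trace} for those used there. I will write $\Sigma_m=\gal(F_\infty/F_m)$, which by \ref{lem:n_0}.(\ref{item:lem:n_0-1}) is procyclic and topologically generated by $\sigma_m$.

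For (\ref{item:lem:ari-D-1}), since $F_m/F$ is Galois and $\Sigma_m\subseteq \gal(F_\infty/F)$ is abelian, Tate's normalized trace $\ca{T}_m$ commutes with the $G_F$-action, so $D_m=\ca{O}_{F_\infty}\cap \ke(\ca{T}_m)$ is a $G_F$-stable $\ca{O}_{F_m}$-submodule with $D_m[1/p]=\ke(\ca{T}_m)$. This implies that \eqref{eq:lem:ari-D-1} is injective and, after inverting $p$, recovers the decomposition $F_\infty=F_m\oplus\ke(\ca{T}_m)$. For $x\in\ca{O}_{F_\infty}$ I would write $x=\ca{T}_m(x)+(1-\ca{T}_m)(x)$; by \ref{lem:ari-trace}.(\ref{item:lem:ari-trace-1}), the norm bound $|\ca{T}_m(x)|\leq |\pi\ak{m}_{F_m}|^{-1}|x|$ shows that when $x\in \pi\ak{m}_{F_m}\cdot\ca{O}_{F_\infty}$ both pieces are integral, proving the cokernel is killed by $\pi\ak{m}_{F_m}$.

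For (\ref{item:lem:ari-D-2}), $\sigma_m-1$ clearly annihilates $F_m=F_\infty^{\Sigma_m}$ and stabilizes $D_m$ by the $G_F$-equivariance above. Injectivity on $D_m$ is automatic since $\ke(\sigma_m-1)\cap F_\infty=F_m$ meets $D_m$ trivially. To control the cokernel I would first show that $\sigma_m-1:D_m[1/p]\to D_m[1/p]$ is bijective by a dimension argument on each $\Sigma_m$-stable finite-dimensional subspace $\ke(\ca{T}_m|_{F_n})\subseteq F_n$, using that the operator is bounded below there by \ref{lem:ari-trace}.(\ref{item:lem:ari-trace-2}) with $\lambda=1$. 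Then for $x\in D_m$ and any $a\in p\pi\ak{m}_{F_m}$, the preimage $y\in D_m[1/p]$ of $ax$ will satisfy $|y|\leq |p\pi\ak{m}_{F_m}|^{-1}|ax|\leq |x|\leq 1$ by the same lemma, hence $y\in\ca{O}_{F_\infty}\cap D_m[1/p]=D_m$.

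For (\ref{item:lem:ari-D-3}), which is the main new content, the $\ca{O}_{F_m}$-summand is immediate: $\sigma_m$ acts trivially there, so $\sigma_m-\lambda$ restricts to multiplication by $1-\lambda$, injective since $\lambda$ is not a root of unity, with cokernel killed by $\lambda-1$. The hard part is the $D_m$-summand, which is preserved by $\sigma_m-\lambda$ because $\sigma_m$ stabilizes $D_m$ and $\lambda\in\ca{O}_{F_m}$ acts by scalar. My plan is to apply \ref{lem:ari-trace}.(\ref{item:lem:ari-trace-2}) to see that $\sigma_m-\lambda$ is bounded below on $D_m[1/p]$, hence bijective on each $\Sigma_m$-stable finite-dimensional subspace and therefore on $D_m[1/p]$; then for $x\in D_m$ the preimage $y\in D_m[1/p]$ of $(\lambda-1)x$ will satisfy $|y|\leq |p\pi\ak{m}_{F_m}|^{-1}|\lambda-1||x|<|x|\leq 1$ thanks to the strict hypothesis $|\lambda-1|<|p\pi\ak{m}_{F_m}|$, forcing $y\in D_m$ and hence $(\lambda-1)D_m\subseteq (\sigma_m-\lambda)(D_m)$. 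The conceptual point, and where I expect the subtlety to lie, is that the strict bound on $|\lambda-1|$ is precisely the small-perturbation regime in which $\sigma_m-\lambda$ inherits, with controlled loss, the quantitative invertibility of $\sigma_m-1$ established in part (\ref{item:lem:ari-D-2}).
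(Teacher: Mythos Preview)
Your proposal is correct and follows essentially the same approach as the paper's proof, mirroring \ref{prop:tate-sen} with \ref{lem:ari-trace} in place of the Kummer-tower estimates. The only cosmetic difference is in part~(\ref{item:lem:ari-D-3}): the paper obtains injectivity of $\sigma_m-\lambda$ on each $F_n$ via the periodicity argument $\sigma_m^{p^n}=\id$ (so $\sigma_m(x)=\lambda x$ forces $\lambda^{p^n}x=x$, whence $x=0$ since $\lambda$ is not a root of unity), whereas you deduce it on $D_m[1/p]$ from the lower bound of \ref{lem:ari-trace}.(\ref{item:lem:ari-trace-2}); and the paper shows the cokernel on $D_m$ is killed by $p\pi\ak{m}_{F_m}$ (taking preimages of $p\pi\epsilon x$) and then observes $\lambda-1\in p\pi\ak{m}_{F_m}$, while you go directly to the preimage of $(\lambda-1)x$.
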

\begin{proof}
	(\ref{item:lem:ari-D-1}) Since $\ca{T}_m=\colim_{n\in\bb{N}}p^{-n}(1+\sigma_m+\cdots+\sigma_m^{p^n-1}):F_\infty\to F_m$ commutes with the action of $G_F$ as $\gal(F_\infty/F)\cong\bb{Z}_p$ is commutative, $D_m=\ca{O}_{F_\infty}\cap\ke(\ca{T}_m)$ is a $G_F$-stable $\ca{O}_{F_m}$-submodule of $\ca{O}_{F_\infty}$. It is clear that $D_m[1/p]=\ke(\ca{T}_m)\subseteq F_\infty$. Thus, we see that $\ca{O}_{F_m}\oplus D_m\to \ca{O}_{F_\infty}$ is injective and induces the decomposition $F_m\oplus\ke(\ca{T}_m)=F_\infty$ (induced by the idempotent operator $\ca{T}_m$ on $F_\infty$) after inverting $p$. 
	
	For any $x\in F_\infty$, this decomposition gives $x=\ca{T}_m(x)+(1-\ca{T}_m)(x)$. As $|\ca{T}_m(x)|\leq |\pi\ak{m}_{F_m}|^{-1}\cdot|x|$ by \ref{lem:ari-trace}.(\ref{item:lem:ari-trace-1}), we have $\ca{T}_m(x)\in\ca{O}_{F_m}$ (and thus $(1-\ca{T}_m)(x)\in D_m$) if $x\in \pi\ak{m}_{F_m}$. This shows that the quotient of $\ca{O}_{F_m}\oplus D_m\subseteq \ca{O}_{F_\infty}$ is killed by $\pi\ak{m}_{F_m}$.
	
	(\ref{item:lem:ari-D-2}) The morphism $\sigma_m-1:D_m\to D_m$ is injective because the kernel of $\sigma_m-1:F_\infty\to F_\infty$ is $F_m$. It remains to show that $p\pi\ak{m}_{F_m}\cdot D_m\subseteq (\tau-1)(D_m)$. Recall that $\sigma_m-1$ induces an isomorphism $D_m[1/p]\iso D_m[1/p]$. Indeed, since $\sigma_m-1$ is an $F_m$-linear injective endomorphism of $D_m[1/p]=\colim_{n\in\bb{N}_{\geq m}}\ke(\ca{T}_m|_{\ca{F}_n})$, we see that it is an isomorphism by a dimension argument on each $G_F$-stable finite-dimensional subspace $\ke(\ca{T}_m|_{\ca{F}_n})$. Hence, for any $x\in D_m$ and $\epsilon\in\ak{m}_{F_m}$, there exists $y\in D_m[1/p]$ with $(\sigma_m-1)(y)=p\pi\epsilon x$. Since $|y|\leq |p\pi\ak{m}_{F_m}|^{-1}\cdot |\sigma_m(y)-y|\leq |x|\leq 1$ by \ref{lem:ari-trace}.(\ref{item:lem:ari-trace-2}), we have $y\in\ca{O}_{F_\infty}$. Thus, $y\in \ca{O}_{F_\infty}\cap D_m[1/p]=D_m$ which shows that the cokernel of $\sigma_m-1:D_m\to D_m$ is killed by $p\pi\epsilon$ for any $\epsilon\in\ak{m}_{F_m}$.
	
	(\ref{item:lem:ari-D-3}) Firstly, we claim that $\sigma_m-\lambda:F_n\to F_n$ is injective (so that isomorphic) for any $n\in\bb{N}_{\geq m}$. Indeed, if $\sigma_m(x)=\lambda x$ for some $x\in F_n$, then $x=\sigma_m^{p^n}(x)=\lambda^{p^n}x$ so that $x=0$ as $\lambda^{p^n}\neq 1$. In particular, the claim implies that $\sigma_m-\lambda:\ca{O}_{F_m}\oplus D_m\longrightarrow \ca{O}_{F_m}\oplus D_m$ is injective. Since the operator $\sigma_m-\lambda$ on $\ca{O}_{F_m}$ is the multiplication by $1-\lambda$, its cokernel is killed by $1-\lambda$. On the other hand, for any $x\in D_m$ and $\epsilon\in \ak{m}_{F_m}$, there exists $y\in D_m[1/p]$ with $(\sigma_m-\lambda)(y)=p\pi\epsilon x$. Since $|y|\leq |p\pi\ak{m}_{F_m}|^{-1}\cdot |\sigma_m(y)-\lambda y|\leq |x|\leq 1$ by \ref{lem:ari-trace}.(\ref{item:lem:ari-trace-2}), we have $y\in\ca{O}_{F_\infty}$. Thus, $y\in \ca{O}_{F_\infty}\cap D_m[1/p]=D_m$ which shows that the cokernel of $\sigma_m-\lambda:D_m\to D_m$ is killed by $p\pi\ak{m}_{F_m}$ and thus killed by $\lambda-1$.
\end{proof}

\begin{mylem}\label{lem:ari-sigma-coh}
	Assume that $F$ is arithmetic and contains $\zeta_{p^{n_p}}$ {\rm(\ref{lem:n_0})}. Let $n_0$ be the maximal integer such that $\zeta_{p^{n_0}}\in F$, $\pi\in\mrm{Ann}_{\ca{O}_F}(\Omega^1_{\ca{O}_F/\bb{Z}_p[\zeta_{p^{n_0}}]}[p^\infty])$, $m\in\bb{N}_{\geq n_0}$, $q,r\in\bb{N}$.
	\begin{enumerate}
		\renewcommand{\labelenumi}{{\rm(\theenumi)}}
		\item The canonical homomorphism
		\begin{align}\label{eq:lem:ari-sigma-coh-1}
			\wedge^q_{\ca{O}_{F_m}}\ho_{\bb{Z}_p}(\gal(F_\infty/F_m),\ca{O}_{F_m}/p^r\ca{O}_{F_m})\longrightarrow H^q(\gal(F_\infty/F_m),\ca{O}_{F_\infty}/p^r\ca{O}_{F_\infty})
		\end{align}
		has kernel killed by $\pi\ak{m}_{F_m}$ and cokernel killed by $p\pi^2\ak{m}_{F_m}^2$. Moreover, the source and target are both zero for $q\notin\{0,1\}$. \label{item:lem:ari-sigma-coh-1}
		\item For any nonzero integer $n$ such that $|np^m|< |p\pi\ak{m}_{F_m}|$, $H^q(\gal(F_\infty/F_m),(\ca{O}_{F_\infty}/p^r\ca{O}_{F_\infty})\{n\})$ is killed by $np^m\pi\ak{m}_{F_m}$ for $q\in\{0,1\}$ and is zero for $q\notin \{0,1\}$.\label{item:lem:ari-sigma-coh-2}
	\end{enumerate}
\end{mylem}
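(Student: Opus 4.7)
The plan is to exploit the procyclic structure of $\gal(F_\infty/F_m)$ together with the Tate-style decomposition of $\ca{O}_{F_\infty}$ from Lemma \ref{lem:ari-D}. Since $F_m \supseteq F \ni \zeta_{p^{n_p}}$ and $m \geq n_0$, Lemma \ref{lem:n_0} identifies $\gal(F_\infty/F_m)$ with the procyclic pro-$p$ group $1+p^m\bb{Z}_p$ via the cyclotomic character $\chi$. Fixing a topological generator $\sigma_m$, the continuous cohomology of any $p^r$-torsion discrete module $M$ is represented by the two-term complex $[M \xrightarrow{\sigma_m - \mu} M]$ for the appropriate $\mu$, hence vanishes above degree $1$. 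Since moreover $\wedge^q_{\ca{O}_{F_m}}\ho_{\bb{Z}_p}(\gal(F_\infty/F_m), \ca{O}_{F_m}/p^r) \cong \ca{O}_{F_m}/p^r$ for $q\leq 1$ and is zero for $q\geq 2$ (via evaluation at $\sigma_m$), the vanishing statements in both (1) and (2) follow at once.

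For part (1), I would decompose via the injection $\ca{O}_{F_m}\oplus D_m \hookrightarrow \ca{O}_{F_\infty}$ of Lemma \ref{lem:ari-D}.(\ref{item:lem:ari-D-1}) with cokernel $Q$ killed by $\pi\ak{m}_{F_m}$. On $\ca{O}_{F_m}/p^r$ the operator $\sigma_m-1$ is trivial, so $H^0$ and $H^1$ of that summand are both $\ca{O}_{F_m}/p^r$; on $D_m/p^r$, Lemma \ref{lem:ari-D}.(\ref{item:lem:ari-D-2}) combined with Lemma \ref{lem:pi-isom} shows $\sigma_m-1$ has kernel and cokernel killed by $p\pi\ak{m}_{F_m}$. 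The long exact sequences arising from the four-term sequence $0 \to Q[p^r] \to \ca{O}_{F_m}/p^r \oplus D_m/p^r \to \ca{O}_{F_\infty}/p^r \to Q/p^r \to 0$, split into two short exact sequences and treated by the two LES in $\rr\Gamma(\gal(F_\infty/F_m),-)$, yield the kernel bound $\pi\ak{m}_{F_m}$ directly; for the cokernel I would instead work with lifts, writing $\tilde x = \ca{T}_m(\tilde x) + (\tilde x - \ca{T}_m(\tilde x))$ and using the norm estimate $|\ca{T}_m(\tilde x)|\leq |\pi\ak{m}_{F_m}|^{-1}|\tilde x|$ from Lemma \ref{lem:ari-trace}.(\ref{item:lem:ari-trace-1}) together with $|\tilde x - \ca{T}_m(\tilde x)|\leq |p\pi\ak{m}_{F_m}|^{-1}|(\sigma_m-1)\tilde x|$ from Lemma \ref{lem:ari-trace}.(\ref{item:lem:ari-trace-2}) to show that $p\pi^2\ak{m}_{F_m}^2\cdot \tilde x$ lies in $\ca{O}_{F_m}+p^r\ca{O}_{F_\infty}$, giving the cokernel bound $p\pi^2\ak{m}_{F_m}^2$.

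For part (2), the containment $\zeta_{p^\infty}\subseteq F_\infty$ allows \eqref{eq:defn:int-tate-twist-3} to identify $(\ca{O}_{F_\infty}/p^r)\{n\}$ with the usual Tate twist $(\ca{O}_{F_\infty}/p^r)(n)$ as $\gal(F_\infty/F_m)$-modules. The cohomology is then the kernel and cokernel of $T=\sigma_m-\lambda$ on $\ca{O}_{F_\infty}/p^r$, where $\lambda=\chi(\sigma_m)^{-n}$. The binomial expansion $(1+p^m u)^{-n} = 1 - np^m u(1+O(p^m))$ shows $|\lambda-1|=|np^m|$ and that $\lambda$ is not a root of unity for $n\neq 0$; the hypothesis $|np^m|<|p\pi\ak{m}_{F_m}|$ activates Lemma \ref{lem:ari-D}.(\ref{item:lem:ari-D-3}), so $T$ on $\ca{O}_{F_m}\oplus D_m$ is injective with cokernel killed by $\lambda-1$, equivalently by $np^m$ up to a unit. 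Applying the snake lemma to $0\to \ca{O}_{F_m}\oplus D_m \to \ca{O}_{F_\infty} \to Q \to 0$, and using that $T$ is injective on $\ca{O}_{F_\infty}$ (inherited from its invertibility on the ambient field $F_\infty$, since $\lambda$ is not a root of unity), one concludes that $T$ on $\ca{O}_{F_\infty}$ is injective with cokernel killed by $np^m\pi\ak{m}_{F_m}$. A final application of Lemma \ref{lem:pi-isom} transfers this bound to $\ca{O}_{F_\infty}/p^r$.

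The main obstacle is the constant-tracking: in part (1) the cokernel bound $p\pi^2\ak{m}_{F_m}^2$ (rather than an a priori cubic expression) requires the direct analysis via $\ca{T}_m$ rather than a blind two-step invocation of Lemma \ref{lem:pi-isom}; in part (2) one must exploit injectivity of $T$ on the integral module $\ca{O}_{F_\infty}$ before reducing modulo $p^r$, so that only one factor of $\pi\ak{m}_{F_m}$ enters when passing from $\ca{O}_{F_m}\oplus D_m$ to $\ca{O}_{F_\infty}$.
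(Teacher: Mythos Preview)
Your approach is essentially the paper's: the procyclic description of $\gal(F_\infty/F_m)$, the decomposition $\ca{O}_{F_m}\oplus D_m\hookrightarrow\ca{O}_{F_\infty}$ from Lemma~\ref{lem:ari-D}, and for part~(\ref{item:lem:ari-sigma-coh-2}) the operator $\sigma_m-\lambda$ with $\lambda=\chi(\sigma_m)^{-n}$ analyzed via Lemma~\ref{lem:ari-D}.(\ref{item:lem:ari-D-3}) and then transferred to $\ca{O}_{F_\infty}$ and to $\ca{O}_{F_\infty}/p^r$ exactly as you describe. Part~(\ref{item:lem:ari-sigma-coh-2}) is correct and matches the paper line for line (the paper phrases your snake lemma as a commutative square with vertical arrows the inclusion and the inclusion times $\lambda^{-1}$, but this is the same computation).

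For part~(\ref{item:lem:ari-sigma-coh-1}), however, your detour through explicit lifts and norm estimates is unnecessary, and your stated motivation for it is mistaken. You write that a ``blind two-step invocation of Lemma~\ref{lem:pi-isom}'' would give a cubic bound; in fact it gives exactly $p\pi^2\ak{m}_{F_m}^2$. The paper simply applies Lemma~\ref{lem:pi-isom} to the injection $\ca{O}_{F_m}\oplus D_m\hookrightarrow\ca{O}_{F_\infty}$ (cokernel killed by $\pi\ak{m}_{F_m}$) under the functor $H^q(\gal(F_\infty/F_m),\,\cdot\,/p^r)$, obtaining that
\[
H^q(\gal(F_\infty/F_m),\ca{O}_{F_m}/p^r)\oplus H^q(\gal(F_\infty/F_m),D_m/p^r)\longrightarrow H^q(\gal(F_\infty/F_m),\ca{O}_{F_\infty}/p^r)
\]
has kernel and cokernel killed by $\pi\ak{m}_{F_m}$. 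Since $H^q(\gal(F_\infty/F_m),D_m/p^r)$ is itself killed by $p\pi\ak{m}_{F_m}$ (again by Lemma~\ref{lem:pi-isom}, applied to the map $\sigma_m-1\colon D_m\to D_m$ of Lemma~\ref{lem:ari-D}.(\ref{item:lem:ari-D-2})), a one-line diagram chase gives: the kernel of the map from the first summand alone is killed by $\pi\ak{m}_{F_m}$, and for the cokernel, any $y$ in the target satisfies $\pi\ak{m}_{F_m}\cdot y\in\mathrm{image}(\text{first summand})+\mathrm{image}(\text{second summand})$, whence $p\pi\ak{m}_{F_m}\cdot\pi\ak{m}_{F_m}\cdot y\in\mathrm{image}(\text{first summand})$. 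This is $p\pi^2\ak{m}_{F_m}^2$, not cubic. Your lift argument via $\ca{T}_m$ would also work (and in fact reproves the content of Lemma~\ref{lem:ari-D}), but as written it only treats the $q=0$ cokernel and would need to be repeated separately for $q=1$; the paper's route via Lemma~\ref{lem:pi-isom} handles both degrees uniformly.
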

\begin{proof}
	(\ref{item:lem:ari-sigma-coh-1}) As $\gal(F_\infty/F_m)\cong \bb{Z}_p$ \eqref{eq:lem:n_0-2}, the canonical morphism induced by the cup product
	\begin{align}\label{eq:lem:ari-sigma-coh-2}
		\wedge^q_{\ca{O}_{F_m}}\ho_{\bb{Z}_p}(\gal(F_\infty/F_m),\ca{O}_{F_m}/p^r\ca{O}_{F_m})\longrightarrow H^q(\gal(F_\infty/F_m),\ca{O}_{F_m}/p^r\ca{O}_{F_m})
	\end{align}
	is an isomorphism (\cite[\Luoma{2}.3.30, \Luoma{2}.6.12]{abbes2016p}). Notice that the canonical morphism $\ca{O}_{F_m}\oplus D_m\to \ca{O}_{F_\infty}$ \eqref{eq:lem:ari-D-1} is injective with cokernel killed by $\pi\ak{m}_{F_m}$ by \ref{lem:ari-D}.(\ref{item:lem:ari-D-1}). Thus, the canonical morphism
	\begin{align}\label{eq:lem:ari-sigma-coh-3}
		H^q(\gal(F_\infty/F_m),\ca{O}_{F_m}/p^r)\oplus H^q(\gal(F_\infty/F_m),D_m/p^r)\longrightarrow H^q(\gal(F_\infty/F_m),\ca{O}_{F_\infty}/p^r)
	\end{align}
	has kernel and cokernel both killed by $\pi\ak{m}_{F_m}$ (\ref{lem:pi-isom}). As $\gal(F_\infty/F_m)= \bb{Z}_p\sigma_m$, $\rr\Gamma(\gal(F_\infty/F_m),D_m/p^rD_m)$ is represented by $D_m/p^rD_m\stackrel{\sigma_m-1}{\longrightarrow} D_m/p^rD_m$  and the ``moreover" part holds (\cite[\Luoma{2}.3.23]{abbes2016p}). Since $\sigma_m-1:D_m\to D_m$ is injective whose cokernel is killed by $p\pi\ak{m}_{F_m}$ by \ref{lem:ari-D}.(\ref{item:lem:ari-D-2}), the kernel and cokernel of $\sigma_m-1:D_m/p^rD_m\to D_m/p^rD_m$ are also killed by $p\pi\ak{m}_{F_m}$ by \ref{lem:pi-isom}. Then, the conclusion follows.
	
	(\ref{item:lem:ari-sigma-coh-2}) As $\gal(F_\infty/F_m)= \bb{Z}_p\sigma_m$, $\rr\Gamma(\gal(F_\infty/F_m),(\ca{O}_{F_\infty}/p^r\ca{O}_{F_\infty})\{n\})$ is represented by $\chi(\sigma_m)^n\cdot\sigma_m-1:\ca{O}_{F_\infty}/p^r\ca{O}_{F_\infty}\to \ca{O}_{F_\infty}/p^r\ca{O}_{F_\infty}$ (\cite[\Luoma{2}.3.23]{abbes2016p} and \eqref{eq:defn:int-tate-twist-3}), where $\chi:\gal(F_\infty/F_m)\iso 1+p^m\bb{Z}_p$ is the cyclotomic character \eqref{eq:lem:n_0-2}. As $\sigma_m$ is a topological generator of $\gal(F_\infty/F_m)$, we have $\chi(\sigma_m)^n\in 1+np^m\bb{Z}_p^\times$ as $\log(\chi(\sigma_m)^n)=n\log(\chi(\sigma_m))\in np^m\bb{Z}_p^\times$. We put $\lambda=\chi(\sigma_m)^{-n}$ so that $|\lambda-1|=|np^m|<|p\pi\ak{m}_{F_m}|$. Then, $\sigma_m-\lambda:\ca{O}_{F_m}\oplus D_m\longrightarrow \ca{O}_{F_m}\oplus D_m$ is injective with cokernel killed by $np^m$ by \ref{lem:ari-D}.(\ref{item:lem:ari-D-3}).
	Consider the commutative diagram
	\begin{align}
		\xymatrix{
			\ca{O}_{F_m}\oplus D_m\ar[d]\ar[rr]^-{\sigma_m-\lambda} &&\ca{O}_{F_m}\oplus D_m\ar[d]^-{\cdot \lambda^{-1}}\\
			\ca{O}_{F_\infty}\ar[rr]^-{\chi(\sigma_m)^n\cdot\sigma_m-1}&& \ca{O}_{F_\infty}
		}
	\end{align}
	where the vertical arrows are injective with cokernel killed by $\pi\ak{m}_{F_m}$ (\ref{lem:ari-D}.(\ref{item:lem:ari-D-1})). We see that $\chi(\sigma_m)^n\cdot\sigma_m-1:\ca{O}_{F_\infty}\to \ca{O}_{F_\infty}$ is injective (by inverting $p$) with cokernel killed by $np^m\pi\ak{m}_{F_m}$. Then, the kernel and cokernel of $\chi(\sigma_m)^n\cdot\sigma_m-1:\ca{O}_{F_\infty}/p^r\ca{O}_{F_\infty}\to \ca{O}_{F_\infty}/p^r\ca{O}_{F_\infty}$ are both killed by $np^m\pi\ak{m}_{F_m}$ by \ref{lem:pi-isom}.
\end{proof}

\begin{myrem}\label{rem:ari-D}
	In \eqref{eq:lem:ari-sigma-coh-1}, the $\ca{O}_{F_m}/p^r\ca{O}_{F_m}$-module $\ho_{\bb{Z}_p}(\gal(F_\infty/F_m),\ca{O}_{F_m}/p^r\ca{O}_{F_m})$ is free with basis $p^{-m}(\log\circ\chi)$. Indeed, $\log\circ\chi:\gal(F_\infty/F_m)\iso p^m\bb{Z}_p$ \eqref{eq:lem:n_0-2} is a canonical isomorphism and $\ho_{\bb{Z}_p}(p^m\bb{Z}_p,\ca{O}_{F_m}/p^r\ca{O}_{F_m})$ is free with basis $f:p^m\bb{Z}_p\to\ca{O}_{F_m}/p^r\ca{O}_{F_m}$ sending $p^m$ to $1$.
\end{myrem}

\begin{myprop}\label{prop:ari-D}
	Assume that $F$ is arithmetic. Then, there exists a nonzero element $\pi\in\ca{O}_F$ such that the following statements hold for any $m,r\in\bb{N}$.
	\begin{enumerate}
		\renewcommand{\labelenumi}{{\rm(\theenumi)}}
		\item For $q=0$, the canonical morphism 
		\begin{align}\label{eq:prop:ari-D-1}
			\ca{O}_{F_m}/p^r\ca{O}_{F_m}\longrightarrow H^0(\gal(F_\infty/F_m),\ca{O}_{F_\infty}/p^r\ca{O}_{F_\infty})
		\end{align}
		has kernel and cokernel killed by $\pi$.\label{item:prop:ari-D-1}
		\item For $q=1$, the canonical morphism
		\begin{align}\label{eq:prop:ari-D-2}
			\ca{O}_{F_m}/p^r\ca{O}_{F_m}\longrightarrow H^1(\gal(F_\infty/F_m),\ca{O}_{F_\infty}/p^r\ca{O}_{F_\infty}),
		\end{align}
		sending $1$ to $p^{-m}(\log\circ \chi)$ \eqref{eq:lem:n_0-2}, has kernel and cokernel killed by $\pi$.\label{item:prop:ari-D-2}
		\item For any $q\in\bb{N}_{\geq 2}$, $H^q(\gal(F_\infty/F_m),\ca{O}_{F_\infty}/p^r\ca{O}_{F_\infty})$ is killed by $\pi$.\label{item:prop:ari-D-3}
		\item For any nonzero integer $n$ and any $q\in\bb{N}$, $H^q(\gal(F_\infty/F_m),(\ca{O}_{F_\infty}/p^r\ca{O}_{F_\infty})\{n\})$ is killed by $n^2p^m\pi$.\label{item:prop:ari-D-4}
	\end{enumerate}
\end{myprop}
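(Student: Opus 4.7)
My plan is to reduce Proposition~\ref{prop:ari-D} to Lemma~\ref{lem:ari-sigma-coh} by two successive reductions. First I replace $F$ by $F' := F_{n_p}$, which contains $\zeta_{p^{n_p}}$, is still arithmetic by \ref{prop:ari-geo-basic}.(\ref{item:prop:ari-geo-basic-2}), and satisfies $[F':F]\mid n_p(p-1)$ by \ref{lem:n_0}. The Hochschild--Serre spectral sequence for the normal subgroup $\gal(F_\infty/F'\cdot F_m)\subseteq \gal(F_\infty/F_m)$ reduces each statement over $F$ to the corresponding one over $F'$, with the cost of a multiplicative constant depending on $[F':F]$ that I absorb into the final choice of $\pi$. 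Hence I may assume $\zeta_{p^{n_p}}\in F$, and define $n_0\geq n_p$ to be maximal with $\zeta_{p^{n_0}}\in F$. The arithmeticity of $F$ supplies a nonzero $\pi_0\in \mathcal{O}_F$ annihilating $\Omega^1_{\mathcal{O}_F/\bb{Z}_p[\zeta_{p^{n_0}}]}[p^\infty]$.

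For $m\geq n_0$, Lemma~\ref{lem:ari-sigma-coh}.(\ref{item:lem:ari-sigma-coh-1}) together with Remark~\ref{rem:ari-D} yield statements (1), (2), (3) directly, after absorbing the ideals $\pi_0\ak{m}_F$ and $p\pi_0^2\ak{m}_F^2$ into $\pi$. For $m\leq n_0$, the equality $F_m=F=F_{n_0}$ makes the Galois group and hence the relevant cohomology groups coincide with the $m=n_0$ case; only the map in (2) differs, as the cocycle $p^{-m}(\log\chi)$ equals $p^{n_0-m}$ times the corresponding cocycle at level $n_0$, introducing a factor $p^{n_0-m}\leq p^{n_0}$ in the kernel and cokernel bounds, which I absorb by requiring $p^{n_0}\mid\pi$. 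This settles (1)--(3).

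For (4), I apply Lemma~\ref{lem:ari-sigma-coh}.(\ref{item:lem:ari-sigma-coh-2}) directly whenever $|np^m|<|p\pi_0\ak{m}_{F_m}|$, yielding killer $np^m\pi_0\ak{m}_{F_m}$, which divides $n^2p^m\pi$. When this condition fails I shift to the level $m'=m+v_p(n)$, where $|np^{m'}|=|n^2p^m|$, and combine the lemma at level $m'$ with the Hochschild--Serre spectral sequence for the cyclic quotient $\gal(F_{m'}/F_m)\cong\bb{Z}/p^{v_p(n)}\bb{Z}$. This contributes an additional factor $p^{v_p(n)}$ from the finite cyclic cohomology, and using that $n\cdot p^{v_p(n)}$ and $n^2$ have equal $p$-adic valuations (differing only by a $p$-adic unit), the combined killer is of the form $n^2 p^m\cdot c$ for a constant $c$ depending only on $\pi_0$, absorbed into $\pi$. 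The main obstacle I anticipate is verifying the bound in (4) uniformly for $n\in\bb{Z}\setminus\{0\}$ in the narrow range where the hypothesis of the lemma fails both at level $m$ and at level $m'$; in this regime both $v_p(n)$ and $m$ are bounded by a constant depending on $\pi_0$, and I expect to handle it by decomposing $\mathcal{O}_{F_\infty}=\mathcal{O}_{F_m}\oplus D_m$ via \ref{lem:ari-D}.(\ref{item:lem:ari-D-1}) and analyzing the operator $\sigma_m-\chi(\sigma_m)^{-n}$ separately on each summand, using that it acts by the scalar $1-\chi(\sigma_m)^{-n}=np^m u$ on $\mathcal{O}_{F_m}$ and applying \ref{lem:ari-D}.(\ref{item:lem:ari-D-2}) on $D_m$ for the finite exceptional set.
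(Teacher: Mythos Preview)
Your treatment of (1)--(3) is correct and close to the paper's; in fact your observation that $F_m=F_{n_0}$ for $m\le n_0$ (after replacing $F$ by $F_{n_p}$) slightly streamlines the small-$m$ case compared with the paper's uniform Hochschild--Serre argument.

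For (4), however, your route is more complicated than needed and the ``narrow range'' you flag is a genuine gap. The paper avoids the $n$-dependent shift $m\mapsto m'=m+v_p(n)$ entirely: it fixes once and for all an integer $l\ge n_0$ with $|p^l|<|p^2\pi_0|$. Then for \emph{every} $m\ge l$ and \emph{every} nonzero integer $n$ one has $|np^m|\le|p^m|\le|p^l|<|p^2\pi_0|\le|p\pi_0\ak{m}_{F_m}|$ (the last inequality since $|\ak{m}_{F_m}|\ge|p|$), so Lemma~\ref{lem:ari-sigma-coh}(\ref{item:lem:ari-sigma-coh-2}) applies uniformly in $n$. For the finitely many $m<l$, the Hochschild--Serre spectral sequence for the finite quotient $\gal(F_l/F_m)$ (together with Corollary~\ref{cor:coh-torsion}) has only two nonzero rows $j\in\{0,1\}$, each killed by $np^l\pi_0\ak{m}_{F_l}$; hence the abutment is killed by $(np^l\pi_0)^2$ up to $\ak{m}$, and the ratio $p^{2l}\pi_0^2/p^m$ is bounded (since $0\le m<l$) and absorbed into $\pi$. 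This single $n$-independent shift to level $l$ is the missing idea.

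Your proposed direct analysis on $D_m$ in the narrow range does not work as written: Lemma~\ref{lem:ari-D}(\ref{item:lem:ari-D-2}) concerns $\sigma_m-1$, whereas you need $\sigma_m-\lambda$ with $\lambda=\chi(\sigma_m)^{-n}$, and part~(\ref{item:lem:ari-D-3}) requires precisely the hypothesis $|\lambda-1|<|p\pi_0\ak{m}_{F_m}|$ that has failed. Although $m$ and $v_p(n)$ are bounded in this regime, $\lambda$ still varies with $n$, so there is no finite family of operators to dispose of by hand; the lower bound $|\sigma_m(y)-\lambda y|\ge|p\pi_0\ak{m}_{F_m}|\cdot|y|$ from Lemma~\ref{lem:ari-trace}(\ref{item:lem:ari-trace-2}) is exactly what breaks down here.
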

\begin{proof}
	 As $F_{n_p}$ is arithmetic (by \ref{lem:n_0} and \ref{prop:ari-geo-basic}.(\ref{item:prop:ari-geo-basic-2})), we fix a nonzero element $ \pi\in\mrm{Ann}_{\ca{O}_{F_{n_0}}}(\Omega^1_{\ca{O}_{F_{n_0}}/\bb{Z}_p[\zeta_{p^{n_0}}]}[p^\infty])$. We fix $l\in\bb{N}_{\geq n_0}$ such that $|p^l|<|p^2\pi|$. Then, for any integer $m\geq l$ (so that $|np^m|<|p\pi\ak{m}_{F_m}|$ for any $n\in\bb{Z}$), all the statements have been proved in \ref{lem:ari-sigma-coh}. It remains to consider the finitely many cases $m<l$. For any $n\in\bb{Z}$ and $r\in\bb{N}$, consider the canonical convergent spectral sequence (\cite[\Luoma{5}.11.7]{abbes2016p})
	 \begin{align}\label{eq:prop:ari-D-3}
	 	E_2^{i,j}=H^i(\gal(F_l/F_m),H^j(\gal(F_\infty/F_l),(\ca{O}_{F_\infty}/p^r)\{n\}))\Rightarrow H^{i+j}(\gal(F_\infty/F_m),(\ca{O}_{F_\infty}/p^r)\{n\}).
	 \end{align}
	 Note that $E_2^{i,j}=0$ for any $j\notin\{0,1\}$ by \ref{lem:ari-sigma-coh}. Moreover, if $n\neq 0$, then $n\cdot E_2^{i,j}$ is killed by a power of $p$ independent of $i,j,r$.
	 
	 (\ref{item:prop:ari-D-1}) Taking $i=j=n=0$, as $\ca{O}_{F_m}/p^r\ca{O}_{F_m}
	 \to H^0(\gal(F_l/F_m),\ca{O}_{F_l}/p^r\ca{O}_{F_l})$ and $\ca{O}_{F_l}/p^r\ca{O}_{F_l}\to H^0(\gal(F_\infty/F_l),\ca{O}_{F_\infty}/p^r)$ has kernel and cokernel killed by a power of $p$ independent of $r$ by \ref{cor:coh-torsion} and \ref{lem:ari-sigma-coh} respectively, so is $\ca{O}_{F_m}/p^r\ca{O}_{F_m}\to E_2^{0,0}=H^0(\gal(F_\infty/F_m),\ca{O}_{F_\infty}/p^r)$.

	 (\ref{item:prop:ari-D-2}) Taking  $i=n=0$ and $j=1$, as $\ca{O}_{F_m}/p^r\ca{O}_{F_m}
	 \to H^0(\gal(F_l/F_m),\ca{O}_{F_l}/p^r\ca{O}_{F_l})$ and $p^{-l}(\log\circ\chi):\ca{O}_{F_l}/p^r\ca{O}_{F_l}\to H^1(\gal(F_\infty/F_l),\ca{O}_{F_\infty}/p^r)$ has kernel and cokernel killed by a power of $p$ independent of $r$ by \ref{cor:coh-torsion} and \ref{lem:ari-sigma-coh} respectively, so is $p^{-l}(\log\circ\chi):\ca{O}_{F_m}/p^r\ca{O}_{F_m}\to E_2^{0,1}$ and thus so is $p^{-m}(\log\circ\chi):\ca{O}_{F_m}/p^r\ca{O}_{F_m}\to E_2^{0,1}$. Note that the latter factors through $H^1(\gal(F_\infty/F_m),\ca{O}_{F_\infty}/p^r)$.
	 
	 On the other hand, taking $i=1$ and $j=n=0$, as $0
	 \to H^1(\gal(F_l/F_m),\ca{O}_{F_l}/p^r\ca{O}_{F_l})$ and $\ca{O}_{F_l}/p^r\ca{O}_{F_l}\to H^0(\gal(F_\infty/F_l),\ca{O}_{F_\infty}/p^r)$ has kernel and cokernel killed by a power of $p$ independent of $r$ by \ref{cor:coh-torsion} and \ref{lem:ari-sigma-coh} respectively, so is $0\to E_2^{1,0}$. Therefore, the conclusion follows from the canonical exact sequence $0\to E_2^{1,0}\to H^1(\gal(F_\infty/F_m),\ca{O}_{F_\infty}/p^r)\to E_2^{0,1}$.
	 
	 (\ref{item:prop:ari-D-3}) Taking $n=0$, for any $q\in\bb{N}_{\geq 2}$, as $E_2^{q,0}$ and $E_2^{q-1,1}$ are killed by a power of $p$ independent of $r$ by a similar argument of (\ref{item:prop:ari-D-2}), the conclusion follows from the canonical exact sequence $E_2^{q,0}\to H^q(\gal(F_\infty/F_m),\ca{O}_{F_\infty}/p^r)\to E_2^{q-1,1}$.
	 
	 (\ref{item:prop:ari-D-4}) For $n\neq 0$, as $n\cdot E_2^{i,j}$ is killed by a power of $p$ independent of $i,j,r$, the conclusion follows from the canonical exact sequence $ E_2^{q,0}\to H^q(\gal(F_\infty/F_m),(\ca{O}_{F_\infty}/p^r)\{n\})\to E_2^{q-1,1}$ for any $q\in\bb{N}$.
\end{proof}

\begin{mycor}\label{cor:ari-D}
	Assume that $F$ is arithmetic. Then, for any $j\in\bb{N}$, there exists a nonzero element $\pi\in\ca{O}_F$ (depending on $j$) such that the following statements hold for any $r\in\bb{N}$.
	\begin{enumerate}
		\renewcommand{\labelenumi}{{\rm(\theenumi)}}
		\item For $q=0$, the canonical morphism 
		\begin{align}\label{eq:cor:ari-D-1}
			\Omega^j_{\ca{O}_F/\bb{Z}_p}/p^r\Omega^j_{\ca{O}_F/\bb{Z}_p}\longrightarrow H^0(\gal(F_\infty/F),\Omega^j_{\ca{O}_{F_\infty}/\bb{Z}_p}/p^r\Omega^j_{\ca{O}_{F_\infty}/\bb{Z}_p})
		\end{align}
		has kernel and cokernel killed by $\pi$.\label{item:cor:ari-D-1}
		\item For $q=1$, the canonical morphism
		\begin{align}\label{eq:cor:ari-D-2}
			\Omega^j_{\ca{O}_F/\bb{Z}_p}/p^r\Omega^j_{\ca{O}_F/\bb{Z}_p}\longrightarrow H^1(\gal(F_\infty/F),\Omega^j_{\ca{O}_{F_\infty}/\bb{Z}_p}/p^r\Omega^j_{\ca{O}_{F_\infty}/\bb{Z}_p}),
		\end{align}
		sending $\omega$ to $(\log\circ \chi)\cdot \omega$ \eqref{eq:lem:n_0-2}, has kernel and cokernel killed by $\pi$.\label{item:cor:ari-D-2}
		\item For any $i\in\bb{N}_{\geq 2}$, $H^i(\gal(F_\infty/F),\Omega^j_{\ca{O}_{F_\infty}/\bb{Z}_p}/p^r\Omega^j_{\ca{O}_{F_\infty}/\bb{Z}_p})$ is killed by $\pi$.\label{item:cor:ari-D-3}
		\item For any nonzero integer $n$ and any $i\in\bb{N}$, $H^i(\gal(F_\infty/F),(\Omega^j_{\ca{O}_{F_\infty}/\bb{Z}_p}/p^r\Omega^j_{\ca{O}_{F_\infty}/\bb{Z}_p})\{n\})$ is killed by $n^2\pi$.\label{item:cor:ari-D-4}
	\end{enumerate}	
\end{mycor}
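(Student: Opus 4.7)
The plan is to bootstrap from Proposition \ref{prop:ari-D}, which handles the case $j=0$, via the comparison of differentials provided by Proposition \ref{prop:ari-diff}. The key inputs are: (a) the flatness of $\ca{O}_{F_\infty}$ over $\ca{O}_F$ (as a torsion-free module over a valuation ring), (b) the fact that $\gal(F_\infty/F)$ acts trivially on $\Omega^j_{\ca{O}_F/\bb{Z}_p}$, and (c) the bounded $p$-power torsion of $\Omega^j_{\ca{O}_F/\bb{Z}_p}$ guaranteed by arithmeticity.

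First I would upgrade Proposition \ref{prop:ari-diff} to a mod-$p^r$ and wedge-power comparison. By \eqref{eq:prop:ari-diff-2} and the identification $\widehat{\Omega}^1_{\ca{O}_{F_\infty}/\ca{O}_{K_\infty}}/p^r=\Omega^1_{\ca{O}_{F_\infty}/\bb{Z}_p}/p^r$ (from \eqref{eq:prop:rhom-2}), there is a canonical morphism
\[
(\ca{O}_{F_\infty}/p^r)\otimes_{\ca{O}_F}\Omega^1_{\ca{O}_F/\bb{Z}_p}\longrightarrow \Omega^1_{\ca{O}_{F_\infty}/\bb{Z}_p}/p^r
\]
with kernel and cokernel killed by $\pi_0^2$ for some nonzero $\pi_0\in\ca{O}_F$ independent of $r$. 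Taking $j$-th wedge powers over $\ca{O}_{F_\infty}/p^r$ and applying \ref{lem:pi-isom} yields a canonical $G_F$-equivariant comparison morphism
\[
(\ca{O}_{F_\infty}/p^r)\otimes_{\ca{O}_F}\Omega^j_{\ca{O}_F/\bb{Z}_p}\longrightarrow \Omega^j_{\ca{O}_{F_\infty}/\bb{Z}_p}/p^r
\]
whose kernel and cokernel are killed by a power $\pi_0^{c(j)}$ depending only on $j$. The analogous statement for the twisted coefficients $\{n\}$ follows by tensoring, as the twist is a purely coefficient-side operation that commutes with the comparison.

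Next I would compute the left-hand-side cohomology in terms of $\Omega^j_{\ca{O}_F/\bb{Z}_p}$. After first passing to $F_m$ for $m\geq n_0$ as in \ref{lem:n_0} (which identifies $\gal(F_\infty/F_m)\cong p^m\bb{Z}_p$), the continuous cohomology $\rr\Gamma(\gal(F_\infty/F_m),-)$ is represented by the two-term Koszul complex $C^\bullet=[\,\ca{O}_{F_\infty}/p^r\xrightarrow{\sigma_m-1}\ca{O}_{F_\infty}/p^r\,]$ of flat $\ca{O}_F/p^r$-modules. Since $\gal$ acts trivially on $\Omega^j_{\ca{O}_F/\bb{Z}_p}$, we obtain
\[
\rr\Gamma(\gal(F_\infty/F_m),(\ca{O}_{F_\infty}/p^r)\otimes_{\ca{O}_F}\Omega^j_{\ca{O}_F/\bb{Z}_p})\;\simeq\; C^\bullet\otimes^{\dl}_{\ca{O}_F/p^r}(\Omega^j_{\ca{O}_F/\bb{Z}_p}/p^r).
\]
The derived and ordinary tensor products differ by $\mathrm{Tor}$ terms supported on $\Omega^j_{\ca{O}_F/\bb{Z}_p}[p^\infty]$, which by arithmeticity is killed by a uniform power of $\pi_0$; the error is therefore absorbed into the $\pi$-power bound. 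Combining this with the four parts of Proposition \ref{prop:ari-D} applied to $C^\bullet$ yields, after another application of \ref{lem:pi-isom} together with the comparison morphism above, the analogues of (\ref{item:cor:ari-D-1})--(\ref{item:cor:ari-D-4}) over $F_m$ (with the map in (\ref{item:cor:ari-D-2}) sending $\omega$ to $p^{-m}(\log\circ\chi)\cdot\omega$, per \ref{rem:ari-D}).

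Finally, the descent from $F_m$ back to $F$ proceeds exactly as in the proof of Proposition \ref{prop:ari-D} via the Hochschild-Serre spectral sequence for the finite extension $F\to F_m$, where the cohomology of the finite group $\gal(F_m/F)$ with coefficients in $\Omega^j_{\ca{O}_{F_m}/\bb{Z}_p}/p^r$ is controlled by the trace argument of \ref{cor:coh-torsion} (applied to the bounded-torsion-plus-free description of $\Omega^j_{\ca{O}_{F_m}/\bb{Z}_p}$), and rescales the bound $p^{-m}(\log\circ\chi)$ to $\log\circ\chi$ as in the statement. The main obstacle is the uniform control (in $r$) of the Tor contributions in the derived tensor product above, since $\Omega^j_{\ca{O}_F/\bb{Z}_p}$ is not flat over $\ca{O}_F$; this is exactly the point where arithmeticity is indispensable, as it is what forces the $p$-primary torsion of $\Omega^j_{\ca{O}_F/\bb{Z}_p}$ to be globally bounded, allowing the Tor terms to be absorbed into a single power of $\pi$.
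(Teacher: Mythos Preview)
Your proposal is essentially correct but takes a more circuitous route than the paper, and one step is only sketched where it would need real work.

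The paper's proof is shorter in two ways. First, it applies Proposition~\ref{prop:ari-D} directly at $m=0$; that proposition is already stated for all $m\in\bb{N}$, and its own proof already contains the Hochschild--Serre descent from $F_l$ to $F_m$. Your detour through $F_m$ with $m\geq n_0$ followed by a second descent back to $F$ is therefore redundant, and your descent step is not fully justified: Corollary~\ref{cor:coh-torsion} is about coefficients $\ca{O}_{K'}/\pi\ca{O}_{K'}$, not about $\Omega^j_{\ca{O}_{F_m}/\bb{Z}_p}/p^r$, so you would need to reduce to that case via a bounded-torsion-plus-flat splitting before invoking it.

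Second, and more interestingly, the paper sidesteps your derived-tensor-product-and-Tor-control argument entirely. Instead of controlling $\mrm{Tor}_1$ terms, it replaces $\Omega^j_{\ca{O}_F/\bb{Z}_p}$ by the \emph{flat} module $Q^j=\wedge^j(\Omega^1_{\ca{O}_F/\bb{Z}_p}/\Omega^1_{\ca{O}_F/\bb{Z}_p}[p^\infty])$; since arithmeticity bounds $\Omega^1[p^\infty]$, the map $\Omega^j\to Q^j$ has kernel and cokernel killed by a fixed power of $p$ depending only on $j$. For the flat $Q^j$ one has the clean identification
\[
H^i(\gal(F_\infty/F),(\ca{O}_{F_\infty}\otimes_{\ca{O}_F}Q^j/p^r)\{n\})=H^i(\gal(F_\infty/F),(\ca{O}_{F_\infty}/p^r)\{n\})\otimes_{\ca{O}_F}Q^j,
\]
and Proposition~\ref{prop:ari-D} at $m=0$ gives everything. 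Your Tor approach also works (the only nonzero Tor term involves $H^1$ of the Koszul complex, which is $\pi$-close to the flat module $\ca{O}_{F_m}/p^r$), but the flat replacement $Q^j$ is the cleaner device and is worth remembering.
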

\begin{proof}
	As $F$ is arithmetic, the canonical morphism $\ca{O}_{F_\infty}\widehat{\otimes}_{\ca{O}_F}\Omega^1_{\ca{O}_F/\bb{Z}_p}\to \widehat{\Omega}^1_{\ca{O}_{F_\infty}/\bb{Z}_p}$ has kernel and cokernel killed by a certain power of $p$ by \ref{prop:ari-diff}. Thus, $H^i(\gal(F_\infty/F),(\ca{O}_{F_\infty}\otimes_{\ca{O}_F}\Omega^j_{\ca{O}_F/\bb{Z}_p}/p^r)\{n\})\to H^i(\gal(F_\infty/F),(\Omega^j_{\ca{O}_{F_\infty}/\bb{Z}_p}/p^r)\{n\})$ has kernel and cokernel killed by a power of $p$ depending on $j$ but independent on $i,r,n$ by \ref{lem:pi-isom}.
	
	On the other hand, $\Omega^1_{\ca{O}_F/\bb{Z}_p}[p^\infty]$ is bounded so that $\Omega^j_{\ca{O}_F/\bb{Z}_p}\to Q^j=\wedge^j (\Omega^1_{\ca{O}_F/\bb{Z}_p}/\Omega^1_{\ca{O}_F/\bb{Z}_p}[p^\infty])$ and thus $H^i(\gal(F_\infty/F),(\ca{O}_{F_\infty}\otimes_{\ca{O}_F}\Omega^j_{\ca{O}_F/\bb{Z}_p}/p^r)\{n\})\to H^i(\gal(F_\infty/F),(\ca{O}_{F_\infty}\otimes_{\ca{O}_F}Q^j/p^r)\{n\})$ have kernel and cokernel killed by a certain power of $p$ (depending on $j$ but not on $i,r,n$) by \ref{lem:pi-isom}. As $Q^j$ is flat over $\ca{O}_F$, we have $H^i(\gal(F_\infty/F),(\ca{O}_{F_\infty}\otimes_{\ca{O}_F}Q^j/p^r)\{n\})=H^i(\gal(F_\infty/F),(\ca{O}_{F_\infty}/p^r)\{n\})\otimes_{\ca{O}_F}Q^j$ (\cite[3.15]{abbes2016p}). Therefore, the conclusion follows from \ref{prop:ari-D} (for the case $m=0$) and a diagram chasing.
\end{proof}

\begin{mythm}\label{thm:ari}
	Assume that $F$ is arithmetic. Let $q\in\bb{N}$.
	\begin{enumerate}
		\renewcommand{\labelenumi}{{\rm(\theenumi)}}
		\item The canonical morphism induced by the cup product of \eqref{eq:lem:comp-1},
		\begin{align}\label{eq:thm:ari-1}
			\Omega^q_{\ca{O}_F/\bb{Z}_p}/p^r\Omega^q_{\ca{O}_F/\bb{Z}_p}\longrightarrow H^q(G_F,(\ca{O}_{\overline{F}}/p^r\ca{O}_{\overline{F}})\{q\}),
		\end{align}
		has kernel and cokernel killed by a power of $p$ independent of $r$.
		\label{item:thm:ari-1}
		\item The canonical morphism induced by the cup product of \eqref{eq:thm:ari-1} with $\log\circ\chi\in H^1(G_F,\ca{O}_{\overline{F}}/p^r\ca{O}_{\overline{F}})$ \eqref{eq:lem:n_0-1},
		\begin{align}\label{eq:thm:ari-2}
			\Omega^{q-1}_{\ca{O}_F/\bb{Z}_p}/p^r\Omega^{q-1}_{\ca{O}_F/\bb{Z}_p}\longrightarrow H^q(G_F,(\ca{O}_{\overline{F}}/p^r\ca{O}_{\overline{F}})\{q-1\}),
		\end{align}
		has kernel and cokernel killed by a power of $p$ independent of $r$.\label{item:thm:ari-2}
		\item For any integer $n\notin\{q,q-1\}$, $H^q(G_F,(\ca{O}_{\overline{F}}/p^r\ca{O}_{\overline{F}})\{n\})$ is killed by a power of $p$ independent of $r$.\label{item:thm:ari-3}
	\end{enumerate}
\end{mythm}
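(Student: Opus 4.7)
The plan is to apply the Hochschild--Serre spectral sequence for $G_{F_\infty} \trianglelefteq G_F$ with quotient $\gal(F_\infty/F)$,
\begin{align*}
E_2^{i,j} = H^i(\gal(F_\infty/F), H^j(G_{F_\infty}, (\ca{O}_{\overline{F}}/p^r\ca{O}_{\overline{F}})\{n\})) \Longrightarrow H^{i+j}(G_F, (\ca{O}_{\overline{F}}/p^r\ca{O}_{\overline{F}})\{n\}).
\end{align*}
Since $F$ is arithmetic, $F_\infty$ contains the pre-perfectoid subfield $\bigcup_{m \in \bb{N}}\bb{Q}_p(\zeta_{p^m})$ and is therefore geometric by \ref{lem:ari-geo-basic}.(\ref{item:lem:ari-geo-basic-2}); it also contains a compatible system of $p$-power roots of unity, so \ref{thm:coh} applies to $F_\infty$ in place of $F$. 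Tensoring the resulting comparison for $q = j$ over $\ca{O}_{\widehat{F_\infty}}$ with $T_p(\Omega^1_{\ca{O}_{F_\infty}/\bb{Z}_p})^{\otimes (n-j)}$ produces a $\gal(F_\infty/F)$-equivariant morphism
\begin{align*}
(\Omega^j_{\ca{O}_{F_\infty}/\bb{Z}_p}/p^r\Omega^j_{\ca{O}_{F_\infty}/\bb{Z}_p})\{n-j\} \longrightarrow H^j(G_{F_\infty}, (\ca{O}_{\overline{F}}/p^r)\{n\})
\end{align*}
whose kernel and cokernel are killed by a bounded power of $p$, so that $E_2^{i,j}$ is computed up to bounded torsion by \ref{cor:ari-D} applied to $\Omega^j_{\ca{O}_{F_\infty}/\bb{Z}_p}/p^r$ with twist index $n - j$.

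For assertion (1) take $n = q$. On the diagonal $i + j = q$, the twist index $n - j = i$ vanishes only at $(i, j) = (0, q)$, where \ref{cor:ari-D}.(\ref{item:cor:ari-D-1}) identifies $E_2^{0, q}$ with $\Omega^q_{\ca{O}_F/\bb{Z}_p}/p^r$ up to bounded torsion. All remaining diagonal terms have $i \geq 1$ and are killed by $i^2 \pi$ via \ref{cor:ari-D}.(\ref{item:cor:ari-D-4}). The differentials $d_r : E_r^{0, q} \to E_r^{r, q - r + 1}$ land in terms of twist index $r - 1 \neq 0$ (hence bounded torsion), while incoming differentials vanish for index reasons, giving $E_\infty^{0, q} \cong E_2^{0, q}$ up to bounded torsion. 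The edge map supplies the required identification with $H^q(G_F, (\ca{O}_{\overline{F}}/p^r)\{q\})$, and its compatibility with the canonical map \eqref{eq:lem:comp-1} follows from the functoriality of the cup-product construction of \ref{prop:comp}.

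For assertion (2) take $n = q - 1$; then $n - j = 0$ forces $(i, j) = (1, q-1)$, where \ref{cor:ari-D}.(\ref{item:cor:ari-D-2}) identifies $E_2^{1, q-1}$ with $\Omega^{q-1}_{\ca{O}_F/\bb{Z}_p}/p^r$ via cup-product with $\log \circ \chi$ (cf.\ \ref{rem:ari-D}), up to bounded torsion. All other diagonal terms and outgoing differentials have nonzero twist index and are bounded torsion, and the inflation of $\log \circ \chi \in H^1(\gal(F_\infty/F), \ca{O}_F/p^r)$ coincides with the class used in \eqref{eq:thm:ari-2}, matching the edge map with the stated cup-product. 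For assertion (3), when $n \notin \{q, q-1\}$, the only pairs $(i, j)$ on the diagonal $i + j = q$ with $n - j = 0$ and $i \in \{0, 1\}$ would require $n \in \{q, q-1\}$, which is excluded; hence every diagonal term falls into either case (\ref{item:cor:ari-D-3}) or case (\ref{item:cor:ari-D-4}) of \ref{cor:ari-D} and is killed by a bounded power of $p$, so the abutment is too.

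The principal subtlety is verifying the $\gal(F_\infty/F)$-equivariance of the comparison produced by \ref{thm:coh} together with its twist. This follows from the naturality of \eqref{eq:lem:comp-1} in the base valuation field, but tracking the $\gal(F_\infty/F)$-action on the Breuil--Kisin--Fargues twist requires the factorization $T_p(\Omega^1_{\ca{O}_{F_\infty}/\bb{Z}_p}) \cong \ak{a}_{F_\infty} \ca{O}_{\widehat{F_\infty}}(1)$ of \eqref{eq:cor:tate-cyclo-2}, which cleanly separates the cyclotomic character from the fractional-ideal action on $\ak{a}_{F_\infty}$ and ensures that \ref{cor:ari-D} truly governs each $E_2^{i, j}$.
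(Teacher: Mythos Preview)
Your proposal is correct and follows essentially the same approach as the paper: both run the Hochschild--Serre spectral sequence for $G_{F_\infty}\trianglelefteq G_F$, invoke \ref{thm:coh} at $F_\infty$ to replace $H^j(G_{F_\infty},-)$ by $(\Omega^j_{\ca{O}_{F_\infty}/\bb{Z}_p}/p^r)\{n-j\}$ up to bounded torsion, and then apply \ref{cor:ari-D} to the resulting $\Sigma$-cohomology to isolate the single surviving term on each diagonal. The paper organizes the spectral-sequence analysis via the filtration $\mrm{F}^\bullet$ on the abutment rather than by tracking individual differentials, and your explicit remark on $\gal(F_\infty/F)$-equivariance of the twisted comparison (via \eqref{eq:cor:tate-cyclo-2}) makes precise a point the paper leaves implicit, but these are presentational differences only.
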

\begin{proof}
	We put $\Sigma=\gal(F_\infty/F)$ and for any $n\in\bb{Z}$, consider the convergent spectral sequence (\cite[\Luoma{5}.11.7]{abbes2016p})
	\begin{align}\label{eq:thm:ari-3}
		E_2^{i,j}=H^i(\Sigma,H^j(G_{F_\infty},(\ca{O}_{\overline{F}}/p^r\ca{O}_{\overline{F}})\{n\}))\Rightarrow H^{i+j}(G_F,(\ca{O}_{\overline{F}}/p^r\ca{O}_{\overline{F}})\{n\}).
	\end{align}
	Recall that the canonical morphism \eqref{eq:thm:coh-1},
	\begin{align}\label{eq:thm:ari-4}
		(\Omega^j_{\ca{O}_{F_\infty}/\bb{Z}_p}/p^r\Omega^j_{\ca{O}_{F_\infty}/\bb{Z}_p})\{n-j\}\longrightarrow H^j(G_{F_\infty},(\ca{O}_{\overline{F}}/p^r\ca{O}_{\overline{F}})\{n\}),
	\end{align}
	has kernel and cokernel killed by a power of $p$ depending on $j$ but not on $r,n$. Therefore, the canonical morphisms
	\begin{align}
		\Omega^n_{\ca{O}_F/\bb{Z}_p}/p^r\Omega^n_{\ca{O}_F/\bb{Z}_p}&\longrightarrow E_2^{0,n}=H^0(\Sigma,H^n(G_{F_\infty},(\ca{O}_{\overline{F}}/p^r\ca{O}_{\overline{F}})\{n\}))\label{eq:thm:ari-5}\\
		\Omega^n_{\ca{O}_F/\bb{Z}_p}/p^r\Omega^n_{\ca{O}_F/\bb{Z}_p}&\longrightarrow E_2^{1,n}=H^1(\Sigma,H^n(G_{F_\infty},(\ca{O}_{\overline{F}}/p^r\ca{O}_{\overline{F}})\{n\}))\label{eq:thm:ari-6}
	\end{align}
	have kernel and cokernel killed by a power of $p$ depending on $n$ but not on $r$ by \ref{cor:ari-D}.(\ref{item:cor:ari-D-1}, \ref{item:cor:ari-D-2}). Moreover, if $i\notin\{0,1\}$ or $j\neq n$, $E_2^{i,j}$ is killed by a power of $p$ depending on $j,n$ not on $i,r$ by \ref{cor:ari-D}.(\ref{item:cor:ari-D-3}, \ref{item:cor:ari-D-4}). Note that the spectral sequence \eqref{eq:thm:ari-3} induces a finite decreasing filtration $\mrm{F}^\bullet$ on $H^q(G_F,(\ca{O}_{\overline{F}}/p^r\ca{O}_{\overline{F}})\{n\})$ whose graded pieces $\mrm{gr}^i_{\mrm{F}}$ identifies with a subquotient of $E_2^{i,q-i}$. 
	
	(\ref{item:thm:ari-1}) Taking $n=q$, consider the canonical exact sequence
	\begin{align}\label{eq:thm:ari-7}
		0\longrightarrow \mrm{F}^1(H^q(G_F,(\ca{O}_{\overline{F}}/p^r)\{q\}))\longrightarrow H^q(G_F,(\ca{O}_{\overline{F}}/p^r)\{q\})\longrightarrow E_2^{0,q}.
	\end{align}
	On the one hand, $\mrm{F}^1(H^q(G_F,(\ca{O}_{\overline{F}}/p^r)\{q\}))$ is killed by $\prod_{i=1}^q\mrm{Ann}_{\ca{O}_F}(E_2^{i,q-i})$ and thus by a power of $p$ depending on $q$ but not on $r$. On the other hand, \eqref{eq:thm:ari-5} canonically factors through $H^q(G_F,(\ca{O}_{\overline{F}}/p^r)\{q\})$. Therefore, we see that the canonical morphism $\Omega^q_{\ca{O}_F/\bb{Z}_p}/p^r\to H^q(G_F,(\ca{O}_{\overline{F}}/p^r)\{q\})$ \eqref{eq:thm:ari-1} has kernel and cokernel killed by a power of $p$ depending on $q$ but not on $r$.
	
	(\ref{item:thm:ari-2}) Taking $n=q-1$, consider the canonical exact sequences
	\begin{align}
		0\longrightarrow \mrm{F}^1(H^q(G_F,(\ca{O}_{\overline{F}}/p^r)\{q-1\}))&\longrightarrow H^q(G_F,(\ca{O}_{\overline{F}}/p^r)\{q-1\})\longrightarrow E_2^{0,q},\label{eq:thm:ari-8}\\
		0\longrightarrow \mrm{F}^2(H^q(G_F,(\ca{O}_{\overline{F}}/p^r)\{q-1\}))&\longrightarrow \mrm{F}^1(H^q(G_F,(\ca{O}_{\overline{F}}/p^r)\{q-1\}))\longrightarrow E_2^{1,q-1}.
	\end{align}
	On the one hand, similar as above, $E_2^{0,q}$ and $\mrm{F}^2(H^q(G_F,(\ca{O}_{\overline{F}}/p^r)\{q-1\}))$ are killed by a power of $p$ depending on $q$ but not on $r$. On the other hand, \eqref{eq:thm:ari-6} canonically factors through $\mrm{F}^1(H^q(G_F,(\ca{O}_{\overline{F}}/p^r)\{q-1\}))$. Therefore, we see that the canonical morphism $\Omega^{q-1}_{\ca{O}_F/\bb{Z}_p}/p^r\longrightarrow H^q(G_F,(\ca{O}_{\overline{F}}/p^r)\{q-1\})$ \eqref{eq:thm:ari-2} has kernel and cokernel killed by a power of $p$ depending on $q$ but not on $r$.
	
	(\ref{item:thm:ari-3}) Taking $n\in\bb{Z}\setminus\{q,q-1\}$, we see that $E_2^{i,q-i}$ is killed by a power of $p$ depending on $i,q,n$ but not on $r$. Thus, so is $H^q(G_F,(\ca{O}_{\overline{F}}/p^r)\{n\})$.
\end{proof}

\begin{mycor}\label{cor:ari}
	Assume that $F$ is arithmetic. Let $q\in\bb{N}$.
	\begin{enumerate}
		\renewcommand{\labelenumi}{{\rm(\theenumi)}}
		\item The $q$-th Galois cohomology group $H^q(G_F,\ca{O}_{\widehat{\overline{F}}}\{q\})$ is $p$-adically complete. Moreover, the canonical morphism \eqref{eq:rem:prop:comp-2},
		\begin{align}\label{eq:cor:ari-1}
			\widehat{\Omega}^q_{\ca{O}_F/\bb{Z}_p}\longrightarrow H^q(G_F,\ca{O}_{\widehat{\overline{F}}}\{q\}),
		\end{align}
		has kernel and cokernel killed by a power of $p$. In particular, it induces a canonical isomorphism after inverting $p$,
		\begin{align}\label{eq:cor:ari-2}
			\widehat{\Omega}^q_{\ca{O}_F/\bb{Z}_p}[1/p]\iso H^q(G_F,\widehat{\overline{F}}(q)).
		\end{align}
		\label{item:cor:ari-1}
		\item The canonical morphism induced by the cup product of \eqref{eq:cor:ari-1} with $\log\circ\chi\in H^1(G_F,\ca{O}_{\widehat{\overline{F}}})$ \eqref{eq:lem:n_0-1},
		\begin{align}\label{eq:cor:ari-3}
			\widehat{\Omega}^{q-1}_{\ca{O}_F/\bb{Z}_p}\longrightarrow H^q(G_F,\ca{O}_{\widehat{\overline{F}}}\{q-1\}),
		\end{align}
		has kernel and cokernel killed by a power of $p$. In particular, it induces a canonical isomorphism after inverting $p$, 
		\begin{align}\label{eq:cor:ari-4}
			\widehat{\Omega}^{q-1}_{\ca{O}_F/\bb{Z}_p}[1/p]\iso H^q(G_F,\widehat{\overline{F}}(q-1)).
		\end{align}
		\label{item:cor:ari-2}
		\item For any integer $n\notin\{q,q-1\}$, $H^q(G_F,\ca{O}_{\widehat{\overline{F}}}\{n\})$ is killed by a power of $p$. In particular,
		\begin{align}\label{eq:cor:ari-5}
			H^q(G_F,\widehat{\overline{F}}(n))=0.
		\end{align}
		 \label{item:cor:ari-3}
	\end{enumerate}
\end{mycor}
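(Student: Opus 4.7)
The plan is to derive this corollary from Theorem \ref{thm:ari} by taking $p$-adic limits, following the same template used to deduce Corollary \ref{cor:geo} from Theorem \ref{thm:geo}. The starting point, for each $n \in \bb{Z}$ and each $q$, will be the canonical short exact sequence of continuous cohomology (cf.\ \cite[\Luoma{2}.3.10]{abbes2016p})
\begin{equation*}
0 \to \rr^1\!\lim_{r \in \bb{N}} H^{q-1}(G_F, (\ca{O}_{\overline{F}}/p^r\ca{O}_{\overline{F}})\{n\}) \to H^q(G_F, \ca{O}_{\widehat{\overline{F}}}\{n\}) \to \lim_{r \in \bb{N}} H^q(G_F, (\ca{O}_{\overline{F}}/p^r\ca{O}_{\overline{F}})\{n\}) \to 0.
\end{equation*}
The task in each case is to control the two outer terms using Theorem \ref{thm:ari}, deduce $p$-adic completeness of the middle via Lemma \ref{lem:complete}, and then perform a diagram chase identifying the canonical morphism \eqref{eq:rem:prop:comp-2} (or its cup-product variant for part (2)).

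For part (3), with $n \notin \{q-1, q\}$: the $\lim$ term is killed by a uniform power of $p$ directly by Theorem \ref{thm:ari}.(3) applied in degree $q$. For the $\rr^1\!\lim$ term I will case-split on $n$. If $n \notin \{q-2, q-1\}$, then Theorem \ref{thm:ari}.(3) in degree $q-1$ gives a uniform $p$-power bound on each group, hence on $\rr^1\!\lim$. If $n = q-2$ (so necessarily $q \geq 2$), then Theorem \ref{thm:ari}.(2) with $q$ replaced by $q-1$ identifies $H^{q-1}(G_F, (\ca{O}_{\overline{F}}/p^r)\{q-2\})$ with $\Omega^{q-2}_{\ca{O}_F/\bb{Z}_p}/p^r$ up to kernel and cokernel killed by a uniform power of $p$; since that system is Mittag-Leffler, its $\rr^1\!\lim$ vanishes, and the standard long-exact-sequence comparison transfers the bound. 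In either subcase the whole short exact sequence collapses to show $H^q(G_F, \ca{O}_{\widehat{\overline{F}}}\{n\})$ is killed by a power of $p$, and \eqref{eq:cor:ari-5} follows by inverting $p$.

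For parts (1) and (2), I will bound the $\rr^1\!\lim$ term similarly (for each relevant pair $(q, n)$ with $n\in\{q, q-1\}$, one checks that either Theorem \ref{thm:ari}.(3) applies in degree $q-1$, or one of parts (1)--(2) gives a Mittag-Leffler approximation of that degree), and identify the $\lim$ term with $\widehat{\Omega}^q_{\ca{O}_F/\bb{Z}_p}\{-q\}$ (respectively $\widehat{\Omega}^{q-1}_{\ca{O}_F/\bb{Z}_p}\{-(q-1)\}$) up to kernel and cokernel killed by a uniform power of $p$ via Theorem \ref{thm:ari}.(1) (respectively (2)). Because $\widehat{\Omega}^j_{\ca{O}_F/\bb{Z}_p}$ has bounded $p$-power torsion by Proposition \ref{prop:non-ari-flat} (applied with $K = \bb{Q}_p$), the $\lim$ term also has bounded $p$-power torsion, and Lemma \ref{lem:complete} then yields $p$-adic completeness of $H^q(G_F, \ca{O}_{\widehat{\overline{F}}}\{n\})$. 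A diagram chase identical to that of Corollary \ref{cor:geo} identifies the canonical morphism---or the cup product with $\log\circ\chi \in H^1(G_F, \ca{O}_{\widehat{\overline{F}}})$ for part (2)---with the induced factorization through the $\lim$ term, and delivers the claimed kernel and cokernel bounds. The isomorphisms \eqref{eq:cor:ari-2} and \eqref{eq:cor:ari-4} after inverting $p$ are then immediate.

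The main obstacle---more bookkeeping than substance---is the case analysis in the $\rr^1\!\lim$ term, and in particular verifying that the cup product with $\log\circ\chi$ at the integral level is genuinely the limit of the finite-level morphism of Theorem \ref{thm:ari}.(2), so that the diagram chase for part (2) proceeds cleanly. No conceptual input beyond Theorem \ref{thm:ari} and the argument pattern of \ref{cor:geo} is required.
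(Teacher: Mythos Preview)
Your proposal is correct and follows essentially the same approach as the paper's proof: both start from the short exact sequence with $\rr^1\!\lim$ and $\lim$ terms, handle the $\rr^1\!\lim$ term by the same case split (using \ref{thm:ari}.(\ref{item:thm:ari-3}) when the twist is not special for degree $q-1$, and the Mittag-Leffler property of $(\Omega^j/p^r)_r$ otherwise), identify the $\lim$ term via \ref{thm:ari}.(\ref{item:thm:ari-1}) or (\ref{item:thm:ari-2}), invoke \ref{prop:non-ari-flat} for bounded torsion, and conclude via \ref{lem:complete} and the diagram chase of \ref{cor:geo}. The only cosmetic difference is the order of presentation (the paper treats part (\ref{item:cor:ari-1}) first and establishes completeness there, whereas you sketch (\ref{item:cor:ari-3}) first); your flagged compatibility of the cup product with $\log\circ\chi$ under $\lim$ is precisely what the paper uses without further comment.
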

\begin{proof}
	For any $n\in\bb{Z}$, there is a canonical exact sequence (\cite[\Luoma{2}.3.10]{abbes2016p})
	\begin{align}\label{eq:cor:ari-6}
		0\to  \rr^1\lim_{r\in\bb{N}}H^{q-1}(G_F,(\ca{O}_{\overline{F}}/p^r)\{n\})\to H^q(G_F,\ca{O}_{\widehat{\overline{F}}}\{n\})\to \lim_{r\in\bb{N}} H^q(G_F,(\ca{O}_{\overline{F}}/p^r)\{n\})\to 0.
	\end{align}
	
	(\ref{item:cor:ari-1}) Taking $n=q$, as $H^{q-1}(G_F,(\ca{O}_{\overline{F}}/p^r)\{q\})$ is killed by a power of $p$ independent of $r$ by \ref{thm:ari}.(\ref{item:thm:ari-3}), so is $\rr^1\lim H^{q-1}(G_F,(\ca{O}_{\overline{F}}/p^r)\{q\})$. In particular, it is $p$-adically complete. On the other hand, we claim that $\lim_{r\in\bb{N}}H^q(G_F,(\ca{O}_{\overline{F}}/p^r)\{q\})$ is $p$-adically complete with bounded $p$-power torsion. Indeed, it is $p$-adically complete as each $H^q(G_F,(\ca{O}_{\overline{F}}/p^r)\{q\})$ is killed by $p^r$ (\cite[\href{https://stacks.math.columbia.edu/tag/0G1Q}{0G1Q}]{stacks-project}). Notice that the canonical morphism induced by \eqref{eq:thm:ari-1},
	\begin{align}\label{eq:cor:ari-7}
		\widehat{\Omega}^q_{\ca{O}_F/\bb{Z}_p}=\lim_{r\in\bb{N}}\Omega^q_{\ca{O}_F/\bb{Z}_p}/p^r\longrightarrow \lim_{r\in\bb{N}}H^q(G_F,(\ca{O}_{\overline{F}}/p^r)\{q\})
	\end{align}
	has kernel and cokernel killed by a power of $p$ by \ref{thm:ari}.(\ref{item:thm:ari-1}) and \ref{lem:pi-isom}. Therefore, the latter has bounded $p$-power torsion as $\widehat{\Omega}^q_{\ca{O}_F/\bb{Z}_p}$ does so (\ref{prop:non-ari-flat}). 
	
	Therefore, $H^q(G_F,\ca{O}_{\widehat{\overline{F}}}\{q\})$ is also $p$-adically complete by \ref{lem:complete}. Then, \eqref{eq:cor:ari-7} coincides with the composition of the canonical morphisms
	\begin{align}\label{eq:cor:ari-8}
		\widehat{\Omega}^q_{\ca{O}_F/\bb{Z}_p}\longrightarrow H^q(G_F,\ca{O}_{\widehat{\overline{F}}}\{q\}) \longrightarrow   \lim_{r\in\bb{N}} H^q(G_F,(\ca{O}_{\overline{F}}/p^r)\{q\}),
	\end{align}
	where the first arrow is \eqref{eq:rem:prop:comp-2} and the second arrow is defined in \eqref{eq:cor:ari-6}. The conclusion follows from diagram chasing.
	
	(\ref{item:cor:ari-2}) Taking $n=q-1$, as the inverse system $(\Omega^{q-1}_{\ca{O}_F/\bb{Z}_p}/p^r)_{r\in\bb{N}}$ satisfies the Mittag-Leffler condition, we have $\rr^1\lim\Omega^{q-1}_{\ca{O}_F/\bb{Z}_p}/p^r=0$. Thus, $\rr^1\lim H^{q-1}(G_F,(\ca{O}_{\overline{F}}/p^r)\{q-1\})$ is killed a power of $p$ by \ref{thm:ari}.(\ref{item:thm:ari-1}) and \ref{lem:pi-isom}. On the other hand, the canonical morphism induced by \eqref{eq:thm:ari-2},
	\begin{align}\label{eq:cor:ari-9}
		\widehat{\Omega}^{q-1}_{\ca{O}_F/\bb{Z}_p}=\lim_{r\in\bb{N}}\Omega^{q-1}_{\ca{O}_F/\bb{Z}_p}/p^r\longrightarrow \lim_{r\in\bb{N}}H^q(G_F,(\ca{O}_{\overline{F}}/p^r)\{q-1\})
	\end{align}
	has kernel and cokernel killed by a power of $p$ by \ref{thm:ari}.(\ref{item:thm:ari-2}) and \ref{lem:pi-isom}.
	
	Since \eqref{eq:cor:ari-9} coincides with the composition of the canonical morphisms
	\begin{align}\label{eq:cor:ari-10}
		\widehat{\Omega}^{q-1}_{\ca{O}_F/\bb{Z}_p}\longrightarrow H^q(G_F,\ca{O}_{\widehat{\overline{F}}}\{q-1\}) \longrightarrow   \lim_{r\in\bb{N}} H^q(G_F,(\ca{O}_{\overline{F}}/p^r)\{q-1\}),
	\end{align}
	where the first arrow is induced by the cup product of \eqref{eq:cor:ari-1} with $\log\circ\chi$, and the second arrow is defined in \eqref{eq:cor:ari-6}. The conclusion follows from diagram chasing.
	
	(\ref{item:cor:ari-3}) Taking $n\in\bb{Z}\setminus\{q,q-1\}$, similarly we see that $\rr^1\lim H^{q-1}(G_F,(\ca{O}_{\overline{F}}/p^r)\{n\})$ is killed a power of $p$ by \ref{thm:ari}.(\ref{item:thm:ari-2}) for $n=q-2$ and by \ref{thm:ari}.(\ref{item:thm:ari-3}) for $n\neq q-2$. We also see that $\lim H^q(G_F,(\ca{O}_{\overline{F}}/p^r)\{n\})$ is killed a power of $p$ by \ref{thm:ari}.(\ref{item:thm:ari-3}). The conclusion follows from \eqref{eq:cor:ari-6}.
\end{proof}

\section{Appendix: Bouis' Prismatic Computation after Bhatt}\label{sec:pris}
For valuation fields over a pre-perfectoid field, we present a prismatic approach to its Galois cohomology computation due to Bhatt (see \ref{cor:cartier-smooth-val}). The key ingredient is Bouis' computation of the prismatic cohomology for such valuation fields, which relies not only on the Hodge-Tate and de Rham comparisons for prismatic cohomology but also on Gabber-Ramero's work on valuation rings (see \ref{thm:cartier-smooth} and \ref{thm:cartier-smooth-val}).

\begin{mypara}\label{para:notation-prism}
	Let $(A,I)$ be a bounded prism, $\phi_A:A\to A$ its associated Frobenius morphism (\cite[3.2]{bhattscholze2019prisms}). To any $p$-complete $(A/I)$-algebra $R$, Bhatt-Scholze \cite[7.6]{bhattscholze2019prisms} associate functorially a derived $(p,I)$-complete commutative algebra $\prism_{R/A}$ in the derived $\infty$-category $\mbf{D}(A)$ equipped with an $A$-algebra homomorphism $\phi:\prism_{R/A}\to \phi_{A*}\prism_{R/A}$. 
	We put
	\begin{align}\label{eq:para:notation-prism-1}
		\prism^{(1)}_{R/A}=\phi_A^*\prism_{R/A}=\prism_{R/A}\widehat{\otimes}^{\dl}_{A,\phi_A}A
	\end{align}
	where the completion is the derived $(p,I)$-completion. Note that $\phi$ also corresponds to an $A$-algebra homomorphism $\prism^{(1)}_{R/A}\to \prism_{R/A}$ in $\mbf{D}(A)$. It induces a canonical morphism
	\begin{align}\label{eq:para:notation-prism-2}
		\widetilde{\phi}:\prism^{(1)}_{R/A}\longrightarrow \dl\eta_I \prism_{R/A},
	\end{align}
	where $\dl\eta_I $ is the d\'ecalage functor defined in \cite[\textsection6]{bhattmorrowscholze2018integral}. This fact relies on a deep computation of the Nygaard filtration on $\prism^{(1)}_{R/A}$, see \cite[15.3]{bhattscholze2019prisms}, \cite[5.1.1]{bhattlurie2022absolute}, and also \cite[2.13]{bouis2023cartier}. 
	
	We also put
	\begin{align}\label{eq:para:notation-prism-3}
		\overline{A}=A/I,\quad \overline{\prism}_{R/A}=\prism_{R/A}\otimes^{\dl}_A\overline{A},\quad \overline{\prism}^{(1)}_{R/A}=\prism_{R/A}^{(1)}\otimes^{\dl}_A\overline{A}.
	\end{align} 
	Recall that the de Rham comparison \cite[5.2.5]{bhattlurie2022absolute} (cf. \cite[15.4]{bhattscholze2019prisms}) is a canonical equivalence
	\begin{align}\label{eq:para:notation-prism-4}
		(\bb{L}\Omega^\bullet_{R/\overline{A}})^\wedge\iso \overline{\prism}^{(1)}_{R/A},
	\end{align}
	where $\bb{L}\Omega^\bullet_{R/\overline{A}}$ is the derived de Rham complex (obtained by the left Kan extension of the de Rham complexes for polynomial $\overline{A}$-algebras). The Hodge-Tate comparison \cite[4.1.7]{bhattlurie2022absolute} (cf. \cite[6.3]{bhattscholze2019prisms}) is a canonical equivalence for any $q\in\bb{N}$,
	\begin{align}\label{eq:para:notation-prism-5}
		(\bb{L}\Omega^q_{R/\overline{A}})^\wedge[-q]\iso \mrm{gr}^{\mrm{conj}}_q(\overline{\prism}_{R/A}\{q\}),
	\end{align}
	where $\bb{L}\Omega^q_{R/\overline{A}}$ is the derived $q$-th exterior power of the cotangent complex (obtained by the left Kan extension of $\Omega^q$ for polynomial $\overline{A}$-algebras) and $\overline{\prism}_{R/A}\{q\}=\overline{\prism}_{R/A}\otimes_A (I/I^2)^{\otimes q}$ is endowed with the conjugate filtration (obtained by the left Kan extension of the Postnikov filtration for polynomial $\overline{A}$-algebras). 
\end{mypara}

\begin{mythm}[{\cite[2.17]{bouis2023cartier}}]\label{thm:cartier-smooth}
	With the notation in {\rm\ref{para:notation-prism}}, assume that $R$ is $p$-Cartier smooth over $\overline{A}$ {\rm(\cite[2.5]{bouis2023cartier})}, i.e., the following conditions hold:
	\begin{enumerate}
		\renewcommand{\theenumi}{\roman{enumi}}
		\renewcommand{\labelenumi}{{\rm(\theenumi)}}
		\item The canonical morphism $R\otimes^{\dl}_{\overline{A}}\overline{A}/p\overline{A}\to R/pR$ is an isomorphism.\label{item:thm:cartier-smooth-1}
		\item The $R/pR$-module of differentials $\Omega^1_{(R/pR)/(\overline{A}/p\overline{A})}$ is flat and the homology groups of the cotangent complex $H_q(\bb{L}_{(R/pR)/(\overline{A}/p\overline{A})})=0$ for any $q\in\bb{N}_{>0}$.\label{item:thm:cartier-smooth-2}
		\item The inverse Cartier morphism associated to the morphism of $\bb{F}_p$-algebras $\overline{A}/p\overline{A}\to R/pR$,
		\begin{align}\label{eq:thm:cartier-smooth-1}
			C^{-1}:\Omega^q_{(R/pR)/(\overline{A}/p\overline{A})}\otimes_{A,\phi_A}A\longrightarrow H^q(\Omega^\bullet_{(R/pR)/(\overline{A}/p\overline{A})}),
		\end{align}
		is an isomorphism for any $q\in\bb{N}$, where $\Omega^\bullet_{(R/pR)/(\overline{A}/p\overline{A})}$ is the de Rham complex.\label{item:thm:cartier-smooth-3}
	\end{enumerate}
	Then, the canonical morphism
	\begin{align}\label{eq:thm:cartier-smooth-2}
		\widetilde{\phi}:\prism^{(1)}_{R/A}\longrightarrow \dl\eta_I \prism_{R/A}
	\end{align}
	is an equivalence.
\end{mythm}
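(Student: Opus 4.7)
The plan is to reduce the claim modulo $I$ and match both sides via the de Rham and Hodge-Tate comparisons. Since $\prism_{R/A}$ is derived $(p,I)$-complete and the d\'ecalage functor $\dl\eta_I$ preserves derived $(p,I)$-completeness, it suffices to verify that the reduced map $\widetilde{\phi}\otimes^{\dl}_A\overline{A}$ is an equivalence in $\mbf{D}(\overline{A})$. The source of this reduction is $\overline{\prism}^{(1)}_{R/A}$, which by the de Rham comparison \eqref{eq:para:notation-prism-4} is canonically equivalent to the Hodge-completed derived de Rham complex $(\bb{L}\Omega^\bullet_{R/\overline{A}})^\wedge$ equipped with its Hodge filtration, whose $q$-th graded piece is $\bb{L}\Omega^q_{R/\overline{A}}[-q]$.

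On the target side, I would invoke the fundamental property of the d\'ecalage functor from Bhatt-Morrow-Scholze: for a derived $(p,I)$-complete object $C\in \mbf{D}(A)$, the reduction $(\dl\eta_I C)\otimes^{\dl}_A\overline{A}$ is naturally quasi-isomorphic to the cochain complex whose $q$-th term is the Breuil-Kisin twisted cohomology $H^q(C\otimes^{\dl}_A\overline{A})\{q\}$ and whose differential is the Bockstein associated to the short exact sequence $0\to I/I^2\to A/I^2\to \overline{A}\to 0$. Applied to $C=\prism_{R/A}$ and combined with the Hodge-Tate comparison \eqref{eq:para:notation-prism-5}, the $q$-th term is identified with the $p$-completed derived exterior power $(\bb{L}\Omega^q_{R/\overline{A}})^\wedge$. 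Under the $p$-Cartier smoothness hypothesis, in particular the vanishing of $H_q(\bb{L}_{(R/pR)/(\overline{A}/p\overline{A})})$ for $q>0$ and the flatness of $\Omega^1_{(R/pR)/(\overline{A}/p\overline{A})}$, the cotangent complex collapses and $\bb{L}\Omega^q_{R/\overline{A}}$ reduces to the ordinary exterior power $\Omega^q_{R/\overline{A}}$, matching the graded pieces of the Hodge filtration on the source.

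The crux of the argument is then to verify that, under these identifications, the Bockstein differential on the target agrees with the de Rham differential on the source. This compatibility can be tested after further reducing modulo $p$, where it becomes exactly the content of the Cartier isomorphism \eqref{eq:thm:cartier-smooth-1}: the Frobenius twist appearing in the inverse Cartier operator $C^{-1}$ matches precisely the Frobenius twist built into the construction of $\prism^{(1)}_{R/A}$ and of $\widetilde{\phi}$ via \eqref{eq:para:notation-prism-2}. I expect this identification of differentials to be the main technical obstacle, as it requires carefully tracking how the Frobenius on $\prism_{R/A}$ factors through the d\'ecalage to produce $\widetilde{\phi}$, and invoking the $p$-Cartier smoothness hypothesis at exactly the right step. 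Once the compatibility on associated gradeds is established, a standard d\'evissage along the complete, exhaustive, and separated conjugate filtration on the target and Hodge filtration on the source yields the equivalence $\widetilde{\phi}\otimes^{\dl}_A\overline{A}$, which in turn promotes to the desired equivalence $\widetilde{\phi}$ itself by the initial derived $(p,I)$-completeness reduction.
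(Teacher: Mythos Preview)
Your overall strategy matches the paper's: reduce modulo $I$ by derived completeness, identify the source with the derived de Rham complex via the de Rham comparison, and identify the target with a Bockstein complex via the Hodge--Tate comparison and the d\'ecalage--Bockstein relation \cite[6.12]{bhattmorrowscholze2018integral}. Under hypotheses (i) and (ii), the target becomes the naive $p$-completed de Rham complex $(\Omega^\bullet_{R/\overline{A}})^\wedge$, and the question reduces (after a further mod $p$ reduction) to showing that $\bb{L}\Omega^\bullet_{(R/pR)/(\overline{A}/p\overline{A})}\to \Omega^\bullet_{(R/pR)/(\overline{A}/p\overline{A})}$ is an equivalence.

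However, your account of how hypothesis (iii) enters is off, and the proposed d\'evissage does not work as stated. First, the identification of the Bockstein differential with the de Rham differential on the target is \emph{not} what requires the Cartier isomorphism; this is already part of the Hodge--Tate comparison as a differential graded algebra statement (see \cite[2.15.(3)]{bouis2023cartier} as cited in the paper). Second, comparing the Hodge filtration on the source with the conjugate filtration on the target is ill-posed: these are filtrations of opposite nature (decreasing vs.\ increasing), and $\widetilde{\phi}$ does not carry one to the other on graded pieces in any evident way. The paper instead invokes the \emph{derived} inverse Cartier isomorphism \cite[3.5]{bhatt2012derham}, which computes the conjugate graded pieces of the derived de Rham complex as $(\bb{L}\Omega^q\otimes^{\dl}_{A,\phi_A}A)[-q]$; under (i) and (ii) this becomes $\Omega^q\otimes_{A,\phi_A}A$. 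Hypothesis (iii) then says exactly that the naive de Rham complex has the same cohomology, so the map is a quasi-isomorphism. The missing ingredient in your sketch is this derived Cartier isomorphism, and the correct d\'evissage is along the conjugate filtration on \emph{both} sides (equivalently, a direct comparison of cohomology groups), not Hodge vs.\ conjugate.
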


\begin{mypara}
	We briefly sketch Bouis' proof here. Under the assumptions (\ref{item:thm:cartier-smooth-1}) and (\ref{item:thm:cartier-smooth-2}), the Hodge-Tate comparison \eqref{eq:para:notation-prism-5} induces an isomorphism of differential graded $\overline{A}$-algebras (\cite[2.15.(3)]{bouis2023cartier},
	\begin{align}\label{eq:thm:cartier-smooth-3}
		(\Omega^\bullet_{R/\overline{A}})^\wedge\iso H^\bullet(\overline{\prism}_{R/A}\{\bullet\}),
	\end{align}
	where $H^\bullet(\overline{\prism}_{R/A}\{\bullet\})$ is the complex with Bockstein differentials induced by the $I$-adic filtration on $\overline{\prism}_{R/A}$. In particular, we have $(\Omega^q_{R/\overline{A}})^\wedge=H^q(\overline{\prism}_{R/A}\{q\})$ coincides with the classical $p$-adic completion of $\Omega^q_{R/\overline{A}}$. Combining with a fundamental property for d\'ecalage functor \cite[6.12]{bhattmorrowscholze2018integral}, we obtain canonical equivalences in the derived category
	\begin{align}\label{eq:thm:cartier-smooth-4}
		(\Omega^\bullet_{R/\overline{A}})^\wedge\iso H^\bullet(\overline{\prism}_{R/A}\{\bullet\})\iso \dl\eta_I \prism_{R/A}\otimes^{\dl}_A\overline{A}.
	\end{align}
	Combing with the de Rham comparison \eqref{eq:para:notation-prism-4}, the equivalence of $\widetilde{\phi}$ is equivalent to the equivalence of the following canonical morphism by the derived Nakayama's lemma,
	\begin{align}
		(\bb{L}\Omega^\bullet_{R/\overline{A}})^\wedge\longrightarrow(\Omega^\bullet_{R/\overline{A}})^\wedge,
	\end{align}
	where its compatibility with $\widetilde{\phi}$ follows from \cite[5.2.3]{bhattlurie2022absolute}. Again by the derived Nakayama's lemma and the assumptions (\ref{item:thm:cartier-smooth-1}) and (\ref{item:thm:cartier-smooth-2}), it remains to check the equivalence of 
	\begin{align}
		\bb{L}\Omega^\bullet_{(R/pR)/(\overline{A}/p\overline{A})}\longrightarrow \Omega^\bullet_{(R/pR)/(\overline{A}/p\overline{A})}.
	\end{align}
	Combining the assumptions (\ref{item:thm:cartier-smooth-1}) and (\ref{item:thm:cartier-smooth-2}) with the derived inverse Cartier isomorphism \cite[3.5]{bhatt2012derham},
	\begin{align}
		C^{-1}:(\bb{L}\Omega^q_{(R/pR)/(\overline{A}/p\overline{A})}\otimes^{\dl}_{A,\phi_A}A)[-q]\iso \mrm{gr}^{\mrm{conj}}_q(\bb{L}\Omega^\bullet_{(R/pR)/(\overline{A}/p\overline{A})}),
	\end{align}
	we obtain a canonical isomorphism for any $q\in\bb{N}$,
	\begin{align}
		C^{-1}:\Omega^q_{(R/pR)/(\overline{A}/p\overline{A})}\otimes_{A,\phi_A}A\iso H^q(\bb{L}\Omega^\bullet_{(R/pR)/(\overline{A}/p\overline{A})}).
	\end{align}
	Then, the conclusion follows from the assumption (\ref{item:thm:cartier-smooth-3}). 
	\begin{align}
		\xymatrix{
			\prism^{(1)}_{R/A}\ar[dd]_-{\widetilde{\phi}}\ar@{~>}[rr]^-{\trm{de Rham}} \ar@{}[ddrr]|-{\trm{reduction modulo }I}&& (\bb{L}\Omega^\bullet_{R/\overline{A}})^\wedge\ar[dd]\ar@{~>}[r]\ar@{}[ddr]|-{\trm{reduction modulo }p}&\bb{L}\Omega^\bullet_{(R/pR)/(\overline{A}/p\overline{A})}\ar[dd]&H^q(\bb{L}\Omega^\bullet_{(R/pR)/(\overline{A}/p\overline{A})})\\
			&&&&\Omega^q_{(R/pR)/(\overline{A}/p\overline{A})}\otimes_{A,\phi_A}A\ar[u]^-{\wr}_-{C^{-1}}\ar[d]_-{\wr}^-{C^{-1}}\\
			\dl\eta_I \prism_{R/A}\ar@{~>}[rr]^-{\trm{Hodge-Tate}} &&(\Omega^\bullet_{R/\overline{A}})^\wedge\ar@{~>}[r]&\Omega^\bullet_{(R/pR)/(\overline{A}/p\overline{A})}&H^q(\Omega^\bullet_{(R/pR)/(\overline{A}/p\overline{A})}) 
		}
	\end{align}
\end{mypara}

\begin{mylem}\label{lem:Leta-pos}
	Let $A$ be a ring, $I$ an invertible ideal of $A$, $M$ a derived $I$-complete complex of $A$-modules. Assume that $M\otimes^{\dl}_AA/I\in\mbf{D}^{\geq 0}(A/I)$. Then, $M\in\mbf{D}^{\geq 0}(A)$ and $H^0(M)[I]=0$.
\end{mylem}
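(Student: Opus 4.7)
The plan is to translate the vanishing hypothesis on $M\otimes^{\dl}_AA/I$ into algebraic conditions on the individual cohomology modules of $M$ via a universal-coefficient-type sequence, and then invoke derived Nakayama. Since $I$ is invertible it is locally free of rank one, hence flat; in particular $I\otimes^{\dl}_AM=I\otimes_AM$, and the short exact sequence $0\to I\to A\to A/I\to 0$ produces a distinguished triangle
\begin{align*}
	I\otimes_AM\longrightarrow M\longrightarrow M\otimes^{\dl}_AA/I
\end{align*}
in $\mbf{D}(A)$. Moreover, the two-term flat resolution $[I\to A]$ of $A/I$ shows that $A/I$ has Tor-dimension at most one over $A$ and identifies $\mrm{Tor}_1^A(A/I,N)$ with the $I$-torsion submodule $N[I]$ for every $A$-module $N$.

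Taking the long exact cohomology sequence of the triangle above, and using that the flat functor $I\otimes_A-$ commutes with cohomology, one extracts for every $n\in\bb{Z}$ a natural short exact sequence
\begin{align*}
	0\longrightarrow H^n(M)/IH^n(M)\longrightarrow H^n(M\otimes^{\dl}_AA/I)\longrightarrow H^{n+1}(M)[I]\longrightarrow 0.
\end{align*}
Feeding in the hypothesis that $H^n(M\otimes^{\dl}_AA/I)=0$ for all $n<0$, the case $n=-1$ immediately yields $H^0(M)[I]=0$, which is the second assertion, while the cases $n\leq -1$ give the equality $IH^n(M)=H^n(M)$.

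It remains to show $H^n(M)=0$ for all $n<0$. Since $I$ is invertible it is in particular finitely generated, so the derived $I$-completeness of $M$ descends to each of its cohomology modules: every $H^n(M)$ is again a derived $I$-complete $A$-module. The derived Nakayama lemma (\cite[\href{https://stacks.math.columbia.edu/tag/09B9}{09B9}]{stacks-project}) then guarantees that any derived $I$-complete module $N$ satisfying $IN=N$ must vanish; applied to each $H^n(M)$ with $n\leq -1$ this yields the first assertion. The only step that requires genuine input beyond routine manipulation of the long exact sequence is this descent of derived completeness from the complex to its individual cohomology modules, which relies on the finite generation of $I$ and is standard.
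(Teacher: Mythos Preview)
Your proof is correct and follows essentially the same strategy as the paper: both exploit the two-term flat resolution $[I\to A]$ of $A/I$, verify the relevant vanishing modulo $I$, and conclude via derived Nakayama together with the stability of derived $I$-completeness under truncation/cohomology. The only difference is organizational—you extract a universal-coefficient short exact sequence and argue degree by degree on the $H^n(M)$, whereas the paper works directly with the single truncated complex $\tau^{\leq -1}M$, shows $(\tau^{\leq -1}M)\otimes^{\dl}_AA/I=0$ from the truncation triangle, and applies derived Nakayama once at the level of complexes.
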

\begin{proof}
	The problem is local on $A$ so that we may assume that $I$ is generated by a nonzero divisor of $A$. Consider the distinguished triangle 
	\begin{align}
		(\tau^{\leq -1}M)\otimes^{\dl}_AA/I\longrightarrow M\otimes^{\dl}_AA/I\longrightarrow(\tau^{\geq 0}M)\otimes^{\dl}_AA/I\longrightarrow.
	\end{align}
	As $I$ is free over $A$, we have $(\tau^{\geq 0}M)\otimes^{\dl}_AA/I\in \mbf{D}^{\geq -1}(A/I)$. Then, the associated long exact sequence 
	\begin{align}
		\cdots\to H^{n-1}((\tau^{\geq 0}M)\otimes^{\dl}_AA/I)\to H^n((\tau^{\leq -1}M)\otimes^{\dl}_AA/I)\to  H^n(M\otimes^{\dl}_AA/I)\to \cdots
	\end{align}
	implies that $H^n((\tau^{\leq -1}M)\otimes^{\dl}_AA/I)=0$ for any integer $n\leq -1$. Therefore, $(\tau^{\leq -1}M)\otimes^{\dl}_AA/I=0$. As $\tau^{\leq -1}M$ is also derived $I$-complete (\cite[6.15]{bhattmorrowscholze2018integral}), it is zero by derived Nakayama's lemma \cite[\href{https://stacks.math.columbia.edu/tag/0G1U}{0G1U}]{stacks-project}. Hence, $M=\tau^{\geq 0}M\in \mbf{D}^{\geq 0}(A)$. Moreover, $H^0(M)[I]=H^{-1}(M\otimes^{\dl}_AA/I)=0$.
\end{proof}

\begin{mycor}\label{cor:cartier-smooth}
	With the notation in {\rm\ref{para:notation-prism}}, assume that $R$ is $p$-Cartier smooth over $\overline{A}$.
	\begin{enumerate}
		\renewcommand{\labelenumi}{{\rm(\theenumi)}}
		\item We have $\prism_{R/A}\in \mbf{D}^{\geq 0}(A)$ with $H^0(\prism_{R/A})[I]=0$.\label{item:cor:cartier-smooth-1}
		\item Assume that $I$ is generated by a nonzero divisor $d\in A$. Then, for any $q\in\bb{N}$, there exists a canonical morphism $\psi:\tau^{\leq q}\prism_{R/A}\to \tau^{\leq q}\prism^{(1)}_{R/A}$ in $\mbf{D}(A)$ such that $\psi\circ\phi=d^q\cdot \id_{\tau^{\leq q}\prism^{(1)}_{R/A}}$ and $\phi\circ\psi=d^q\cdot \id_{\tau^{\leq q}\prism_{R/A}}$.\label{item:cor:cartier-smooth-2}
	\end{enumerate}
\end{mycor}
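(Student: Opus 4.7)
For part (\ref{item:cor:cartier-smooth-1}), I would apply Lemma~\ref{lem:Leta-pos} to the derived $I$-complete complex $M = \prism_{R/A}$ (with $I$ invertible by the prism structure), thereby reducing the statement to showing $\overline{\prism}_{R/A} \in \mbf{D}^{\geq 0}(\overline{A})$. For this, the Hodge-Tate comparison \eqref{eq:para:notation-prism-5} equips $\overline{\prism}_{R/A}\{q\}$ with an exhaustive ascending (conjugate) filtration whose $i$-th graded piece is $(\bb{L}\Omega^i_{R/\overline{A}})^\wedge[-i]$. Under the $p$-Cartier smoothness hypothesis, conditions~(\ref{item:thm:cartier-smooth-1})--(\ref{item:thm:cartier-smooth-2}) force $(\bb{L}\Omega^i_{R/\overline{A}})^\wedge = (\Omega^i_{R/\overline{A}})^\wedge$ to be concentrated in degree $0$, as in the derivation of \eqref{eq:thm:cartier-smooth-3} sketched in the excerpt. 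Hence each graded piece sits in cohomological degree $i \geq 0$, and since the filtration is exhaustive this gives $\overline{\prism}_{R/A} \in \mbf{D}^{\geq 0}(\overline{A})$; Lemma~\ref{lem:Leta-pos} then delivers both $\prism_{R/A} \in \mbf{D}^{\geq 0}(A)$ and $H^0(\prism_{R/A})[I] = 0$.

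For part (\ref{item:cor:cartier-smooth-2}), the plan is to combine the equivalence $\widetilde{\phi}: \prism^{(1)}_{R/A} \iso \dl\eta_I \prism_{R/A}$ from Theorem~\ref{thm:cartier-smooth} (recalling that $\phi = \rho \circ \widetilde{\phi}$, where $\rho: \dl\eta_I \prism_{R/A} \to \prism_{R/A}$ denotes the canonical map) with a standard property of the d\'ecalage functor: for any complex $M$ of $A$-modules and any nonzero divisor $d \in A$, there is a canonical morphism $\sigma_q: \tau^{\leq q} M \to \tau^{\leq q} \dl\eta_d M$ satisfying $\rho \circ \sigma_q = d^q \cdot \id$ and $\sigma_q \circ \rho = d^q \cdot \id$. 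Concretely, representing $M$ by a $d$-torsion-free complex with differential $\delta_M$ and using the explicit description $(\eta_d M)^i = \{x \in d^i M^i : \delta_M(x) \in d^{i+1} M^{i+1}\}$, one checks that multiplication by $d^q$ sends $\tau^{\leq q} M$ into $\tau^{\leq q} \eta_d M$ (the degree-$q$ piece only needs to be checked on cycles, where $\delta_M(x) = 0$ lies trivially in $d^{q+1} M^{q+1}$). Setting $\psi := \widetilde{\phi}^{-1} \circ \sigma_q$ (with $\sigma_q$ applied to $\prism_{R/A}$), the identities $\phi \circ \psi = \rho \circ \sigma_q = d^q \cdot \id$ and $\psi \circ \phi = \widetilde{\phi}^{-1} \circ (\sigma_q \circ \rho) \circ \widetilde{\phi} = d^q \cdot \id$ then follow formally from $\phi = \rho \circ \widetilde{\phi}$.

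The main subtlety lies in the general $\dl\eta_d$ property invoked in part (\ref{item:cor:cartier-smooth-2}): while constructing $\sigma_q$ via a $d$-torsion-free resolution is transparent on the nose, some care is needed to verify that $\sigma_q$ is functorial in the derived category (independent of the chosen resolution), and to transport it across the equivalence $\widetilde{\phi}$ in a way compatible with both compositions. Part (\ref{item:cor:cartier-smooth-1}), by contrast, is essentially a formal consequence of the Hodge-Tate comparison combined with the simplification \eqref{eq:thm:cartier-smooth-3} available under $p$-Cartier smoothness, so once the conjugate filtration is set up the conclusion is immediate from Lemma~\ref{lem:Leta-pos}.
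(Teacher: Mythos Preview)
Your proof is correct and follows the same approach as the paper: both parts use the Hodge--Tate comparison together with Lemma~\ref{lem:Leta-pos} for (\ref{item:cor:cartier-smooth-1}), and for (\ref{item:cor:cartier-smooth-2}) combine the equivalence of Theorem~\ref{thm:cartier-smooth} with the standard d\'ecalage property \cite[6.9]{bhattmorrowscholze2018integral}. One small point the paper makes explicit and you leave implicit: the canonical map $\rho:\dl\eta_I\prism_{R/A}\to\prism_{R/A}$ (and hence the factorization $\phi=\rho\circ\widetilde{\phi}$) is only available once $H^0(\prism_{R/A})[I]=0$ has been established in part~(\ref{item:cor:cartier-smooth-1}), via \cite[6.10]{bhattmorrowscholze2018integral}.
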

\begin{proof}
	(\ref{item:cor:cartier-smooth-1}) It follows directly from the Hodge-Tate comparison \eqref{eq:thm:cartier-smooth-4} that $\overline{\prism}_{R/A}\in \mbf{D}^{\geq 0}(\overline{A})$. Thus, the conclusion follows from \ref{lem:Leta-pos}.
	
	(\ref{item:cor:cartier-smooth-2}) Notice that $\phi:\prism^{(1)}_{R/A}\to \prism_{R/A}$ canonically factors as (cf. \cite[2.13.(2)]{bouis2023cartier})
	\begin{align}
		\prism^{(1)}_{R/A}\stackrel{\widetilde{\phi}}{\longrightarrow} \dl\eta_I\prism_{R/A}\longrightarrow \prism_{R/A}
	\end{align}
	by \eqref{eq:para:notation-prism-2} and (\ref{item:cor:cartier-smooth-1}) (\cite[6.10]{bhattmorrowscholze2018integral}). Then, the conclusion follows from the equivalence of $\widetilde{\phi}$ by \ref{thm:cartier-smooth} and a basic property for the d\'ecalage functor \cite[6.9]{bhattmorrowscholze2018integral}.
\end{proof}

\begin{mypara}\label{notation:perfect-prism}
	Let $K$ be a pre-perfectoid field. Recall that $\ca{O}_{K^\flat}=\lim_{x\mapsto x^p}\ca{O}_K/p\ca{O}_K$ is a perfect valuation ring of characteristic $p$ (\cite[3.4]{scholze2012perfectoid}). We put $A=W(\ca{O}_{K^\flat})$ and let $d$ be a generator of the kernel of Fontaine's surjection $\vartheta:A\to \ca{O}_{\widehat{K}}$ (\cite[3.10]{bhattmorrowscholze2018integral}). Then, $(A,dA)$ is a perfect prism (\cite[3.10]{bhattscholze2019prisms}). Note that $\vartheta$ induces a canonical isomorphism
	\begin{align}\label{eq:notation:perfect-prism-1}
		\ca{O}_{K^\flat}/d\ca{O}_{K^\flat}\iso \ca{O}_K/p\ca{O}_K .
	\end{align}
	For any $n\in\bb{N}$, let $\pi_n\in\ca{O}_K$ be a $p^n$-th root of $p$ up to a unit (\cite[5.4]{he2024coh}). Then, we see that $\phi_A^{-n}(d)$ and $\pi_n$ generate the same ideal in $\ca{O}_{K^\flat}/d\ca{O}_{K^\flat}= \ca{O}_K/p\ca{O}_K$. Therefore, $\vartheta(\phi_A^{-n}(d))=u\cdot \pi_n$ for some unit $u\in \ca{O}_{\widehat{K}}^\times$.
	
	Let $R$ be a $p$-complete $\overline{A}$-algebra (where $\overline{A}=A/dA=\ca{O}_{\widehat{K}}$). Following \cite[8.2]{bhattscholze2019prisms}, we define the \emph{perfection} of $\prism_{R/A}$ to be
	\begin{align}
		\prism_{R/A,\mrm{perf}}=\left(\colim(\prism_{R/A}\stackrel{\phi}{\longrightarrow}\phi_{A*}\prism_{R/A}\stackrel{\phi}{\longrightarrow}\cdots)\right)^\wedge,
	\end{align}
	where the completion is the derived $(p,I)$-completion. We define the \emph{perfectoidization} of $R$ to be
	\begin{align}
		R_{\mrm{perfd}}=\prism_{R/A,\mrm{perf}}\otimes^{\dl}_A\overline{A}=\left(\colim(\overline{\prism}_{R/A}\stackrel{\phi}{\longrightarrow}\phi_{A*}\overline{\prism}_{R/A}\stackrel{\phi}{\longrightarrow}\cdots)\right)^\wedge.
	\end{align}
	where the completion is the derived $p$-completion.
\end{mypara}

\begin{myprop}\label{prop:cartier-smooth}
	With the notation in {\rm\ref{notation:perfect-prism}}, assume that $R$ is $p$-Cartier smooth over $\ca{O}_{\widehat{K}}$. Let $\pi\in\ca{O}_K$ such that $\pi^{p-1}\in p\ca{O}_K$ and let $\varphi:\overline{\prism}_{R/A}\to R_{\mrm{perfd}}$ be the canonical morphism. Then, for any $q\in\bb{N}$, there exists a canonical morphism $\psi:\tau^{\leq q}R_{\mrm{perfd}}\to \tau^{\leq q}\overline{\prism}_{R/A}$ such that $\psi\circ\varphi=\pi^q\cdot \id_{\tau^{\leq q}\overline{\prism}_{R/A}}$ and $\varphi\circ\psi=\pi^q\cdot \id_{\tau^{\leq q}R_{\mrm{perfd}}}$. In particular, the canonical morphism
	\begin{align}\label{eq:prop:cartier-smooth-1}
		\widehat{\Omega}^q_{R/\ca{O}_{\widehat{K}}}\{-q\}=H^q(\overline{\prism}_{R/A})\longrightarrow H^q(R_{\mrm{perfd}})
	\end{align}
	has kernel and cokernel killed by $\pi^q$.
\end{myprop}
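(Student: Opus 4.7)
The plan is to iterate Corollary \ref{cor:cartier-smooth}.(\ref{item:cor:cartier-smooth-2}) along the Frobenius tower defining $R_{\mrm{perfd}}$ and assemble the resulting iterated quasi-inverses into a single morphism after a compatible rescaling. By Corollary \ref{cor:cartier-smooth}.(\ref{item:cor:cartier-smooth-2}) applied to the perfect prism $(A, dA)$, we obtain $\psi: \tau^{\leq q}\prism_{R/A} \to \tau^{\leq q}\prism_{R/A}^{(1)}$ with $\psi \circ \phi = \phi \circ \psi = d^q\,\id$. For each $k \geq 1$, the iterated Frobenius pullback $\phi_A^{-k*}$ of this morphism gives, since $A$ is perfect, a quasi-inverse to the $k$-th Frobenius transition along the tower $\prism_{R/A} \xrightarrow{\phi} \phi_{A*}\prism_{R/A} \xrightarrow{\phi} \cdots$ up to multiplication by $\phi_A^{-k}(d)^q$. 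Composing these yields, for each $n \geq 0$, a morphism $\Psi_n: \tau^{\leq q}\phi_{A*}^n\prism_{R/A} \to \tau^{\leq q}\prism_{R/A}$ satisfying $\Psi_n \circ \Phi_n = \eta_n^q\,\id$ and $\Phi_n \circ \Psi_n = \eta_n^q\,\id$, where $\Phi_n = \phi^n$ is the $n$-fold Frobenius and $\eta_n := \prod_{k=1}^n \phi_A^{-k}(d) \in A$.

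Reducing modulo $d$ and using $\vartheta(\phi_A^{-k}(d)) = u_k\pi_k$ from \ref{notation:perfect-prism}, we have $v_p(\vartheta(\eta_n^q)) = q\sum_{k=1}^n 1/p^k < q/(p-1) \leq v_p(\pi^q)$, so $\vartheta(\eta_n^q)$ divides $\pi^q$ in $\ca{O}_{\widehat{K}}$; choose $c_n \in \ca{O}_{\widehat{K}}$ with $c_n \cdot \vartheta(\eta_n^q) = \pi^q$. Setting $\widetilde{\Psi}_n := c_n \cdot (\Psi_n \bmod d)$ produces a morphism $\tau^{\leq q}\phi_{A*}^n\overline{\prism}_{R/A} \to \tau^{\leq q}\overline{\prism}_{R/A}$ that is a $\pi^q$-quasi-inverse to the reduction of $\Phi_n$. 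The crucial compatibility $\widetilde{\Psi}_{n+1} \circ (\phi \bmod d) = \widetilde{\Psi}_n$ follows from $\Psi_{n+1} \circ \phi = \phi_A^{-(n+1)}(d)^q \Psi_n$ combined with the recursion $c_{n+1} \cdot (u_{n+1}\pi_{n+1})^q = c_n$ forced by our choice of $c_n$.

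Since $\tau^{\leq q}$ commutes with colimits, the compatible system $\{\widetilde{\Psi}_n\}$ descends to the derived $p$-completed colimit, yielding the desired $\psi: \tau^{\leq q}R_{\mrm{perfd}} \to \tau^{\leq q}\overline{\prism}_{R/A}$ with $\psi \circ \varphi = \varphi \circ \psi = \pi^q\,\id$; the ``in particular'' statement then follows by applying $H^q$ and invoking \ref{lem:pi-isom}. The main obstacle will be verifying that these colimit and truncation manipulations interact correctly with the derived $p$-adic completion implicit in the definition of $R_{\mrm{perfd}}$, and making the choices of the units $u_k$ and rescaling factors $c_n$ sufficiently coherent so that the $\widetilde{\Psi}_n$ form a strict compatible system rather than one that commutes only up to units.
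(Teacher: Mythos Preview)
Your approach is essentially the same as the paper's, and the iteration of Corollary~\ref{cor:cartier-smooth}.(\ref{item:cor:cartier-smooth-2}) along the Frobenius tower is exactly what is done there. Two remarks on the obstacles you flag at the end. First, the coherence of the rescaling factors becomes automatic if you set $\pi_n:=\vartheta(\phi_A^{-n}(d))$ itself (rather than writing it as a unit times a chosen $p^n$-th root of $p$) and take $c_n=\pi^q\cdot(\pi_1\cdots\pi_n)^{-q}$; then $\widetilde{\Psi}_{n+1}\circ(\phi\bmod d)=\widetilde{\Psi}_n$ holds on the nose with no unit ambiguity, since the scalar appearing in $\Psi_{n+1}\circ\phi=\phi_A^{-(n+1)}(d)^q\,\Psi_n$ reduces exactly to $\pi_{n+1}^q$. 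Second, and more importantly, the paper sidesteps the interaction of truncation with derived $p$-completion entirely: it first constructs $\psi$ on the \emph{uncompleted} filtered colimit $M=\colim_n\phi_{A*}^n\overline{\prism}_{R/A}$, obtaining $\psi\circ\varphi=\varphi\circ\psi=\pi^q\cdot\id$ on $\tau^{\leq q}M$, and then observes that since each $H^q(M)$ is thus $\pi^q$-isogenous to $H^q(\overline{\prism}_{R/A})$ and the latter is derived $p$-complete, $M$ is already derived $p$-complete (by \cite[6.15]{bhattmorrowscholze2018integral}), so $M=R_{\mrm{perfd}}$ and no further completion is needed.
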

\begin{proof}
	Since $\phi_A:A\to A$ is an isomorphism (as $(A,dA)$ is perfect), for any $q,n\in\bb{N}$, there exists a canonical morphism $\psi_{n+1,n}:\tau^{\leq q}\phi_{A*}^{n+1}\prism_{R/A}\to \tau^{\leq q}\phi_{A*}^n\prism_{R/A}$ such that $\psi_{n+1,n}\circ \phi=\phi_A^{-(n+1)}(d^q)\cdot \id_{\tau^{\leq q}\phi_{A*}^n\prism_{R/A}}$ and $\phi\circ\psi_{n+1,n}=\phi_A^{-(n+1)}(d^q)\cdot \id_{\tau^{\leq q}\phi_{A*}^{n+1}\prism_{R/A}}$ by \ref{cor:cartier-smooth}.(\ref{item:cor:cartier-smooth-2}).
	\begin{align}
		\xymatrix{
			\tau^{\leq q}\overline{\prism}_{R/A}\ar@<2pt>[r]^-{\phi}&\tau^{\leq q}\phi_{A*}\overline{\prism}_{R/A}\ar@<2pt>[r]^-{\phi}\ar@<2pt>[l]^-{\psi_{1,0}}&\tau^{\leq q}\phi_{A*}^2\overline{\prism}_{R/A}\ar@<2pt>[r]^-{\phi}\ar@<2pt>[l]^-{\psi_{2,1}}&\cdots\ar@<2pt>[l]^-{\psi_{3,2}}
		}
	\end{align}
	We put $\pi_n=\vartheta(\phi_A^{-n}(d))\in\ca{O}_{\widehat{K}}$. Note that its normalized valuation is $v_p(\pi_n)=1/p^n$. In particular, $v_p(\pi_1\cdots\pi_n)=1/p+\cdots +1/p^n<1/(p-1)$, i.e., $\pi\in \pi_1\cdots\pi_n\cdot\ak{m}_{\widehat{K}}$, where $\pi\in\ca{O}_K$ such that $\pi^{p-1}\in p\ca{O}_K$. We put $\psi_n=\pi^q\cdot (\pi_1\cdots\pi_n)^{-q} \cdot \psi_{1,0}\circ\cdots\circ\psi_{n,n-1}:\tau^{\leq q}\phi_{A*}^n\overline{\prism}_{R/A}\to \tau^{\leq q}\overline{\prism}_{R/A}$. 
	\begin{align}
		\xymatrix{
			\tau^{\leq q}\overline{\prism}_{R/A}\ar@<2pt>[r]^-{\phi^n}&\tau^{\leq q}\phi_{A*}^n\overline{\prism}_{R/A}\ar@<2pt>[l]^-{\psi_n}
		}
	\end{align}
	Then, we have $\psi_n\circ\phi^n=\pi^q\cdot \id_{\tau^{\leq q}\overline{\prism}_{R/A}}$, $\phi^n\circ \psi_n=\pi^q\cdot\id_{\tau^{\leq q}\phi_{A*}^n\overline{\prism}_{R/A}}$ and $\psi_{n+1}\circ\phi=\psi_n$. Taking colimit over $n\in\bb{N}$, we obtain canonical morphisms
	\begin{align}
		\xymatrix{
			\tau^{\leq q}\overline{\prism}_{R/A}\ar@<2pt>[r]^-{\varphi}&\tau^{\leq q}M\ar@<2pt>[l]^-{\psi}
		}
	\end{align}
	where $M=\colim(\overline{\prism}_{R/A}\stackrel{\phi}{\longrightarrow}\phi_{A*}\overline{\prism}_{R/A}\stackrel{\phi}{\longrightarrow}\cdots)$, such that $\psi\circ\varphi=\pi^q\cdot\id_{\tau^{\leq q}\overline{\prism}_{R/A}}$ and $\varphi\circ\psi=\pi^q\cdot\id_{\tau^{\leq q}M}$. In particular, the kernel and cokernel of $H^q(\overline{\prism}_{R/A})\to H^q(M)$ are killed by $\pi^q$. This shows that $M$ is derived $p$-complete as $\overline{\prism}_{R/A}$ is so by \cite[6.15]{bhattmorrowscholze2018integral}. Therefore, $M=R_{\mrm{perfd}}$ and the conclusion follows from the Hodge-Tate comparison \eqref{eq:thm:cartier-smooth-3}.
\end{proof}

\begin{mypara}\label{para:notation-arc}
	Let $((\mbf{fSch}_{/\bb{Z}_p})_{\mrm{arc}},\ca{O})$ be the ringed site of $p$-adic formal schemes endowed with the $p$-adic arc topology (\cite[8.7]{bhattscholze2019prisms}). Recall that the objects $\mrm{Spf}(R)$, where $R$ is a perfectoid ring {\rm(\cite[3.5]{bhattmorrowscholze2018integral})}, form a topological basis of $(\mbf{fSch}_{/\bb{Z}_p})_{\mrm{arc}}$ (\cite[8.8]{bhattscholze2019prisms}).
\end{mypara}

\begin{mythm}[{\cite[8.10, 8.11]{bhattscholze2019prisms}}]\label{thm:arc}
	With the notation in {\rm\ref{para:notation-arc}}, let $R$ be a $p$-complete ring and $\ak{X}=\mrm{Spf}(R)$.
	\begin{enumerate}
		\renewcommand{\labelenumi}{{\rm(\theenumi)}}
		\item If $R$ is perfectoid, then for any $r\in\bb{N}$, the canonical morphism
		\begin{align}\label{eq:thm:arc-1}
			R/p^rR\longrightarrow \rr\Gamma((\mbf{fSch}_{/\ak{X}})_{\mrm{arc}},\ca{O}/p^r\ca{O})
		\end{align}
		is an isomorphism.\label{item:thm:arc-1}
		\item The canonical morphism 
		\begin{align}\label{eq:thm:arc-2}
			\ca{O}\longrightarrow \rr\lim_{r\in\bb{N}}\ca{O}/p^r\ca{O}
		\end{align}
		is an isomorphism. In particular, $R=\rr\Gamma((\mbf{fSch}_{/\ak{X}})_{\mrm{arc}},\ca{O})$ if $R$ is perfectoid.\label{item:thm:arc-2}
		\item There is a canonical isomorphism
		\begin{align}\label{eq:thm:arc-3}
			R_{\mrm{perfd}}\iso \rr\Gamma((\mbf{fSch}_{/\ak{X}})_{\mrm{arc}},\ca{O}).
		\end{align}\label{item:thm:arc-3}
	\end{enumerate}
\end{mythm}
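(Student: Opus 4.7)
\emph{Proof plan.} The three parts build on one another, so I would tackle them in order. The overarching strategy is to exploit that perfectoid affine formal schemes form a topological basis of the arc-site \ref{para:notation-arc}: each sheaf-theoretic assertion then reduces to a statement about perfectoid rings. For part (\ref{item:thm:arc-1}), I would first reduce to the case $r=1$ by d\'evissage along $0 \to p^{r-1}\ca{O}/p^r \to \ca{O}/p^r \to \ca{O}/p^{r-1} \to 0$ and induction on $r$. For $r=1$, the tilting correspondence supplies an (almost) isomorphism $R/p \cong R^\flat/d$ for a suitable pseudo-uniformizer $d$ and identifies $p$-adic arc-covers of a perfectoid $R$ with arc-covers of its tilt $R^\flat$; the assertion then transforms into arc-descent for the structure sheaf on perfect $\bb{F}_p$-algebras, which follows from $v$-descent for quasi-coherent sheaves on perfect schemes (using that on perfect affines, $p$-adic arc-covers refine to $v$-covers after tilting).

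For part (\ref{item:thm:arc-2}), the first assertion is local on the basis, so I would check it on a perfectoid ring $R$. By part (\ref{item:thm:arc-1}), the target computes as $\rr\lim_r R/p^r = R$, the latter equality holding since $R$ is classically $p$-complete and the inverse system $(R/p^r)_r$ satisfies the Mittag-Leffler condition. The ``in particular'' statement then follows by commuting $\rr\lim_r$ with $\rr\Gamma_{\mrm{arc}}$, which is justified by the cohomological boundedness granted by part (\ref{item:thm:arc-1}). For part (\ref{item:thm:arc-3}), I would construct the canonical map via the universal property of the perfectoidization: choose a $p$-adic arc-cover $\mrm{Spf}(S) \to \mrm{Spf}(R)$ with $S$ perfectoid (for instance, by taking $S$ to be the $p$-adic completion of an absolute integral closure of a $p$-henselization of $R$); then the \v{C}ech complex of $\ca{O}$ along this cover computes $\rr\Gamma_{\mrm{arc}}(\ak{X},\ca{O})$, with each term perfectoid by stability of perfectoidness under $p$-completed tensor products \cite[3.16]{bhattmorrowscholze2018integral}. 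Comparing with the defining colimit $R_{\mrm{perfd}} = (\colim_\phi \overline{\prism}_{R/A})^\wedge$ via the universal property of the perfectoidization as the initial derived $p$-complete perfectoid $R$-algebra then yields the asserted isomorphism.

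The main obstacle will be part (\ref{item:thm:arc-1}). Although the reduction to characteristic $p$ via tilting is conceptually clean, the verification of arc-descent in characteristic $p$ for perfect rings relies nontrivially on $v$-descent for quasi-coherent sheaves, and the compatibility of the \v{C}ech nerve of a perfectoid $p$-adic arc-cover with tilting requires careful tracking of almost-structures so that the descent assertion in characteristic $p$ lifts back cleanly to mixed characteristic without accumulating obstructions.
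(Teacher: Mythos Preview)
Your proposal is correct in outline, but it is worth noting that the paper does not actually prove this theorem: its proof consists of three citations, pointing to the original arguments in \cite[8.10, 8.11]{bhattscholze2019prisms} for parts (\ref{item:thm:arc-1}) and (\ref{item:thm:arc-3}), and deducing (\ref{item:thm:arc-2}) from (\ref{item:thm:arc-1}) together with \cite[\href{https://stacks.math.columbia.edu/tag/0BKY}{0BKY}]{stacks-project}. What you have sketched is essentially a reconstruction of those cited proofs. Your d\'evissage-and-tilt strategy for (\ref{item:thm:arc-1}) matches the Bhatt--Scholze argument, and your local computation on the perfectoid basis for (\ref{item:thm:arc-2}) is exactly what the Stacks reference provides. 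For (\ref{item:thm:arc-3}), your \v{C}ech-nerve-plus-universal-property approach is again the Bhatt--Scholze line.

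One small caution on your d\'evissage in (\ref{item:thm:arc-1}): the identification $p^{r-1}\ca{O}/p^r\ca{O}\cong \ca{O}/p\ca{O}$ requires $p$ to be a non-zero-divisor, which need not hold for arbitrary perfectoid rings in the sense of \cite[3.5]{bhattmorrowscholze2018integral}. You can circumvent this by working instead with the derived quotients $\ca{O}\otimes^{\dl}_{\bb{Z}}\bb{Z}/p^r$, for which the d\'evissage triangle is automatic, and then noting that on a perfectoid $R$ the derived and classical quotients agree up to bounded $p$-power torsion (which is harmless after taking $\rr\lim$). Apart from this technical wrinkle, your plan is sound and recovers the cited results.
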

\begin{proof}
	(\ref{eq:thm:arc-1}) follows from the proof of \cite[8.10]{bhattscholze2019prisms}; (\ref{item:thm:arc-2}) follows from (\ref{eq:thm:arc-1}) and \cite[\href{https://stacks.math.columbia.edu/tag/0BKY}{0BKY}]{stacks-project}; and (\ref{item:thm:arc-3}) is proved in \cite[8.11]{bhattscholze2019prisms}.
\end{proof}

\begin{mypara}\label{notation:fal-site}
	Let $K$ be a pre-perfectoid field extension of $\bb{Q}_p$, $\eta=\spec(K)$, $S=\spec(\ca{O}_K)$. Let $(\falh_{\eta\to S},\ca{O})$ be the ringed site of $\eta$-integrally closed coherent $S$-schemes endowed with the v-topology (\cite[3.23]{he2024coh}). Recall that the objects $\spec(R)$, where $R$ is an almost pre-perfectoid $\ca{O}_K$-algebra {\rm(\ref{para:notation-perfd})}, form a topological basis of $\falh_{\eta\to S}$ (\cite[8.5, 8.10]{he2024coh}).
\end{mypara}

\begin{mythm}[{\cite[8.11]{he2024coh}}]\label{thm:fal-site}
	With the notation in {\rm\ref{notation:fal-site}}, let $R$ be an $\ca{O}_K$-algebra integrally closed in $R[1/p]$ and $X=\spec(R)$.
	\begin{enumerate}
		\renewcommand{\labelenumi}{{\rm(\theenumi)}}
		\item If $R$ is almost pre-perfectoid, then for any $r\in\bb{N}$, the canonical morphism
		\begin{align}\label{eq:thm:fal-site-1}
			R/p^rR\longrightarrow \rr\Gamma(\falh_{X_\eta\to X},\ca{O}/p^r\ca{O})
		\end{align}
		is an almost isomorphism.\label{item:thm:fal-site-1}
		\item Let $\widehat{\ca{O}}=\lim_{r\in\bb{N}}\ca{O}/p^r\ca{O}$ be the $p$-adic completion of $\ca{O}$. Then, the canonical morphism 
		\begin{align}\label{eq:thm:fal-site-2}
			\widehat{\ca{O}}\longrightarrow \rr\lim_{r\in\bb{N}}\ca{O}/p^r\ca{O}
		\end{align}
		is an almost isomorphism. In particular, the canonical morphism $\widehat{R}\to \rr\Gamma(\falh_{X_\eta\to X},\widehat{\ca{O}})$ is an almost isomorphism if $R$ is almost pre-perfectoid.\label{item:thm:fal-site-2}
	\end{enumerate}
\end{mythm}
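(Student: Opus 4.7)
The plan is to follow the blueprint of Bhatt-Scholze's Theorem \ref{thm:arc} in the Faltings v-topology, replacing perfectoid formal schemes by $\eta$-integrally closed coherent schemes equipped with almost pre-perfectoid structure. The topological basis assertion recalled in \ref{notation:fal-site} is what makes this transfer possible: any hypercohomology computation on $\falh_{\eta\to S}$ can be reduced to a Čech computation with entries that are themselves almost pre-perfectoid $\ca{O}_K$-algebras.

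First I would reduce (1) to an almost Čech-descent statement. Namely, given an almost pre-perfectoid $R$ and a v-hypercover $\spec(R^\bullet)\to \spec(R)$ built out of spectra of almost pre-perfectoid algebras (which exists by the basis property), one needs to show that the complex $R/p^r R \to R^0/p^r \to R^1/p^r \to \cdots$ is almost exact. By the usual dévissage along the exact sequences $0 \to \ca{O}/p\ca{O}\to \ca{O}/p^{r+1}\ca{O}\to \ca{O}/p^r\ca{O}\to 0$, it suffices to treat $r=1$. Using the defining property of almost pre-perfectoid algebras (the Frobenius is an almost isomorphism $R/\pi R \iso R/\pi^p R$ for $\pi\in\ak{m}_K$ with $p\in\pi^p\ca{O}_K$), one can further reduce to the analogous statement modulo $\pi$, i.e., to a purely characteristic-$p$ problem.

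In characteristic $p$, the tilt $R^\flat$ is a perfect $\ca{O}_{K^\flat}$-algebra, and the v-topology on $R$-algebras integrally closed in their generic fibre corresponds (via Scholze's tilting equivalence applied to the almost category) to an arc-topology condition on $R^\flat$-algebras. The required descent then follows from Theorem \ref{thm:arc}.(\ref{item:thm:arc-1}) applied to the tilt, together with the observation that for perfect $\mathbb{F}_p$-algebras integrally closed in their generic fibre, arc-descent for $\ca{O}/\pi$ holds on the nose, not only almost; the almost qualifier here only enters through the transfer back to mixed characteristic via Fontaine's map $\vartheta$.

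For (2), I would apply $\rr\lim_{r\in\bb{N}}$ to the (almost) isomorphism of (1). The inverse system $(\ca{O}/p^r\ca{O})_{r\in\bb{N}}$ is levelwise v-locally surjective, hence so is the system $(R/p^rR)_{r\in\bb{N}}^{\al}$ on the basis of almost pre-perfectoid algebras, which forces $\rr^1\lim_{r} \ca{O}/p^r\ca{O}=0$ almost (analogously to the proof of Theorem \ref{thm:arc}.(\ref{item:thm:arc-2})); then the canonical morphism $\widehat{\ca{O}}\to \rr\lim \ca{O}/p^r\ca{O}$ is an almost isomorphism. The hardest step is the characteristic-$p$ descent at the bottom of Step 1: the interaction between the almost ideal $\widetilde{\ak{m}}_K$ and the completed tensor products arising in the Čech nerve is delicate, since $\widetilde{\ak{m}}_K^{\otimes n}$ need not behave well under $p$-adic completion. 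This is exactly the point where Scholze's tilting equivalence, together with the fact that arc-descent for perfect rings is exact (not merely almost exact), saves the day.
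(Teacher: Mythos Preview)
The paper does not actually prove this theorem: part (1) is simply quoted from \cite[8.11]{he2024coh}, and part (2) is deduced from (1) together with the Stacks Project tag \texttt{0BKY} (a general principle for computing $\rr\lim$ of sheaves via a basis of acyclic objects). So there is no ``paper's own proof'' to compare against for (1); you have written a genuine sketch where the paper only records a citation.

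Your outline for (1) --- \v{C}ech reduction on the basis of almost pre-perfectoid algebras, d\'evissage to $r=1$, then reduction modulo $\pi$ via the Frobenius almost-isomorphism, and finally a tilt to characteristic $p$ --- is the correct shape and is essentially how results of this type (including the cited \cite[8.11]{he2024coh}) are proved. The one place where your sketch is genuinely incomplete is the sentence ``the v-topology on $R$-algebras integrally closed in their generic fibre corresponds \dots\ to an arc-topology condition on $R^\flat$-algebras'': this correspondence is not automatic. Tilting gives an equivalence at the level of perfectoid objects, but the site $\falh_{\eta\to S}$ contains many non-perfectoid objects, and the v-topology there is defined scheme-theoretically rather than via formal schemes. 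What one actually needs is that a v-cover by spectra of almost pre-perfectoid algebras tilts to an arc-cover of perfect $\bb{F}_p$-algebras, and conversely that arc-descent on the tilt pulls back along $\vartheta$ to almost v-descent --- both require work (compare the role of Proposition~\ref{prop:fal-site-perfd} later in the appendix, which carefully mediates between the two sites via $(\mbf{Sch}^{\mrm{coh}}_{/X})_{\mrm{v}}$). Also note that Theorem~\ref{thm:arc} as stated is about $p$-adic formal schemes; you are implicitly using its characteristic-$p$ specialization, which is fine but should be said explicitly.

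For (2), your argument (apply $\rr\lim$ to (1), using that the transition maps $R/p^{r+1}R\to R/p^rR$ are surjective on the basis so that $\rr^1\lim$ vanishes almost) is exactly what the paper does, just spelled out rather than packaged into the Stacks reference.
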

\begin{proof}
	(\ref{item:thm:fal-site-1}) follows from \cite[8.11]{he2024coh}; (\ref{item:thm:fal-site-2}) follows from (\ref{item:thm:fal-site-1}) and \cite[\href{https://stacks.math.columbia.edu/tag/0BKY}{0BKY}]{stacks-project}.
\end{proof}

\begin{mylem}\label{lem:perfd}
	Let $K$ be a pre-perfectoid field. Consider the following properties of an $\ca{O}_K$-algebra $R$:
	\begin{enumerate}
		\renewcommand{\labelenumi}{{\rm(\theenumi)}}
		\item the $\ca{O}_K$-algebra $R$ is pre-perfectoid {\rm(\ref{para:notation-perfd})};\label{item:lem:perfd-1}
		\item the $p$-adic completion $\widehat{R}$ is a perfectoid ring {\rm(\cite[3.5]{bhattmorrowscholze2018integral})};\label{item:lem:perfd-2}
		\item the $\ca{O}_K$-algebra $R$ is almost pre-perfectoid {\rm(\ref{para:notation-perfd})}.\label{item:lem:perfd-3}
	\end{enumerate}
	Then, we have {\rm(\ref{item:lem:perfd-1})} $\Rightarrow$ {\rm(\ref{item:lem:perfd-2})} $\Rightarrow$ {\rm(\ref{item:lem:perfd-3})}.
\end{mylem}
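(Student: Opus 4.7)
The plan is to reduce both implications to the equivalent characterizations of (almost) pre-perfectoid $\ca{O}_K$-algebras already recorded in \ref{para:notation-perfd}. Recall that the citation \cite[5.18]{he2024coh} gives two equivalences: (a) $R$ is pre-perfectoid if and only if the $\ca{O}_{\widehat{K}}$-algebra $\widehat{R}$ is perfectoid in the sense of \cite[3.10.(\luoma{2})]{bhattmorrowscholze2018integral}, and (b) $R$ is almost pre-perfectoid if and only if the associated almost algebra $\widehat{R}^{\al}$ is perfectoid in Scholze's sense \cite[5.1.(\luoma{2})]{scholze2012perfectoid}. Together these two equivalences already make (\ref{item:lem:perfd-1}) and (\ref{item:lem:perfd-2}) tautologous, and leave only the passage from the integral perfectoidness of $\widehat{R}$ to the Scholze-perfectoidness of $\widehat{R}^{\al}$ to be verified.

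For (\ref{item:lem:perfd-1}) $\Rightarrow$ (\ref{item:lem:perfd-2}), the plan is simply to invoke (a); no additional argument is required (in fact (\ref{item:lem:perfd-1}) $\Leftrightarrow$ (\ref{item:lem:perfd-2})).

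For (\ref{item:lem:perfd-2}) $\Rightarrow$ (\ref{item:lem:perfd-3}), the plan is to unpack both definitions in the concrete formulation of \ref{para:notation-perfd}. Fix a nonzero $\pi \in \ak{m}_K$ with $p \in \pi^p\ca{O}_K$. By (a) applied to $R$, the hypothesis (\ref{item:lem:perfd-2}) gives that $\widehat{R}$ is flat over $\ca{O}_{\widehat{K}}$ and that the Frobenius $R/\pi R \to R/\pi^p R$ is a genuine isomorphism. Both properties weaken trivially to the almost category: flatness implies almost flatness, and every isomorphism is in particular an almost isomorphism (with respect to $\widetilde{\ak{m}}_K$). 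Hence $R$ satisfies the defining conditions of an almost pre-perfectoid $\ca{O}_K$-algebra; equivalently, $\widehat{R}^{\al}$ is perfectoid in Scholze's sense by (b).

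I do not expect any serious obstacle: the content of the lemma is an organizational statement linking the integral and almost notions of perfectoidness, and the proof is essentially bookkeeping between the two equivalences \cite[5.18]{he2024coh} already cited in \ref{para:notation-perfd}.
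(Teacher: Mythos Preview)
Your argument for (\ref{item:lem:perfd-1}) $\Rightarrow$ (\ref{item:lem:perfd-2}) is fine and matches the paper's. The problem is your claim that (\ref{item:lem:perfd-1}) $\Leftrightarrow$ (\ref{item:lem:perfd-2}) and, consequently, your proof of (\ref{item:lem:perfd-2}) $\Rightarrow$ (\ref{item:lem:perfd-3}).

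You have conflated two distinct notions. The equivalence (a) from \ref{para:notation-perfd} says that $R$ is pre-perfectoid if and only if $\widehat{R}$ is perfectoid \emph{in the sense of \cite[3.10.(\luoma{2})]{bhattmorrowscholze2018integral}}, whereas condition (\ref{item:lem:perfd-2}) in the lemma asks that $\widehat{R}$ be a perfectoid ring \emph{in the sense of \cite[3.5]{bhattmorrowscholze2018integral}}. These are not the same: the characterization in 3.10 builds in $\pi$-torsion-freeness (i.e., flatness over $\ca{O}_{\widehat{K}}$), while the general definition 3.5 does not. A perfectoid ring in the sense of 3.5 may have nontrivial $p$-power torsion. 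Thus your sentence ``By (a) applied to $R$, the hypothesis (\ref{item:lem:perfd-2}) gives that $\widehat{R}$ is flat over $\ca{O}_{\widehat{K}}$\ldots'' is unjustified: hypothesis (\ref{item:lem:perfd-2}) is not the right-hand side of (a), so (a) cannot be invoked.

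The paper closes this gap by citing \cite[2.1.3]{cesnaviciusscholze2019purity}: if $\widehat{R}$ is perfectoid in the sense of \cite[3.5]{bhattmorrowscholze2018integral}, then $\widehat{R}[p^\infty]$ is killed by $\ak{m}_K$, which is exactly almost flatness over $\ca{O}_{\widehat{K}}$. With almost flatness in hand, the Frobenius condition and hence almost pre-perfectoidness follow. This is the missing ingredient in your argument.
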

\begin{proof}
	Let $\pi\in\ak{m}_K$ be a nonzero element with $p\in\pi^p\ca{O}_K$. If $R$ is pre-perfectoid, then $\widehat{R}$ is flat over $\ca{O}_{\widehat{K}}$ and the Frobenius induces an isomorphism $\widehat{R}/\pi\widehat{R}\iso \widehat{R}/\pi^p\widehat{R}$. Thus, $\widehat{R}$ is a perfectoid ring by \cite[3.10.(\luoma{2})]{bhattmorrowscholze2018integral}. If $\widehat{R}$ is perfectoid, then $\widehat{R}[p^\infty]$ is killed by $\ak{m}_K$ by \cite[2.1.3]{cesnaviciusscholze2019purity}, i.e., $\widehat{R}$ is almost flat over $\ca{O}_{\widehat{K}}$ (\cite[5.12]{he2024coh}). Thus, $R$ is almost pre-perfectoid.
\end{proof}

\begin{myprop}\label{prop:fal-site-perfd}
	Let $K$ be a pre-perfectoid field extension of $\bb{Q}_p$, $X$ a coherent $\ca{O}_K$-scheme, $\widehat{X}$ the associated $p$-adic formal scheme of $X$, $X_\eta$ the generic fibre of $X$, $X^\eta$ the integral closure of $X$ in $X_\eta$. Then, for any $r\in\bb{N}$, there is a canonical morphism
	\begin{align}\label{eq:prop:fal-site-perfd-1}
		\rr\Gamma((\mbf{fSch}_{/\widehat{X}})_{\mrm{arc}},\ca{O}/p^r\ca{O})\longrightarrow \rr\Gamma(\falh_{X_\eta\to X^\eta},\ca{O}/p^r\ca{O})
	\end{align}
	which induces an almost isomorphism $H^q((\mbf{fSch}_{/\widehat{X}})_{\mrm{arc}},\ca{O}/p^r\ca{O})\to H^q(\falh_{X_\eta\to X^\eta},\ca{O}/p^r\ca{O})$ for any $q\in\bb{N}$. In particular, the canonical morphism
	\begin{align}\label{eq:prop:fal-site-perfd-2}
		\rr\Gamma((\mbf{fSch}_{/\widehat{X}})_{\mrm{arc}},\ca{O})\longrightarrow \rr\Gamma(\falh_{X_\eta\to X^\eta},\widehat{\ca{O}})
	\end{align}
	is an almost isomorphism.
\end{myprop}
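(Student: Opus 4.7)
The plan is to construct the morphism \eqref{eq:prop:fal-site-perfd-1} via a morphism of ringed sites, and then reduce the comparison to a term-by-term check on a compatible topological basis of perfectoid affines. First I construct the comparison map. Consider the functor
\[
u\colon (\mbf{fSch}_{/\widehat X})_{\mrm{arc}}\longrightarrow \falh_{X_\eta\to X^\eta}
\]
sending an affine $\mrm{Spf}(A)\to \widehat X$ to $\mrm{Spec}(A^{\mrm{ic}})\to X^\eta$, where $A^{\mrm{ic}}$ denotes the integral closure of $A$ in $A[1/p]$. On the perfectoid basis of the arc site from \ref{para:notation-arc} the functor is well-defined, since a perfectoid $\ca O_{\widehat K}$-algebra is already integrally closed in its generic fibre and is almost pre-perfectoid by \ref{lem:perfd}, hence lies in the basis of $\falh_{X_\eta\to X^\eta}$ from \ref{notation:fal-site}; moreover, on such perfectoid affines $u$ sends arc covers to v-covers, by the standard compatibility between the $p$-complete arc topology and the v-topology on perfectoids. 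Combined with the tautological map of sheaves $u^{-1}\ca O\to \ca O$, $u$ induces a morphism of ringed topoi, and \eqref{eq:prop:fal-site-perfd-1} is the resulting map on cohomology modulo $p^r$.

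Next I would verify that \eqref{eq:prop:fal-site-perfd-1} induces almost isomorphisms on every $H^q$. The idea is to compute both sides simultaneously by a common Čech complex. The question is local, so we may assume $\widehat X=\mrm{Spf}(R)$ is affine; choose a v-hypercover $\mrm{Spec}(A^\bullet)\to X^\eta$ by almost pre-perfectoid $\ca O_K$-algebras (possible by \ref{notation:fal-site}), so that $\mrm{Spf}(A^\bullet)\to \widehat X$ is an arc-hypercover by perfectoid affine formal schemes (possible by \ref{para:notation-arc}, up to passing between perfectoid and almost pre-perfectoid via \ref{lem:perfd}). Then \ref{thm:arc}.(\ref{item:thm:arc-1}) identifies
\[
\rr\Gamma((\mbf{fSch}_{/\widehat X})_{\mrm{arc}},\ca O/p^r\ca O)\simeq A^\bullet/p^r A^\bullet,
\]
while \ref{thm:fal-site}.(\ref{item:thm:fal-site-1}) identifies
\[
\rr\Gamma(\falh_{X_\eta\to X^\eta},\ca O/p^r\ca O)\aliso A^\bullet/p^r A^\bullet,
\]
and the map \eqref{eq:prop:fal-site-perfd-1} is compatible with these two descriptions, so it is an almost isomorphism on every $H^q$.

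Finally the ``in particular'' part follows by applying $\rr\lim_{r\in\bb N}$. Since $H^q(-,\ca O/p^r\ca O)$ is killed by $p^r$ on both sides, the inverse system in $r$ is Mittag-Leffler in each cohomological degree, so derived and naive inverse limits agree and commute with the already-established almost isomorphism. Combining this with \ref{thm:arc}.(\ref{item:thm:arc-2}) on the arc side and \ref{thm:fal-site}.(\ref{item:thm:fal-site-2}) on the Faltings side gives the almost isomorphism \eqref{eq:prop:fal-site-perfd-2}.

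The main obstacle is the precise verification that $u$ extends to a genuine continuous functor of sites (not merely on the perfectoid bases), i.e.\ that arc covers of general $p$-adic formal $\widehat X$-schemes induce v-covers of their integral closures in the generic fibre, and that the perfectoid and almost-pre-perfectoid bases are cofinal in a compatible way so that a single hypercover computes both cohomologies at once. Once this site-theoretic bookkeeping is in place, the remainder of the argument is a formal descent calculation built on the two fundamental vanishing/identification theorems \ref{thm:arc} and \ref{thm:fal-site}.
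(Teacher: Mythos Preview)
Your strategy has the right inputs (the perfectoid/almost-perfectoid bases and Theorems~\ref{thm:arc} and~\ref{thm:fal-site}), and you correctly isolate the difficulty: the functor $u\colon \mrm{Spf}(A)\mapsto \spec(A^{\mrm{ic}})$ is awkward to define on the full arc site, its continuity is unclear, and your ``common hypercover'' step tacitly assumes a compatibility you have not established. This is a genuine gap, not just bookkeeping.

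The paper resolves exactly this obstacle by \emph{not} constructing a direct functor between the two sites. Instead it introduces an intermediate site, the v-site of coherent $X$-schemes $((\mbf{Sch}^{\mrm{coh}}_{/X})_{\mrm{v}},\widehat{\ca{O}})$, from which there are obvious functors in \emph{both} directions: $U\mapsto \widehat{U}$ to $\mbf{fSch}_{/\widehat{X}}$ and $U\mapsto U^\eta$ to $\falh_{X_\eta\to X^\eta}$. These are left exact and continuous (this is where the real site-theoretic work is done, but it is standard and cited), so they give morphisms of ringed sites $\nu$ and $\mu$. The objects $U=\spec(R)$ with $\widehat{R}$ perfectoid form a basis of the intermediate site; for such $U$ one has $\ca{O}/p^r\iso \rr\nu_*(\ca{O}/p^r)$ by \ref{thm:arc}.(\ref{item:thm:arc-1}) and $\ca{O}/p^r\aliso \rr\mu_*(\ca{O}/p^r)$ by \ref{thm:fal-site}.(\ref{item:thm:fal-site-1}). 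This yields a zig-zag
\[
\rr\Gamma((\mbf{fSch}_{/\widehat{X}})_{\mrm{arc}},\ca{O}/p^r)\stackrel{\sim}{\longleftarrow} \rr\Gamma((\mbf{Sch}^{\mrm{coh}}_{/X})_{\mrm{v}},\ca{O}/p^r)\stackrel{\al\text{-}\sim}{\longrightarrow} \rr\Gamma(\falh_{X_\eta\to X^\eta},\ca{O}/p^r),
\]
and inverting the left arrow gives the canonical morphism \eqref{eq:prop:fal-site-perfd-1}. The intermediate site is precisely the device that replaces your ``single hypercover computing both sides at once.''

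A minor point: your Mittag-Leffler argument for the ``in particular'' is unnecessary and the justification (``killed by $p^r$ implies ML'') is not quite right. Since \eqref{eq:prop:fal-site-perfd-1} is already an almost isomorphism in the derived category for each $r$, applying $\rr\lim_{r}$ preserves this, and then \ref{thm:arc}.(\ref{item:thm:arc-2}) and \ref{thm:fal-site}.(\ref{item:thm:fal-site-2}) identify the two sides; no ML is needed.
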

\begin{proof}
	Let $((\mbf{Sch}^{\mrm{coh}}_{/X})_{\mrm{v}},\ca{O})$ be the ringed site of coherent $X$-schemes endowed with $v$-topology (\cite[3.3]{he2024coh}). There are canonical functors 
	\begin{align}
		\mbf{fSch}_{/\widehat{X}}\longleftarrow\mbf{Sch}^{\mrm{coh}}_{/X}\longrightarrow \falh_{X_\eta\to X^\eta},\ \widehat{U}\mapsfrom U\mapsto U^\eta,
	\end{align}
	which are left exact (\cite[\Luoma{1}.1.4.6]{fujiwarakato2018rigid} and \cite[3.21]{he2024coh}) and continuous (\cite[3.4.(4), 3.15]{he2024coh}). Therefore, they define canonical morphisms of ringed sites
	\begin{align}
		((\mbf{fSch}_{/\widehat{X}})_{\mrm{arc}},\ca{O})\stackrel{\nu}{\longrightarrow} ((\mbf{Sch}^{\mrm{coh}}_{/X})_{\mrm{v}},\widehat{\ca{O}})\stackrel{\mu}{\longleftarrow} (\falh_{X_\eta\to X^\eta},\widehat{\ca{O}}).
	\end{align}
	Notice that the objects $U=\spec(R)$ such that $\widehat{R}$ is a perfectoid ring form a topological basis of $(\mbf{Sch}^{\mrm{coh}}_{/X})_{\mrm{v}}$ (\cite[3.14]{he2024coh}). For such $U$, we see that $\widehat{U}=\mrm{Spf}(\widehat{R})$ is the formal spectrum of a perfectoid ring $\widehat{R}$ and that $U^\eta=\spec(\overline{R})$ is the spectrum of an almost pre-perfectoid $\ca{O}_K$-algebra (\ref{lem:perfd} and \cite[5.30]{he2024coh}). Therefore, the canonical morphism $\ca{O}/p^r\ca{O}\to \rr\nu_*(\ca{O}/p^r\ca{O})$ is an isomorphism by \ref{thm:arc}.(\ref{item:thm:arc-1}) and the canonical morphism $\ca{O}/p^r\ca{O}\to \rr\mu_*(\ca{O}/p^r\ca{O})$ is an almost isomorphism by \ref{thm:fal-site}.(\ref{item:thm:fal-site-1}). Thus, we obtain canonical morphisms
	\begin{align}
		\rr\Gamma((\mbf{fSch}_{/\widehat{X}})_{\mrm{arc}},\ca{O}/p^r\ca{O})\longleftarrow \rr\Gamma((\mbf{Sch}^{\mrm{coh}}_{/X})_{\mrm{v}},\ca{O}/p^r\ca{O}) \longrightarrow \rr\Gamma(\falh_{X_\eta\to X^\eta},\ca{O}/p^r\ca{O}),
	\end{align}
	where the left arrow is an isomorphism and the right arrow is an almost isomorphism. The conclusion follows from taking $\rr\lim_{r\in\bb{N}}$ by \ref{thm:arc}.(\ref{item:thm:arc-2}) and \ref{thm:fal-site}.(\ref{item:thm:fal-site-2}).
\end{proof}

\begin{mycor}\label{cor:fal-site-perfd}
	Let $K$ be a pre-perfectoid field extension of $\bb{Q}_p$, $F$ a valuation field of height $1$ extension of $K$, $\overline{F}$ an algebraic closure of $F$, $\ca{O}_{\overline{F}}$ the integral closure of $\ca{O}_F$ in $\overline{F}$ endowed with the natural continuous action of the absolute Galois group $G_F=\gal(\overline{F}/F)$ of $F$. Then, there is a canonical isomorphism in the derived category of almost $\ca{O}_{\widehat{F}}$-modules $\mbf{D}(\ca{O}_{\widehat{F}}^{\al})$,
	\begin{align}
		\rr\Gamma(G_F,\ca{O}_{\widehat{\overline{F}}})\iso(\ca{O}_{\widehat{F}})_{\mrm{perfd}}.
	\end{align}
\end{mycor}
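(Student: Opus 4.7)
The plan is to chain together three identifications, taking $X = \spec(\ca{O}_F)$ so that $X_\eta = \spec(F)$, $X^\eta = X$ (since $\ca{O}_F$ is integrally closed in $F$), and $\widehat{X} = \spf(\ca{O}_{\widehat{F}})$.

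\textbf{Step 1.} Apply Theorem~\ref{thm:arc}.(\ref{item:thm:arc-3}) to the $p$-complete ring $\ca{O}_{\widehat{F}}$ to obtain a canonical isomorphism
\begin{align*}
(\ca{O}_{\widehat{F}})_{\mrm{perfd}} \iso \rr\Gamma((\mbf{fSch}_{/\widehat{X}})_{\mrm{arc}},\ca{O}).
\end{align*}
Then Proposition~\ref{prop:fal-site-perfd}, applied to this $X$, supplies a canonical almost isomorphism
\begin{align*}
\rr\Gamma((\mbf{fSch}_{/\widehat{X}})_{\mrm{arc}},\ca{O}) \iso \rr\Gamma(\falh_{X_\eta\to X},\widehat{\ca{O}}).
\end{align*}

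\textbf{Step 2.} Identify the Faltings cohomology with continuous Galois cohomology. Put $Y = \spec(\ca{O}_{\overline{F}})$. Since $\overline{F}$ is algebraically closed and contains the pre-perfectoid field $K$, the ring $\ca{O}_{\overline{F}}$ is itself pre-perfectoid over $\ca{O}_K$: it is flat over $\ca{O}_K$ as a valuation ring extension, and for any $\pi\in\ak{m}_K$ with $p\in\pi^p\ca{O}_K$, the Frobenius $\ca{O}_{\overline{F}}/\pi\to \ca{O}_{\overline{F}}/\pi^p$ is bijective because every element of $\ca{O}_{\overline{F}}$ admits a $p$-th root in $\ca{O}_{\overline{F}}$. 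Hence Theorem~\ref{thm:fal-site}.(\ref{item:thm:fal-site-2}) yields a canonical almost isomorphism
\begin{align*}
\ca{O}_{\widehat{\overline{F}}} \iso \rr\Gamma(\falh_{Y_\eta\to Y},\widehat{\ca{O}}).
\end{align*}
The morphism $Y\to X$ is a $v$-cover (faithfully flat and integral), and $\overline{F}/F$ is the filtered union of its finite Galois subextensions $F'/F$ with $\ca{O}_{F'} = \ca{O}_{\overline{F}}^{\gal(\overline{F}/F')}$. The Čech-to-derived-functor spectral sequence for this pro-Galois cover with group $G_F$, together with the almost acyclicity of $\widehat{\ca{O}}$ on $Y$ just recorded, reduces $\rr\Gamma(\falh_{X_\eta\to X},\widehat{\ca{O}})$ to the Čech complex of $Y\to X$. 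For each $n$, the $(n+1)$-fold self product of $Y$ in $\falh_{X_\eta\to X}$ has generic fiber $\overline{F}^{\otimes_F(n+1)}\cong \contmap(G_F^n,\overline{F})$ and integral closure $\contmap(G_F^n,\ca{O}_{\overline{F}})$; its $p$-adic completion is $\contmap(G_F^n,\ca{O}_{\widehat{\overline{F}}})$, and the Čech differentials become the continuous group cochain differentials. This yields an almost isomorphism
\begin{align*}
\rr\Gamma(\falh_{X_\eta\to X},\widehat{\ca{O}}) \iso \rr\Gamma(G_F,\ca{O}_{\widehat{\overline{F}}}),
\end{align*}
and composing with Step~1 gives the corollary.

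The main obstacle is the rigorous identification in Step~2 of the Čech complex of $Y\to X$ on the Faltings site with the continuous cochain complex of $G_F$ valued in $\ca{O}_{\widehat{\overline{F}}}$. One must verify that taking self-products in $\falh_{X_\eta\to X}$ commutes, up to almost isomorphism after $p$-adic completion, with the identification $\overline{F}^{\otimes_F(n+1)}\cong \contmap(G_F^n,\overline{F})$. This is a standard consequence of almost purity applied levelwise to each finite Galois subcover $\spec(\ca{O}_{F'})\to X$ (whose structure sheaf sections are almost $\ca{O}_{F'}$) together with passage to the filtered colimit over $F'$, but it is the place where the almost-ring machinery is actually invoked and should be documented carefully.
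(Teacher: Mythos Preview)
Your Step~1 matches the paper exactly. Your Step~2 is correct in substance but takes a different route from the paper. The paper does not compute fiber products of $Y=\spec(\ca{O}_{\overline{F}})$ in $\falh_{X_\eta\to X}$ by hand; instead it introduces the morphism of ringed sites
\[
\beta:(\falh_{X_\eta\to X},\ca{O})\longrightarrow (X_{\eta,\profet},\falb),\qquad V\mapsto X^V,
\]
where $\falb$ is the sheaf $V\mapsto\Gamma(X^V,\ca{O}_{X^V})$. Since $\ca{O}_{\overline{F}}$ is pre-perfectoid, $\falb/p^r\to\rr\beta_*(\ca{O}/p^r)$ is an almost isomorphism by \ref{thm:fal-site}.(\ref{item:thm:fal-site-1}), and one then invokes the standard identifications $\rr\Gamma(X_{\eta,\profet},\falb/p^r)\cong\rr\Gamma(X_{\eta,\fet},\falb/p^r)\cong\rr\Gamma(G_F,\ca{O}_{\overline{F}}/p^r)$ (via $G_F=\pi_1(X_\eta,\overline{x})$, citing \cite[7.32]{he2024coh} and \cite[\Luoma{6}.9.8.6]{abbes2016p}). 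Taking $\rr\lim_r$ finishes. Your \v{C}ech approach unpacks this same content by hand: the fiber products $Y^{\times_X(n+1)}$ in $\falh$ are $\spec(\contmap(G_F^n,\ca{O}_{\overline{F}}))$, these rings are pre-perfectoid as filtered colimits of finite products of $\ca{O}_{\overline{F}}$, and their $\widehat{\ca{O}}$-sections are $\contmap(G_F^n,\ca{O}_{\widehat{\overline{F}}})$. The paper's route is shorter because the pro-finite \'etale comparison is already packaged in the cited references; your route is more self-contained but, as you note, requires you to verify the fiber-product identification in $\falh$ (i.e., that integral closure commutes with the colimit over finite Galois subextensions) and the almost acyclicity on each term of the nerve, both of which are routine given \ref{thm:fal-site}.
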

\begin{proof}
	We take $X=\spec(\ca{O}_F)$, $x=X_\eta=\spec(F)$ and $\overline{x}=\spec(\overline{F})$. There is a canonical left exact continuous functor (\cite[8.5]{he2024coh})
	\begin{align}
		X_{\eta,\profet}\longrightarrow \falh_{X_\eta\to X},\ V\mapsto X^V
	\end{align}
	where $X^V$ is the integral closure of $X$ in the pro-finite \'etale $X_\eta$-scheme $V$. Then, it defines a canonical morphism of ringed sites  
	\begin{align}
		\beta:(\falh_{X_\eta\to X},\ca{O})\longrightarrow (X_{\eta,\profet},\falb),
	\end{align}
	where $\falb$ is the sheaf associated to the presheaf sending $V$ to $\Gamma(X^V,\ca{O}_{X^V})$. As $\ca{O}_{\overline{F}}$ is pre-perfectoid, we see that the canonical morphism $\falb/p^r\falb\to \rr\beta_*(\ca{O}/p^r\ca{O})$ is an almost isomorphism by \ref{thm:fal-site}.(\ref{item:thm:fal-site-1}). Therefore, there are canonical morphisms
	\begin{align}
		\xymatrix{
			\rr\Gamma((\mbf{fSch}_{/\widehat{X}})_{\mrm{arc}},\ca{O}/p^r\ca{O})\ar[r]\ar@{.>}[d]& \rr\Gamma(\falh_{X_\eta\to X},\ca{O}/p^r\ca{O})&\rr\Gamma(X_{\eta,\profet},\falb/p^r\falb)\ar[l]\\
			\rr\Gamma(G_F,\ca{O}_{\overline{F}}/p^r\ca{O}_{\overline{F}})\ar[rr]^-{\sim}&&\rr\Gamma(X_{\eta,\fet},\falb/p^r\falb)\ar[u]_-{\wr}
		}
	\end{align}
	where the arrows on the top are almost isomorphisms, the right vertical arrow is the canonical isomorphism (cf. \cite[7.32]{he2024coh}) and the arrow on the bottom is the canonical isomorphism as $G_F=\pi_1(X_\eta,\overline{x})$ (\cite[\Luoma{6}.9.8.6]{abbes2016p}). The conclusion follows from taking $\rr\lim_{r\in\bb{N}}$ by \ref{thm:arc}.(\ref{item:thm:arc-2}, \ref{item:thm:arc-3}) and the fact that $\rr\lim_{r\in\bb{N}}\rr\Gamma(G_F,\ca{O}_{\overline{F}}/p^r\ca{O}_{\overline{F}})=\rr\Gamma(G_F,\ca{O}_{\widehat{\overline{F}}})$ (\cite[2.2]{jannsen1988cont}).
\end{proof}

\begin{mythm}[{\cite[3.1]{bouis2023cartier}}]\label{thm:cartier-smooth-val}
	Let $K\to F$ be an extension of valuation fields with $K$ pre-perfectoid. Then, $\ca{O}_F$ is $p$-Cartier smooth over $\ca{O}_K$.
\end{mythm}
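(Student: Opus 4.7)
The plan is to verify directly the three defining conditions (i)--(iii) of $p$-Cartier smoothness from \ref{thm:cartier-smooth} for $R=\ca{O}_F$ over $\overline{A}=\ca{O}_K$. Condition (i) is immediate: the valuation ring $\ca{O}_F$ is torsion-free over $\ca{O}_K$ and hence flat, so $\mrm{Tor}_i^{\ca{O}_K}(\ca{O}_F,\ca{O}_K/p)=0$ for all $i>0$. For (ii), since $K$ is pre-perfectoid it is perfect by \cite[5.24]{he2024coh}, so \ref{thm:differential}.(\ref{item:thm:differential-1}) yields $\bb{L}_{\ca{O}_F/\ca{O}_K}\simeq \Omega^1_{\ca{O}_F/\ca{O}_K}$, and \ref{cor:differential} shows this module is torsion-free, hence flat over $\ca{O}_F$. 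Base-changing the transitivity triangle for $\ca{O}_K\to \ca{O}_F\to \ca{O}_F/p$ against the one for $\ca{O}_K\to \ca{O}_K/p\to \ca{O}_F/p$ then produces $\bb{L}_{(\ca{O}_F/p)/(\ca{O}_K/p)}\simeq \Omega^1_{\ca{O}_F/\ca{O}_K}/p$, concentrated in degree zero and flat over $\ca{O}_F/p$.

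The heart of the matter is (iii), the classical inverse Cartier isomorphism for the $\bb{F}_p$-algebra morphism $\ca{O}_K/p\to \ca{O}_F/p$. My plan is to reduce to the case where $F$ has finite transcendental degree over $K$, writing a general $F$ as a filtered colimit of Henselian valuation subfields $F_\lambda\subseteq F$ finitely generated over $K$, since exterior powers, de Rham cohomology, and Frobenius pullback all commute with filtered colimits of flat objects. For each such $F_\lambda$, \ref{lem:differential} supplies a $\pi$-basis $\df t_1,\dots,\df t_d$ of $\widehat{\Omega}^1_{\ca{O}_{F_\lambda}/\ca{O}_K}$ with $t_i\in \ca{O}_{F_\lambda}^\times$. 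Mapping $T_i\mapsto t_i$ yields a canonical map into $\ca{O}_{F_\lambda}/p$ from the smooth Laurent polynomial $\ca{O}_K/p$-algebra $B=(\ca{O}_K/p)[T_1^{\pm 1},\dots,T_d^{\pm 1}]$, such that the relative differentials $\Omega^1_{(\ca{O}_{F_\lambda}/p)/B}$ are annihilated by a fixed power of $\pi$. The classical Cartier isomorphism is known for the smooth base $B$, and the semiperfectness of $\ca{O}_K/p$ (which follows from $K$ being pre-perfectoid) ensures Frobenius behaves well on the coefficient ring.

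The hard part, and the main obstacle, is to propagate the Cartier isomorphism across the non-smooth extension $B\to \ca{O}_{F_\lambda}/p$. Here the argument uses the full strength of Gabber--Ramero's analysis of the cotangent complex of valuation ring extensions: applying \ref{thm:differential}.(\ref{item:thm:differential-3}) together with the Fitting ideal formula \eqref{eq:para:fitting-ideal-1} shows that both the relative cotangent complex and the relative de Rham complex of $B\to \ca{O}_{F_\lambda}/p$ are controlled by bounded powers of $\pi$, so the kernel and cokernel of the induced map between Cartier morphisms are similarly bounded. Since $\pi$ may be chosen arbitrarily small in $\ak{m}_K$ while preserving the $\pi$-basis property of $\df t_1,\dots,\df t_d$, and since the target $\Omega^q_{\ca{O}_{F_\lambda}/\ca{O}_K}/p$ is flat (as exterior power of a torsion-free module over a valuation ring), these bounded-torsion obstructions force the Cartier map for $\ca{O}_{F_\lambda}/p$ over $\ca{O}_K/p$ to be an isomorphism. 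This torsion-killing step, worked out carefully in \cite[\textsection3]{bouis2023cartier}, is the technically most delicate point and requires the combined use of Gabber--Ramero's cotangent complex computations and the pre-perfectoid structure of $K$.
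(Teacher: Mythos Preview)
Your verification of conditions (i) and (ii) is correct and standard; these follow readily from flatness of $\ca{O}_F$ over $\ca{O}_K$ together with \ref{thm:differential}.(\ref{item:thm:differential-1}) and \ref{cor:differential}. The problem is entirely in your treatment of condition (iii), where your approach diverges from the paper's and contains a genuine gap.

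The paper (following Bouis) does \emph{not} verify the Cartier isomorphism directly. Instead, it reduces to the characteristic-$p$ case, which is a theorem of Gabber \cite[A.4]{kerz2021vorst}. For $K$ of characteristic $0$, after reducing to $F$ of height $1$ and finite type over $K$, Bouis uses deformation theory: since $\bb{L}_{\ca{O}_F/\ca{O}_K}\simeq\Omega^1_{\ca{O}_F/\ca{O}_K}$ is flat and countably generated (\ref{cor:kaplansky}), the obstructions to lifting $\ca{O}_F/p$ successively along $\ca{O}_{K^\flat}/d^n\to\ca{O}_{K^\flat}/d^{n-1}$ vanish, producing a valuation ring extension $\ca{O}_L$ of $\ca{O}_{K^\flat}$ with $\ca{O}_L/d\cong\ca{O}_F/p$ compatibly with \eqref{eq:notation:perfect-prism-1}. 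This transports the Cartier question to characteristic $p$.

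Your direct approach has several defects. First, your appeal to \ref{thm:differential}.(\ref{item:thm:differential-3}) and the Fitting ideal formula \eqref{eq:para:fitting-ideal-1} is misplaced: those statements concern finite separable extensions of Henselian valuation fields, not a map from a Laurent polynomial ring $B$ into $\ca{O}_{F_\lambda}/p$. Second, even granting that $\Omega^1_{(\ca{O}_{F_\lambda}/p)/B}$ is annihilated by a power of $\pi$, this controls neither the de Rham cohomology $H^q(\Omega^\bullet_{(\ca{O}_{F_\lambda}/p)/(\ca{O}_K/p)})$ nor the Cartier map in any evident way; the Cartier isomorphism is a statement about cohomology of a complex, not about its terms. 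Third, your ``varying $\pi$'' maneuver is circular: shrinking $\pi$ changes the $\pi$-basis $t_1,\dots,t_d$ and hence the auxiliary ring $B$, so you are not bounding a single fixed obstruction by arbitrarily small $\pi$; at best you would obtain that the kernel and cokernel are almost zero, which does not imply vanishing since $\ca{O}_F/p$ is not reduced in the mixed-characteristic case. Finally, your attribution of this argument to \cite[\S3]{bouis2023cartier} is incorrect: Bouis proceeds via the deformation-theoretic tilting argument sketched above, not by $\pi$-basis approximation.
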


\begin{mypara}
	We briefly sketch Bouis' proof here. The case for $K$ of characteristic $p$ is proved by Gabber, see \cite[A.4]{kerz2021vorst}. For $K$ of characteristic $0$, after reducing to the case where $F$ is of height $1$ and finitely generated over $K$, it suffices to construct a valuation field $L$ extension of the tilt $K^\flat$ with an isomorphism compatible with \eqref{eq:notation:perfect-prism-1},
	\begin{align}
		\ca{O}_L/d\ca{O}_L\iso \ca{O}_F/p\ca{O}_F.
	\end{align}
	Bouis took induction to construct a flat $\ca{O}_{K^\flat}/d^n\ca{O}_{K^\flat}$-algebra $\ca{O}_L/d^n\ca{O}_L$ by deformation theory. This is possible because the cotangent complex of the valuation ring extension $\ca{O}_K\to\ca{O}_F$ is concentrated in degree $0$ where it is given by a countably generated flat $\ca{O}_F$-module $\Omega^1_{\ca{O}_F/\ca{O}_K}$ (\ref{thm:differential} and \ref{cor:kaplansky}, cf. \cite[3.11]{bouis2023cartier}).
\end{mypara}

\begin{mycor}[{cf. \ref{cor:geo}}]\label{cor:cartier-smooth-val}
	Let $K$ be a pre-perfectoid field extension of $\bb{Q}_p$, $F$ a valuation field of height $1$ extension of $K$, $\overline{F}$ an algebraic closure of $F$, $\ca{O}_{\overline{F}}$ the integral closure of $\ca{O}_F$ in $\overline{F}$ endowed with the natural continuous action of the absolute Galois group $G_F=\gal(\overline{F}/F)$ of $F$. Then, for any $q\in\bb{N}$ and $\varpi\in\ca{O}_K$ such that $\varpi^{p-1}\in p\ca{O}_K$, there is a canonical morphism of $\ca{O}_{\widehat{F}}^{\al}$-modules
	\begin{align}
		\widehat{\Omega}^q_{\ca{O}_F/\ca{O}_K}\{-q\}\longrightarrow H^q(G_F,\ca{O}_{\widehat{\overline{F}}})
	\end{align}
	with kernel and cokernel both killed by $\varpi^q$.
\end{mycor}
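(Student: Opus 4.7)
The plan is to deduce this corollary from the Cartier-smooth prismatic comparison of Proposition \ref{prop:cartier-smooth}, applied to $R=\ca{O}_{\widehat F}$ over the perfect prism $(A,dA)=(W(\ca{O}_{K^\flat}),\ker\vartheta)$, and then to identify the two sides with the objects appearing in the statement by invoking Bouis' Cartier-smoothness theorem for valuation rings together with the arc-cohomology computation of the Galois cohomology carried out in Corollary \ref{cor:fal-site-perfd}.

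First I would verify the hypothesis. Since $\widehat K\to\widehat F$ is an extension of height-$1$ valuation fields with $\widehat K$ perfectoid (hence pre-perfectoid), Theorem \ref{thm:cartier-smooth-val} shows that $R=\ca{O}_{\widehat F}$ is $p$-Cartier smooth over $\overline A=\ca{O}_{\widehat K}$. Applying Proposition \ref{prop:cartier-smooth} with $\pi=\varpi$ produces a canonical morphism
\begin{align}
\widehat\Omega^q_{\ca{O}_{\widehat F}/\ca{O}_{\widehat K}}\{-q\}\ =\ H^q(\overline\prism_{R/A})\ \longrightarrow\ H^q(R_{\mrm{perfd}})
\end{align}
whose kernel and cokernel are killed by $\varpi^q$, where the equality on the left is the Hodge--Tate comparison \eqref{eq:thm:cartier-smooth-3} and $\{-q\}$ denotes the prismatic Breuil--Kisin--Fargues twist by $(I/I^2)^{\otimes -q}$.

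Then I would match both sides with the statement. On the source, the canonical map $\widehat\Omega^q_{\ca{O}_F/\ca{O}_K}\to\widehat\Omega^q_{\ca{O}_{\widehat F}/\ca{O}_{\widehat K}}$ is an isomorphism: indeed $\Omega^1_{\ca{O}_F/\ca{O}_K}\otimes_{\ca{O}_F}\ca{O}_{\widehat F}=\Omega^1_{\ca{O}_{\widehat F}/\ca{O}_K}$ (by \ref{thm:differential}.(\ref{item:thm:differential-1}) and flatness of $\ca{O}_F\to\ca{O}_{\widehat F}$), and the quotient $\Omega^1_{\ca{O}_{\widehat F}/\ca{O}_K}\to\Omega^1_{\ca{O}_{\widehat F}/\ca{O}_{\widehat K}}$ has $p$-adically trivial kernel because $\Omega^1_{\ca{O}_{\widehat K}/\ca{O}_K}$ is $p$-divisible as $\widehat K$ is perfectoid (\cite[6.6.6]{gabber2003almost}). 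On the target, Corollary \ref{cor:fal-site-perfd} gives a canonical almost isomorphism $R_{\mrm{perfd}}\iso\rr\Gamma(G_F,\ca{O}_{\widehat{\overline F}})$ in $\mbf D(\ca{O}_{\widehat F}^{\al})$; taking $H^q$ yields an almost isomorphism $H^q(R_{\mrm{perfd}})\iso H^q(G_F,\ca{O}_{\widehat{\overline F}})$. Composing these identifications with the map above produces the desired morphism together with the stated bound $\varpi^q$ on kernel and cokernel.

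The main obstacle is the compatibility of the prismatic twist $\{-q\}$, defined via $(I/I^2)^{\otimes -q}$, with the differential Tate twist of \ref{defn:int-tate-twist}, defined via $T_p(\Omega^1_{\ca{O}_F/\bb Z_p})^{\otimes -q}$, which is the meaning of $\{-q\}$ in the target of the corollary. Since $K$ is pre-perfectoid, $F$ is geometric by \ref{lem:ari-geo-basic}.(\ref{item:lem:ari-geo-basic-2}), so the differential Tate twist is defined; and over the perfect prism $A=W(\ca{O}_{K^\flat})$ the free rank-$1$ $\overline A$-module $I/I^2$ is canonically identified with $T_p(\Omega^1_{\ca{O}_{\widehat K}/\bb Z_p})=\ak a_{\widehat K/\bb Q_p}\ca{O}_{\widehat K}(1)$ (via $d\mapsto (\zeta_p-1)\otimes(\zeta_{p^n})_{n\in\bb N}$, cf.\ \ref{cor:tate-cyclo}), and this identification is compatible with base change to $\ca{O}_{\widehat F}$ via \eqref{eq:defn:int-tate-twist-2}. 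Unwinding this identification converts the prismatic $\{-q\}$ into the differential $\{-q\}$ on $\ca{O}_{\widehat F}^{\al}$-modules, and the corollary follows.
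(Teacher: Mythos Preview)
Your proof is correct and follows exactly the same route as the paper's own proof, which simply reads: ``It follows directly from \ref{cor:fal-site-perfd} and \ref{prop:cartier-smooth} (whose assumptions hold by \ref{thm:cartier-smooth-val}).'' You have spelled out the implicit steps the paper glosses over---notably the identification $\widehat{\Omega}^q_{\ca{O}_F/\ca{O}_K}\cong\widehat{\Omega}^q_{\ca{O}_{\widehat F}/\ca{O}_{\widehat K}}$ and the compatibility of the prismatic Breuil--Kisin--Fargues twist $(I/I^2)^{\otimes -q}$ with the differential Tate twist of \ref{defn:int-tate-twist}---but the underlying strategy is identical.
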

\begin{proof}
	It follows directly from \ref{cor:fal-site-perfd} and \ref{prop:cartier-smooth} (whose assumptions hold by \ref{thm:cartier-smooth-val}).
\end{proof}

\bibliographystyle{myalpha}
\bibliography{bibli}
\end{document}